\documentclass{amsart}
\usepackage[english]{babel}
\usepackage[utf8]{inputenc}

\usepackage[bookmarks,colorlinks,breaklinks,hypertexnames=false]{hyperref}
\hypersetup{linkcolor=blue,citecolor=blue,filecolor=blue,urlcolor=blue}

\usepackage{amssymb,graphicx,mathrsfs,cleveref,latexsym}

\usepackage{paper}
\usepackage{environments}
\usepackage{macros}

\providecommand{\assign}{:=}
\providecommand{\cdummy}{\cdot}
\providecommand{\nobracket}{.}
\providecommand{\of}{:}
\providecommand{\tmem}[1]{{\em #1\/}}
\providecommand{\tmmathbf}[1]{\ensuremath{\boldsymbol{#1}}}
\providecommand{\tmop}[1]{\ensuremath{\operatorname{#1}}}
\providecommand{\tmscript}[1]{\text{\scriptsize{\(#1\)}}}

\providecommand{\tmtextbf}[1]{\text{{\bfseries{#1}}}}
\providecommand{\tmtextit}[1]{\text{{\itshape{#1}}}}

\providecommand{\comment}[1]{}


\addbibresource{ubht.bib}

\AtEveryBibitem{
    \clearfield{urldate}
}

\providecommand{\subsubsectiontoc}[1]{}

{\comment{* Local to this file/project}}

{\comment{** Operators and forms related to BHT}}

\providecommand{\BHT}{\mathrm{BHT}}

\providecommand{\Hil}{\mathrm{H}}

\providecommand{\HLM}{\mathrm{M}}
{\comment{parameter}}

\providecommand{\gr}{\mf{r}}
{\comment{** Time frequency notions}}

{\comment{Wave packet embedding}}

\providecommand{\Emb}{\ensuremath{\mathrm{Emb}}}
{\comment{Model tree}}

\providecommand{\mT}{\mf{T}}
{\comment{Trees}}

\providecommand{\TT}{\mbb{T}}
{\comment{Strips}}

\providecommand{\DD}{\mbb{D}}
{\comment{Full size}}

\providecommand{\SF}{\mbb{F}}
{\comment{Bilinear size}}

{\comment{Integral size}}

\providecommand{\SI}{\mbb{I}}
{\comment{Maximal truncation size}}

\providecommand{\SJ}{\mbb{J}}
{\comment{Lebesgue size}}

\providecommand{\SL}{\mbb{L}}
{\comment{Some other size}}

\providecommand{\SO}{\mbb{S}}
{\comment{Some other space}}

\providecommand{\XO}{\mbb{X}}
{\comment{Some other generating collections}}

\providecommand{\BO}{\mbb{B}}
{\comment{Local outer Lebesgue}}

\providecommand{\fL}{\not{L}}
{\comment{Wave packet operators}}

\providecommand{\wpD}{\mathfrak{d}}
{\comment{Defect}}

\providecommand{\dfct}{\Box}

\begin{document}

\title{The full range of uniform bounds for the bilinear Hilbert transform}

\author[G. Uraltsev]{Gennady Uraltsev}
\address{\noindent Department of Mathematics \newline \indent University of Virginia, Charlottesville, VA, USA}
\email{gennady.uraltesv@gmail.com}

\author[M.Warchalski]{Michał Warchalski}
\email{waral91@gmail.com }

\subjclass[2020]{Primary 42B20, 42B35, Secondary 46B70, 47A07, 47B38}
\keywords{Time-frequency analysis, bilinear Hilbert transform,  outer Lebesgue spaces, interpolation, uniform bounds, wave packet analysis}

\begin{abstract}
We prove  uniform \(L^{p}\) bounds for the family of bilinear Hilbert transforms
\(\mathrm{BHT}_{\beta} [f_1, f_2] (x) := \mathrm{p.v.} \int_{\R} f_1 (x - t) f_2 (x + \beta t)  \frac{\mathrm{d} t}{t} \). We show that the operator \(\mathrm{BHT}_{\beta}\) maps \(L^{p_{1}}\times L^{p_{2}}\) into \(L^{p}\)  as long as \(p_1 \in (1, \infty)\), \(p_2 \in (1, \infty)\), and \(p > \frac{2}{3}\) with a bound independent of \(\beta\in(0,1]\). This is the full open range of exponents where the modulation invariant class of bilinear operators containing \(\mathrm{BHT}_{\beta}\)  can be bounded uniformly. This is done by proving boundedness of certain affine transformations of the frequency-time-scale space \(\mathbb{R}^{3}_{+}\) in terms of iterated outer Lebesgue spaces. This results in new linear and bilinear wave packet embedding bounds well suited to study uniform bounds. 
\end{abstract}

{\maketitle}

{\tableofcontents}

\section{Introduction}\label{sec:intro}

The bilinear Hilbert transform with parameter \(\beta \in (0, 1]\) is a bilinear
singular integral operator given by
\begin{equation}
  \BHT_{\beta} [f_1, f_2] (x) \eqd \pv \int_{\R} f_1 (x - t) f_2 (x + \beta t)
  \frac{\dd t}{t} \label{eq:BHT}
\end{equation}
where \(f_1, f_2 \in \Sch (\R)\) are Schwartz functions on the real
line. It arises in diverse context: averages of \eqref{eq:BHT} over \(\beta\)
can yield relevant correction terms for evaluating Cauchy integrals along
Lipschitz domains in \(\C\) and for inverting elliptic operators with
non-constant coefficients; fiberwise applications of bounds on \eqref{eq:BHT}
can be used to obtain bounds on multilinear singular integral operators with
full \(\tmop{Gl}_2\) symmetry. We elaborate and provide references for these
applications further on.

In this paper we are interested in showing the a-priori bounds
\begin{equation}
  \| \BHT_{\beta} [f_1, f_2] \|_{L^p (\R)} \leq C _{p_1,
  p_2, \beta} \| f_1 \|_{L^{p_1} (\R)} \| f_2 \|_{L^{p_2} (\R)}, \qquad p^{- 1} = p_1^{- 1} + p_2^{- 1} \label{eq:BHT-bounds}
\end{equation}
for some constant \(C_{p_1, p_2, \beta} > 0\) that does not depend on \(f_1\) and
\(f_2\). A scaling argument shows that the constraints on the exponents \((p_1,
p_2, p)\) in \eqref{eq:BHT-bounds} must hold. In
{\cite{laceyEstimatesBilinearHilbert1997}}, Lacey and Thiele proved the first
estimates of the type \eqref{eq:BHT-bounds}: they showed that the bound holds
in the range \(2 < p_1, p_2 < \infty\), \(p < 2\). This corresponds to the open
triangle \(c\) in \Cref{fig:triangle}. The range of exponents for
\eqref{eq:BHT-bounds} was later extended by Lacey and Thiele in
{\cite{laceyCalderonConjectureBilinear1998}} to the convex hull of the open
triangles \(a_1, a_2, a_3\) in \Cref{fig:triangle}, where bounds below the line
\((0, 1, 0) - (0, 0, 1)\) and to the upper right side of the line \((0, 1, 0) -
(0, 0, 1)\) are to be intended in the restricted weak sense, discussed in
{\cite{thieleWavePacketAnalysis2006}}. Inspired by the works of Carleson
{\cite{carlesonConvergenceGrowthPartial1966}} and Fefferman
{\cite{feffermanPointwiseConvergenceFourier1973}} the authors of the above
results used the paradigm of time scale frequency analysis.

When \(\beta = 0\) the operator \(\BHT_{\beta} [f_1, f_2] (x)\) becomes simpler
and is given by the pointwise product of the Hilbert transform of \(f_1\) with
\(f_2\):
\[ \BHT_0 [f_1, f_2] (x) =  \Hil f_1 (x) \,f_2 (x) ; \]
this immediately implies boundedness for any \(p_1, p_2 \in (1, \infty)\) by the
boundedness of \(f \mapsto \Hil f\) on \(L^{p_1} (\R)\) for any \(p_1
\in (1, \infty)\) and by Hölder's inequality. For any fixed \(p_1, p_2 \in (1,
\infty)\), the constant \(C_{p_1, p_2, \beta}\) obtained in
{\cite{laceyEstimatesBilinearHilbert1997}} and
{\cite{laceyCalderonConjectureBilinear1998}}, while this is not stated
explicitly, grows linearly in \(| \beta |^{- 1}\). Naturally, in
{\cite{laceyCalderonConjectureBilinear1998}} the authors posed the question of
whether bounds \eqref{eq:BHT-bounds} can be shown with a constant \(C_{p_1,
p_2}\) independent of \(\beta \in (0, 1]\), in place of \(C _{p_1, p_2, \beta}\),
and if so, for what range of exponents \(p_1, p_2\). This is in accordance with
the observation that in the limit \(\beta = 0\) we are still dealing with a
bounded operator. These bounds are referred to as ``uniform bounds''; any
bound that holds only for \(\beta \gtrsim 1\) or with a constant that is allowed
to depend on \(\beta\), will be referred to in this paper as a ``non-uniform''
bound.

There is a rich collection of partial results concerning uniform variants of
\eqref{eq:BHT-bounds}. In {\cite{thieleUniformEstimate2002}} Thiele
established weak uniform bounds at the two upper corners of the triangle \(c\)
in \Cref{fig:triangle}. Grafakos and Li in
{\cite{grafakosUniformBoundsBilinear2004}} showed the uniform inequality in
the open triangle \(c\), and Li {\cite{liUniformBoundsBilinear2006}} proved the
uniform bounds in the open triangles \(a_1\) \(a_2\). By interpolation one obtains
the uniform variant of \eqref{eq:BHT-bounds} in the range corresponding to the
convex hull of the open triangles \(a_2\), \(a_3\), and \(c\). What, however, was
not known up to date, is whether the uniform bounds hold in the vicinity of
\((p_1, p_2) = (\infty, 1)\) and \((p_1, p_2) = (\infty, \infty)\) i.e. in the
corners of triangles \(b_2\) and \(b_3\). This is precisely the main result of
this work.

\begin{theorem}
  \label{thm:main-1}Bounds \eqref{eq:BHT-bounds} holds for all functions \(f_1,
  f_2 \in \Sch (\R)\) as long as \(p_1 \in (1, \infty)\), \(p_2 \in
  (1, \infty)\), and \(p > \frac{2}{3}\) with a constant dependent only on \(p_1\)
  and \(p_2\) but not on \(\beta \in (0, 1]\).
\end{theorem}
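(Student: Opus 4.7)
My approach is to follow the time-frequency paradigm in the outer Lebesgue framework on the frequency-time-scale half-space $\R^3_+$. First, I would dualize $\BHT_\beta$ against a third function $f_3 \in L^{p_3}(\R)$ (with $p_1^{-1}+p_2^{-1}+p_3^{-1}=1$) to obtain a trilinear form $\Lambda_\beta[f_1,f_2,f_3]$, and decompose $\Lambda_\beta$ using a smooth wave packet resolution of the Hilbert kernel $\mathrm{p.v.}\,\tfrac{1}{t}$ to write
\[
 \Lambda_\beta[f_1,f_2,f_3] \;=\; \int_{\R^3_+} \prod_{i=1}^{3} (\Emb_i^\beta f_i)\,\dd\mu,
\]
where each $\Emb_i^\beta$ is a wave packet embedding from $\R$ into functions on $\R^3_+$, with $\Emb_1^\beta, \Emb_2^\beta$ adapted to the input frequencies $\eta$ and $\beta\eta$ and $\Emb_3^\beta$ adapted to the output frequency $-(1+\beta)\eta$. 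The key observation that makes $\beta$-uniformity conceivable is that the $\beta$-dependence of the ambient measure and of the test wave packets can be absorbed into affine reparametrizations of $\R^3_+$ whose Jacobian is bounded above and below uniformly in $\beta\in(0,1]$.

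The core of the argument is then to bound the trilinear form via an outer Hölder inequality,
\[
 |\Lambda_\beta[f_1,f_2,f_3]| \;\lesssim\; \prod_{i=1}^{3} \|\Emb_i^\beta f_i\|_{L^{p_i} \SO_i},
\]
with suitably chosen sizes $\SO_i$ drawn from the catalog $\SF, \SB, \SI, \SL$, and to prove uniform embedding bounds $\|\Emb_i^\beta f_i\|_{L^{p_i} \SO_i} \lesssim \|f_i\|_{L^{p_i}(\R)}$. For $i=1,2$ the linear embeddings into lacunary sizes are available down to $p_i > 1$ in $\beta$-uniform form by extensions of the arguments of Li and Uraltsev cited above. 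The new ingredient is a bilinear embedding bound for the output embedding $\Emb_3^\beta$ into an overlapping size that remains well-defined in the limit $\beta \to 0$. I would establish this by the classical tree-strip decomposition: partition $\R^3_+$ into maximal trees $\TT$ of controlled size and into strips $\DD$ carrying uniform normalization, estimate tree-by-tree via a John--Nirenberg / $\mathrm{BMO}$-type inequality at the tree tops, and finally sum the trees via the outer $L^p$ structure.

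Once these embeddings are established in a sufficiently large subregion of exponents, Theorem~\ref{thm:main-1} follows by quasi-Banach interpolation of outer $L^p$ spaces against the previously known uniform bounds in the convex hull of $a_2, a_3, c$. The main obstacle I anticipate lies precisely in the bilinear embedding at scales where the three frequency supports degenerate to a common cone as $\beta\to 0$: there one loses the frequency transversality that normally underlies the orthogonality estimates, and one must instead quantify the loss using the defect operators $\dfct$ and choose sizes that remain nontrivial in the limit. Making the iterated outer Lebesgue bookkeeping compatible with this degenerate geometry, while keeping all constants uniform for $\beta\in(0,1]$, is the central technical difficulty that the new affine-transformation framework of the $\R^3_+$-embeddings announced in the abstract is designed to overcome.
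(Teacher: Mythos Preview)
Your broad architecture---dualize to a trilinear form, represent it via wave packet embeddings on $\R^3_+$, control it through outer Lebesgue norms and uniform embedding theorems, then interpolate with Li's result---matches the paper. But two identifications are off in ways that would derail the argument if followed literally.

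First, you have the degeneration backwards. In the paper's parametrization \eqref{eq:gamma:params} it is $\Gamma_1=\Id$ that is non-degenerate, while \emph{both} $\Gamma_2$ and $\Gamma_3$ carry the small parameter $\beta_2=\beta_3=\beta$. Thus the non-uniform embedding (Theorem~\ref{thm:non-uniform-embedding-bounds}) applies to $f_1$, and it is $f_2$ and $f_3$ that each require new linear uniform embeddings (Theorem~\ref{thm:uniform-embedding-bounds:linear}); these are not available from prior work but rest on new geometric covering lemmata on $\R^3_+$ (Lemmata~\ref{lem:ext-tree-covering} and~\ref{lem:singular-tree-covering}). Second, and more critically, the bilinear embedding is not ``for the output embedding $\Emb_3^\beta$'': it is a bound on the \emph{product} $\Emb[f_2]\circ\Gamma_2\cdot\Emb[f_3]\circ\Gamma_3$ in a genuinely bilinear size $\widetilde{\SF}^{u_\times}_{\Gamma_\times}$ (Theorem~\ref{thm:uniform-embedding-bounds:bilinear}, bound~\eqref{eq:embedding-bound:bilinear}). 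This is forced by the structure of the integral size $\SI_\Theta$: one of its five components, $\SI^{(l,o,o)}_\Theta$, cannot be split by a three-fold H\"older inequality into linear sizes of $F_1,F_2,F_3$ uniformly in $\beta$; it must instead be bounded as in \eqref{eq:iterated-Holder-type-bounds:bilinear} by a linear norm of $F_1$ times a bilinear norm of $F_2F_3$. Your plan of a simple product $\prod_i\|\Emb_i^\beta f_i\|_{L^{p_i}\SO_i}$ would fail on exactly this term. Relatedly, the outer Lebesgue structure is not a single-layer $L^{p_i}\SO_i$: the argument requires \emph{iterated} norms $L^p_{\nu_\beta}\fL^{q,+}_{\mu^\infty_\Theta,\nu_1}$ for $F_1$ and the new localized spaces $L^p_{\nu_\beta}X^{q,r,+}_{\mu^1_\Theta,\nu_\beta}$ for $F_2,F_3$ and $F_2F_3$, with the H\"older-type step coming from restricted weak interpolation of atomic estimates (Lemma~\ref{lem:atomic}, Proposition~\ref{prop:holder-bound-trilinear}) rather than a direct outer H\"older inequality.
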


Our methods cover directly the regions \(b_1, b_2, b_3, c\) in
\Cref{fig:triangle}. The extension to the full shaded region in
\Cref{fig:triangle} can be obtained by interpolation with the result in
{\cite{liUniformBoundsBilinear2006}}. As noted before, bounds on the same
side of the triangle \(a_3\) of the segment \((0, 1, 0) - (0, 0, 1)\) are to be
intended in the restricted weak type sense. Otherwise, we believe our methods
can be used to obtain a uniform sparse domination of the trilinear form
associated to \(\BHT_{\beta}\) that would imply \Cref{thm:main-1} in the full
range. However, we leave this for possible subsequent works.

Since the Hilbert transform is not bounded on \(L^{\infty}\), bounds
\eqref{eq:BHT-bounds} cannot hold uniformly below the \((0, 1, 0) - (0, 0, 1)\)
segment. Thus, \Cref{thm:main-1} covers the full range where bounds
\eqref{eq:BHT-bounds} can hold uniformly in \(\beta \in (0, 1]\), inside the
range where bounds \eqref{eq:BHT-bounds} are known to hold at all
(non-uniformly). Furthermore Lacey in
{\cite{laceyBilinearMaximalFunctions2000a}} shows that the above range of
exponents is sharp, modulo endpoints, for the non-uniform modulation invariant
class of bilinear operators.

Oberlin and Thiele, showed \Cref{thm:main-1} for a Walsh model of the bilinear
Hilbert transform. The techniques there rely crucially on the perfect time
frequency analysis of the Walsh phase space and on the possibility of
constructing phase space tree projections, tools that we were unable to
generalize for obtaining the current result in the real case.

\begin{figure}[h]
  \raisebox{-0.00133654242739077\height}{\includegraphics[width=6.29001049455595cm,height=6.28161484979667cm]{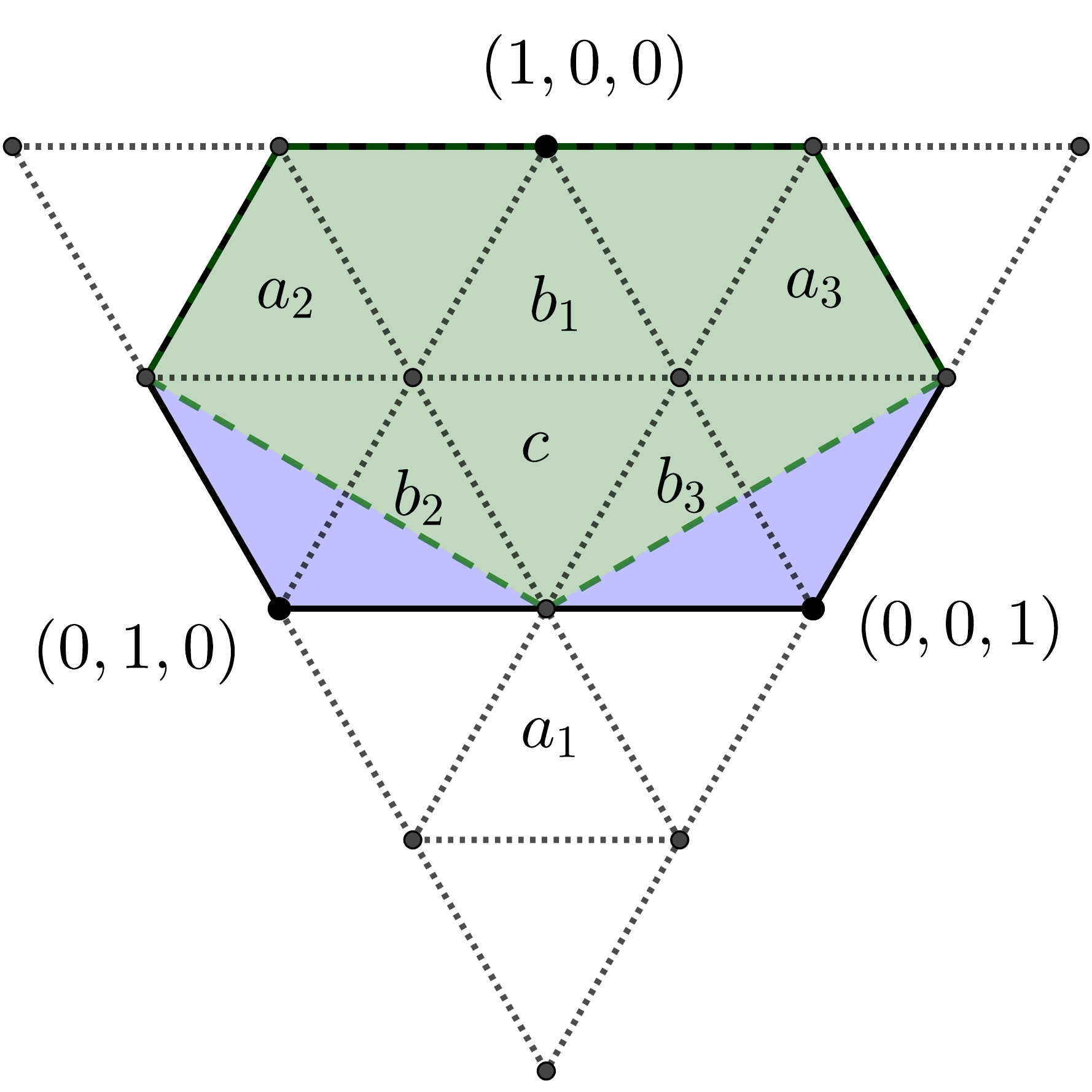}}
  \caption{\label{fig:triangle}
  Points \((p_1^{- 1}, p_2^{- 1}, 1 - p^{- 1}) \in (- 1, 1)^3\) with \(p^{- 1} =
  p_1^{- 1} + p_2^{- 1}\) for which \eqref{eq:BHT-bounds} may hold, represented
  as affine combinations.}
\end{figure}

\subsection{Applications}

The bilinear Hilbert transform has shown up in multiple circumstances. One of
the original motivations for the interest in \(\BHT_{\beta}\) was Calderón's
first commutator \(C_1 (f, A)\), that arises naturally as a correction term when
studying solutions for elliptic operators with non-smooth coefficients. In the
one-dimensional case one has
\[ C_1 (f, A) (x) \eqd \int_{\R} \frac{A (x) - A (y)}{(x - y)^2} f (y) \dd y.
\]
In {\cite{calderonCommutatorsSingularIntegral1965}} Calderón showed that \(f
\mapsto C_1 (f, A)\) is bounded on \(L^p (\R)\) for all \(p \in (1,
\infty)\) when \(\| A \|_{\tmop{Lip}} < + \infty\). Formally rewriting \(C_1 (f,
A)\) using the fundamental theorem of calculus yields
\[ C_1 (f, A) (x) \eqd \int_0^1 \int_{\R} f (y) A' (y + \beta (x - y))
   \frac{\dd y}{x - y} \dd \beta = \int_0^1 \BHT_{\beta} [f, A'] \dd \beta .
\]
Dualizing and enacting a change of variables allows one to recover Calderón's
result from {\cite{liUniformBoundsBilinear2006}}. An alternative proof of the
boundedness of Calderón's first commutator was given in
{\cite{muscaluCalderonCommutatorsCauchy2014}}. For a more detailed discussion
of Calderón's commutators see Chapter 4 of
{\cite{muscaluClassicalMultilinearHarmonic2013a}}.

More recently in {\cite{gressmanTrilinearSingularIntegral2016}}, bounds on a
trilinear form with full \(\tmop{GL} \left( \R^2 \right)\) symmetry acting on
functions on \(\R^2\) was obtained by a fiber-wise application of uniform bounds
in the range covered by {\cite{grafakosUniformBoundsBilinear2004}}.

Generalizations of the bilinear Hilbert transform to functions on \(\R^2\) have
been systematically studied in
{\cite{demeterTwodimensionalBilinearHilbert2010}}. Instead of just one
parameter \(\beta\), the corresponding trilinear form on \(\R^2\) is determined by
three matrixes \(\vec{B} = (B_j)_{j \in \{ 1, 2, 3 \}} \in \left( \R^{2 \times
2} \right)^3\) and is given by
\[ \tmop{BHF}^K_{\vec{B}} (f_1, f_2, f_3) \eqd \int_{\R^2} \int_{\R^2}
   \prod_{j = 1}^3 f_j \left(\begin{pmatrix}
     x\\
     y
   \end{pmatrix} + B_j \begin{pmatrix}
     s\\
     t
     \end{pmatrix} \right) K (s, t) \dd s \dd t \dd x \dd y, \]
where \(K\) is a two-dimensional smooth Calderón-Zygmund kernel. If \(B\) is
non-invertible, one can obtain completely novel multilinear integral
operators, some of which have bee   n studied independently (see Kovač's work on
the twisted paraproduct in {\cite{kovacBoundednessTwistedParaproduct2012}}),
while others remain beyond the current level understanding, like the
triangular Hilbert transform (see {\cite{kovacDyadicTriangularHilbert2015}}).
A partial investigation of uniform bounds in the context of the 2-dimensional
bilinear Hilbert transform has been carried by the second author in his PhD
thesis {\cite{warchalskiUniformEstimatesOneand2018}}. A preliminary version of
this joint work also appeared there.

\subsection{Wave packet representation and the space of symmetries}

The bilinear Hilbert transform is symmetric under translations, modulations,
and dilations, given by
\begin{equation}
  \begin{aligned}[c]
  & \Tr_y \phi (z) \eqd \phi (z - y),
  \\ & \Mod _{\eta} \phi (z) \eqd e^{2 \pi i \eta z} \phi (z),
  \\ & \Dil_t \phi (z) \eqd t^{- 1} \phi \left(
    \frac{z}{t} \right),
  \end{aligned}\quad    (\eta, y, t) \in \R^3_+ \eqd \R^2 \times \R_+, \label{eq:symmetries}
\end{equation}
that act on the functions \(f_1\) and \(f_2\). In particular, up to a linear
combination with the pointwise product, \(\BHT_{\beta}\) is the unique bilinear
operator that satisfies
\[ \begin{aligned}[t]
&\BHT_{\beta} \left[ \Tr_y f_1, \Tr_y f_2 \right] = \Tr_y \BHT_{\beta}
[f_1, f_2] \qquad \forall y \in \R,
\\ &
\BHT_{\beta} \left[ \Mod _{\beta \eta} f_1, \Mod _{\eta} f_2 \right] =
\Mod _{(1 + \beta) \eta} \BHT_{\beta} [f_1, f_2] \qquad \forall \eta \in
\R,
\\ &
t^2 \BHT_{\beta} \left[ \Dil_t f_1, \Dil_t f_2 \right] = t \Dil_t
\BHT_{\beta} [f_1, f_2] \qquad \forall t \in \R^+ .
\end{aligned} \]
Based on this we show that \(\BHT_{\beta}\) admits the following wave packet
representation
\begin{equation}
  \begin{aligned}[t]
  & \frac{1}{\pi i} \BHT_{\beta} [f_1, f_2] (x) = f_1 (x) f_2 (x)
  \\ & 
    \qquad \begin{aligned}[t]& - C_{\beta} 
    \int_{\R^3_+} \Emb [f_1] (\eta, y, t) [\phi_0]  \Emb [f_2] (\beta^{- 1}
      \eta - (\beta t)^{- 1}, y, \beta t) [\phi_0] 
      \\ &
      \qquad \times \Tr_y \Mod_{\frac{1 + \beta}{\beta} \eta - (\beta t)^{- 1}}
      \Dil_{\beta t} \phi_0 (x) \dd \eta \dd y \dd t ;
    \end{aligned}
  \end{aligned} \label{eq:BHT-wave-packet-representation}
\end{equation}
the wave packet \(\phi_0 \in \Sch (\R)\) can be chosen
independently of \(\beta\), and to satisfy \(\spt \FT{\phi_0} \subseteq
B_{\gr_0}\) for any fixed \(\gr_0 < 2^{- 20}\). The constant \(C_{\beta} > 0\) is
uniformly bounded above and away from \(0\) for all \(\beta \in (0, 1]\).

The model wave packet \(\phi\) is allowed to be any function in
\(\Phi^{\infty}_{\gr} \subset \Sch (\R)\), the subset of Schwartz
functions with compact Fourier support in \(\overline{B_{\gr}} \eqd \left[ -
\gr, \gr \right]\) for some \(\gr > 0\). Hölder's inequality shows that the
pointwise product \((f_1, f_2) \mapsto f_1 (x) f_2 (x)\) satisfies the bounds of
\Cref{thm:main-1}; thus we concern ourselves only with the second summand on
{\RHS{\eqref{eq:BHT-wave-packet-representation}}}.

When \(p > 1\), we prove \Cref{thm:main-1} by using identity
\(\eqref{eq:BHT-wave-packet-representation}\) and dualizing: we reduce to showing
that
\begin{equation}
  \left| \int_{\R^3_+} \Emb [f_1] (\eta, y, t) [\phi_0] \prod_{j = 2}^3 \Emb
  [f_j] \circ \Gamma_j (\eta, y, t) [\phi_0] \dd \eta \dd y \dd t \right|
  \lesssim  \prod_{j = 1}^3 \| f_j \|_{L^{p_j} (\R)},
  \label{eq:BHT-dual-bounds}
\end{equation}
holds as long as \(p_1, p_2, p_3 \in (1, \infty)\) and \(\sum_{j = 1}^3 p_j^{- 1}
= 1\). The maps \(\Gamma_j \of \R^3_+ \rightarrow \R^3_+\) are given by
\begin{equation}
  \begin{aligned}[t]
  & \Gamma_j  (\eta, y, t) \eqd \Gamma_{(\alpha_j, \beta_j, \gamma_j)}  (\eta, y, t) \eqd \Big(\alpha_j (\eta + \gamma_j t^{- 1}), y, \beta_j t\Big),
  \\ &
    \begin{aligned}[t]
     & (\alpha_2, \beta_2, \gamma_2) \eqd \left( \frac{1}{\beta}, \beta, - 1
     \right),
     \\ &  
      (\alpha_3, \beta_3, \gamma_3) \eqd \left( - \frac{1 + \beta}{\beta},
      \beta, \frac{- 1}{1 + \beta} \right) ;
    \end{aligned}
  \end{aligned} \label{eq:gamma:params}
\end{equation}
for convenience of notation we set \(\Gamma_1 (\eta, y, t) = (\eta, y, t)\). One
should think of \(\Gamma_j\) as encoding the geometric datum of the degeneration
parameter \(\beta \in (0, 1]\) in terms of the group of symmetries generated by
\eqref{eq:symmetries}.

The embedding coefficients \(\Emb [f] (\eta, y, t) [\phi]\) appearing above
provide a description of a function \(f\) in terms of the defining symmetries of
\(\BHT_{\beta}\). They are given by
\begin{equation}
  \Emb [f] (\eta, y, t) [\phi] \eqd \int_{\R} f (z) \Tr_y \Mod_{- \eta} \Dil_t
  \phi (z) \dd z. \label{eq:embedding}
\end{equation}
An appropriate functional analytic framework to understand the embedded
functions \(F = \Emb [f] : \R^3_+ \times \Phi_{\gr}^{\infty} \rightarrow \C\) is
given by outer Lebesgue spaces introduced in
{\cite{doTheoryOuterMeasures2015}}.

Non-uniform (\(\beta \gtrsim 1\)) variants of the bounds
\eqref{eq:BHT-dual-bounds} can be obtained in this framework from the chain of
inequalities
\begin{equation}
  \begin{aligned}[t]&
    \Big| \int_{\R^3_+} \prod_{j = 1}^3 \Emb [f_j] \circ \Gamma_j (\eta, y,
    t) [\phi_0] \dd \eta \dd y \dd t \Big|
    \\ &    \qquad
    \lesssim \Big\| \prod_{j = 1}^3 \Emb [f_j] \circ \Gamma_j
    [\phi_0] \Big\|_{L^1_{\nu_1} \fL^1_{\mu^1_{\Theta}, \nu_1} \SL^{(1, 1)}_{\Theta}}
    \\ &     \qquad
    \lesssim \prod_{j = 1}^3 \Big\| \Emb [f_j] \circ \Gamma_j [\phi_0]
    \Big\|_{L^{p_j}_{\nu_1} \fL^{q_j, +}_{\mu^1_{\Theta}, \nu_1}
    \SO_{\tmop{en}, j}},
  \end{aligned} \label{eq:outer-domination:non-uniform}
\end{equation}
and from the embedding bounds
\begin{equation}
  \left\| \Emb [f_j] \circ \Gamma_j \right\|_{L^{p_j}_{\nu_1} \fL^{q_j,
  +}_{\mu^1_{\Theta}, \nu_1} \SO_{\tmop{en}, j}} \lesssim \| f_j \|_{L^{p_j}
  (\R)} . \label{eq:iterated-embedding-bounds:non-uniform}
\end{equation}
for some appropriately chosen exponents \(q_j \in (2, \infty]\). The terms
appearing on \(\RHS{\eqref{eq:outer-domination:non-uniform}}\) are
\tmtextit{iterated outer Lebesgue quasi-norms} \(\| \cdot \|_{L^{p_j}_{\nu_1}
\fL^{q_j, +}_{\mu^1_{\Theta}, \nu_1} \SO_{\tmop{en}, j}}\). One should think of
these quasi-norms as appropriate generalizations of product Lebesgue space
norms defined on \(\R^3_+\), the parameterization of the symmetries
\eqref{eq:symmetries} of the problem at hand. Unlike in the case of classical
Lebesgue integral norms, the outer measures \(\nu_1\) and \(\mu^1_{\Theta}\)
appearing in \(\eqref{eq:iterated-embedding-bounds:non-uniform}\) fail to
satisfy Carathéodory measurability conditions on any non-trivial sets: thus
the need for a generalized integration theory. The symbols \(\SO_{\tmop{en},
j}\) and \(\SL^{(1, 1)}_{\Theta}\) appearing in the definition of the outer
Lebesgue quasi-norms are \tmtextit{sizes}, quantities that allow us to control
the magnitude of \(\Emb [f_j] \circ \Gamma_j\) locally.

The two inequalities in \eqref{eq:outer-domination:non-uniform} are abstract
functional analytic statements about (iterated) outer Lebesgue quasi-norms: a
Radon-Nikodym -type domination lemma and an outer Hölder inequality.
Inequalities \eqref{eq:iterated-embedding-bounds:non-uniform} are bounds on
wave packet embeddings \(f \mapsto \Emb [f] \circ \Gamma\). In
{\cite{uraltsevVariationalCarlesonEmbeddings2016}}, with minor notational
differences, one of the authors formulated inequalities
\eqref{eq:iterated-embedding-bounds:non-uniform} using this framework in the
context of the variation norm Carleson operator, deducing them from Di Plinio
and Ou's ideas in {\cite{diplinioModulationInvariantCarleson2018}}. The
iterated variants of outer Lebesgue spaces (e.g. bounds
\eqref{eq:outer-domination:non-uniform}) were initially introduced therein. A
complete study of non-uniform bounds for \(\BHT_{\beta}\) in this framework is
carried out with suitable generalizations in
{\cite{amentaBilinearHilbertTransform2020}} for Banach-valued functions; the
result there subsumes the non-uniform scalar results above. Originally, the
approach to non-uniform bounds for \(\BHT_{\beta}\) using outer Lebesgue spaces
was carried out in {\cite{doTheoryOuterMeasures2015}}, where the non-iterated
variants of outer Lebesgue quasi-norms have been introduced and were
sufficient to obtain bounds \eqref{eq:BHT-dual-bounds} for \(p_j \in (2,
\infty]\), \(j \in \{ 1, 2, 3 \}\).

The {\cite{doTheoryOuterMeasures2015}} proof of the non-iterated variant of
bound \eqref{eq:iterated-embedding-bounds:non-uniform} relies on
Calderón-Zygmund techniques when \(p = \infty\) and on
orthogonality considerations together with a covering lemma for sets on
\(\R^3_+\) when \(p = 2\). The range \(p \in (2, \infty]\) is then obtained by
interpolation. To go to \(p < 2\),
{\cite{diplinioModulationInvariantCarleson2018}} made use of a multi-frequency
Calderón-Zygmund decomposition. In
{\cite{amentaBilinearHilbertTransform2020}} provided an alternative proof of
\eqref{eq:iterated-embedding-bounds:non-uniform} in the context of Banach
space-valued spaces by interpolating the result with \(p > 2\) with an endpoint
that does not rely on cancellation at all, thus avoiding Hilbert space
techniques. On the other hand, in the recent result
{\cite{diplinioWeaktypeCarlesonTheorem2022}}, Di Plinio and Fragos improve on
{\cite{diplinioModulationInvariantCarleson2018}} and obtain an endpoint
embedding result close to \(L^1\) for the embedding bound
\eqref{eq:iterated-embedding-bounds:non-uniform} in the context of endpoint
estimates for the Carleson operator. Their result do not follow by
interpolation; they make use Gabor frame techniques and work in a localized
outer Lebesgue endpoint space formally reminiscent of the space
\(X_{\mu_{\Theta}^1, \nu_{\beta }}^{q, r, +}\) we introduce in
\Cref{sec:localized-outer-lebesgue}.

In the current paper, we generalize the framework of outer Lebesgue spaces
and the bounds \eqref{eq:iterated-embedding-bounds:non-uniform}. However, we
do not prove the analogue of \eqref{eq:iterated-embedding-bounds:non-uniform}
directly. With few exceptions, arguments are carried out purely in terms of
properties of embedded functions \(F \of \R^3_+ \times \Phi_{\gr}^{\infty}
\rightarrow \C\) and corresponding outer Lebesgue quasi-norms; we mainly rely
on a extensions of ideas related to the covering lemma from
{\cite{doTheoryOuterMeasures2015}}.

\subsection{Outline of the argument}

In this paper, we require a significant extensions of the framework of outer
Lebesgue spaces. A Radon-Nikodym domination type result, as before, allows us
to write
\begin{equation}
  \begin{aligned}[t]&
    \Big| \int_{\R^3_+} \prod_{j = 1}^3 \Emb [f_j] \circ \Gamma_j (\eta, y,
    t) [\phi_0] \dd \eta \dd y \dd t \Big| \lesssim \Big\| \prod_{j = 1}^3
    \Emb [f_j] \circ \Gamma_j \Big\|_{L^1_{\nu_1} \fL^1_{\mu^1_{\Theta}, \nu_1} \SI _{\Theta} \Phi_{\gr}^N} .
  \end{aligned} \label{eq:RN-domination-BHT}
\end{equation}
First of all, the sizes we use no longer evaluate the function \(\Emb [f] \circ
\Gamma\) at a fixed \(\phi_0 \in \Phi^{\infty}_{\mf{r}}\) but they track the
dependence of \(\Emb [f] \circ \Gamma\) on all possible generating wave packets
\(\phi \in \Phi^{\infty}_{\mf{r}}\). This is emphasized by the notation
\(\Phi_{\gr}^N\) appearing in the size above; the parameter \(N \in \N\) controls
the rate of spatial decay of \(\phi\) while \(\gr > 0\) governs the support of
\(\FT{\phi}\). In the subsequent discussion and in the entirety of this paper
the specific values of \(N \in \N\) and \(\gr > 0\) are inessential: inequalities
hold with constants dependent on \(N\) and \(\gr\) for all \(N \in \N\) large enough
and all \(\gr > 0\) small enough independently. However, the choice of \(N\) and
\(\gr\) can be made independently of \(\beta \in (0, 1]\). Actually, all implicit
constants henceforth are uniformly bounded in \(\beta \in (0, 1]\).

Continuing with the difference with respect to the non-uniform bound
\eqref{eq:outer-domination:non-uniform}, we use a much more involved size
\begin{equation}
  \SI _{\Theta} \Phi_{\gr}^N \eqd \begin{aligned}[t]&
    {\SI _{\Theta}}^{(l, l, l)} \Phi_{\gr}^N + {\SI _{\Theta}}^{(o, l, l)}
    \Phi_{\gr}^N\\ &
    \qquad + {\SI _{\Theta}}^{(l, l, o)} \Phi_{\gr}^N + {\SI _{\Theta}}^{(l,
    o, l)} \Phi_{\gr}^N + {\SI _{\Theta}}^{(l, o, o)} \Phi_{\gr}^N
  \end{aligned} \label{eq:I-size-sum}
\end{equation}
that replaces the simpler size \(\SL^{(1, 1)}_{\Theta}\) modeled locally on a
classical \(L^1\) Lebesgue norm.

The outer Lebesgue quasi-norm on the {\RHS{\eqref{eq:RN-domination-BHT}}} is
then divided into two parts. A part with all but one size forming \(\SI
_{\Theta} \Phi_{\gr}^N\) in \eqref{eq:I-size-sum}, can be bound, similarly to
\eqref{eq:outer-domination:non-uniform}, by a product of outer Lebesgue norms
of the embedded functions \(\Emb [f_j]\). It holds that
\begin{equation}
  \begin{aligned}[t]&
  \Big\| \prod_{j = 1}^3 \Emb [f_j] \circ \Gamma_j \Big\|_{L^1_{\nu_1} \fL^1_{\mu_{\Theta}^1, \nu_1} \left( {\SI _{\Theta}}^{(l, l, l)} \Phi_{\gr}^N + \SI^{(o, l, l)}_{\Theta} \Phi^N_{\gr} + \SI^{(l, o, l)}_{\Theta} \Phi^N_{\gr} + \SI^{(l, l, o)}_{\Theta} \Phi^N_{\gr}  \right)}
  \\ & \qquad
  \lesssim \Big\| \Emb [f_1] \Big\|_{L^{p_1}_{\nu_1} \fL^{q_1,+}_{\mu_{\Theta}^{\infty}, \nu_1} \SF^{u_1}_{\Theta} \Phi_{4 \mf{r}}^{N - 3}} \prod_{j = 2}^3 \Big\| \Emb [f_j] \circ \Gamma_j
  \Big\|_{L^{p_j}_{\nu_{\beta }} X_{\mu_{\Theta}^1, \nu_{\beta }}^{q_j, r_j, +} \widetilde{\SF}^{u_j}_{\Gamma_j} \Phi_{4 \mf{r}}^{N - 3}}
  \end{aligned} \label{eq:iterated-Holder-type-bounds:linear}
\end{equation}
for any \(p_j \in (1, \infty)\) with \(\sum_{j = 1}^3 p_j^{- 1} = 1\) with the
powers \(q_j\), \(r_j\), and \(u_j\) chosen appropriately depending on \(p_j\). We
introduce a new outer measures \(\nu_{\beta_j}\) on \(\R^3_+\) to deal with the
functions \(\Emb [f_j] \circ \Gamma_j\) with \(j \in \{ 2, 3 \}\), for which
\(\beta_j = \beta \ll 1\). Bounds \eqref{eq:iterated-Holder-type-bounds:linear}
are deduced from an ``atomic estimate'' using restricted weak type
interpolation for outer measure Lebesgue spaces
(\Cref{prop:outer-restricted-interpolation}). This generalizes the simpler
Hölder inequality for outer Lebesgue spaces used in the non-uniform case of
bounds \eqref{eq:outer-domination:non-uniform}. We then obtain bounds
\[
\begin{aligned}[t]
& \Big\| \prod_{j = 1}^3 \Emb [f_j] \circ \Gamma_j \Big\|_{L^1_{\nu_1} \fL^1_{\mu_{\Theta}^1, \nu_1} \left( {\SI _{\Theta}}^{(l, l, l)}
   \Phi_{\gr}^N + \SI^{(o, l, l)}_{\Theta} \Phi^N_{\gr} + \SI^{(l, o, l)}_{\Theta} \Phi^N_{\gr} + \SI^{(l, l, o)}_{\Theta} \Phi^N_{\gr} \right)}
\\ & \qquad
\lesssim \prod_{j = 1}^3 \| f_j \|_{L^{p_j} (\R)}
\end{aligned}
\]
by using \eqref{eq:iterated-Holder-type-bounds:linear} and by showing the
non-uniform (\(\beta = 1\)) embedding bounds
\begin{equation}
\begin{aligned}[t]
&\Big\| \Emb [f_1] \Big\|_{L^{p_1}_{\nu_1} \fL^{q_1,+}_{\mu_{\Theta}^{\infty}, \nu_1} \SF^{u_1}_{\Theta} \Phi_{\mf{r}}^N}
\lesssim \| f_1 \|_{L^{p_1} (\R)}
\end{aligned}
  \label{eq:embedding-bounds:non-uniform-iterated}
\end{equation}
and the uniform embedding bounds
\begin{equation}
  \Big\| \Emb [f_j] \circ \Gamma_j \Big\|_{L^{p_j}_{\nu_{\beta }}
  X_{\mu_{\Theta}^1, \nu_{\beta }}^{q_j, r_j, +}
  \widetilde{\SF}^{u_j}_{\Gamma_j} \Phi_{\mf{r}}^N} \lesssim \| f_j  \|_{L^{p_j} (\R)} .
  \label{eq:embedding-bounds:uniform-iterated}
\end{equation}
Bounds \eqref{eq:embedding-bounds:non-uniform-iterated} mostly extend the
available results from {\cite{amentaBilinearHilbertTransform2020}} to account
for additional terms in the size \(\SF_{\Theta}^{u_1}\) and a larger outer
measure \(\mu^{\infty}_{\Theta}\) (cfr. \eqref{eq:iterated-embedding-bounds:non-uniform}). Bounds
\eqref{eq:embedding-bounds:uniform-iterated} are an innovation of the current
work. Unlike \eqref{eq:embedding-bounds:non-uniform-iterated}, they are not
proven directly as in {\cite{amentaBilinearHilbertTransform2020}} and
{\cite{doTheoryOuterMeasures2015}} by analyzing the function \(f_j\). Rather, we
obtain \eqref{eq:embedding-bounds:uniform-iterated} from geometric properties
of the diffeomorphisms \(\Gamma_j\) on the time-frequency-scale space and by
proving a covering lemma directly on \(\R^3_+\).

To deal with the outer Lebesgue quasi-norms with the size \(\SI^{(l, o,
o)}_{\Theta} \Phi_{\gr}^N\) we prove the bound
\begin{equation}
  \begin{aligned}[t]&
    \Big\| \prod_{j = 1}^3 \Emb [f_j] \circ \Gamma_j \Big\|_{L^1_{\nu_1}
    \fL^1_{\mu_{\Theta}^1, \nu_1} \SI^{(l, o, o)}_{\Theta} \Phi^N_{\gr}}\\ &
    \qquad \lesssim \Big\| \Emb [f_1] \Big\|_{L^{p_1}_{\nu_1} \fL^{q_1,
    +}_{\mu_{\Theta}^{\infty}, \nu_1} \SF_{\Theta}^{u_1} \Phi_{4 \mf{r}}^{N -
    3}} \Big\| \prod_{j = 2}^3 \Emb [f_j] \circ \Gamma_j
    \Big\|_{L^{p_{\times}}_{\nu_{\beta }} X_{\mu_{\Theta}^1, \nu_{\beta
    }}^{q_{\times}, r_{\times}, +}
    \widetilde{\SF}^{u_{\times}}_{\Gamma_{\times}} \Phi_{4 \mf{r}}^{N - 3}}
  \end{aligned} \label{eq:iterated-Holder-type-bounds:bilinear}
\end{equation}
for any \(p_1, p_{\times} \in (1, \infty)\) with \(p_1^{- 1} + p_{\times}^{- 1} =
1\) and for an appropriate choice of \(q_1, q_{\times}\), \(r_{\times}\), and \(u_1,
u_{\times}\). To show
\[ \Big\| \prod_{j = 1}^3 \Emb [f_j] \circ \Gamma_j \Big\|_{L^1_{\nu_1}
   \fL^1_{\mu_{\Theta}^1, \nu_1} \SI^{(l, o, o)}_{\Theta} \Phi^N_{\gr}}
   \lesssim \prod_{j = 1}^3 \| f_j \|_{L^{p_j} ( \R )} \]
requires us, in addition to using
\eqref{eq:embedding-bounds:non-uniform-iterated}, to prove a bilinear uniform
embedding bound
\begin{equation}
  \Big\| \Emb [f_2] \circ \Gamma_2  \Emb [f_3] \circ \Gamma_3 \Big\|_{L^{p_{\times}}_{\nu_{\beta}} X^{q_{\times}, r_{\times},  +}_{\mu_{\Theta}^1, \nu_{\beta}}
  \widetilde{\SF}^{u_{\times}}_{\Gamma_{\times}} \Phi_{\mf{r}}^N} \lesssim \| f_2 \|_{L^{p_2} (\R)} \| f_2 \|_{L^{p_3} (\R)} .
  \label{eq:embedding-bound:bilinear}
\end{equation}
The product term in {\LHS{\eqref{eq:embedding-bound:bilinear}}} is another new
result of this work. We believe it to be of crucial importance since for any
fixed \(f_{1, 2, 3} \in \Sch (\R)\) the term
\[ \Big\| \prod_{j = 1}^3 \Emb [f_j] \circ \Gamma_j \Big\|_{L^1_{\nu_1} \fL^1_{\mu_{\Theta}^1, \nu_1} \SI^{(l, o, o)}_{\Theta} \Phi^N_{\gr}} \]
becomes the most relevant one when \(\beta \rightarrow 0^+\). Furthermore, we
believe that bounds \eqref{eq:embedding-bound:bilinear} are intrinsically
bilinear and they cannot be deduced from linear embedding bounds similar to
\eqref{eq:embedding-bounds:uniform-iterated}.

After setting up the appropriate extension of the outer measure Lebesgue space
framework, we prove the following results.

\begin{theorem}
  \label{thm:STE} Let \(\Gamma_j = \Gamma_{(\alpha_j, \beta_j, \gamma_j)}\), \(j
  \in \{ 2, 3 \}\) with \((\alpha_j, \beta_j, \gamma_j)\) as in
  \eqref{eq:gamma:params}. In particular suppose \(1 + \alpha_2 + \alpha_3 = 0\)
  and \(| \gamma_j | \geq 1 / 4\), \(j \in \{ 2, 3 \}\). The bound
  \begin{equation}
  \begin{aligned}[t]&
      \| F_1 F_2 F_3 \|_{\left( \SI^{(l, l, l)}_{\Theta} \Phi_{\gr}^N +
      \SI^{(o, l, l)}_{\Theta} \Phi_{\gr}^N + \SI^{(l, l, o)}_{\Theta}
      \Phi_{\gr}^N + \SI^{(l, o, l)}_{\Theta} \Phi_{\gr}^N \right)}
    \\ &\qquad
      \lesssim  \| F_1 \|_{\SF_{\Theta}^{u_1} \Phi_{4 \gr}^{N - 3}} \| F_2
      \|_{\widetilde{\SF }_{\Gamma_2}^{u_2} \Phi_{4 \gr}^{N - 3}} \| F_3
      \|_{\widetilde{\SF }_{\Gamma_3}^{u_3} \Phi_{4 \gr}^{N - 3}}
    \end{aligned} \label{eq:STE:l-l-o}
  \end{equation}
  holds for any three functions \(F_1, F_2, F_3 \in L^{\infty}_{\tmop{loc}}
  (\R^{3}_{+}) {\otimes \Phi_{4 \gr}^{(N - 3)}}'\) as long as \(u_{j
  \in \{ 1, 2, 3 \}} \in [1, \infty]\) with \(\sum_{j = 1}^3 u_j^{- 1} \leq 1\).
  Furthermore the bound
  \begin{equation}
    \| F_1 F_2 F_3 \|_{\SI^{(l, o, o)}_{\Theta} \Phi_{\gr}^N} \lesssim \| F_1
    \|_{\SF_{\Gamma_1}^{u_1} \Phi_{4 \gr}^{(N - 5)}} \| F_2 F_3
    \|_{\widetilde{\SF}_{\Gamma_{\times}}^{u_{\times}} \Phi_{4 \gr}^{(N - 5)}}
    \label{eq:STE:l-o-o}
  \end{equation}
  holds for any three functions \(F_1, F_2, F_3 \in L^{\infty}_{\tmop{loc}}
  (\R^{3}_{+}) {\otimes \Phi_{4 \gr}^{(N - 3)}}'\) as long as \(u_1,
  u_{\times} \in [1, \infty]\) with \(u_1^{- 1} + u_{\times}^{- 1} \leq 1\).
\end{theorem}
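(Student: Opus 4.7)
The plan is to reduce both inequalities to pointwise Hölder-type estimates at the level of the defining atomic quantities of the sizes, and then to combine them over the families of atoms (strips/trees in $\R^3_+$) entering the supremum. For \eqref{eq:STE:l-l-o} I would begin from the pure local size $\SI^{(l,l,l)}_{\Theta}\Phi^{N}_{\gr}$: unfolding the local integral on a single atom and applying classical Hölder in $L^{u_1}L^{u_2}L^{u_3}$ (permissible since $\sum u_j^{-1}\le 1$) separates the three factors into three ordinary $L^{u_j}$-type local quantities, each of which is then dominated by the corresponding $\SF^{u_j}_{\Theta}\Phi^{N-3}_{4\gr}$-size of $F_j$ restricted to that atom. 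The enlargements $\gr\to 4\gr$ and $N\to N-3$ leave room to absorb the wave-packet convolutions produced when passing from an $\SI$-type local quantity to an $\SF$-type one. For each of the mixed sizes $\SI^{(o,l,l)}_{\Theta}$, $\SI^{(l,o,l)}_{\Theta}$, $\SI^{(l,l,o)}_{\Theta}$ the same outline applies, but Hölder is now used only in the two local indices; the single outer-labelled factor retains its native form on the atom and is then absorbed by its $\widetilde{\SF}^{u_j}_{\Gamma_j}$-size, where the diffeomorphism $\Gamma_j$ accounts for the geometric distortion of the atom after composition with $\Gamma_j$.

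The second bound \eqref{eq:STE:l-o-o} is proved along the same lines, but the factorization must respect the fact that two of the three labels are now outer. I would apply Hölder only between $F_1$ on the one hand and the joint product $F_2F_3$ on the other, with exponents $u_1$ and $u_\times$; the local estimate for $F_1$ produces its $\SF^{u_1}_{\Gamma_1}$-size, while the joint local estimate for $F_2F_3$ produces the bilinear $\widetilde{\SF}^{u_\times}_{\Gamma_\times}$-size. It is exactly at this step that the balance condition $1+\alpha_2+\alpha_3=0$ from \eqref{eq:gamma:params} plays a decisive role: it forces the modulation shifts introduced by $\Gamma_2$ and $\Gamma_3$ to cancel when the wave-packet factors are multiplied, so that the product of the Schwartz wave packets arising from $F_2$ and $F_3$ can be expanded in a bilinear wave-packet class adapted to $\Gamma_\times$ in $\Phi^{N-5}_{4\gr}$. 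The hypothesis $|\gamma_j|\ge 1/4$ keeps the frequency supports from collapsing onto one another and is what allows constants to be tracked uniformly in $\beta\in(0,1]$.

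The main obstacle, I expect, is precisely this bilinear wave-packet expansion underlying the passage from the $(l,o,o)$-local quantity to the $\widetilde{\SF}_{\Gamma_\times}$-size: one needs a pointwise decomposition of the tensor $\Tr_{y}\Mod_{-\eta_2}\Dil_{t}\phi_2\cdot \Tr_{y}\Mod_{-\eta_3}\Dil_{t}\phi_3$ under the cancellation $1+\alpha_2+\alpha_3=0$ into a single wave packet adapted to $\Gamma_\times$ modulo error terms that decay in the Schwartz parameter and can be absorbed in the enlarged class. Once this pointwise identity is in hand, both inequalities of the theorem reduce to Hölder on each atom followed by a supremum, and the remaining bookkeeping is arranged so that the implicit constants remain uniform in $\beta\in(0,1]$.
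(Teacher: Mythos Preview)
Your proposal misses the central mechanism of the proof and would fail to give bounds uniform in \(\beta\in(0,1]\). The difficulty is not just Hölder on each atom: for the \((l,l,o)\) and \((l,o,l)\) cases the overlapping factor \(F_j^\ast[\phi^o_{j,\theta}]\) has only \(L^\infty\)-in-scale control, and in the uniform size \(\widetilde{\SF}^{u_j}_{\Gamma_j}\) that control comes with the prefactor \(\beta^{1/u_j}\) (see \eqref{eq:uniform-embedding-full-size:linear}). A direct \(L^2\times L^2\times L^\infty\) Hölder in \(\sigma\) would therefore cost \(\beta^{-1/u_j}\) and blow up as \(\beta\to 0^+\). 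The paper avoids this by applying the fundamental theorem of calculus in the scale variable to the overlapping factor, using the algebraic identity \(\wpD_{\sigma}(\theta)=\wpD_{\zeta}(\theta)\) acting on \(z\phi(z)\) (equation \eqref{eq:useful-boost-identities}), and then integrating by parts in \(\zeta\). This converts the overlapping \(\phi^o\)-piece into a lacunary \(\wpD_{\zeta}\)-piece (which has undamped \(L^2\)-in-scale control via \(\Gamma_j^\ast\SL^{(u_j,2)}\wpD\)) at the cost of main terms M1, M2 and defect terms B1--B4 that are bounded using the defect sizes in \(\widetilde{\SF}^{u_j}_{\Gamma_j}\). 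Your outline contains none of this machinery.

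For \eqref{eq:STE:l-o-o} the situation is similar but now applied to the product \(F_2^\ast F_3^\ast\); the resulting identity \eqref{eq:scale-derivative-gain-of-cancellation:prod} is what generates the tensor sizes \((\wpD\otimes\wpD)\), \((\Phi\otimes\wpD)\), etc., in \(\widetilde{\SF}^{u_\times}_{\Gamma_\times}\). There is no ``bilinear wave-packet expansion'' of the product \(\phi_2\cdot\phi_3\) in the paper. You also misplace the role of \(1+\alpha_2+\alpha_3=0\): the paper uses it only once, to cancel the modulation phases \(e^{-2\pi i\alpha_j\xi_T s_T\zeta}\) in the reduction \eqref{eq:local-F} so that \(\prod_j F_j\circ\pi_T=\prod_j F_j^\ast\); it plays no role in the subsequent estimates. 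Finally, the term B1 in the \((l,o,o)\) decomposition is bounded using the SIO truncation size \(\SJ^{u_1}_\Theta\) of \(F_1\), which is part of \(\SF^{u_1}_\Theta\) but does not appear in your sketch.
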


The bounds \eqref{eq:STE:l-l-o} and \eqref{eq:STE:l-o-o} are referred to as
the ``single tree estimate'' and they serve as the \(L^{\infty}\) endpoints for
the interpolation procedure to obtain
\eqref{eq:iterated-Holder-type-bounds:linear} and
\eqref{eq:iterated-Holder-type-bounds:bilinear}, respectively.

The second main result of this work consists of proving uniform embedding
bounds.

\begin{theorem}
  \label{thm:uniform-embedding-bounds:linear} Let \(\Gamma = \Gamma_{(\alpha,
  \beta, \gamma)}\) be as in \eqref{eq:gamma}. Let \(\Theta\) be a open interval
  with \(B_4 \subsetneq \Theta \subsetneq B_{2^8}\), and let
  \(\Theta^{\tmop{in}}\) be a bounded open interval with \(B_{2^{- 5}} (- \gamma)
  \subsetneq \Theta^{\tmop{in}} \subsetneq B_{2^{- 3}} (- \gamma)\). Let the
  size \(\widetilde{\SF}^u_{\Gamma} \Phi_{\mf{r}}^N\) be given by
  \begin{equation}
    \widetilde{\SF}^u_{\Gamma} \Phi_{\mf{r}}^N \eqd {\beta^{\frac{1}{u}}} 
    \Gamma^{\ast} \SL^{(u, \infty)}_{\Theta} \Phi_{\gr}^N + \Gamma^{\ast}
    \SL^{(u, 2)}_{\Theta} \wpD_{\gr}^N + \Gamma^{\ast} \SL^{(u, 1)}_{(\Theta,
    \Theta^{\tmop{in}})} \dfct^N_{\gr}
    \label{eq:uniform-embedding-full-size:linear}
  \end{equation}
  The bounds \eqref{eq:embedding-bounds:uniform-iterated} hold with \(\Gamma_j
  = \Gamma\) and with \((p_j, q_j, r_j, u_j) = (p, q, r, u)\) for all \(f \in \Sch
  (\R)\) as long as \(p \in (1, \infty]\), \(q \in (\max (p', 2),
  \infty]\), and \(1 \leq u < r < q\). The implicit constant is independent of
  \(\Gamma_{(\alpha, \beta, \gamma)}\) and \(f\). 
\end{theorem}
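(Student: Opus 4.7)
The plan is to decompose the size $\widetilde{\SF}^u_\Gamma \Phi_\gr^N$ into its three summands in \eqref{eq:uniform-embedding-full-size:linear}, to establish a single-strip bound for each one separately, and then to pass to the full iterated outer Lebesgue quasi-norm by a covering argument on $\R^3_+$ followed by restricted-weak-type interpolation in $p$. By the quasi-triangle inequality for outer Lebesgue quasi-norms it suffices to prove the embedding bound with each of the three summands in place of the full size; each summand captures a distinct feature of $\Emb[f] \circ \Gamma$ that must be handled by a different mechanism.

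For the single-strip step I would fix a strip $D \subset \R^3_+$ and bound the local size of $\Emb[f] \circ \Gamma$ restricted to $D$ by an average of $f$ over the spatial projection of $\Gamma(D)$. The first piece $\beta^{1/u} \Gamma^{\ast} \SL^{(u,\infty)}_\Theta \Phi_\gr^N$ is a Calderón--Zygmund-type majorization: the wave packets $\Tr_y \Mod_{-\eta} \Dil_t \phi$ in \eqref{eq:embedding}, once pulled back through $\Gamma$, are still Schwartz-adapted in the $\Gamma$-coordinates, and the Jacobian of $\Gamma$ produces exactly the factor $\beta^{1/u}$. For $\Gamma^{\ast} \SL^{(u,2)}_\Theta \wpD_\gr^N$ the defect operator $\wpD$ annihilates the non-oscillatory part of the embedded function, reducing the local estimate to a standard $TT^{\ast}$/Plancherel bound on the $\Gamma$-rotated frequency axis. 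For $\Gamma^{\ast} \SL^{(u,1)}_{(\Theta, \Theta^{\tmop{in}})} \dfct_\gr^N$ the inner-frequency localization $\Theta^{\tmop{in}} \subset B_{2^{-3}}(-\gamma)$ isolates the oscillation at frequency $-\gamma$; integration by parts against this oscillation yields the $L^1$ gain that compensates the potential loss as $\beta \to 0^+$.

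With the single-strip bounds in hand, the $L^\infty$ endpoint $p = \infty$ is immediate: every strip on which the size exceeds a large multiple of $\|f\|_{L^\infty}$ is forbidden by the local bound, so the corresponding outer super-level set is empty. For a low-$p$ endpoint I would run a covering lemma on $\R^3_+$ adapted to the outer measure $\nu_\beta$: greedily select maximal strips on which $\Emb[f] \circ \Gamma$ is concentrated with respect to the intermediate $X^{q,r,+}_{\mu^1_\Theta, \nu_\beta}$ quasi-norm, then use John--Nirenberg/Carleson-measure arguments together with the $TT^{\ast}$ step from the $\wpD$-piece to control the total $\nu_\beta$-mass of the selection by $\lambda^{-p} \|f\|_{L^p}^p$. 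Restricted weak-type interpolation via \Cref{prop:outer-restricted-interpolation} then delivers the full range $p \in (1,\infty]$, $q \in (\max(p',2),\infty]$, $1 \le u < r < q$.

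The main obstacle will be the weak-type endpoint under the degenerate $\Gamma$-geometry. In the non-uniform case treated in \cite{amentaBilinearHilbertTransform2020} and \cite{doTheoryOuterMeasures2015} selection proceeds using the natural Whitney decomposition of $\R^3_+$; here, however, pullback by $\Gamma$ drastically skews the frequency direction when $\beta$ is small, so the selection must be carried out directly in the $\Gamma$-coordinates and then translated back through $\Gamma^{-1}$. The outer measure $\nu_\beta$ is engineered precisely to absorb this asymmetry and to track the correct dependence on $\beta$ throughout the covering step. The $\dfct$-piece further demands a Gabor-frame-style cancellation argument so that the factor $\beta^{1/u}$ materializes only on the first summand of the size and not, incorrectly, on the other two.
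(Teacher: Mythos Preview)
Your plan misses the paper's central idea and would not, as sketched, yield bounds uniform in $\beta$. The paper does \emph{not} re-prove an embedding directly; it deduces \eqref{eq:embedding-bounds:uniform-iterated} from the already-established non-uniform bound \Cref{thm:non-uniform-embedding-bounds} by showing that the change of variables $F \mapsto F\circ\Gamma$ is bounded between the relevant outer Lebesgue spaces, uniformly in $\beta$. Concretely, one applies \Cref{thm:non-uniform-embedding-bounds} to $\Emb[f]$ (in the $\Theta_\Gamma$-coordinates) to produce the level sets $\widetilde{V^-}(\lambda)\in\DD_1^\cup$, then pulls them back via $\Gamma^{-1}$ to get $V^-(\lambda)\in\DD_\beta^\cup$; the work is to show that on each $(V^+\cap W^+)\setminus V^-(\lambda)$ the $\widetilde\SF^u_\Gamma$-size is controlled. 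This is done not by splitting into the three summands of \eqref{eq:uniform-embedding-full-size:linear}, but by introducing two \emph{auxiliary} sizes, an ``exterior'' Lebesgue size $\Gamma^\ast\SL^{(u,2)}_{(\Theta,\Theta^{\mathrm{ex}})}\Phi$ and a ``singular'' boundary size $\Gamma^\ast\SL^{(u,1)}_{(\Theta,\Theta^{\mathrm{in}})}\Phi$ evaluated on $t\,\dd_t\1_E$ (\Cref{prop:unif-gamma-bounds:Sex}, \Cref{prop:unif-gamma-bounds:singular-size}). These are controlled by geometric \emph{tree}-covering lemmas on $\R^3_+$ (not strip selection), and the three summands of $\widetilde\SF^u_\Gamma$ are then dominated by these auxiliary sizes via \Cref{lem:unif-derived-size-bound}. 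No interpolation in $p$ is performed at this stage: the $p$-dependence is inherited entirely from the non-uniform theorem.

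Several of your local descriptions are also off. The defect size $\Gamma^\ast\SL^{(u,1)}_{(\Theta,\Theta^{\mathrm{in}})}\dfct^N_\gr$ vanishes identically on $\Emb[f]\circ\Gamma$ (see \eqref{eq:no-defect-gamma}); it only becomes nontrivial after multiplying by $\1_{(V^+\cap W^+)\setminus V^-}$, and then it is a boundary-supported singular measure controlled via \Cref{lem:geometry-of-boundary}, not by ``integration by parts against the oscillation at $-\gamma$.'' Likewise, $\wpD$ is the lacunary operator \eqref{eq:space-boost}, not a defect operator, and the $\SL^{(u,2)}_\Theta\wpD$ piece is handled not by a fresh $TT^\ast$ argument but by dominating it, on $\Theta^{\mathrm{in}}$, by the exterior size plus the defect (\Cref{lem:unif-lac-size-domination}). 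Finally, the factor $\beta^{1/u}$ on the maximal piece is not produced by a Jacobian: it is the allowed loss, and \Cref{lem:unif-max-size-domination} shows the maximal size is bounded by $\beta^{-1/u}$ times the exterior-plus-defect combination, which is exactly what \eqref{eq:uniform-embedding-full-size:linear} permits. Your proposed direct selection-and-$TT^\ast$ route, even if carried out, would essentially be re-deriving a uniform analogue of \Cref{thm:non-uniform-embedding-bounds} from scratch, and you have not explained how the John--Nirenberg step or the orthogonality argument would survive the $\beta\to 0$ degeneration; this is precisely the obstacle the paper's change-of-variables strategy is designed to sidestep.
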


\begin{theorem}
  \label{thm:uniform-embedding-bounds:bilinear}Let \(\Gamma_j =
  \Gamma_{(\alpha_j, \beta_j, \gamma_j)}\), \(j \in \{ 2, 3 \}\) with \((\alpha_j,
  \beta_j, \gamma_j)\) as in \eqref{eq:gamma:params}. Let \(\Theta\) and
  \(\Theta^{\tmop{in}}\) be as in \Cref{thm:uniform-embedding-bounds:linear}.
  Let the size \(\widetilde{\SF}^{u }_{\Gamma_{\times}} \Phi_{\mf{r}}^N\) is
  given by
  \begin{equation}
    \begin{aligned}[t]&
      \widetilde{\SF}^{u }_{\Gamma_{\times}} \Phi_{\mf{r}}^N \eqd
      \begin{aligned}[t]&
      \beta^{\frac{1}{u }} \Big( \Gamma_{\times}^{\ast} \SL_{(\Theta,\Theta^{\tmop{in}}) }^{(u , \infty)} (\Phi \otimes \Phi)_{\gr}^N +
      \Gamma_{\times}^{\ast} \SL_{(\Theta, \Theta^{\tmop{in}}) }^{(u , 2)}( \Phi \otimes \wpD)_{\gr}^N \Big\nobracket
      \\ & \qquad
      \Big\nobracket + \Gamma_{\times}^{\ast} \SL_{(\Theta, \Theta^{\tmop{in}}) }^{(u , 2)} ( \wpD \otimes \Phi )_{\gr}^N \Big)
      + \Gamma_{\times}^{\ast} {\SL  }^{(u, 1)}_{\Theta} ( \wpD \otimes \wpD )_{\gr}^{N } +
      \Gamma_{\times}^{\ast} \SL_{(\Theta , \Theta^{\tmop{in}})}^{(u , 1)}\dfct_{\gr}^{N }
        \end{aligned}
    \end{aligned} \label{eq:uniform-embedding-full-size:bilinear}
  \end{equation}
  The bounds \eqref{eq:embedding-bound:bilinear} hold for all \(f_2, f_3 \in
  \Sch (\R)\) as long as \((p_2, p_3) \in (1, \infty]\),
  \(p_{\times}^{- 1} = p_2^{- 1} + p_3^{- 1}\), \(q_{\times}^{- 1} < \min \left(
  1 - \frac{1}{p_2}, \frac{1}{2} \right)^{- 1} + \min \left( 1 -
  \frac{1}{p_3}, \frac{1}{2} \right)^{- 1}\), and \(1 \leq u_{\times} <
  r_{\times} < q_{\times}\). The implicit constant is independent of \(\beta \in
  (0, 1]\), that determines \((\alpha_j, \beta_j, \gamma_j)\) and \(\Gamma_j\), \(j
  \in \{ 2, 3 \}\), and of \(f_2, f_3\). 
\end{theorem}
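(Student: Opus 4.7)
The plan is to follow the strategy of \Cref{thm:uniform-embedding-bounds:linear} and reduce the bilinear embedding bound to endpoint estimates via outer restricted weak type interpolation (\Cref{prop:outer-restricted-interpolation}). Modulo interpolation the relevant endpoints are (i) an $L^\infty \times L^\infty$ corner where $p_2 = p_3 = \infty$, and (ii) a low-integrability corner saturating the constraint on $q_\times$ while keeping the balance $p_\times^{-1} = p_2^{-1} + p_3^{-1}$. The scaling factor $\beta^{1/u_\times}$ built into $\widetilde{\SF}^{u_\times}_{\Gamma_\times}$ and the use of the outer measure $\nu_\beta$ are designed precisely so that these endpoints become $\beta$-uniform, which in turn forces the $L^{p_\times}_{\nu_\beta} X^{q_\times, r_\times, +}_{\mu^1_\Theta, \nu_\beta}$ bound to be $\beta$-uniform as well.

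At the $L^\infty$ endpoint the argument is essentially pointwise: for each $(\eta, y, t) \in \R^3_+$ and each pair of test wave packets $\phi, \psi \in \Phi_\gr^N$, the integrand of $\Emb[f_2]\circ\Gamma_2(\eta,y,t)[\phi]\cdot\Emb[f_3]\circ\Gamma_3(\eta,y,t)[\psi]$ against $f_2(z_2)f_3(z_3)$ is a tensor-product Schwartz bump localized at scale $\beta t$ around the common spatial center $y$; controlling each $f_j$ in $L^\infty$ then matches the size of the pulled-back tensor test function with the correct $\beta^{1/u_\times}$ volume factor. The size pieces with a single defect, $\Phi \otimes \wpD$ and $\wpD \otimes \Phi$, are then handled at the other endpoint by a hybrid argument: one trivializes the cancellation-less factor by a sharp $L^\infty$-in-position bound, reducing to an effectively linear energy estimate on the remaining factor, which is supplied by the corresponding term of \Cref{thm:uniform-embedding-bounds:linear} applied to the appropriate $\Gamma_j$ and glued back together using the common $y$ variable.

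The genuinely bilinear content lies in the $\wpD \otimes \wpD$ term, and this is the main obstacle: both factors carry cancellation simultaneously, so no reduction to linear embeddings is possible. The crucial geometric input is the constraint $1 + \alpha_2 + \alpha_3 = 0$ together with $|\gamma_j| \geq 1/4$ from \eqref{eq:gamma:params}, which forces the frequency centers attached to $\Gamma_2$ and $\Gamma_3$ at a point $(\eta, y, t)$ to be separated by an amount comparable to $1/t$ after rescaling by $\beta$. This places the bilinear testing against an essentially anti-diagonal region of the $\R \times \R$ frequency plane and unlocks a bilinear Plancherel / Bessel identity for the joint wave-packet transform at fixed scale. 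To upgrade this pointwise orthogonality to the target outer Lebesgue bound, one must prove a bilinear covering lemma on $\R^3_+$, generalizing the one from \cite{doTheoryOuterMeasures2015}: it must simultaneously organize the images $\Gamma_2(T)$ and $\Gamma_3(T)$ of each selected tree $T$ and propagate the $\beta$-gain across the selection, in the spirit of the pullback sizes used in \Cref{thm:uniform-embedding-bounds:linear}. Summing the resulting bilinear tree energies against the outer measure $\nu_\beta$ then closes the low-integrability endpoint and, by interpolation with the $L^\infty$ corner, yields \eqref{eq:embedding-bound:bilinear} in the full range claimed.
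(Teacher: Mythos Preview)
Your proposal misidentifies which piece of $\widetilde{\SF}^{u_\times}_{\Gamma_\times}\Phi_{\gr}^N$ carries the genuinely bilinear content, and as a result the argument you sketch for the hard part would not be needed, while the term that actually requires care is not addressed at all.

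Concretely: the size $\Gamma_\times^\ast \SL^{(u_\times,1)}_\Theta (\wpD\otimes\wpD)_\gr^N$ that you single out as ``the main obstacle'' is in fact the easiest of the five. By definition \eqref{eq:product-tensor-sizes} it applies $\Gamma_2^\ast\wpD_\zeta$ to the first tensor slot and $\Gamma_3^\ast\wpD_\zeta$ to the second; on a pointwise product $F_2F_3$ this is just $(\Gamma_2^\ast\wpD_\zeta F_2)(\Gamma_3^\ast\wpD_\zeta F_3)$, and the classical H\"older inequality in $(\theta,\zeta,\sigma)$ with exponents $(u_2,2)$ and $(u_3,2)$ factors it into $\|F_2\|_{\Gamma_2^\ast\SL^{(u_2,2)}_\Theta\wpD}\|F_3\|_{\Gamma_3^\ast\SL^{(u_3,2)}_\Theta\wpD}$. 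No bilinear Plancherel, no bilinear covering lemma, and no use of the constraint $1+\alpha_2+\alpha_3=0$ is needed here (that constraint is used in \Cref{sec:single-tree}, not in the embedding). The same H\"older mechanism handles the $(\Phi\otimes\Phi)$, $(\Phi\otimes\wpD)$, and $(\wpD\otimes\Phi)$ pieces, and then the outer H\"older inequality (\Cref{cor:outer-holder-classical}) lifts these size bounds to the iterated $L^{p_\times}_{\nu_\beta}X^{q_\times,r_\times,+}$ norm; this is exactly \Cref{prop:bilinear-holder-bound}.

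The term you do not mention, $\Gamma_\times^\ast\SL^{(u_\times,1)}_{(\Theta,\Theta^{\tmop{in}})}\dfct_\gr^N$, is the one the paper calls ``intrinsically bilinear'': the operators in \eqref{eq:product-space-defect}--\eqref{eq:product-scale-defect} mix the two tensor slots with a derivative on the product and do not factor through a size H\"older inequality. The paper's resolution (\Cref{prop:bilinear-bidefect-bound}) is to observe that on $\1_{(V^+\cap W^+)\setminus(V^-\cup W^-)}\Emb[f_2]\circ\Gamma_2\,\Emb[f_3]\circ\Gamma_3$ the product defect vanishes on the interior by \eqref{eq:no-defect-product}, so only boundary contributions $t\delta(t-\mf b_\pm(\eta,y))$ survive; these are pointwise products evaluated on a graph and \emph{do} factor via classical H\"older into the linear defect sizes, which are already controlled by $\widetilde{\SF}^{u_j}_{\Gamma_j}$. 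The iterated outer norms are then assembled by choosing $V^-_\times(\lambda)=\bigcup_j V^-_j(\lambda^{p_\times/p_j})$ and $W^-_\times(\tau)=\bigcup_j W^-_j(\tau^{\bar r_\times/\bar r_j})$ from the linear selections supplied by \Cref{thm:uniform-embedding-bounds:linear}. Your proposal needs to replace the bilinear-covering sketch for $(\wpD\otimes\wpD)$ with this boundary-defect argument for $\dfct_\gr^N$.
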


As mentioned before, we do not prove these results directly but rather we
deduce them from the non-uniform bounds
\eqref{eq:embedding-bounds:non-uniform-iterated}, that in turn follow from
{\cite{amentaBilinearHilbertTransform2020}}. To do so we make use of covering
lemmata depending on the geometry of the outer measure space \(\R^3_+\). The
arguments involved in the proof of the above results are purely positive and
can be interpreted as boundedness properties of the map \(F \mapsto F \circ
\Gamma\) in terms of outer Lebesgue quasi-norms. As a matter of fact, with few
exceptions, the functions \(F \in \Rad (\R^{3}_{+}) {\otimes
\Phi^{\infty}_{\gr}}'\) in play need not be of the form \(F = \Emb [f]\) for some
\(f \in \Sch (\R)\). We are hopeful that appropriate refinements
can lead to completely removing such assumptions in the intermediate lemmata.

Finally, we carry out some improvements to the non-uniform embedding bounds
in {\cite{amentaBilinearHilbertTransform2020}} to obtain the following
theorem. The improvement from using the outer measure \(\mu^1_{\Theta}\) to
\(\mu^{\infty}_{\Theta}\) is common knowledge in time-scale-frequency analysis
and had to merely be codified using the outer Lebesgue framework. We are not
aware of a prior use of the quantity (size) \(\SJ_{\Theta}^u \Phi_{\gr}^N\),
appearing as a part of \(\SF^u_{\Theta} \Phi_{\gr}^N\); it represents a maximal
truncation of a singular integral operator.

\begin{theorem}
  \label{thm:non-uniform-embedding-bounds}Let \(\Theta\) be an bounded open real
  interval with \(B_1 \subsetneq \Theta \subsetneq B_{2^{10}}\). Let the size
  \(\SF^u_{\Theta} \Phi_{\gr}^N\) be given by
  \begin{equation}
    \SF^u_{\Theta} \Phi_{\gr}^N \eqd \SL^{(\infty, \infty)}_{\Theta}
    \Phi_{\gr}^N + \SL^{(u, 2)}_{\Theta} \wpD_{\gr}^N + \SL^{(u, 1)}_{\Theta}
    \dfct^N_{\gr} + \SJ_{\Theta}^u \Phi_{\gr}^N .
    \label{eq:non-uniform-embedding-full-size}
  \end{equation}
  The bounds \eqref{eq:embedding-bounds:non-uniform-iterated} hold for all \(f
  \in \Sch (\R)\) as long as \(p \in (1, \infty)\), \(q \in (\max
  (p', 2), \infty)\), and \(u \in (1, \infty)\). The implicit constant is
  independent of \(f\). 
\end{theorem}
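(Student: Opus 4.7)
The proof strategy is to decompose the size $\SF^u_\Theta \Phi^N_\gr$ into its four summands and prove the embedding bound separately for each of them, then use sub-additivity of the outer Lebesgue quasi-norm to combine. Three of the four summands --- $\SL^{(\infty,\infty)}_\Theta \Phi^N_\gr$, $\SL^{(u,2)}_\Theta \wpD^N_\gr$ and $\SL^{(u,1)}_\Theta \dfct^N_\gr$ --- have close analogues in \cite{amentaBilinearHilbertTransform2020} and \cite{doTheoryOuterMeasures2015}, so the real work is to supply two genuinely new ingredients: the strengthening of the outer measure from $\mu^1_\Theta$ to $\mu^\infty_\Theta$ in the iterated outer quasi-norm, and the inclusion of the new maximal-truncation size $\SJ^u_\Theta \Phi^N_\gr$.

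For the first three sizes, the plan is to follow the established template: prove a pointwise $L^\infty$ endpoint embedding using Schwartz decay and the band-limit $\gr>0$ of the wave packets, prove an $L^2$ orthogonality endpoint via a tree-counting/covering lemma on $\R^3_+$, and then use outer Lebesgue interpolation to obtain the bound for every $p\in(1,\infty)$ with $q > \max(p',2)$. The upgrade from $\mu^1_\Theta$ to $\mu^\infty_\Theta$ is essentially a codification in the outer framework of a well-known fact in time-frequency analysis: the top-size component already provides a pointwise-in-scale bound, so one replaces the Carleson-style $L^1$-average against $\mu^1_\Theta$ by the corresponding $L^\infty$-supremum against $\mu^\infty_\Theta$ without loss, producing only a harmless constant depending on $\gr$ and $N$.

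The heart of the new work is the size $\SJ^u_\Theta \Phi^N_\gr$, which records a maximal truncation of the singular-integral operator implicit in the wave packet embedding. The plan is to control this maximal truncation by a Cotlar-type inequality: for fixed $(\eta,y,t)\in\R^3_+$ and each test wave packet $\phi\in\Phi_\gr^N$, one splits the truncation kernel into a part already captured by the defect embedding $\dfct^N_\gr$ (and hence by the $\SL^{(u,1)}_\Theta \dfct^N_\gr$ summand we have in hand) and a remainder dominated pointwise by a Hardy--Littlewood-type maximal average of $f$. The former is absorbed into the already-proven embedding for the defect size, and the latter is bounded by classical $L^p$-maximal-function theory for $p>1$; the constraint $u>1$ enters precisely through this maximal-function step on the inner scale.

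The main obstacle I anticipate is making the Cotlar-type inequality uniform with respect to the scale structure carried by the outer measure $\mu^\infty_\Theta$, so that it can be promoted to a pointwise statement at the level of the size rather than only an integrated statement at the level of the outer quasi-norm; the tail estimates of the truncated kernel must be run against the embedded function without destroying the cancellation captured by $\dfct$ and $\wpD$ and without losing any of the Schwartz decay parameters. Once this pointwise control is in place, the proof assembles itself: sub-additivity of sizes reduces to the four separate bounds, the three classical bounds are imported essentially verbatim from \cite{amentaBilinearHilbertTransform2020} (with the routine $\mu^1 \to \mu^\infty$ refinement), and the fourth follows from the Cotlar reduction plus the maximal-function bound.
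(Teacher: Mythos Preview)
Your high-level structure matches the paper: start from the known embedding of \cite{amentaBilinearHilbertTransform2020} for the first three summands (with $\mu^1_\Theta$), add control of the new size $\SJ^u_\Theta$, then upgrade $\mu^1_\Theta$ to $\mu^\infty_\Theta$. But two of your proposed mechanisms diverge from the paper, and one of them looks conceptually off.

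For $\SJ^u_\Theta$, your Cotlar-type plan of splitting into a piece absorbed by the \emph{defect} size plus a Hardy--Littlewood remainder misreads what the defect encodes. For $F=\1_{E}\,\Emb[f]$ the defect size only sees the boundary of the truncation set $E$; it carries no information about the scale-truncation supremum inside $\SJ$. The paper instead proves (Proposition~\ref{prop:SIO-size-domination}) the size-level domination
\[
\bigl\|\1_E\,\Emb[f]\bigr\|_{\SJ^u_\Theta\Phi^N_\gr}\ \lesssim\ \bigl\|\1_E\,\Emb[f]\bigr\|_{\SL^{(2,2)}_\Theta\wpD^{N-8}_{8\gr}}+\bigl\|\1_E\,\Emb[f]\bigr\|_{\SL^{(\infty,\infty)}_\Theta\Phi^{N-8}_{8\gr}},
\]
so the maximal truncation is controlled by the \emph{lacunary} square-function and the Lebesgue sizes already present in the known embedding; the defect does not enter. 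The argument has two layers: a John--Nirenberg-type induction on tree scales reduces general $u$ to $u=2$; then for $u=2$ one writes $\bigl|\int F^{\ast}[\phi_\theta]\,d\rho/\rho\bigr|^{2}$ as a double integral, uses the identity $F^{\ast}[\phi_\theta]=\rho\,d_\zeta F^{\ast}[\phi]$ valid on $E$, and integrates by parts in the \emph{spatial} variable $\zeta$. The resulting bulk term is bounded by $\SL^{(2,2)}\wpD$ and the boundary terms by $\SL^{(\infty,\infty)}$. No Cotlar decomposition and no maximal function of $f$ appear.

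Second, the $\mu^1_\Theta\to\mu^\infty_\Theta$ upgrade is not ``a harmless constant''. The paper devotes Section~\ref{sec:non-unif:counting-function-improvement} to it: one runs a Hardy--Littlewood stopping time on the counting function $N_{\mathcal T}$ of a tree collection realising the $\mu^1_\Theta$-superlevel set, excises an exceptional union of strips of geometrically decreasing $\nu_1$-measure, and shows that off this exception the surviving trees have bounded overlap, hence bounded $\mu^\infty_\Theta$. Restricted-weak interpolation in $p$ then gives the claim, and this step genuinely costs the $p=\infty$ endpoint---which is why the theorem is stated only for $p\in(1,\infty)$. Your description ``without loss'' is inconsistent with this.
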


We summarize the steps necessary to prove \Cref{thm:main-1}.

\begin{proof}{Proof of \Cref{thm:main-1}}
  Given the identity \eqref{eq:BHT-wave-packet-representation}, the claim
  follows by interpolation of bound \eqref{eq:BHT-dual-bounds} for any fixed
  \((p_1, p_2, p_3) \in (1, \infty)^3\) with \(\sum_{j = 1}^{\infty} p_{j }^{- 1}
  = 1\) boundedness result {\cite[Theorem 2]{liUniformBoundsBilinear2006}}.
  
  To show that the bound \eqref{eq:BHT-dual-bounds} holds we first apply the
  bound \eqref{eq:RN-domination-BHT} to reduce ourselves to showing the bounds
  \[ \RHS{\eqref{eq:RN-domination-BHT}} =
     \LHS{\eqref{eq:iterated-Holder-type-bounds:linear}} +
     \LHS{\eqref{eq:iterated-Holder-type-bounds:bilinear}} \lesssim \prod_{j =
     1}^3 \| f_j \|_{L^{p_j} (\R)} =
     \RHS{\eqref{eq:BHT-dual-bounds}} . \]
  Let us fix \((p_1, p_2, p_3) \in (1, \infty)^3\).
  
  Bounds \eqref{eq:embedding-bounds:uniform-iterated} hold for \(j \in \{ 2, 3
  \}\) as long as \(q_j > \max (2, p_j')\), and \(1 < u_j < r_j < q_j\). Bounds
  \eqref{eq:embedding-bounds:non-uniform-iterated} hold as long as \(q_1 > \max
  (2, p_1')\) and \(u_1 < \infty\). Bounds \eqref{eq:embedding-bound:bilinear}
  hold with \(p_{\times}^{- 1} = p_2^{- 1} + p_3^{- 2} = 1 - p_1^{- 1}\), with
  \(q_{\times}^{- 1} = q_2^{- 1} + q_3^{- 1}\), with \(r_{\times}^{- 1} = r_2^{-
  1} + r_3^{- 1} > q_{\times}^{- 1}\). From this we get that
  \[ \RHS{\eqref{eq:iterated-Holder-type-bounds:linear}} +
     \RHS{\eqref{eq:iterated-Holder-type-bounds:bilinear}} \lesssim \prod_{j =
     1}^3 \| f_j \|_{L^{p_j} (\R)} . \]
  It remains to show that bounds \eqref{eq:iterated-Holder-type-bounds:linear}
  and \eqref{eq:iterated-Holder-type-bounds:bilinear} hold for an appropriate
  choice of \(1 < u_j < r_j < q_j\). Let \(q_j = \max (2, p_j') + 4 \varepsilon\)
  and let \(r_j = 2 + 2 \varepsilon'\) and \(u_j = 2 + \varepsilon''\), \(j \in \{
  2, 3 \}\), for some small \(\varepsilon > \varepsilon' > \varepsilon'' > 0\) to
  be determined later. It then holds that \(\sum_{j = 1}^3 q_j^{- 1} > 1\) if
  \(\varepsilon > 0\) is chosen small enough. We also have \(2 < u_j < r_j < q_j\)
  for \(j \in \{ 2, 3 \}\) for all sufficiently small choices of \(\varepsilon' >
  \varepsilon'' > 0\). Fix \(u_1 = (1 - u_2^{- 1} - u_3^{- 1})^{- 1} < \infty\).
  
  Note that \(p_2^{- 1} + p_3^{- 2} = 1 - p_1^{- 1} < 1\) while \(r_2^{- 1} +
  r_3^{- 1} = \frac{1}{1 + \varepsilon'} > 1 - p_1^{- 1}\) if \(\varepsilon' >
  0\) is chosen small enough. Under these conditions the bound
  \eqref{eq:iterated-Holder-type-bounds:linear} and the bound
  \eqref{eq:iterated-Holder-type-bounds:bilinear} hold (see
  \Cref{prop:holder-bound-trilinear}).
\end{proof}

\subsection{Structure of the paper}

In \Cref{sec:wave-packet-representation} we show that the identity
\eqref{eq:BHT-wave-packet-representation} holds.

In \Cref{sec:outer-space} we discuss the functional framework used to deal
with embedded functions \(F \of \R^3_+ \times \Phi_{\gr}^{\infty} \rightarrow
\C\) for which an important example are functions of the form \(F = \Emb [f]\).
In \Cref{sec:outer-generalities} we discuss the general outer Lebesgue space
framework and prove abstract functional analytic results. We recall the
necessary facts and prove a restricted weak type interpolation theorem for
these spaces. In \Cref{sec:time-frequency-outer-lebesgue} we discuss the outer
Lebesgue space structure specifically on \(\R^3_+\) that we will be using
throughout the paper. In particular, we introduce the outer measures
\(\mu^1_{\Theta}\) and \(\mu^{\infty}_{\Theta}\), trees, and sizes. We first
discuss the sizes used to locally control the magnitude of embedded functions
\(F \of \R^3_+ \times \Phi_{\gr}^{\infty} \rightarrow \C\) and then we introduce
the size \(\SI_{\Theta}\) appearing in \eqref{eq:RN-domination-BHT} adapted to
controlling the term \(\prod_{j = 1}^3 \Emb [f_j] \circ \Gamma_j\) i.e. the
product of several functions \(F_j \of \R^3_+ \times \Phi_{\gr}^{\infty}
\rightarrow \C\). In \Cref{sec:derived-sizes} we discuss how the former sizes
must be modified when dealing with compositions with diffeomorphisms: \(F
\mapsto F \circ \Gamma\). We also define the product sizes used to deal with
functions \(F_{\times} = F_2 F_3\) appearing in
\eqref{eq:iterated-Holder-type-bounds:bilinear} and
\eqref{eq:embedding-bound:bilinear}. In \Cref{sec:localized-outer-lebesgue} we
introduce the strip outer measures \(\nu_1\) and \(\nu_{\beta}\), and we construct
\tmtextit{localized outer Lebesgue quasi-norms} based on a localization
procedure of the already defined outer Lebesgue quasi-norms. We thus give
meaning to the symbols \(\fL^{q, +}_{\mu_{\Theta}^{\infty}, \nu_1}\), \(\fL^{q ,
+}_{\mu_{\Theta}^1, \nu_{\beta}}\), and \(X^{q, r, +}_{\mu^1_{\Theta},
\nu_{\beta}}\).

In \Cref{sec:single-tree} we prove \Cref{thm:STE}, the single tree estimate.
This is the starting point for showing bounds
\eqref{eq:iterated-Holder-type-bounds:linear} and
\eqref{eq:iterated-Holder-type-bounds:bilinear} in the following section. The
technically involved proof can be seen as the motivation for the specific
choice of sizes introduced in \(\Cref{sec:outer-space}\).

We begin \Cref{sec:forest} by showing that \eqref{eq:RN-domination-BHT} holds
and by doing so we justify the choice of the size \(\SI_{\Theta}\) introduced in
\Cref{sec:time-frequency-outer-lebesgue}. We then prove bounds
\eqref{eq:iterated-Holder-type-bounds:linear} and
\eqref{eq:iterated-Holder-type-bounds:bilinear} by using the restricted weak
type interpolation statement from \Cref{sec:time-frequency-outer-lebesgue}.
The bulk of this part consists of deducing the atomic bound necessary to apply
interpolation (\Cref{prop:outer-restricted-interpolation}) from the single
tree estimate (\Cref{thm:STE}).

In \Cref{sec:non-uniform} we prove \Cref{thm:non-uniform-embedding-bounds}.
We begin by referencing available non-uniform results that apply to \(\Emb
[f_1]\). We then prove the necessary improvements consisting of controlling the
new term \(\SJ^u_{\Theta}\), that is part of the size \(\SF^u_{\Theta},\) and
improving the localized outer Lebesgue norm \(\fL^{q, +}_{\mu_{\Theta}^1,
\nu_1}\) to \(\fL^{q, +}_{\mu_{\Theta}^{\infty}, \nu_1}\). In doing so we loose
the \(p = \infty\) endpoint for the embedding.

In \Cref{sec:uniform-embeddings:linear} we prove
\Cref{thm:uniform-embedding-bounds:linear}. We begin by proving two
boundedness results for the map \(F \mapsto F \circ \Gamma\) in terms of outer
Lebesgue quasi-norms. The two results, encoded in
\Cref{prop:unif-gamma-bounds:Sex} and
\Cref{prop:unif-gamma-bounds:singular-size}, are expressed in terms of two
auxiliary Lebesgue quasi-norms, that do not appear in
\Cref{thm:uniform-embedding-bounds:linear}. Next, in
\Cref{lem:unif-derived-size-bound} we show that as long as \(F = \Emb [f]\) the
two auxiliary Lebesgue quasi-norms control the ones appearing in
\Cref{thm:uniform-embedding-bounds:linear}. Then, by appealing to
\Cref{thm:non-uniform-embedding-bounds} we deduce that \
\Cref{thm:uniform-embedding-bounds:linear} holds.
\Cref{sec:bound-exterior-size} is dedicated to proving
\Cref{prop:unif-gamma-bounds:Sex}, \Cref{sec:unif-bounds:singular} is
dedicated to proving \Cref{prop:unif-gamma-bounds:singular-size}, and
\Cref{sec:unif-derived-size-bound} is dedicated to proving
\Cref{lem:unif-derived-size-bound}. \Cref{prop:unif-gamma-bounds:Sex} and
\Cref{prop:unif-gamma-bounds:singular-size} are a consequence of two purely
geometric covering lemmata on \(\R^3_+\) that can be found in the respective
sections.

In \Cref{sec:uniform-embeddings:bilinear} we prove
\Cref{thm:uniform-embedding-bounds:bilinear} that follows along the same lines
of the proof \Cref{thm:uniform-embedding-bounds:linear}. Most of the arguments
will rely on the results of \Cref{sec:uniform-embeddings:linear}.

\subsection{Notation}

We denote by \(B_r (c)\) the Euclidean open ball of radius \(r\). The dimension
can usually be extrapolated from context. If \(c = 0\) then we use the shorthand
\(B_r\). We use the bracket notation:
\[ \langle x \rangle \eqd \left( {1 + x^2}  \right)^{\frac{1}{2}} . \]
Given two subsets \(A, B\) of a metric space we set \(\dist (A, B) \eqd \inf_{a
\in A, b \in B} \dist (a, b)\) and \(\dist (a, B) \eqd \inf_{b \in B} \dist (a,
b)\). Given \(A \subset \R\) we denote by \(| A |\) its Lebesgue measure. The
Schwartz space, denoted by \(\Sch \left( \R^d \right)\), is the space of
\(C^{\infty} \left( \R^d \right)\) functions for which
\[ \Big\| \langle x \rangle^N f \Big\|_{C^N} < + \infty \qquad \forall N \in \N . \]
We denote \(\R_+ \eqd (0, + \infty)\) and \(\R^d_+ \eqd \R^{d - 1} \times \R_+\).
Given a set \(E \subset \R^d\) we set
\[ \begin{aligned}[t]&
     E + x = \{ y + x : y \in E \}, \qquad \lambda E = \{ \lambda y \suchthat
     y \in E \} .
   \end{aligned} \]
We denote the total derivative in a variable  (\(z\))  by \(\dd_z\); the partial
derivative in the \(i^{\tmop{th}}\) coordinate is denoted by \(\partial_i\). We
use the convention on the order of variables that allows us to use more
descriptive notation:
\[ \begin{aligned}[t]&
     \partial_{\eta} \equiv \partial_{\theta} \equiv \partial_1, \qquad
     \partial_y \equiv \partial_{\zeta} \equiv \partial_2, \qquad \partial_t
     \equiv \partial_{\sigma} \equiv \partial_3 .
   \end{aligned} \]
We do not use \(\phi'\) to denote the derivative of a function \(\phi\): we prefer
\(\partial \phi\) and leave the notation \(\phi'\) to denote another function that
is not necessarily related to \(\phi\).

The Fourier transform of a function \(f\) on \(\R^d\) is given by
\[ \Fourier f \left( \FT{z} \right) \equiv \FT{f} \left( \FT{z} \right) \eqd
   \int_{\R^d} f (z) e^{- 2 \pi i z \FT{z}} \dd z. \]
If \(f\) depends on multiple variables we will denote the variable in which we
take the Fourier transform by writing \(\Fourier_z\) in place of \(\Fourier\). We
often denote the ``dual'' variable to \(z\) by \(\FT{z}\). Given a functions \(f\)
we set \(f^{\vee} (z) = f (- z)\).

We write \(A \lesssim B\) if there exists a universal constant \(C > 0\) such that
\(A \leq C B\). The constant may implicitly depend on some parameters, as judged
from context, but it will always be independent of \(\beta \in (0, 1]\) in our
problem. If we want to emphasize that the constant \(C\) may depend on a
parameter \(p\), we write \(A \lesssim_p B\). We generally allow the constant \(C\)
in our bounds to change value from one line to the next. We say that a
quantity \(A\) is bounded if \(| A | \lesssim 1\) and we say it is bounded away
from \(0\) if \(| A |^{- 1} \lesssim 1\).

Given an equality or inequality \((\star)\) we denote by \(\LHS{} (\star)\) and
\(\RHS{} (\star)\) the expressions on the left and right hand sides of
\((\star)\), respectively.

If \(K \in D' \left( \R^d \right)\) we abuse the integral to denote duality
pairing by writing \(\int_{\R^d} f K\) or \(\int_{\R^d} f (z) K (z) \dd z\). The
distribution \(\delta\) is given by
\[ \int f (z) \delta (z) \dd z = f (0) . \]
Given \(\phi \in C^1 \left( \R^d ; \R \right)\) we set
\[ \int_{\R^d} f (z) \delta (\phi (z)) \dd z = \int_{\phi^{- 1} (0) \subset
   \R^d} \frac{f (z)}{| \nabla \phi (z) |} \dd \mc{H}^{d - 1} (z) \]
as long as \(\nabla \phi \neq 0\) on \(\phi^{- 1} (0)\). The above can be
interpreted using the limiting procedure if \(f \in C^0\) by setting
\[ \int_{\R^d} f (z) \delta (\phi (z)) \dd z = \int_{\R^d} f (z)
   \delta_{\varepsilon} (\phi (z)) \dd z \]
where \(\delta_{\varepsilon} (z) = \varepsilon^{- 1} \delta_1 (z /
\varepsilon)\) for some \(\delta_1 \in C^{\infty}_c (\R)\) with
\(\int \delta_1 (x) \dd x = 1\). We do not elaborate on the formal theory of
such integrals; the interested reader is directed to
{\cite{federerGeometricMeasureTheory1996}}.

\subsection{Acknowledgments}We thank Christoph Thiele, Pavel Zorin-Kranich, and Marco Fraccaroli
for their encouragement, suggestions, and insightful questions.

We thank Walker Stern and Filippo Mazzoli for helpful stylistic comments.

This paper has been written with the GNU TeXmacs system \cite{vanderhoevenGNUTeXmacsScientific1998}

\section{Wave packet representation.}\label{sec:wave-packet-representation}

\begin{proposition}
  \label{prop:wave-packet-represetion}For any \(\gr_0 < 2^{- 20}\) there exists
  \(\phi \in \Phi^{\infty}_{\gr_0}\) i.e. \(\phi_0 \in \Sch (\R)\)
  with \(\spt \FT{\phi}_0 \subset \left[ - \gr_0, \gr_0 \right]\), such that
  identity \eqref{eq:BHT-wave-packet-representation} holds for all \(\beta \in
  (0, 1]\) and all \(f_j \in \Sch (\R)\), \(j \in \{ 1, 2, 3 \}\) with
  a real constant \(C_{\beta} > 0\) uniformly bounded from above and away from
  \(0\). The integral in \eqref{eq:BHT-wave-packet-representation} converges
  absolutely and locally uniformly in \(x\). In particular, for any \(R \gg 1\)
  and \(\varepsilon > 0\) there exists a compact set \(\mathbb{K} \subset \R^3_+\)
  such that
  \[ \begin{aligned}[t]&
       \int_{\R^3_+ \setminus \mathbb{K}} \Big| \Emb [f_1] (\eta, y, t)
       [\phi_0]  \Emb [f_2] (\beta^{- 1} \eta - (\beta t)^{- 1}, y, \beta t)
       [\phi_0] \Big. \\ &\qquad
       \times \Big. \Tr_y \Mod_{\frac{1 + \beta}{\beta} \eta - (\beta t)^{-
       1}} \Dil_{\beta t} \phi_0 (x) \Big| \dd \eta \dd y \dd t < \varepsilon
     \end{aligned} \]
  for all \(x \in B_R\).
\end{proposition}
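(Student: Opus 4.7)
The plan is to verify \eqref{eq:BHT-wave-packet-representation} by passing both sides to the Fourier side and matching. The bilinear Hilbert transform is a Fourier multiplier with symbol $\pi i\,\operatorname{sgn}(\beta\xi_2 - \xi_1)$: plugging the Fourier inversions of $f_1$ and $f_2$ into the definition and using the distributional identity $\pv\int_\R e^{2\pi i ts}/t\, \dd t = \pi i\,\operatorname{sgn}(s)$ yields
\[
\frac{1}{\pi i}\BHT_\beta[f_1,f_2](x) = \int_{\R^2} \FT{f_1}(\xi_1)\,\FT{f_2}(\xi_2)\,\operatorname{sgn}(\beta\xi_2 - \xi_1)\, e^{2\pi i x(\xi_1+\xi_2)}\,\dd\xi_1\,\dd\xi_2.
\]
Writing $\operatorname{sgn}(u) = 1 - 2\cdot\mathbf{1}_{u<0}$, the constant $1$ reproduces $f_1(x)f_2(x)$ by Fourier inversion, and it remains to identify the wave packet integral on the right hand side of \eqref{eq:BHT-wave-packet-representation} with, up to the factor $C_\beta$, the half-plane projection $2\int \FT{f_1}\,\FT{f_2}\,\mathbf{1}_{\xi_1 > \beta\xi_2}\, e^{2\pi i x(\xi_1+\xi_2)}\,\dd\xi_1\,\dd\xi_2$.

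To this end I would fix $\phi_0$ concretely via $\FT{\phi_0} = |\FT{\psi}|^2$ for a real, even $\psi\in\Sch(\R)$ with $\spt\FT{\psi}\subset[-\gr_0,\gr_0]$, so that $\FT{\phi_0}$ is non-negative, even, and supported in $[-\gr_0,\gr_0]$. Writing each embedding as $\Emb[f](\eta,y,t)[\phi_0] = \int \FT{f}(\xi)\,e^{2\pi i y\xi}\,\FT{\phi_0}(t(\eta-\xi))\,\dd\xi$ and Fourier-expanding the output factor $\Dil_{\beta t}\phi_0(x-y)$, the integrand on the right hand side of \eqref{eq:BHT-wave-packet-representation} becomes a Fourier integral whose $y$-integration collapses into a Dirac delta enforcing the total frequency balance. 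The substitutions $u = t\eta$ and then $v = u - t\xi_1$ recast the three $\FT{\phi_0}$ arguments as
\[
v,\qquad v + \tau - 1,\qquad -(1+\beta)v - \tau + 1, \qquad \tau \eqd t(\xi_1 - \beta\xi_2),
\]
the resonance condition induced by the modulation symmetry of $\BHT_\beta$. Since the resulting integrand depends on $(t,\xi_1,\xi_2)$ only through $\tau$, the substitution $s = t(\xi_1-\beta\xi_2)$ yields on $\{\xi_1 > \beta\xi_2\}$ an absolute constant
\[
C'_\beta \eqd \int_0^\infty\!\!\int_\R \FT{\phi_0}(v)\,\FT{\phi_0}(v+s-1)\,\FT{\phi_0}(-(1+\beta)v - s + 1)\,\dd v\,\frac{\dd s}{s},
\]
while on $\{\xi_1 < \beta\xi_2\}$ the joint support of the three $\FT{\phi_0}$ factors is empty, since the middle factor forces $\tau \in (1-2\gr_0, 1+2\gr_0)\subset\R_+$, incompatible with $\tau < 0$. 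Setting $C_\beta \eqd 2/C'_\beta$ then establishes \eqref{eq:BHT-wave-packet-representation}; the non-negativity of $\FT{\phi_0}$ gives $C'_\beta > 0$, and the fact that the three linear forms above simultaneously vanish at $(v,s) = (0,1)$ while having full rank $2$ for every $\beta \in (0,1]$ produces uniform upper and lower bounds on $C_\beta$.

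Absolute convergence of the integral and the compactness claim follow from standard decay estimates: $|\Emb[f_j](\eta',y,t')[\phi_0]|$ enjoys Schwartz decay in $\eta'$ and $y$ with constants controlled by seminorms of $f_j$ and $\phi_0$, while $|\Tr_y\Mod\Dil_{\beta t}\phi_0(x)| \lesssim (\beta t)^{-1}(1 + |x-y|/(\beta t))^{-N}$. The apparent singularity as $t \to 0^+$ is tamed by the rapid decay of $\FT{f_2}$ evaluated at the diverging second-embedding frequency $\beta^{-1}\eta - (\beta t)^{-1}$, producing an integrable majorant on $\R^3_+$ whose tails can be made uniformly small outside any sufficiently large compact $\mathbb{K}$ for every $x \in B_R$. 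The main obstacle is careful bookkeeping through the changes of variable leading to the $(v,\tau)$ form and verifying that the joint support of the three $\FT{\phi_0}$ arguments stays a non-degenerate compact region uniformly in $\beta$: when $\gr_0 < 2^{-20}$ this reduces to checking that some $2\times 2$ minor of the coefficient matrix of the three linear forms $(v,\, v+s-1,\, -(1+\beta)v-s+1)$ remains bounded away from $0$ on a $\beta$-independent neighborhood of $(0,1)$.
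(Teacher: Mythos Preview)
Your proposal is correct and follows essentially the same route as the paper: pass to the Fourier side, reduce to realizing the half-plane multiplier $\mathbf{1}_{\xi_1>\beta\xi_2}$ as a superposition of products of three dilated--modulated $\FT{\phi_0}$ factors, and use the change of variables isolating $\tau=t(\xi_1-\beta\xi_2)$ so that the $t$-integral produces a $\beta$-uniform constant. The only differences are cosmetic---the paper picks $\FT{\phi_0}$ to be a plateau bump (which makes the uniform bounds on $C_\beta$ explicit rather than via your rank-$2$ argument, and for which you should make sure $\FT{\phi_0}(0)>0$), and handles convergence by first truncating $\FT{f_j}$ to compact support rather than invoking Schwartz decay directly.
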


\begin{proof}
  Using the Fourier inversion formula we can write that
  \[ \BHT_{\beta} [f_1, f_2] (x) \begin{aligned}[t]&
       = \lim_{\varepsilon \rightarrow 0} \int_{\varepsilon < | t | < \varepsilon^{- 1}} \int_{\R^2} \FT{f_1} (\xi_1) \FT{f_2} (\xi_2) e^{2
       \pi i (\xi_1 (x - t) + \xi_2 (x + \beta t))} \dd \xi_1 \dd \xi_2
       \frac{\dd t}{t}\\ &
       = - \pi i \int_{\R^2} \FT{f_1} (\xi_1) \FT{f_2} (\xi_2) e^{2 \pi i
       (\xi_1 + \xi_2) x} \sgn (\xi_1 - \beta \xi_2) \dd \xi_1 \dd \xi_2 .
     \end{aligned} \]
  We used that
  \[ \begin{aligned}[t]&
       \lim_{\varepsilon \rightarrow 0} \int_{| t | > \varepsilon} e^{- 2 \pi
       i \xi t} \frac{\dd t}{t} = - \pi i \sgn (\xi) \qquad \forall \xi \neq
       0,\\ &
       \sup_{\varepsilon > 0} \sup_{\xi} \Big| \int_{| t | > \varepsilon}
       e^{- 2 \pi i \xi t} \frac{\dd t}{t} \Big| < + \infty .
     \end{aligned} \]
  Thus
  \[
  \frac{1}{2} f_1 (x) f_2 (x) - \frac{1}{2 \pi i} \BHT_{\beta} [f_1, f_2] (x) =
  \int_{\R^2} \FT{f_1} (\xi_1) \FT{f_2} (\xi_2) e^{2 \pi i (\xi_1 +
    \xi_2) x} {\1_{\xi_1 - \beta \xi_2 > 0}}  \dd \xi_1 \dd \xi_2 .
  \]
  Let \(\FT{\phi_0} \in C^{\infty}_c \left( B_{\gr_0} \right)\) be real-valued,
  even, with \(0 \leq \FT{\phi}_0 \left( \FT{z} \right) \leq 1\), and with
  \(\FT{\phi}_0 \left( \FT{z} \right) = 1\) for \(\FT{z} \in B_{\gr_0 / 2}\). We
  show that the multiplier \({\1_{\xi_1 - \beta \xi_2 > 0}} \) admits the
  representation
  \[ \1_{(\xi_1 - \beta  \xi_2) > 0} = \frac{1}{C_{\beta}} \int_{\R_+}
     \int_{\R} \FT{\phi}_0 (t \xi_1 - \theta) \FT{\phi}_0 ( \beta t \xi_2 -
     \theta + 1) \FT{\phi}_0 (\beta t (\xi_1 + \xi_2) - (1 + \beta) \theta +
     1) \dd \theta \frac{\dd t}{t} ; \]
  the integral converges uniformly for \((\xi_1, \xi_2)\) on any compact subset
  of\\ \(\R^2 \backslash \left\{ (\beta \xi, \xi), \xi \in \R \right\}\). A change
  of variables \(\theta' = \theta - t \xi_1\) gives
  \[ m_{t, \beta} (\xi_1, \xi_2) 
  \begin{aligned}[t]&
  \eqd
  \int_{\R} \FT{\phi}_0 (t \xi_1 - \theta) \FT{\phi}_0 ( \beta t \xi_2 - \theta + 1) \FT{\phi}_0 (\beta t (\xi_1 + \xi_2) - (1 + \beta) \theta + 1) \dd \theta\\ &
  = \int_{\R} \FT{\phi}_0 (- \theta) \FT{\phi}_0 (- \tilde{\xi} - \theta  + 1) \FT{\phi}_0 (- \tilde{\xi} - (1 + \beta) \theta + 1) \dd \theta =
  m (\tilde{\xi})
     \end{aligned} \]
  with \(\tilde{\xi} \eqd t (\xi_1 - \beta \xi_2)\). The integrand vanishes
  unless \(\theta \in B_{\mf{r}_0} \cap B_{\mf{r}_0} (1 - \tilde{\xi})\) and is
  \(1\) when \(\theta \in B_{\mf{r}_0 / 4} \cap B_{\mf{r}_0 / 4} (1 -
  \tilde{\xi})\). This shows that
  \[ \frac{\mf{r}_0}{4} \1_{B_{\frac{\mf{r}_0}{4}} (1)} (\tilde{\xi}) \leq 
     m (\tilde{\xi}) \leq \mf{r}_0 \1_{B_{2 \mf{r}_0} (1)} (\tilde{\xi}) . \]
  Using the change of variables \(t' = t \tilde{\xi}\) if \(\tilde{\xi} > 0\) and
  support considerations if \(\tilde{\xi} \leq 0\) we obtain that
  \[ \int_{\R_+} m_{t, \beta} (\xi_1, \xi_2) \frac{\dd t}{t} = \int_{\R_+} m
     (t (\xi_1 - \beta \xi_2)) \frac{\dd t}{t} = C_{\phi, \beta} \1_{(0 +
     \infty)} (t (\xi_1 - \beta \xi_2)) \]
  with \(C_{\phi, \beta} \in \left( \frac{\mf{r}_0^2}{8}, 8 \mf{r}_0^2
  \right)\). Using this identity and the Fourier inversion formula we can write

  \[ \begin{aligned}[t]&
       \int_{\R^2} \FT{f}_1 (\xi_1) \FT{f}_2 (\xi_2) {\1_{(\xi_1 - \beta 
       \xi_2) > 0}}  e^{2 \pi i (\xi_1 + \xi_2) x} \dd \xi_1 \dd \xi_2\\ &
       = \frac{1}{C_{\phi, \beta}} \int_{\R^2_+} \int_{\R^2} \FT{f}_1 (\xi_1)
       \FT{f}_2 (\xi_2) {\1_{(\xi_1 - \beta  \xi_2) > 0}}  e^{2 \pi i (\xi_1 +
       \xi_2) x} \dd \xi_1 \dd \xi_2 \dd \theta \frac{\dd t}{t}\\ &
       = \begin{aligned}[t]&
         \frac{1}{C_{\phi, \beta}} \int_{\R^2_+} \int_{\R^3} \int_{\R^2} f_1
         \ast \Dil_t \Mod_{\theta} \phi_0 (y_1) f_2 \ast \Dil_{\beta t}
         \Mod_{\theta - 1} \phi_0 (y_2)\\ &\qquad
         \times \Dil_{\beta t} \Mod_{(1 + \beta) \theta - 1} \phi (y_3) e^{2
         \pi i (\xi_1 (x - y_1 - y_3) + \xi_2 (x - y_2 - y_3))} \dd \xi_1 \dd
         \xi_2 \dd y_1 \dd y_2 \dd y_3 \dd \theta \frac{\dd t}{t}
       \end{aligned}\\ &
       = \begin{aligned}[t]&
         \frac{1}{C_{\phi, \beta}} \int_{\R^2_+} \int_{\R^3} f_1 \ast \Dil_t
         \Mod_{\theta} \phi_0 (y_1) f_2 \ast \Dil_{\beta t} \Mod_{\theta - 1}
         \phi_0 (y_2) \\ &
         \quad \times \Dil_{\beta t} \Mod_{(1 + \beta) \theta - 1} \phi_0
         (y_3) \delta (x - y_1 - y_3) \delta (x - y_2 - y_3) \dd y_1 \dd y_2
         \dd y_3 \dd \theta \frac{\dd t}{t}
       \end{aligned}\\ &
       = \begin{aligned}[t]&
         \frac{1}{C_{\phi, \beta}} \int_{\R^3_+} f_1 \ast \Dil_t \Mod_{\theta}
         \phi_0 (y) f_2 \ast \Dil_{\beta t} \Mod_{\theta - 1} \phi_0 (y )
         \\ &\qquad
         \times \Tr_y \Dil_{\beta t} \Mod_{(1 + \beta) \theta - 1} \phi_0 (x)
         \dd \theta \dd y \frac{\dd t}{t}
       \end{aligned}\\ &
       = \begin{aligned}[t]&
         \frac{1}{C_{\phi, \beta}} \int_{\R^3_+} f_1 \ast \Dil_t \Mod_{\theta}
         \phi_0 (y) f_2 \ast \Dil_{\beta t} \Mod_{\theta - 1} \phi_0 (y )
         \\ &\qquad
         \times \Tr_y \Mod_{\frac{1 + \beta}{\beta} \eta - (\beta t)^{- 1}}
         \Dil_{\beta t} \phi_0 (x) \dd \eta \dd y \frac{\dd t}{t} .
       \end{aligned}
     \end{aligned} \]
  The detailed verification of the validity of the above manipulations is
  omitted. To justify the convergence claim let us first suppose that \(\spt
  \FT{f_1}, \spt \FT{f_2} \subset B_S\) for some \(S \gg 1\). For \(t >
  \frac{1}{100 S}\) and \(x \in B_R\) it holds that
  \[ \begin{aligned}[t]&
       \left| f_1 \ast \Dil_t \Mod_{\theta} \phi_0 (y) \right| + \left| f_2
       \ast \Dil_{\beta t} \Mod_{\theta - 1} \phi_0 (y ) \right|
       \lesssim_{\phi, \beta} t^{- 1} (\| f_1 \|_{L^1} + \| f_2 \|_{L^1}),\\ &
       \left| \Tr_y \Mod_{\frac{1 + \beta}{\beta} \eta - (\beta t)^{- 1}}
       \Dil_{\beta t} \FT{\phi}_0 (x) \right| \lesssim_{\phi} t^{- 1}
       {\Big\langle \frac{| y | - R}{t} \Big\rangle^{- 100}} 
     \end{aligned} \]
  while for \(t < \frac{1}{100 S}\) either \(\FT{z} \rightarrow \FT{\phi_0}
  \left( \FT{z} - \theta \right)\) or \(\FT{z} \rightarrow \FT{\phi_0} \left(
  \FT{z} - \theta - 1 \right)\) vanishes on \(B_{\frac{1}{100}}\) so
  \[ f_1 \ast \Dil_t \Mod_{\theta} \phi_0 (y) = 0 \qquad \text{or} \qquad f_2
     \ast \Dil_{\beta t} \Mod_{\theta - 1} \phi_0 (y ) = 0. \]
  The claim for \(f_1, f_2 \in \Sch (\R)\) without restrictions on
  the support of the Fourier transform follows by standard approximation
  arguments. 
\end{proof}

\section{Outer Lebesgue spaces and the time-frequency-scale
space.}\label{sec:outer-space}

In this section we discuss outer Lebesgue spaces and quasi-norms in general
and, specifically, their details in the case of the time-scale-frequency space
\(\R^3_+\). We begin by discussion embedded functions i.e. functions of the form
\(\R^3_+ \times \Sch (\R) \rightarrow \Emb [f] (\eta, y, t)
[\phi]\) that are the main actors of our functional analytic framework.

We define the Banach space of band-limited wave packets of order \(N \in \N\)
and frequency radius \(\gr > 0\) by setting
\begin{equation}
  \Phi^N_{\gr} \eqd \left\{ \phi \in C^{\infty} (\R) \st \hspace{0.27em}
  \FT{\phi} \in C^N (\R), \spt \FT{\phi} \subseteq
  \overline{B_{\gr} } \right\} \label{eq:def:wave-packets}
\end{equation}
with an associated norm \(\| \phi \|_{\Phi_{\mf{r}}^N} = \left\| \FT{\phi}
\right\|_{C^N}\); we set \(\Phi^{\infty}_{\mf{r}} \eqd \bigcap_{N \in \N}
\Phi_{\mf{r}}^N\). We say that \(F \in \ms{R} (\R^3_+) \otimes^k
{\Phi_{\gr}^{\infty}}'\) if the map \(\left( \Phi^{\infty}_{\gr} \right)^k \ni
(\phi_1, \ldots, \phi_k) \mapsto F (\cdot) [\phi_1, \ldots \phi_k] \in \ms{R}
(\R^3_+)\) is \(k\)-linear and is valued in signed Radon measures on \(\R^3_+\).
Recall that a Radon measure is a measure that is locally finite and inner and
outer regular. In the special case of \(k = 0\), the space \(\ms{R} (\R^3_+)
\otimes^0 {\Phi_{\gr}^{\infty}}' = \ms{R} (\R^3_+)\) is simply to the space of
\(\C\)-valued Radon measures on \(\R^3_+\). When \(k = 1\) we omit \(k\) from the
notation and simply write \(\Rad (\R^{3}_{+}) {\otimes
\Phi_{\gr}^{\infty}} '\). We denote by \(L^{\infty}_{\tmop{loc}} (\R^{3}_{+}) \otimes^k {\Phi_{\gr}^{\infty}} '\) the subspaces of \(\Rad (\R^{3}_{+}) \otimes^k {\Phi_{\gr}^{\infty}} '\) given by functions such that
for every compact \(K \subset \R^3_+\) there exists \(N_K \in \N\) such that
\[ \begin{aligned}[t]&
     {\sup_{\tmscript{
       \phi_1, \ldots, \phi_k}}}
 \Big\| F (\eta, y, t) [\phi_1, \ldots, \phi_k]  \Big\|_{L^{\infty} (K)} < \infty,
   \end{aligned} \]
where the upper bound is taken over \(\phi_j \in \Phi_{\gr}^{\infty}\) with \(\|
\phi_j \|_{\Phi_{\gr}^{N_K}} \leq 1\), \(j \in \{ 1, \ldots, k \}\). For any
fixed \(\phi_1, \ldots, \phi_k \in \Phi^{\infty}_{\gr}\) we have identified the
function \(F (\eta, y, t) [\phi_1, \ldots, \phi_k]\) with the measure \(F (\eta,
y, t) [\phi_1, \ldots, \phi_k] \dd \eta \dd y \dd t\). According to this
notation, \(\Emb [f] \in L^{\infty}_{\tmop{loc}} (\R^{3}_{+}) {\otimes
\Phi_{\gr}^{\infty}} '\) for any \(f \in \Sch (\R)\). If \(\gr' >
\gr\) then by restricting we can identify a function \(F \in \Rad (\R^{3}_{+}) {\otimes \Phi^{\infty}_{\gr'}}'\) with a function \(F \in \Rad (\R^{3}_{+}) {\otimes \Phi^{\infty}_{\gr}}'\). Analogously to the above we
can define \(\Rad \left( \XO \right) {\otimes \Phi^{\infty}_{\gr'}}'\) for any
locally compact topological space \(\XO\).

\subsection{Generalities of outer measure
spaces}\label{sec:outer-generalities}

We now provide a with brief overview of the construction and properties of
general outer Lebesgue spaces. Specific instances for the space \(\R^3_+\) of
all the objects introduced here will be discussed in the next section. The
reader may want to always assume that \(\XO = \R^3_+\) as that is the context in
which we will be using the subsequent construction. Generally, let \(\XO\) be a
locally compact topological space. An outer measure \(\mu\) on \(\XO\) is a
function \(\mu : 2^{\XO} \rightarrow [0, + \infty]\) that is
\(\sigma\)-subadditive. A \(\sigma\)-generating collection on \(\XO\) is a
collection \(\BO \subset \Bor (\mathbb{X})\) of Borel sets \(B\) with \(\mu (B) <
\infty\) such that \(\XO\) can be written as a union of countably many elements
of \(\BO\) and such that \(\mu (E)\) for any \(E \subset \XO\) can be recovered by
countable coverings of elements of \(\BO\):
\begin{equation}
  \mu (E) = \inf \Big\{ \sum_{B \in \mc{B} \subset \BO} \mu (B) \suchthat E
  \supseteq \bigcup_{B \in \mc{B} \subset \BO} B \Big\} .
  \label{eq:measure-recovery}
\end{equation}
We denote countable unions of \(\BO\) by
\[ \BO^{\cup} \eqd \Big\{ \bigcup_{n \in \N} B_n \text{ with } B_n \in
   \mathbb{B} \Big\} . \]
Let \(\Rad \left( \XO \right) \otimes^k {\Phi_{\gr}^{\infty}} '\) be defined as
in the beginning of \Cref{sec:outer-space}, that we refer to, by a light abuse
of lexicon, as ``functions'' on \(\XO\). A size is a positive norm-like
functional \(\SO \of \Rad \left( \XO \right) \otimes^k {\Phi_{\gr}^{\infty}} '
\rightarrow [0, + \infty]\) with the following properties. First of all it
possesses the properties of a quasi-norm:
\begin{equation}
  \| \lambda F \|_{\SO} = | \lambda | \| F \|_{\SO} \qquad \forall F \in \Rad
  \left( \XO \right) \otimes^k {\Phi_{\mf{r}}^{\infty}}',
  \label{eq:size-homogeneity}
\end{equation}
and there exists \(\tmop{Tri}_{\SO} \geq 1\) such that
\begin{equation}
  \| F_1 + F_2 \|_{\SO} \leq \tmop{Tri}_{\SO} \left( \| F_1 \|_{\SO} + \| F_2
  \|_{\SO} \right), \quad \forall F_1, F_2 \in \Rad \left( \XO \right)
  \otimes^k {\Phi_{\mf{r}}^{\infty}}' . \label{eq:size-triangle}
\end{equation}
We do not require separation and allow functions \(F \neq 0\) with \(\| F
\|_{\SO} = 0\). We require monotonicity in the following form:
\begin{equation}
  \left\| \1_{\XO \backslash E} F \right\|_{\SO} \leq \| F \|_{\SO}, \quad
  \forall F \in \Rad \left( \XO \right) \otimes^k {\Phi_{\mf{r}}^{\infty}}',
  \quad \forall E \in \BO^{\cup} . \label{eq:size-monotonicity}
\end{equation}
Finally, we ask that \(\SO\) be regular: if \(E_n^+ \in \BO^{\cup}\) is an
increasing sequence of sets and \(E^-_n \in \BO^{\cup}\) is a decreasing
sequences of sets such that \(\bigcup_n E_n^+ \setminus E_n^- = \XO\) then
\begin{equation}
  \| F \|_{\SO} \lesssim \liminf_n \left\| \1_{E_n^+ \setminus E_n^-} F
  \right\|_{\SO} . \label{eq:size-regularity}
\end{equation}

We call the tuple \(\left( \XO, \BO, \mu, \SO \right)\) an outer Lebesgue space.
Sizes and measures can be combined via positive linear combinations. Two sizes
\(\SO\) and \(\SO'\) are equivalent if \(\|F\|_{\SO} \approx \|F\|_{\SO'}\) and
similarly two outer measures \(\mu\) and \(\mu'\) are equivalent if \(\mu (E)
\approx \mu' (E)\) for all \(E \in \Bor (\XO)\).

Given an outer Lebesgue space, we define outer Lebesgue quasi-norms by
combining the notions of size and outer measures. The \(\lambda\)-superlevel
measure of a function \(F \in \Rad \left( \XO \right) \otimes^k
{\Phi_{\mf{r}}^{\infty}}'\) is given by
\begin{equation}
  \mu (\|F\|_{\SO} > \lambda) \eqd \inf \left\{ \mu (E) : E \in \BO^{\cup},
  \hspace{0.27em} \| \1_{\XO \setminus E} F\|_{\SO} \leq \lambda \right\} .
  \label{eq:superlevel-measure}
\end{equation}
This quantity, in general, is not a measure of any specific set, but rather it
is an interpolating quantity between the outer measure \(\mu\) and the size
\(\SO\). The notion above encodes the optimal way of splitting \(F\) into two
parts: \(F = \1_E F + \1_{\XO \setminus E} F\) with \(E \in \BO^{\cup}\) so that
\(\1_{\XO \setminus E} F\) has controlled size and \(\1_E F\) is supported on a
set of controlled measure.

Given \(F \in \Rad \left( \XO \right) \otimes^k {\Phi_{\mf{r}}^{\infty}}'\),
its outer Lebesgue quasi-norms and their weak variants are given by
\begin{equation}
  \begin{aligned}[t]&
    \|F\|_{L_{\mu}^p \SO} \eqd \left( \int_0^{\infty} \lambda \mu (\|F\|_{\SO}
    > \lambda^{\frac{1}{p}}) \hspace{0.17em} \frac{\dd{} \lambda}{\lambda}
    \right)^{\frac{1}{p}},\\ &
    \|F\|_{L_{\mu}^{p, \infty} \SO} \eqd {\left( \sup_{\lambda > 0} \lambda
    \mu (\|F\|_{\SO} > \lambda^{\frac{1}{p}}) \right)^{\frac{1}{p}}} , \qquad
    \forall p \in (0, \infty)\\ &
    \|F\|_{L_{\mu}^{\infty} \SO} \eqd \|F\|_{\SO} .
  \end{aligned} \label{eq:superlevel-Lp}
\end{equation}
The quantities above vanish only if \(\mu (\|F\|_{\SO} > 0) = 0\) so we quotient
out such functions similarly to how one does in classical Lebesgue theory.
Using the monotonicity property \eqref{eq:size-monotonicity}, one can check
that the quantities above are indeed quasi-norms with
\[ \| F_1 + F_2 \|_{L^p_{\mu} \SO} \leq \tmop{Tri}_{L^p_{\mu} \SO} \left( \|
   F_1 \|_{L^p_{\mu} \SO} + \| F_2 \|_{L^p_{\mu} \SO} \right), \qquad
   \tmop{Tri}_{L^p_{\mu} \SO} \leq 2^{1 + \frac{1}{p}} \tmop{Tri}_{\SO} . \]
The quasi-norms \(L^p_{\mu} \SO\) are themselves regular for any \(p \in (0,
\infty)\) since
\[ \mu (\| \1_{E_n^+ \setminus E_n^-} F\|_{\SO} > 2 \lambda) \lesssim
   \liminf_n \mu (\| \1_{E_n^+ \setminus E_n^-} F\|_{\SO} > \lambda) \]
for any sequence of sets \(E_n^+, E^-_n \in \BO^{\cup}\) (respectively
increasing and decreasing) such that \(\bigcup_n E_n^+ \setminus E_n^- =
\XO\).

We write \(F \in L^p_{\mu} \SO\) if \(F \in \Rad \left( \XO \right) \otimes^k
{\Phi_{\gr}^{\infty}}'\) and \(\| F \|_{L^p_{\mu} \SO} < \infty\); the space
\(\XO\) and the values of \(k\) and \(\mf{r}\) are encoded in the domains for the
size \(\SO\) and the measure \(\XO\) and thus can be deduced without being
explicitly specified.

We now summarize some of the properties of outer Lebesgue quasi-norms relevant
for our work. First of all, outer Lebesgue spaces are a quasi-normed space; in
general, if \(\| \cdot \| : X \rightarrow [0, + \infty]\) is a function on a
vector space \(X\) that satisfies
\[ \| v_1 + v_2 \|  \leq \tmop{Tri}_{\| \cdot \|} (\| v_1 \| + \| v_2 \|) \]
then for any \(K \in \N\) one has
\begin{equation}
  \Big\| \sum_{k = 1}^K v_k \Big\| \leq C_{\| \cdot \|} \Big( \sum_{k =
  1}^K k^{C_{\| \cdot \|}} \| v_k \| \Big) \label{eq:multi-triangle}
\end{equation}
for some \(C_{\| \cdot \|} > 0\) depending only on \(\tmop{Tri}_{\| \cdot \|}\).
In particular, for any \(\varepsilon > 0\) there exists \(C_{\| \cdot \|,
\varepsilon}\) such that
\[ \left\| \sum_{k = 1}^K v_k \right\| \leq C_{\| \cdot \|, \varepsilon}
   \left( \sum_{k = 1}^K 2^{\varepsilon k} \| v_k \| \right) . \]
We omit the proof of this fact; it relies on a dyadic partitioning argument.

Outer Lebesgue quasi-norms satisfy many of the properties that hold for
classical Lebesgue spaces.

\begin{proposition}
  \label{prop:outer-properties} Let \((\XO, \BO, \mu, \SO)\) be an outer space.
  If \(\bar{\mu}\) is another outer measure then for any \(p \in (0, \infty)\) it
  holds that
  \begin{equation}
    \| F \|_{L^{p }_{\bar{\mu}} \SO} \leq \sup_{E \in \BO^{\cup}} \left(
    \frac{\bar{\mu} (E)}{\mu (E)} \right)^{\frac{1}{p}} \| F \|_{L^p_{\mu}
    \SO}  \label{eq:outer-measure-Lp-comparison}
  \end{equation}
  (the bound is non-trivial only if \(\mu (E) = 0 \Longrightarrow \bar{\mu} (E)
  = 0\)). The Chebyshev inequality holds: for any \(p \in (0, \infty)\) it holds
  that
  \begin{equation}
    \| F \|_{L^{p, \infty}_{\mu} \SO} \lesssim \| F \|_{L^p_{\mu} \SO} \qquad
    \forall F \in L^p_{\mu} \SO . \label{eq:outer-chebychev}
  \end{equation}
  On finite measure spaces, outer Lebesgue quasi-norms are nested: for any \
  \(p_-, p_+ \in (0, \infty]\) with \(p_- \leq p_+\) it holds that
  \begin{equation}
    \| F \|_{L^{p_-}_{\mu} \SO} \lesssim \mu \left( \| F \|_{\SO} > 0
    \right)^{\frac{1}{p_-} - \frac{1}{p_+}} \| F \|_{L^{p_+, \infty}_{\mu}
    \SO}  \label{eq:outer-Lp-finite-support}
  \end{equation}
  for any \(F \in L^p_{\mu} \SO \) with an implicit constant independent of \(F\).
  Outer Lebesgue quasi-norms are log-convex in the exponent \(p\). Given any any
  \(p_0, p_1 \in (0, \infty]\) and any \(\theta \in (0, 1)\) it holds that
  \begin{equation}
    \| F \|_{L^{p_{\theta}}_{\mu} \SO} \leq C_{p_0, p_1, \theta} \| F
    \|_{L^{p_0, \infty}_{\mu} \SO}^{1 - \theta} \| F \|_{L^{p_{1,
    \infty}}_{\mu} \SO}^{\theta}, \qquad \forall F \in L^{p_0, \infty}_{\mu}
    \SO \cap L^{p_1, \infty}_{\mu} \SO . \label{eq:outer-log-convexity}
  \end{equation}
  with \(p_{\theta}^{- 1} \eqd (1 - \theta) p_0^{- 1} + \theta p_1^{- 1}\).
\end{proposition}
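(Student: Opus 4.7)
The plan is to mimic the classical real-interpolation arguments for Lebesgue spaces, with the superlevel set $\{|F| > \lambda\}$ replaced by the superlevel measure $\mu(\|F\|_{\SO} > \lambda)$ from \eqref{eq:superlevel-measure}. The key structural facts I would establish up front are two: first, monotonicity of $\lambda \mapsto \mu(\|F\|_{\SO} > \lambda)$, which follows directly from \eqref{eq:superlevel-measure} since any covering $E \in \BO^{\cup}$ witnessing the infimum at level $\lambda$ also witnesses the infimum at any level $\lambda' \geq \lambda$; second, the Fubini-style rewriting
\[
\|F\|_{L^p_{\mu} \SO}^p = \int_0^{\infty} \mu(\|F\|_{\SO} > s^{1/p})\,\dd s,
\]
obtained from \eqref{eq:superlevel-Lp} by a change of variable $\lambda = s^p$.

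For \eqref{eq:outer-measure-Lp-comparison} I would argue directly from the definition: given any $\varepsilon > 0$, pick $E_\lambda \in \BO^{\cup}$ with $\|\1_{\XO \setminus E_\lambda} F\|_{\SO} \leq \lambda$ and $\mu(E_\lambda) \leq (1+\varepsilon) \mu(\|F\|_{\SO} > \lambda)$. Setting $C := \sup_E (\bar\mu(E)/\mu(E))^{1/p}$, one gets $\bar\mu(\|F\|_{\SO} > \lambda) \leq C^p \mu(E_\lambda)$, and integrating the definition \eqref{eq:superlevel-Lp} with $\bar\mu$ in place of $\mu$ against the bound $\bar\mu(\|F\|_{\SO}>\lambda)\leq C^p\mu(\|F\|_{\SO}>\lambda)$ (then letting $\varepsilon\to 0$) yields the claim. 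For Chebyshev \eqref{eq:outer-chebychev}, I restrict the integral over $s \in [\lambda/2, \lambda]$ in the Fubini rewriting and use monotonicity of the superlevel measure to conclude $(\lambda/2)\,\mu(\|F\|_{\SO} > \lambda^{1/p}) \leq \|F\|_{L^p_\mu \SO}^p$.

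For the nesting \eqref{eq:outer-Lp-finite-support}, write
\[
\|F\|_{L^{p_-}_\mu \SO}^{p_-} = p_- \int_0^{\infty} s^{p_- - 1} \mu(\|F\|_{\SO} > s)\,\dd s
\]
and split the integration at a parameter $s_0 > 0$. On $(0, s_0)$ I dominate the superlevel measure by $\mu(\|F\|_{\SO} > 0)$ (monotonicity), getting a contribution $\mu(\|F\|_{\SO}>0) s_0^{p_-}$. On $(s_0, \infty)$ I use the weak $L^{p_+}$ bound $\mu(\|F\|_{\SO} > s) \leq s^{-p_+} \|F\|_{L^{p_+,\infty}_\mu \SO}^{p_+}$ and integrate, obtaining $\frac{p_-}{p_+-p_-} s_0^{p_- - p_+}\|F\|_{L^{p_+,\infty}_\mu \SO}^{p_+}$. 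Optimizing $s_0$ at $\mu(\|F\|_\SO>0)^{-1/p_+}\|F\|_{L^{p_+,\infty}_\mu\SO}$ gives exactly \eqref{eq:outer-Lp-finite-support} after taking $p_-$-th roots. Log-convexity \eqref{eq:outer-log-convexity} is the same Marcinkiewicz-type splitting, with the weak $L^{p_0}$ bound used on one half and the weak $L^{p_1}$ bound on the other; the optimal cutoff is $s_0$ balancing the two contributions, which yields the interpolated exponent $p_\theta$ and a constant that blows up as $\theta \to 0, 1$, consistent with the stated $C_{p_0, p_1, \theta}$.

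None of these steps is genuinely hard; the only real subtlety is that the superlevel quantity $\mu(\|F\|_{\SO} > \lambda)$ is not the measure of a set but an infimum, so one cannot directly invoke set-theoretic inclusions. The workaround throughout is the near-optimal covering argument used for \eqref{eq:outer-measure-Lp-comparison}: pass to an approximate minimizer $E_\lambda \in \BO^{\cup}$, reason about the genuine outer measure of that set, then take $\varepsilon \to 0$. This, combined with the monotonicity property \eqref{eq:size-monotonicity} of the size, is what makes the classical $L^p$ real-variable theory transfer verbatim to the outer setting.
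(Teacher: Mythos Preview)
Your proposal is correct and aligns precisely with what the paper indicates: the paper omits the proof entirely, stating only that ``it is the same as for classical Lebesgue spaces and relies on the super-level set representation \eqref{eq:superlevel-Lp} of the (outer) integral.'' Your sketch carries out exactly this classical layer-cake/Marcinkiewicz argument with the superlevel measure $\mu(\|F\|_{\SO} > \lambda)$ in place of the distribution function, and your remark about passing through near-optimal coverings $E_\lambda \in \BO^{\cup}$ is the correct way to handle the fact that this quantity is an infimum rather than the measure of a specific set.
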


We omit the proof of this statement as it the same as for classical Lebesgue
spaces and relies on the super-level set representation
\eqref{eq:superlevel-Lp} of the (outer) integral. The properties above
motivate the definition of a \(L^p_{\mu} \SO\)-atom and an associated
decomposition statement for functions \(F \in L^p_{\mu} \SO\).

\begin{proposition}[Atomic decomposition]
  \label{prop:atomic-decomposition} Let \((\XO, \BO, \mu, \SO)\) be an outer
  space. Given \(p \in (0, \infty]\), an \(L^p_{\mu} \SO\)-atom is a function \(F
  \in L^{\infty}_{\mu} \SO\) such that
  \[ \mu \left( \| F \|_{\SO} > 0 \right)^{\frac{1}{p}} \| F
     \|_{L^{\infty}_{\mu} \SO} \leq 1. \]
  Given any \(F \in L^p_{\mu} \SO\) there exists a decreasing sequence of sets
  \(A_k \subset A_{k - 1} \subset \XO\) such that
  \[ \lim_N \Big\| F - \sum_{k = - N}^N \1_{\Delta A^k} F
     \Big\|_{L^p_{\mu} \SO} = 0 \]
  with \(\Delta A_k \eqd A_{k - 1} \backslash A_k\). The sets \(\Delta A_k\) are
  such that
  \[ \left\| \1_{\Delta A_k} F \right\|_{\SO} \leq 2^{\frac{k}{p}}, \qquad
     \sum_{k \in \Z} \mu (\Delta A_k) 2^k \approx \| F \|_{L^p_{\mu} \SO}^p,
  \]
  and \(A_k = \emptyset\) if \(\| F \|_{\SO} < 2^{k / p} .\) In particular, for
  some \(C \gtrsim 1\), the functions \(\frac{1}{C} \frac{\1_{\Delta A_k} F}{\| F
  \|_{L^p_{\mu} \SO}}\) are \(L^p_{\mu} \SO\)-atoms. Furthermore, for any \(N \in
  \N\)
  \[ \begin{aligned}[t]&
       F - \Big( \sum_{k = - N}^N \1_{\Delta A^k} F \Big) = \1_{A_N} F +
       \1_{\XO \setminus A_{- N - 1}} F,
     \end{aligned} \]
  and the bounds
  \[ \left\| \1_{\XO \setminus A_{- N - 1}} F \right\|_{\SO} \leq 2^{-
     \frac{(N + 1)}{p}}, \qquad \mu \left( \left\| \1_{A_N} F \right\|_{\SO} >
     0 \right) \leq 2^{- N} \| F \|_{L^p_{\mu} \SO}^p, \]
  hold, as well as

  \[ \lim_N \left\| \1_{\XO \setminus A_{- N - 1}} F \right\|_{L^p_{\mu} \SO}
     = \lim_N \left\| \1_{A_N} F \right\|_{L^p_{\mu} \SO} = 0. \]
  \[  \]
  As a consequence, finite linear combination of \(L^p_{\mu} \SO\)-atoms and
  functions \(F \in L^{\infty}_{\mu} \SO\) with \(\mu \left( \| F \|_{\SO} > 0
  \right) < \infty\) are dense in \(L^p_{\mu} \SO\).
\end{proposition}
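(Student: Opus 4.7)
The plan is to carry out a dyadic super-level decomposition of $F$ adapted to the outer-measure framework, mirroring the classical $L^p$ atomic decomposition but using near-optimal witnesses in the definition \eqref{eq:superlevel-measure} in place of sublevel sets of $|F|$. Since $F \in L^p_\mu \SO$, the Chebyshev inequality \eqref{eq:outer-chebychev} ensures $\mu(\|F\|_\SO > \lambda) < \infty$ for every $\lambda > 0$. For each $k \in \Z$ I would select $A_k^{(0)} \in \BO^\cup$ with
$$\|\1_{\XO \setminus A_k^{(0)}} F\|_\SO \le 2^{k/p}/(4\tmop{Tri}_\SO), \qquad \mu(A_k^{(0)}) \le 2\,\mu\bigl(\|F\|_\SO > 2^{k/p}/(4\tmop{Tri}_\SO)\bigr),$$
taking $A_k^{(0)} = \emptyset$ when $\|F\|_\SO$ is already below the threshold. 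To enforce nesting $A_{k+1} \subseteq A_k$, I would set $A_k := \bigcup_{j \ge k} A_j^{(0)} \in \BO^\cup$. The crucial observation is that $A_k \supseteq A_k^{(0)}$ allows me to factor $\1_{\XO \setminus A_k} F = \1_{\XO \setminus A_k}\cdot\1_{\XO \setminus A_k^{(0)}} F$ and apply the monotonicity axiom \eqref{eq:size-monotonicity} (with $E = A_k \in \BO^\cup$ acting on the function $\1_{\XO \setminus A_k^{(0)}} F$) to deduce $\|\1_{\XO \setminus A_k} F\|_\SO \le 2^{k/p}/(4\tmop{Tri}_\SO)$; a re-indexing of the threshold then produces $A_k = \emptyset$ when $\|F\|_\SO < 2^{k/p}$.

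For $\Delta A_k = A_{k-1} \setminus A_k$, nestedness yields the telescoping $\1_{\Delta A_k} = \1_{\XO \setminus A_k} - \1_{\XO \setminus A_{k-1}}$, so the quasi-triangle inequality \eqref{eq:size-triangle} delivers $\|\1_{\Delta A_k} F\|_\SO \le 2^{k/p}$ after the constants are absorbed. For the measure estimate, the feasibility of $A_k$ in \eqref{eq:superlevel-measure} gives the lower bound $\mu(A_k) \ge \mu(\|F\|_\SO > 2^{k/p}/(4\tmop{Tri}_\SO))$, while subadditivity yields the upper bound $\mu(A_k) \le 2\sum_{j \ge k} \mu(\|F\|_\SO > 2^{j/p}/(4\tmop{Tri}_\SO))$. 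Combining $\mu(\Delta A_k) \le \mu(A_{k-1})$ with the dyadic representation $\|F\|_{L^p_\mu \SO}^p \approx \sum_k 2^k \mu(\|F\|_\SO > 2^{k/p})$ (immediate from \eqref{eq:superlevel-Lp} by dyadicizing the logarithmic integral) via an Abel-summation argument then gives $\sum_k \mu(\Delta A_k) 2^k \approx \|F\|_{L^p_\mu \SO}^p$.

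For the convergence claim, the telescoping identity $F - \sum_{k=-N}^N \1_{\Delta A_k} F = \1_{\XO \setminus A_{-N-1}} F + \1_{A_N} F$ reduces the problem to controlling two tails. The first summand satisfies $\|\1_{\XO \setminus A_{-N-1}} F\|_\SO \le 2^{-(N+1)/p}$ directly from the construction, and its $L^p_\mu \SO$ quasi-norm tends to zero via the regularity axiom \eqref{eq:size-regularity}. For the second, $\mu(\|\1_{A_N} F\|_\SO > 0) \le \mu(A_N) \lesssim 2^{-N} \|F\|_{L^p_\mu \SO}^p$ from Chebyshev, and $\|\1_{A_N} F\|_\SO \lesssim \|F\|_\SO$ via the quasi-triangle applied to $\1_{A_N} F = F - \1_{\XO \setminus A_N} F$, so \eqref{eq:outer-Lp-finite-support} forces $\|\1_{A_N} F\|_{L^p_\mu \SO} \to 0$. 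The density statement is then immediate: each $\1_{\Delta A_k} F$ is, after normalization by $\|F\|_{L^p_\mu \SO}$, a universal-constant multiple of an $L^p_\mu \SO$-atom. The main obstacle I anticipate is the set-theoretic bookkeeping: since the axiom \eqref{eq:size-monotonicity} only controls restrictions away from $\BO^\cup$-sets, every restriction in the argument must be expressed as a difference of two functions of the form $\1_{\XO \setminus E} F$ with $E \in \BO^\cup$, and all quasi-triangle constants tracked so as to reach the clean exponent $2^{k/p}$ rather than $C\cdot 2^{k/p}$.
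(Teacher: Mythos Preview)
Your approach is essentially the paper's: select near-optimal witnesses $\tilde A_k$ for the super-level measures at dyadic thresholds and force nesting by $A_k=\bigcup_{j\ge k}\tilde A_j$; the paper does exactly this (with threshold $2^{k/p}$ rather than your $2^{k/p}/(4\operatorname{Tri}_{\SO})$). Your extra care with the quasi-triangle constant is legitimate---the axiom \eqref{eq:size-monotonicity} alone does not give $\|\1_{\Delta A_k}F\|_{\SO}\le\|\1_{\XO\setminus A_k}F\|_{\SO}$, so one does pass through $\1_{\Delta A_k}F=\1_{\XO\setminus A_k}F-\1_{\XO\setminus A_{k-1}}F$ and \eqref{eq:size-triangle}.

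There are two small slips in your tail analysis. First, the regularity axiom \eqref{eq:size-regularity} is not the right tool for $\|\1_{\XO\setminus A_{-N-1}}F\|_{L^p_\mu\SO}\to 0$: what you actually need is that $\mu(\|\1_{\XO\setminus A_{-N-1}}F\|_{\SO}>\lambda)\le\mu(\|F\|_{\SO}>\lambda)$ together with the size bound $\le 2^{-(N+1)/p}$, which cuts the defining integral down to $\int_0^{2^{-(N+1)}}\mu(\|F\|_{\SO}>\lambda^{1/p})\,d\lambda$, a tail of a finite integral. Second, your argument for $\|\1_{A_N}F\|_{L^p_\mu\SO}\to 0$ invokes $\|\1_{A_N}F\|_{\SO}\lesssim\|F\|_{\SO}$ and \eqref{eq:outer-Lp-finite-support}, but $\|F\|_{\SO}$ need not be finite for $F\in L^p_\mu\SO$ with $p<\infty$. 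The paper instead splits the $L^p$ integral at $\lambda=2^N$: the high part is dominated by $\int_{2^N}^\infty\mu(\|F\|_{\SO}>\lambda^{1/p})\,d\lambda$, and the low part by $2^N\mu(A_N)\lesssim\sum_{l\ge N}2^l\mu(\|F\|_{\SO}>2^{l/p})$; both are tails of summable quantities. With these corrections your argument goes through.
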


\begin{proof}
  Choose \(\tilde{A}_k \subset \XO\) such that
  \[ \left\| \1_{\XO \setminus \tilde{A}^k} F \right\|_{\SO} \leq
     2^{\frac{k}{p}}, \qquad \mu (\tilde{A}_k) \leq 2 \mu \left( \| F \|_{\SO}
     > 2^{\frac{k}{p}} \right) . \]
  Setting \(A_k \eqd \bigcup_{l \geq k} \tilde{A}_l\) we guarantee that \(A_k
  \subset A_{k - 1}\) and we have that \
  \[ \sum_{k \in \Z} \mu (A_k) 2^k \leq\begin{aligned}[t]&
        \sum_{k \in \Z} \sum_{l \geq k} \mu
       (\tilde{A}^l) 2^k \leq \sum_{l \in \Z} \mu (\tilde{A}_l) 2^l \sum_{k
       \leq l} 2^{k - l}\\ &
       \leq 2 \sum_{l \in \Z} \mu (\tilde{A}_l) 2^l \leq 4 \sum_{l \in \Z} \mu
       \left( \| F \|_{\SO} > 2^{\frac{l}{p}} \right) 2^l\\ &
       \leq 4 \int_0^{\infty} \lambda \mu \left( \| F \|_{\SO} >
       \lambda^{\frac{1}{p}} \right) \frac{\dd \lambda}{\lambda} \leq 4 \| F
       \|_{L^p_{\mu} \SO}^p .
     \end{aligned} \]
  Since \(\Delta A_k = A_{k - 1} \backslash A_k\), for any \(N \in \N\) it holds
  that
  \[ F = \sum_{k = - N}^N \1_{\Delta A_k} F + \1_{A_N} F + \1_{\XO \setminus
     A_{- N - 1}} F. \]
  To show the claims on outer Lebesgue norm convergence notice that
   \[ \Big\| \1_{A_N} F \Big\|_{L^p_{\mu} \SO}^p \leq \int_{2^N}^{\infty}
     \lambda \mu ( \| F \|_{\SO} > \lambda^{\frac{1}{p}} )
     \frac{\dd \lambda}{\lambda} + 2^N \mu (A_N) \]
  while
  \[ \Big\| \1_{\XO \setminus A_{- N - 1}} F \Big\|_{L^p_{\mu} \SO}^p \leq
     \int_0^{2^{- N - 1}} \hspace{-2em}\lambda \mu ( \| F \|_{\SO} >
     \lambda^{\frac{1}{p}} ) \frac{\dd \lambda}{\lambda} . \]
\end{proof}

\begin{proposition}[Restricted weak type interpolation for outer measure
spaces]
  \label{prop:outer-restricted-interpolation} Let \((\XO_j, \BO_j, \mu_j,
  \SO_j)\) for each \(j \in \{0, 1, \ldots, M\}\) be outer spaces and let
  \[ \Pi : \prod_{j = 1}^M L^{\infty}_{\mu_j} \SO_j \to \Rad \left( \XO_0
     \right) \otimes^k {\Phi^{\infty}_{\gr}}' \]
  be an \(M\)-linear map. Let \(\Omega_{\tmop{restr}} \subset [0, 1]^M\) be the
  set of points \((p_1^{- 1}, \ldots, p_M^{- 1}) \in [0, 1]^M\) such that the
  restricted bounds
  \begin{equation}
    \Big\| \Pi (F_1, \ldots, F_M)\Big\|_{L^{p_0, \infty}_{\mu_0} \SO_0} \leq
    C_{\tmop{restr}} (\vec{p}) \prod_{j = 1}^M \mu_j (\| F_j \|_{\SO_j} >
    0)^{\frac{1}{p_j}} \|F_j \|_{\SO_j} \qquad \forall F_j \in
    L^{\infty}_{\mu_j} \SO_j \label{eq:abstract-endpoint-holder}
  \end{equation}
  hold with \(p_0^{- 1} \eqd \sum_{j = 1}^M p_j^{- 1}\) for some constant
  \(C_{\tmop{restr}} (\vec{p}) > 0\) independent of \(F_j\). Then
  \(\Omega_{\tmop{restr}}\) is convex and if \((\bar{p}_1^{- 1}, \ldots,
  \bar{p}_M^{- 1}) \in [0, 1]^M\) is an interior point of
  \(\Omega_{\tmop{restr}}\) then the strong bounds
  \begin{equation}
    \| \Pi (F_1, \ldots, F_M)\|_{L_{\mu_0}^{\bar{p}_0} \SO_0} \leq C
    (\overrightarrow{\bar{p}}) \prod_{j = 1}^M \|F_j \|_{L_{\mu_j}^{\bar{p}_j}
    \SO_j} \qquad \forall F_j \in L_{\mu_j}^{\bar{p}_j} \SO_j
    \label{eq:abstract-Lq-holder}
  \end{equation}
  hold with \(\bar{p}_0^{- 1} \eqd \sum_{j = 1}^M \bar{p}_j^{- 1}\) and some \(C
  (\overrightarrow{\bar{p}}) \geq 0\) independent of \(F_j\).
\end{proposition}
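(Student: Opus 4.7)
The plan is to handle convexity of $\Omega_{\tmop{restr}}$ first by log-convexity, then establish the strong bound at interior points through atomic decomposition of each input $F_j$ combined with a multilinear Marcinkiewicz-type interpolation between restricted bounds at neighboring exponents.

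For convexity, given $(\vec p^{(0)})^{-1}, (\vec p^{(1)})^{-1} \in \Omega_{\tmop{restr}}$ and any $\theta \in (0,1)$, I would set $(\vec p^{(\theta)})^{-1} \eqd (1-\theta)(\vec p^{(0)})^{-1} + \theta (\vec p^{(1)})^{-1}$ and show the restricted bound at $\vec p^{(\theta)}$. Log-convexity \eqref{eq:outer-log-convexity} applied to $\|\Pi(\vec F)\|_{L^{p_0^{(\theta)},\infty}_{\mu_0}\SO_0}$, together with the elementary observation that each factor $\mu_j(\|F_j\|_{\SO_j}>0)^{1/p^{(\cdot)}_j}\|F_j\|_{\SO_j}$ on the right of \eqref{eq:abstract-endpoint-holder} is log-linear in $1/p^{(\cdot)}_j$, yields the conclusion with $C_{\tmop{restr}}(\vec p^{(\theta)}) \lesssim C_{\tmop{restr}}(\vec p^{(0)})^{1-\theta} C_{\tmop{restr}}(\vec p^{(1)})^\theta$.

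For the strong bound at an interior $(\vec{\bar p})^{-1}$, I would normalize so $\|F_j\|_{L^{\bar p_j}_{\mu_j}\SO_j}=1$ and apply \Cref{prop:atomic-decomposition} to each $F_j$ at exponent $\bar p_j$, writing $F_j = \sum_{k_j \in \Z} F_j^{k_j}$ with $\|F_j^{k_j}\|_{\SO_j} \lesssim 2^{k_j/\bar p_j}$ and $\mu_j(\|F_j^{k_j}\|_{\SO_j}>0) \lesssim 2^{-k_j}$. By $M$-linearity, $\Pi(\vec F) = \sum_{\vec k \in \Z^M} \Pi(F_1^{k_1},\ldots,F_M^{k_M})$. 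Because $(\vec{\bar p})^{-1}$ is interior to the convex set $\Omega_{\tmop{restr}}$, for each sign pattern $\vec\epsilon \in \{-1,+1\}^M$ one can select $\vec p^{(\vec\epsilon)} \in \Omega_{\tmop{restr}}$ satisfying $\tmop{sgn}(\bar p_j^{-1} - (p_j^{(\vec\epsilon)})^{-1}) = -\epsilon_j$ with magnitudes bounded below by a uniform $\delta>0$. Setting $\vec p(\vec k) \eqd \vec p^{(\tmop{sgn}\vec k)}$, the restricted bound \eqref{eq:abstract-endpoint-holder} yields
\[ \|\Pi(F_1^{k_1},\ldots,F_M^{k_M})\|_{L^{p_0(\vec k),\infty}_{\mu_0}\SO_0} \lesssim \prod_j 2^{k_j(1/\bar p_j - 1/p_j(\vec k))} \lesssim 2^{-\delta \sum_j |k_j|}. \]

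The main obstacle will be consolidating these weak bounds at varying output exponents $p_0(\vec k)$ into a single strong $L^{\bar p_0}_{\mu_0}\SO_0$ estimate. My plan is: for each $\vec k$, to pair the above estimate with a restricted bound at a companion exponent $\vec p_*(\vec k) \in \Omega_{\tmop{restr}}$ chosen on the opposite side of $\vec{\bar p}$, and apply log-convexity \eqref{eq:outer-log-convexity} to obtain $L^{p_0^{(\pm)},\infty}_{\mu_0}\SO_0$ bounds on $\Pi(F_1^{k_1},\ldots,F_M^{k_M})$ at two \emph{fixed} exponents $p_0^{(-)} < \bar p_0 < p_0^{(+)}$ that still display dyadic decay $2^{-\delta'\sum_j|k_j|}$; preserving decay while reaching a prescribed output exponent is possible precisely because $\Omega_{\tmop{restr}}$ has nonempty $M$-dimensional interior. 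Summing in $\vec k$ via the multi-triangle inequality \eqref{eq:multi-triangle}, whose polynomial factor is overwhelmed by the geometric decay, gives weak-type bounds on $\Pi(\vec F)$ at both $p_0^{(\pm)}$. A final Marcinkiewicz-style step — integrating the super-level measure $\mu_0(\|\Pi(\vec F)\|_{\SO_0}>\lambda)$ in $\lambda$ after splitting each $F_j$ into an atomic tail above and below a $\lambda$-dependent threshold and using whichever of the two weak bounds is more favorable in each regime — then upgrades the bracketing weak bounds to the target strong $L^{\bar p_0}_{\mu_0}\SO_0$ estimate. Implicit constants stay controlled throughout because the auxiliary exponents $\vec p^{(\vec\epsilon)}, \vec p_*(\vec k)$ lie at positive distance from $\partial\Omega_{\tmop{restr}}$.
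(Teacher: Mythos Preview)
Your convexity argument is correct and matches the paper's. The gap is in your consolidation step. The claim that for every $\vec k$ one can obtain, by pairing with a companion exponent and interpolating, bounds at two \emph{fixed} output exponents $p_0^{(-)}<\bar p_0<p_0^{(+)}$ that still decay like $2^{-\delta'\sum_j|k_j|}$ is false as stated. Consider the orthant where all $k_j>0$: the decay condition forces $1/p_j>1/\bar p_j$ for every $j$, hence $1/p_0=\sum_j 1/p_j>1/\bar p_0$, so no point in $\Omega_{\tmop{restr}}$ with output exponent $p_0^{(+)}>\bar p_0$ gives decay there. Interpolating with a companion on the opposite side, where the bound \emph{grows} at a comparable rate, lands you at $p_0^{(+)}$ with exponent roughly $2^{+c\sum_j k_j}$, not decay. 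The diagonal direction $\sum_j k_j$ is rigidly coupled to $1/p_0$ and cannot be decoupled by log-convexity alone; having $M$-dimensional interior does not help because the constraint is an inequality on a linear functional. Your subsequent ``Marcinkiewicz-style step'' cannot repair this, since the preceding sum has already diverged.

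The paper's resolution is to change coordinates on $\Z^M$, writing $k_i=n+\text{(off-diagonal part)}$ with $n=M^{-1}\sum_i k_i$ the diagonal coordinate and the off-diagonal part lying in the hyperplane $\{\sum_i w_i=0\}$. For each of the $2^{M-1}$ sign patterns in the off-diagonal directions one chooses a \emph{single} $\vec p$ in the interior of $\Omega_{\tmop{restr}}$ lying on the hyperplane $\sum_j 1/p_j=1/\bar p_0$, so the output exponent is always $\bar p_0$ and there is no consolidation problem; this gives geometric decay in the off-diagonal coordinates $\vec l$. The diagonal coordinate $n$ is then handled directly at the super-level-measure level: for each threshold $2^{n_0/\bar p_0}$ one uses the restricted bounds at the $(1\pm\varepsilon)$-dilated exponents (which perturb $1/p_0$ slightly) to obtain decay $2^{-\varepsilon|n-n_0|}$, and finally sums over $n_0\in\Z$ to recover $\|\Pi(\vec F)\|_{L^{\bar p_0}_{\mu_0}\SO_0}^{\bar p_0}$ via the layer-cake representation. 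Your $\lambda$-dependent splitting is morally this last step, but it must be done in place of, not after, an attempt to sum at fixed $p_0^{(\pm)}$.
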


The proof of this fact is a direct adaptation of the proof of restricted weak
type interpolation for multilinear forms on classical Lebesgue spaces.

\begin{proof}
  Let us first show that \(\Omega_{\tmop{restr}}\) is convex. Let \((p_{1, 0}^{-
  1}, \ldots, p_{M, 0}^{- 1}), (p_{1, 1}^{- 1}, \ldots, p_{M, 1}^{- 1}) \in
  \Omega_{\tmop{restr}}\) and let \(\bar{p}_{j }^{- 1} = (1 - \theta) p_{j,
  0}^{- 1} + \theta p_{j, 1}^{- 1}\) for a fixed \(\theta \in (0, 1)\). By log
  convexity of outer Lebesgue spaces (\eqref{eq:outer-log-convexity} in
  \Cref{prop:outer-properties}) it holds that
  \[
  \begin{aligned}[t]
  &\Big\| \Pi (F_1, \ldots, F_M)\Big\|_{L_{\mu_0}^{\bar{p}_0, \infty} \SO_0} \hspace{-2em}
  \begin{aligned}[t]& \hspace{2em}\leq
       \Big\| \Pi (F_1, \ldots, F_M)\Big\|_{L_{\mu_0}^{\bar{p}_0} \SO_0}\\ &
       \leq C_{\vec{p}_0, \vec{p}_1, \theta} \Big\| \Pi (F_1, \ldots,
       F_M)\Big\|_{L_{\mu_0}^{p_{0, 0}, \infty} \SO_0}^{(1 - \theta)} \Big\| \Pi (F_1,
       \ldots, F_M)\Big\|_{L_{\mu_0}^{p_{1, 0}, \infty} \SO_0}^{\theta}\\ &
       \leq C_{\vec{p}_0, \vec{p}_1, \theta} C_0^{(1 - \theta)} C_1^{\theta}
       \prod_{j = 1}^M \mu_j (\| F_j \|_{\SO_j} > 0)^{\frac{1 - \theta}{p_{j,
       0}} + \frac{\theta}{p_{j, 1}}} \|F_j \|_{\SO_j},\\ &
 \end{aligned}
  \\ &      C_0 \eqd C_{\tmop{restr}} ((p_{1, 0}, \ldots, p_{M, 0})) \qquad C_1
       \eqd C_{\tmop{restr}} ((p_{1, 1}, \ldots, p_{M, 1})),
  \end{aligned}
 \]
  thus showing that \((\bar{p}_1^{- 1}, \ldots, \bar{p}_M^{- 1}) \in
  \Omega_{\tmop{restr}}\), as required. We need to show that the bounds above
  hold with the strong outer Lebesgue norms instead.
  
  Assume that the factors on the right hand side of
  \eqref{eq:abstract-Lq-holder} are finite and non-zero, for otherwise there
  is nothing to prove. By homogeneity we may assume that \(\|F_j
  \|_{L^{\bar{p}_j}_{\mu_j} \SO_j} = 1\) for each \(j \in \{1, \ldots, M\}\).
  Using the atomic decomposition of \(F_j\) (proposition
  \ref{prop:atomic-decomposition}) let us represent
  \[ F_j = \sum_{k \in \Z} F_j^k \quad \text{with } F_j^k \eqd \1_{\Delta
     A_j^k} F_j \]
  and
  \[ \| F_j^k \|_{\SO_j} \leq 2^{\frac{k}{\bar{p}_j}} \qquad \sum_{k \in \Z}
     \mu_j (\Delta A^k_j) 2^k \approx 1. \]
  We reason under the assumptions that the above sums are finite; the full
  statement then follows by a limiting argument that we omit. We represent any
  \(\vec{k} = (k_1, \ldots, k_M) \in \Z^M\) as
  \[ k_i = n + d^{- 1} \sum_{j = 1}^{M - 1} l_j \mf{s}_j \mf{w}_{j, i}, \]
  with \(n \in \frac{1}{M} \Z\), for some \(\vec{l} = (l_1, \ldots, l_{M - 1})
  \in \N^{M - 1}\), \(\overrightarrow{\mf{s}} = \left( \mf{s}_1, \ldots,
  \mf{s}_{M - 1} \right) \in \{ - 1, 1 \}^{M - 1}\), and with \
  \(\overrightarrow{\mf{w}}_1, \ldots, \overrightarrow{\mf{w}}_{M - 1} \in
  \Z^M_0 \subset \Z^M\) a pairwise orthogonal collection of vectors that we
  denote by \(\overrightarrow{\mf{w}}_j \eqd \left( \mf{w}_{j, i} \right)_{i
  \in \{ 1, \ldots, M \}}\), \(j \in \{ 1, \ldots, M - 1 \}\), where
  \[ \Z^M_0 \eqd \Big\{ \overrightarrow{\mf{\vec{w}}} = ( \mf{w}_1,
     \ldots, \mf{w}_{M - 1} ) \in \Z^M \suchthat \sum_{j = 1}^M \mf{w}_j
     = 0  \Big\} . \]
  This is always possible by choosing \(n = M^{- 1} \sum_{i = 1}^M k_i\) and \(d
  \in \N\) large enough so that the map \(\Z^{M - 1} \ni (l_1, \ldots, l_{M -
  1}) \mapsto d^{- 1} \sum_{j = 1}^{M - 1} l_j \overrightarrow{\mf{w}}_j\) is
  surjective onto the set \(M^{- 1} {\Z^M_0} \). Using the above representation
  of \(\vec{k}\), we can decompose
  \[ \begin{aligned}[t]&
       \Pi (F_1, \ldots, F_M) = {\sum_{\overrightarrow{\mf{s}} \in \{ - 1, 1
       \}^{M - 1}}}_{\mf{\overrightarrow{}}} \Pi^{\overrightarrow{\mf{s}}}
       (F_1, \ldots, F_M),\\ &
       \begin{aligned}[t]&
         \Pi^{\overrightarrow{\mf{s}}} (F_1, \ldots, F_M) \eqd\\ &
         \qquad \sum_{n \in \frac{1}{M} \Z} \sum_{(l_1, \ldots, l_{M - 1}) \in
         \N^M} \Pi \Big(F_1^{n + d^{- 1} \sum_{j = 1}^{M - 1} l_j \mf{s}_j
         \mf{w}_{j, 1}}, \ldots, F_M^{n + d^{- 1} \sum_{j = 1}^{M - 1} l_j
         \mf{s}_j \mf{w}_{j, M}}\Big) .
       \end{aligned} 
     \end{aligned} \]
  In the above notation we set \(F_i^{k_i} = 0\) if \(k_i \notin \Z\). Let us fix
  \(\overrightarrow{\mf{s}}\) and show that \eqref{eq:abstract-Lq-holder} holds
  for \(\Pi^{\overrightarrow{\mf{s}}}\) in place of \(\Pi \). There exists a point
  \(\vec{p} = (p_1, \ldots, p_M) \in (0, \infty)\) with \((p_1^{- 1}, \ldots,
  p_M^{- 1})\) in the interior of \(\Omega_{\tmop{restr}}\) such that \(\sum_{i =
  1}^M {p_i} ^{- 1} = \bar{p}_0^{- 1}\) and such that
  \[ \mf{s}_j \sum_{i = 1}^M (p_i^{- 1} - \bar{p}_i^{- 1}) \mf{w}_{j, i} > 0
     \qquad \forall j \in \{ 1, \ldots, M - 1 \} . \]
  Let us choose an \(\varepsilon \in (0, 1)\) small enough to have
  \[ \begin{aligned}[t]&
       ((1 \pm \varepsilon)^{- 1} p_1^{- 1}, \ldots, (1 \pm \varepsilon)^{- 1}
       p_M^{- 1}) \in \Omega_{\tmop{restr}},\\ &
       \mf{s}_j \sum_{i = 1}^M (p_i^{- 1} - (1 \pm \varepsilon)^{- 1}
       \bar{p}_i^{- 1}) \mf{w}_{j, i} > \varepsilon \qquad \forall j \in \{ 1,
       \ldots, M - 1 \} .
     \end{aligned} \]
  Using the restricted weak boundedness assumptions with the exponents above,
  for any \(\lambda \in (0, \infty)\) it holds that
  \[ \begin{aligned}[t]&
       \mu_0 \Big( \Big\| \Pi (F_1^{n + d^{- 1} \sum_{j = 1}^{M - 1} l_j
       \mf{s}_j \mf{w}_{j, 1}}, \ldots, F_M^{n + d^{- 1} \sum_{j = 1}^{M - 1}
       l_j \mf{s}_j \mf{w}_{j, M}}) \Big\|_{\SO_0} >
     \lambda^{\frac{1}{\bar{p}_0}} \Big)\\ &\qquad
       \lesssim \lambda^{- (1 \pm \varepsilon)} \prod_{i = 1}^M \mu_i \Big(
       \| F_i \|_{\SO_i} \Big)^{\frac{\bar{p}_0 (1 \pm \varepsilon)}{p_i (1
       \pm \varepsilon)}} 2^{\frac{\bar{p}_0 (1 \pm \varepsilon)}{\bar{p}_i}
       \big( n + d^{- 1} \sum_{j = 1}^{M - 1} l_j \mf{s}_j \mf{w}_{j, i}
       \big)}\\ & \qquad
     =
     \begin{aligned}[t]
     & \lambda^{- (1 \pm \varepsilon)} \prod_{i = 1}^M \mu_i \Big( \Big\|
       F_i^{n + d^{- 1} \sum_{j = 1}^{M - 1} l_j \mf{s}_j \mf{w}_{j, i}}
       \Big\|_{\SO_i} \Big)^{\frac{\bar{p}_0}{p_i}}
       2^{\frac{\bar{p}_0}{p_i} \big( n + d^{- 1} \sum_{j = 1}^{M - 1} l_j
       \mf{s}_j \mf{w}_{j, i} \big)} \\ &\qquad
       \times 2^{\pm \varepsilon n} 2^{- \bar{p}_0 d^{- 1} \sum_{j = 1}^{M -
       1} l_j \mf{s}_j \sum_{i = 1}^M \mf{w}_{j, i} (p_i^{- 1} - (1 \pm
       \varepsilon) \bar{p}_i^{- 1})}     
     \end{aligned}
     \end{aligned} \]
  By the quasi-triangle inequality on \(\| \cdot \|_{\SO_0}\), for a
  sufficiently large \(C > 1\) it holds that
  \[ \begin{aligned}[t]&
       \begin{aligned}[t]
         \mu_0 \Big( \Big\| \sum_{n \in \frac{1}{M} \Z} \sum_{(l_1, \ldots,
         l_{M - 1}) \in \N^M} \Pi (&F_1^{n + d^{- 1} \sum_{j = 1}^{M - 1} l_j
         \mf{s}_j \mf{w}_{j, 1}}, \ldots \Big. \Big. \\ &\qquad 
         \Big. \Big. \ldots, F_M^{n + d^{- 1} \sum_{j = 1}^{M - 1} l_j
         \mf{s}_j \mf{w}_{j, M}}) \Big\|_{\SO_0} > C
         2^{\frac{n_0}{{\bar{p}_0} }} \Big)
       \end{aligned}\\ &
       \lesssim \begin{aligned}[t]
       \sum_{n \in \frac{1}{M} \Z} &\sum_{(l_1, \ldots, l_{M - 1}) \in
       \N^M} \mu_0 \Big( \Big\| \Pi (F_1^{n + d^{- 1} \sum_{j = 1}^{M - 1} l_j
         \mf{s}_j \mf{w}_{j, 1}}, \ldots \Big\nobracket  
         \Big\nobracket \\ & \qquad 
         \Big. \Big\nobracket  \ldots, F_M^{n + d^{- 1} \sum_{j = 1}^{M - 1}
         l_j \mf{s}_j \mf{w}_{j, M}}) \Big\|_{\SO_0} {>
         2^{\frac{n_0}{\bar{p}_0} - \frac{\varepsilon'}{\bar{p}_0} | n - n_0 |
         - \frac{\varepsilon'}{\bar{p}_0} \sum_{j = 1}^{M - 1} l_j}}  \Big)
       \end{aligned}\\ &
       \lesssim
       \begin{aligned}[t]
       2^{- n_0} &\sum_{(l_1, \ldots, l_{M - 1}) \in \N^M} 2^{-
       \bar{p}_0 \varepsilon d^{- 1} \sum_{j = 1}^{M - 1} l_j}
       \sum_{\tmscript{n \in \frac{1}{M} \Z}}  2^{- (\varepsilon - \varepsilon') | n - n_0 |}\\ &
       \qquad \times \prod_{i = 1}^M \mu_i \Big( \Big\| F_i^{n + d^{- 1} \sum_{j =
       1}^{M - 1} l_j \mf{s}_j \mf{w}_{j, i}} \Big\|_{\SO_i}
       \Big)^{\frac{\bar{p}_0}{p_i}} 2^{\frac{\bar{p}_0}{p_i} \Big( n +
       d^{- 1} \sum_{j = 1}^{M - 1} l_j \mf{s}_j \mf{w}_{j, i} \Big)} .
       \end{aligned}
     \end{aligned} \]
  Summing over \(n_0 \in \Z\) we get that
  \[ \begin{aligned}[t]&
       \sum_{n_0 \in \Z} \begin{aligned}[t]
         \mu_0 \Big( \Big\| \sum_{n \in \frac{1}{M} \Z} \sum_{(l_1, \ldots,
         l_{M - 1}) \in \N^M} \Pi (&F_1^{n + d^{- 1} \sum_{j = 1}^{M - 1} l_j
         \mf{s}_j \mf{w}_{j, 1}}, \ldots \Big.  \Big.\\ & \qquad
           \Big. \Big. \ldots, F_M^{n + d^{- 1} \sum_{j = 1}^{M - 1} l_j
           \mf{s}_j \mf{w}_{j, M}}) \Big\|_{\SO_0} > C
           2^{\frac{n_0}{{\bar{p}_0} }} \Big)
       \end{aligned}\\ &
       \lesssim \begin{aligned}[t]
         \sum_{(l_1, \ldots, l_{M - 1}) \in \N^M} 2^{- \bar{p}_0 \varepsilon
         d^{- 1} \sum_{j = 1}^{M - 1} l_j} \sum_{\tmscript{n \in \frac{1}{M} \Z}} &\prod_{i = 1}^M \mu_i \Big( \Big\| F_i^{n + d^{- 1}
         \sum_{j = 1}^{M - 1} l_j \mf{s}_j \mf{w}_{j, i}} \Big\|_{\SO_i}
         \Big)^{\frac{\bar{p}_0}{p_i}}\\ &\qquad
         \times 2^{\frac{\bar{p}_0}{p_i} \Big( n + d^{- 1} \sum_{j = 1}^{M -
         1} l_j \mf{s}_j \mf{w}_{j, i} \Big)}
       \end{aligned}\\ &
       \lesssim \begin{aligned}[t]
       \sum_{(l_1, \ldots, l_{M - 1}) \in \N^M} 2^{- \bar{p}_0 \varepsilon d^{- 1} \sum_{j = 1}^{M - 1} l_j}
       &\prod_{i = 1}^M \Big( \sum_{n \in \frac{1}{M} \Z} \mu_i \Big( \Big\| F_i^{n + d^{- 1} \sum_{j = 1}^{M - 1} l_j \mf{s}_j \mf{w}_{j, i}} \Big\|_{\SO_i} \Big)^{\bar{p}_0}
     \Big. \\ &\qquad
         \times \Big\nobracket 2^{\bar{p}_0 ( n + d^{- 1} \sum_{j =
         1}^{M - 1} l_j \mf{s}_j \mf{w}_{j, i} )}
         \Big)^{\frac{p_i}{\bar{p}_0}}
       \end{aligned}\\ &
       \lesssim \sum_{(l_1, \ldots, l_{M - 1}) \in \N^M} 2^{- \bar{p}_0
       \varepsilon d^{- 1} \sum_{j = 1}^{M - 1} l_j} \lesssim 1
     \end{aligned} \]
\end{proof}

As a result we deduce the outer Hölder inequality used extensively in the
context of outer Lebesgue spaces; variants include {\cite[Proposition
2.34]{amentaBilinearHilbertTransform2020}}, {\cite[Proposition
3.4]{doTheoryOuterMeasures2015}}.

\begin{corollary}[Hölder inequality for outer Lebesgue spaces]
  \label{cor:outer-holder-classical}For each\\ \(j \in \{0, 1, \ldots, M\}\) let
  \((\XO_j, \BO_j^{\cup}, \mu_j, \SO_j)\) be an outer space. Let
  \[ \Pi : \prod_{j = 1}^M L^{\infty}_{\mu_j} \SO_j \to L^{\infty}_{\mu_0}
     \SO_j \]
  be an \(M\)-linear map such that
  \begin{equation}
    \| \Pi (F_1, \ldots, F_M)\|_{\SO_0} \leq C_{\infty} \prod_{j = 1}^M \|F_j
    \|_{\SO_j} \label{eq:abstract-size-holder}
  \end{equation}
  and suppose that for all \(F_j \in L^{\infty}_{\mu_j} \SO_j\) it holds that
  \begin{equation}
    \mu_0 (\| \Pi (F_1, \ldots, F_M) \|_{\SO_0} > 0) \leq \min_{j = 1, \ldots,
    M} \mu_j \bigl( \| F_j \|_{\SO_j} > 0 \bigr) .
    \label{eq:abstract-measure-holder}
  \end{equation}
  Then for all \(\vec{p} = (p_1, \ldots, p_M) \in (0, \infty]^M\) it holds that
  \begin{equation}
    \| \Pi (F_1, \ldots, F_M)\|_{L_{\mu_0}^{p_0} \SO_0} \lesssim_{p_j} C
    \prod_{j = 1}^M \|F_j \|_{L_{\mu_j}^{p_j} \SO_j}
    \label{eq:abstract-Lp-holder}
  \end{equation}
  with \(p_0^{- 1} \eqd \sum_{j = 1}^M p_j^{- 1}\).
\end{corollary}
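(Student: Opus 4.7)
The plan is to deduce the corollary by verifying the restricted weak-type hypothesis of \Cref{prop:outer-restricted-interpolation} at the vertices of the unit cube $[0,1]^M$ and then invoking that proposition. First observe that, because \eqref{eq:abstract-size-holder} gives a pointwise-constant upper bound on $\|\Pi(F_1,\ldots,F_M)\|_{\SO_0}$ and \eqref{eq:abstract-measure-holder} controls its $\mu_0$-support, for every $p_0 \in (0,\infty]$ the inequality
\[
\|\Pi(F_1,\ldots,F_M)\|_{L^{p_0,\infty}_{\mu_0}\SO_0} \leq \|\Pi(F_1,\ldots,F_M)\|_{\SO_0} \cdot \mu_0\bigl(\|\Pi(F_1,\ldots,F_M)\|_{\SO_0} > 0\bigr)^{1/p_0}
\]
holds trivially, since $\mu_0(\|\Pi\|_{\SO_0} > \lambda) \leq \mu_0(\|\Pi\|_{\SO_0} > 0)$ for all $\lambda > 0$ and vanishes as soon as $\lambda$ exceeds $\|\Pi(F_1,\ldots,F_M)\|_{\SO_0}$.

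For each $S \subseteq \{1,\ldots,M\}$ I consider the vertex of $[0,1]^M$ at which $p_j = 1$ for $j \in S$ and $p_j = \infty$ for $j \notin S$, so that $p_0 = |S|^{-1}$. For $S = \emptyset$ the required endpoint bound \eqref{eq:abstract-endpoint-holder} is precisely the size hypothesis \eqref{eq:abstract-size-holder}. For $S \neq \emptyset$, substituting \eqref{eq:abstract-size-holder} together with the support bound $\mu_0(\|\Pi\|_{\SO_0} > 0) \leq \min_{j \in S} \mu_j(\|F_j\|_{\SO_j} > 0)$ into the display above, and applying the elementary inequality $\min_{j \in S} a_j \leq \prod_{j \in S} a_j^{1/|S|}$, yields
\[
\|\Pi(F_1,\ldots,F_M)\|_{L^{p_0,\infty}_{\mu_0}\SO_0} \leq C_\infty \prod_{j=1}^{M} \|F_j\|_{\SO_j} \cdot \prod_{j \in S} \mu_j\bigl(\|F_j\|_{\SO_j} > 0\bigr)^{1/p_j},
\]
which is \eqref{eq:abstract-endpoint-holder} at this vertex with constant $C_\infty$. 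Hence every vertex of $[0,1]^M$ lies in $\Omega_{\tmop{restr}}$.

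By the convexity of $\Omega_{\tmop{restr}}$ established in \Cref{prop:outer-restricted-interpolation}, we conclude $\Omega_{\tmop{restr}} \supseteq [0,1]^M$, so every tuple $(\bar p_1^{-1},\ldots,\bar p_M^{-1}) \in (0,1)^M$ is an interior point, and \Cref{prop:outer-restricted-interpolation} delivers the strong bound \eqref{eq:abstract-Lp-holder} for all $\bar p_j \in (1,\infty)$. The remaining cases in which some $\bar p_j = \infty$ are treated by fixing the corresponding $F_j$, absorbing $\|F_j\|_{\SO_j}$ into the constants in \eqref{eq:abstract-size-holder} and \eqref{eq:abstract-measure-holder}, and applying the previous case to the resulting lower-arity multilinear operator; this induction on the number of finite exponents terminates in the pure size bound. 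There is no real obstacle here: the substantive work is the endpoint verification above, while the rest is a mechanical application of the interpolation machinery already developed.
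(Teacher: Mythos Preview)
Your proof is correct and takes essentially the same approach as the paper: verify the restricted weak-type hypothesis of \Cref{prop:outer-restricted-interpolation} and then invoke that proposition. The paper's verification is slightly more direct---rather than checking only the vertices of $[0,1]^M$ and appealing to convexity, it observes in a single line that for \emph{every} tuple $(p_1^{-1},\ldots,p_M^{-1})$ one has $\mu_0(\|\Pi\|_{\SO_0}>0)^{1/p_0}=\mu_0(\|\Pi\|_{\SO_0}>0)^{\sum_j 1/p_j}\le\prod_j\mu_j(\|F_j\|_{\SO_j}>0)^{1/p_j}$ (since $\mu_0\le\mu_j$ for each $j$ by \eqref{eq:abstract-measure-holder}), which together with \eqref{eq:abstract-size-holder} gives the restricted bound at all points at once; your explicit reduction-of-arity argument for the boundary case $p_j=\infty$ is a detail the paper leaves implicit.
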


\begin{proof}
  The claim above follows by noticing that
  \[ \| \Pi (F_1, \ldots, F_M) \|_{L_{\mu_0}^{p_0} \SO_0} \begin{aligned}[t]&
       \leq \mu_0 \left( \| \Pi (F_1, \ldots, F_M) \|_{\SO_0} > 0
       \right)^{\frac{1}{p_0}} \| \Pi (F_1, \ldots, F_M)
       \|_{L_{\mu_0}^{\infty} \SO_0}\\ &
       \leq \prod_{j = 1}^M \mu_j (\| F_j \|_{\SO_j} > 0)^{\frac{1}{p_j}}
       \|F_j \|_{\SO_j}
     \end{aligned} \]
  and applying \Cref{prop:outer-restricted-interpolation}.
\end{proof}

The next statement is usually referred to as outer Radon-Nikodym. It has been
used extensively in results proved using the framework of outer Lebesgue
spaces e.g. in {\cite[Proposition 2.33]{amentaBilinearHilbertTransform2020}},
{\cite[Corollary 4.13]{amentaBanachvaluedModulationInvariant2020}},
{\cite[Lemma 2.2]{uraltsevVariationalCarlesonEmbeddings2016}}. We merely
sketch its proof. This result allows one to compare classical integrals with
\(L^1\)-type outer Lebesgue quasi-norms e.g. the one appearing on
{\LHS{\eqref{eq:RN-domination-BHT}}}, that represents the dual form to
\(\BHT_{\beta}\) in terms of embedded functions. \

\begin{proposition}[Radon-Nikodym -type domination]
  \label{prop:outer-RN}Let \((\XO, \BO^{\cup}, \mu, \SO)\) be an outer space and
  let \(\mf{m}\) be a Radon measure on \(\XO\) and let \(\phi_1, \ldots, \phi_k \in
  \Phi_{\gr}^{\infty}\). Suppose the bound
  \[
  \begin{aligned}[t]
  & \Big| \int_B F (x) [\phi_1, \ldots, \phi_k] \dd{\mf{m} (x)} \Big| \leq
     C_{\mf{m}} \mu (B) \| \1_B F\|_{\SO} \qquad \forall B \in \BO,\\ & \qquad
     \forall F \in L^1_{\dd \mf{m}} (E) \otimes^k
     {\Phi_{\gr}^N}' 
  \end{aligned}
  \]
  holds with some constant \(C_{\mf{m}} > 0\) and
  \[ \mu (A) = 0 \hspace{0.17em} \Rightarrow \mf{m} (A) = 0 \qquad \forall A
     \in \Bor (\XO) . \]
  Then we have
  \[ | \int_{\XO} F (x) [\phi_1, \ldots, \phi_k] \hspace{0.17em} \dd{\mf{m}
     (x)}| \lesssim C_{\mf{m}} \|F\|_{L^1_{\mu} \SO} \qquad
     \forall F \in L^1_{\dd \mf{m}} (E) \otimes^k {\Phi_{\gr}^N}' . \]
\end{proposition}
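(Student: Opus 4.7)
The plan is to reduce the statement to summing contributions from the pieces of the $p=1$ atomic decomposition of $F$ (Proposition \ref{prop:atomic-decomposition}), bounding each piece's integral against $\mf{m}$ by covering its support with generators from $\BO$ and invoking the hypothesis.

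First, I would apply Proposition \ref{prop:atomic-decomposition} with exponent $p=1$ to produce a decreasing sequence $A_k \in \BO^{\cup}$ with $\mu(A_k) \leq 2 \mu(\|F\|_{\SO} > 2^k)$ and atoms $F_k \eqd \1_{\Delta A_k} F$ satisfying $\|F_k\|_{\SO} \leq 2^k$ together with $\sum_k 2^k \mu(\Delta A_k) \approx \|F\|_{L^1_{\mu} \SO}$. For each fixed $k$, I would then use \eqref{eq:measure-recovery} to pick a countable cover $A_{k-1} \subseteq \bigcup_n B_{k,n}$ by elements $B_{k,n} \in \BO$ with $\sum_n \mu(B_{k,n}) \leq 2 \mu(A_{k-1})$. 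Since $F_k$ is supported on $\Delta A_k \subseteq A_{k-1}$, additivity of $\mf{m}$ and the hypothesis applied to each $B_{k,n}$ with integrand $F_k$ give
\[
\Big| \int F_k \, \dd \mf{m} \Big| \leq \sum_n \Big| \int_{B_{k,n}} F_k \, \dd \mf{m} \Big| \leq C_{\mf{m}} \sum_n \mu(B_{k,n}) \|\1_{B_{k,n}} F_k\|_{\SO}.
\]
The monotonicity property \eqref{eq:size-monotonicity} yields $\|\1_{B_{k,n}} F_k\|_{\SO} \leq \|F_k\|_{\SO} \leq 2^k$, so that $|\int F_k \, \dd \mf{m}| \lesssim C_{\mf{m}} \mu(A_{k-1}) \, 2^k \lesssim C_{\mf{m}} \mu(\|F\|_{\SO} > 2^{k-1}) \, 2^k$.

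Summing over $k$ and comparing with the integral representation of the outer $L^1$-quasi-norm yields
\[
\sum_k \Big| \int F_k \, \dd \mf{m} \Big| \lesssim C_{\mf{m}} \sum_k \mu(\|F\|_{\SO} > 2^{k-1}) \, 2^k \lesssim C_{\mf{m}} \int_0^{\infty} \mu(\|F\|_{\SO} > \lambda) \, \dd \lambda = C_{\mf{m}} \|F\|_{L^1_{\mu} \SO},
\]
which is the desired bound once the identity $\int F \, \dd \mf{m} = \sum_k \int F_k \, \dd \mf{m}$ is justified. For the latter I would truncate via $F - \sum_{k=-N}^N F_k = \1_{A_N} F + \1_{\XO \setminus A_{-N-1}} F$ as in Proposition \ref{prop:atomic-decomposition}. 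The lower-tail $\1_{\XO \setminus A_{-N-1}} F$ has size at most $2^{-N-1}$ and can be controlled by the single-level argument applied on a $\BO$-cover of a fixed set of finite $\mf{m}$-measure containing the $\mf{m}$-essential support of $F$; the upper-tail $\1_{A_N} F$ is handled using $\mu(A_N) \to 0$ combined with the null-set absolute continuity assumption $\mu(A)=0 \Rightarrow \mf{m}(A) = 0$, together with absolute continuity of the $\mf{m}$-integral of $F \in L^1_{\dd \mf{m}}$.

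The main obstacle is this last convergence step: the atomic decomposition organizes $F$ according to the abstract outer quantity $\mu$, whereas the integral identity sits in classical $L^1_{\dd \mf{m}}$. A clean workaround is to first prove the bound for functions $F \in L^{\infty}_{\mu} \SO$ with $\mu(\|F\|_{\SO} > 0) < \infty$ and supported in a $\BO$-set (for which all manipulations are finite sums), and then extend by the density statement at the end of Proposition \ref{prop:atomic-decomposition} together with the a priori bound just established.
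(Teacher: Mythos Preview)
Your proposal follows essentially the same route as the paper's (sketched) proof: atomic decomposition at level $p=1$, then for each atom cover the support by generators $B_{k,n}\in\BO$, apply the hypothesis on each generator, and sum. Your treatment of the tail/convergence issue is in fact more explicit than the paper's, which simply remarks that the integrability and absolute-continuity hypotheses justify the limits.

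One small slip: the step ``$|\int F_k\,\dd\mf{m}|\le\sum_n|\int_{B_{k,n}}F_k\,\dd\mf{m}|$'' is not guaranteed by additivity of $\mf{m}$ when the $B_{k,n}$ overlap. The paper handles this by disjointifying the cover into $\Delta B_{k,n}\eqd B_{k,n}\setminus\bigcup_{m<n}B_{k,m}$, writing $\int F_k=\sum_n\int_{\Delta B_{k,n}}F_k$, and then applying the hypothesis on $B_{k,n}$ to the function $\1_{\Delta B_{k,n}}F_k$ (so that the integral over $B_{k,n}$ equals the integral over $\Delta B_{k,n}$). With that adjustment the rest of your argument goes through unchanged, since $\sum_n\mu(B_{k,n})\lesssim\mu(A_{k-1})$ is all you use.
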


\begin{proof}
  If \(\|F\|_{L^1_{\mu} \SO} \leq 1\) and \(F \in L^1_{\dd \mf{m}} (E) \otimes^k
  {\Phi_{\gr}^N}'\) we enact the atomic decomposition of \(F\) as per
  \Cref{prop:atomic-decomposition} to obtain
  \[ F = \sum_{k \in \Z} F_k \]
  with
  \[ \sum_{k \in \Z} \mu ( \| F_k \|_{\SO} > 0) \| F_k \|_{\SO}
     \lesssim \|F\|_{L^1_{\mu} \SO} . \]
  We claim that
  \[ \Big| \int_{\XO} F_k (x) [\phi_1, \ldots, \phi_k] \dd{\mf{m} (x)}
    \Big| \leq C_{\mf{m}} \mu ( \| F_k \|_{\SO} > 0  )
     \| F_k \|_{\SO} . \]
  This would allow us to conclude by summing the above estimate over \(k \in
  \Z\) and using that \(F_k\) are supported on disjoint sets. The assumptions on
  \(\mu\), \(\mf{m}\), and on the integrability of \(F\) justify the convergence of
  appropriate series. To prove the claimed bound on \(F_k\) we choose \(E_k \in
  \BO^{\cup}\) such that \(\1_{\XO \setminus E_k} F_k = 0\) and \(\mu (E_k)
  \approx \mu \left( \| F_k \|_{\SO} > 0 \right)\). We then cover \(E_k\)
  optimally with sets from \(\BO\) by selecting \(B_{k, n} \in \BO\) such that \
  \(E_k \subset \bigcup_{n \in \N} B_{k, n}\) with \(\sum_{n \in \N} \mu (B_{k,
  n}) \lesssim \mu (E_k)\). It holds that
  \[ \begin{aligned}[t]&
       \left| \int_{\XO} F_k (x) [\phi_1, \ldots, \phi_k] \dd{\mf{m} (x)}
       \mf{m} (x) \right| \leq \sum_{n \in \N} \left| \int_{\Delta B_{k, n}}
       F_k (x) [\phi_1, \ldots, \phi_k] \dd{\mf{m} (x)} \mf{m} (x) \right|\\ &
       \lesssim C_{\mf{m}} \sum_{n \in \N} \mu (\Delta B_{k, n}) \left\|
       \1_{\Delta B_{k, n}} F_k \right\|_{\SO} \lesssim \mu \left( \| F_k
       \|_{\SO} > 0 \right) \| F_k \|_{\SO} .
     \end{aligned} \]
  We used the monotonicity property \eqref{eq:size-monotonicity} in the above
  inequalities, and \eqref{eq:measure-recovery} to find the sets \(B_{k, n} \in
  \BO\). Summing the above inequality over \(k\) yields the claim. 
\end{proof}

\subsection{The time-frequency-scale
space}\label{sec:time-frequency-outer-lebesgue}

In this section we discuss the outer \\ Lebesgue space structure we will be using
in this paper. First we introduce trees: distinguished subsets of \(\R^3_+\)
that represent frequency- and space-localized portions of \(\R^3_+\),
interpreted as the group of symmetries (as per \eqref{eq:symmetries}). Next we
introduce the multiple sizes that will come into play in our argument. We
first present all of them succinctly to allow for efficient consultation; we
postpone the discussion about the motivation and approximate meaning of these
sizes to the end of the corresponding subsection.

\subsubsection{The \(\sigma\)-generating collection: trees}

We begin our construction of the outer Lebesgue space on \(\R^3_+\) by
introducing the family of distinguished sets, \tmtextit{trees}, efficiently
describe the local geometry of the time-frequency-scale space and play the
role of a \(\sigma\)-generating collection.

\begin{definition}[Trees]
  \label{def:tree}A tree \(T_{\Theta} (\xi_T, x_T, s_T)\) with top \((\xi_T, x_T,
  s_T) \in \R^3_+\) is the set given by
  \begin{equation}
    \begin{aligned}[t]&
      T_{\Theta} (\xi_T, x_T, s_T) \eqd \pi_{(\xi_T, x_T, s_T)}
      (\mT_{\Theta}),\\ &
      \mT_{\Theta} \eqd \left\{ (\theta, \zeta, \sigma) \in \Theta \times B_1
      \times (0, 1) \st \sigma < 1 - | \zeta | \right\},\\ &
      \pi_{(\xi, x, s)} (\theta, \zeta, \sigma) \eqd (\xi + \theta (s
      \sigma)^{- 1}, x + s \zeta, s \sigma) .
    \end{aligned} \label{eq:def:tree}
  \end{equation}
  The set \(\Theta \subset \R\) can be any Borel, that mainly is assumed to be
  an open interval \(\Theta \supset B_4\). 
\end{definition}

There is a one-to-one correspondence between the datum \(\Theta\), \((\xi_T, x_T,
s_T)\) and a tree \(T\). Given a tree \(T\), we call \(\mT_{\Theta}\) its model tree,
\(\Theta\) is its model frequency band, the \(\xi_T\) is the tree's frequency,
\(B_{s_T} (x_T)\) is its spatial interval, \(\xi_T + s_T^{- 1} \Theta\) is its
frequency band, and \(\pi_T \eqd \pi_{(\xi_T, x_T, s_T)}\) is its local
coordinate map.  We denote by \(\mathbb{T}_{\Theta}\) the collection of all
trees \(T = T_{\Theta} (\xi_T, x_T, s_T)\). By \(\TT_{\Theta}^{\cup}\) we denote
countable unions of trees.

We can now use trees to generate an outer measures \(\mu_{\Theta}^{\mf{p}}\) on
\(\R^3_+\) that allow us to measure how large a collection of trees is.

\begin{definition}[Forest outer measure]
  \label{def:forest-outer-measure}Given a countable collection \(\mc{T} \subset
  \mathbb{T}_{\Theta}\) we denote its counting function by
  \begin{equation}
    \begin{aligned}[t]&
      N_{\mc{T}} (z) \eqd \sum_{T \in \mc{T}} \1_{B_{s_{T}} (x_T)} (z) .
    \end{aligned} \label{eq:def:counting-function}
  \end{equation}
  The outer measures \(\mu_{\Theta}^{\mf{p}}\) of a set \(E \subset \R^3_+\) is
  defined in terms of the counting function of a covering by trees:
  \[ \mu_{\Theta}^{\mf{p}} (E) \eqd \inf \Big\{ \|N_{\mc{T}}
     (z)\|_{L^{\mf{p}}_{\dd z} (\R)} \st \mc{T} \subset \TT_{\Theta},
     \; E \subset \bigcup_{T \in \mc{T}} T \Big\}, \qquad
     \mf{p} \in [1, + \infty] . \]
  We are principally interested in \(\mf{p} \in \{ 1, \infty \}\).
\end{definition}

Any tree \(T_{\Theta} (\xi_T, x_T, s_T)\) can be obtained from \(T_{\Theta} (0,
0, 1)\) as follows.
\begin{equation}
  T_{\Theta} (\xi_T, x_T, s_T) = \{ (\xi_T + s_T^{- 1} \eta, x_T + s_T y, s_T
  t) \suchthat (\eta, y, t) \in T_{\Theta} (0, 0, 1) \} .
  \label{eq:tree-symmetry}
\end{equation}
This is in agreement with the understanding that trees represent localized
subsets of \(\R^3_+\) seen as a group of symmetries. As a matter of fact, if
\((\eta, y, t) \in T_{\Theta} (\xi_T, x_T, s_T)\) it holds that the spatial
support of \(\Tr_y \Mod_{\eta} \Dil_t \phi (z)\) is essentially contained in
\(B_{s_T} (x_T)\); more precisely
\[ \left| \Tr_y \Mod_{\eta} \Dil_t \phi (z) \right| \lesssim \left\langle
   \frac{z - x_T}{s_T} \right\rangle^{- N} \| \phi \|_{\Phi^N_{\gr}} . \]
Furthermore, localization in Fourier space can be seen from the fact that
\[ \dist \left( \spt \left( \FT{\Tr_y \Mod_{\eta} \Dil_t \phi} \right), \xi_T
   \right) \lesssim \diam \left( \spt \left( \FT{\Tr_y \Mod_{\eta} \Dil_t
   \phi} \right) \right) \]
for any \((\eta, y, t) \in T_{\Theta} (\xi_T, x_T, s_T)\). Note however that
trees are not scale-localized e.g. if \((\eta, y, t) \in T_{\Theta} (\xi_T,
x_T, s_T)\) then \((\eta, y, t') \in T_{\Theta} (\xi_T, x_T, s_T)\) for all \(t' <
t\). Trees respects the symmetry property of the map \(f \mapsto \Emb  [f]\). It
follows from \eqref{eq:embedding} that
\begin{equation}
  \begin{aligned}[t]&
    \Emb [f] (\eta, y, t) [\phi]\\ &
    \qquad = e^{2 \pi i \xi (y - x)} \Emb \left[ s \Dil_{s^{- 1}} \Mod_{- \xi}
    \Tr_{- x } f \right] \Bigl( s (\eta - \xi), \frac{y - x}{s}, \frac{t}{s}
    \Bigr) [\phi]
  \end{aligned} \label{eq:emb-symmetry}
\end{equation}
Identities \eqref{eq:tree-symmetry} and \eqref{eq:emb-symmetry} together give
\begin{equation}
  \begin{aligned}[t]&
    \Emb [f] \circ \pi_T (\theta, \zeta, \sigma)\\ &
    \qquad = e^{2 \pi i \xi_T s_T \zeta} \Emb \left[ s_T \Dil_{s_T^{- 1}}
    \Mod_{- \xi_T} \Tr_{- x_T} f \right] \circ \pi_{(0, 0, 1)} (\theta, \zeta,
    \sigma) .
  \end{aligned} \label{eq:piT-symmetry}
\end{equation}

\subsubsection{Sizes}\label{sec:sizes}

The second ingredient of an outer Lebesgue spaces are sizes. To construct
sizes, we first introduce \tmtextit{local sizes} measure how ``large'' a
function is on any fixed tree \(T \in \TT_{\Theta}\). Formally, a
\tmtextit{local size} is a collection \(\left( \SO (T)  \right)_{T \in
\TT_{\Theta}}\) indexed by \(T \in \TT_{\Theta}\) of quasi-norms on \(\Rad (T)
\otimes^k {\Phi_{\mf{r}}^{\infty}}'\) for some fixed \(k \in \N\); we require
that the quasi-triangle constants \({\tmop{Tri}_{_{\SO (T)}}} \) be uniform i.e.
that there exist \(\tmop{Tri}_{\SO} \geq 1\) such that
\[ \| F_1 + F_2 \|_{\SO (T)} \leq \tmop{Tri}_{\SO} \left( \| F_1 \|_{\SO (T)}
   + \| F_1 \|_{\SO (T)} \right) \qquad \forall T \in \TT_{\Theta}, \; \forall
   F_1, F_2 \in \Rad (T) \otimes^k {\Phi_{\mf{r}}^{\infty}}' . \]
Local sizes generate a size by setting
\begin{equation}
  \|F\|_{\SO} \eqd \sup_{T \in \TT_{\Theta}} \sup_{W^- \in
  \TT^{\cup}_{\Theta}}  \| \1_{\R^3_+ \setminus W^-} F\|_{\SO (T)}
  \label{eq:generated-size:trees}
\end{equation}
where we identify \(\1_{\R^3_+ \setminus W^-} F\) appearing above with its
restriction to \(T\). The upper bound over \(W^-\) guarantees that monotonicity
\eqref{eq:size-monotonicity} holds.

We will now introduce several (local) sizes that will each control relevant
properties of functions \(F \in \Rad (T) {\otimes \Phi_{\mf{r}}^{\infty}}'\).
These sizes are combined to obtain the size \(\SF^u_{\Theta} \Phi_{\gr}^N\) in
\eqref{eq:non-uniform-embedding-full-size}. Most of the sizes below depend on
the datum \(N, \gr\) and \(\Theta\) that we track but do not explicitly discuss.

\begin{definition}[Lebesgue size]
  \label{def:lebesgue-size}Let \((u, v) \in [1, \infty]^2\). We define the local
  sizes \(\SL{}^{(u, v)}_{\Theta}\) as follows. For \(F \in \Rad (\R^{3}_{+})\) and \(T \in \mathbb{T}_{\Theta}\) set
  \begin{equation}
    \begin{aligned}[t]&
      \|F\|_{\SL{}^{(u, v)}_{\Theta} (T)} \eqd {\sup_G}  \left| \frac{1}{|
      \Theta | s_T} \int_T G (\eta, y, t)  \dd F (\eta, y, t) \right|
    \end{aligned}  \label{eq:lebesgue-size-scalar}
  \end{equation}
  where the upper bound gets taken over all \(G \in C^{\infty}_c (T)\) with \(\|
  G \circ \pi_T \|_{L^{u'}_{\frac{\dd \theta \dd \zeta}{| \Theta |} }
  L^{v'}_{\frac{\dd \sigma}{\sigma}}} \leq 1\), where \(u' = \frac{u}{u - 1},
  v' = \frac{v}{v - 1}\) and
  \[ \begin{aligned}[t]&
       \| G \circ \pi_T \|_{L^{u'}_{\frac{\dd \theta \dd \zeta}{| \Theta
       |} } L^{v'}_{\frac{\dd \sigma}{\sigma}}} \assign \Big( \int_{\R^2}
       \Big( \int_{\R_+} | G \circ \pi_T (\theta, \zeta, \sigma) |^{v'}
       \frac{\dd \sigma}{\sigma} \Big)^{\frac{u'}{v'}} \frac{\dd
       \theta \dd \zeta}{| \Theta |} \Big)^{\frac{1}{u'}} .
     \end{aligned} \]
  The integral in \eqref{eq:lebesgue-size-scalar} should be interpreted as the
  integration of the test function \(G (\eta, y, t)\) against the measure \(F\).
  For \(F \in \Rad (\R^{3}_{+}) \otimes^k {{\Phi^{\infty}_{\gr}}'} \)
  we define the local sizes
  \begin{equation}
    \|F\|_{\SL{}^{(u, v)}_{\Theta} \Phi_{\gr}^N (T)} {\assign \sup_{\phi_1,
    \ldots, \phi_k}}  \|F [\phi_1, \ldots, \phi_k] \|_{\SL{}^{(u, v)}_{\Theta}
    (T)} \label{eq:lebesgue-size}
  \end{equation}
  with the upper bound being taken over all wave packets \(\phi_1, \ldots,
  \phi_k \in \Phi_{\gr}^{\infty}\) with \(\| \phi_j \|_{\Phi_{\gr}^N} \leq 1\),
  \(j \in \{ 1, \ldots, k \}\). 
\end{definition}

In \eqref{eq:lebesgue-size-scalar} we defined the size by duality because we
need to deal with the eventuality that \(F\) is a Radon measure and is not given
by a locally integrable function w.r.t to the measure \(\dd \eta \dd y \dd t\).
If \(F \in L^1_{\tmop{loc}}\) then it simply holds that
\[ \| F \|_{\SL{}^{(u, v)}_{\Theta}} = \left\| \left( \1_{\mT_{\Theta}} F
   \circ \pi_T \right) (\theta, \zeta, \sigma) \right\|_{L^u_{\frac{\dd
   \theta \dd \zeta}{| \Theta |} } L^v_{\frac{\dd \sigma}{\sigma}}} . \]
Furthermore, if \(u \neq \{ 1, \infty \}\) and \(v \neq \{ 1, \infty \}\) then \(\|
F \|_{\SL{}^{(u, v)}_{\Theta}} < \infty\) only if \(F \in L^1_{\tmop{loc}}\).
However, we will come across functions of the form \(F (\eta, y, t) t \delta
\left( t - \mf{b} (\eta, y) \right) \dd \eta \dd y \dd t\) for some \(F \in
L^{\infty}_{\tmop{loc}} (\R^{3}_{+})\) and for some continuous
function \(\mf{b} \of \R^2 \rightarrow [0, + \infty]\). Integrating a test
function \(G\) against such a Radon measure amounts to computing the integral
\[ G \mapsto \int_{\R^2} G \left( \eta, y, \mf{b} (\eta, y) \right) F \left(
   \eta, y, \mf{b} (\eta, y) \right) \mf{b} (\eta, y) \dd \eta \dd y. \]
\begin{definition}[Lacunary size]
  \label{def:lacunary-size}Let \((u, v) \in [1, \infty]^2\); we define the local
  sizes \(\SL^{(u, v)}_{\Theta} \wpD_{\gr}^{N }\) as follows. For \(F \in \Rad
  (\R^3_+) {{\otimes \Phi^{\infty}_{\gr}}'} \) and \(T \in \mathbb{T}_{\Theta}\)
  set
  \[ \|F (\eta, y, t) \|_{{\SL  }^{(u, v)}_{\Theta} \wpD_{\gr}^{N } (T)}
     \assign \left\|  \wpD_{\zeta} (t (\eta - \xi_T)) F (\eta, y, t)
     \right\|_{{\SL  }^{(u, v)}_{\Theta} \Phi_{\mf{r}}^N (T)} . \]
  The linear operator \(\wpD_{\zeta} (\theta)\), defined on \(F \in \Rad (\R^3_+)
  \otimes \Phi_{\gr}^{\infty} (\R)'\) is given by
  \begin{equation}
    \wpD_{\zeta} (\theta) F (\theta, \zeta, \sigma) [\phi] \eqd F (\theta,
    \zeta, \sigma) [(- d_z + 2 \pi i \theta) \phi (z)] \label{eq:space-boost}
  \end{equation}
  where \(z\) in {\RHS{{\eqref{eq:space-boost}}}} plays the role of a dummy
  variable.
\end{definition}

\begin{definition}[Defect size]
  \label{def:defect-size}Let \((u, v) \in [1, \infty]^2\); we define the
  {\tmem{space defect}} local sizes \({\SL_{\Theta}^{(u, v)}}  \dfct_{\zeta,
  \gr}^N\) and the {\tmem{scale defect}} local sizes \(\SL^{(u, v)}_{\Theta}
  \dfct_{\sigma, \gr}^N\) as follows. For \(F \in \Rad (\R^{3}_{+})
  {{\otimes \Phi^{\infty}_{\gr}}'} \) and \(T \in \mathbb{T}_{\Theta}\) set
  \begin{equation}
    \begin{aligned}[t]&
      \|F\|_{{\SL_{\Theta}^{(u, v)}}  \dfct^N_{\zeta, \gr} (T)} \assign
      \left\| \left( \wpD_{\zeta} (t (\eta - \xi_T)) - t \dd_y + 2 \pi i
      \xi_T \right) F (\eta, y, t)  \right\|_{\SL_{\Theta}^{(u, v)}
      \Phi_{\mf{r}}^N (T) },\\ &
      \|F\|_{\SL^{(u, v)}_{\Theta} \dfct^N_{\sigma, \gr} (T)} \assign \left\|
      \left( \wpD_{\sigma} (t (\eta - \xi_T)) - t \dd_t + (\eta - \xi_T)
      \dd_{\eta} \right) F (\eta, y, t) \right\|_{\SL_{\Theta}^{(u, v)}
      \Phi_{\mf{r}}^N (T)} .
    \end{aligned}  \label{eq:defect-size}
\end{equation}

  The operator \(\wpD_{\sigma} (\theta)\), defined on \(\Rad (\R^3_+) {\otimes
  \Phi_{\gr}^{\infty}}'\), is given by
  \begin{equation}
    \begin{aligned}[t]&
      \wpD_{\sigma} (\theta) F (\theta, \zeta, \sigma) [\phi] \assign F
      (\theta, \zeta, \sigma) [(- d_z + 2 \pi i \theta) (z \phi (z))] .
    \end{aligned} \label{eq:scale-boost}
  \end{equation}
  The derivatives appearing in \eqref{eq:defect-size} are intended in the
  distributional sense.
  
  We combine the two defect sizes and use the shorthand
  \begin{equation}
    {\SL_{\Theta}^{(u, v)}}  \dfct^N_{\gr} (T) \eqd {\SL_{\Theta}^{(u, v)}} 
    \dfct^N_{\zeta, \gr} (T) + {\SL_{\Theta}^{(u, v)}}  \dfct^N_{\sigma, \gr}
    (T) . \label{eq:total-defect-size}
  \end{equation}
\end{definition}

We introduce an additional size that represents the maximal truncation of a
localized singular integral operator. This size has not been studied in prior
works that use outer Lebesgue spaces in the context of time frequency
analysis. Controlling this size for functions \(F = \Emb [f]\) is one of the
main objectives of \Cref{sec:non-uniform:SIO}.

\begin{definition}[SIO truncation size]
  \label{def:SIO-size}Let \(u \in [1, \infty)\); we define the {\tmem{SIO
  truncation}} local sizes \(\SJ_{\Theta}^u \Phi_{\gr}^N\) as follows. For \(F
  \in \Rad (\R^{3}_{+}) {{\otimes \Phi^{\infty}_{\gr}}'} \) and \(T \in
  \mathbb{T}_{\Theta}\) set
  \begin{equation}
    \begin{aligned}[t]&
      \|F\|_{\SJ_{\Theta}^u \Phi_{\gr}^N (T)} \eqd 
      {\sup_{\tmscript{\begin{aligned}[t]&
        \phi, G
      \end{aligned}}}}  \left| \frac{1}{s_T | \Theta |} \int_T \wpD_{\zeta} (t
      (\eta - \xi_T)) F (\eta, y, t) G (\eta, y, t) \dd \eta \dd y \dd t
      \right|
    \end{aligned} \label{eq:SIO-size}
  \end{equation}
  where the upper bound gets taken over all \(\phi \in \Phi_{\gr}^{\infty}\)
  with \(\| \phi \|_{\Phi_{\gr}^N} \leq 1\) and over all \(G \in C^{\infty}_c
  (\R^{3}_{+})\) such that
  \[ \| \sigma d_{\sigma} G \circ \pi_T (\theta, \zeta, \sigma)
     \|_{L^{u'}_{\frac{\dd \theta \dd \zeta}{| \Theta |} }
     L^{\infty}_{\frac{\dd \sigma}{\sigma}}} \leq 1. \]
\end{definition}

With these definitions in hand we have all the elements necessary to
understand \eqref{eq:non-uniform-embedding-full-size} that defines the size
\(\SF^u_{\Theta} \Phi_{\gr}^N\). The single summands in
\eqref{eq:non-uniform-embedding-full-size} are related to the local sizes
above through \eqref{eq:generated-size:trees}.

\subsubsection{Sizes: some technical remarks}

All the sizes above are regular in the sense of \eqref{eq:size-regularity}. On
a technical level, this allows us to prove most of our statements under the
assumptions that the functions \(F \in \Rad (\R^{3}_{+}) {\otimes
\Phi^{\infty}_{\gr}}'\) in play are compactly supported.

\begin{remark}
  \label{rmk:cpt-sets}There exists two sequences of forests \(W_n^+ \in
  \TT_{\Theta}^{\cup}\) and \(W_n^- \in \TT_{\Theta}^{\cup}\) such that \(W^+_n\)
  is increasing, \(W^-_n\) is decreasing, and \(\mathbb{K}_n \eqd W^+_n \setminus
  W^-_n\) are compact, increasing, and satisfy \(\bigcup_n \mathbb{K}_n =
  \R^3_+\).
\end{remark}

We simplify dealing with the dependence on \(\phi \in \Phi^{\infty}_{\gr}\)
appearing in the definition of all sizes above using the following wave packet
decomposition result.

\begin{lemma}
  \label{lem:wave-packet-decomposition} For any \(0 < N' < N\) there exists a
  sequence of test functions \(\tilde{\phi}_k \in \Phi_{2 \gr}^{N'}\), \(k \in
  \Z\) satisfying \(\| \tilde{\phi}_k \|_{\Phi^{N'}_{2 \gr}} \lesssim 1\) such
  that the following decomposition holds. For any \(\phi \in \Phi_{\gr}^N\)
  there exists a sequence \(a_k \in \C\) such that
  \[ \phi (z) = \sum_{k \in \Z} a_k \tilde{\phi}_k, \]
  and \(| a_k | \lesssim \langle k \rangle^{- (N - N')} \| \phi
  \|_{\Phi_{\gr}^N} .\) In particular, the above sum converges absolutely in
  \(\Phi^{N'}_{2 \gr}\) if \(N > N' + 1\). 
\end{lemma}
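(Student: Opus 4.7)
The plan is to realize the decomposition as a Shannon-type periodization of $\FT{\phi}$, followed by a renormalization that places all the $k$-dependence on the coefficients $a_k$.

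First I would fix a smooth even cutoff $\chi \in C^{\infty}_c(\R)$ with $\spt \chi \subseteq \overline{B_{2\gr}}$ and $\chi \equiv 1$ on $\overline{B_{\gr}}$, and periodize $\FT{\phi}$ on the Fourier side at period $4\gr$:
\[
\tilde{F}(\xi) \eqd \sum_{k \in \Z} \FT{\phi}(\xi - 4\gr k).
\]
Since $\spt \FT{\phi} \subseteq \overline{B_{\gr}}$, the translates are disjoint, so $\tilde{F} = \FT{\phi}$ on $\overline{B_{\gr}}$. Expanding $\tilde{F}$ as a Fourier series on $[-2\gr, 2\gr]$ gives $\tilde{F}(\xi) = \sum_k c_k e^{2\pi i \xi k/(4\gr)}$ with
\[
c_k = \frac{1}{4\gr} \int_{-2\gr}^{2\gr} \FT{\phi}(\xi)\, e^{-2\pi i \xi k/(4\gr)} \dd \xi = \frac{1}{4\gr}\, \phi\bigl(-k/(4\gr)\bigr).
\]
Multiplying by $\chi$ and inverting the Fourier transform yields the key identity
\[
\phi(z) = \sum_{k \in \Z} c_k\, \check{\chi}\bigl(z + k/(4\gr)\bigr),
\]
where $\check{\chi}$ is the inverse Fourier transform of $\chi$. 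All translates $z \mapsto \check{\chi}(z + k/(4\gr))$ have Fourier support inside $\overline{B_{2\gr}}$.

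Next I would set
\[
\tilde{\phi}_k(z) \eqd \langle k \rangle^{-N'}\, \check{\chi}\bigl(z + k/(4\gr)\bigr), \qquad a_k \eqd \langle k \rangle^{N'}\, c_k,
\]
so that $\phi = \sum_k a_k \tilde{\phi}_k$ with $\tilde{\phi}_k \in \Phi^{N'}_{2\gr}$. The normalization factor $\langle k \rangle^{-N'}$ is chosen precisely to absorb the growth of $\|\chi(\xi)\, e^{2\pi i \xi k/(4\gr)}\|_{C^{N'}}$: applying Leibniz, each $\xi$-derivative of $e^{2\pi i \xi k/(4\gr)}$ produces a factor of order $k$, so $\|\FT{\tilde{\phi}_k}\|_{C^{N'}} \lesssim \langle k \rangle^{-N'} \langle k \rangle^{N'} \lesssim 1$, with implicit constant depending only on $\chi$, $\gr$, and $N'$.

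For the coefficient bound, I would integrate by parts in
$\phi(z) = \int_{-\gr}^{\gr} \FT{\phi}(\xi)\, e^{2\pi i z \xi} \dd \xi$
exactly $N$ times, using the compact support of $\FT{\phi}$ and $\FT{\phi} \in C^N$, to obtain $|\phi(z)| \lesssim \langle z \rangle^{-N} \|\phi\|_{\Phi^N_{\gr}}$. Evaluating at $z = -k/(4\gr)$ yields $|c_k| \lesssim \langle k \rangle^{-N} \|\phi\|_{\Phi^N_{\gr}}$ and hence $|a_k| \lesssim \langle k \rangle^{-(N-N')} \|\phi\|_{\Phi^N_{\gr}}$, as required. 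Under the condition $N > N' + 1$, the series $\sum_k |a_k|\, \|\tilde{\phi}_k\|_{\Phi^{N'}_{2\gr}}$ is summable, giving absolute convergence in $\Phi^{N'}_{2\gr}$. There is no real obstacle here; the only point requiring care is the renormalization in Step 2, which ensures that the $\tilde{\phi}_k$ carry \emph{uniformly} bounded $\Phi^{N'}_{2\gr}$-norms while all the $k$-decay is funneled into $a_k$.
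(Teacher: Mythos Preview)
Your proof is correct and follows essentially the same approach as the paper: expand $\FT{\phi}$ in a Fourier series on an interval of length $4\gr$ containing its support, then renormalize by $\langle k\rangle^{-N'}$ so that the fixed basis functions are uniformly bounded in $\Phi^{N'}_{2\gr}$ and all decay is carried by the coefficients. The only cosmetic difference is that the paper takes $\FT{\tilde{\phi}}(\xi)=(2\gr-|\xi|)^{N'+\varepsilon}\1_{B_{2\gr}}$ and correspondingly pre-divides $\FT{\phi}$ by this weight before extracting Fourier coefficients, whereas you use a smooth cutoff $\chi$ and take the coefficients of $\FT{\phi}$ directly; your version is slightly cleaner and avoids the weight bookkeeping.
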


\begin{proof}
  The proof is based on the Fourier inversion formula. Fix \(0 < \varepsilon <
  1\) and set
  \[ \begin{aligned}[t]&
       \FT{\tilde{\phi}} \left( \FT{z} \right) \eqd \left( \left| \FT{z}
       \right| - 2 \gr \right)^{(N' + \varepsilon)} \1_{B_{2 \gr}} \left(
       \FT{z} \right),\\ &
       \tilde{\phi}_k (z) \eqd \langle k \rangle^{- N'} \tilde{\phi} \left( z
       + \frac{k}{4 \gr} \right)
     \end{aligned} \]
  so that \(\| \tilde{\phi}_k (z) \|_{\Phi^{N'}_{2 \gr}} \lesssim 1\). Let
  \[ \begin{aligned}[t]&
       a_k \eqd \frac{\langle k \rangle^{N'}}{\left( 4 \gr \right)^2}
       \int_{B_{2 \gr}} e^{- 2 \pi i \frac{k}{4 \gr} \FT{w}} \FT{\phi} \left(
       \FT{w} \right)  \left( \left| \FT{w} \right| - 2 \gr \right)^{- (N' +
       \varepsilon)} \dd \FT{w} .
     \end{aligned} \]
  Since \(\FT{\phi}\) is supported on \(B_{\gr}\) it holds that \(\left\|
  \FT{\phi} \left( \FT{w} \right)  \left( \left| \FT{w} \right| - 2 \gr
  \right)^{- (N' + \varepsilon)} \right\|_{C^N} \approx \left\| \FT{\phi}
  \left( \FT{w} \right) \right\|_{C^N}\) so \(| a_k | \lesssim \langle k
  \rangle^{- (N - N')} \| \phi \|_{\Phi_{\gr}^N}\). The Fourier inversion
  formula gives us that
  \[ \sum_{k \in \Z} a_k  \langle k \rangle^{- N'} e^{2 \pi i \frac{k}{4 \gr}
     \FT{z}} = \FT{\phi} \left( \FT{z} \right)  \left( \left| \FT{z} \right| -
     2 \gr \right)^{- (N' + \varepsilon)} \]
  with the series converging uniformly. It follows that
  \[ \begin{aligned}[t]&
       \FT{\phi} \left( \FT{z} \right) = \sum_{k \in \Z} a_k  \langle k
       \rangle^{- N'} e^{2 \pi i \frac{k}{4 \gr} \FT{z}} \FT{\tilde{\phi}}
       \left( \FT{z} \right)
     \end{aligned} \]
  i.e.
  \[ \phi (z) = \sum_{k \in \Z} a_k \tilde{\phi}_k (z) \]
  as required. 
\end{proof}

As a consequence one can compare \(F \in \Rad (\R^{3}_{+}) {\otimes
\Phi^{\infty}_{2 \gr}}'\) with scalar functions in the following sense.

\begin{corollary}
  \label{cor:wave-packet-decomposition-and-sizes}Let \((\eta, y, t) \mapsto
  \phi_{(\eta, y, t)} \in \Phi_{\gr}^N\) be a measurable function with \(\|
  \phi_{(\eta, y, t)} \|_{\Phi^N_{\gr}} \lesssim 1\). Then \((\eta, y, t)
  \mapsto F (\eta, y, t) [\phi_{(\eta, y, t)}]\) defines an element of \(\Rad
  (\R^{3}_{+})\) with
  \begin{equation}
    \| F (\eta, y, t) [\phi_{(\eta, y, t)}] \|_{\SL^{(u, v)}_{\Theta}}
    \lesssim \| F \|_{\SL^{(u, v)}_{\Theta} \Phi^{N'}_{2 \gr}}
    \label{eq:scalar-non-scalar-bound-size}
  \end{equation}
  as long as \(\| F \|_{\SL^{(u, v)}_{\Theta} \Phi^{N'}_{2 \gr}} < \infty\) and
  \(N \geq N' + 4\). Furthermore
  \begin{equation}
    \| F (\eta, y, t) [\phi_{(\eta, y, t)}] \|_{L^p_{\mu^{\mf{p}}_{\Theta}}
    \SL^{(u, v)}_{\Theta}} \lesssim \| F \|_{\SL^{(u, v)}_{\Theta}
    \Phi^{N'}_{2 \gr}} . \label{eq:scalar-non-scalar-bound-Lp}
  \end{equation}
  for any \(p \in (0, + \infty]\) and
  \begin{equation}
    \mu^{\mf{p}}_{\Theta} \Bigl( \| F (\eta, y, t) [\phi_{(\eta, y, t)}]
    \|_{\SL^{(u, v)}_{\Theta}} > \lambda \Bigr) \leq \mu^{\mf{p}}_{\Theta}
    \Bigl( \| F \|_{\SL^{(u, v)}_{\Theta} \Phi^{N'}_{2 \gr}} > C \lambda
    \Bigr) . \label{eq:scalar-non-scalar-bound-spt}
  \end{equation}
  Finally, for any \(p \geq 1\) it holds that that
  \begin{equation}
    \| F \|_{L^p_{\mu^{\mf{p}}_{\Theta}} \SL^{(u, v)}_{\Theta} \Phi^N_{\gr}}
    \lesssim \sup_{\phi} \| F (\eta, y, t) [\phi]
    \|_{L^p_{\mu^{\mf{p}}_{\Theta}} \SL^{(u, v)}_{\Theta}},
    \label{eq:non-scalar-scalar-bound-Lp}
  \end{equation}
  where the upper bound is taken over all \(\phi \in \Phi^{\infty}_{2 \gr}\)
  with \(\| \phi \|_{\Phi^{N'}_{2 \gr}} \lesssim 1\), as long as \(N \geq N' +
  4\). 
\end{corollary}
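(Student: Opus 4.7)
The plan is to use Lemma \ref{lem:wave-packet-decomposition} to reduce all four statements to the case of a fixed countable family of wave packets. Choosing $N' \leq N-4$, the lemma provides fixed $\tilde{\phi}_k \in \Phi^{N'}_{2\gr}$ with $\|\tilde{\phi}_k\|_{\Phi^{N'}_{2\gr}} \lesssim 1$ and measurable coefficients $a_k(\eta,y,t)$ satisfying $|a_k(\eta,y,t)| \lesssim \langle k \rangle^{-(N-N')}$ such that $\phi_{(\eta,y,t)} = \sum_k a_k(\eta,y,t)\tilde{\phi}_k$ pointwise. By linearity, $F(\eta,y,t)[\phi_{(\eta,y,t)}] = \sum_k a_k(\eta,y,t)\, F(\eta,y,t)[\tilde{\phi}_k]$, where each summand is a scalar Radon measure and the series converges absolutely on every compact set by the local boundedness of $F \in \Rad(\R^{3}_{+}) \otimes \Phi^{\infty}_{2\gr}'$.

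For \eqref{eq:scalar-non-scalar-bound-size}, I would observe that the Lebesgue size is a genuine norm (by $L^{u'}L^{v'}$-duality in \eqref{eq:lebesgue-size-scalar}) and is contracted by multiplication by any bounded measurable function; together with the triangle inequality on the series and the trivial bound $\|F(\cdot)[\tilde{\phi}_k]\|_{\SL^{(u,v)}_{\Theta}(T)} \lesssim \|F\|_{\SL^{(u,v)}_{\Theta}\Phi^{N'}_{2\gr}(T)}$ built into the definition \eqref{eq:lebesgue-size}, this gives $\|F(\cdot)[\phi_{(\cdot)}]\|_{\SL^{(u,v)}_{\Theta}(T)} \lesssim \|F\|_{\SL^{(u,v)}_{\Theta}\Phi^{N'}_{2\gr}(T)} \sum_k \langle k \rangle^{-(N-N')}$, with the series converging thanks to $N \geq N'+4$. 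The outer Lebesgue statements \eqref{eq:scalar-non-scalar-bound-Lp} and \eqref{eq:scalar-non-scalar-bound-spt} follow from the same decomposition coupled with the generalized quasi-triangle inequality \eqref{eq:multi-triangle} for $L^p_{\mu^{\mf{p}}_{\Theta}} \SL^{(u,v)}_{\Theta}$ and $\sigma$-subadditivity of $\mu^{\mf{p}}_{\Theta}$. Concretely, for a target superlevel $\lambda$ I would take $E = \bigcup_k E_k$ where each $E_k$ is near-optimal for $\{\|F[\tilde{\phi}_k]\|_{\SL^{(u,v)}_{\Theta}} > c_k\lambda\}$, with weights $c_k \sim \langle k \rangle^{N-N'-1}$ calibrated so that $\sum_k c_k^{-1} k^C \langle k \rangle^{-(N-N')} \lesssim 1$; then $\mu^{\mf{p}}_{\Theta}(E) \leq \sum_k \mu^{\mf{p}}_{\Theta}(E_k)$ and integration in $\lambda$ produces the outer $L^p$ bound. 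For \eqref{eq:non-scalar-scalar-bound-Lp} the argument is the same but applied to a fixed (now scalar-coefficient) $\phi \in \Phi^N_{\gr}$, with the supremum over $\phi$ taken only at the end; each term $\|F[\tilde{\phi}_k]\|_{L^p_{\mu^{\mf{p}}_{\Theta}} \SL^{(u,v)}_{\Theta}}$ is then controlled uniformly in $k$ by the right-hand side of \eqref{eq:non-scalar-scalar-bound-Lp}.

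The only real obstacle is arranging absolute convergence of the resulting series $\sum_k c_k^{-p} k^{pC} \langle k \rangle^{-p(N-N')}$, where $C$ is the polynomial growth constant from \eqref{eq:multi-triangle} determined by the quasi-triangle constant of $L^p_{\mu^{\mf{p}}_{\Theta}} \SL^{(u,v)}_{\Theta}$. Any fixed margin $N - N'$ larger than a constant depending on $p$ and this quasi-triangle constant suffices, and the choice $N \geq N'+4$ is a comfortable concrete value. The regularity property \eqref{eq:size-regularity} justifies passing from truncated sums to the full decomposition, so nothing beyond bookkeeping is at stake.
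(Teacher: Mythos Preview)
Your overall strategy matches the paper's: everything flows from Lemma~\ref{lem:wave-packet-decomposition} together with the observation that $\SL^{(u,v)}_\Theta$ is a genuine norm (triangle constant $1$). However, you overcomplicate \eqref{eq:scalar-non-scalar-bound-spt}, and as written your argument with separate sets $E_k$ and their union does not yield that pointwise-in-$\lambda$ superlevel bound. The sum $\sum_k \mu^{\mf{p}}_\Theta(E_k)$ is not in general controlled by a single term $\mu^{\mf{p}}_\Theta(\|F\|_{\SL^{(u,v)}_\Theta \Phi^{N'}_{2\gr}} > C\lambda)$: take $F$ supported on one tree with unit size, so that for small $\lambda$ arbitrarily many $E_k$ each have unit measure. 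The paper's route is shorter. Since \eqref{eq:scalar-non-scalar-bound-size} holds with $\1_{\R^3_+ \setminus E} F$ in place of $F$ for any $E \in \TT^{\cup}_\Theta$, the very set $E$ that witnesses $\|\1_{\R^3_+ \setminus E} F\|_{\SL^{(u,v)}_\Theta \Phi^{N'}_{2\gr}} \leq C\lambda$ automatically witnesses $\|\1_{\R^3_+ \setminus E} F[\phi_{(\cdot)}]\|_{\SL^{(u,v)}_\Theta} \leq \lambda$, giving \eqref{eq:scalar-non-scalar-bound-spt} in one step; \eqref{eq:scalar-non-scalar-bound-Lp} then follows by integrating in $\lambda$.

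Consequently your ``obstacle'' about the polynomial factor $k^C$ from \eqref{eq:multi-triangle} is a red herring: all the summation over $k$ happens at the level of the size, where the triangle constant is $1$, so no such factor appears. The only place a genuine union-of-$E_k$ argument is needed is \eqref{eq:non-scalar-scalar-bound-Lp}, where the paper does exactly what you sketch (with weights $c_k \sim \langle k\rangle^2$), and there the integration over $\lambda$ absorbs the sum cleanly; the margin $N - N' \geq 4$ suffices and no dependence on the outer-$L^p$ quasi-triangle constant enters.
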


\begin{proof}
  Bounds \eqref{eq:scalar-non-scalar-bound-size} and
  \(\eqref{eq:scalar-non-scalar-bound-spt}\) follow directly from
  \Cref{lem:wave-packet-decomposition} noting that the quasi-triangle
  inequality holds with constant \(1\). From those two bounds one deduces
  \eqref{eq:scalar-non-scalar-bound-Lp}. For bound
  \eqref{eq:non-scalar-scalar-bound-Lp} note that
  \[ \| F \|_{\SL^{(u, v)}_{\Theta} \Phi^N_{\gr}} \lesssim \sum_{k \in \Z}
     \langle k \rangle^{- (N - N')} \| F \circ \Gamma [\tilde{\phi}_k]
     \|_{\SL^{(u, v)}_{\Theta}} \]
  with \(\tilde{\phi}_k\) given by \Cref{lem:wave-packet-decomposition}. Thus
  \[ \mu^{\mf{p}}_{\Theta} \left( \| F \|_{\SL^{(u, v)}_{\Theta}
     \Phi^N_{\gr}} > \lambda \right) \lesssim \sum_{k \in \Z}
     \mu^{\mf{p}}_{\Theta} \left( \| F [\tilde{\phi}_k] \|_{\SL^{(u,
     v)}_{\Theta}} > \lambda C \langle k \rangle^2 \right) \]
  and
  \[  \| F \|_{L^p_{\mu^{\mf{p}}_{\Theta}} \SL^{(u, v)}_{\Theta}
       \Phi^N_{\gr}}^p \begin{aligned}[t]&
      \lesssim \sum_{k \in \Z} \int_0^{\infty}
       \mu^{\mf{p}}_{\Theta} \left( \| F [\tilde{\phi}_k] \|_{\SL^{(u,
       v)}_{\Theta}} > \lambda^{\frac{1}{p}} C \langle k \rangle^2 \right) \dd
       \lambda\\ &
       \lesssim \sum_{k \in \Z} \langle k \rangle^{- 2 p} \int_0^{\infty}
       \mu^{\mf{p}}_{\Theta} \left( \| F [\tilde{\phi}_k] \|_{\SL^{(u,
       v)}_{\Theta}} > \lambda^{\frac{1}{p}} \right) \dd \lambda\\ &
       = \sum_{k \in \Z} \langle k \rangle^{- 2 p} \| F [\widetilde{\phi_k}]
       \|_{\SL^{(u, v)}_{\Theta}}^p .
     \end{aligned} \]
  This shows \eqref{eq:non-scalar-scalar-bound-Lp}.
\end{proof}

\subsubsection{Sizes: motivation}\label{sec:size-motivation}

We conclude this section by providing some details and motivation for the
definition of the sizes introduced so far. As already discussed for the trees
themselves, the sizes are adapted to treating \(\R^3_+\) as the group of
symmetries \eqref{eq:symmetries}. If \(\SO_{\Theta} \Phi^N_{\gr}\) is any of the
sizes \(\SL^{(u, v)}_{\Theta} \Phi^N_{\gr}\), \(\SL^{(u, v)}_{\Theta}
\wpD^N_{\gr}\), \(\SL^{(u, v)}_{\Theta} \dfct^N_{\gr}\), or \(\SJ^u_{\Theta}
\Phi^N_{\gr}\) above, then setting
\[ \tilde{F} (\eta, y, t) = e^{2 \pi i \xi (y - x)} F \Bigl( s (\eta - \xi),
   \frac{y - x}{s}, \frac{t}{s} \Bigr) \]
it holds that
\begin{equation}
  \| \tilde{F} \|_{\SO_{\Theta} \Phi_{\gr}^N (T_{\Theta} (\xi', x', s'))} = \|
  F \|_{\SO_{\Theta} \Phi_{\gr}^N \left( T_{\Theta} \left( s (\xi' - \xi),
  \frac{x' - x}{s}, \frac{s'}{s} \right) \right)} \label{eq:size-symmetry}
\end{equation}
and thus \(\| \tilde{F} \|_{\SO_{\Theta} \Phi_{\gr}^N} = \| F \|_{\SO_{\Theta}
\Phi_{\gr}^N}\). Thanks to this fact and identities \eqref{eq:piT-symmetry} and
\eqref{eq:tree-symmetry}, in our proofs we will often be able to restrict our
focus to \(T = T_{\Theta} (0, 0, 1)\). Let us make this assumption also for the
discussion below that motivates the sizes we introduce and relates them to
well-known quantities in harmonic analysis.

The Lebesgue size \(\SL{}^{(u, v)}_{\Theta}\) of \Cref{def:lebesgue-size}
controls the magnitude of the coefficients of the embedding on a tree. If \(v =
\infty\) this is closely related to bounding a space and frequency localized
version of a Hardy-Littlewood maximal function. Let \(\phi \in \Phi^N_{\gr}\) be
chosen to maximize {\RHS{\eqref{eq:lebesgue-size}}} for \(F = \Emb [f]\).
Setting \(\widetilde{\phi}_{\theta} = \Mod_{\theta} (\phi^{\vee})\) we have
\[ \bigl\| \Emb [f] \bigr\|_{\SL{}^{(u, \infty)}_{\Theta} \Phi_{\gr}^N (T)}
   \approx \int_{\Theta} \int_{B_1} \sup_{0 < \sigma < 1 - | \zeta |} \left| f
   \ast \Dil_{\sigma} \tilde{\phi}_{\theta} (\zeta)  \right|^u \frac{\dd \zeta
   \dd \theta}{\Theta} . \]
For every fixed \(\theta \in \Theta\) the inner integrand is a Hardy-Littlewood
maximal function over scales bounded from above. The lacunary size \(\SL^{(u,
2)}_{\Theta} \wpD_{\gr}^{N }\) encodes the magnitude of a localized square
function. Let \(\phi \in \Phi^N_{\gr}\) be chosen to extremize the upper bound
in the size \Cref{def:lacunary-size}. Setting \(\psi_{\theta} (z) = \dd_z
\left( \Mod_{\theta} (\phi^{\vee}) (z) \right)\) we have that
\[ \bigl\| \Emb [f] \bigr\|_{\SL{}^{(u, 2)}_{\Theta} \wpD_{\gr}^{N } (T)}
   \approx \int_{\Theta} \int_{B_1} \left( \int_0^{1 - | \zeta |} \left| f
   \ast \Dil_{\sigma} \psi_{\theta} (\zeta)  \right|^2 \frac{\dd
   \sigma}{\sigma} \right)^{u / 2} \frac{\dd \zeta \dd \theta}{\Theta} . \]
For every fixed \(\theta \in \Theta\) the innermost integral is the
Littlewood-Paley square function. Note that \(\psi_{\theta}\) satisfies \(\int
\psi_{\theta} (z) \dd z = 0\).

Next, the {\tmem{defect sizes}} quantify by how much a function \(F \in \Rad
(\R^3_+) {\otimes \Phi^{\infty}_{\gr}}'\) fails to be of the form \(F = \Emb
[f]\). The differential operators \(t \dd_y + 2 \pi i \xi_T\) and \(t \dd_t
- (\eta - \xi_T) \dd_{\eta}\) appearing in \Cref{def:defect-size} are the push
forwards of \(\sigma \dd_{\sigma}\) and \(\sigma \dd_{\zeta}\) using the map
\(\pi_T\). More explicitly
\[ \begin{aligned}[t]&
     \begin{aligned}[t]&
       \sigma \dd_{\zeta} (e^{2 \pi i \xi_T s_T \zeta} F \circ \pi_T (\theta,
       \zeta, \sigma)) \\ &
       \qquad = e^{2 \pi i \xi_T s_T \zeta} (s_T \sigma \partial_y + 2 \pi i
       \xi_T) F (\xi_T + \theta s_T^{- 1} \sigma^{- 1}, x_T + s_T \zeta, s_T
       \sigma)\\ &
       \qquad= e^{2 \pi i \xi_T s_T \zeta}  \left( \left( t \dd_y + 2 \pi i \xi_T
       \right) F (\eta, y, t) \right) \circ \pi_T (\theta, \zeta, \sigma)
     \end{aligned}\\ &
     \begin{aligned}[t]&
       \sigma \dd_{\sigma} (e^{2 \pi i \xi_T s_T \zeta} F \circ \pi_T (\theta,
       \zeta, \sigma))\\ &
       \qquad = e^{2 \pi i \xi_T s_T \zeta} (- \theta s_T^{- 1} \sigma^{- 1}
       \partial_{\eta} + s_T \sigma \partial_t) F (\xi_T + \theta s_T^{- 1}
       \sigma^{- 1}, x_T + s_T \zeta, s_T \sigma)\\ &
       \qquad = e^{2 \pi i \xi_T s_T \zeta} \left( \left( t \dd_t - (\eta - \xi_T)
       \dd_{\eta} \right) F (\eta, y, t) \right) \circ \pi_T (\theta, \zeta,
       \sigma)
     \end{aligned}
   \end{aligned} \]
A direct computation gives that for any \(f \in \Sch (\R)\) it
holds that
\begin{equation}
  \begin{aligned}[t]&
    \sigma \dd_{\zeta} \left( e^{2 \pi i \xi_T s_T \zeta} \Emb [f] \circ \pi_T
    (\theta, \zeta, \sigma) \right) = \wpD_{\zeta} (\theta) \left( e^{2 \pi i
    \xi_T s_T \zeta} \Emb [f] \circ \pi_T (\theta, \zeta, \sigma) \right)\\ &
    \sigma \dd_{\sigma} \left( e^{2 \pi i \xi_T s_T \zeta} \Emb [f] \circ
    \pi_T (\theta, \zeta, \sigma) \right) = \wpD_{\sigma} (\theta) \left( e^{2
    \pi i \xi_T s_T \zeta} \Emb [f] \circ \pi_T (\theta, \zeta, \sigma)
    \right) .
  \end{aligned} \label{eq:embedded-no-defect}
\end{equation}
It follows that \(\left\| \Emb [f] \right\|_{L^{(u, v)}_{\Theta} \dfct_{\zeta,
\gr}^N} = \left\| \Emb [f] \right\|_{L^{(u, v)}_{\Theta} \dfct_{\sigma,
\gr}^N} = 0\). We actually use identity \eqref{eq:embedded-no-defect} to
\tmtextit{define} the operators \(\wpD_{\zeta} (\theta)\) and \(\wpD_{\sigma}
(\theta)\) acting on general functions \(F \in \Rad (\R^3_+) {\otimes
\Phi^{\infty}_{\gr}}'\) not necessarily of the form \(F = \Emb [f]\). In
\eqref{eq:space-boost} and \eqref{eq:scale-boost} we postulate that the
operators \(\wpD_{\zeta} (\theta)\) and \(\wpD_{\sigma} (\theta)\) should act on
the argument \(\Phi^{\infty}_{\gr}\) of \(F \circ \pi_{T_{\Theta} (0, 0, 1)}\) in
the same way as the derivatives \(\sigma \dd_{\zeta}\) and \(\sigma \dd_{\sigma}\)
act on \(\Emb [f] \circ \pi_{T_{\Theta} (0, 0, 1)}\). The defect size then
encodes and quantifies the local failure of a function \(F \in \Rad (\R^3_+)
{\otimes \Phi^{\infty}_{\gr}}'\) to be of the form \(F = \Emb [f]\). We cannot
expect that the functions \(F \in \Rad (\R^3_+) {\otimes \Phi^{\infty}_{\gr}}'\)
in play in the arguments of this paper to always be of the form \(F = \Emb
[f]\). Even in the process of defining size itself, described by
\eqref{eq:generated-size:trees}, we need to consider at least functions of the
form
\[ F = \1_{\R^3_+ \setminus W^-} \Emb [f] \]
with W\(^- \in \TT^{\cup}_{\Theta}\). For the sake of discussion let us suppose
that \(T = T_{\Theta} (0, 0, 1)\): the quantity intervening in the definition of
\({\SL_{\Theta}^{(u, v)}}  \dfct^N_{\zeta, \gr} (T)\) then becomes
\[ \left( \wpD_{\zeta} (t \eta) - t \dd_y \right) F (\eta, y, t) = - \left( t
   \dd_y \1_{\R^3_+ \setminus W^-} (\eta, y, t) \right) \Emb [f] (\eta, y, t)
   . \]
Based on the geometry of the set \(W^-\), as explained in more detail further
down in \Cref{lem:geometry-of-boundary}, the term \(\left( t \dd_y \1_{\R^3_+
\setminus W^-} (\eta, y, t) \right)\) is a Radon measure supported on the
boundary of \(W^-\). A similar discussion can be carried out for the size
\({\SL_{\Theta}^{(u, v)}}  \dfct^N_{\sigma, \gr}\). The crucial theme when
dealing with the defect is showing that mostly this is that generally we can
guarantee that the defect of the functions \(F \in \Rad (\R^3_+) {\otimes
\Phi^{\infty}_{\gr}}'\) in play will resemble the one discussed above.

Finally, the \(\SJ_{\Theta}^u \Phi_{\gr}^N\) represents the maximal truncation
of a localized singular integral operator. To see this chose \(\phi \in
\Phi^N_{\gr}\) to extremize the upper bound in \eqref{eq:SIO-size}. Setting
\(\psi_{\theta} (z) = \dd_z \left( \Mod_{\theta} (\phi^{\vee}) (z) \right)\) and
computing the upper bound over \(G\) we get that
\[ \| \Emb [f] \|_{\SJ_{\Theta}^u \Phi_{\gr}^N (T)}^u \lesssim \int_{\Theta}
   \int_{B_1} \sup_{\sigma < 1 - | \zeta | } \left| \int_0^{\sigma} f \ast
   \Dil_{\rho} \psi_{\theta} (\zeta) \frac{\dd \rho}{\rho} \right|^u \frac{\dd
   \theta \dd \zeta}{| \Theta |} . \]
The inner integral is related and can be bound by the Hardy-Littlewood maximal
function of \(f \ast K\) where the kernel \(K\) is Calderón-Zygmund given by
\[ K (z) \eqd \int_0^{\infty}  \Dil_{\rho} \psi_{\theta} (\zeta) \frac{\dd
   \rho}{\rho} . \]

\subsection{Changes of variables and product sizes}\label{sec:derived-sizes}

The sizes that make up size \(\SF^u_{\Theta} \Phi_{\gr}^N\), introduced in the
previous section, are well suited to control functions of the form \(F = \Emb
[f] \in \Rad (\R^{3}_{+}) {\otimes \Phi_{\gr}^{\infty}}'\). To control
functions \(\Emb [f] \circ \Gamma\) appearing in
\eqref{eq:embedding-bounds:uniform-iterated} we pull back the sizes via the
change of variables \(\Gamma\). We consider changes of variables \(\Gamma =
\Gamma_{(\alpha, \beta, \gamma)}\) of the form
\begin{equation}
  \begin{aligned}[t]&
    \Gamma_{(\alpha, \beta, \gamma)} (\eta, y, t) \eqd (\alpha (\eta + \gamma
    t^{- 1}), y, \beta t),\\ &
    \beta \in (0, 1], \quad | \alpha \beta | \in \left( \frac{1}{2}, 2
    \right), \qquad | \gamma | < 1 \tmmathbf{} .
  \end{aligned} \label{eq:gamma}
\end{equation}
Let us introduce the notation \(\theta_{\Gamma} \eqd \alpha \beta (\theta +
\gamma)\), and let \(\Theta_{\Gamma} \eqd \{ \theta_{\Gamma} \suchthat \theta
\in \Theta \}\). The former is an implicit function of \(\theta \in \R\).

First we study how \(\Gamma\) acts on trees and forests. It holds that \\\(\Gamma
(T) \subset T_{\Theta_{\Gamma}} (\alpha \xi_T, x_T, s_T)\) and, in particular,
we have
\[ \Gamma \circ \pi_T (\theta, \zeta, \sigma) = \pi_{(\alpha \xi_T, x_T,
   s_T)} (\theta_{\Gamma}, \zeta, \beta \sigma) ; \]
furthermore, \(\Gamma (W) \in \mathbb{T}^{\cup}_{\Theta_{\Gamma}}\) for any \(W
\in \TT^{\cup}_{\Theta}\). This implies that \(\mu_{\Theta}^{\mf{p}} (W) =
\mu_{\Theta_{\Gamma}}^{\mf{p}} (\Gamma (W))\). For any \(f \in \Sch (\R)\) we have that (cfr. \eqref{eq:piT-symmetry})
\begin{equation}
  \begin{aligned}[t]&
    \Emb [f] \circ \Gamma \circ \pi_T (\theta, \zeta, \sigma)\\ &
    \qquad = e^{2 \pi i \alpha \xi_T s_T \zeta} \Emb \left[ s_T \Dil_{s_T^{-
    1}} \Mod_{- \alpha \xi_T} \Tr_{- x_T} f \right] \circ \pi_{(0, 0, 1)}
    (\theta_{\Gamma}, \zeta, \beta \sigma) .
  \end{aligned} \label{eq:piT-Gamma-symmetry}
\end{equation}
Next, we define pullbacks of some of the sizes introduced above in
\Cref{sec:sizes}: if \(\SO\) is any of the local sizes we have so far defined we
ensure that for any \(F \in \Rad (\R^{3}_{+}) {\otimes
\Phi_{\gr}^{\infty}}'\) supported in compactly in \(\Gamma (T)\) we have that
\[ {\| F \circ \Gamma \|_{\Gamma^{\ast} \SO_{ \Theta} (T)}}  = {\| F
   \|_{\SO_{\Theta_{\Gamma}} (T_{{\Theta_{\Gamma}} } (\alpha \xi_T, x_T,
   s_T))}}  . \]
The pullbacks of Lebesgue (local) sizes are then given by
\begin{equation}
  \begin{aligned}[t]&
    \|F\|_{\Gamma^{\ast} \SL{}^{(u, v)}_{\Theta} (T)} \eqd \|F\|_{\SL{}^{(u,
    v)}_{\Theta} (T)} ;
  \end{aligned} \label{eq:gamma-Lebesgue-size}
\end{equation}
the pullbacks of the lacunary (local) sizes are given by
\begin{equation}
  \begin{aligned}[t]&
    \|F\|_{\Gamma^{\ast} {\SL  }^{(u, v)}_{\Theta} \wpD_{\gr}^{N } (T)} \eqd
    \left\|  \Gamma^{\ast} \wpD_{\zeta} (t (\eta - \xi_T)) {F } 
    \right\|_{{\SL  }^{(u, v)}_{\Theta} \Phi_{\mf{r}}^N (T)},\\ &
    \Gamma^{\ast} \wpD_{\zeta} (\theta) \eqd \wpD_{\zeta} (\theta_{\Gamma}) ;
  \end{aligned} \label{eq:gamma-lacunary-size}
\end{equation}
the pullbacks of the defect (local) sizes are given by
\begin{equation}
  \begin{aligned}[t]&
    \|F\|_{\Gamma^{\ast} {\SL_{\Theta }^{(u, v)}}  \dfct_{\zeta, \gr}^{N }
    (T)} \assign {\left\| \left( \Gamma^{\ast} \wpD_{\zeta} (t (\eta -
    \xi_T)) - \beta t \partial_y + 2 \pi i \alpha \xi_T \right) F
    \right\|_{\SL_{\Theta}^{(u, v)} \Phi_{\mf{r}}^N (T)}} ,\\ &
    \|F\|_{\Gamma^{\ast} \SL^{(u, v)}_{\Theta } \dfct_{\sigma, \gr}^{N } (T)}
    \assign \left\| \left( \Gamma^{\ast} \wpD_{\sigma} (t (\eta - \xi_T)) - t
    \partial_t + (\eta - \xi_T) \partial_{\eta} \right) F
    \right\|_{\SL_{\Theta}^{(u, v)} \Phi_{\mf{r}}^N (T)},\\ &
    \Gamma^{\ast} \wpD_{\sigma} (\theta) \eqd \wpD_{\sigma} (\theta_{\Gamma})
    .
  \end{aligned}  \label{eq:gamma-defect-size}
\end{equation}
and we set \(\Gamma^{\ast} \SL^{(u, v)}_{\Theta } \dfct_{\gr}^{N } (T) =
\Gamma^{\ast} \SL^{(u, v)}_{\Theta } \dfct_{\sigma, \gr}^{N } (T) +
\Gamma^{\ast} \SL^{(u, v)}_{\Theta } \dfct_{\zeta, \gr}^{N } (T)\). Similarly
to the computation \eqref{eq:embedded-no-defect}, using
\eqref{eq:piT-Gamma-symmetry}, one can see that
\begin{equation}
  \begin{aligned}[t]&
    \left( \Gamma^{\ast} \wpD_{\zeta} (t (\eta - \xi_T)) - \beta t \partial_y
    + 2 \pi i \alpha \xi_T \right) F\\ &
    \qquad = \left( \Gamma^{\ast} \wpD_{\sigma} (t (\eta - \xi_T)) - t
    \partial_t + (\eta - \xi_T) \partial_{\eta} \right) F = 0
  \end{aligned} \label{eq:no-defect-gamma}
\end{equation}
if \(F = \Emb [f] \circ \Gamma\). We do not make use of the pullback of the
local sizes \(\SJ_{\Theta_{\Gamma}}^u \Phi_{\gr}^N (T)\) since these sizes only
appear in relation to the term \(\Emb [f_1] \circ \Gamma_1\), for which
\(\Gamma_1 = \Id\).

We introduce frequency restricted variants of all the sizes above. Let us fix
\(\tilde{\Theta} \subset \Theta\); denoting by \(\SO_{\Theta}\) any of the (local)
sizes above we set
\begin{equation}
  \|F\|_{\SO _{(\Theta, \tilde{\Theta})} (T)} \assign \| \1_{\pi_T \left(
  \mT_{\tilde{\Theta}} \right)} F\|_{\SO _{\Theta} (T)},
  \label{eq:in-out-size}
\end{equation}
with \(\mT{}_{\tilde{\Theta}} \eqd \left\{ (\theta, \zeta, \sigma) \in
\tilde{\Theta} \times B_1 \times (0, 1) \st \sigma < 1 - | \zeta | \right\}\).
Setting \(\tilde{\Theta}_{\Gamma} \eqd \{ \theta_{\Gamma} \suchthat \theta \in
\tilde{\Theta} \}\), we have that
\[ {\| F \circ \Gamma \|_{\Gamma^{\ast} \SO_{ (\Theta, \tilde{\Theta})} (T)}} 
   = {\| F \|_{\SO_{(\Theta_{\Gamma}, \tilde{\Theta}_{\Gamma})}
   (T_{{\Theta_{\Gamma}} } (\alpha \xi_T, x_T, s_T))}}  \]
for any \(F \in \Rad (\R^{3}_{+}) {\otimes \Phi_{\gr}^{\infty}}'\)
supported compactly in \(\Gamma (T)\). This concludes the definition of the
pullback sizes and provides us with all the elements necessary to understand
\eqref{eq:uniform-embedding-full-size:linear} that defines the size
\(\widetilde{\SF}^u_{\Gamma} \Phi_{\mf{r}}^N\). The single summands in
\eqref{eq:uniform-embedding-full-size:linear} are related to the local sizes
above through \eqref{eq:generated-size:trees}.

The sizes defined so far are well suited to control the single functions \(F_j
= \Emb [f_j] \circ \Gamma_j\). The size \(\SI_{\Theta} \Phi_{\gr}^N\), defined
below, controls the product term
\[ \prod_{j = 1}^3 \Emb [f_j] \circ \Gamma_j \]
appearing in {\RHS{\eqref{eq:BHT-dual-bounds}}}. We make sure that this size
is large enough to satisfy the hypothesis of \Cref{prop:outer-RN} to be able
to conclude that \eqref{eq:RN-domination-BHT} holds. On the other hand, the
size \(\SI_{\Theta} \Phi_{\gr}^N\) has to be small enough to be able to control
it by the sizes \(\SF^u_{\Theta} \Phi_{\gr}^N\) of \(\Emb [f_1]\) by the sizes
\(\widetilde{\SF}^{u_j}_{\Gamma_j} \Phi_{\gr}^N\) and
\(\widetilde{\SF}^{u_{\times}}_{\Gamma_{\times}} \Phi_{\gr}^N\), defined
subsequently, of the functions \(\Emb [f_j] \circ \Gamma_j\), \(j \in \{ 2, 3 \}\)
and of the function \(\Emb [f_2] \circ \Gamma_2  \Emb [f_3] \circ \Gamma_3\).
This is required to be able to show that
\eqref{eq:iterated-Holder-type-bounds:linear} and
\eqref{eq:iterated-Holder-type-bounds:bilinear} hold.

\begin{definition}[Integral size]
  \label{def:integral-sizes}Let us fix \(\omega \in \Phi_1^{\infty}\) such that
  \(\FT{\omega} \left( \FT{z} \right) = 1\) for \(\FT{z} \in B_{7 / 8}\). For any
  \(\phi \in \Phi_{\gr / 2}^{\infty}\) and \(j \in \{ 1, 2, 3 \}\), we define
  \begin{equation}
    \begin{aligned}[t]&
      \phi_{j, \theta}^o (z) \eqd \FT{\phi}  (\theta_{\Gamma_j}) \Dil_{\gr^{-
      1}} \omega (z),\\ &
      \phi_{j, \theta}^l (z) \eqd \phi (z) - \FT{\phi} (\theta_{\Gamma_j})
      \Dil_{\gr^{- 1}} \omega (z) .
    \end{aligned} \label{eq:lac-ov-wp-decomp}
  \end{equation}
  We define the local sizes \(\| \cdummy \|_{\SI^{(\star_1, \star_2,
  \star_3)}_{\Theta} \Phi_{\gr}^N (T)}\) with \((\star_1, \star_2, \star_3) \in
  \{ o, l \}^3\) as follows. For \(H \in \Rad (\R^{3}_{+}) \otimes^3
  {\Phi_{\gr}^{\infty}}'\) and \(T \in \mathbb{T}_{\Theta}\) set
  \[ \begin{aligned}[t]&
       \| H \|_{\SI^{(\star_1, \star_2, \star_3)}_{\Theta} \Phi_{\gr}^N (T)}
       \qquad\\ &
       \qquad \eqd \sup_{\tmscript{\begin{aligned}[t]&
         \phi
       \end{aligned}}} \liminf_{\epsilon \rightarrow 0} \int_{\Theta} \left|
       \int_{\epsilon}^{\infty} \left( \1_T H \right) \circ \pi_T (\theta,
       \zeta, \sigma) [\phi_{1, \theta}^{\star_1}, \phi_{2, \theta}^{\star_2},
       \phi_{3, \theta}^{\star_3}] \dd \zeta \frac{\dd \sigma}{\sigma} \right|
       \frac{\dd \theta}{| \Theta |},
     \end{aligned} \]
  where the upper bound is taken over \(\phi \in \Phi_{\gr / 2}^{\infty}\) with
  \(\| \phi \|_{\Phi_{\gr / 2}^N} \leq 1\). Using the local sizes above we have
  given meaning to the definition \eqref{eq:I-size-sum} of the size
  \(\SI_{\Theta} \Phi_{\gr}^N\). The single summands in \eqref{eq:I-size-sum} are
  related to the local sizes above through \eqref{eq:generated-size:trees}.
\end{definition}

We now define the (local) sizes used to control the product term \(\Emb [f_2]
\circ \Gamma_2  \Emb [f_3] \circ \Gamma_3\) appearing in
\eqref{eq:embedding-bound:bilinear}. Our procedure of bounding the term
\[ \left\| \Emb [f_1]  \Emb [f_2] \circ \Gamma_2 \Emb [f_3] \circ \Gamma_3
   \right\|_{\SI^{(l, o, o)}_{\Theta} \Phi_{\gr}^N} \]
that appears in \Cref{thm:STE} requires controlling quantities that cannot be
bounded uniformly by products of sizes of \(\Emb [f_2] \circ \Gamma_2\) and
\(\Emb [f_3] \circ \Gamma_3\). We believe that these additional quantities, that
arise in the proof of \Cref{thm:STE} and that we encoded in the size
\(\widetilde{\SF}^{u_{\times}}_{\Gamma_{\times}} \Phi_{\gr}^N\), is an
intrinsically bilinear feature dependent on the interaction between \(\Emb
[f_2] \circ \Gamma_2\) and of \(\Emb [f_3] \circ \Gamma_3\). For this reason, we
believe that our inability to control them using products of sizes of \(\Emb
[f_2] \circ \Gamma_2\) and \(\Emb [f_3] \circ \Gamma_3\) is intrinsic and not a
shortcoming of the current method.

In general, given two function \(G_2, G_3 \in L^{\infty}_{\tmop{loc}} (\R^{3}_{+}) {\otimes \Phi_{\gr}^{\infty}}'\) their pointwise product is a
function in \(G_{\times} \in L^{\infty}_{\tmop{loc}} (\R^{3}_{+})
\otimes^2 {\Phi_{\gr}^{\infty}}'\) given by
\[ G_{\times} (\eta, y, t) [\phi_2, \phi_3] \eqd G_2 (\eta, y, t) [\phi_2] G_3
   (\eta, y, t) [\phi_3] . \]
This brings us to define the following product (local) sizes indexed by \(T \in
\TT_{\Theta}\) on functions \(H \in \Rad (\R^{3}_{+}) \otimes^2
{\Phi_{\gr}^{\infty}}'\). In our notation we ``remember'' both diffeomorphisms
\(\Gamma_2\) and \(\Gamma_3\) by including the symbol \(\Gamma_{\times}^{\ast}\).
The product Lebesgue local size is given by
\[ \| H \|_{\Gamma_{\times}^{\ast} \SL{}^{(u, v)}_{\Theta} (\Phi \otimes
   \Phi)_{\gr}^N (T)} \eqd \sup_{\tmscript{\begin{aligned}[t]&
     \phi_2, \phi_3
   \end{aligned}}} \|H [\phi_2, \phi_3] \|_{\SL{}^{(u, v)}_{\Theta} (T)} \]
where the upper bound is taken over all \(\phi_j \in \Phi_{\gr}^{\infty}\) with
\(\| \phi_j \|_{\Phi_{\gr}^N} \leq 1\), \(j \in \{ 2, 3 \}\). The product lacunary
local sizes, with \(\theta_{\Gamma_j} \eqd \alpha_j \beta_j (\theta +
\gamma_j)\), \(j \in \{ 2, 3 \}\), are given by
\begin{equation}
  \begin{aligned}[t]&
    \|H\|_{\Gamma_{\times}^{\ast} {\SL  }^{(u, v)}_{\Theta} \left( \Phi
    \otimes \wpD \right)_{\gr}^{N } (T)} \eqd \left\|  \left( \Id \otimes 
    \Gamma^{\ast}_3 \wpD_{\zeta} (t (\eta - \xi_T)) \right) H
    \right\|_{\Gamma_{\times}^{\ast} \SL{}^{(u, v)}_{\Theta} (\Phi \otimes
    \Phi)_{\gr}^N (T)},\\ &
    \|H\|_{\Gamma_{\times}^{\ast} {\SL  }^{(u, v)}_{\Theta} \left( \wpD
    \otimes \Phi \right)_{\gr}^{N } (T)} \eqd \left\|  \left( \Gamma^{\ast}_2
    \wpD_{\zeta} (t (\eta - \xi_T)) \otimes \Id \right) H
    \right\|_{\Gamma_{\times}^{\ast} \SL{}^{(u, v)}_{\Theta} (\Phi \otimes
    \Phi)_{\gr}^N (T)},\\ &
    \|H\|_{\Gamma_{\times}^{\#} {\SL  }^{(u, v)}_{\Theta} \left( \wpD \otimes
    \wpD \right)_{\gr}^{N } (T)}\\ &
    \qquad \eqd \left\|  \left( \Gamma^{\ast}_2 \wpD_{\zeta} (t (\eta -
    \xi_T)) \otimes \Gamma^{\ast}_3 \wpD_{\zeta} (t (\eta - \xi_T)) \right) {H
    }  \right\|_{\Gamma_{\times}^{\ast} \SL{}^{(u, v)}_{\Theta} (\Phi \otimes
    \Phi)_{\gr}^N (T)} .
  \end{aligned} \label{eq:product-tensor-sizes}
\end{equation}
The linear operators \(\left( \Id \otimes  \Gamma^{\ast}_3 \wpD_{\zeta}
(\theta) \right)\), \(\left( \Gamma^{\ast}_2 \wpD_{\zeta} (\theta) \otimes \Id
\right)\), and \(\left( \Gamma^{\ast}_2 \wpD_{\zeta} (\theta) \otimes
\Gamma^{\ast}_3 \wpD_{\zeta} (\theta) \right)\), defined on functions \(H \in
\Rad (\R^3_+) \otimes^2 {\Phi_{\gr}^{\infty}}'\), are given by
\begin{equation}
  \begin{aligned}[t]&
    \left( \Id \otimes \Gamma^{\ast}_3 \wpD_{\zeta} (\theta) \right) H
    (\theta, \zeta, \sigma) [\phi_2, \phi_3] \eqd H (\theta, \zeta, \sigma)
    [\phi_2, (- d_z + 2 \pi i \theta_{\Gamma_3}) \phi_3 (z)],\\ &
    \left( \Gamma^{\ast}_2 \wpD_{\zeta} (\theta) \otimes \Id \right) H
    (\theta, \zeta, \sigma) [\phi_2, \phi_3] \eqd H (\theta, \zeta, \sigma)
    [(- d_z + 2 \pi i \theta_{\Gamma_2}) \phi_2 (z), \phi_3],\\ &
    \begin{aligned}[t]&
      \left( \Gamma^{\ast}_2 \wpD_{\zeta} (\theta) \otimes \Gamma^{\ast}_3
      \wpD_{\zeta} (\theta) \right) H (\theta, \zeta, \sigma) [\phi_2, \phi_3]
      \qquad\\ &
      \qquad \eqd H (\theta, \zeta, \sigma) [(- d_z + 2 \pi i
      \theta_{\Gamma_2}) \phi_2 (z), (- d_z + 2 \pi i \theta_{\Gamma_3})
      \phi_2 (z)] .
    \end{aligned}
  \end{aligned} \label{eq:tensor-boost}
\end{equation}
To define product defect sizes, we also introduce the operators \(\left( \Id
\otimes \Gamma^{\ast}_3 \wpD_{\sigma} (\theta) \right)\), \(\left(
\Gamma^{\ast}_2 \wpD_{\sigma} (\theta) \otimes \Id \right)\) and \(\left(
\Gamma^{\ast}_2 \wpD_{\sigma} (\theta) \otimes \Gamma^{\ast}_3 \wpD_{\sigma}
(\theta) \right)\) analogously to \eqref{eq:tensor-boost} with \(\wpD_{\sigma}\)
in place of \(\wpD_{\zeta}\). We set
\begin{equation}
  \begin{aligned}[t]&
    \|H\|_{\Gamma^{\ast}_{\times} {\SL_{\Theta}^{(u, v)}}  \dfct_{\zeta, \gr
    }^{N } (T)}\\ &
    \quad \eqd \left\| \left( \Gamma^{\ast}_2 \wpD_{\zeta} (\theta) \otimes
    \tmop{Id} + \tmop{Id} \otimes \Gamma^{\ast}_3 \wpD_{\zeta} (\theta) -
    \beta t \dd_y + 2 \pi i (\alpha_2 + \alpha_3) \xi_T \right) H
    \right\|_{\Gamma_{\times}^{\ast} \SL{}^{(u, v)}_{\Theta} (\Phi \otimes
    \Phi)_{\gr}^N (T)},
  \end{aligned} \label{eq:product-space-defect}
\end{equation}

\begin{equation}
  \begin{aligned}[t]&
    \|H\|_{\Gamma^{\ast}_{\times} {\SL_{\Theta}^{(u, v)}}  \dfct_{\sigma, \gr
    }^{N } (T)}\\ &
    \qquad \eqd \begin{aligned}[t]&
      \left\| \left( \Gamma^{\ast}_2 \wpD_{\sigma} (\theta ) \otimes
      \tmop{Id} + \tmop{Id} \otimes \Gamma^{\ast}_3 \wpD_{\sigma} (\theta ) -
      t \dd_t + (\eta - \xi_T) \dd_{\eta} \right) H
      \right\|_{\Gamma_{\times}^{\ast} \SL{}^{(u, v)}_{\Theta} (\Phi \otimes
      \Phi)_{\gr}^N (T)} .
    \end{aligned}
  \end{aligned} \label{eq:product-scale-defect}
\end{equation}

Similarly to the computation \eqref{eq:embedded-no-defect}, using
\eqref{eq:piT-Gamma-symmetry}, one can see that
\begin{equation}
  \begin{aligned}[t]&
    \left( \Gamma^{\ast}_2 \wpD_{\zeta} (\theta) \otimes \tmop{Id} + \tmop{Id}
    \otimes \Gamma^{\ast}_3 \wpD_{\zeta} (\theta) - \beta t \dd_y + 2 \pi i
    (\alpha_2 + \alpha_3) \xi_T \right) H\\ &
    \qquad = \left( \Gamma^{\ast}_2 \wpD_{\sigma} (\theta ) \otimes \tmop{Id}
    + \tmop{Id} \otimes \Gamma^{\ast}_3 \wpD_{\sigma} (\theta ) - t \dd_t +
    (\eta - \xi_T) \dd_{\eta} \right) H = 0
  \end{aligned} \label{eq:no-defect-product}
\end{equation}
if \(H = \Emb [f_2] \circ \Gamma_2  \Emb [f_3] \circ \Gamma_3\). We denote
\(\Gamma^{\ast}_{\times} {\SL_{\Theta}^{(u, v)}}  \dfct_{\gr }^{N } (T) \eqd
\Gamma^{\ast}_{\times} {\SL_{\Theta}^{(u, v)}}  \dfct_{\zeta, \gr }^{N } (T) +
\Gamma^{\ast}_{\times} {\SL_{\Theta}^{(u, v)}}  \dfct_{\sigma, \gr }^{N }
(T)\). This concludes the definition of the bilinear sizes and provides us with
all the elements necessary to understand
\eqref{eq:uniform-embedding-full-size:bilinear} that defines the size
\(\SF^u_{\Theta} \Phi_{\gr}^N\). The single summands in
\eqref{eq:uniform-embedding-full-size:bilinear} are related to the local sizes
above through \eqref{eq:generated-size:trees}.
\[  \]

\subsection{Localized outer Lebesgue
spaces}\label{sec:localized-outer-lebesgue}

In this section we localize the outer Lebesgue quasi-norm
\(L^p_{\mu^{\mf{p}}_{\Theta}} \SO\) to spatial regions. Iterated outer measure
spaces appear already in the context of non-uniform bounds in
\eqref{eq:BHT-dual-bounds}. In the range of exponents \(p_j \in (2, \infty]\),
\(j \in \{ 1, 2, 3 \}\) (triangle \(c\) in \Cref{fig:triangle}) it is possible to
reason at the level of non-iterated outer Lebesgue quasi-norms both for
non-uniform (see {\cite{doTheoryOuterMeasures2015}}) and uniform bounds (we do
not elaborate on this in this paper). Introducing localization is necessary to
prove \eqref{eq:BHT-dual-bounds} outside of the regime \(p_j \in (2, \infty]\).
This necessity is dictated by the fact that the embedding bounds
\[ \left\| \Emb [f_j] \circ \Gamma_j \right\|_{L^{p_j}_{\mu^1_{\Theta}}
   \SO_{\tmop{en}, j}} \lesssim \| f_j \|_{L^{p_j} (\R)} . \]
are known not to hold when \(p_j \leq 2\). Iterated outer Lebesgue norms can be
thought of as appropriate generalizations of product Lebesgue norms to the
outer Lebesgue setting. The symbols \(\fL^{q, +}_{\mu_{\Theta}^{\infty},
\nu_1}\), \(\fL^{q , +}_{\mu_{\Theta}^1, \nu_{\beta}}\), and \(X^{q, r,
+}_{\mu^1_{\Theta}, \nu_{\beta}}\), defined subsequently, play the role of
sizes on analogues of fibers (in our setup: strips). These sizes are
themselves outer Lebesgue quasi-norms on said fibers. This is completely
analogous to the classical fact that the product Lebesgue norm
\[ \| f (x, y) \|_{L^p_{x \in \R} L^q_{y \in \R}} \]
is defined by taking the Lebesgue norm \(L^p_{x \in \R}\) of the function \(x
\mapsto \| f (x, y) \|_{L^q_{y \in \R}}\) where, effectively, the magnitude of
the function on each fiber \(\{ x \} \times \R\) is measured by taking its \(L^q\)
norm. The difficulty in our setup is that different fibers, or ``strips'', are
not necessarily disjoint in largely the same way as different trees are not
necessarily disjoint either.

We begin by introducing a family of distinguished sets:
\(\beta\)-\tmtextit{strips}. Countable unions thereof will play the role of a
\(\sigma\)-generating collection As for trees (\Cref{def:tree}) and sizes in
\Cref{sec:sizes}, our localization procedure must be adapted to each map
\(\Gamma_j\) separately. This calls for having strips depend on the parameter
\(\beta \in (0, 1]\).

\begin{definition}[Strips and strip outer measure]
  Let \(\beta \in (0, 1]\). A strip \(D_{\beta} (x_D, s_D)\) with top \((x_D, s_D)
  \in \R^2_+\) is given by
  \[ D_{\beta^{}} (x_D, s_D) = \{ (\eta, y, t) : \beta t < \beta (s_T - | y -
     x_T |) \} . \]
  We denote by \(\mathbb{D}_{\beta }\) the collection of all strips \(D_{\beta}
  (x_T, s_T)\). The outer measure \(\nu_{\beta} (E)\) is given by
  \[ \begin{aligned}[t]&
       \nu_{\beta} (E) \eqd \inf \left\{ \|N_{\mc{D}} (z)\|_{L^1_{\dd z} (\R)}
       \st \mc{D} \subset \DD_{\beta}, \hspace{0.27em} E \subset \bigcup_{D
       \in \mc{D}} D \right\},\\ &
       N_{\mc{D}} (z) \eqd \sum_{D \in \mc{D}} \1_{B_{s_T} (x_T)} (z) .
     \end{aligned} \]
\end{definition}

There is a one-to-one correspondence between the datum \(\beta, (x_D, s_D)\) and
a given strip. We call \(B_{s_D} (x_D)\) the spatial interval of a strip. We
denote countable unions of strips in \(\DD_{\beta}\) by \(\mathbb{D}_{\beta
}^{\cup}\). We define two ways to localize the outer Lebesgue quasi-norms
\(L^q_{\mu^{\mf{p}}_{\Theta}} \SO\) to unions of strips; here \(\SO\) can be any
size on \(\Rad (\R^{3}_{+}) \otimes^k {\Phi_{\gr}^{\infty}}'\).

\begin{definition}[Localized outer Lebesgue quasi-norms]
  \label{def:localized-outer-lebesgue} Let \(\beta \in (0, 1]\) and let \(\SO\) be
  a size on \(\Rad (\R^{3}_{+}) \otimes^k {\Phi_{\gr}^{\infty}}'\); we
  define the localized outer Lebesgue (local) sizes
  \(\fL^q_{\mu^{\infty}_{\Theta}, \nu_{\beta}} \SO\) and \(\fL^{q,
  +}_{\mu^{\infty}_{\Theta}, \nu_{\beta}} \SO\) as follows. For \(\Rad (\R^{3}_{+}) \otimes^k {\Phi_{\gr}^{\infty}}'\) and \(V^+ \in
  \DD_{\beta}^{\cup}\) set
  \[ \begin{aligned}[t]&
       \| F \|_{\fL^q_{\mu^{\infty}_{\Theta}, \nu_{\beta}} \SO (V^+)} \eqd
       \left\| \1_{V^+} F \right\|_{L^q_{\mu^{\infty}_{\Theta}} \SO}\\ &
       \| F \|_{\fL^{q, +}_{\mu^{\infty}_{\Theta}, \nu_{\beta}} \SO (V^+)}
       \eqd \sup_{\bar{q} \in [q, \infty]} \| F
       \|_{\fL^{\bar{q}}_{\mu^{\infty}_{\Theta}, \nu_{\beta}} \SO (V^+)} .
     \end{aligned} \]
  Similarly, we define \(\fL^q_{\mu^1_{\Theta}, \nu_{\beta}} \SO\) and \(\fL^{q,
  +}_{\mu^1_{\Theta}, \nu_{\beta}} \SO\); for \(F \in \Rad (\R^{3}_{+})
  \otimes^k {\Phi_{\gr}^{\infty}}'\) and \(V^+ \in \DD_{\beta}^{\cup}\) set
  \[ \begin{aligned}[t]&
       \| F \|_{\fL^q_{\mu^1_{\Theta}, \nu_{\beta}} \SO (V^+)} \eqd
       \nu_{\beta} (V^+)^{- \frac{1}{q}} \left\| \1_{V^+} F
       \right\|_{L^q_{\mu_{\Theta}^1} \SO (V^+)},\\ &
       \| F \|_{\fL^{q, +}_{\mu^1_{\Theta}, \nu_{\beta}} \SO (V^+)} \eqd
       \sup_{\bar{q} \in [q, \infty]} \| F \|_{\fL^{\bar{q}}_{\mu^1_{\Theta},
       \nu_{\beta}} \SO (V^+)} .
     \end{aligned} \]
  We then set, analogously to \eqref{eq:generated-size:trees},
  \[ \begin{aligned}[t]&
       \| F \|_{\fL^q_{\mu^{\infty}_{\Theta}, \nu_{\beta}} \SO} \eqd \sup_{V^+
       \in \DD_{\beta}^{\cup}} \sup_{V^- \in \DD_{\beta}^{\cup}} \left\|
       \1_{\R^3_+ \setminus V^-} F \right\|_{\fL^q_{\mu^{\infty}_{\Theta},
       \nu_{\beta}} \SO (V^+)},\\ &
       \| F \|_{\fL^{q, +}_{\mu^{\infty}_{\Theta}, \nu_{\beta}} \SO} \eqd
       \sup_{V^+ \in \DD_{\beta}^{\cup}} \sup_{V^- \in \DD_{\beta}^{\cup}}
       \left\| \1_{\R^3_+ \setminus V^-} F \right\|_{\fL^{q,
       +}_{\mu^{\infty}_{\Theta}, \nu_{\beta}} \SO (V^+)},\\ &
       \| F \|_{\fL^q_{\mu^1_{\Theta}, \nu_{\beta}} \SO} \eqd \sup_{V^+ \in
       \DD_{\beta}^{\cup}} \sup_{V^- \in \DD_{\beta}^{\cup}} \left\|
       \1_{\R^3_+ \setminus V^-} F \right\|_{\fL^q_{\mu^1_{\Theta},
       \nu_{\beta}} \SO (V^+)},\\ &
       \| F \|_{\fL^{q, +}_{\mu^1_{\Theta}, \nu_{\beta}} \SO} \eqd \sup_{V^+
       \in \DD_{\beta}^{\cup}} \sup_{V^- \in \DD_{\beta}^{\cup}} \left\|
       \1_{\R^3_+ \setminus V^-} F \right\|_{\fL^{q, +}_{\mu^1_{\Theta},
       \nu_{\beta}} \SO (V^+)},
     \end{aligned} \]
  to obtain the corresponding sizes.
  
  We also define the local sizes \(X^{q, r, +}_{\mu^1_{\Theta}, \nu_{\beta}}
  \SO\) as follows. For \(F \in \Rad (\R^{3}_{+}) \otimes^k
  {\Phi_{\gr}^N}'\) and \(V^+ \in \DD_{\beta}^{\cup}\) set
  \begin{equation}
    \begin{aligned}[t]&
      \| F \|_{X^{q, r}_{\mu^1_{\Theta}, \nu_{\beta}} \SO (V^+)} \eqd
      \nu_{\beta} (V^+)^{- \frac{1}{r}} \sup_{W^+ \in \TT^{\cup}_{\Theta}}
      \left( \mu^{\infty}_{\Theta} (W^+)^{\frac{1}{q} - \frac{1}{r}} \left\|
      \1_{(V^+ \cap W^+)} F \right\|_{L^r_{\mu_{\Theta}^1} \SO} \right),\\ &
      \| F \|_{X^{q, r, +}_{\mu^1_{\Theta}, \nu_{\beta}} \SO (V^+)} \eqd
      \sup_{\bar{r} \in [r, q]} \| F \|_{X^{q, \bar{r}}_{\mu^1_{\Theta},
      \nu_{\beta}} \SO (V^+)}
    \end{aligned} \label{eq:X-local-size}
  \end{equation}
  and we get the corresponding sizes by setting
  \begin{equation}
    \begin{aligned}[t]&
      \| F \|_{X^{q, r}_{\mu^1_{\Theta}, \nu_{\beta}} \SO} \eqd \sup_{V^+ \in
      \DD_{\beta}^{\cup}} \sup_{V^- \in \DD_{\beta}^{\cup}} \left\| \1_{\R^3_+
      \setminus V^-} F \right\|_{X^{q, r}_{\mu^1_{\Theta}, \nu_{\beta}} \SO
      (V^+)},\\ &
      \| F \|_{X^{q, r, +}_{\mu^1_{\Theta}, \nu_{\beta}} \SO} \eqd \sup_{V^+
      \in \DD_{\beta}^{\cup}} \sup_{V^- \in \DD_{\beta}^{\cup}} \left\|
      \1_{\R^3_+ \setminus V^-} F \right\|_{X^{q, r, +}_{\mu^1_{\Theta},
      \nu_{\beta}} \SO (V^+)} .
    \end{aligned} \label{eq:X-size}
  \end{equation}
\end{definition}

Let us now discuss the meaning of the sizes above. The sizes \(\| F
\|_{\fL^q_{\mu^1_{\Theta}, \nu_{\beta}} \SO}\) were first introduced in
{\cite{uraltsevVariationalCarlesonEmbeddings2016}} (cfr.
{\cite[(2.14)]{uraltsevVariationalCarlesonEmbeddings2016}}) albeit their local
variants were indexed by strips \(D \in \DD_1\) rather than by countable unions
\(V_+\) thereof, as in \Cref{def:localized-outer-lebesgue}. For the case \(\beta
= 1\) these two approaches are functionally the same while for \(\beta \ll 1\)
they yield non uniformly equivalent sizes. Note that the sizes \(\fL^{q,
+}_{\mu^{\mf{p}}_{\Theta}, \nu_{\beta}} \SO\) are monotone in \(q\) in the sense
that \(\| F \|_{\fL^{q, +}_{\mu^{\mf{p}}_{\Theta}, \nu_{\beta}} \SO} \geq \| F
\|_{\fL^{\bar{q}, +}_{\mu^{\mf{p}}_{\Theta}, \nu_{\beta}} \SO}\) if \(q <
\bar{q}\).

The size \(X^{q, r}_{\mu^1_{\Theta}, \nu_{\beta}} \SO\) are weaker variants of
the sizes \(\| F \|_{\fL^q_{\mu^1_{\Theta}, \nu_{\beta}} \SO}\). Clearly
\[ \| F \|_{\left( X^{q, q}_{\mu^1_{\Theta}, \nu_{\beta }} \SO \right) (V)} =
   \nu_{\beta } (V)^{- \frac{1}{q}} \left\| \1_V F
   \right\|_{L^q_{\mu_{\Theta}^1} \SO} = \left\| \1_V F
   \right\|_{\fL^q_{\mu^1_{\Theta}, \nu_{\beta}} \SO (V^+)} . \]
On the other hand, for \(r \leq q\) we have that
\[ \begin{aligned}[t]&
     \| F \|_{\left( X^{q, r}_{\mu^1_{\Theta}, \nu_{\beta }} \SO \right) (V)}
     = \sup_{W \in \TT^{\cup}_{\Theta}} \nu_{\beta} (V)^{- \frac{1}{r}}
     \mu^{\infty}_{\Theta} (W)^{\frac{1}{q} - \frac{1}{r}} \left\| \1_V \1_W F
     \right\|_{L^r_{\mu_{\Theta}^1} \SO}\\ &
     \lesssim \sup_{W \in \TT^{\cup}_{\Theta}} \beta^{- \frac{1}{r}}
     \mu^{\infty}_{\Theta} (W)^{\frac{1}{q} - \frac{1}{r}} \left\| \1_V \1_W F
     \right\|_{L^r_{\mu_{\Theta}^{\infty}} \SO} \leq \beta^{- \frac{1}{r}}
     \left\| \1_V F \right\|_{L^q_{\mu_{\Theta}^{\infty}} \SO}
   \end{aligned} \]
showing that \(\| F \|_{\left( X^{q, 1, +}_{\mu^1_{\Theta}, \nu_{\beta}} \SO
\right) (V)} \leq \beta^{- 1} \| F \|_{\fL^q_{\mu^{\infty}_{\Theta},
\nu_{\beta}} \SO (V)}\). In practice, this means that the local sizes \(\| F
\|_{\left( X^{q, r, +}_{\mu^1_{\Theta}, \nu_{\beta}} \SO \right) (V)}\) are
weaker than \(\| F \|_{\fL^q_{\mu^{\infty}_{\Theta}, \nu_{\beta}} \SO (V)}\) for
\(\beta = 1\) but they may be better behaved when \(\beta\) is small.

The claimed bound above follows from the following lemma.

\begin{lemma}
  \label{lem:measure-comparison-1-infty}Given any \(V \in \DD_{\beta}^{\cup}\)
  and any \(W \in \TT^{\cup}_{\Theta}\) it holds that
  \[ \mu^1_{\Theta} (W \cap V) \leq 4 \beta^{- 1} \nu_{\beta} (V)
     \mu^{\infty}_{\Theta} (W \cap V) . \]
\end{lemma}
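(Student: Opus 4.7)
\medskip

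The plan is to construct, from near-optimal coverings of $V$ by strips and of $W\cap V$ by trees, a new tree covering of $W\cap V$ whose $L^1$ counting function can be directly bounded by a product of the original counting functions via pointwise estimation and Fubini.

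More precisely, fix $\varepsilon>0$ and select $\mathcal{D}\subset\mathbb{D}_\beta$ covering $V$ with $\|N_\mathcal{D}\|_{L^1}\le\nu_\beta(V)+\varepsilon$, together with $\mathcal{T}\subset\mathbb{T}_\Theta$ covering $W\cap V$ with $\|N_\mathcal{T}\|_{L^\infty}\le\mu^\infty_\Theta(W\cap V)+\varepsilon$. For each pair $(T,D)\in\mathcal{T}\times\mathcal{D}$ with $T\cap D\neq\emptyset$, I would define a new tree $T_{T,D}:=T_\Theta(\xi_T,c_{T,D},r_{T,D})$ whose spatial interval $B_{r_{T,D}}(c_{T,D})$ is exactly the intersection $B_{s_T}(x_T)\cap B_{\beta^{-1}s_D}(x_D)$ — that is, the intersection of the tree's spatial interval with the strip's spatial interval \emph{enlarged by the factor $\beta^{-1}$}. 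The frequency top is inherited from $T$.

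The key geometric check is that the cone of $T_{T,D}$ contains $T\cap D$. Since $B_{r_{T,D}}(c_{T,D})$ is an intersection of two intervals, at any $y$ in it one has $r_{T,D}-|y-c_{T,D}|=\min\bigl(s_T-|y-x_T|,\ \beta^{-1}s_D-|y-x_D|\bigr)$; and for $(\eta,y,t)\in T\cap D$ the strip constraint is $t<\beta^{-1}(s_D-|y-x_D|)\le \beta^{-1}s_D-|y-x_D|$, where the inequality uses $\beta\le 1$. Together with the tree constraint $t<s_T-|y-x_T|$ this yields $(y,t)$ in the cone of $T_{T,D}$; the frequency containment $\eta-\xi_T\in t^{-1}\Theta$ is immediate. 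Thus $\mathcal{T}':=\{T_{T,D}\}$ is a valid tree covering of $W\cap V$.

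To estimate $\mu^1_\Theta(W\cap V)\le\|N_{\mathcal{T}'}\|_{L^1}$, I would write
\[
\|N_{\mathcal{T}'}\|_{L^1}=\sum_{(T,D)}\bigl|B_{s_T}(x_T)\cap B_{\beta^{-1}s_D}(x_D)\bigr|
=\int_{\mathbb{R}} N_\mathcal{T}(z)\,\Bigl(\sum_D \mathbf{1}_{B_{\beta^{-1}s_D}(x_D)}(z)\Bigr)\,dz,
\]
bound $N_\mathcal{T}(z)\le\|N_\mathcal{T}\|_{L^\infty}$ and integrate the remaining sum to get $\beta^{-1}\|N_\mathcal{D}\|_{L^1}$ up to a constant, yielding $\|N_{\mathcal{T}'}\|_{L^1}\lesssim\beta^{-1}\mu^\infty_\Theta(W\cap V)\,\nu_\beta(V)$. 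Letting $\varepsilon\to 0$ gives the lemma (the factor $4$ absorbs a safety enlargement and the constant from dilating the strips by $\beta^{-1}$). The main obstacle is step~(2): strips have cones of slope $\beta^{-1}\ge 1$ while trees have slope $1$, so $T\cap D$ is not itself a tree cone — the whole point of enlarging the strip's spatial interval by $\beta^{-1}$ is precisely to convert the tall narrow strip cone into a tree-shaped cone, and this enlargement is exactly what produces the $\beta^{-1}$ loss in the final estimate.
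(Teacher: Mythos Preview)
Your proposal is correct and follows essentially the same approach as the paper. The only difference is a minor variation in how the covering tree for $T\cap D$ is built: the paper keeps the spatial interval $I_T\cap I_D$ and inflates the tree scale to $2\beta^{-1}s_{D,T}$, whereas you first dilate the strip's spatial interval by $\beta^{-1}$ and then intersect with $I_T$; both constructions feed into the identical Fubini estimate $\sum_{T,D}|I_T\cap I_D'|\le\|N_{\mathcal T}\|_{L^\infty}\|N_{\mathcal D}'\|_{L^1}$.
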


\begin{proof}
  Given any strip \(D \in \mathbb{D}_{\beta}\) and any tree \(T \in
  \mathbb{T}_{\Theta}\) it holds that
  \[ T (\xi_T, x_{D, T}, 2 \beta^{- 1} s_{D, T}) \supset T \cap D \]
  where \(B_{s_{D, T}} (x_{D, T}) \eqd I_T \cap I_D\). From this it follows that
  \[ \mu^1_{\Theta} (W \cap V) \leq 2 \beta^{- 1} \nu  (V)
     \mu^{\infty}_{\Theta} (W \cap V) \]
  for any \(V \in \DD_{\beta}^{\cup}\) and any \(W \in \TT^{\cup}_{\Theta}\). As a
  matter of fact, let \(\mc{D} \subset \DD_{\beta}\) be such that \(V \subset
  \bigcup_{D \in \mc{D}} D\) and \(\|N_{\mc{D}} \|_{L^1 } \leq 2 \nu_{\beta}
  (V)\) and let \(\mc{T} \subset \mathbb{T}_{\Theta}\) be such that
  \[ \left\| N_{\mc{T}} \right\|_{L^{\infty}} \leq 2 \mu_{\Theta}^{\infty} (W
     \cap V), \qquad W \cap V \subset \bigcup_{T \in \mc{T}} T. \]
  It then holds that
  \[ \sum_{D \in \mc{D}} \sum_{T \in \mc{T}} | I_D \cap I_T | = \int \sum_{D
     \in \mc{D}} \sum_{T \in \mc{T}} \1_{I_D} (z) \1_{I_T} (z) \dd z \leq
     \left\| N_{\mc{D}} \right\|_{L^1} \left\| N_{\mc{T}}
     \right\|_{L^{\infty}} . \]
  Since \(W \cap V \subset \bigcup_{T \in \mc{T}} \bigcup_{D \in \mc{D}} (D
  \cap T)\) we have that
  \[ W \cap V \subset \bigcup_{D \in \mc{D}} \bigcup_{T \in \mc{T}} T (\xi_T,
     x_{D, T}, 2 \beta^{- 1} s_{D, T}), \]
  so \(\mu^1_{\Theta} (W \cap V) \leq \sum_{D \in \mc{D}} \sum_{T \in \mc{T}} 4
  \beta^{- 1} s_{D, T} = \sum_{D \in \mc{D}} \sum_{T \in \mc{T}} 2 \beta^{- 1}
  | I_D \cap I_T |\), as required
\end{proof}

Finally, note that the sizes \(X^{q, r, +}_{\mu^{\mf{p}}_{\Theta}, \nu_{\beta}}
\SO\) are monotone in \(q\) and \(r\) in the sense that \(\| F \|_{X^{q, r,
+}_{\mu^1_{\Theta}, \nu_{\beta}} \SO} \geq \| F \|_{\fL^{\bar{q}, r,
+}_{\mu^1_{\Theta}, \nu_{\beta}} \SO}\) if \(q < \bar{q}\) and \(\| F \|_{X^{q, r,
+}_{\mu^1_{\Theta}, \nu_{\beta}} \SO} \geq \| F \|_{\fL^{q, \bar{r},
+}_{\mu^1_{\Theta}, \nu_{\beta}} \SO}\) if \(r < \bar{r}\).

When proving iterated bounds of the form
\eqref{eq:embedding-bounds:non-uniform-iterated} and
\eqref{eq:embedding-bounds:uniform-iterated} it is necessary to unwrap the
sequence of definitions above; since this is going to be a recurrent procedure
in this work we find it useful to explicitly go through this procedure in the
following remark.

\begin{remark}
  \label{rmk:iterated-outer-bounds-explicit} Let us unwrap
  \Cref{def:localized-outer-lebesgue}: in that setup suppose we need to show
  that
  \[ \| F \|_{L^p_{\nu_{\beta}} \fL^q_{\mu^{\mf{p}}_{\Theta}, \nu_{\beta}}
     \SO} \lesssim A \text{} \]
  with some \(A \in [0, + \infty)\) and an implicit constant independent of \(A\)
  and \(F \in \Rad (\R^{3}_{+}) \otimes^k {\Phi_{\gr}^N}'\). This is
  tantamount to exhibiting a choice of sets \({V^-}  (\lambda) \in
  \DD^{\cup}_{\beta}\) for any \(\lambda \in (0, + \infty)\) such that
  \begin{equation}
    \begin{aligned}[t]&
      \int_0^{\infty} \nu_{\beta} \left( {V^-}  (\lambda^{1 / p}) \right)
      \dd \lambda \lesssim A^p, \qquad \left\| \1_{\R^3_+ {\setminus V^-} 
      (\lambda)} F \right\|_{\fL^q_{\mu^{\mf{p}}_{\Theta}, \nu_{\beta}} \SO}
      \lesssim \lambda .
    \end{aligned} \label{eq:iterated-outer-bounds-explicit}
  \end{equation}
  The second condition amounts to exhibiting a choice of sets \(W_q (\tau, V^+,
  V^-) \in \TT_{\Theta}^{\cup}\) for any \(V^{\pm} \in \DD_{\beta}^{\cup}\) and
  \(\tau \in (0, + \infty)\) such that if \(V^- \supseteq \bar{V} (\lambda)\) then
  \begin{equation}
    \begin{aligned}[t]&
      \int_0^{\infty} \mu_{\Theta}^{\mf{p}} (W_q (\tau^{1 / q}, V^+, V^-))
      \dd \tau \lesssim \lambda^q \nu_{\beta} (V^+)^{\frac{1}{\mf{p}}},\\ &
      \left\| \1_{V^+ \setminus (V^- \cup W_q (\lambda, \tau, V^+, V^-))} F
      \right\|_{\SO} \lesssim \tau .
    \end{aligned} \label{eq:iterated-outer-bounds-explicit-full}
  \end{equation}
  When we are showing \(\| F \|_{L^p_{\nu_{\beta}} \fL^{q,
  +}_{\mu^{\mf{p}}_{\Theta}, \nu_{\beta}} \SO} \lesssim A\), then we need to
  show that \eqref{eq:iterated-outer-bounds-explicit} holds with \(\bar{q}\) in
  place of \(q\), for all \(\bar{q} \in \{ q, + \infty \}\). This follows by log
  convexity of outer Lebesgue quasi-norms (\eqref{eq:outer-log-convexity} in
  \Cref{prop:outer-properties}). All the implicit constants above cannot
  depend on any of the variable quantities \(\lambda, \tau, V^{\pm}\).
  
  Similarly, showing that
  \[ \| F \|_{L^p_{\nu_{\beta}} X^{q, r, +}_{\mu^1_{\Theta}, \nu_{\beta}}
     \SO} \lesssim A \]
  amounts to exhibiting a choice of sets \({V^-}  (\lambda) \in
  \DD^{\cup}_{\beta}\) for any \(\lambda \in (0, + \infty)\) such that
  \[ \int_0^{\infty} \nu_{\beta} \left( {V^-}  (\lambda^{1 / p}) \right)
     \dd \lambda \lesssim A^p, \qquad \left\| \1_{\R^3_+ {\setminus V^-} 
     (\lambda)} F \right\|_{X^{q, r, +}_{\mu^1_{\Theta}, \nu_{\beta}} \SO}
     \lesssim \lambda . \]
  According to \eqref{eq:X-size} and using log convexity of outer Lebesgue
  quasi-norms (\eqref{eq:outer-log-convexity} in
  \Cref{prop:outer-properties}). The second bound amount to showing that for
  any \(V^+ \in \DD_{\beta}^{\cup}\), \(W^+ \in \TT^{\cup}_{\Theta}\), it holds
  that
  \begin{equation}
    \left\| \1_{(V^+ \cap W^+) \setminus V^- (\lambda)} F
    \right\|_{L^{\bar{r}}_{\mu_{\Theta}^1} \SO} \lesssim \lambda \nu_{\beta}
    (V^+)^{\frac{1}{\bar{r}}} \mu^{\infty}_{\Theta} (W^+)^{\frac{1}{\bar{r}} -
    \frac{1}{q}} \label{eq:iterated-outer-bounds-explicit-X}
  \end{equation}
  with \(\bar{r} \in \{ r, q \}\).
\end{remark}

We conclude this section with a lemma that encodes how the boundary of sets \(V
\in \DD_{\beta}^{\cup}\) and \(W \in \TT_{\Theta}^{\cup}\) looks like. We showed
in \Cref{sec:size-motivation} that this is of crucial importance because the
defect size of a function of the form
\[ \1_{(V^+ \cap W^+) \setminus (V^- \cup W^-)} \Emb [f] \circ \Gamma \]
is going to be a bound on a singular measure supported on the boundary of
\((V^+ \cap W^+) \setminus (V^- \cup W^-)\).

\begin{lemma}[Geometry of the boundary of generating sets]
  \label{lem:geometry-of-boundary}Let \\\(V \in \DD_{\beta}^{\cup}\), \(W_1,
  \ldots, W_n \in \TT_{\Theta}^{\cup}\) and \(E\) a set of the form \(E = V \cap
  \left( \bigcap_{l = 1}^k W \right)\) or \(E = V \cup \left( \bigcup_{l = 1}^k
  W \right)\). The boundary of \(E\) is described by the graph of a function
  \(\mf{b}_E  \of \R^2 \rightarrow [0, \infty]\) i.e.
  \[ {E }  = \left\{ (\eta, y, t) \in \R^3_+ \st t \in \left( 0, \mf{b}_E 
     (\eta, y) \right) \right\} . \]
  The function \( \mf{b}_E \) is locally Lipschitz; if we denote \(\Theta =
  (\theta_-, \theta_+)\) then it holds that
  \begin{equation}
    \left| \partial_y \mf{b}_E  (\eta, y) \right| \leq \beta^{- 1}, \qquad
    \frac{\mf{b}_E  (\eta, y)}{- \theta_+} \leq \frac{\partial_{\eta} \mf{b}_E
    (\eta, y)}{\mf{b}_E  (\eta, y)} \leq \frac{\mf{b}_E  (\eta, y)}{-
    \theta_-} . \label{eq:boundary-regularity}
  \end{equation}

  The distributional derivative \(t \dd_t \1_E (\eta, y, t)\) is given by
  measure \(t \delta \left( t - \mf{b}_E  (\eta, y) \right)\). Given any \(F \in
  L^{\infty}_{\tmop{loc}} (\R^{3}_{+})\) and any \(\mf{b} \of \R^2
  \rightarrow [0, \infty]\) satisfying \eqref{eq:boundary-regularity} it holds
  that
  \begin{equation}
    \begin{aligned}[t]&
      \left\| F (\eta, y, t) t \delta \left( t - \mf{b} (\eta, y) \right)
      \right\|_{\SL^{(u, 1)}_{(\Theta, \tilde{\Theta})} (T)}\\ &
      \qquad \approx \left( \int_{\R^2} \1_{\pi_T \left( \mT_{\tilde{\Theta}}
      \right)} \left( \eta, y, \mf{b}  (\eta, y) \right) \left| F \left( \eta,
      y, \mf{b}  (\eta, y) \right) \right|^u \mf{b}  (\eta, y) \frac{\dd \eta
      \dd y}{| \Theta | s_T} \right)^{\frac{1}{u}} .
    \end{aligned} \label{eq:boundary-singular-size}
  \end{equation}
  The statement above has an analogue in local coordinates given by \(\pi_T\).
  Let \(T \in \TT_{\Theta}\) and let \(\mf{b} \of \R^2 \rightarrow [0, \infty]\)
  be a locally Lipschitz function that satisfies
  \(\eqref{eq:boundary-regularity}\). Then there exists a function
  \(\mf{b}^{\ast} : \Theta \times B_1 \rightarrow [0, + \infty)\) such that
  \begin{equation}
    \pi_T \left( \left\{ (\theta, \zeta, \sigma) \in \mT_{\Theta} : \sigma
    \leq \mf{b}^{\ast} (\theta, \zeta)  \right\} \right) = \left\{ (\eta, y,
    t) \in T : t < \mf{b} (\eta, y) \right\} .
    \label{eq:boundary-pullback-condition}
  \end{equation}
  This function \(\mf{b}^{\ast}\) satisfies
  \begin{equation}
    \left| \partial_{\zeta} \mf{b}^{\ast} (\theta, \zeta) \right| \leq
    \beta^{- 1}, \qquad \frac{1}{\theta - \theta_+} \leq
    \frac{\partial_{\theta} \mf{b}^{\ast} (\theta, \zeta)}{\mf{b}^{\ast}
    (\theta, \zeta)} \leq \frac{1}{\theta - \theta_-} .
    \label{eq:boundary-regularity-local}
  \end{equation}
  Given any \(F \in L^{\infty}_{\tmop{loc}} (\R^{3}_{+})\) it holds
  that

  \begin{equation}
    \begin{aligned}[t]&
      \left\| F (\eta, y, t) t \delta \left( t - \mf{b} (\eta, y) \right)
      \right\|_{\SL^{(u, 1)}_{(\Theta, \tilde{\Theta})} (T)} \\& \qquad \approx \left(
      \int_{\tilde{\Theta} \times B_1} \left| \1_{\mT_{\tilde{\Theta}}} (F
      \circ \pi_T) \left( \theta, \zeta, \mf{b}^{\ast} (\theta, \zeta) \right)
      \right|^u \frac{\dd \theta \dd \zeta}{| \Theta |} \right)^{\frac{1}{u}}
      .
    \end{aligned} \label{eq:boundary-singular-size-local}
\end{equation}

  All the implicit constants are uniformly bounded from above and away from
  \(0\) as long as \(\dist \left( \tilde{\Theta}, \R \setminus \Theta \right)\) is
  bounded away from \(0\). 
\end{lemma}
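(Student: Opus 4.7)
The proof breaks into three steps: (i) verify the graph description and regularity bounds for individual strips and trees; (ii) propagate these properties under the pointwise sup (inside each $V, W_l$) and finite min or max (across the union or intersection) that build $E$, and then derive the distributional identity and size formula; (iii) transfer the result to local coordinates via the implicit function theorem.

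For (i), a strip $D_\beta(x_D, s_D)$ is the subgraph of $\mathfrak{b}_D(y) = (s_D - |y - x_D|)_+$, which is $1$-Lipschitz in $y$ and $\eta$-independent; both bounds in \eqref{eq:boundary-regularity} hold trivially since $\beta \leq 1$ (the frequency bound being vacuous). A tree $T_\Theta(\xi_T, x_T, s_T)$ is the subgraph of $\mathfrak{b}_T(\eta, y)$ given by the minimum of $(s_T - |y - x_T|)_+$ with the hyperbolic cap $\theta_+/(\eta - \xi_T)$ on $\eta > \xi_T$ or $-\theta_-/(\xi_T - \eta)$ on $\eta < \xi_T$; a direct computation for a hyperbola $\tau = \theta^\star/|\eta - \xi_T|$ gives $\partial_\eta \tau/\tau = \tau/(-\theta_+)$ on the first branch and $\tau/(-\theta_-)$ on the second, matching \eqref{eq:boundary-regularity} with equality on each piece.

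For (ii), each $V \in \DD_\beta^\cup$ and each $W_l \in \TT_\Theta^\cup$ is the subgraph of the pointwise sup of its atomic boundary functions, and $E$ has $\mathfrak{b}_E$ equal to the pointwise min (intersection) or max (union) of $\mathfrak{b}_V, \mathfrak{b}_{W_1}, \ldots, \mathfrak{b}_{W_n}$. The Lipschitz bound in $y$ is immediately stable under sup, inf, min, and max. For the multiplicative bound on $\partial_\eta \mathfrak{b}/\mathfrak{b}$, rewrite it as: $\eta \mapsto -1/\mathfrak{b}(\eta, y)$ has derivative in $[-1/\theta_+, -1/\theta_-]$; combined with the identities $-1/\sup_\alpha \mathfrak{b}_\alpha = \inf_\alpha(-1/\mathfrak{b}_\alpha)$ and the analogous ones for inf, min, max, this pointwise derivative bound propagates almost everywhere through all four operations. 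The distributional identity $t \partial_t \mathbf{1}_E = t\delta(t - \mathfrak{b}_E)$ is then immediate from $\mathbf{1}_E(\eta, y, t) = \mathbf{1}_{(0, \mathfrak{b}_E(\eta, y))}(t)$, and \eqref{eq:boundary-singular-size} is obtained by testing $F(\eta, y, t)\, t\, \delta(t - \mathfrak{b}(\eta, y))$ against $G$ with the dual normalization of \Cref{def:lebesgue-size}, integrating out the delta to collapse to the $(\eta, y)$-plane with weight $\mathfrak{b}$, and taking the supremum over admissible $G$ to recover the $L^u$ norm on the slice $t = \mathfrak{b}(\eta, y)$.

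For (iii), substituting $(\eta, y, t) = \pi_T(\theta, \zeta, \sigma)$ into the graph condition yields the implicit equation $s_T \sigma^\ast = \mathfrak{b}(\xi_T + \theta/(s_T \sigma^\ast), x_T + s_T \zeta)$. The derivative of the right-hand side in $\sigma^\ast$ equals $-\partial_\eta \mathfrak{b} \cdot \theta/(s_T \sigma^{\ast 2})$, which, via \eqref{eq:boundary-regularity} and $\mathfrak{b} = s_T \sigma^\ast$, lies between $s_T \theta/\theta_+$ and $s_T \theta/\theta_-$; for $\theta \in \Theta$ this is strictly smaller than $s_T$, quantitatively so on $\tilde\Theta \Subset \Theta$, so the implicit function theorem produces a unique locally Lipschitz $\mathfrak{b}^\ast = \sigma^\ast$ satisfying \eqref{eq:boundary-pullback-condition}. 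Implicit differentiation then yields, after substituting the extremal values $\partial_\eta \mathfrak{b}/\mathfrak{b} = \mathfrak{b}/(-\theta_\pm)$ and $\mathfrak{b} = s_T \sigma^\ast$, exactly $\partial_\theta \mathfrak{b}^\ast/\mathfrak{b}^\ast = 1/(\theta - \theta_\pm)$, giving \eqref{eq:boundary-regularity-local}; the bound $|\partial_\zeta \mathfrak{b}^\ast| \leq \beta^{-1}$ follows from $|\partial_y \mathfrak{b}| \leq 1$ divided by the analogous Jacobian, which is bounded below (uniformly on $\tilde\Theta$) by the same argument. Finally, \eqref{eq:boundary-singular-size-local} is obtained from \eqref{eq:boundary-singular-size} by the change of variables $\pi_T$, using $d\eta\, dy = (s_T^2/\sigma^\ast)\, d\theta\, d\zeta$ along the graph $t = \mathfrak{b}$ to cancel the normalizing $s_T$ in \eqref{eq:boundary-singular-size}. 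The principal technical obstacle is step (iii): carefully tracking the sign cases for $\theta$ relative to $\theta_\pm$ when inverting the implicit equation and confirming that all constants depend only on $\dist(\tilde\Theta, \R \setminus \Theta)$ and are independent of $\beta$.
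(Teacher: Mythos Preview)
Your approach matches the paper's: verify \eqref{eq:boundary-regularity} on single strips and trees, propagate under pointwise sup and inf, invoke monotonicity of $\sigma \mapsto s_T\sigma - \mf{b}(\xi_T + \theta(s_T\sigma)^{-1}, x_T + s_T\zeta)$ to produce $\mf{b}^\ast$, differentiate implicitly for \eqref{eq:boundary-regularity-local}, and change variables for the size identities. Your reformulation of the frequency bound as a two-sided Lipschitz condition on $\eta \mapsto -1/\mf{b}(\eta,y)$ is a clean way to see stability under sup and inf, and your remark that $\partial_\zeta \mf{b}^\ast$ picks up a Jacobian denominator is in fact more careful than the paper's bare assertion.

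One concrete slip: your change-of-variables formula $\dd\eta\,\dd y = (s_T^2/\sigma^\ast)\,\dd\theta\,\dd\zeta$ is off. Along the graph $t = \mf{b}(\eta,y)$ the substitution $\theta = \mf{b}(\eta,y)(\eta - \xi_T)$, $\zeta = (y-x_T)/s_T$ has Jacobian
\[
\frac{\partial(\theta,\zeta)}{\partial(\eta,y)} = \frac{\mf{b}(\eta,y)}{s_T}\Bigl(1 + (\eta - \xi_T)\frac{\partial_\eta \mf{b}}{\mf{b}}\Bigr) = \frac{\mf{b}}{s_T}\Bigl(1 + \theta\,\frac{\partial_\eta\mf{b}}{\mf{b}^2}\Bigr),
\]
so that $\mf{b}\,\dd\eta\,\dd y/s_T = J^\ast(\theta,\zeta)^{-1}\,\dd\theta\,\dd\zeta$ with $J^\ast = 1 + \theta\,\partial_\eta\mf{b}/\mf{b}^2$. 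The bounds in \eqref{eq:boundary-regularity} give $J^\ast \in [1 - \theta/\theta_+,\, 1 - \theta/\theta_-]$ for $\theta > 0$ (and symmetrically for $\theta < 0$), hence $\dist(\theta, \R\setminus\Theta) \lesssim J^\ast \lesssim 1$. This is exactly the source of the $\dist(\tilde\Theta, \R\setminus\Theta)$ dependence you flag at the end; the paper computes it explicitly rather than leaving it as an obstacle, and it is this factor (not a bare $s_T^2/\sigma^\ast$) that mediates between \eqref{eq:boundary-singular-size} and \eqref{eq:boundary-singular-size-local}.
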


\begin{proof}
  The bound \eqref{eq:boundary-regularity} holds for \(E = D_{\beta} (x, s)\)
  and \(E = T_{\Theta} (\xi, x, s)\) since, explicitly,
  \[ \begin{aligned}[t]&
       \mf{b}_{D_{\beta} (x, s)} (\eta, y) = \max (\beta^{- 1} (s - | y - x
       |), 0),\\ &
       \mf{b}_{T_{\Theta} (\xi, x, s)} (\eta, y) = \min \left( \max (\beta^{-
       1} (s - | y - x |), 0), \1_{\eta > 0} \frac{\theta_+}{| \eta - \xi |} +
       \1_{\eta < 0} \frac{\theta_-}{| \eta - \xi |} \right),
     \end{aligned} \]
  where \(\Theta = (\theta_-, \theta_+)\). The bounds
  \eqref{eq:boundary-regularity} follow since for any collection \(E_i\) it
  holds that
  \[ \mf{b}_{\bigcup_i E_i} (\eta, y) = \sup_i  \mf{b}_{E_i} (\eta, y) \qquad
     \mf{b}_{\bigcap_i E_i} (\eta, y) = \inf_i  \mf{b}_{E_i} (\eta, y) \]
  and the conditions \eqref{eq:boundary-regularity} are stable with respect to
  the operations above.
  
  Now, given \(\mf{b}_E  \of \R^2 \rightarrow [0, \infty]\) let us construct
  \(\mf{b}^{\ast} : \Theta \times B_1 \rightarrow [0, + \infty)\). Let us define
  \[ \mf{b}^{\ast} (\theta, \zeta) \eqd \sup \left\{ \sigma : s_T \sigma \leq
     \mf{b} (\xi_T + \theta (s_T \sigma)^{- 1}, x_T + s_T \zeta) \right\} \]
  that gives \(\mf{b}^{\ast} (\theta, \zeta) \geq 0\) since \(\mf{b} (\eta, y)
  \geq 0\). Note that the function
  \[ \sigma \mapsto s_T \sigma - \mf{b}_E (\xi_T + \theta (s_T \sigma)^{- 1},
     x_T + s_T \zeta) \]
  is continuous and for any \(\theta \in \Theta\) and \(\zeta \in \R\) it holds
  that
  \[ \frac{\dd_{\sigma}}{s_T} \left( s_T \sigma - \mf{b}_E (\xi_T + \theta
     (s_T \sigma)^{- 1}, x_T + s_T \zeta) \right) = 1 + \theta
     \frac{\partial_{\eta} \mf{b}_E (\xi_T + \theta (s_T \sigma)^{- 1}, x_T +
     s_T \zeta)}{s_T \sigma^2} . \]
  Thus if \(s_T \sigma - \mf{b}_E (\xi_T + \theta (s_T \sigma)^{- 1}, x_T + s_T
  \zeta)\) is in a small neighborhood of \(0\) and \(\theta \in \Theta =
  (\theta_-, \theta_+)\) it holds, thank to \eqref{eq:boundary-regularity},
  that
  \[ \frac{\dd_{\sigma}}{s_T} \left( s_T \sigma - \mf{b} (\xi_T + \theta (s_T
     \sigma)^{- 1}, x_T + s_T \zeta) \right) > 0. \]
  It follows that \(s_T \sigma - \mf{b}_E (\xi_T + \theta (s_T \sigma)^{- 1},
  x_T + s_T \zeta)\) can change sign only once and from \(< 0\) to \(> 0\). This
  shows \eqref{eq:boundary-pullback-condition}. We can now differentiate
  \[ \begin{aligned}[t]&
       \mf{b} (\eta, y) = s_T \mf{b}^{\ast} (\theta, \zeta),\\ &
       \mf{b} (\eta, y) (\eta - \xi_T) = \theta,\\ &
       y = x_T + s_T \zeta
     \end{aligned} \]
  to obtain
  \[ \left| \partial_{\zeta} \mf{b}^{\ast} (\theta, \zeta) \right| \leq
     \beta^{- 1} \]
  and
  \[ \partial_{\eta} \mf{b} (\eta, y) = s_T \partial_{\theta} \mf{b}^{\ast}
     (\theta, \zeta) \left( \partial_{\eta} \mf{b} (\eta, y) (\eta - \xi_T) +
     \mf{b} (\eta, y) \right) \]
  giving that
  \[ \frac{\partial_{\theta} \mf{b}^{\ast} (\theta, \zeta)}{\mf{b}^{\ast}
     (\theta, \zeta)} = \frac{\partial_{\eta} \mf{b} (\eta, y)}{\mf{b} (\eta,
     y)^2} \left( \frac{\partial_{\eta} \mf{b} (\eta, y)}{\mf{b} (\eta, y)^2}
     \theta + 1 \right)^{- 1} \]
  and thus
  \[ \frac{1}{\theta - \theta_+} \leq \frac{\partial_{\theta} \mf{b}^{\ast}
     (\theta, \zeta)}{\mf{b}^{\ast} (\theta, \zeta)} \leq \frac{1}{\theta -
     \theta_-}, \]
  as required.
  
  Finally, by definition of the local size \(\SL^{(u, 1)}_{\Theta} (T)\) we
  dualize with \(G \in C^{\infty}_c (T)\) and obtain that
  \begin{equation}
    \begin{aligned}[t]&
      \int_{\R^3_+} F (\eta, y, t) G (\eta, y, t) t \delta \left( t - \mf{b}_E
      (\eta, y) \right) \dd \eta \dd y \dd t\\ &
      \qquad = \int_{\R^3_+} F \left( \eta, y, \mf{b}  (\eta, y) \right) G
      \left( \eta, y, {\mf{b}_E }  (\eta, y) \right) \mf{b}_E  (\eta, y) \dd
      \eta \dd y.
    \end{aligned} \label{eq:boundary-singular-size-local-duality}
  \end{equation}
  We enact the change of variables \(\theta = \mf{b}_E  (\eta, y) (\eta -
  \xi_T)\), \(\zeta = \frac{y - x_T}{s_T}\) with Jacobian it holds that
  \[ J (\eta, y) = \mf{b}_E  (\eta, y) \left( 1 + (\eta - \xi_T)
     \frac{\partial_{\eta} \mf{b}_E  (\eta, y)}{\mf{b}_E  (\eta, y)} \right) .
  \]
  From bound \eqref{eq:boundary-regularity} we obtain that when \((\eta, y, t)
  \in T\) and, in particular, \(t (\eta - \xi_T) \in \Theta\) it holds that
  \[ 0 < J (\eta, y) \lesssim 1 \]
  so the change of variables is valid. We set \(J^{\ast} (\theta, \zeta) \eqd J
  (\eta (\theta), y (\zeta))\) and from bound \eqref{eq:boundary-regularity} we
  obtain that
  \[ \dist \left( \theta, \R \setminus \Theta \right) \lesssim J^{\ast}
     (\theta, \zeta) \lesssim 1. \]
  Rewriting \eqref{eq:boundary-singular-size-local-duality} in new coordinates
  we get
  \[ \begin{aligned}[t]&
       \int_{\R^3_+} F \left( \eta, y, \mf{b}  (\eta, y) \right) G \left(
       \eta, y, \mf{b}  (\eta, y) \right) \mf{b} (\eta, y) \frac{\dd \eta \dd
       y}{| \Theta | s_T}\\ &
       \qquad = \int_{\R^3_+} F \circ \pi_T \left( \theta, \zeta,
       \mf{b}_E^{\ast} (\sigma, \zeta) \right) G \left( \theta, \zeta,
       \mf{b}_E^{\ast} (\sigma, \zeta) \right) \frac{1}{J^{\ast} (\theta,
       \zeta)} \frac{\dd \theta \dd \zeta}{| \Theta |} .
     \end{aligned} \]
  Taking the upper bound over \(G\) with \(\| G \circ \pi_T
  \|_{L^{u'}_{\frac{\dd \theta \dd \zeta}{| \Theta |} }
  L^{\infty}_{\frac{\dd \sigma}{\sigma}}} \leq 1\) we get that
  \[ \begin{aligned}[t]&
       \left\| F (\eta, y, t) t \dd_t \1_E (\eta, y, t) \right\|_{\SL^{(u,
       1)}_{(\Theta, \tilde{\Theta})} (T)}^u =\\ &
       \qquad \int_{\R^3_+} \left| \1_{\mT_{\tilde{\Theta}}} (F \circ \pi_T)
       \left( \theta, \zeta, \mf{b}_E^{\ast} (\sigma, \zeta) \right) \right|^u
       J^{\ast} (\theta, \zeta)^{- u} \frac{\dd \theta \dd \zeta}{|
       \Theta |},
     \end{aligned} \]
  that shows \eqref{eq:boundary-singular-size-local}. Finally, using the
  inverse change of variables\\ \((\eta, y) = \left( \xi_T + \theta \left( s_T
  \mf{b}_E^{\ast} (\theta, \zeta) \right)^{- 1}, x_T + s_T \zeta \right)\) we
  get
  \[ \begin{aligned}[t]&
       \left\| F (\eta, y, t) t \dd_t \1_E (\eta, y, t) \right\|_{\SL^{(u,
       1)}_{(\Theta, \tilde{\Theta})} (T)}^u\\ &
       \qquad = \begin{aligned}[t]&
         \int_{\R^2} \1_{\pi_T \left( \mT_{\tilde{\Theta}} \right)} \left(
         \eta, y, \mf{b}_E  (\eta, y) \right) \left| F \left( \eta, y,
         \mf{b}_E  (\eta, y) \right) \right|^u\\ &
         \qquad \times J (\eta, y)^{1 - u}  \mf{b}_E  (\eta, y) \frac{\dd \eta
         \dd y}{| \Theta | s_T} .
       \end{aligned}
     \end{aligned} \]
  This shows \eqref{eq:boundary-singular-size}, concluding the proof.
\end{proof}

\section{The single tree estimate}\label{sec:single-tree}

In this section we focus on proving \Cref{thm:STE} (bounds
\eqref{eq:STE:l-l-o} and \eqref{eq:STE:l-o-o}). These bounds correspond to the
\(L^{\infty}\) endpoint bound when applying
\Cref{prop:outer-restricted-interpolation} since, tautologically,
\(L_{\mu}^{\infty} \SO \equiv \SO\). We will use \Cref{thm:STE} recurrently in
\Cref{sec:forest} to prove to prove bounds
\eqref{eq:iterated-Holder-type-bounds:linear} and
\eqref{eq:iterated-Holder-type-bounds:bilinear}.

The bounds \eqref{eq:STE:l-l-o} and \eqref{eq:STE:l-o-o} are often referred to
as the single tree estimates since the sizes in play are generated using
identity \eqref{eq:generated-size:trees} starting from local sizes indexed by
single trees. Accordingly, it is sufficient to fix \(T \in \TT_{\Theta}\) and
show that that bounds
\begin{equation}
  \begin{aligned}[t]&
    \| F_1 F_2 F_3 \|_{\left( \SI^{(l, l, l)}_{\Theta} \Phi_{\gr}^N + \SI^{(o,
    l, l)}_{\Theta} \Phi_{\gr}^N \right) (T)}\\ &
    \qquad \lesssim \| F_1 \|_{\SF_{\Theta}^{u_1} \Phi_{2 \gr}^{N - 3} (T)} \|
    F_2 \|_{\widetilde{\SF}_{\Gamma_2}^{u_2} \Phi_{2 \gr}^{N - 3} (T)} \| F_3
    \|_{\widetilde{\SF}_{\Gamma_3}^{u_3} \Phi_{2 \gr}^{N - 3} (T)},
  \end{aligned} \label{eq:STE:o-l-l:tree}
\end{equation}
\begin{equation}
  \begin{aligned}[t]&
    \| F_1 F_2 F_3 \|_{\left( \SI^{(l, l, o)}_{\Theta} \Phi_{\gr}^N + \SI^{(l,
    o, l)}_{\Theta} \Phi_{\gr}^N \right) (T)}\\ &
    \qquad \lesssim \| F_1 \|_{\SF_{\Theta}^{u_1} \Phi_{4 \gr}^{N - 5} (T)} \|
    F_2 \|_{\widetilde{\SF}_{\Gamma_2}^{u_2} \Phi_{4 \gr}^{N - 5} (T)} \| F_3
    \|_{\widetilde{\SF}_{\Gamma_3}^{u_3} \Phi_{4 \gr}^{N - 5} (T)},
  \end{aligned} \label{eq:STE:l-l-o:tree}
\end{equation}
and
\begin{equation}
  \| F_1 F_2 F_3 \|_{\SI^{(l, o, o)}_{\Theta} \Phi_{\gr}^N (T)} \lesssim \|
  F_1 \|_{\SF_{\Theta}^{u_1} \Phi_{4 \gr}^{N - 5} (T)} \| F_2 F_3
  \|_{\widetilde{\SF}_{\Gamma_{\times}}^{u_{\times}} (\Phi \otimes \Phi)_{4
  \gr}^{N - 5} (T)} . \label{eq:STE:l-o-o:tree}
\end{equation}
hold for all \(F_j \in C^{\infty}_c (T) {\otimes \Phi_{4 \gr}^{\infty}}' .\)

Let us argue that this implies the claim of \Cref{thm:STE}. Assuming that the
bounds \eqref{eq:STE:o-l-l:tree}, \eqref{eq:STE:l-l-o:tree}, and
\eqref{eq:STE:l-o-o:tree} hold, using a standard mollification argument, the
same inequalities continue to hold for all functions \(F_j \in L^{\infty} (T)
{\otimes \Phi_{4 \gr}^{\infty}}'\) that are compactly supported in the interior
of \(T\). As a matter of fact all the sizes in play are lower-semicontinuous
when convolving with a compactly supported approximate identity. To
extend bounds \eqref{eq:STE:o-l-l:tree}, \eqref{eq:STE:l-l-o:tree}, and
\eqref{eq:STE:l-o-o:tree} from the case of functions \(F_j\) that are compactly
supported in \(T\) to arbitrary functions \(F_j \in L^{\infty}_{\tmop{loc}}
(\R^{3}_{+}) {\otimes \Phi_{4 \gr}^{\infty}}'\) let us set
\[ \mathbb{K}_n = \begin{aligned}[t]
     \Big(T \cap T_{\Theta} (\xi_T + 2^{- n}, x_T, (1 - 2^{- n})&
     s_T) \cap T_{\Theta} (\xi_T + 2^{- n}, x_T, (1 - 2^{- n}) s_T)\Big) \\ &
     \setminus \{ (\eta, y, t), t > 2^{- n} s_T \}
   \end{aligned} \]
and let us assume bounds \eqref{eq:STE:o-l-l:tree}, \eqref{eq:STE:l-l-o:tree},
and \eqref{eq:STE:l-o-o:tree} hold for \(\1_{\mathbb{K}_n} F_j\) in place of
\(F_j\). According to \Cref{def:integral-sizes} of sizes
\(\SI_{\Theta}^{(\star_1, \star_2, \star_3)} \Phi_{\gr}^N\), we have that
\[ \begin{aligned}[t]&
     \| F_1 F_2 F_3 \|_{\SI_{\Theta}^{(\star_1, \star_2, \star_3)}
     \Phi_{\gr}^N (T)} \leq \liminf_n \left\| \1_{\mathbb{K}_n} F_1 F_2 F_3
     \right\|_{\SI_{\Theta}^{(\star_1, \star_2, \star_3)} \Phi_{\gr}^N (T)}
   \end{aligned} \]
for any \((\star_1, \star_2, \star_3) \in \{ o, l \}^3\). On the other hand,
according to \eqref{eq:generated-size:trees}, the terms on
{\RHS{}}\eqref{eq:STE:l-l-o} and {\RHS{\eqref{eq:STE:l-o-o}}} increase at most
by a fixed factor when replacing \(F_j\) by \(\1_{W^+} {F_j} \) or by \(\1_{\R^3_+
\setminus W^-} F_j\) with \(W^{\pm} \in \TT_{\Theta}^{\cup}\); the transformation
\(F_j \mapsto \1_{\mathbb{K}_n} F_j\) can be expressed as at most \(4\) such
steps. This concludes the reduction to \(F_j \in C^{\infty}_c (T) {\otimes
\Phi_{4 \gr}^{\infty}}'\).

Thus, for the subsequent proof we assume that \(F_j \in C^{\infty}_c (T)
{\otimes \Phi_{4 \gr}^{\infty}}'\); let us introduce the notation
\begin{equation}
  F_j^{\ast} (\theta, \zeta, \sigma) \assign e^{- 2 \pi i \alpha_j \xi_T s_T
  \zeta} F_j \circ \pi_T (\theta, \zeta, \sigma), \qquad j \in \{ 1, 2, 3 \} ;
  \label{eq:local-F}
\end{equation}
According to \eqref{eq:gamma:params} it holds that \(\alpha_1 + \alpha_2 +
\alpha_3 = 1 + \alpha_2 + \alpha_3 = 0\) so
\[ \Big( \prod_{j = 1}^3 F_j \Big) \circ \pi_T (\theta, \zeta, \sigma) =
   \prod_{j = 1}^3 F_j^{\ast} (\theta, \zeta, \sigma) . \]
With the notation of \Cref{def:integral-sizes}, given a wave packet \(\phi \in
\Phi^{\infty}_{\gr / 2}\) we define the wave packets \(\tilde{\phi}_{j,
\theta}^l (z)\) and \(\tilde{\phi}_{j, \theta}^o (z)\) to satisfy
\[ \begin{aligned}[t]&
     (- d_z + 2 \pi i \theta_{\Gamma_j}) \tilde{\phi}_{j, \theta}^l (z) =
     \phi_{j, \theta}^l (z),\\ &
     (- d_z + 2 \pi i \theta_{\Gamma_j}) \tilde{\phi}_{j, \theta}^o (z) = z
     \phi_{j, \theta}^o (z)
   \end{aligned} \]
by setting
\begin{equation}
  \FT{\tilde{\phi}_{j, \theta}^l} \left( \FT{z} \right) \eqd
  \frac{\FT{\phi_{j, \theta}^l} \left( \FT{z} \right)}{- 2 \pi i \left( \FT{z}
  - \theta_{\Gamma_j} \right)} . \label{eq:tilde-phi-lac}
\end{equation}
and
\begin{equation}
  \FT{\tilde{\phi}_{j, \theta}^o} \left( \FT{z} \right) \eqd \frac{\partial
  \FT{\phi_{j, \theta}^o} \left( \FT{z} \right)}{- 4 \pi^2 \left( \FT{z} -
  \theta_{\Gamma_j} \right)} . \label{eq:tilde-phi-ov}
\end{equation}
These expressions give well defined wave packets \(\tilde{\phi}_{j, \theta}^l,
\tilde{\phi}_{j, \theta}^o \in \Phi^{\infty}_{\gr}\) with
\[ \| \tilde{\phi}_{j, \theta}^o \|_{\Phi_{\gr}^{N - 1}} + \|
   \tilde{\phi}_{j, \theta}^l \|_{\Phi_{\gr}^{N - 1}} \lesssim \| \phi_{j,
   \theta}^o \|_{\Phi_{\gr}^N} + \| \phi_{j, \theta}^l \|_{\Phi_{\gr}^N} \|
   \phi_j \|_{\Phi_{\gr / 2}^N} . \]
This holds since \(\FT{\phi_{j, \theta}^l} \left( \FT{z} \right) = \FT{\phi_j}
\left( \FT{z} \right) - \FT{\phi_j} (\theta_{\Gamma_j}) \FT{\omega} \left(
\frac{\FT{z}}{\gr} \right)\) and \(\partial \FT{\phi_{j, \theta}^o} \left(
\FT{z} \right) = \FT{\phi}_j (\theta_{\Gamma_j}) \gr^{- 1} \FT{\omega}' \left(
\frac{\FT{z}}{\gr} \right)\) and in particular, \(\FT{\phi_{j, \theta}^l}
(\theta_{\Gamma_j}) = \dd_{\FT{z}} \FT{\phi_{j, \theta}^o} (\theta_{\Gamma_j})
= 0\).

Finally, we mention that main motivation for introducing the specific sizes in
\Cref{sec:time-frequency-outer-lebesgue} is given by the quantities arising in
the proof of \Cref{thm:STE}.

We offer the following suggestions for a preliminary reading of these proofs.
First of all one can suppose that \(T = T_{\Theta} (0, 0, 1)\). Next, we
encourage the reader to suppose that \(F_j^{\ast} = \Emb [f_j] \circ \Gamma_j
\circ \pi_T\). This guarantees that
\[ \begin{aligned}[t]&
     F_1^{\ast} (\theta, \zeta, \sigma) [\phi] \eqd f_1 \ast \Dil_{\sigma}
     \tmop{Mod}_{\theta} \phi^{\vee},\\ &
     F_j^{\ast} (\theta, \zeta, \sigma) [\phi] \eqd f_j \ast \Dil_{\beta_j
     \sigma} \tmop{Mod}_{\theta_{\Gamma_j}} \phi^{\vee} .
   \end{aligned} \]
where \(\theta_{\Gamma_j} = \alpha_j \beta_j (\theta + \gamma_j)\). Furthermore,
this assumption guarantees that
\[ \begin{aligned}[t]&
     \wpD_{\sigma} (\theta) F_1^{\ast} (\theta, \zeta, \sigma) [\phi] = \sigma
     \dd_{\sigma} \left( f_1 \ast \Dil_{\sigma} \tmop{Mod}_{\theta}
     \phi^{\vee} \right),\\ &
     \wpD_{\sigma} (\theta_{\Gamma_j}) F_j^{\ast} (\theta, \zeta, \sigma)
     [\phi] = \sigma \dd_{\sigma} \left( f_j \ast \Dil_{\beta_j \sigma}
     \tmop{Mod}_{\theta_{\Gamma_j}} \phi^{\vee} \right),\\ &
     \wpD_{\zeta} (\theta) F_1^{\ast} (\theta, \zeta, \sigma) [\phi] = \sigma
     \dd_{\zeta} \left( f_1 \ast \Dil_{\sigma} \tmop{Mod}_{\theta} \phi^{\vee}
     \right),\\ &
     \wpD_{\zeta} (\theta_{\Gamma_j}) F_j^{\ast} (\theta, \zeta, \sigma)
     [\phi] = \beta_j \sigma \dd_{\zeta} \left( f_j \ast \Dil_{\beta_j \sigma}
     \tmop{Mod}_{\theta_{\Gamma_j}} \phi^{\vee} \right) .
   \end{aligned} \]
In particular, any term in the subsequent manipulations that contains the
difference between the {\LHS{}} and {\RHS{}} of the quantities above vanishes.
In the complete proof the defect sizes included in \(\SF_{\Theta}^{u_1}
\Phi^N_{\gr}\), \(\widetilde{\SF}_{\Gamma_j}^{u_j} \Phi^N_{\gr}\), and
\(\widetilde{\SF}_{\Gamma_{\times}}^{u_{\times}} (\Phi \otimes \Phi)^N_{\gr}\)
are used to control the discrepancies. Similarly, the reader is invited to
disregard boundary terms appearing when integrating by parts: they too are
controlled by the defect sizes. The reader can also suppose that \(\beta_2 =
\beta_3 = \beta\), \(\alpha_2 = \alpha_3 = \beta^{- 1},\) and \(\gamma_2 =
\gamma_3 = \frac{1}{2}\). The second assumption is formally not allowed since
we require \(1 + \alpha_2 + \alpha_3 = 0\). However, this fact is used only in
the reduction following \eqref{eq:local-F} and is not of relevance for the
remainder of the proof. Finally, in the proof of \eqref{eq:STE:o-l-l:tree} the
reader can assume simply that \(\theta = 0\) instead of having an integral over
\(\theta \in \Theta\) while in the proof of \eqref{eq:STE:l-l-o:tree} and of
\eqref{eq:STE:l-o-o:tree} the reader can assume that \(\theta = - \frac{1}{2}\)
so that \(\theta_{\Gamma_j} = 0\) for \(j \in \{ 2, 3 \}\).

\begin{proof}{Proof of bound \eqref{eq:STE:o-l-l:tree}}
  We deal only with the size \(\SI^{(o, l, l)}_{\Theta} \Phi_{\gr}^N\) appearing
  on {\LHS{\eqref{eq:STE:o-l-l:tree}}}; the term \(\SI^{(l, l, l)}_{\Theta}
  \Phi_{\gr}^N\) is dealt with in the same way. We begin by fixing \(T \in
  \TT_{\Theta}\) and \(\phi \in \Phi_{\gr}^N\) with \(\| \phi \|_{\Phi_{\gr}^N}
  \leq 1\) so that we now need to show bound
  \[ \int_{\Theta} \left| \int_0^{1 - | \zeta |} F_1 F_2 F_3 \circ \pi_T
     (\theta, \zeta, \sigma) [\phi_{1, \theta}^{o }, \phi_{2, \theta}^l,
     \phi_{3, \theta}^l] \dd \zeta \frac{\dd \sigma}{\sigma} \right| \frac{\dd
     \theta}{| \Theta |} . \]
  Using the notation from the discussion preceding this proof we can rewrite
  and bound this expression as follows:
  \[ \begin{aligned}[t]&
       \int_{\Theta} \left| \int_{\R^2_+} F_1 F_2 F_3 \circ \pi_T (\theta,
       \zeta, \sigma) [\phi_{1, \theta}^{o }, \phi_{2, \theta}^l, \phi_{3,
       \theta}^l] \dd \zeta \frac{\dd \sigma}{\sigma} \right| \frac{\dd
       \theta}{| \Theta |}\\ &
       \qquad \leq \int_{\R^3_+} \begin{aligned}[t]&
         | F_1^{\ast} (\theta, \zeta, \sigma) [\phi_{1, \theta}^{o }]
         \nobracket \; \wpD_{\zeta} (\theta_{\Gamma_2}) F_2^{\ast} (\theta,
         \zeta, \sigma) [\widetilde{\phi_{2, \theta}^l}]\\ &
         \qquad \left. \times \wpD_{\zeta} (\theta_{\Gamma_3}) F_3^{\ast}
         (\theta, \zeta, \sigma) [\widetilde{\phi_{2, \theta}^l}] \right| \dd
         \zeta \frac{\dd \sigma}{\sigma} \frac{\dd \theta}{| \Theta |}
       \end{aligned}\\ &
       \qquad \lesssim \sup_{\tilde{\phi}_1, \tilde{\phi}_2, \tilde{\phi}_3}
       \int_{\R^3_+} \begin{aligned}[t]&
         | F_1^{\ast} (\theta, \zeta, \sigma) [\widetilde{\phi_1}] | \left| \;
         \wpD_{\zeta} (\theta_{\Gamma_2}) F_2^{\ast} (\theta, \zeta, \sigma)
         [\widetilde{\phi_2}] \right|\\ &
         \qquad \times \left| \wpD_{\zeta} (\theta_{\Gamma_3}) F_3^{\ast}
         (\theta, \zeta, \sigma) [\widetilde{\phi_3}] \right| \dd \zeta
         \frac{\dd \sigma}{\sigma} \frac{\dd \theta}{| \Theta |},
       \end{aligned}
     \end{aligned} \]
  with the upper bound being taken over wave packets \(\widetilde{{\phi_j} }
  \in \Phi^{\infty}_{2 \gr}\) with \(\| \widetilde{{\phi_j} } \|_{\Phi_{2
  \gr}^{N - 3}} \leq 1\), \(j \in \{ 1, 2, 3 \}\). The second inequality follows
  from \Cref{lem:wave-packet-decomposition} according to which we can
  decompose each of the wave packets \(\tilde{\phi}_{1, \theta}^o,
  \tilde{\phi}_{2, \theta}^l, \tilde{\phi}_{3, \theta}^l\) into an absolutely
  convergent series e.g.
  \[ \tilde{\phi}_{1, \theta}^l \eqd \sum_k \tilde{a}^l_{1, k} (\theta)
     \widetilde{\tilde{\phi}}_k \]
  with \(\| \widetilde{\tilde{\phi}}_k \|_{\Phi_{2 \gr}^{N - 3}} \lesssim \|
  \tilde{\phi}_{1, \theta}^l \|_{\Phi_{\gr}^{N - 1}} \lesssim 1\). We bound the
  {\RHS{}} of the chain of inequalities above Cauchy-Schwartz inequality: we
  obtain the bound
  \[ \begin{aligned}[t]&
       \int_{\R^3_+} \begin{aligned}[t]&
         | F_1^{\ast} (\theta, \zeta, \sigma) [\widetilde{\phi_1}] | \left| \;
         \wpD_{\zeta} (\theta_{\Gamma_2}) F_2^{\ast} (\theta, \zeta, \sigma)
         [\widetilde{\phi_2}] \right| \qquad\\ &
         \qquad \times \left| \wpD_{\zeta} (\theta_{\Gamma_3}) F_3^{\ast}
         (\theta, \zeta, \sigma) [\widetilde{\phi_3}] \right| \dd \zeta
         \frac{\dd \sigma}{\sigma} \frac{\dd \theta}{| \Theta |}
       \end{aligned}\\ &
       \qquad \lesssim \begin{aligned}[t]&
         \| F_1^{\ast} (\theta, \zeta, \sigma) [\widetilde{\phi_1}]
         \|_{L^{u_1}_{\frac{\dd \zeta \dd \theta}{| \Theta |}}
         L^{\infty}_{\frac{\dd \sigma}{\sigma}}} \left\| \wpD_{\zeta}
         (\theta_{\Gamma_2})  F_2^{\ast} (\theta, \zeta, \sigma)
         [\widetilde{\phi_2}] \right\|_{L^{u_2}_{\frac{\dd \zeta \dd
         \theta}{| \Theta |}} L^2_{\frac{\dd \sigma}{\sigma}}}\\ &
         \times \left\| \wpD_{\zeta} (\theta_{\Gamma_3}) F^{\ast}_3 (\theta,
         \zeta, \rho) [\widetilde{\phi_3}] \right\|_{L_{\frac{\dd \zeta
         \dd \theta}{| \Theta |}}^{u_3} L_{\frac{\dd \rho}{\rho}}^2}
       \end{aligned}\\ &
       \qquad \lesssim \| F_1 \|_{\SL_{\Theta }^{(u_1, \infty)} \Phi_{2
       \gr}^{N - 2} (T)} \| F_2 \|_{\SL_{\Theta }^{(u_2, 2)} \Gamma_2^{\ast}
       \wpD_{2 \gr}^{N - 3} (T)} \| F_3 \|_{\SL_{\Theta }^{(u_3, 2)}
       \Gamma_3^{\ast} \wpD_{2 \gr}^{N - 3} (T)},
     \end{aligned} \]
  as required.
\end{proof}

\begin{proof}{Proof of bound \eqref{eq:STE:l-l-o:tree}}
  We deal only with the size \(\SI^{(l, l, o)}_{\Theta} \Phi_{\gr}^N\) appearing
  on {\LHS{\eqref{eq:STE:l-l-o:tree}}}; the term with \(\SI^{(l, o,
  l)}_{\Theta} \Phi_{\gr}^N\) is dealt with analogously, by exchanging the role
  of the indexes \(2\) and \(3\). We begin by fixing \(T \in \TT_{\Theta}\) and
  \(\phi \in \Phi_{\gr}^N\) with \(\| \phi \|_{\Phi_{\gr}^N} \leq 1\). Using the
  notation from the discussion preceding this proof, we need to bound the
  expression
  \begin{equation}
    \begin{aligned}[t]&
      \int_{\Theta} \left| \int_{\R^2_+} \wpD_{\zeta} (\theta) F_1^{\ast}
      (\theta, \zeta, \sigma) [\tilde{\phi}_{1, \theta}^l] \wpD_{\zeta}
      (\theta_{\Gamma_2}) F_2^{\ast} (\theta, \zeta, \sigma) [\tilde{\phi}_{2,
      \theta}^l] F_3^{\ast} (\theta, \zeta, \sigma) [\phi_{3, \theta}^o] \dd
      \zeta \frac{\dd \sigma}{\sigma} \right| \frac{\dd \theta}{| \Theta |} .
    \end{aligned} \label{eq:l-l-o-quantity}
\end{equation}

  We claim that
  \begin{equation}
    \begin{aligned}[t]&
      \eqref{eq:l-l-o-quantity} \leq \int_{\Theta} \begin{aligned}[t]&
        (| \textrm{M1} (\theta) [\tilde{\phi}_{1, \theta}^l, \tilde{\phi}_{2,
        \theta}^l, \tilde{\phi}_{3, \theta}^o] | + | \textrm{M2} (\theta)
        [\tilde{\phi}_{1, \theta}^l, \tilde{\phi}_{2, \theta}^l,
        \tilde{\phi}_{3, \theta}^o] | + \nobracket \qquad\\ &
        + | \textrm{B1} (\theta) [\tilde{\phi}_{1, \theta}^l, \tilde{\phi}_{2,
        \theta}^l, \phi_{3, \theta}^o] | + | \textrm{B2} (\theta)
        [\tilde{\phi}_{1, \theta}^l, \tilde{\phi}_{2, \theta}^l,
        \tilde{\phi}_{3, \theta}^o] |\\ &
        \qquad \left\nobracket + \left| \text{B3} (\theta) [\tilde{\phi}_{1,
        \theta}^l, \tilde{\phi}_{2, \theta}^l, \tilde{\phi}_{3, \theta}^o]
        \right| + \left| \textrm{B4} (\theta) [\tilde{\phi}_{1, \theta}^l,
        \tilde{\phi}_{2, \theta}^l, \tilde{\phi}_{3, \theta}^o] \right|
        \right) \frac{\dd \theta}{| \Theta |}
      \end{aligned}\\ &
      \lesssim \sup_{\tmscript{\begin{aligned}[t]&
        \left\| {\phi_j}  \right\|_{\Phi_{\gr}^N} \leq 1\\ &
        j \in \{ 1, 2, 3 \}
      \end{aligned}}} \int_{\Theta} \begin{aligned}[t]&
        \left( | \textrm{M1} (\theta) [\phi_1 , \phi_2 , \phi_3 ] | + \left|
        \textrm{M2\((\theta) [\phi_1 , \phi_2 , \phi_3 ]\)} \right| + \right.
        \qquad\\ &
        + | \textrm{B1} (\theta) [\phi_1 , \phi_2 , \phi_3 ] | + | \textrm{B2}
        (\theta) [\phi_1 , \phi_2 , \phi_3 ] |\\ &
        \qquad \left\nobracket + \left| \text{B3} (\theta) [\phi_1 , \phi_2 ,
        \phi_3 ] \right| + \left| \textrm{B4} (\theta) [\phi_1 , \phi_2 ,
        \phi_3 ] \right| \right) \frac{\dd \theta}{| \Theta |}
      \end{aligned}
    \end{aligned} \label{eq:single-tree:l-l-o:decomp}
  \end{equation}
  where
  \[ \textrm{M1} (\theta) [\phi_1 , \phi_2 , \phi_3 ] \assign \begin{aligned}[t]&
       \int_{\R^2_+} \wpD_{\zeta} (\theta) F_1^{\ast} (\theta, \zeta, \sigma)
       [\phi_1 ] \wpD_{\zeta} (\theta_{\Gamma_2}) ^2 F_2^{\ast} (\theta,
       \zeta, \sigma) [\phi_2 ] \qquad\\ &
       \times \int_0^{\sigma} \frac{\rho}{\sigma} \wpD_{\zeta}
       {(\theta_{\Gamma_3})  }  F_3^{\ast} (\theta, \zeta, \rho) [\phi_3 ]
       \frac{\dd \rho}{\rho} \frac{\dd \sigma}{\sigma} \dd \zeta,
     \end{aligned} \]
  \[ \textrm{M2} (\theta) [\phi_1 , \phi_2 , \phi_3 ] \assign \begin{aligned}[t]&
       \beta \int_{\R^2_+} \wpD_{\zeta} {(\theta) }^2 F_1^{\ast} (\theta,
       \zeta, \sigma) [\phi_1 ] \wpD_{\zeta} (\theta_{\Gamma_2})  F_2^{\ast}
       (\theta, \zeta, \sigma) [\phi_2 ] \qquad\\ &
       \times \int_0^{\sigma} \frac{\rho}{\sigma} \wpD_{\zeta}
       {(\theta_{\Gamma_3}) }  F_3^{\ast} (\theta, \zeta, \rho) [\phi_3 ]
       \frac{\dd \rho}{\rho} \frac{\dd \sigma}{\sigma} \dd \zeta,
     \end{aligned} \]
  and
  \[ \begin{aligned}[t]&
       \textrm{B1} (\theta) [\phi_1 , \phi_2 , \phi_3 ] \assign
       \begin{aligned}[t]&
         \int_{\R^2_+} \wpD_{\zeta} (\theta) F_1^{\ast} (\theta, \zeta,
         \sigma) [\phi_1 ] \hspace{0.17em} \wpD_{\zeta} (\theta_{\Gamma_2}) 
         F_2^{\ast} (\theta, \zeta, \sigma) [\phi_2 ]\\ &
         \qquad \times \int_0^{\sigma} \left( \wpD_{\sigma}
         (\theta_{\Gamma_3}) - \rho \dd_{\rho} \right) F_3^{\ast} (\theta,
         \zeta, \rho) [\phi_3 ] \frac{\dd \rho}{\rho} \frac{\dd
         \sigma}{\sigma} \dd \zeta,
       \end{aligned}
     \end{aligned} \]
  \[ \begin{aligned}[t]&
       \textrm{B2} (\theta) [\phi_1 , \phi_2 , \phi_3 ] \assign
       \begin{aligned}[t]&
         \int_{\R^2_+} \wpD_{\zeta} (\theta) F_1^{\ast} (\theta, \zeta,
         \sigma) [\phi_1 ] \hspace{0.17em} \wpD_{\zeta} (\theta_{\Gamma_2}) 
         F_2^{\ast} (\theta, \zeta, \sigma) [\phi_2 ]\\ &
         \qquad \times \int_0^{\sigma} \left( \wpD_{\zeta_3} - \beta \rho
         d_{\zeta} \right) \wpD_{\zeta_3} F^{\ast}_3 (\theta, \zeta, \rho)
         [\phi_3 ] \frac{\dd \rho}{\rho} \frac{\dd \sigma}{\sigma}
         \dd \zeta,
       \end{aligned}
     \end{aligned} \]
  \[ \begin{aligned}[t]&
       \textrm{B3} (\theta) [\phi_1 , \phi_2 , \phi_3 ]\\ &
       \assign \begin{aligned}[t]&
         \int_{\R^2_+} \wpD_{\zeta} (\theta) F_1^{\ast} (\theta, \zeta,
         \sigma) [\phi_1 ] \left( \wpD_{\zeta} (\theta_{\Gamma_2}) - \beta
         \sigma \dd_{\zeta} \right) \wpD_{\zeta} (\theta_{\Gamma_2}) 
         F_2^{\ast} (\theta, \zeta, \sigma) [\phi_2 ] \qquad\\ &
         \qquad \times \int_0^{\sigma} \frac{\rho}{\sigma} \wpD_{\zeta}
         {(\theta_{\Gamma_3}) }  F_3^{\ast} (\theta, \zeta, \rho) [\phi_3 ]
         \frac{\dd \rho}{\rho} \frac{\dd \sigma}{\sigma} \dd \zeta,
       \end{aligned}
     \end{aligned} \]
  \[ \begin{aligned}[t]&
       \textrm{B4} (\theta) [\phi_1 , \phi_2 , \phi_3 ]\\ &
       \assign \begin{aligned}[t]&
         \beta \int_{\R^2_+} \left( \wpD_{\zeta} (\theta_{\Gamma_2}) - \sigma
         \dd_{\zeta} \right) \wpD_{\zeta} (\theta) F_1^{\ast} (\theta,
         \zeta, \sigma) [\phi_1 ] \wpD_{\zeta} (\theta_{\Gamma_2})  F_2^{\ast}
         (\theta, \zeta, \sigma) [\phi_2 ] \qquad\\ &
         \qquad \times \int_0^{\sigma} \frac{\rho}{\sigma} \wpD_{\zeta}
         {(\theta_{\Gamma_3}) }  F_3^{\ast} (\theta, \zeta, \rho) [\phi_3 ]
         \frac{\dd \rho}{\rho} \frac{\dd \sigma}{\sigma} \dd \zeta .
       \end{aligned}
     \end{aligned} \]

  The second inequality in \eqref{eq:single-tree:l-l-o:decomp} follows from
  \Cref{lem:wave-packet-decomposition} according to which we can decompose
  each of the wave packets \(\tilde{\phi}_{1, \theta}^l, \tilde{\phi}_{2,
  \theta}^l, \tilde{\phi}_{3, \theta}^o, \phi_{3, \theta}^o\) into an
  absolutely convergent series e.g.
  \[ \tilde{\phi}_{1, \theta}^l \eqd \sum_k \tilde{a}^l_{1, k} (\theta)
     \widetilde{\tilde{\phi}}_k \]
  with \(\| \widetilde{\tilde{\phi}}_k \|_{\Phi_{\gr}^N} \leq 1\).
  
  Let us now show that each of the 6 terms in
  {\RHS{\eqref{eq:single-tree:l-l-o:decomp}}} contributes a quantity bounded
  by {\RHS{\eqref{eq:STE:l-l-o}}}. We record Young's convolution inequality in
  a form useful for us. A direct computation gives
  \begin{equation}
    \bigl\| \rho \1_{\rho < 1} \bigr\|_{L^1_{\frac{\dd \rho}{\rho}}
    (\R)} + \bigl\| \rho \1_{\rho < 1}
    \bigr\|_{L^{\infty}_{\frac{\dd \rho}{\rho}} (\R)}
    \lesssim 1, \label{eq:young-kernel-estimate}
  \end{equation}
  so it follows that for any \(q \in [1, \infty]\) and any \(h \in L^q_{\dd
  \rho / \rho} (\R_+)\) it holds that
  \begin{equation}
    \left\| \int_0^{\sigma} \frac{\rho}{\sigma} h (\rho) \frac{\dd
    \rho}{\rho} \right\|_{L^q_{\frac{\dd \rho}{\rho}} (\R) +
    L^{\infty}_{\frac{\dd \rho}{\rho}} (\R)} \lesssim
    \|h\|_{L^q_{\frac{\dd \rho}{\rho}} (\R)} .
    \label{eq:young-multiplicative}
  \end{equation}
  Note that the integral above is a multiplicative convolution with respect to
  the multiplicative Haar measure \(\frac{\dd \rho}{\rho}\). For \(\left\|
  {\phi_j}  \right\|_{\Phi_{\gr}^N} \leq 1\), \(j \in \{ 1, 2, 3 \}\), the
  following bounds thus hold:
  \[ \begin{aligned}[t]&
       \int_{\Theta} | \textrm{M1} (\theta) [\phi_1 , \phi_2 , \phi_3 ] |
       \dd \theta\\ &
       \qquad \lesssim \begin{aligned}[t]&
         \left\| \wpD_{\zeta} (\theta)  F_1^{\ast} (\theta, \zeta, \sigma)
         [\phi_1] \right\|_{L^{u_1}_{\frac{\dd \zeta \dd \theta}{|
         \Theta |}} L^{\infty}_{\frac{\dd \sigma}{\sigma}}} \left\|
         \wpD_{\zeta} (\theta_{\Gamma_2})^2 F_2^{\ast} (\theta, \zeta, \sigma)
         [\phi_2] \right\|_{L^{u_2}_{\frac{\dd \zeta \dd \theta}{|
         \Theta |}} L^2_{\frac{\dd \sigma}{\sigma}}} \qquad\\ &
         \times \left\| \wpD_{\zeta} (\theta_{\Gamma_3}) F_3^{\ast} (\theta,
         \zeta, \rho) [\phi_3] \right\|_{L_{\frac{\dd \zeta \dd
         \theta}{| \Theta |}}^{u_3} L_{\frac{\dd \sigma}{\sigma}}^2}
       \end{aligned} \\ &
       \qquad \lesssim \| F_1 \|_{\SL_{\Theta }^{(u_1, \infty)} \Phi_{\gr}^N
       (T)} \| F_2 \|_{\SL_{\Theta }^{(u_2, 2)} \Gamma_2^{\ast} \wpD_{\gr}^N
       (T)} \| F_3 \|_{\SL_{\Theta }^{(u_3, 2)} \Gamma_3^{\ast} \wpD_{\gr}^N
       (T)} ;
     \end{aligned} \]
  \[ \begin{aligned}[t]&
       \int_{\Theta} | \textrm{M2} (\theta) [\phi_1 , \phi_2 , \phi_3 ] |
       \dd \theta\\ &
       \qquad \lesssim \begin{aligned}[t]&
         \beta \left\| \wpD_{\zeta} (\theta) ^2 F_1^{\ast} (\theta, \zeta,
         \sigma) [\phi_1] \right\|_{L^{u_1}_{\frac{\dd \zeta \dd
         \theta}{| \Theta |}} L^{\infty}_{\frac{\dd \sigma}{\sigma}}}
         \left\| \wpD_{\zeta} (\theta_{\Gamma_2}) F_2^{\ast} (\theta, \zeta,
         \sigma) [\phi_2] \right\|_{L^{u_2}_{\frac{\dd \zeta \dd
         \theta}{| \Theta |}} L^2_{\frac{\dd \sigma}{\sigma}}} \qquad\\ &
         \times \left\| \wpD_{\zeta} (\theta_{\Gamma_3}) F_3^{\ast} (\theta,
         \zeta, \rho) [\phi_3] \right\|_{L_{\frac{\dd \zeta \dd
         \theta}{| \Theta |}}^{u_3} L_{\frac{\dd \sigma}{\sigma}}^2}
       \end{aligned} \\ &
       \qquad \lesssim \beta \| F_1 \|_{\SL_{\Theta }^{(u_1, \infty)}
       \Phi_{\gr}^N (T)} \| F_2 \|_{\SL_{\Theta }^{(u_2, 2)} \Gamma_2^{\ast}
       \wpD_{\gr}^N (T)} \| F_3 \|_{\SL_{\Theta }^{(u_3, 2)} \Gamma_3^{\ast}
       \wpD_{\gr}^N (T)} .
     \end{aligned} \]
  Bounding the boundary terms \(\textrm{B1}\), \(\textrm{B2}\), \(\textrm{B3}\), and
  \(\textrm{B4}\) requires the use of defect sizes:
  \[ \begin{aligned}[t]&
       \int_{\Theta} | \textrm{B1} (\theta) [\phi_1 , \phi_2 , \phi_3 ] |
       \dd \theta\\ &
       \qquad \lesssim \begin{aligned}[t]&
         \left\| \wpD_{\zeta} (\theta) F_1^{\ast} (\theta, \zeta, \sigma)
         [\phi_1 ] \right\|_{L^{u_1}_{\frac{\dd \zeta \dd \theta}{|
         \Theta |}} L^2_{\frac{\dd \sigma}{\sigma}}} \left\| \wpD_{\zeta}
         (\theta_{\Gamma_2})  F_2^{\ast} (\theta, \zeta, \sigma) [\phi_2]
         \right\|_{L^{u_2}_{\frac{\dd \theta \dd \zeta}{| \Theta |}}
         L^2_{\frac{\dd \sigma}{\sigma}}}\\ &
         \times \left\| \left( \wpD_{\sigma} (\theta_{\Gamma_3}) - \rho
         \dd_{\rho} \right)  F_3^{\ast} (\theta, \zeta, \rho) [\phi_3 ]
         \right\|_{L_{\frac{\dd \theta \dd \zeta}{| \Theta |}}^{u_3}
         L_{\frac{\dd \rho}{\rho}}^1}
       \end{aligned} \qquad\\ &
       \hspace{2.0em} \lesssim \| F_1 \|_{\SL_{\Theta }^{(u_1, \infty)}
       \wpD_{\gr}^N (T)} \| F_2 \|_{\SL_{\Theta }^{(u_2, 2)} \Gamma_2^{\ast}
       \wpD_{\gr}^N (T)} \| F_3 \|_{\SL_{\Theta }^{(u_3, 1)} \Gamma_3^{\ast}
       {\dfct }_{\gr}^N (T)},
     \end{aligned} \]
  \[ \begin{aligned}[t]&
       \int_{\Theta} \left| \textrm{B2} (\theta) [\phi_1 , \phi_2 , \phi_3 ]
       \right| \dd \theta\\ &
       \qquad \lesssim \begin{aligned}[t]&
         \left\| \wpD_{\zeta} (\theta) F_1^{\ast} (\theta, \zeta, \sigma)
         [\phi_1 ] \right\|_{L^{u_1}_{\frac{\dd \zeta \dd \theta}{|
         \Theta |}} L^2_{\frac{\dd \sigma}{\sigma}}} \left\| \wpD_{\zeta}
         (\theta_{\Gamma_2})  F_2^{\ast} (\theta, \zeta, \sigma) [\phi_2]
         \right\|_{L^{u_2}_{\frac{\dd \theta \dd \zeta}{| \Theta |}}
         L^2_{\frac{\dd \sigma}{\sigma}}}\\ &
         \times \left\| \left( \wpD_{\zeta_3} - \beta \rho d_{\zeta} \right)
         \wpD_{\zeta} (\theta_{\Gamma_3}) F^{\ast}_3 (\theta, \zeta, \rho)
         [\phi_3 ] \right\|_{L_{\frac{\dd \theta \dd \zeta}{| \Theta
         |}}^{u_3} L_{\frac{\dd \rho}{\rho}}^1}
       \end{aligned} \qquad\\ &
       \qquad \lesssim \| F_1 \|_{\SL_{\Theta }^{(u_1, \infty)} \wpD_{\gr}^N
       (T)} \| F_2 \|_{\SL_{\Theta }^{(u_2, 2)} \Gamma_2^{\ast} \wpD_{\gr}^N
       (T)} \| F_3 \|_{\SL_{\Theta }^{(u_3, 1)} \Gamma_3^{\ast} \dfct_{\gr}^N
       (T)},
     \end{aligned} \]
  \[ \begin{aligned}[t]&
       \int_{\Theta} | \textrm{B3} (\theta) [\phi_1 , \phi_2 , \phi_3 ] |
       \dd \theta\\ &
       \qquad \lesssim \begin{aligned}[t]&
         \left\| \wpD_{\zeta} (\theta) F_1^{\ast} (\theta, \zeta, \sigma)
         [\phi_1 ] \right\|_{L^{u_1}_{\frac{\dd \zeta \dd \theta}{|
         \Theta |}} L^{\infty}_{\frac{\dd \sigma}{\sigma}}} \qquad\\ &
         \qquad \times \left\| \left( \wpD_{\zeta} (\theta_{\Gamma_2}) -
         \beta \sigma \dd_{\zeta} \right) \wpD_{\zeta} (\theta_{\Gamma_2}) 
         F_2^{\ast} (\theta, \zeta, \sigma) [\phi_2]
         \right\|_{L^{u_2}_{\frac{\dd \theta \dd \zeta}{| \Theta |}}
         L^1_{\frac{\dd \sigma}{\sigma}}}\\ &
         \qquad \times \left\| \wpD_{\zeta } (\theta_{\Gamma_3}) F^{\ast}_3
         (\theta, \zeta, \rho) [\phi_3 ] \right\|_{L_{\frac{\dd \theta
         \dd \zeta}{| \Theta |}}^{u_3} L_{\frac{\dd \rho}{\rho}}^2}
       \end{aligned}\\ &
       \qquad \lesssim \| F_1 \|_{\SL_{\Theta }^{(u_1, \infty)} \Phi_{\gr}^N
       (T)} \| F_2 \|_{\Gamma_2^{\ast} \SL_{\Theta }^{(u_2, 1)} \dfct_{\gr}^N
       (T)} \| F_3 \|_{\Gamma_3^{\ast} \SL_{\Theta }^{(u_3, 2)} \wpD_{\gr}^N
       (T)},
     \end{aligned} \]
  
  \[ \begin{aligned}[t]&
       \int_{\Theta} \left| \textrm{B4} (\theta) [\phi_1 , \phi_2 , \phi_3 ]
       \right| \dd \theta\\ &
       \qquad \lesssim \begin{aligned}[t]&
         \beta \left\| \left( \wpD_{\zeta} (\theta_{\Gamma_2}) - \sigma
         \dd_{\zeta} \right) \wpD_{\zeta} (\theta) F_1^{\ast} (\theta,
         \zeta, \sigma) \right\|_{L^{u_1}_{\frac{\dd \zeta \dd
         \theta}{| \Theta |}} L^1_{\frac{\dd \sigma}{\sigma}}} \qquad\\ &
         \times \left\| \wpD_{\zeta} (\theta_{\Gamma_2})  F_2^{\ast} (\theta,
         \zeta, \sigma) [\phi_2] \right\|_{L^{u_2}_{\frac{\dd \zeta \dd
         \theta}{| \Theta |}} L^{\infty}_{\frac{\dd \sigma}{\sigma}}}
         \left\| \wpD_{\zeta } (\theta_{\Gamma_3}) F^{\ast}_3 (\theta, \zeta,
         \rho) [\phi_3 ] \right\|_{L_{\frac{\dd \zeta \dd \theta}{|
         \Theta |}}^{u_3} L_{\frac{\dd \rho}{\rho}}^2}
       \end{aligned}\\ &
       \qquad \lesssim \beta \| F_1 \|_{\SL_{\Theta }^{(u_1, 1)} \dfct_{\gr}^N
       (T)} \| F_2 \|_{\SL_{\Theta }^{(u_2, \infty)} \Phi_{\gr}^N (T)} \| F_3
       \|_{\SL_{\Theta }^{(u_3, 2)} \Gamma_3^{\ast} \wpD_{\gr}^N (T)}
     \end{aligned} \]

  We conclude the proof by showing that \eqref{eq:single-tree:l-l-o:decomp}
  holds. Our manipulations rely on using the fundamental theorem of calculus
  in the scale parameter on \(F_3^{\ast}\). To exploit certain cancellations we
  need to integrate by parts in the spatial variable. Every integration causes
  ``defect'' terms to appear that have to be accounted for; they are grouped
  into the terms \(\textrm{B1} - \textrm{B4}\).
  
  We crucially rely on the following algebraic identity that follows directly
  from definitions \eqref{eq:space-boost} and \eqref{eq:scale-boost}:
  \begin{equation}
    \wpD_{\sigma} (\theta) F (\theta, \zeta, \sigma) [\phi] = \wpD_{\zeta}
    (\theta) F (\theta, \zeta, \sigma) [z \phi (z)] .
    \label{eq:useful-boost-identities}
  \end{equation}
  In what follows we drop \(\theta\) from the notation and keep the dependence
  on this variable implicit: we write \(\wpD_{\zeta_j}\) in place of
  \(\wpD_{\zeta} (\theta_{\Gamma_j})\), \(\wpD_{\sigma_j}\) in place of
  \(\wpD_{\sigma} (\theta_{\Gamma_j})\) , \(F^{\ast}_j (\zeta, \sigma)\) in place
  of \(F^{\ast}_j (\theta, \zeta, \sigma)\), \(\tilde{\phi}_j^{l / o}\) in place
  of \(\tilde{\phi}_{j, \theta}^{l / o}\), and so on.
  
  It holds that (using the convention of keeping \(\theta\) implicit)
  \begin{equation}
    \begin{aligned}[t]&
      \rho \dd_{\rho} F^{\ast}_3 (\zeta, \rho) [\phi_3^o] = \wpD_{\sigma_3}
      F^{\ast}_3 (\zeta, \rho) [\phi_3^o] - \left( \wpD_{\sigma_3} - \rho
      \dd_{\rho} \right) F^{\ast}_3 (\zeta, \rho) [\phi_3^o]\\ &
      \qquad = \wpD_{\zeta_3}^2 F^{\ast}_3 (\zeta, \rho) [\tilde{\phi}_3^o] -
      \left( \wpD_{\sigma_3} - \rho \dd_{\rho} \right) F^{\ast}_3 (\zeta,
      \rho) [\phi_3^o]\\ &
      \qquad = \begin{aligned}[t]&
        \beta \rho d_{\zeta} \wpD_{\zeta_3} F^{\ast}_3 (\zeta, \rho)
        [\tilde{\phi}_3^o] + \left( \wpD_{\zeta_3} - \beta \rho d_{\zeta}
        \right) \wpD_{\zeta_3} F^{\ast}_3 (\zeta, \rho) [\tilde{\phi}_3^o]\\ &
        - \left( \wpD_{\sigma_3} - \rho \dd_{\rho} \right) F^{\ast}_3
        (\zeta, \rho) [\phi_3^o]
      \end{aligned}
    \end{aligned} \label{eq:scale-derivative-gain-of-cancellation}
  \end{equation}
  We now concentrate on the inner \(\R^2_+\) integral in
  {\LHS{\eqref{eq:single-tree:l-l-o:decomp}}}. Applying the fundamental
  theorem of calculus in the scale variable to term \(F^{\ast}_3 (\zeta,
  \sigma) [\phi_3^o]\) and using identity
  \eqref{eq:scale-derivative-gain-of-cancellation} we get
  \[ \begin{aligned}[t]&
       \int_{\R^2_+} {\wpD_{\zeta_1}}  F_1^{\ast} (\zeta, \sigma)
       [\tilde{\phi}_1^l] \wpD_{\zeta_2} F_2^{\ast} (\zeta, \sigma)
       [\tilde{\phi}_2^l] F_3^{\ast} (\theta, \zeta, \sigma) [\phi_2^o] \dd
       \zeta \frac{\dd \sigma}{\sigma}\\ &
       \qquad = \int_{\R^2_+} {\wpD_{\zeta_1}}  F^{\ast}_1 (\zeta, \sigma)
       [\tilde{\phi}_1^l] \hspace{0.17em} \wpD_{\zeta_2} F^{\ast}_2 (\zeta,
       \sigma) [\tilde{\phi}_2^l] \int_0^{\sigma} \rho \dd_{\rho}
       F^{\ast}_3 (\zeta, \rho) [\phi_3^o] \frac{\dd \rho}{\rho} \dd
       \zeta \frac{\dd \sigma}{\sigma} .\\ &
       \qquad = \begin{aligned}[t]&
         \int_{\R^2_+} \wpD_{\zeta_1} F^{\ast}_1 (\zeta, \sigma)
         [\tilde{\phi}_1^l] \wpD_{\zeta_2} F^{\ast}_2 (\zeta, \sigma)
         [\tilde{\phi}_2^l] \int_0^{\sigma} \beta \rho \dd_{\zeta}
         \wpD_{\zeta_3} F^{\ast}_3 (\zeta, \rho) [\tilde{\phi}_3^o]
         \frac{\dd \rho}{\rho} \dd \zeta \frac{\dd \sigma}{\sigma}\\ &
         - \textrm{B1} [\tilde{\phi}_1^l, \tilde{\phi}_2^l, \phi_3^o] +
         \textrm{B2} [\tilde{\phi}_1^l, \tilde{\phi}_2^l, \tilde{\phi}_3^o] .
       \end{aligned}
     \end{aligned} \]
  Integrating the first summand on the \(\RHS{}\) by parts in \(\zeta\) gives
  \[ \begin{aligned}[t]&
       \int_{\R^2_+} \wpD_{\zeta_1} F^{\ast}_1 (\zeta, \sigma)
       [\tilde{\phi}_1^l] \wpD_{\zeta_2} F^{\ast}_2 (\zeta, \sigma)
       [\tilde{\phi}_2^l] \int_0^{\sigma} \beta \rho \dd_{\zeta}
       \wpD_{\zeta_3} F^{\ast}_3 (\zeta, \rho) [\tilde{\phi}_3^o] \frac{\dd
       \rho}{\rho} \dd \zeta \frac{\dd \sigma}{\sigma}\\ &
       \qquad = \begin{aligned}[t]&
         \int_{\R^2_+} \wpD_{\zeta_1} F^{\ast}_1 (\zeta, \sigma)
         [\tilde{\phi}_1^l] \hspace{0.17em} \beta \sigma \dd_{\zeta}
         \wpD_{\zeta_2} F^{\ast}_2 (\zeta, \sigma) [\tilde{\phi}_2^l]
         \int_0^{\sigma} \frac{\rho}{\sigma} \wpD_{\zeta_3} F^{\ast}_3 (\zeta,
         \rho) [\tilde{\phi}_3^o] \frac{\dd \rho}{\rho} \frac{\dd
         \sigma}{\sigma} \dd \zeta\\ &
         \qquad + \beta \int_{\R^2_+} \sigma \dd_{\zeta} \wpD_{\zeta_1}
         F^{\ast}_1 (\zeta, \sigma) [\tilde{\phi}_1^l] \wpD_{\zeta_2}
         F^{\ast}_2 (\zeta, \sigma) [\tilde{\phi}_2^l] \int_0^{\sigma}
         \frac{\rho}{\sigma} \wpD_{\zeta_3} F^{\ast}_3 (\zeta, \rho)
         [\tilde{\phi}_3^o] \frac{\dd \rho}{\rho} \frac{\dd
         \sigma}{\sigma} \dd \zeta
       \end{aligned}\\ &
       \qquad = \textrm{M1} [\tilde{\phi}_1^l, \tilde{\phi}_2^l,
       \tilde{\phi}_3^o] + \textrm{B3} [\tilde{\phi}_1^l, \tilde{\phi}_2^l,
       \tilde{\phi}_3^o] + \textrm{M2} [\tilde{\phi}_1^l, \tilde{\phi}_2^l,
       \tilde{\phi}_3^o] + \textrm{B4} [\tilde{\phi}_1^l, \tilde{\phi}_2^l,
       \tilde{\phi}_3^o] .
     \end{aligned} \]
  This concludes the proof of the decomposition
  \eqref{eq:single-tree:l-l-o:decomp}.
\end{proof}

\begin{proof}{Proof of bound \eqref{eq:STE:l-o-o:tree}}
  Fix \(\phi_j \in \Phi_{\gr / 4}^{N + 3}\) with \(\| \phi_j \|_{\Phi_{\gr /
  4}^{N + 3}} \leq 1\) so that \(\| \phi_{j, \theta}^{o / l} \|_{\Phi^{N +
  3}_{\gr / 2}} \lesssim 1\). Using the notation from the discussion preceding
  this proof, we need to bound the expression
  \begin{equation}
    \int_{\Theta} \left| \int_{\R^2_+} \wpD_{\zeta} (\theta) F_1^{\ast}
    (\theta, \zeta, \sigma) [\tilde{\phi}_{1, \theta}^l] F_{\times }^{\ast}
    (\theta, \zeta, \rho) [\phi_{2, \theta}^o, \phi_{3, \theta}^o] \dd \zeta
    \frac{\dd \sigma}{\sigma} \right| \frac{\dd \theta}{| \Theta |}
    \label{eq:l-o-o-quantity}
  \end{equation}
  for every \(\phi_j \in \Phi_{\gr / 4}^{N + 3}\) with \(\| \phi_j \|_{\Phi_{\gr
  / 4}^{N + 3}} \leq 1\). We claim that
  \begin{equation}
    \eqref{eq:l-o-o-quantity} \begin{aligned}[t]&
      \leq \int_{\Theta} \begin{aligned}[t]&
        (| \textrm{M1} (\theta) [\tilde{\phi}_{1, \theta}^l, \tilde{\phi}_{2,
        \theta}^o, \phi_{3, \theta}^o] | + | \textrm{M1} (\theta)
        [\tilde{\phi}_{1, \theta}^l, \phi_{2, \theta}^o, \tilde{\phi}_3^o] |
        \nobracket \qquad\\ &
        + \left| \textrm{M2}_{\left( \wpD_{\zeta_2} \otimes \Id \right)}
        [\tilde{\phi}_{1, \theta}^l, \tilde{\phi}_{2, \theta}^o, \phi_{3,
        \theta}^o] \right| + \left| \textrm{M2}_{\left( \Id \otimes
        \wpD_{\zeta_3} \right)} [\tilde{\phi}_{1, \theta}^l, \phi_{2,
        \theta}^o, \tilde{\phi}_{3, \theta}^o] \right|\\ &
        + | \textrm{B1} (\theta) [\tilde{\phi}_{1, \theta}^l, \phi_{2,
        \theta}^o, \phi_{3, \theta}^o] |\\ &
        + \left| \textrm{B2}_{\left( \wpD_{\zeta_2} \otimes \Id \right)}
        (\theta) [\tilde{\phi}_{1, \theta}^l, \tilde{\phi}_{2, \theta}^o,
        \phi_{3, \theta}^o] \right| + \left| \textrm{B2}_{\left( \Id \otimes
        \wpD_{\zeta_3} \right)} (\theta) [\tilde{\phi}_{1, \theta}^l, \phi_{2,
        \theta}^o, \tilde{\phi}_{3, \theta}^o] \right|\\ &
        + \left| \textrm{B3}_{\left( \wpD_{\zeta_2} \otimes \Id \right)}
        (\theta) [\tilde{\phi}_{1, \theta}^l, \tilde{\phi}_{2, \theta}^o,
        \phi_{3, \theta}^o] \right|\\ &
        \qquad \left\nobracket + \left| \textrm{B3}_{\left( \Id \otimes
        \wpD_{\zeta_3} \right)} (\theta) [\tilde{\phi}_{1, \theta}^l, \phi_{2,
        \theta}^o, \tilde{\phi}_{3, \theta}^o] \right| \right) \frac{\dd
        \theta}{| \Theta |}
      \end{aligned}\\ &
      \lesssim_N \sup_{\tmscript{\begin{aligned}[t]&
        \left\| {\phi_j}  \right\|_{\Phi_{\gr}^N} \leq 1\\ &
        j \in \{ 1, 2, 3 \}
      \end{aligned}}} \int_{\Theta} \begin{aligned}[t]&
        \left( | \textrm{M1} (\theta) [\phi_1 , \phi_2 , \phi_3 ] | + \left|
        \textrm{M2\((\theta) [\phi_1 , \phi_2 , \phi_3 ]\)} \right| + \right.
        \qquad\\ &
        \qquad + | \textrm{B1} (\theta) [\phi_1 , \phi_2 , \phi_3 ] | + |
        \textrm{B2} (\theta) [\phi_1 , \phi_2 , \phi_3 ] |\\ &
        \qquad \left\nobracket + \left| \text{B3} (\theta) [\phi_1 , \phi_2 ,
        \phi_3 ] \right| \right) \frac{\dd \theta}{| \Theta |}
      \end{aligned}
    \end{aligned} \label{eq:single-tree:l-o-o:decomp}
\end{equation}

  where
  \[ \textrm{M1} (\theta) [\phi_1 , \phi_2 , \phi_3 ] \assign
     \begin{aligned}[t]&
       \int_{\R^2_+} \wpD_{\zeta} (\theta) F_1^{\ast} (\theta, \zeta, \sigma)
       [\phi_1 ] \qquad\\ &
       \qquad \times \int_0^{\sigma} \left( \wpD_{\zeta } (\theta_{\Gamma_2})
       \otimes \wpD_{\zeta } (\theta_{\Gamma_3}) \right) F_{\times }^{\ast}
       (\theta, \zeta, \sigma) [\phi_2, \phi_3] \frac{\dd \rho}{\rho}
       \frac{\dd \sigma}{\sigma} \dd \zeta,
     \end{aligned} \]
  \[ \textrm{\(\textrm{M2}_{\left( \wpD_{\zeta} \otimes \Id \right)}\)} (\theta)
     [\phi_1 , \phi_2 , \phi_3 ] \assign \begin{aligned}[t]&
       \beta \int_{\R^2_+} \wpD_{\zeta} (\theta)^2 F_1^{\ast} (\theta, \zeta,
       \sigma) [\phi_1 ] \qquad\\ &
       \qquad \times \int_0^{\sigma} \frac{\rho}{\sigma} \left( \wpD_{\zeta }
       (\theta_{\Gamma_2}) \otimes \tmop{Id} \right) F_{\times }^{\ast}
       (\theta, \zeta, \sigma) [\phi_2, \phi_3] \frac{\dd \rho}{\rho}
       \frac{\dd \sigma}{\sigma} \dd \zeta,
     \end{aligned} \]
  \[ \textrm{\(\textrm{M2}_{\left( \Id \otimes \wpD_{\zeta} \right)}\)} (\theta)
     [\phi_1 , \phi_2 , \phi_3 ] \assign \begin{aligned}[t]&
       \beta \int_{\R^2_+} \wpD_{\zeta} (\theta)^2 F_1^{\ast} (\theta, \zeta,
       \sigma) [\phi_1 ] \qquad\\ &
       \qquad \times \int_0^{\sigma} \frac{\rho}{\sigma} \left( \tmop{Id}
       \otimes \wpD_{\zeta } (\theta_{\Gamma_3}) \right) F_{\times }^{\ast}
       (\theta, \zeta, \sigma) [\phi_2, \phi_3] \frac{\dd \rho}{\rho}
       \frac{\dd \sigma}{\sigma} \dd \zeta,
     \end{aligned} \]
  and
  \[ \begin{aligned}[t]&
       \textrm{B1} (\theta) [\phi_1 , \phi_2 , \phi_3 ]\\ &
       \qquad \assign \begin{aligned}[t]&
         \int_{\R^2_+} \wpD_{\zeta} (\theta) F_1^{\ast} (\theta, \zeta,
         \sigma) [\phi_1 ]\\ &
         \qquad \times \int_0^{\sigma} \left( \wpD_{\sigma}
         (\theta_{\Gamma_2}) \otimes \Id + \Id \otimes \wpD_{\sigma }
         (\theta_{\Gamma_3}) - \rho \dd_{\rho} \right) F_{\times }^{\ast}
         (\theta, \zeta, \sigma) [\phi_2, \phi_3] \frac{\dd \rho}{\rho}
         \frac{\dd \sigma}{\sigma} \dd \zeta,
       \end{aligned}
     \end{aligned} \]
  \[ \begin{aligned}[t]&
       \textrm{B2}_{\left( \wpD_{\zeta} \otimes \Id \right)} [\phi_1 , \phi_2
       , \phi_3 ]\\ &
       \qquad \assign \begin{aligned}[t]&
         \int_{\R^2_+} \wpD_{\zeta} (\theta) F_1^{\ast} (\theta, \zeta,
         \sigma) [\phi_1 ] \int_0^{\sigma} \left( \wpD_{\zeta}
         (\theta_{\Gamma_2}) \otimes \Id + \Id \otimes \wpD_{\zeta }
         (\theta_{\Gamma_3}) - \beta \rho \dd_{\zeta} \right)\\ &
         \qquad \left( \wpD_{\zeta} (\theta_{\Gamma_2}) \otimes \Id \right)
         F_{\times }^{\ast} (\theta, \zeta, \sigma) [\phi_2, \phi_3]
         \frac{\dd \rho}{\rho} \frac{\dd \sigma}{\sigma} \dd \zeta,
       \end{aligned}
     \end{aligned} \]
  \[ \begin{aligned}[t]&
       \textrm{B2}_{\left( \Id \otimes \wpD_{\zeta} \right)} [\phi_1 , \phi_2
       , \phi_3 ]\\ &
       \qquad \assign \begin{aligned}[t]&
         \int_{\R^2_+} \wpD_{\zeta} (\theta) F_1^{\ast} (\theta, \zeta,
         \sigma) [\phi_1 ] \int_0^{\sigma} \left( \wpD_{\zeta}
         (\theta_{\Gamma_2}) \otimes \Id + \Id \otimes \wpD_{\zeta }
         (\theta_{\Gamma_3}) - \beta \rho \dd_{\zeta} \right)\\ &
         \qquad \left( \Id \otimes \wpD_{\zeta } (\theta_{\Gamma_3}) \right)
         F_{\times }^{\ast} (\theta, \zeta, \sigma) [\phi_2, \phi_3]
         \frac{\dd \rho}{\rho} \frac{\dd \sigma}{\sigma} \dd \zeta,
       \end{aligned}
     \end{aligned} \]
  \[ \textrm{B3}_{\left( \wpD_{\zeta} \otimes \Id \right)} [\phi_1 , \phi_2 ,
     \phi_3 ] \assign \begin{aligned}[t]&
       \beta \int_{\R^2_+} \left( \wpD_{\zeta} (\theta) - \sigma d_{\zeta}
       \right) F_1^{\ast} (\theta, \zeta, \sigma) [\phi_1 ]\\ &
       \qquad \times \int_0^{\sigma} \frac{\rho}{\sigma} \left( \wpD_{\zeta}
       (\theta_{\Gamma_2}) \otimes \Id \right) F_{\times }^{\ast} (\theta,
       \zeta, \sigma) [\phi_2, \phi_3] \frac{\dd \rho}{\rho} \frac{\dd
       \sigma}{\sigma} \dd \zeta,
     \end{aligned} \]
  \[ \textrm{B3}_{\left( \Id \otimes \wpD_{\zeta} \right)} [\phi_1 , \phi_2 ,
     \phi_3 ] \assign \begin{aligned}[t]&
       \beta \int_{\R^2_+} \left( \wpD_{\zeta} (\theta) - \sigma d_{\zeta}
       \right) F_1^{\ast} (\theta, \zeta, \sigma) [\phi_1 ] \qquad\\ &
       \qquad \times \int_0^{\sigma} \frac{\rho}{\sigma} \left( \Id \otimes
       \wpD_{\zeta } (\theta_{\Gamma_3}) \right) F_{\times }^{\ast} (\theta,
       \zeta, \sigma) [\phi_2, \phi_3] \frac{\dd \rho}{\rho} \frac{\dd
       \sigma}{\sigma} \dd \zeta,
     \end{aligned} \]

  Let us now show that each of the 6 terms in
  {\RHS{\eqref{eq:single-tree:l-o-o:decomp}}} contributes a quantity bounded
  by {\RHS{\eqref{eq:STE:l-o-o}}}. For \(\left\| {\phi_j} 
  \right\|_{\Phi_{\gr}^N} \leq 1\), \(j \in \{ 1, 2, 3 \}\), the following bounds
  thus hold:
  \[ \begin{aligned}[t]&
       \int_{\Theta^{\tmop{in}}} | \textrm{M1} (\theta) [\phi_1 , \phi_2 ,
       \phi_3 ] | \dd \theta\\ &
       \qquad\lesssim \begin{aligned}[t]&
         \left\| \wpD_{\zeta} (\theta)  F_1^{\ast} (\theta, \zeta, \sigma)
         [\phi_1] \right\|_{L^{u_1}_{\frac{\dd \zeta \dd \theta}{|
               \Theta |}} L^{\infty}_{\frac{\dd \sigma}{\sigma}}} \\ & \qquad \times
         \left\| \left(
         \wpD_{\zeta } (\theta_{\Gamma_2}) \otimes \wpD_{\zeta }
         (\theta_{\Gamma_3}) \right) F_{\times }^{\ast} (\theta, \zeta,
         \sigma) [\phi_2, \phi_3] \right\|_{L^{u_{\times}}_{\frac{\dd
         \zeta \dd \theta}{| \Theta |}} L^1_{\frac{\dd \sigma}{\sigma}}}
       \end{aligned} \\ &
       \qquad \lesssim \| F_1 \|_{\SL_{\Theta }^{(u_1, \infty)} \Phi_{\gr}^N (T)} \|
       F_2 F_3 \|_{\SL_{\Theta }^{(u_{\times}, 1)} \Gamma_{\times}^{\ast}
       \left( \wpD \otimes \wpD \right)_{\gr}^N (T)} ;
     \end{aligned} \]
  \[ \begin{aligned}[t]&
       \int_{\Theta^{\tmop{in}}} \left| \textrm{M2}_{\left( \wpD_{\zeta}
       \otimes \Id \right)} (\theta) [\phi_1 , \phi_2 , \phi_3 ] \right|
       \dd \theta\\ &
       \qquad \lesssim \begin{aligned}[t]&
         \beta \left\| \wpD_{\zeta} (\theta) ^2 F_1^{\ast} (\theta, \zeta,
         \sigma) [\phi_1] \right\|_{L^{u_1}_{\frac{\dd \zeta \dd
               \theta}{| \Theta |}} L^2_{\frac{\dd \sigma}{\sigma}}} \\ & \qquad \times\left\|
         \left( \wpD_{\zeta } (\theta_{\Gamma_2}) \otimes \Id \right)
         F_{\times }^{\ast} (\theta, \zeta, \sigma) [\phi_2, \phi_3]
         \right\|_{L^{u_{\times}}_{\frac{\dd \zeta \dd \theta}{| \Theta
         |}} L^2_{\frac{\dd \sigma}{\sigma}}}
       \end{aligned} \\ &\qquad
       \lesssim \beta \| F_1 \|_{\SL_{\Theta }^{(u_1, 2)} \wpD_{\gr}^N (T)} \|
       F_2 F_3 \|_{\Gamma_{\times}^{\ast} \SL_{\Theta }^{(u_{\times}, 2)}
       \left( \Id \otimes \wpD \right)_{\gr}^N (T)} ;
     \end{aligned} \]
  similarly
  \[ \begin{aligned}[t]&
       \int_{\Theta} \left| \textrm{M2}_{\left( \Id \otimes \wpD_{\zeta}
       \right)} (\theta) [\phi_1 , \phi_2 , \phi_3 ] \right| \dd \theta\\ &
       \qquad \lesssim \beta \| F_1 \|_{\SL_{\Theta }^{(u_1, 2)} \wpD_{\gr}^N
       (T)} \| F_2 F_3 \|_{\Gamma_{\times}^{\ast} \SL_{\Theta }^{(u_{\times},
       2)} \left( \wpD \otimes \Id \right)_{\gr}^N (T)} .
     \end{aligned} \]
  Bounding the boundary terms \(\textrm{B1}\), \(\textrm{B2}\), and \(\textrm{B3}\),
  requires the use of defect sizes:
  \[ \begin{aligned}[t]&
       \int_{\Theta^{\tmop{in}}} | \textrm{B1} (\theta) [\phi_1 , \phi_2 ,
       \phi_3 ] | \dd \theta\\ &
       \qquad \lesssim \begin{aligned}[t]&
         \left\| \int_{\rho}^{+ \infty} \wpD_{\zeta} (\theta)  F_1^{\ast}
         (\theta, \zeta, \sigma) [\phi_1] \frac{\dd \sigma}{\sigma}
         \right\|_{L^{u_1}_{\frac{\dd \zeta \dd \theta}{| \Theta |}}
         L^{\infty}_{\frac{\dd \rho}{\rho}}}\\ &
         \times \left\| \left( \wpD_{\sigma} (\theta_{\Gamma_2}) \otimes \Id
         + \Id \otimes \wpD_{\sigma } (\theta_{\Gamma_3}) - \rho \dd_{\rho}
         \right) F_{\times }^{\ast} (\theta, \zeta, \rho) [\phi_2, \phi_3]
         \right\|_{L^{u_{\times}}_{\frac{\dd \zeta \dd \theta}{| \Theta
         |}} L^1_{\frac{\dd \rho}{\rho}}}
       \end{aligned} \\ &
       \lesssim \| F_1 \|_{\SJ^{(u_1, \infty)}_{\Theta} \Phi_{\gr}^N (T)} \|
       F_2 F_3 \|_{\Gamma_{\times}^{\ast} \SL_{(\Theta ,
       \Theta^{\tmop{in}})}^{(u_{\times}, 1)} \dfct_{\gr}^{N } (T)} ;
     \end{aligned} \]
  \[ \begin{aligned}[t]&
       \int_{\Theta} \left| \textrm{B2}_{\left( \wpD_{\zeta_2} \otimes \Id
       \right)} [\phi_1 , \phi_2 , \phi_3 ] \right| \dd \theta\\ &
       \qquad \lesssim \begin{aligned}[t]&
         \left\| \wpD_{\zeta} (\theta)  F_1^{\ast} (\theta, \zeta, \sigma)
         [\phi_1] \frac{\dd \sigma}{\sigma} \right\|_{L^{u_1}_{\frac{\dd
         \zeta \dd \theta}{| \Theta |}} L^2_{\frac{\dd
         \sigma}{\sigma}}}\\ &
         \times \begin{aligned}[t]&
           \left\| \left( \wpD_{\zeta} (\theta_{\Gamma_2}) \otimes \Id + \Id
           \otimes \wpD_{\zeta } (\theta_{\Gamma_3}) - \beta \rho
           \dd_{\zeta} \right) \right\nobracket\\ &
           \qquad \times \left. \left( \wpD_{\zeta} (\theta_{\Gamma_2})
           \otimes \Id \right) F_{\times }^{\ast} (\theta, \zeta, \rho)
           [\phi_2, \phi_3] \right\|_{L^{u_{\times}}_{\frac{\dd \zeta
           \dd \theta}{| \Theta |}} L^1_{\frac{\dd \rho}{\rho}}}
         \end{aligned}
       \end{aligned} \\ &
       \qquad \lesssim \| F_1 \|_{\SL^{(u_1, 2)}_{\Theta} \wpD_{\gr}^N (T)} \| F_2
       F_3 \|_{\Gamma_{\times}^{\ast} \SL_{(\Theta ,
       \Theta^{\tmop{in}})}^{(u_{\times}, 1)} \dfct_{\gr}^{N } (T)}
     \end{aligned} \]
  and similarly
  \[ \int_{\Theta} \left| \textrm{B2}_{\left( \wpD_{\zeta_2} \otimes \Id
     \right)} [\phi_1 , \phi_2 , \phi_3 ] \right| \dd \theta \lesssim \|
     F_1 \|_{\SL^{(u_1, 2)}_{\Theta} {\wpD_{\gr}^N}' (T)} \| F_2 F_3
     \|_{\Gamma_{\times}^{\ast} \SL_{(\Theta , \Theta^{\tmop{in}})
     }^{(u_{\times}, 1)} \dfct_{\gr}^{N } (T)} ; \]
  \[ \begin{aligned}[t]&
       \int_{\Theta} \left| \textrm{B3}_{\left( \wpD_{\zeta_2} \otimes \Id
       \right)} [\phi_1 , \phi_2 , \phi_3 ] \right| \dd \theta
       \begin{aligned}[t]&
         \lesssim \begin{aligned}[t]&
           \beta \left\| \left( \wpD_{\zeta} (\theta) - \sigma d_{\zeta}
           \right)  F_1^{\ast} (\theta, \zeta, \sigma) [\phi_1] \frac{\dd
           \sigma}{\sigma} \right\|_{L^{u_1}_{\frac{\dd \zeta \dd
           \theta}{| \Theta |}} L^1_{\frac{\dd \sigma}{\sigma}}}\\ &
           \qquad \times \left\| \left( \wpD_{\zeta} (\theta_{\Gamma_2})
           \otimes \Id \right) F_{\times }^{\ast} (\theta, \zeta, \rho)
           [\phi_2, \phi_3] \right\|_{L^{u_{\times}}_{\frac{\dd \zeta
           \dd \theta}{| \Theta |}} L^2_{\frac{\dd \rho}{\rho}}}
         \end{aligned}\\ &
         \lesssim \beta \| F_1 \|_{\SL^{(u_1, 1)}_{\Theta} \dfct_{\gr}^{N }
         (T)} \| F_2 F_3 \|_{\Gamma_{\times}^{\ast} \SL_{\Theta
         }^{(u_{\times}, 1)} {\left( \wpD \otimes \Id \right) }_{\gr}^{N }
         (T)}
       \end{aligned} \\ &       
     \end{aligned} \]
  and similarly
  \[ \begin{aligned}[t]&
       \int_{\Theta} \left| \textrm{B3}_{\left( \Id \otimes \wpD_{\zeta_3}
       \right)} [\phi_1 , \phi_2 , \phi_3 ] \right| \dd \theta \lesssim
       \begin{aligned}[t]&
         \beta \left\| \left( \wpD_{\zeta} (\theta) - \sigma d_{\zeta} \right)
         F_1^{\ast} (\theta, \zeta, \sigma) [\phi_1] \frac{\dd
         \sigma}{\sigma} \right\|_{L^{u_1}_{\frac{\dd \zeta \dd
         \theta}{| \Theta |}} L^1_{\frac{\dd \sigma}{\sigma}}}\\ &
         \qquad \times \left\| \left( \Id \otimes \wpD_{\zeta }
         (\theta_{\Gamma_3}) \right) F_{\times }^{\ast} (\theta, \zeta, \rho)
         [\phi_2, \phi_3] \right\|_{L^{u_{\times}}_{\frac{\dd \zeta \dd
         \theta}{| \Theta |}} L^2_{\frac{\dd \rho}{\rho}}}
       \end{aligned} \\ &
       \lesssim \beta \| F_1 \|_{\SL^{(u_1, 1)}_{\Theta} \dfct_{\gr}^{N } (T)}
       \| F_2 F_3 \|_{\Gamma_{\times}^{\ast} \SL_{\Theta }^{(u_{\times}, 1)}
       {\left( \Id \otimes \wpD \right) }_{\gr}^{N } (T)} .
     \end{aligned} \]

  We conclude the proof by showing that \eqref{eq:single-tree:l-o-o:decomp}
  holds. Our manipulations rely on using the fundamental theorem of calculus
  in the scale parameter on the product \(F_2^{\ast} F_3^{\ast}\). To exploit
  certain cancellations we need to integrate by parts in the spatial variable.
  Every integration causes ``defect'' terms to appear that have to be
  accounted for; they are grouped into the terms \(\textrm{B1} - \textrm{B3}\).
  
  In what follows we drop \(\theta\) from the notation and keep the dependence
  on this variable implicit: we write \(\wpD_{\zeta_j}\) in place of
  \(\wpD_{\zeta} (\theta_{\Gamma_j})\), \(\wpD_{\sigma_j}\) in place of
  \(\wpD_{\sigma} (\theta_{\Gamma_j})\), \(\tilde{\phi}_j^{l / o}\) in place of
  \(\tilde{\phi}_{j, \theta}^{l / o}\), and so on. For conciseness, we also
  write \(F_{\times }^{\ast} (\zeta, \rho)\) for \(F^{\ast}_2 (\zeta, \rho)
  F^{\ast}_3 (\zeta, \rho)\). It holds that (using the convention of keeping
  \(\theta\) implicit)
  \begin{equation}
    \begin{aligned}[t]&
      \rho \dd_{\rho} (F^{\ast}_2 (\zeta, \rho) [\phi_2^o] F^{\ast}_3
      (\zeta, \rho) [\phi_3^o]) = \rho \dd_{\rho} F_{\times }^{\ast}
      (\zeta, \rho) [\phi_2^o, \phi_3^o]\\ &
      =
      \begin{aligned}[t]
      &+ \left( \wpD_{\sigma_2} \otimes \Id + \Id \otimes \wpD_{\sigma_3}
      \right) F_{\times }^{\ast} (\zeta, \rho) [\phi_2^o, \phi_3^o]\\ &
      - \left( \wpD_{\sigma_2} \otimes \Id + \Id \otimes \wpD_{\sigma_3} -
      \rho \dd_{\rho} \right) F_{\times }^{\ast} (\zeta, \rho) [\phi_2^o,
      \phi_3^o]
      \end{aligned}
      \\ &
      =
      \begin{aligned}[t]
      &+ \left( \wpD_{\zeta_2} \otimes \Id \right)^2 F_{\times }^{\ast} (\zeta,
      \rho) [\tilde{\phi}_2^o, \phi_3^o] + \left( \Id \otimes \wpD_{\zeta_3}
      \right)^2 F_{\times }^{\ast} (\zeta, \rho) [\phi_2^o,
      \tilde{\phi}_3^o]\\ &
      - \left( \wpD_{\sigma_2} \otimes \Id + \Id \otimes \wpD_{\sigma_3} -
      \rho \dd_{\rho} \right) F_{\times }^{\ast} (\zeta, \rho) [\phi_2^o,
      \phi_3^o]
      \end{aligned}
      \\ &
      =
      \begin{aligned}[t]
      & + \left( \wpD_{\zeta_2} \otimes \Id + \Id \otimes \wpD_{\zeta_3} \right)
      \left( \wpD_{\zeta_2} \otimes \Id \right)  F_{\times }^{\ast} (\zeta,
      \rho) [\tilde{\phi}_2^o, \phi_3^o]\\ &
      + \left( \wpD_{\zeta_2} \otimes \Id + \Id \otimes \wpD_{\zeta_3} \right)
      \left( \Id \otimes \wpD_{\zeta_3} \right) F^{\ast}_{\times} (\zeta,
      \rho) [\phi_2^o, \tilde{\phi}_3^o]\\ &
      - \left( \wpD_{\zeta_2} \otimes \wpD_{\zeta_3} \right) F_{\times
      }^{\ast} (\zeta, \rho) [\tilde{\phi}_2^o, \phi_3^o] - \left(
      \wpD_{\zeta_2} \otimes \wpD_{\zeta_3} \right)  F_{\times }^{\ast}
      (\zeta, \rho) [\phi_2^o, \tilde{\phi}_3^o]\\ &
      - \left( \wpD_{\sigma_2} \otimes \Id + \Id \otimes \wpD_{\sigma_3} -
      \rho \dd_{\rho} \right) F_{\times }^{\ast} (\zeta, \rho) [\phi_2^o,
      \phi_3^o]
      \end{aligned}
      \\ &
      =
      \begin{aligned}[t]
      &+ \beta \rho d_{\zeta} \left( \wpD_{\zeta_2} \otimes \Id \right) 
      F_{\times }^{\ast} (\zeta, \rho) [\tilde{\phi}_2^o, \phi_3^o]\\ &
      + \beta \rho d_{\zeta} \left( \Id \otimes \wpD_{\zeta_3} \right)
      F_{\times }^{\ast} (\zeta, \rho) [\phi_2^o, \tilde{\phi}_3^o]\\ &
      + \left( \wpD_{\zeta_2} \otimes \Id + \Id \otimes \wpD_{\zeta_3} - \beta
      \rho d_{\zeta} \right) \left( \wpD_{\zeta_2} \otimes \Id \right) 
      F_{\times }^{\ast} (\zeta, \rho) [\tilde{\phi}_2^o, \phi_3^o]\\ &
      + \left( \wpD_{\zeta_2} \otimes \Id + \Id \otimes \wpD_{\zeta_3} - \beta
      \rho d_{\zeta} \right) \left( \Id \otimes \wpD_{\zeta_3} \right)
      F^{\ast}_{\times} (\zeta, \rho) [\phi_2^o, \tilde{\phi}_3^o]\\ &
      - \left( \wpD_{\zeta_2} \otimes \wpD_{\zeta_3} \right) F_{\times
      }^{\ast} (\zeta, \rho) [\tilde{\phi}_2^o, \phi_3^o] - \left(
      \wpD_{\zeta_2} \otimes \wpD_{\zeta_3} \right)  F_{\times }^{\ast}
      (\zeta, \rho) [\phi_2^o, \tilde{\phi}_3^o]\\ &
      - \left( \wpD_{\sigma_2} \otimes \Id + \Id \otimes \wpD_{\sigma_3} -
      \rho \dd_{\rho} \right) F_{\times }^{\ast} (\zeta, \rho) [\phi_2^o,
      \phi_3^o] .
      \end{aligned}
    \end{aligned} \label{eq:scale-derivative-gain-of-cancellation:prod}
  \end{equation}
  We now concentrate on the inner \(\R^2_+\) integral in
  \(\LHS{\eqref{eq:single-tree:l-o-o:decomp}}\). Applying the fundamental
  theorem Differentiating and integrating back in scale the term \(F^{\ast}_3
  (\zeta, \sigma) [\phi_3^o]\) in the first summand gives and using identity
  \eqref{eq:scale-derivative-gain-of-cancellation:prod} we get
  \[ \begin{aligned}[t]&
       \int_{\R^2_+} \wpD_{\zeta} F_1^{\ast} (\zeta, \sigma)
       [\tilde{\phi}_1^l] F_2^{\ast} (\zeta, \sigma) [\phi_2^o] F_3^{\ast}
       (\zeta, \sigma) [\phi_3^o] \dd \zeta \frac{\dd \sigma}{\sigma}\\ &
       \qquad= \int_{\R^2_+} \wpD_{\zeta} F_1^{\ast} (\zeta, \sigma)
       [\tilde{\phi}_1^l] \int_0^{\sigma} \rho \dd_{\rho} F_{\times
       }^{\ast} (\zeta, \rho) [\phi_2^o, \phi_3^o] \frac{\dd \rho}{\rho}
       \dd \zeta \frac{\dd \sigma}{\sigma}\\ &
       \qquad= \begin{aligned}[t]&
         \int_{\R^2_+} \wpD_{\zeta} F_1^{\ast} (\zeta, \sigma)
         [\tilde{\phi}_1^l] \int_0^{\sigma} \beta \rho d_{\zeta} \left(
         \wpD_{\zeta_2} \otimes \Id \right)  F_{\times }^{\ast} (\zeta, \rho)
         [\tilde{\phi}_2^o, \phi_3^o] \frac{\dd \rho}{\rho} \dd \zeta
         \frac{\dd \sigma}{\sigma}\\ &
         + \int_{\R^2_+} \wpD_{\zeta} F_1^{\ast} (\zeta, \sigma)
         [\tilde{\phi}_1^l] \int_0^{\sigma} \beta \rho d_{\zeta}  \left( \Id
         \otimes \wpD_{\zeta_3} \right) F_{\times }^{\ast} (\zeta, \rho)
         [\phi_2^o, \tilde{\phi}_3^o] \frac{\dd \rho}{\rho} \dd \zeta
         \frac{\dd \sigma}{\sigma}\\ &
         - \textrm{B1} [\tilde{\phi}_1^l, \phi_2^o, \phi_3^o] - \textrm{M1}
         [\tilde{\phi}_1^l, \tilde{\phi}_2^o, \phi_3^o] - \textrm{M1}
         [\tilde{\phi}_1^l, \phi_2^o, \tilde{\phi}_3^o]\\ &
         + \textrm{B2}_{\left( \wpD_{\zeta } \otimes \Id \right)}
         [\tilde{\phi}_1^l, \tilde{\phi}_2^o, \phi_3^o] + \textrm{B2}_{\left(
         \Id \otimes \wpD_{\zeta } \right)} [\tilde{\phi}_1^l, \phi_2^o,
         \tilde{\phi}_3^o] .
       \end{aligned}
     \end{aligned} \]
  Integrating the first two summands on the \(\RHS{}\) by parts in \(\zeta\) gives
  \[ \begin{aligned}[t]&
       \int_{\R^2_+} \wpD_{\zeta} F_1^{\ast} (\zeta, \sigma)
       [\tilde{\phi}_1^l] \int_0^{\sigma} \beta \rho d_{\zeta} \left(
       \wpD_{\zeta_2} \otimes \Id \right)  F_{\times }^{\ast} (\zeta, \rho)
       [\tilde{\phi}_2^o, \phi_3^o] \frac{\dd \rho}{\rho} \dd \zeta
       \frac{\dd \sigma}{\sigma}\\ &
       = - \beta \int_{\R^2_+} \sigma d_{\zeta} \wpD_{\zeta} F_1^{\ast}
       (\zeta, \sigma) [\tilde{\phi}_1^l] \int_0^{\sigma} \frac{\rho}{\sigma}
       \left( \wpD_{\zeta_2} \otimes \Id \right)  F_{\times }^{\ast} (\zeta,
       \rho) [\tilde{\phi}_2^o, \phi_3^o] \frac{\dd \rho}{\rho} \dd
       \zeta \frac{\dd \sigma}{\sigma}\\ &
       =
       \begin{aligned}[t]
       &- \beta \int_{\R^2_+} \wpD_{\zeta}^2 F_1^{\ast} (\zeta, \sigma)
       [\tilde{\phi}_1^l] \int_0^{\sigma} \frac{\rho}{\sigma} \left(
       \wpD_{\zeta_2} \otimes \Id \right)  F_{\times }^{\ast} (\zeta, \rho)
       [\tilde{\phi}_2^o, \phi_3^o] \frac{\dd \rho}{\rho} \dd \zeta
       \frac{\dd \sigma}{\sigma}\\ &
       + \beta \int_{\R^2_+} \left( \wpD_{\zeta} - \sigma d_{\zeta} \right)
       \wpD_{\zeta} F_1^{\ast} (\zeta, \sigma) [\tilde{\phi}_1^l]
       \int_0^{\sigma} \frac{\rho}{\sigma} \left( \wpD_{\zeta_2} \otimes \Id
       \right)  F_{\times }^{\ast} (\zeta, \rho) [\tilde{\phi}_2^o, \phi_3^o]
       \frac{\dd \rho}{\rho} \dd \zeta \frac{\dd \sigma}{\sigma}
       \end{aligned}
       \\ &
       = - \textrm{M2}_{\left( \wpD_{\zeta_2} \otimes \Id \right)}
       [\tilde{\phi}_1^l, \tilde{\phi}_2^o, \phi_3^o] + \textrm{B3}_{\left(
       \wpD_{\zeta_2} \otimes \Id \right) } [\tilde{\phi}_1^l,
       \tilde{\phi}_2^o, \phi_3^o]
     \end{aligned} \]
  and
  \[ \begin{aligned}[t]&
       \int_{\R^2_+} \wpD_{\zeta} F_1^{\ast} (\zeta, \sigma)
       [\tilde{\phi}_1^l] \int_0^{\sigma} \beta \rho d_{\zeta} \left(
       \wpD_{\zeta_2} \otimes \Id \right)  F_{\times }^{\ast} (\zeta, \rho)
       [\tilde{\phi}_2^o, \phi_3^o] \frac{\dd \rho}{\rho} \dd \zeta
       \frac{\dd \sigma}{\sigma}\\ &
       \qquad= - \textrm{M2}_{\left( \Id \otimes \wpD_{\zeta } \right)}
       [\tilde{\phi}_1^l, \phi_2^o, \tilde{\phi}_3^o] + \textrm{B3}_{\left(
       \Id \otimes \wpD_{\zeta } \right) } [\tilde{\phi}_1^l, \phi_2^o,
       \tilde{\phi}_3^o] .
     \end{aligned} \]
  This concludes the proof of the decomposition
  \eqref{eq:single-tree:l-o-o:decomp}.
\end{proof}

\section{Forest estimates}\label{sec:forest}

In this section we prove bounds \eqref{eq:RN-domination-BHT},
\eqref{eq:iterated-Holder-type-bounds:linear}, and
\eqref{eq:iterated-Holder-type-bounds:bilinear}. These bounds are proven for
general functions \(F_j \in L^{\infty} (\R^{3}_{+}) \otimes
\Phi^{\infty}_{\gr}\) in place of \(\Emb [f_j] (\eta, y, t)\) specifically. The
results are generally obtained using interpolation using geometric
considerations on the support of the functions \(F_j\) and on properties of the
outer Lebesgue norms in play. The single tree estimates of
\Cref{sec:single-tree} serve as the \(L^{\infty}\) endpoint when proving bounds
\eqref{eq:iterated-Holder-type-bounds:linear}, and
\eqref{eq:iterated-Holder-type-bounds:bilinear}. Bound
\eqref{eq:RN-domination-BHT}, on the other hand relies on the correctness of
\Cref{def:integral-sizes} of the size \(\SI _{\Theta}\).

We begin this section by showing in \Cref{prop:TF-RN} that
\eqref{eq:RN-domination-BHT} holds. The role of this estimate allows us to pass
from trying to bound the trilinear form on {\LHS{\eqref{eq:BHT-dual-bounds}}}
directly to bounding appropriate outer Lebesgue norms of the integrand
\[ \Emb [f_1] (\eta, y, t) [\phi_0] \prod_{j = 2}^3 \Emb [f_j] \circ \Gamma_j
   (\eta, y, t) [\phi_0] . \]
Subsequently, our goal is to prove bounds
\eqref{eq:iterated-Holder-type-bounds:linear} and
\eqref{eq:iterated-Holder-type-bounds:bilinear} that we do in
\Cref{prop:holder-bound-trilinear}. This relies first on proving a
non-iterated version of these bounds in \Cref{lem:forest-estimate}. This is
just a straight-forward application of the outer Hölder inequality. Then we
deducing an atomic estimate for iterated outer Lebesgue spaces in
\Cref{lem:atomic}. This is the more involved procedure and involves. Then
\Cref{prop:holder-bound-trilinear} follows by interpolation as a corollary of
\Cref{lem:atomic}. \

\begin{proposition}
  \label{prop:TF-RN}Given any \(\phi_0 \in \Phi^N_{\gr}\), the bound
  \begin{equation}
    \left| \int_{\R^3_+} H (\eta, y, t) [\phi_0, \phi_0, \phi_0] \dd \eta \dd
    y \dd t \right| \lesssim \| \phi_0 \|_{\Phi^N_{\gr}}^3 \| H (\eta, y, t)
    \|_{L^1_{\nu_1} \fL^1_{\mu^1_{\Theta}, \nu_1} \SI _{\Theta} \Phi_{\gr}^N}
    \label{eq:TF-RN}
  \end{equation}
  holds for any compactly supported \(H \in L^{\infty} (\R^{3}_{+})
  \otimes^3 {\Phi^{\infty}_{\gr}}'\). As a consequence, the bound
  \eqref{eq:RN-domination-BHT} holds for all functions \(f_j \in \Sch (\R), j \in \{ 1, 2, 3 \}\).
\end{proposition}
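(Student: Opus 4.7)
The plan is to prove \eqref{eq:TF-RN} by applying the outer Radon--Nikodym domination of \Cref{prop:outer-RN} twice in succession. First I would invoke it at the outer level, with outer measure $\nu_1$ and generating collection $\DD_1$, reducing to a strip-local bound; then, within each strip $D \in \DD_1$, I would apply it again with outer measure $\mu^1_\Theta$ and generating collection $\TT_\Theta$. This reduces \eqref{eq:TF-RN} to the ``single atomic'' inequality
\begin{equation*}
\Big| \int_{T \cap D} H(\eta,y,t)[\phi_0,\phi_0,\phi_0]\, \dd\eta\, \dd y\, \dd t \Big| \lesssim \|\phi_0\|_{\Phi^N_\gr}^3\, \mu^1_\Theta(T \cap D)\, \|\1_{T \cap D} H\|_{\SI_\Theta \Phi^N_\gr},
\end{equation*}
valid for every tree $T \in \TT_\Theta$, every strip $D \in \DD_1$, and every compactly supported $H$.

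To prove this atomic inequality I would change variables to the local tree chart via $\pi_T$. The Jacobian $|\det D\pi_T| = s_T/\sigma$ rewrites the integration on $T \cap D$ as an integral over a subset of $\mT_\Theta$ with density $s_T/\sigma$ against $\dd\theta\,\dd\zeta\,\dd\sigma$; the overall factor $s_T|\Theta|$ is comparable to $\mu^1_\Theta(T)$ under the standing assumption $B_4 \subset \Theta \subset B_{2^8}$. I would then decompose $\phi_0 = \phi^o_{j,\theta} + \phi^l_{j,\theta}$ in each of the three slots as in \Cref{def:integral-sizes}, producing an eight-term multilinear expansion of $H[\phi_0,\phi_0,\phi_0]$. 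Five of these contributions match the summands of \eqref{eq:I-size-sum} verbatim and are each dominated by the corresponding local size $\SI^{(\star_1,\star_2,\star_3)}_\Theta \Phi^N_\gr(T)$, the $(s_T|\Theta|)^{-1}$ normalization built into the size cancelling exactly the Jacobian to produce the desired factor of $\mu^1_\Theta(T \cap D)$.

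The main technical obstacle is accounting for the three combinations absent from \eqref{eq:I-size-sum}, namely $(o,o,l)$, $(o,l,o)$ and $(o,o,o)$, all of which place $\phi^o_{1,\theta}$ in the first slot alongside further $\phi^o_{j,\theta}$ factors. My plan is to regroup these with the kept $(o,l,l)$ piece via the identity
\begin{equation*}
\phi^o_{1,\theta} \otimes \phi_0 \otimes \phi_0 = \sum_{(\star_2, \star_3) \in \{o,l\}^2} \phi^o_{1,\theta} \otimes \phi^{\star_2}_{2,\theta} \otimes \phi^{\star_3}_{3,\theta},
\end{equation*}
so that only the single combined integral $\int H[\phi^o_{1,\theta}, \phi_0, \phi_0]$ needs to be estimated; this expression can then be matched against $\|\1_T H\|_{\SI^{(o,l,l)}_\Theta \Phi^N_\gr}$ by testing the supremum defining that size against a specific admissible generating wave packet built from $\phi_0$, with a harmless rescaling accounting for the mismatch between the Fourier radii $\gr$ and $\gr/2$. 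The $\liminf_\epsilon$ regularization in the definition of the size is justified by the compact support of $H$.

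Once \eqref{eq:TF-RN} is in hand, I would deduce \eqref{eq:RN-domination-BHT} by applying \eqref{eq:TF-RN} to the specific trilinear function $H(\eta,y,t)[\phi_1,\phi_2,\phi_3] \eqd \Emb[f_1](\eta,y,t)[\phi_1]\,\Emb[f_2]\circ\Gamma_2(\eta,y,t)[\phi_2]\,\Emb[f_3]\circ\Gamma_3(\eta,y,t)[\phi_3]$, first truncating to a compact subset via \Cref{rmk:cpt-sets} and then passing to the limit using the absolute convergence ensured by \Cref{prop:wave-packet-represetion} together with the Schwartz decay of the $f_j$.
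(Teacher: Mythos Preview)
Your two-fold application of \Cref{prop:outer-RN} and the eight-term expansion of $\phi_0$ into overlapping and lacunary pieces follow the paper's argument. The gap is in how you dispose of the three combinations $(o,o,l)$, $(o,l,o)$, $(o,o,o)$ absent from \eqref{eq:I-size-sum}.

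Your regrouping identity correctly collapses the four $(o,\star_2,\star_3)$ terms into the single expression $\int H[\phi^o_{1,\theta},\phi_0,\phi_0]$, but the claim that this can be ``matched against $\|\1_T H\|_{\SI^{(o,l,l)}_{\Theta}\Phi^N_{\gr}}$ by testing the supremum defining that size against a specific admissible generating wave packet built from $\phi_0$'' does not work as stated. The size $\SI^{(o,l,l)}_{\Theta}$ tests $H$ against triples $[\phi^o_{1,\theta},\phi^l_{2,\theta},\phi^l_{3,\theta}]$ all built from a \emph{single} $\phi\in\Phi^\infty_{\gr/2}$ via \eqref{eq:lac-ov-wp-decomp}. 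Taking $\phi$ to be (a rescaling of) $\phi_0$ recovers $\phi^o_{1,\theta}$ in the first slot, but in the second and third slots one obtains $\phi^l_{j,\theta}=\phi_0-\FT{\phi_0}(\theta_{\Gamma_j})\Dil_{\gr^{-1}}\omega$, which equals $\phi_0$ only when $\FT{\phi_0}(\theta_{\Gamma_j})=0$. No rescaling between radii $\gr$ and $\gr/2$ repairs this.

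The paper's proof goes through precisely because that vanishing \emph{does} hold---and in fact the three extra terms vanish identically. The point is that $\phi^o_{1,\theta}\neq 0$ forces $\theta_{\Gamma_1}=\theta\in B_{\gr}$, and then the explicit parameters \eqref{eq:gamma:params} give $\theta_{\Gamma_2}=\theta-1\in B_{\gr}-1$ and $\theta_{\Gamma_3}=-(1+\beta)\theta+1\in B_{(1+\beta)\gr}+1$, both disjoint from $B_{\gr}$ since $\gr$ is small. Hence $\FT{\phi_0}(\theta_{\Gamma_2})=\FT{\phi_0}(\theta_{\Gamma_3})=0$, so $\phi^o_{2,\theta}=\phi^o_{3,\theta}=0$ whenever $\phi^o_{1,\theta}\neq 0$. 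This Fourier-support separation---a consequence of the specific geometry of the maps $\Gamma_j$---is the missing ingredient; with it the three extra terms disappear outright and no regrouping is needed.
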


\begin{proof}
  Bound \eqref{eq:RN-domination-BHT} follows from \eqref{eq:TF-RN} by applying
  it to \(H = \1_{\mathbb{K}_n} \prod_{j = 1}^3 \Emb [f_j] \circ \Gamma_j\)
  where \(\mathbb{K}_n \subset \R^3_+\) are compact subsets as specified in
  Remark \Cref{rmk:cpt-sets}. Convergence follows by
  \Cref{prop:wave-packet-represetion}: the integral on the
  {\LHS{\eqref{eq:RN-domination-BHT}}} is absolutely convergent.
  
  The bound \eqref{eq:TF-RN} follows by a two-fold application of
  \Cref{prop:outer-RN}. At the first level we obtain that
  \begin{equation}
    \left| \int  H (\eta, y, t) [\phi_0, \phi_0, \phi_0] \dd \eta \dd y \dd t
    \right| \lesssim \| H (\eta, y, t) \|_{L^1_{\mu^1_{\Theta}} \SI _{\Theta}
    \Phi_{\gr}^N} \label{eq:RN-domination-BHT:non-iter}
  \end{equation}
  as long as for any fixed tree \(T \in \TT_{\Theta}\) one has
  \[ \left| \int_T H (\eta, y, t) [\phi_0, \phi_0, \phi_0] \dd \eta \dd y \dd
     t \right| \lesssim_N \mu_{\Theta}^1 (T) \| H (\eta, y, t) \|_{\SI
     _{\Theta} \Phi_{\gr}^N} . \]
  To show the latter bound it is sufficient to decompose each \(\phi_j =
  \phi_0\) into \(\phi_j = \phi_{j, \theta}^o + \phi_{j, \theta}^l\) as described
  in \eqref{eq:lac-ov-wp-decomp} to obtain that \
  \[ H (\eta, y, t) [\phi_0, \phi_0, \phi_0] = \sum_{(\star_1, \star_2,
     \star_3) \in \{ l, o \}^3} H (\eta, y, t) [\phi_{1, \theta}^{\star_1},
     \phi_{2, \theta}^{\star_2}, \phi_{3, \theta}^{\star_3}] . \]
  The terms with \((\star_1, \star_2, \star_3) = (o, o, l)\) and \((\star_1,
  \star_2, \star_3) = (o, l, o)\) vanish because if \(\FT{\phi_0} 
  (\theta_{\Gamma_1}) \neq 0\) then \(\phi_{j, \theta}^o (z) = 0\) for \(j \in \{
  2, 3 \}\). This is because if \(\theta_{\Gamma_1} = \theta \in B_{\gr}\) then
  \(\theta_{\Gamma_2} \in \alpha_2 \beta_2 \left( B_{\gr} + \gamma_2 \right) =
  B_{\gr} - 1\) and \(\theta_{\Gamma_3} \in \alpha_3 \beta_3 \left( B_{\gr} +
  \gamma_3 \right) \in B_{(1 + \beta) \gr} + 1\); in particular
  \(\theta_{\Gamma_2} \notin B_{\gr}\) and \(\theta_{\Gamma_3} \notin B_{\gr}\).
  The claim then follows by directly integrating each term and recalling
  \Cref{def:integral-sizes} of \(\SI _{\Theta} \Phi_{\gr}^N\).
  
  From \eqref{eq:RN-domination-BHT:non-iter} it follows that that for any \(V^+
  \in \DD^{\cup}\) it holds that
  \[ \begin{aligned}[t]&
       \left| \int  \1_{V^+} H (\eta, y, t) [\phi_0, \phi_0, \phi_0] \dd \eta
       \dd y \dd t \right| \lesssim \left\| \1_{V^+} H (\eta, y, t)
       \right\|_{L^1_{\mu^1_{\Theta}, \nu_1} \SI _{\Theta} \Phi_{\gr}^N}\\ &
       \lesssim \nu_1 (V^+) \left\| \1_{V^+} H (\eta, y, t)
       \right\|_{\fL^1_{\mu^1_{\Theta}, \nu_1} \SI _{\Theta} \Phi_{\gr}^N} .
     \end{aligned} \]
  This allows us to apply \Cref{prop:outer-RN} again and obtain bound
  \eqref{eq:TF-RN}.
\end{proof}

\begin{lemma}
  \label{lem:forest-estimate} The bound
  \[ \begin{aligned}[t]&
       \| F_1 F_2 F_3 \|_{L^q_{\mu^1_{\Theta}} \left( \SI^{(l, l, l)}_{\Theta}
       \Phi_{\gr}^N + \SI^{(o, l, l)}_{\Theta} \Phi_{\gr}^N + \SI^{(l, l,
       o)}_{\Theta} \Phi_{\gr}^N + \SI^{(l, o, l)}_{\Theta} \Phi_{\gr}^N
       \right)} \qquad\\ &
       \qquad \lesssim \| F_1 \|_{L^{q_1}_{\mu^1_{\Theta}} \SF_{\Theta}^{u_1}
       \Phi_{4 \gr}^{N - 3}} \| F_2 \|_{L^{q_2}_{\mu^1_{\Theta}}
       \widetilde{\SF}_{\Gamma_2}^{u_2} \Phi_{4 \gr}^{N - 3}} \| F_3
       \|_{L^{q_3}_{\mu^1_{\Theta}} \widetilde{\SF}_{\Gamma_3}^{u_3} \Phi_{4
       \gr}^{N - 3}}
     \end{aligned} \]
  holds for any three functions \(F_1, F_2, F_3 \in L^{\infty}_{\tmop{loc}}
  (\R^{3}_{+}) {\otimes \Phi_{4 \gr}^{\infty}}'\) as long as \((q_1,
  q_2, q_3) \in [1, \infty]^3\) with \(q^{- 1} = \sum_{j = 1}^3 q_j^{- 1}\) and
  as long as \((u_1, u_2, u_3) \in [1, \infty]^3\) are such that \(\sum_{j = 1}^3
  u_j^{- 1} \leq 1\). Similarly, it holds that
  \[ \| F_1 F_2 F_3 \|_{L^q_{\mu^1_{\Theta}} \SI^{(l, o, o)}_{\Theta}
     \Phi_{\gr}^N} \lesssim \| F_1 \|_{L^{q_1}_{\mu^1_{\Theta}}
     \SF_{\Theta}^{u_1} \Phi_{4 \gr}^{N - 3}} \| F_2 F_3
     \|_{\widetilde{\SF}_{\Gamma_{\times}}^{u_{\times}} (\Phi \otimes \Phi)_{4
     \gr}^{N - 3}} \]
  as long as \((q_1, q_{\times}) \in [1, \infty]^2\) with \(q^{- 1} = q_1^{- 1}
  + q_{\times}^{- 1}\) and as long as \((u_1, u_{\times}) \in [1, \infty]^2\) are
  such that \(u_1^{- 1} + u_{\times}^{- 1} \leq 1\). The implicit constants are
  independent of \(F_1, F_2, F_3\).
\end{lemma}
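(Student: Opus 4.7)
The plan is to deduce both displayed inequalities as direct applications of the outer Hölder inequality (Corollary \ref{cor:outer-holder-classical}), using Theorem \ref{thm:STE} (the single tree estimate) as the $L^\infty$ endpoint. In the notation of Corollary \ref{cor:outer-holder-classical}, the ``multilinear map'' $\Pi$ is simply the pointwise product: for the first inequality, $\Pi(F_1,F_2,F_3) \eqd F_1 F_2 F_3$ viewed as a trilinear map with values in $\Rad(\R^3_+) \otimes^3 {\Phi_\gr^\infty}'$; for the second, $\Pi(F_1, H_{23}) \eqd F_1 H_{23}$ viewed as a bilinear map, where $H_{23} = F_2 F_3$. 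In both cases all the domain and codomain outer measures are $\mu^1_\Theta$.

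First I would verify the size-level hypothesis \eqref{eq:abstract-size-holder}. For the first inequality, fix $T \in \TT_\Theta$ and $W^- \in \TT_\Theta^\cup$; by the monotonicity property \eqref{eq:size-monotonicity} applied to each factor, and by the definition \eqref{eq:generated-size:trees} of the size generated by local sizes, it suffices to bound $\| \1_{\R^3_+ \setminus W^-}(F_1 F_2 F_3) \|_{\SI^{(\star)}(T)}$ by the product of the three factor sizes restricted in the same way. But $\1_{\R^3_+ \setminus W^-}(F_1 F_2 F_3) = \prod_j \1_{\R^3_+ \setminus W^-} F_j$, so the single tree estimate \eqref{eq:STE:l-l-o:tree} applied to the three functions $\1_{\R^3_+ \setminus W^-} F_j$ (which still lie in $L^\infty_{\tmop{loc}} \otimes {\Phi_{4\gr}^\infty}'$) gives exactly the desired bound; taking the supremum over $T$ and $W^-$ yields the size-level Hölder $\|F_1 F_2 F_3\|_{\SI^{(\star)}_\Theta \Phi_\gr^N} \lesssim \prod_j \|F_j\|_{\SF\text{ or }\widetilde{\SF}}$. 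The identical argument using \eqref{eq:STE:l-o-o:tree} handles the second inequality.

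Next I would verify the support-level hypothesis \eqref{eq:abstract-measure-holder}. Suppose $E \in \BO^\cup$ is such that $\|\1_{\R^3_+ \setminus E} F_{j_0}\|_{\SF} = 0$ (or the analogous $\widetilde{\SF}$ norm) for some fixed index $j_0$. Applying the size-level bound just established to the family of functions obtained by replacing $F_{j_0}$ with $\1_{\R^3_+ \setminus E} F_{j_0}$ and leaving the others unchanged, the right-hand side vanishes, hence $\|\1_{\R^3_+ \setminus E}(F_1 F_2 F_3)\|_{\SI^{(\star)}} = 0$. By definition \eqref{eq:superlevel-measure} of the superlevel measure, this immediately yields $\mu^1_\Theta(\|F_1 F_2 F_3\|_{\SI^{(\star)}} > 0) \le \mu^1_\Theta(\|F_{j_0}\|_{\SF\text{ or }\widetilde{\SF}} > 0)$; taking the minimum over $j_0$ gives the required control. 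The same reasoning applies to the bilinear case with $F_2 F_3$ replaced by $\1_{\R^3_+ \setminus E}(F_2 F_3)$.

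Having verified both hypotheses, Corollary \ref{cor:outer-holder-classical} (outer Hölder for the outer measure $\mu^1_\Theta$) delivers the stated $L^q_{\mu^1_\Theta}$ bounds with $q^{-1} = \sum_j q_j^{-1}$ (respectively $q^{-1} = q_1^{-1} + q_\times^{-1}$) for all choices of $q_j \in [1,\infty]$, with implicit constants depending only on the $q_j$. No step here is a true obstacle since the heavy lifting has already been done in Theorem \ref{thm:STE}; the only slightly delicate point is to confirm that the support-level implication is correctly obtained from the size-level bound, which is where the monotonicity property \eqref{eq:size-monotonicity} of the generated sizes $\SF_\Theta^{u_1}$, $\widetilde{\SF}_{\Gamma_j}^{u_j}$, and $\widetilde{\SF}_{\Gamma_\times}^{u_\times}$ under restriction to complements of generating sets is essential.
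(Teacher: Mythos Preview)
Your proposal is correct and follows essentially the same route as the paper: both invoke the outer H\"older inequality (Corollary~\ref{cor:outer-holder-classical}) with Theorem~\ref{thm:STE} supplying the size-level bound \eqref{eq:abstract-size-holder}, and both verify the support condition \eqref{eq:abstract-measure-holder} via the pointwise product structure. The paper's verification of \eqref{eq:abstract-measure-holder} is phrased slightly more directly (if $\1_{\R^3_+\setminus W_{j_0}}F_{j_0}=0$ then $\1_{\R^3_+\setminus W_{j_0}}F_1F_2F_3=0$), whereas you route this through the size-level bound; your version is arguably cleaner since it only uses vanishing of the \emph{size} rather than of the function, but the difference is cosmetic.
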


\begin{proof}
  The claim follows from \Cref{thm:STE} by the outer Hölder inequality
  (\Cref{cor:outer-holder-classical}). To apply the outer Hölder notice that
  the bound \eqref{eq:abstract-size-holder} corresponds to the bounds
  \eqref{eq:STE:l-l-o} and \eqref{eq:STE:l-o-o} of \Cref{thm:STE}. Bound
  \eqref{eq:abstract-measure-holder}, on the other hand, follows directly from
  the fact that \(F_1 F_2 F_3\) is a point-wise product: if \(W_j \in
  \mathbb{T}^{\cup}_{\Theta}\) are such that
  \[ \1_{\R^3_+ \setminus W_j} F_j = 0 \]
  then
  \[ \1_{\R^3_+ \setminus W_{\cap}} F_1 F_2 F_3 = 0 \]
  where \(W_{\cap} = W_{j_0}\) for \(j_0\) such that \(\mu^1_{\Theta} (W_{j_0}) =
  \min_{j \in \{ 1, 2, 3 \}} (\mu^1_{\Theta} (W_{j }))\).
\end{proof}

\begin{lemma}
  \label{lem:atomic} Let \((q_1, q_2, q_3) \in [1, \infty)^3\) and let
  \[ q_{\times} \eqd (q_2^{- 1} + q_3^{- 1})^{- 1}, \qquad q \eqd (q_1^{- 1} +
     q_{\times}^{- 1})^{- 1} = \Big( \sum_{j = 1}^3 q_j^{- 1} \Big)^{- 1} .
  \]
  Then the bound
  \begin{equation}
    \begin{aligned}[t]&
      \| F_1 F_2 F_3 \|_{L^p_{\mu_{\Theta}^1} \left( \SI^{(l, l, l)}_{\Theta}
      \Phi_{\gr}^N + \SI^{(o, l, l)}_{\Theta} \Phi_{\gr}^N + \SI^{(l, l,
      o)}_{\Theta} \Phi_{\gr}^N + \SI^{(l, o, l)}_{\Theta} \Phi_{\gr}^N
      \right)}\\ &
      \qquad \lesssim \begin{aligned}[t]&
        \nu_1  ( \| F_1 \|_{\SF_{\Theta}^{u_1} \Phi_{4 \gr}^{N - 3}} > 0 )^{\frac{1}{p_1}}
        \min_{j \in \{ 2, 3 \}} \Big( \nu_{\beta} 
        ( \| {F_j} \|_{\widetilde{\SF}_{\Gamma_j}^{u_j}\Phi_{4 \gr}^{N - 3}} > 0 \Big) \Big)^{\frac{1}{p_{\times}}}
        \\ & \qquad
        \times \| F_1 \|_{\fL^{q_1, +}_{\mu_{\Theta}^{\infty}, \nu_1} \SF_{\Theta}^{u_1} \Phi_{4 \gr}^{N - 3}}
        \Big( \prod_{j = 2}^3 \| F_j
        \|_{X_{\mu_{\Theta}^1, \nu_{\beta}}^{q_j, r_j, +}
        \widetilde{\SF}_{\Gamma_j}^{u_j} \Phi_{4 \gr}^{N - 3}} \Big)
      \end{aligned}
    \end{aligned} \label{eq:atomic-trilinear}
  \end{equation}
  holds for any \(F_j \in L^{\infty}_{\tmop{loc}} (\R^{3}_{+}) \otimes
  \Phi_{4 \gr}^{\infty}\) as long as \((p_1, p_{\times}, p) \in {(q, +
  \infty]^3} \) with \(p^{- 1} = p_1^{- 1} + p_{\times}^{- 1}\), as long as \(r_j
  \in (0, q_j]\), \(j \in \{ 2, 3 \}\), are such that \(r_2^{- 1} + r_3^{- 1} \geq
  p_{\times}^{- 1}\), and as long as \((u_1, u_2, u_3) \in [1, \infty]^3\)
  satisfy \(\sum_{j = 1}^3 u_j^{- 1} \leq 1\).
  
  Similarly, let \((q_1, q_{\times}) \in [1, \infty)^2\) and let \(q \eqd (q_1^{-
  1} + q_{\times}^{- 1})^{- 1}\). The bound
  \begin{equation}
    \begin{aligned}[t]&
      \| F_1 F_2 F_3 \|_{L^p_{\mu_{\Theta}^1} \SI^{(l, o, o)}_{\Theta}
      \Phi^N_{\gr}} \lesssim \begin{aligned}[t]&
    \nu_1  ( \| F_1 \|_{\SF_{\Gamma_1}^{u_1} \Phi_{4 \gr}^{(N - 3)}} > 0 )^{\frac{1}{p_1}}
    \nu_{\beta} ( \| F_2 F_3 \|_{\widetilde{\SF}^{u_{\times}}_{\Gamma_2 \times \Gamma_3} \Phi_{4 \gr}^{N - 3}} > 0)^{\frac{1}{p_{\times}}}\\ &
        \qquad \times \| F_1 \|_{\fL^{q_1}_{\mu_{\Theta}^{\infty}, \nu_1}
          \SF_{\Theta}^{u_1} \Phi_{4 \gr}^{N - 3}}
        \| F_2 F_3 \|_{X_{\mu_{\Theta}^1, \nu_{\beta }}^{q_{\times}, r_{\times}}
        \widetilde{\SF}^{u_{\times}}_{\Gamma_2 \times \Gamma_3} \Phi_{4  \gr}^{N - 3}}
      \end{aligned}
    \end{aligned} \label{eq:atomic-bilinear}
\end{equation}
  holds for any \(F_j \in L^{\infty}_{\tmop{loc}} (\R^{3}_{+}) \otimes
  \Phi_{4 \gr}^{\infty}\) as long as \((p_1, p_{\times}, p) \in {(q, +
  \infty]^3} \) with \(p^{- 1} = p_1^{- 1} + p_{\times}^{- 1}\), as long as
  \(r_{\times} \in (0, q_{\times}]\), is such that \(r_{\times} \leq p_{\times}\),
  and as long as \((u_1, u_{\times}) \in [1, \infty]^3\) satisfy \(u_1^{- 1} +
  u_{\times}^{- 1} \leq 1\). 
\end{lemma}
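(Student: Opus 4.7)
The plan is to prove the trilinear bound \eqref{eq:atomic-trilinear} by directly combining the inner-level forest-Hölder estimate of \Cref{lem:forest-estimate}, the tree-strip measure comparison \Cref{lem:measure-comparison-1-infty}, and the log-convexity \eqref{eq:outer-log-convexity} of outer Lebesgue quasi-norms. The bilinear bound \eqref{eq:atomic-bilinear} then follows by the same scheme with the product $F_2 F_3$ playing the role of a single factor and the second (bilinear) part of \Cref{lem:forest-estimate} invoked. The bounds \eqref{eq:atomic-trilinear}--\eqref{eq:atomic-bilinear} are in turn intended as the restricted weak-type endpoints that feed into \Cref{prop:outer-restricted-interpolation} at the outer $\nu$-level in order to yield the iterated estimates of the form \eqref{eq:iterated-Holder-type-bounds:linear}--\eqref{eq:iterated-Holder-type-bounds:bilinear}.

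By homogeneity, normalize the five factors on the RHS of \eqref{eq:atomic-trilinear} to equal $1$, and choose strip-union covers $V_1 \in \DD_1^{\cup}$ of $\{\|F_1\|_{\SF^{u_1}_{\Theta}} > 0\}$ and $V_j \in \DD_\beta^{\cup}$ of $\{\|F_j\|_{\widetilde{\SF}^{u_j}_{\Gamma_j}} > 0\}$ ($j \in \{2,3\}$), with $\nu$-measures within a factor of $2$ of the true supports and $\nu_\beta(V_2) \leq \nu_\beta(V_3)$ by symmetry (so the $\min$ on the RHS is realized by $V_2$). Writing $F_1 F_2 F_3 = (\1_{V_2} F_1)\, F_2 F_3$ and applying \Cref{lem:forest-estimate} at level $L^q_{\mu^1_\Theta}$ with $q^{-1} = \sum_j q_j^{-1}$ reduces the estimate to the product of $L^{q_j}_{\mu^1_\Theta}$-norms of the individual factors. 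For $j \in \{2, 3\}$, unwinding \eqref{eq:X-size} with $\bar{r} = q_j$ and using $X^{q_j, q_j, +} \leq X^{q_j, r_j, +}$ (valid since $r_j \leq q_j$) yields the bound $\|F_j\|_{L^{q_j}_{\mu^1_\Theta}\widetilde{\SF}^{u_j}_{\Gamma_j}} \leq \nu_\beta(V_j)^{1/q_j}$. For $F_1$, \Cref{lem:measure-comparison-1-infty} applied to $\1_{V_2} F_1$ bridges the $\mu^1_\Theta$ and $\mu^\infty_\Theta$ outer measures at the cost of a factor $(\beta^{-1} \nu_\beta(V_2))^{1/q_1}$, and the resulting $L^{q_1}_{\mu^\infty_\Theta}$-norm is dominated by $\|F_1\|_{\fL^{q_1, +}_{\mu^\infty_\Theta, \nu_1}} \leq 1$ via \Cref{def:localized-outer-lebesgue}. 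The final upgrade from $L^q_{\mu^1_\Theta}$ to $L^p_{\mu^1_\Theta}$ (guaranteed possible by $p_1, p_\times > q$) is handled by log-convex interpolation \eqref{eq:outer-log-convexity} against the $L^\infty_{\mu^1_\Theta} \equiv \textrm{size}$ endpoint supplied by the single-tree estimate \Cref{thm:STE}, on which each size factor is in turn dominated by its corresponding localized norm.

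The main obstacle is the bookkeeping of the measure exponents: the direct $L^q_{\mu^1_\Theta}$ calculation produces a factor of the shape $\nu_\beta(V_2)^{1/q_1 + 1/q_2}\nu_\beta(V_3)^{1/q_3}$, whereas the target is $\nu_1(V_1)^{1/p_1}\nu_\beta(V_2)^{1/p_\times}$. The $L^q \to L^p$ passage via \eqref{eq:outer-Lp-finite-support} contributes an auxiliary factor $\mu^1_\Theta(\mathrm{supp})^{1/p - 1/q}$ which, again via \Cref{lem:measure-comparison-1-infty}, gets converted into further $\nu_\beta$-weights; extracting the final $\nu_1(V_1)^{1/p_1}$ factor then requires a further decomposition of $F_1$ according to the atomic structure of its $\fL^{q_1, +}_{\mu^\infty_\Theta, \nu_1}$-norm along $V_1$, which allows trading a portion of the $\nu_\beta$-weight for $\nu_1(V_1)$. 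The constraints $r_2^{-1} + r_3^{-1} \geq p_\times^{-1}$ (furnishing freedom in the choice of $r_j$) and $p_1, p_\times > q$ (furnishing interpolation slack) together provide the degrees of freedom needed to align all exponents; the bilinear bound \eqref{eq:atomic-bilinear} requires the analogous care with a single $r_\times \leq q_\times$ in place of the pair $r_2, r_3$, and uses the bilinear size $\widetilde{\SF}^{u_\times}_{\Gamma_\times}$ on the product $F_2 F_3$.
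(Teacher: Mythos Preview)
Your proposal has a genuine gap in the passage from the $L^q_{\mu^1_\Theta}$ estimate to the target $L^p_{\mu^1_\Theta}$ bound, and the scheme you outline does not close it. Two specific problems: First, the $L^\infty$ endpoint you invoke via \Cref{thm:STE} requires bounding the plain sizes $\|F_j\|_{\widetilde{\SF}^{u_j}_{\Gamma_j}}$ by the localized $X^{q_j,r_j,+}$ norms for $j\in\{2,3\}$. But $X^{q_j,r_j,+}$ ranges only over $\bar r\in[r_j,q_j]$ and is at best an $L^{q_j}_{\mu^1_\Theta}$-type quantity; it does \emph{not} dominate the size ($L^\infty_{\mu^1_\Theta}$). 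This is in contrast to the $\fL^{q_1,+}$ norm for $F_1$, whose ``$+$'' includes $\bar q=\infty$. Second, even granting the endpoint, log-convexity of $\|F_1F_2F_3\|_{L^p_{\mu^1_\Theta}}$ between $L^q$ and $L^\infty$ cannot manufacture the decoupled factor $\nu_1(V_1)^{1/p_1}\,\nu_\beta(V_\times)^{1/p_\times}$; it would simply raise your single $L^q$ factor (which you correctly identify as having the wrong $\nu$-distribution) to the power $q/p$. The vague ``further decomposition of $F_1$ along $V_1$'' you propose is at the wrong level: decomposing in strips $V_1$ does not interact with the $X^{q_j,r_j}$ structure on $F_2,F_3$, which is sensitive to tree-unions $W^+$, not strip-unions.

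The paper's argument is organized around a different mechanism. One performs the atomic decomposition (\Cref{prop:atomic-decomposition}) of $F_1$ with respect to $\mu^\infty_\Theta$, producing tree-union level sets $\Delta W_k\in\TT^\cup_\Theta$ with $\|\1_{\Delta W_k}F_1\|_{\SF^{u_1}}\lesssim 2^{k/q_1}$ and $\mu^\infty_\Theta(\Delta W_k)\lesssim 2^{-k}$ (and $W_k=\emptyset$ for $k\geq 1$ since the size is dominated by $\fL^{q_1,+}$). One then applies \Cref{lem:forest-estimate} at level $L^p$ (not $L^q$) to each piece $\1_{\Delta W_k}F_1F_2F_3$. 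The crucial point is that the very sets $\Delta W_k$ serve as the $W^+$ inside the definition \eqref{eq:X-local-size} of the $X^{q_j,r_j,+}$ norm for $F_2,F_3$: choosing $p_j\in[r_j,q_j]$ with $p_2^{-1}+p_3^{-1}=p_\times^{-1}$ (possible precisely when $r_2^{-1}+r_3^{-1}\geq p_\times^{-1}$), the $X$-norm yields $\|\1_{V_\times\cap\Delta W_k}F_j\|_{L^{p_j}_{\mu^1_\Theta}}\lesssim \nu_\beta(V_\times)^{1/p_j}\mu^\infty_\Theta(\Delta W_k)^{1/p_j-1/q_j}$. Combined with $\|\1_{V_1\cap\Delta W_k}F_1\|_{L^{p_1}_{\mu^1_\Theta}}\lesssim\nu_1(V_1)^{1/p_1}\mu^\infty_\Theta(\Delta W_k)^{1/p_1}2^{k/q_1}$ via \Cref{lem:measure-comparison-1-infty} and \eqref{eq:outer-Lp-finite-support}, the $k$-th piece contributes $\nu_1(V_1)^{1/p_1}\nu_\beta(V_\times)^{1/p_\times}2^{k(1/q-1/p)}$, and summing over $k\leq 1$ converges because $p>q$. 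This coupling between the atomic pieces of $F_1$ and the tree-localization in the $X$-size of $F_2,F_3$ is the engine of the proof, and it is absent from your outline.
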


\begin{proof}{Proof of \Cref{lem:atomic}}
  We first prove bound \eqref{eq:atomic-trilinear}. The condition \(r_j \leq
  q_j\) for \(j \in \{ 2, 3 \}\) is required for the sizes \(X_{\mu_{\Theta}^1,
  \nu_{\beta}}^{q_j, r_j, +} \widetilde{\SF}_{\Gamma_j}^{u_j} \Phi_{4 \gr}^{N
  - 3}\) to even be defined. We may assume that all terms on
  {\RHS{\eqref{eq:atomic-trilinear}}} are finite; otherwise there is nothing
  to prove. Let \(V_1 \in \mathbb{D}^{\cup}_1\) and \(V_{\times} \in
  \DD^{\cup}_{\beta }\) be such that \(\1_{\R^3_+ \setminus V_1} F_1 = 0\),
  \(\1_{\R^3_+ \setminus V_{\times}} F_2 F_3 = 0\), and
  \[ \begin{aligned}[t]&
  \nu_1 (V_1) \leq 2 \nu_1  ( \| F_1\|_{\SF_{\Theta}^{u_1} \Phi_{4 \gr}^{N - 3}} > 0 ),
  \\ &
  \nu_{\beta } (V_{\times}) \leq 2 \min_{j \in \{ 2, 3 \}} \nu_{\beta } 
       ( \| F_j \|_{\widetilde{\SF}_{\Gamma_j}^{u_j} \Phi_{4 \gr}^{N -3}} > 0 ) .
     \end{aligned} \]
  Let us first reason under the assumption that \(q < p_{\times} \leq
  q_{\times}\). Since \(r_2^{- 1} + r_3^{- 1} \geq p_{\times}^{- 1} \geq
  q_{\times}^{- 1}\) let us choose \(p_2 \in [r_2, q_2]\) and \(p_3 \in [r_3,
  q_2]\) such that \(p_{\times}^{- 1} = p_2^{- 1} + p_3^{- 1}\).
  
  By homogeneity, we may assume that \(\| F_1 \|_{\fL^{q_1,
  +}_{\mu_{\Theta}^{\infty}} \SF_{\Theta}^{u_1} \Phi_{4 \gr}^{N - 3}} = 1\) and
  \\\(\| F_j \|_{X_{\mu_{\Theta}^1, \nu_{\beta}}^{q_j, r_j, +}
  \widetilde{\SF}_{\Gamma_j}^{u_j} \Phi_{4 \gr}^{N - 3}} = 1\) for \(j \in \{ 2,
  3 \}\). Let us enact the atomic decomposition
  (\Cref{prop:atomic-decomposition}) of \(F_1\) to obtain a sequence of sets
  \(W_k \in \TT_{\Theta}^{\cup}\) so that
  \[ \begin{aligned}[t]&
       \left\| \1_{\Delta W_k} F_1 \right\|_{\SF_{\Theta}^{u_1} \Phi_{4
       \gr}^{N - 3}} \lesssim 2^{\frac{k}{q_1}}, \qquad \mu_{\Theta}^{\infty}
       (\Delta W_k) \lesssim 2^{- k}, \qquad \Delta W_k = W_{k - 1} \setminus
       W_k .
     \end{aligned} \]
  Since \(\| F_1 \|_{\SF_{\Theta}^{u_1} \Phi_{4 \gr}^{N - 3}} = \| F_1
  \|_{L^{\infty}_{\mu^{\infty}_{\Theta}} \SF_{\Theta}^{u_1} \Phi_{4 \gr}^{N -
  3}} \lesssim \| F_1 \|_{\fL^{q_1, +}_{\mu_{\Theta}^{\infty}, \nu_1}
  \SF_{\Theta}^{u_1} \Phi_{4 \gr}^{N - 3}}\) it holds that \(W_k = \emptyset\)
  for \(k \geq 1\). As a consequence we will only be concerned with \(k \leq 1\).
  Turning our attention to \(F_j\), according to the definition of the size \(\|
  \cdot \|_{X_{\mu_{\Theta}^1, \nu_{\beta}}^{q_j, r_j, +}
  \widetilde{\SF}_{\Gamma_j}^{u_j} \Phi_{4 \gr}^{N - 3}}\) (given in
  \eqref{eq:X-local-size} and \eqref{eq:X-size}) we have that
  \[ \left\| \1_{V_{\times}} \1_{\Delta W_k} F_j
     \right\|_{L^{\bar{r}_j}_{\mu^1_{\Theta}} \widetilde{\SF}_{\Gamma_j}^{u_j}
     \Phi_{4 \gr}^{N - 3}} \lesssim \nu_{\beta } (V_{\times})^{\frac{1}{p_j}}
     \mu^{\infty}_{\Theta_j} (\Delta W_k)^{\frac{1}{p_j} - \frac{1}{q_j}} \]
  since \(p_j \in [r_j, q_j]\). Using \Cref{lem:forest-estimate} we get that
  \[ \begin{aligned}[t]&
       \Big\| \1_{V_1} \1_{V_{\times}} \1_{\Delta W_k} \prod_{j = 1}^3 F_j
       \Big\| {_{L^{\bar{r}}_{\mu_{\Theta}^1} \left( \SI^{(l, l,
       l)}_{\Theta} \Phi_{\gr}^N + \SI^{(o, l, l)}_{\Theta} \Phi_{\gr}^N +
       \SI^{(l, l, o)}_{\Theta} \Phi_{\gr}^N + \SI^{(l, o, l)}_{\Theta}
       \Phi_{\gr}^N \right)}} \\ &
       \lesssim \left\| \1_{V_1} \1_{\Delta W_k} F_1
       \right\|_{L^{p_1}_{\mu^1_{\Theta}} \SF_{\Theta}^{u_1} \Phi_{4 \gr}^{N -
       3}} \prod_{j = 1}^3 \left\| \1_{V_{\times}} \1_{\Delta W_k} F_j
       \right\|_{L^{p_j}_{\mu^1_{\Theta}} \widetilde{\SF}_{\Gamma_j}^{u_j}
       \Phi_{4 \gr}^{N - 3}}\\ &
       \lesssim \nu_1 (V_1)^{\frac{1}{p_1}} \mu_{\Theta}^{\infty} (\Delta
       W_k)^{\frac{1}{p_1}} \left\| \1_{\Delta W_k} F_1
       \right\|_{\SF_{\Theta}^{u_1} \Phi_{4 \gr}^{N - 3}} \prod_{j = 2}^3
       \nu_1 (V_{\times})^{\frac{1}{\bar{r}_j}} \mu^{\infty}_{\Theta_j}
       (\Delta W_{k + 1})^{\frac{1}{p_j} - \frac{1}{q_j}}\\ &
       \lesssim \nu_1 (V_1)^{\frac{1}{p_1}} \nu_{\beta}
       (V_{\times})^{\frac{1}{p_2} + \frac{1}{p_3}} 2^{k \sum_{j = 1}^3
       \frac{1}{q_j} - \frac{1}{p_j}} = \nu_1 (V_1)^{\frac{1}{p_1}}
       \nu_{\beta} (V_{\times})^{\frac{1}{p_{\times}}} 2^{k \left( \frac{1}{q}
       - \frac{1}{p} \right)} .
     \end{aligned} \]
  Using the triangle inequality \eqref{eq:multi-triangle} we get that
  \[ \begin{aligned}[t]&
       {\Big\| \prod_{j = 1}^3 F_j \Big\|_{L^p_{\mu_{\Theta}^1} \left(
       \SI^{(l, l, l)}_{\Theta} \Phi_{\gr}^N + \SI^{(o, l, l)}_{\Theta}
       \Phi_{\gr}^N + \SI^{(l, l, o)}_{\Theta} \Phi_{\gr}^N + \SI^{(l, o,
       l)}_{\Theta} \Phi_{\gr}^N \right)}} \\ &
       \qquad \lesssim \nu_1 (V_1)^{\frac{1}{p_1}} \nu_{\beta}
       (V_{\times})^{\frac{1}{p_{\times}}} \sum_{k = - \infty}^1 k^{C_{\|
       \cdot \|}} 2^{k \left( \frac{1}{q} - \frac{1}{p} \right)} \lesssim
       \nu_1 (V_1)^{\frac{1}{p_1}} \nu_{\beta}
       (V_{\times})^{\frac{1}{p_{\times}}},
     \end{aligned} \]
  as required. The sum is convergent because \(\frac{1}{q} > \frac{1}{p}\).
  
  Now let us reason under the assumption that \(q_{\times} \leq p_{\times}\).
  From the above proof we have obtained that
  \[ \begin{aligned}[t]&
       \| F_1 F_2 F_3 \|_{L^p_{\mu_{\Theta}^1} \left( \SI^{(l, l, l)}_{\Theta}
       \Phi_{\gr}^N + \SI^{(o, l, l)}_{\Theta} \Phi_{\gr}^N + \SI^{(l, l,
       o)}_{\Theta} \Phi_{\gr}^N + \SI^{(l, o, l)}_{\Theta} \Phi_{\gr}^N
       \right)}\\ &
       \qquad = \left\| \1_{V_1 \cap V_{\times}} F_1 F_2 F_3
       \right\|_{L^p_{\mu_{\Theta}^1} \left( \SI^{(l, l, l)}_{\Theta}
       \Phi_{\gr}^N + \SI^{(o, l, l)}_{\Theta} \Phi_{\gr}^N + \SI^{(l, l,
       o)}_{\Theta} \Phi_{\gr}^N + \SI^{(l, o, l)}_{\Theta} \Phi_{\gr}^N
       \right)}\\ &
       \qquad \lesssim \nu_1 (V_1)^{\frac{1}{p_1}} \nu_{\beta}
       (V_{\times})^{\frac{1}{p_{\times}}} \| F_1 \|_{\fL^{q_1,
       +}_{\mu_{\Theta}^{\infty}, \nu_1} \SF_{\Theta}^{u_1} \Phi_{4 \gr}^{N -
       3}} \left( \prod_{j = 2}^3 \| F_j \|_{X_{\mu_{\Theta}^1,
       \nu_{\beta}}^{q_j, q_j, +} \widetilde{\SF}_{\Gamma_j}^{u_j} \Phi_{4
       \gr}^{N - 3}} \right)
     \end{aligned} \]
  for any \(p_1 \in (0, + \infty]\) such that \(p^{- 1} = p_1^{- 1} +
  q_{\times}^{- 1} < q^{- 1}\). We may assume that \(V_{\times} \subseteq V_1\)
  by carrying out the proof with \(\tilde{V}_{\times} = V_{\times} \cap V_1 \in
  \DD_{\beta}^{\cup}\) in place of \(V_{\times}\). It holds that \(\nu_{\beta}
  (V_1) \leq \nu_1 (V_1)\). Thus we obtain that for any \(p_{\times} >
  q_{\times}\) it holds that
  \[ \begin{aligned}[t]&
       \| F_1 F_2 F_3 \|_{L^p_{\mu_{\Theta}^1} \left( \SI^{(l, l, l)}_{\Theta}
       \Phi_{\gr}^N + \SI^{(o, l, l)}_{\Theta} \Phi_{\gr}^N + \SI^{(l, l,
       o)}_{\Theta} \Phi_{\gr}^N + \SI^{(l, o, l)}_{\Theta} \Phi_{\gr}^N
       \right)}\\ &
       \qquad \lesssim \nu_1 (V_1)^{\frac{1}{p}} \left( \frac{\nu_{\beta}
       (V_{\times} \cap V_1)}{\nu_{\beta} (V_1)}
       \right)^{\frac{1}{q_{\times}}} \| F_1 \|_{\fL^{q_1,
       +}_{\mu_{\Theta}^{\infty}, \nu_1} \SF_{\Theta}^{u_1} \Phi_{4 \gr}^{N -
       3}} \left( \prod_{j = 2}^3 \| F_j \|_{X_{\mu_{\Theta}^1,
       \nu_{\beta}}^{q_j, q_j, +} \widetilde{\SF}_{\Gamma_j}^{u_j} \Phi_{4
       \gr}^{N - 3}} \right)\\ &
       \hspace{2.0em} \lesssim \nu_1 (V_1)^{\frac{1}{p}} \left(
       \frac{\nu_{\beta} (V_{\times} \cap V_1)}{\nu_{\beta} (V_1)}
       \right)^{\frac{1}{p_{\times}}} \| F_1 \|_{\fL^{q_1,
       +}_{\mu_{\Theta}^{\infty}, \nu_1} \SF_{\Theta}^{u_1} \Phi_{4 \gr}^{N -
       3}} \left( \prod_{j = 2}^3 \| F_j \|_{X_{\mu_{\Theta}^1,
       \nu_{\beta}}^{q_j, q_j, +} \widetilde{\SF}_{\Gamma_j}^{u_j} \Phi_{4
       \gr}^{N - 3}} \right)\\ &
       \hspace{2.0em} \lesssim \nu_1 (V_1)^{\frac{1}{p} -
       \frac{1}{p_{\times}}} \nu_{\beta} (V_{\times} \cap
       V_1)^{\frac{1}{p_{\times}}} \| F_1 \|_{\fL^{q_1,
       +}_{\mu_{\Theta}^{\infty}, \nu_1} \SF_{\Theta}^{u_1} \Phi_{4 \gr}^{N -
       3}} \left( \prod_{j = 2}^3 \| F_j \|_{X_{\mu_{\Theta}^1,
       \nu_{\beta}}^{q_j, q_j, +} \widetilde{\SF}_{\Gamma_j}^{u_j} \Phi_{4
       \gr}^{N - 3}} \right) .
     \end{aligned} \]
  Thus we have shown the claim for any \(p_{\times} > q_{\times}\) and any \(p_1
  \in (q, + \infty]\) with the cited restrictions.
  
  Bound \eqref{eq:atomic-bilinear} is proved analogously with the following
  natural modifications. We require \(r_{\times} \leq q_{\times}\) for the size
  \(\widetilde{\SF}^{u_{\times}}_{\Gamma_2 \times \Gamma_3} \Phi_{4 \mf{r}}^{N
  - 3}\) to be defined. By homogeneity we assume that \(\| F_2 F_3
  \|_{X_{\mu_{\Theta}^1, \nu_{\beta}}^{q_{\times}, r_{\times}, +}
  \widetilde{\SF}^{u_{\times}}_{\Gamma_2 \times \Gamma_3} \Phi_{4 \mf{r}}^{N -
  3}} = 1\). Assuming \(p_{\times} \leq q_{\times}\) and taking the sets \(\Delta
  W_k\) to be the same as before, we obtain that
  \[ \left\| \1_{V_{\times}} \1_{\Delta W_k} F_2 F_3
     \right\|_{L^{p_{\times}}_{\mu^1_{\Theta}} \widetilde{\SF}_j^{u_j}}
     \lesssim \nu_{\beta } (V_{\times})^{\frac{1}{p_{\times}}}
     \mu^{\infty}_{\Theta_j} (\Delta W_k)^{\frac{1}{p_{\times}} -
     \frac{1}{q_{\times}}} \]
  and we conclude that
  \[ {\left\| \1_{V_1} \1_{V_{\times}} \1_{\Delta W_k} F_1 F_2 {F_3}_j
     \right\|_{L^p_{\mu_{\Theta}^1} \SI^{(l, o, o)}_{\Theta} \Phi^N_{\gr}}} 
     \lesssim \nu_1 (V_1)^{\frac{1}{p_1}} \nu_{\beta}
     (V_{\times})^{\frac{1}{p_{\times}}} 2^{k \left( \frac{1}{q} - \frac{1}{p}
     \right)} . \]
  Summing up in \(k \in \Z, k \leq 1\) yields the claim. To address the case
  \(p_{\times} > q_{\times}\) the procedure is the same as above and relies on
  noticing that one may assume \(\nu_{\beta} (V_{\times}) \leq \nu_{\beta}
  (V_1) \leq \nu_1 (V_1)\).
\end{proof}

\begin{proposition}
  \label{prop:holder-bound-trilinear}Let \((q_1, q_2, q_3) \in [1, \infty)^3\)
  and let
  \[ q_{\times} \eqd (q_2^{- 1} + q_3^{- 1})^{- 1}, \qquad q \eqd (q_1^{- 1} +
     q_{\times}^{- 1})^{- 1} = \Big( \sum_{j = 1}^3 q_j^{- 1} \Big)^{- 1},
  \]
  with \(q < 1\). Then the bounds
  \begin{equation}
    \begin{aligned}[t]&
      \| F_1 F_2 F_3 \|_{L^1_{\mu_{\Theta}^1} \left( \SI^{(l, l, l)}_{\Theta}
      \Phi_{\gr}^N + \SI^{(o, l, l)}_{\Theta} \Phi_{\gr}^N + \SI^{(l, l,
      o)}_{\Theta} \Phi_{\gr}^N + \SI^{(l, o, l)}_{\Theta} \Phi_{\gr}^N
      \right)} \qquad\\ &
      \qquad \lesssim \| F_1 \|_{L^{p_1}_{\nu_1} \fL^{q_1,
      +}_{\mu_{\Theta}^{\infty}, \nu_1} \SF_{\Gamma_1}^{u_1} \Phi_{4 \gr}^{N -
      3}} \Big( \prod_{j = 2}^3 \| F_j \|_{L^{p_j}_{\nu_{\beta }}
      X_{\mu_{\Theta}^1, \nu_{\beta }}^{q_j, r_j, +}
      \widetilde{\SF}_{\Gamma_j}^{u_j} \Phi_{4 \gr}^{N - 3}} \Big)
    \end{aligned} \label{eq:holder-bound-trilinear}
  \end{equation}
  holds for any \(F_j \in L^{\infty}_{\tmop{loc}} (\R^{3}_{+}) \otimes
  \Phi_{4 \gr}^{\infty}\) as long as \((p_1, p_2, p_3) \in [1, \infty]^3\) with
  \(\sum_{j = 1}^3 p_j^{- 1} = 1\), as long as \((u_1, u_2, u_3) \in [1,
  \infty]^3\) with \(\sum_{j = 1}^3 u_j^{- 1} \leq 1\), as long as \(r_j \in (0,
  q_j]\), \(j \in \{ 2, 3 \}\), are such that \(r_2^{- 1} + r_3^{- 1} \geq p_2^{-
  1} + p_3^{- 1}\).
  
  In particular, bound \eqref{eq:iterated-Holder-type-bounds:linear} holds
  under the above assumptions on the exponents.
  
  Similarly, the bound
  \begin{equation}
    \begin{aligned}[t]&
      \| F_1 F_2 F_3 \|_{L^1_{\mu_{\Theta}^1} \SI^{(l, o, o)}_{\Theta}
      \Phi^N_{\gr}} \lesssim \| F_1 \|_{L^{p_1}_{\nu_1} \fL^{q_1,
      +}_{\mu_{\Theta}^{\infty}, \nu_1} \SF_{\Theta}^{u_1} \Phi_{4 \mf{r}}^{N
      - 3}} \| F_2 F_3 \|_{L^{p_{\times}}_{\nu_{\beta }} X_{\mu_{\Theta}^1,
      \nu_{\beta }}^{q_{\times}, r_{\times}, +}
      \widetilde{\SF}^{u_{\times}}_{\Gamma_2 \times \Gamma_3} \Phi_{4
      \mf{r}}^{N - 3}}
    \end{aligned} \label{eq:holder-bound-bilinear}
  \end{equation}
  holds for any \(F_j \in L^{\infty}_{\tmop{loc}} (\R^{3}_{+}) \otimes
  \Phi_{4 \gr}^{\infty}\) as long as \((p_1, p_{\times}) \in [1, \infty]^2\) with
  \(p^{- 1} \eqd p_1^{- 1} + p_{\times}^{- 1}\), as long as \(r_{\times} \in [1,
  \infty]\) and \((q_1, q_{\times}) \in [1, \infty]^2\) are such that
  \[ r_{\times} < p_{\times} < q_{\times}, \]
  and
  \[ q_1^{- 1} + q_{\times}^{- 1} > 1, \]
  and as long as \((u_1, u_{\times}) \in [1, \infty]^2\) with \(u_1^{- 1} +
  u_{\times}^{- 1} \leq 1\).
  
  In particular, bound \eqref{eq:iterated-Holder-type-bounds:bilinear} holds
  under the above assumptions on the exponents. 
\end{proposition}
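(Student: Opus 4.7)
The plan is to deduce both bounds from the atomic estimates of Lemma \ref{lem:atomic} by invoking the restricted weak type interpolation for outer Lebesgue quasi-norms (Proposition \ref{prop:outer-restricted-interpolation}), with the trilinear (resp.\ bilinear) operator $\Pi(F_1, F_2, F_3) = F_1 F_2 F_3$ (resp.\ $\Pi(F_1, F_\times) = F_1 F_\times$ with $F_\times = F_2 F_3$) viewed as acting at the strip level on the space $\R^3_+$. The structural point is that the factors $\nu_1(\|F_1\|_{\ldots}>0)^{1/p_1}$ and $\nu_\beta(\|F_j\|_{\ldots}>0)^{1/p_\times}$ on the right-hand side of \eqref{eq:atomic-trilinear}, together with the control by the $L^\infty$ size values of $F_j$ on their supports, are exactly of the form demanded by the endpoint hypothesis \eqref{eq:abstract-endpoint-holder}.

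Concretely, for \eqref{eq:holder-bound-trilinear} I would take $\XO_j = \R^3_+$ for $j \in \{0,1,2,3\}$, with outer measures $\mu_0 = \mu_1 = \nu_1$ and $\mu_2 = \mu_3 = \nu_\beta$; the input sizes $\SO_1 = \fL^{q_1,+}_{\mu_\Theta^\infty, \nu_1} \SF_\Theta^{u_1} \Phi_{4\gr}^{N-3}$ and $\SO_j = X_{\mu_\Theta^1, \nu_\beta}^{q_j, r_j, +} \widetilde{\SF}_{\Gamma_j}^{u_j} \Phi_{4\gr}^{N-3}$ for $j \in \{2,3\}$, together with the target size $\SO_0 = \fL^1_{\mu^1_\Theta, \nu_1} \SI$, identifying the quasi-norm $L^1_{\mu^1_\Theta}\SI$ on the LHS of \eqref{eq:atomic-trilinear} with the corresponding local size $\SO_0(V)$ up to a factor $\nu_1(V)$. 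With this identification, \eqref{eq:atomic-trilinear} says precisely that every triple $(p_1^{-1}, p_2^{-1}, p_3^{-1})$ with $p_j \in (q, \infty]$, $p^{-1} = p_1^{-1} + p_\times^{-1}$, lies in the set $\Omega_{\tmop{restr}}$ from Proposition \ref{prop:outer-restricted-interpolation}. By the convexity of $\Omega_{\tmop{restr}}$ and the hypothesis $q<1$, i.e.\ $\sum_j q_j^{-1} > 1$, the strong Hölder endpoint $(\bar p_1, \bar p_2, \bar p_3)$ with $\sum_j \bar p_j^{-1} = 1$ lies in the \emph{interior} of $\Omega_{\tmop{restr}}$, so the restricted weak type interpolation yields the strong bound \eqref{eq:holder-bound-trilinear}. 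The assertion that \eqref{eq:iterated-Holder-type-bounds:linear} follows is then obtained by specializing $F_j = \Emb[f_j] \circ \Gamma_j$ and noting the matching of the iterated outer Lebesgue norms.

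The bilinear bound \eqref{eq:holder-bound-bilinear} is proved by the same scheme, now with two source outer spaces: $F_1$ endowed with $\nu_1$ and size $\SO_1$ as above, and $F_\times \in L^\infty_{\tmop{loc}}(\R^3_+) \otimes^2 {\Phi_{4\gr}^\infty}'$ endowed with $\nu_\beta$ and size $\SO_\times = X_{\mu_\Theta^1, \nu_\beta}^{q_\times, r_\times, +} \widetilde{\SF}^{u_\times}_{\Gamma_\times}(\Phi \otimes \Phi)_{4\gr}^{N-3}$. The atomic estimate \eqref{eq:atomic-bilinear} is the corresponding endpoint restricted bound, and the hypothesis chain $r_\times < p_\times < q_\times$ with $q_1^{-1} + q_\times^{-1} > 1$ guarantees that the strong endpoint $(\bar p_1, \bar p_\times)$ with $\bar p_1^{-1} + \bar p_\times^{-1} = 1$ lies in the interior of the corresponding restricted set, to which Proposition \ref{prop:outer-restricted-interpolation} applies.

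The main point requiring care, which I expect to be the main obstacle in making this plan rigorous, is the bookkeeping required to align the mixed-measure atomic estimates of Lemma \ref{lem:atomic} with the hypotheses of Proposition \ref{prop:outer-restricted-interpolation}, where all input factors are measured against their own outer measures and the target lives in a single outer Lebesgue space $L^{p,\infty}_{\mu_0}\SO_0$. In particular, the compatibility between the two measures $\nu_1$ and $\nu_\beta$ (and their restriction to common supports $V_1 \cap V_\times \in \DD^\cup_\beta$, using that $\nu_\beta(V) \leq \nu_1(V)$) must be handled exactly as in the second half of the proof of Lemma \ref{lem:atomic}, so that the supports can be reduced to a single common strip collection before the interpolation argument runs; after this is done the argument proceeds by direct appeal to the abstract interpolation machinery.
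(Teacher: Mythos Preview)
Your proposal is correct and follows essentially the same approach as the paper's proof, which is very terse: it simply notes that Proposition~\ref{prop:outer-restricted-interpolation} applies to the atomic bounds \eqref{eq:atomic-trilinear} and \eqref{eq:atomic-bilinear}, observing that the conditions of Lemma~\ref{lem:atomic} are open and that $q<1$ together with $\sum_j p_j^{-1}=1$ forces the target exponent tuple to lie in the interior of $\Omega_{\tmop{restr}}$. Your bookkeeping concerns about mixed measures are not addressed separately in the paper; they are absorbed into the fact that \eqref{eq:atomic-trilinear} already has the form of the restricted endpoint hypothesis \eqref{eq:abstract-endpoint-holder} (using $\min_{j\in\{2,3\}}\nu_\beta(\cdots)^{1/p_\times}\le \nu_\beta(\|F_2\|>0)^{1/p_2}\nu_\beta(\|F_3\|>0)^{1/p_3}$ and Chebyshev to pass to the weak norm), after which the interpolation machinery runs directly.
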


\begin{proof}
  \Cref{prop:outer-restricted-interpolation} can be applied to the bounds
  \eqref{eq:atomic-trilinear} to yield bound
  \eqref{eq:holder-bound-trilinear}. As a matter of fact the conditions of
  \Cref{lem:atomic} are open, the condition \(q < 1\) and \(p_1^{- 1} + p_2^{- 1}
  + p_3^{- 1} = 1\) imply that \(p_{\times}^{- 1} \eqd p_2^{- 1} + p_3^{- 1}\)
  satisfies \(p_{\times}^{- 1} < q^{- 1}\).
  
  Similarly \Cref{prop:outer-restricted-interpolation} can be applied to the
  bounds \eqref{eq:atomic-bilinear} to obtain bounds
  \eqref{eq:holder-bound-bilinear}. 
\end{proof}

\section{Non-uniform embedding bounds}\label{sec:non-uniform}

In this section we prove \Cref{thm:non-uniform-embedding-bounds}. We will
start by recording the following embedding bounds that have been shown in
{\cite{amentaBilinearHilbertTransform2020}}, the proof of which we will not
repeat here.

\begin{theorem}[{\cite[Theorem 1.3]{amentaBilinearHilbertTransform2020}}]
  \label{thm:non-uniform-embedding-bounds:worse} The bounds
  \[ \left\| \Emb [f] \right\|_{L^{p }_{\nu_1} \fL^{q, +}_{\mu_{\Theta}^1,
     \nu_1} \left( \SL^{\infty, \infty}_{\Theta} \Phi_{\gr}^N + \SL^{u,
     2}_{\Theta} \wpD_{\gr}^{N } + {\SL_{\Theta}^{(u, 1)}}  \dfct_{\gr}^N
     \right)} \lesssim \| f \|_{L^p (\R)} \]
  hold for any \(f \in \Sch (\R)\) as long as \(p \in (1, \infty]\),
  \(q \in (\max (p', 2), \infty)\), and \(u \in (1, \infty)\).
\end{theorem}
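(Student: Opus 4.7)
The plan is to prove the embedding by real interpolation between an $L^\infty$ endpoint and a restricted weak-type endpoint close to $L^1$, combined with a strip-localised outer covering. First I would establish the $p=\infty$ endpoint: at this exponent the outer $L^p_{\nu_1}$ quasi-norm collapses, and the task reduces to the uniform single-tree estimate $\|\Emb[f]\|_{\SO(T)} \lesssim \|f\|_{L^\infty(\R)}$ for every $T \in \TT_\Theta$, where $\SO$ is the sum of the three local sizes in play. Each summand admits an elementary estimate when $F = \Emb[f]$: the Lebesgue part $\SL^{\infty,\infty}_\Theta \Phi_\gr^N$ is dominated by $\|f\|_{L^\infty}$ via the pointwise wave-packet convolution bound; the lacunary part $\SL^{u,2}_\Theta \wpD_\gr^N$ reduces, fibrewise in the frequency variable $\theta \in \Theta$, to a localised Littlewood--Paley square function acting on the modulated function $\Mod_{-\xi_T - \theta/s_T} f$, and is bounded by $\|f\|_{L^\infty}$ via Plancherel on each fibre followed by an $L^u$ integration over the bounded interval $\Theta$; and the defect part vanishes identically by \eqref{eq:embedded-no-defect}. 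Upgrading from $\|\cdot\|_{\SO}$ to the $\fL^{q,+}_{\mu^1_\Theta,\nu_1}\SO$ norm at $p=\infty$ is immediate by pigeonholing the inner exponent $\bar q \in [q, \infty]$ against a single strip.

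Next I would prove a restricted weak-type endpoint at some $p_0 \in (1, 2)$ arbitrarily close to $1$, of the form
\begin{equation*}
\bigl\| \Emb[\1_E] \bigr\|_{L^{p_0,\infty}_{\nu_1} \fL^{q,+}_{\mu_\Theta^1,\nu_1} \SO} \lesssim |E|^{1/p_0}
\end{equation*}
for measurable $E \subset \R$. One performs a spatial Calderón--Zygmund decomposition of $\1_E$ at a level dictated by $\lambda$ and $|E|$, writing $\1_E = g + \sum_Q b_Q$ with $\|g\|_{L^\infty} \lesssim 1$ and each $b_Q$ supported on a dyadic interval $Q$. The exceptional strip set $V^- \in \DD_1^\cup$ is taken to be the union of strips $D_1(x_Q, |Q|)$ over selected cubes $Q$; by construction $\nu_1(V^-) \lesssim \lambda^{-p_0} |E|$. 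Outside $V^-$ the function is controlled by its good part and one applies the $L^\infty$ estimate from the first step strip by strip inside the localised norm; the strip localisation guarantees that a bad piece $b_Q$ whose spatial support lies inside the removed strip does not contribute to the outer norm at all. Marcinkiewicz interpolation within the outer Lebesgue framework, for instance via \Cref{prop:outer-restricted-interpolation}, then gives the strong bound for every $p \in (p_0, \infty]$, and the constraint $q > \max(p', 2)$ tracks the log-convex interpolation of the localised $\fL^{q,+}$ exponent between the two endpoints, while $u > 1$ is dictated by the endpoints at which the fibrewise Littlewood--Paley bound is available.

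The main obstacle is the contribution of the bad functions $b_Q$ to the portion of $\R^3_+$ that escapes the exceptional strip set. The multiple frequency scales entering $\Emb[b_Q](\eta,y,t)[\phi]$, parametrised by $\theta = t(\eta - \xi_T) \in \Theta$ on each tree, mean that the single-frequency cancellation $\int b_Q = 0$ does not by itself produce the decay one needs uniformly in $\theta$. Two routes are known to overcome this: the multi-frequency Calderón--Zygmund decomposition of Di Plinio--Ou \cite{diplinioModulationInvariantCarleson2018}, in which the bad pieces are adjusted to have vanishing moments at a finite set of frequencies whose cardinality is tied to the exponent $q$; and the Banach-valued interpolation argument of Amenta--Uraltsev \cite{amentaBilinearHilbertTransform2020}, which, specialised to the scalar setting, altogether bypasses the need for frequency-orthogonality by interpolating against an endpoint that only uses the weak $L^1 \to L^{1,\infty}$ mapping properties of localised Carleson-type maximal operators associated to each of the three sizes. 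Either route supplies the weak-type endpoint needed to close the interpolation and yields the claim.
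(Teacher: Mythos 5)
The paper does not prove this statement; it records it as a verbatim citation of \cite[Theorem 1.3]{amentaBilinearHilbertTransform2020}, specialised to the scalar (Hilbert-space valued) case by taking $r=2$ in the cited theorem, and explicitly says that the proof ``will not be repeated here.'' So there is no paper argument to compare against; any honest proof you supply is by construction a different route from the paper's.

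With that caveat, the architecture you describe --- interpolation between a $p=\infty$ single-tree endpoint and a restricted weak-type endpoint close to $L^1$ obtained from a spatial Calderón--Zygmund decomposition whose exceptional set is read off as a union of strips --- is indeed the broad shape of the cited argument. But the proposal has a genuine gap that cannot be waved away: the whole difficulty of the embedding theorem is the control of the bad pieces $b_Q$ on the part of $\R^3_+$ that escapes the exceptional strip set $V^-$, and you do not supply that step. You name two known routes (the multi-frequency Calderón--Zygmund decomposition of Di Plinio--Ou, or the Banach-valued interpolation of Amenta--Uraltsev) and declare that ``either route supplies the weak-type endpoint.'' The second of those references is the very theorem you are asked to prove; invoking it is circular. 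The first would be a legitimate substitute, but then your proposal needs to actually carry out the multi-frequency adjustment of the bad pieces, explain why the number of frequencies scales correctly with $q$, and check that the resulting gain is compatible with the localised outer norm $\fL^{q,+}_{\mu^1_\Theta,\nu_1}$, none of which appears. As it stands the hard core of the theorem is absent.

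A smaller technical point in your $p=\infty$ endpoint: for the lacunary size $\SL^{(u,2)}_\Theta\wpD^N_\gr$ you invoke ``Plancherel on each fibre followed by an $L^u$ integration.'' Plancherel yields the $u=2$ bound on the tree-localised square function of an $L^\infty$ function. For $u>2$, which the statement permits, passing from the $L^2$ average to the $L^u$ average over $\zeta\in B_1$ requires the local exponential square-function estimate (Chang--Wilson--Wolff / John--Nirenberg for the square function of a bounded function), not Plancherel. The conclusion is correct, but the mechanism you cite does not cover the full range of $u$.
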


The statement {\cite[Theorem 1.3]{amentaBilinearHilbertTransform2020}} covers
more general embeddings for functions valued in Banach spaces. For our
purposes, it is sufficient to apply it for scalar-valued functions. That
corresponds to setting \(r = 2\) the statement of {\cite[Theorem
1.3]{amentaBilinearHilbertTransform2020}}. This corresponds to the case of
functions valued in Hilbert spaces.\quad

Bounds \eqref{eq:embedding-bounds:non-uniform-iterated} of
\Cref{thm:non-uniform-embedding-bounds} are an improvement over
\Cref{thm:non-uniform-embedding-bounds:worse} in two ways. First of all, the
size \(\SF^u_{\Theta} \Phi_{\gr}^N\) appearing in
\Cref{thm:non-uniform-embedding-bounds} contains the term \(\SJ_{\Theta}^u
\Phi_{\gr}^{N }\), absent in the bounds of
\Cref{thm:non-uniform-embedding-bounds:worse}. Next, we show that the
embedding holds with localized outer Lebesgue norm \(\fL^{q,
+}_{\mu_{\Theta}^{\infty}, \nu_1}\) in place of \(\fL^{q, +}_{\mu_{\Theta}^1,
\nu_1}\). According to \Cref{lem:measure-comparison-1-infty}, for \(\beta = 1\)
this is a strengthening of the result since we have that
\[ \nu (V)^{- \frac{1}{q}} \left\| \1_V F \right\|_{L^q_{\mu^1_{\Theta}} \SO}
   \lesssim \left\| \1_V F \right\|_{L^q_{\mu^{\infty}_{\Theta}} \SO} \]
for any \(F \in \Rad (\R^{3}_{+}) \otimes \Phi^{\infty}_{\gr}\) and, in
particular,
\[ \| F \|_{\fL^{q, +}_{\mu_{\Theta}^1, \nu_1} \SF^u_{\Theta} \Phi^N_{\gr}}
   \lesssim \left\| \1_V F \right\|_{L^{q, +}_{\mu^{\infty}_{\Theta}, \nu_1}
   \SF^u_{\Theta} \Phi^N_{\gr}} . \]
This strengthening costs us the \(p = \infty\) endpoint for the embedding.
\Cref{sec:non-uniform:SIO} is dedicated to showing the bound
\begin{equation}
  \left\| \Emb [f] \right\|_{L^p_{\nu_1} \fL^{q, +}_{\mu_{\Theta}^1, \nu_1}
  \SF_{\Theta}^u \Phi_{\mf{r}}^N} \lesssim \| f \|_{L^p (\R)}
  \label{eq:embedding-bounds:non-uniform-iterated:mu1}
\end{equation}
by appropriately controlling the \(\SJ_{\Theta}^u \Phi_{\gr}^{N }\) size using
the sizes appearing on the {\LHS{}} of the bound in
\Cref{thm:non-uniform-embedding-bounds:worse}.
\Cref{sec:non-unif:counting-function-improvement} is dedicated to deducing
bounds \eqref{eq:embedding-bounds:non-uniform-iterated} of
\Cref{thm:non-uniform-embedding-bounds} from
\eqref{eq:embedding-bounds:non-uniform-iterated:mu1} by showing that
super-level sets of \(\Emb [f]\) can be selected to satisfy appropriate bounds
in the \(\mu_{\Theta}^{\infty}\) outer measure.

\subsection{SIO truncation size control}\label{sec:non-uniform:SIO}

This section concentrates on proving the bound below..

\begin{proposition}
  \label{prop:SIO-size-domination}Given any sets \(V^+, V^- \in \DD_{1
  }^{\cup}\), and \(W^+, W^- \in \TT^{\cup}_{\Theta}\), the bounds
  \begin{equation}
    \begin{aligned}[t]&
      \left\| \1_{(V^+ \cap W^+) \setminus \left( V^- {\cup W^-}  \right)}
      \Emb [f] \right\|_{\SJ^u_{\Theta} \Phi_{\gr}^N}\\ &
      \qquad \lesssim \begin{aligned}[t]&
        \left\| \1_{(V^+ \cap W^+) \setminus \left( V^- {\cup W^-}  \right)}
        \Emb [f] \right\|_{\SL^{(2, 2)}_{\Theta} \wpD_{8 \gr}^{N - 8 }}\\ &
        \qquad + \left\| \1_{(V^+ \cap W^+) \setminus \left( V^- {\cup W^-} 
        \right)} \Emb [f] \right\|_{\SL^{(\infty, \infty)}_{\Theta} \Phi_{8
        \gr}^{N - 8}}
      \end{aligned}
    \end{aligned} \label{eq:SIO-bound:explicit}
  \end{equation}
  hold for all \(f \in \Sch (\R)\) as long as \(u \in [1, \infty)\).
  The implicit constant is independent of \(f\) and of the sets \(V^{\pm},
  W^{\pm}\).
\end{proposition}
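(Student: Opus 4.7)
The plan is to reduce to a single-tree, single-wave-packet estimate, recognize $\SJ^u_\Theta$ as an $L^u$-norm of a maximal truncation of a Calder\'on--Zygmund convolution, and then apply the pointwise Cotlar inequality together with Littlewood--Paley theory to match the resulting pieces to $\SL^{(\infty,\infty)}\Phi$ and $\SL^{(2,2)}\wpD$.

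By the symmetry property \eqref{eq:size-symmetry} of every size in play and by absorbing $W^-$ into the Borel set $E \eqd (V^+\cap W^+)\setminus(V^-\cup W^-)$ via \eqref{eq:generated-size:trees}, it suffices to prove the corresponding single-tree estimate on the reference tree $T = T_\Theta(0,0,1)$ for $F = \1_E\Emb[f]$. A further appeal to \Cref{lem:wave-packet-decomposition} reduces the bound to the case of a fixed scalar wave packet $\phi$, at the cost of losing $8$ derivatives and enlarging the Fourier support by a factor of $8$. Passing to local coordinates and using, as explained in \Cref{sec:size-motivation}, that $\wpD_\zeta(\theta)\Emb[f]\circ\pi_T(\theta,\zeta,\sigma)[\phi] = f * \Dil_\sigma \psi_\theta(\zeta)$ for the mean-zero wave packet $\psi_\theta(z) \eqd \dd_z(\Mod_\theta\phi^\vee(z))$, the constraint on the test function $G$ in \Cref{def:SIO-size} is dualized by choosing $G$ to be essentially a characteristic function of a scale-interval. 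This realizes $\|F\|_{\SJ^u_\Theta(T)}$ as the $L^u$-norm in $(\theta,\zeta)\in\Theta\times B_1$ of the maximal truncation
\[
\sup_{0<\tilde\sigma<\mf{b}^*(\theta,\zeta)}\Big|\int_0^{\tilde\sigma} f * \Dil_\rho \psi_\theta(\zeta)\frac{\dd\rho}{\rho}\Big|,
\]
with $\mf{b}^*$ the locally Lipschitz graph of the $\sigma$-boundary of $E$ provided by \Cref{lem:geometry-of-boundary}.

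For each fixed $\theta$ the convolution kernel $K_\theta \eqd \int_0^\infty \Dil_\rho\psi_\theta \frac{\dd\rho}{\rho}$ is a standard Calder\'on--Zygmund kernel, and the inner integral above is its smooth truncation at scale $\tilde\sigma$. The classical Cotlar pointwise inequality yields
\[
\sup_{\tilde\sigma>0}\Big|\int_0^{\tilde\sigma} f * \Dil_\rho \psi_\theta(\zeta)\frac{\dd\rho}{\rho}\Big|\lesssim M(f * K_\theta)(\zeta) + Mf(\zeta),
\]
where $M$ is a localized Hardy--Littlewood maximal operator. Taking $L^u$-norms in $(\theta,\zeta)$ on the bounded domain $\Theta\times B_1$, the term $Mf$ is controlled by $L^u$-boundedness of $M$ and H\"older's inequality on bounded measure by the $L^\infty$-norm of $f$ over a fixed neighborhood of $B_1$, which is bounded by $\|F\|_{\SL^{(\infty,\infty)}_\Theta\Phi_{8\gr}^{N-8}(T)}$; the term $M(f * K_\theta)$ is controlled by $\|f * K_\theta\|_{L^u}$, which by Littlewood--Paley theory is bounded by the $L^u$-norm of the square function $Sf_\theta(\zeta)=\bigl(\int_0^\infty|f * \Dil_\rho\psi_\theta(\zeta)|^2\dd\rho/\rho\bigr)^{1/2}$. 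The latter is identified for $u\le 2$ with $\|F\|_{\SL^{(2,2)}_\Theta\wpD_{8\gr}^{N-8}(T)}$ via H\"older on the bounded $(\theta,\zeta)$-domain, while for $u>2$ a further application of the $L^u$-boundedness of the square function and H\"older embedding pass the bound onto $\|F\|_{\SL^{(\infty,\infty)}_\Theta\Phi_{8\gr}^{N-8}(T)}$.

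The principal technical obstacle is to perform the Cotlar splitting so that the truncation at the Lipschitz graph $\mf{b}^*$, rather than at a constant scale, is faithfully preserved: the maximal and square functions on the right-hand side must see only data from within $E$. This is handled using the regularity estimates \eqref{eq:boundary-regularity-local} for $\mf{b}^*$, which ensure that on the spatial scale of the wave packet $\Dil_\rho\psi_\theta$ the graph is essentially constant, so the error in interchanging the $E$-truncation with the kernel splitting is a Schwartz-tail contribution that can be absorbed into additional copies of both sizes on the RHS. The boundary evaluations at $\sigma=\mf{b}^*$ are in turn dominated via the evaluation identity \eqref{eq:boundary-singular-size-local} by $\SL^{(\infty,\infty)}_\Theta\Phi_{8\gr}^{N-8}$. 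Summing these contributions and undoing the initial reductions yields the claim.
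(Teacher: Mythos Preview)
Your approach has a genuine gap that is not fixed by the ``Schwartz-tail'' remark at the end. The right-hand side of \eqref{eq:SIO-bound:explicit} involves the \emph{truncated} embedding $\1_{E}\Emb[f]$, so the sizes $\SL^{(\infty,\infty)}_\Theta\Phi$ and $\SL^{(2,2)}_\Theta\wpD$ only see values of $f\ast\Dil_\sigma\Mod_\theta\phi^{\vee}$ for $(\theta,\zeta,\sigma)$ in the local-coordinate image $E^\ast$, that is, for scales $\sigma$ between the two boundary graphs $\mf{b}_-^{\ast}$ and $\mf{b}_+^{\ast}$. Your Cotlar step, however, produces $M(f\ast K_\theta)$ and $Mf$, both of which depend on $f$ through \emph{all} scales (the full kernel $K_\theta=\int_0^\infty\Dil_\rho\psi_\theta\,\dd\rho/\rho$) and through spatial averages at arbitrarily large radii. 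Neither quantity is controlled by $\|\1_E\Emb[f]\|_{\SL^{(\infty,\infty)}}$ or $\|\1_E\Emb[f]\|_{\SL^{(2,2)}\wpD}$: if $E$ is a thin sliver, the truncated sizes can be arbitrarily small while $Mf$ and $M(f\ast K_\theta)$ stay large. This is a scale-localization problem, not a spatial Schwartz-tail problem, and the boundary regularity of $\mf{b}^{\ast}$ does not help here. A related symptom: your realization of the $\SJ^u$ integrand as $\int_0^{\tilde\sigma}$ rather than $\int_{\mf{b}_-^{\ast}}^{\tilde\sigma}$ already signals that the truncation by $\1_{E^\ast}$ has been lost.

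The paper never leaves the truncated scale interval. For $u=2$ it writes the square of the full local integral $\int_{\tilde{\mf b}_-^{\ast}}^{\tilde{\mf b}_+^{\ast}}$ as a double integral in $(\rho,\rho')$, uses $F^{\ast}[\phi_\theta]=\rho\,\dd_\zeta F^{\ast}[\phi]$ to integrate by parts in $\zeta$, and obtains three terms bounded respectively by the truncated square function squared, by a product of the truncated square function and the truncated $L^\infty$ size, and by a self-referential term absorbed into the left side. For $u>2$ it does \emph{not} use Littlewood--Paley or Cotlar but a John--Nirenberg/BMO induction on the scale parameter $s_T$: one performs a Calder\'on--Zygmund stopping on the function $\mf{F}(\zeta)$, shows the ``high'' part is locally almost constant (hence controlled by the $u=2$ bound on a coarser tree), and applies the inductive hypothesis to the ``low'' part on subtrees. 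For $u<2$ a direct H\"older inequality on the bounded domain $\Theta\times B_1$ reduces to $u=2$.
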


This is sufficient to show
\eqref{eq:embedding-bounds:non-uniform-iterated:mu1}.

\begin{corollary}
  \label{cor:SIO-bound:mu1} The bounds
  \eqref{eq:embedding-bounds:non-uniform-iterated:mu1} hold for all \(f \in
  \Sch (\R)\) as long as \(p \in (1, \infty]\), \(q \in (\max (p',
  2), \infty)\), and \(u \in [1, \infty]\).
\end{corollary}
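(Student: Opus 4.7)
The plan is to reduce \Cref{cor:SIO-bound:mu1} to \Cref{thm:non-uniform-embedding-bounds:worse} by way of \Cref{prop:SIO-size-domination}. I would first split the size $\SF^u_\Theta\Phi_\gr^N$ from \eqref{eq:non-uniform-embedding-full-size} into its four summands. The outer Lebesgue quasi-norm of $\Emb[f]$ associated to the three ``classical'' summands $\SL^{(\infty,\infty)}_\Theta\Phi_\gr^N$, $\SL^{(u,2)}_\Theta\wpD_\gr^N$, and $\SL^{(u,1)}_\Theta\dfct^N_\gr$ is controlled by $\|f\|_{L^p(\R)}$ directly by \Cref{thm:non-uniform-embedding-bounds:worse}. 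Since $L^p_{\nu_1}\fL^{q,+}_{\mu^1_\Theta,\nu_1}\SO$ is quasi-additive in $\SO$ under finite sums of sizes, the corollary reduces to
\[
  \|\Emb[f]\|_{L^p_{\nu_1}\fL^{q,+}_{\mu^1_\Theta,\nu_1}\SJ^u_\Theta\Phi_\gr^N} \lesssim \|f\|_{L^p(\R)}.
\]

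To handle this remaining piece, I would invoke \Cref{prop:SIO-size-domination}, which provides the pointwise-on-cut-sets domination
\[
  \|\1_{E}\Emb[f]\|_{\SJ^u_\Theta\Phi_\gr^N} \lesssim \|\1_{E}\Emb[f]\|_{\SL^{(2,2)}_\Theta\wpD_{8\gr}^{N-8}} + \|\1_{E}\Emb[f]\|_{\SL^{(\infty,\infty)}_\Theta\Phi_{8\gr}^{N-8}}
\]
uniformly over all $E = (V^+\cap W^+)\setminus(V^-\cup W^-)$ with $V^\pm \in \DD^\cup_1$ and $W^\pm \in \TT^\cup_\Theta$. Recall from \eqref{eq:superlevel-measure} and \eqref{eq:generated-size:trees} that the super-level measure $\mu^1_\Theta(\|\cdot\|_\SO>\lambda)$ and the tree-generated size itself are defined as infima and suprema over cut sets of exactly this type, and recall from \Cref{def:localized-outer-lebesgue} that the localized outer Lebesgue quasi-norms $\fL^{q,+}_{\mu^1_\Theta,\nu_1}$ are in turn defined by further suprema over $V^\pm \in \DD^\cup_1$. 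Consequently the displayed pointwise-on-cut-sets inequality transfers mechanically to super-level set comparisons at both the $\mu^1_\Theta$ and the $\nu_1$ level; integrating these comparisons in the corresponding level parameter yields
\[
  \|\Emb[f]\|_{L^p_{\nu_1}\fL^{q,+}_{\mu^1_\Theta,\nu_1}\SJ^u_\Theta\Phi_\gr^N} \lesssim \|\Emb[f]\|_{L^p_{\nu_1}\fL^{q,+}_{\mu^1_\Theta,\nu_1}(\SL^{(2,2)}_\Theta\wpD_{8\gr}^{N-8} + \SL^{(\infty,\infty)}_\Theta\Phi_{8\gr}^{N-8})}.
\]

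To close, I would apply \Cref{thm:non-uniform-embedding-bounds:worse} with its parameter set to $u=2$ and with $(N-8, 8\gr)$ in place of $(N,\gr)$; since the sum $\SL^{(\infty,\infty)}_\Theta\Phi_{8\gr}^{N-8} + \SL^{(2,2)}_\Theta\wpD_{8\gr}^{N-8}$ is pointwise no larger than the three-term size appearing in that theorem, the right-hand side of the previous display is bounded by $\|f\|_{L^p(\R)}$ in exactly the range $p\in(1,\infty]$, $q>\max(p',2)$ demanded by the corollary. The whole argument is thus reduced to \Cref{prop:SIO-size-domination}, and I expect its proof to be the main obstacle: it must decompose the maximal scale-truncation implicit in the definition of $\SJ^u_\Theta\Phi_\gr^N$ into a Littlewood-Paley square-function piece (controlled by $\SL^{(2,2)}_\Theta\wpD^{N-8}_{8\gr}$) and a smooth averaging piece (controlled by $\SL^{(\infty,\infty)}_\Theta\Phi^{N-8}_{8\gr}$), while respecting the fourfold cutoff by $V^\pm$, $W^\pm$ and exploiting the embedding structure $F = \Emb[f]$ rather than treating an arbitrary $F \in \Rad(\R^3_+) \otimes {\Phi^\infty_\gr}'$.
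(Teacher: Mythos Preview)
Your proposal is correct and follows essentially the same approach as the paper: both reduce the corollary to \Cref{thm:non-uniform-embedding-bounds:worse} via \Cref{prop:SIO-size-domination}. The only difference is presentational---the paper invokes the explicit exceptional-set characterization of \Cref{rmk:iterated-outer-bounds-explicit} (the sets $V^-(\lambda)$ and $W_q(\lambda,\tau,V^+,V^-)$ furnished by \Cref{thm:non-uniform-embedding-bounds:worse} for the smaller size are shown, via \eqref{eq:SIO-bound:explicit}, to work verbatim for $\SF^u_\Theta\Phi^N_\gr$), whereas you argue the same transfer abstractly through super-level-set comparisons at the $\mu^1_\Theta$ and $\nu_1$ levels; your ``mechanical'' transfer is valid because every cut set arising in those definitions is of the form $(V^+\cap W^+)\setminus(V^-\cup W^-)$ (noting $\DD^\cup_1\subset\TT^\cup_\Theta$), which is exactly the hypothesis of \Cref{prop:SIO-size-domination}.
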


\begin{proof}
  According to Remark \Cref{rmk:iterated-outer-bounds-explicit},
  \Cref{thm:non-uniform-embedding-bounds:worse} provides us with a choice of
  \(V^- (\lambda) \in \DD^{\cup}_1\) and \(W_q (\lambda, \tau, V^+) \in
  \TT_{\Theta}^{\cup}\) satisfying \eqref{eq:iterated-outer-bounds-explicit} and
  \eqref{eq:iterated-outer-bounds-explicit-full} with
  \[ \begin{aligned}[t]&
       F = \Emb [f], \qquad A = \| f \|_{L^p}, \qquad \beta = 1,\\ &
       \SO = \left( \SL^{(\infty, \infty)}_{\Theta} \Phi_{8 \gr}^{N - 8} +
       \SL^{(u, 2)}_{\Theta} \wpD_{8 \gr}^{N - 8} + {\SL_{\Theta}^{(u, 1)}} 
       \dfct_{8 \gr}^{N - 8} \right) .
     \end{aligned} \]
  Bound \eqref{eq:SIO-bound:explicit} shows that this same choice also
  satisfies conditions \eqref{eq:iterated-outer-bounds-explicit} and
  \eqref{eq:iterated-outer-bounds-explicit-full} with \(\SO = \SF_{\Theta}^u
  \Phi_{\mf{r}}^N\), instead. Again, by Remark
  \Cref{rmk:iterated-outer-bounds-explicit}, this is exactly what is claimed
  by bound \eqref{eq:embedding-bounds:non-uniform-iterated:mu1}.
\end{proof}

\begin{proof}{Proof of \Cref{prop:SIO-size-domination}}
  After some preliminary geometric reductions, this proof is divided into two
  parts. First we reduce the claim of the proposition to the case \(u = 2\). We
  do so by assuming that bound \eqref{eq:SIO-bound:explicit} holds with \(u =
  2\) for any function \(f \in \Sch (\R)\) and any sets  \(V^+, V^-
  \in \DD_{1 }^{\cup}\), and \(W^+, W^- \in \TT^{\cup}_{\Theta}\), and then we
  deduce that bound \eqref{eq:SIO-bound:explicit} also holds for any \(u \in
  [1, + \infty)\).\quad We then conclude the proof by showing bound
  \eqref{eq:SIO-bound:explicit} holds with \(u = 2\).
  
  We start by introducing some notation. Let us fix \(T \in \TT_{\Theta}\) and
  use the shorthand
  \[ F^{\ast}_T (\theta, \zeta, \sigma) \eqd \left( \1_{(V^+ \cap W^+)
     \setminus \left( V^- {\cup W^-}  \right)} \Emb [f] \right) \circ \pi_T
     (\theta, \zeta, \sigma) . \]
  Let \(E^{\ast +} \eqd \pi_T^{- 1} \left( \mT_{\Theta} \cap V^+ \cap W^+
  \right)\) and \(E^{\ast -} \eqd \pi_T^{- 1} (V^- \cup W^-)\) and let \(E^{\ast}
  \eqd E^{\ast +} \setminus E^{\ast -}\). Let \(\mf{b}_{\pm} : \R^2 \rightarrow
  \R_+\) be two functions such that their graphs are the boundary of \(E^{\pm}\)
  and let \(\mf{b^{\ast}_{\pm}} : \Theta \times B_1 \rightarrow \R_+\) be two
  functions such that their graphs are the boundaries of \(E^{\ast \pm} \cap
  \Theta \times B_1 \times \R_+\). These functions are obtained from
  \Cref{lem:geometry-of-boundary}. Let
  \[ \begin{aligned}[t]&
       \widetilde{\mf{b}}^{\ast}_- (\zeta) = \max_{\theta \in \Theta} 
       \mf{b}^{\ast}_- (\zeta, \theta) \qquad \widetilde{\mf{b}}^{\ast}_+
       (\zeta) = \min_{\theta \in \Theta} \mf{b}^{\ast}_+ (\zeta, \theta) .
     \end{aligned} \]
  Let us begin by showing that for any \(T \in \TT_{\Theta}\) it holds that
  \begin{equation}
    \begin{aligned}[t]&
      \left\| \1_{(V^+ \cap W^+) \setminus \left( V^- {\cup W^-}  \right)}
      \Emb [f] \right\|_{\SJ^u_{\Theta} \Phi_{\gr}^N (T)}\\ &
      \qquad \lesssim \begin{aligned}[t]&
        \sup_{\phi} \Big\| \sup_{\sigma > \widetilde{\mf{b}}^{\ast}_-
        (\zeta)} \Big| \int_{\sigma}^{\widetilde{\mf{b}}^{\ast}_+ (\zeta)}
        \wpD_{\zeta} (\theta) F^{\ast} (\theta, \zeta, \rho) [\phi] \frac{\dd
        \rho}{\rho} \Big| \Big\|_{L^u_{\frac{\dd \theta \dd \zeta}{|
        \Theta |}} (\Theta \times B_1)}\\ &
        + \left\| \1_{(V^+ \cap W^+) \setminus \left( V^- {\cup W^-} 
        \right)} \Emb [f] \right\|_{\SL{}^{(\infty, \infty)}_{\Theta}
        \Phi_{\gr}^N (T)} .
      \end{aligned}
    \end{aligned} \label{eq:SIO:up-scale}
  \end{equation}
  As a matter of fact it holds that
  \[ \begin{aligned}[t]&
       \left\| \1_{(V^+ \cap W^+) \setminus \left( V^- {\cup W^-}  \right)}
       \Emb [f] \right\|_{\SJ^u_{\Theta} \Phi_{\gr}^N (T)}\\ &
       \qquad \lesssim \begin{aligned}[t]&
         \sup_{\phi} \Big\| \sup_{\sigma > \widetilde{\mf{b}}^{\ast}_-
         (\zeta)} \Big| \int_{\sigma}^{\widetilde{\mf{b}}^{\ast}_+ (\zeta)}
         \wpD_{\zeta} (\theta) F^{\ast} (\theta, \zeta, \rho) [\phi] \frac{\dd
         \rho}{\rho} \Big| \Big\|_{L^u_{\frac{\dd \theta \dd \zeta}{|
         \Theta |}} (\Theta \times B_1)} \quad\\ &
         \qquad + \begin{aligned}[t]&
           \Big\| \Big| \log \Big( \frac{\mf{b}^{\ast}_- (\zeta,
           \theta)}{\widetilde{\mf{b}}^{\ast}_- (\zeta)} \Big) \Big| +
           \Big| \log \Big( \frac{\mf{b}^{\ast}_+ (\zeta,
           \theta)}{\widetilde{\mf{b}}^{\ast}_+ (\zeta)} \Big)  \Big|
           \Big\|_{L^u_{\frac{\dd \theta \dd \zeta}{| \Theta |}} (\Theta
           \times B_1)}\\ &
           \times \sup_{\phi} \Big\| \wpD_{\zeta} (\theta) F^{\ast} (\theta,
           \zeta, \rho) [\phi] \Big\|_{L^{\infty}_{\frac{\dd \theta \dd
           \zeta}{| \Theta |} \frac{\dd \rho}{\rho}}}
         \end{aligned} 
       \end{aligned}
     \end{aligned} \]
  where the upper bound is taken over \(\phi \in \Phi_{\gr}^{\infty}\) with \(\|
  \phi \|_{\Phi_{\gr}^N} \leq 1\). We assume that the integrand vanishes when
  \(\widetilde{\mf{b}}^{\ast}_+ (\zeta) \leq \widetilde{\mf{b}}^{\ast}_-
  (\zeta)\). Since \(\| (- d_z + 2 \pi i \theta) \phi (z) \|_{\Phi_{\gr}^N}
  \leq \| \phi \|_{\Phi_{\gr}^N}\), we have that
  \[ \tmop{ssup}_{\phi} \left\| \wpD_{\zeta} (\theta) F^{\ast} (\theta,
     \zeta, \rho) [\phi] \right\|_{L^{\infty}_{\frac{\dd \theta \dd \zeta}{|
     \Theta |} \frac{\dd \rho}{\rho}}} \lesssim \left\| \1_{(V^+ \cap W^+)
     \setminus \left( V^- {\cup W^-}  \right)} \Emb [f]
     \right\|_{\SL^{(\infty, \infty)}_{\Theta} \Phi_{\gr}^N} . \]
  On the other hand, let \(\bar{\theta}\) be the point where \(\max_{\theta \in
  \Theta} \mf{b}^{\ast}_- (\zeta, \theta)\) is attained. From
  \eqref{eq:boundary-regularity-local} we get for \(\theta > \bar{\theta}\) that
  \[ 1 \geq \frac{\mf{b}^{\ast} (\theta, \zeta)}{\widetilde{\mf{b}}^{\ast}_-
     (\zeta)} \geq \frac{\theta_+ - \theta}{\theta_+ - \bar{\theta}} \]
  and for \(\theta < \bar{\theta}\) that
  \[ 1 \geq \frac{\mf{b}^{\ast} (\theta, \zeta)}{\widetilde{\mf{b}}^{\ast}_-
     (\zeta)} \geq \frac{\theta - \theta_-}{\bar{\theta} - \theta_-} \]
  where we denote \(\Theta = (\theta_-, \theta_+)\). This shows that
  \[ \Big\| \log \Big( \frac{\mf{b}^{\ast}_- (\zeta,
     \theta)}{\widetilde{\mf{b}}^{\ast}_- (\zeta)} \Big)  
     \Big\|_{L^u_{\frac{\dd \theta \dd \zeta}{| \Theta |}} (\Theta \times
     B_1)} + \Big\|  \log \Big( \frac{\mf{b}^{\ast}_+ (\zeta,
     \theta)}{\widetilde{\mf{b}}^{\ast}_+ (\zeta)} \Big) 
     \Big\|_{L^u_{\frac{\dd \theta \dd \zeta}{| \Theta |}} (\Theta \times
     B_1)} \lesssim 1. \]
  \tmtextbf{Reduction to the case \(u = 2\):}
  
  Recall that we assume bound \eqref{eq:SIO-bound:explicit} holds with \(u =
  2\). For \(u < 2\), bound \eqref{eq:SIO-bound:explicit} follows by the
  classical Hölder inequality that shows that for any \(T \in \TT_{\Theta}\)
  one has
  \[ \left\| \1_{(V^+ \cap W^+) \setminus \left( V^- {\cup W^-}  \right)}
     \Emb [f] \right\|_{\SJ^u_{\Theta} \Phi_{\gr}^N (T)} \lesssim \left\|
     \1_{(V^+ \cap W^+) \setminus \left( V^- {\cup W^-}  \right)} \Emb [f]
     \right\|_{\SJ^2_{\Theta} \Phi_{\gr}^N (T)} . \]
  We henceforth fix \(u \in (2, + \infty)\). The subsequent proof relies mainly
  on the fact that the size over trees is a \(\tmop{BMO}\)-style quantity and
  thus any two integrability exponents \(u_1, u_2\) can be compared. We adopt a
  procedure inspired by the proof of the John-Nirenberg inequality: we prove
  the reduction by induction on the scales of the trees \(T \in \TT_{\Theta}\)
  in play. According to \eqref{eq:SIO:up-scale} we need to show that for any
  \(T \in \TT_{\Theta}\) it holds that
  \[ \begin{aligned}[t]&
       \sup_{\phi} \left\| \sup_{\sigma > \widetilde{\mf{b}}^{\ast}_-
       (\zeta)} \left| \int_{\sigma}^{\widetilde{\mf{b}}^{\ast}_+ (\zeta)}
       \wpD_{\zeta} (\theta) F^{\ast} (\theta, \zeta, \rho) [\phi] \frac{\dd
       \rho}{\rho} \right| \right\|_{L^u_{\frac{\dd \theta \dd \zeta}{|
       \Theta |}} (\Theta \times B_1)} \lesssim \RHS{}
       \eqref{prop:SIO-size-domination}
     \end{aligned} \]
  Let us begin by showing that for any \(T \in \TT_{\Theta}\) and for any \(\phi
  \in \Phi_{\gr}^{\infty}\) with \(\| \phi \|_{\Phi_{\gr}^N} \leq 1\) it holds
  that
  \begin{equation}
    \begin{aligned}[t]&
      \left\| \sup_{\sigma > \widetilde{\mf{b}}^{\ast}_- (\zeta)} \left|
      \int_{\sigma}^{\widetilde{\mf{b}}^{\ast}_+ (\zeta)} \wpD_{\zeta}
      (\theta) F^{\ast} (\theta, \zeta, \rho) [\phi] \frac{\dd \rho}{\rho}
      \right| \right\|_{L^u_{\frac{\dd \theta \dd \zeta}{| \Theta |}} (\Theta
      \times B_1)}\\ &
      \qquad \lesssim \sup_{\phi'} \left\| \left( \int_{\Theta} \sup_{\sigma
      > \widetilde{\mf{b}}^{\ast}_- (\zeta)} \left|
      \int_{\sigma}^{\widetilde{\mf{b}}^{\ast}_+ (\zeta)} \wpD_{\zeta}
      (\theta) F^{\ast} (\theta, \zeta, \rho) [\phi] \frac{\dd \rho}{\rho}
      \right|^2 \frac{\dd \theta}{| \Theta |} \right)^{\frac{1}{2}}
      \right\|_{L^u (B_1)}
    \end{aligned} \label{eq:SIO:theta-integrability}
  \end{equation}
  where the upper bound gets taken over \(\phi' \in \Phi_{4 \gr}^{\infty}\) with
  \(\| \phi' \|_{\Phi_{4 \gr}^{N - 4}} \leq 1\). This amounts to reducing the
  exponent of the integral in the frequency variable \(\theta\). According to
  \eqref{eq:embedding} (that defines \(\Emb\)), to \eqref{eq:space-boost}
  (that defines \(\wpD_{\zeta} (\theta)\)), and to the symmetry property
  \eqref{eq:piT-symmetry}, we have that
  \[ \wpD_{\zeta} (\theta) F^{\ast} (\theta, \zeta, \rho) [\phi] = e^{2 \pi i
     \xi_T s_T \zeta} \rho \dd_{\zeta} \left( f_T \ast \Dil_{\rho}
     \Mod_{\theta} \phi^{\vee} (\zeta) \right) \]
  with \(f_T = s_T \Dil_{s_T^{- 1}} \Mod_{- \xi_T} \Tr_{- x_T} f\). This
  allows us to write that
  \[ \wpD_{\zeta} (\theta) F^{\ast} (\theta, \zeta, \rho) [\phi] =
     \wpD_{\zeta} (\theta') F^{\ast} (\theta', \zeta, \rho) \left[
     \Mod_{(\theta - \theta')} \phi \right] \]
  for any \(\theta' \in B_{\gr} (\theta)\) with \(\widetilde{\mf{b}}^{\ast}_-
  (\zeta) < \theta' < \widetilde{\mf{b}}^{\ast}_+ (\zeta)\) since then
  \((\theta', \zeta, \sigma) \in E^{\ast}\). Note also that \(\Mod_{(\theta -
  \theta')} \phi \in \Phi_{2 \gr}^{\infty}\) with \(\left\| \Mod_{(\theta -
  \theta')} \phi \right\|_{\Phi_{\gr}^N} \leq \| \phi \|_{\Phi_{2 \gr}^N}\). We
  thus have that
  \[ \begin{aligned}[t]&
       \sup_{\sigma \in \R_+} \left|
       \int_{\sigma}^{\widetilde{\mf{b}}^{\ast}_+ (\zeta)} \wpD_{\zeta}
       (\theta) F^{\ast} (\theta, \zeta, \rho) [\phi] \frac{\dd \rho}{\rho}
       \right| \leq\\ &
       \qquad \frac{1}{\gr} \int_{B_{\gr} (\theta) \cap \Theta} \sup_{\sigma
       \in \R_+} \left| \int_{\sigma}^{\widetilde{\mf{b}}^{\ast}_+ (\zeta)}
       \wpD_{\zeta} (\theta') F^{\ast} (\theta', \zeta, \rho) \left[
       \Mod_{(\theta - \theta')} \phi \right] \frac{\dd \rho}{\rho} \right|
       \dd \theta' .
     \end{aligned} \]
  Note that \(\gr < \left| B_{\gr} (\theta) \cap \Theta \right| < 2 \gr\). Using
  \Cref{lem:wave-packet-decomposition} we decompose \(\Mod_{(\theta - \theta')}
  \phi \in \Phi_{2 \gr}^N\) into an absolutely convergent sum:
  \[ \Mod_{(\theta - \theta')} \phi = \sum_{k \in \Z} a_k (\theta, \theta',
     \phi) \phi_k' . \]
  with \(\| \phi_k' \|_{\Phi_{4 \gr}^{N - 2}} \leq 1\) with uniformly summable
  coefficients. This shows that
  \[ \begin{aligned}[t]&
       \left\| \sup_{\sigma \in \R_+} \left|
       \int_{\sigma}^{\widetilde{\mf{b}}^{\ast}_+ (\zeta)} \wpD_{\zeta}
       (\theta) F^{\ast} (\theta, \zeta, \rho) [\phi] \frac{\dd \rho}{\rho}
       \right| \right\|_{L^u_{\frac{\dd \theta \dd \zeta}{| \Theta |}}
       (\Theta \times B_1)}\\ &
       \qquad \lesssim \sup_{\phi'} \int_{\Theta \times B_1} \left(
       \frac{1}{\gr} \int_{B_{\gr} (\theta) \cap \Theta} \sup_{\sigma \in
       \R_+} \left| \int_{\sigma}^{\widetilde{\mf{b}}^{\ast}_+ (\zeta)}
       \wpD_{\zeta} (\theta') F^{\ast} (\theta', \zeta, \rho) [\phi']
       \frac{\dd \rho}{\rho} \right|^2 \dd \theta' \right)^{\frac{u}{2}} \dd
       \zeta \frac{\dd \theta}{| \Theta |}\\ &
       \qquad \lesssim \sup_{\phi'} \left\| \left( \int_{\Theta} \sup_{\sigma
       \in \R_+} \left| \int_{\sigma}^{\mf{\tilde{b}}^{\ast}_+ (\zeta)}
       \wpD_{\zeta} (\theta) F^{\ast} (\theta', \zeta, \rho) [\phi'] \frac{\dd
       \rho}{\rho} \right|^2 \frac{\dd \theta'}{| \Theta |}
       \right)^{\frac{1}{2}} \right\|_{L^u (B_1)}
     \end{aligned} \]
  with the upper bounds being taken over \(\phi' \in \Phi^{\infty}_{\gr}\) with
  \(\| \phi' \|_{\Phi_{4 \gr}^{N - 2}} \leq 1\). This shows that bound
  \eqref{eq:SIO:theta-integrability} holds.
  
  With bound \eqref{eq:SIO:theta-integrability} in hand‚ to deduce bound
  \eqref{eq:SIO-bound:explicit} we now need to show that for every \(T \in
  \TT_{\Theta} \) it holds that
  \begin{equation}
    \begin{aligned}[t]&
      \sup_{\phi} \left\| \left( \int_{\Theta} \sup_{\sigma \in \R_+} \left|
      \int_{\sigma}^{\widetilde{\mf{b}}^{\ast}_+ (\zeta)} \wpD_{\zeta}
      (\theta) F^{\ast} (\theta, \zeta, \rho) [\phi] \frac{\dd \rho}{\rho}
      \right|^2 \frac{\dd \theta}{| \Theta |} \right)^{\frac{1}{2}}
      \right\|_{L^u (B_1)}\\ &
      \qquad \lesssim \sup_{\tmscript{\begin{aligned}[t]&
        T' \in \TT_{\Theta}\\ &
        s_{T'} \leq s_T
      \end{aligned}}} \begin{aligned}[t]&
        \left( \left\| \1_{(V^+ \cap W^+) \setminus \left( V^- {\cup W^-} 
        \right)} \Emb [f] \right\|_{\SL^{(2, 2)}_{\Theta} \wpD_{8 \gr}^{N - 8
        } (T')} \right\nobracket \qquad\\ &
        \qquad + \left. \left\| \1_{(V^+ \cap W^+) \setminus \left( V^- {\cup
        W^-}  \right)} \Emb [f] \right\|_{\SL^{(\infty, \infty)}_{\Theta}
        \Phi_{8 \gr}^{N - 8 } (T')} \right)
      \end{aligned}
    \end{aligned} \label{eq:tree-SIO-size}
  \end{equation}
  where the upper bound is taken over \(\phi \in \Phi_{4 \gr}^{\infty}\) with
  \(\| \phi \|_{\Phi_{4 \gr}^{N - 4}} \leq 1\). We do an induction on scales
  argument. Suppose we have shown that \(\eqref{eq:tree-SIO-size}\) holds with
  implicit constant \(C_{\ast}\) for fixed \(V^+ {, V^-} \), for any \(\tilde{W}^+,
  \tilde{W}^- \in \mathbb{T}_{\Theta}^{\cup}\), and for all \(T \in
  \TT_{\Theta}\) with \(s_T < S\) for some \(S > 0\). Let us show that it holds for
  those same \(V^+, V^-, \) and any \(W^+, W^- \in \mathbb{T}_{\Theta}^{\cup}\),
  and all \(T \in \TT_{\Theta}\) with \(s_T < C_{\tmop{ind}} S\) for some
  \(C_{\tmop{ind}} > 1\). This is sufficient to conclude that
  \(\eqref{eq:tree-SIO-size}\) holds for all \(T \in \TT_{\Theta}\). As a matter
  of fact, first check that it holds for all \(V^- \supset \{ (\eta, y, t) : t
  < \varepsilon \}\) for a fixed \(\varepsilon > 0\) because
  \eqref{eq:tree-SIO-size} holds trivially for \(s_T < S < \varepsilon\) and
  thus, by induction, \(\eqref{eq:tree-SIO-size}\) holds for all \(T \in
  \TT_{\Theta}\). Since the implicit constant in \eqref{eq:tree-SIO-size} is
  independent of \(\varepsilon > 0\) and {\RHS{\eqref{eq:tree-SIO-size}}} then
  we can pass to the limit and conclude that \eqref{eq:tree-SIO-size} holds
  for any \(V^+, V^-\), as required.
  
  We will fix the constants \(C_{\ast} > 1\) and \(C_{\tmop{ind}} > 1\) at the end
  of this discussion. By symmetry (identity \eqref{eq:size-symmetry} and
  subsequent discussion), we can suppose, without loss of generality, that \(T
  = T_{\Theta} (0, 0, 1)\) and \(S > C_{\tmop{ind}}^{- 1}\). Consider the
  function
  \[ \mf{F} (\zeta) \eqd  \left( \int_{\Theta} \sup_{\sigma \in \R_+} \left|
     \int_{\sigma}^{\widetilde{\mf{b}}^{\ast}_+ (\zeta)} F^{\ast} (\theta,
     \zeta, \rho) [\phi_{\theta}] \frac{\dd \rho}{\rho} \right|^2 \frac{\dd
     \theta}{| \Theta |} \right)^{\frac{1}{2}} \]
  with some fixed \(\phi \in \Phi_{4 \gr}^{\infty}\) with \(\| \phi \|_{\Phi_{4
  \gr}^{N - 4}} \leq 1\) and with \(\phi_{\theta} (z) \eqd (- d_z + 2 \pi i
  \theta) \phi (z)\). We need to show that
  \[ \int_{B_1} \mf{F} (\zeta)^u \dd \zeta \leq C_{\ast}
     \RHS{\eqref{eq:tree-SIO-size}}^u . \]
  We separate \(B_1\) into a part where \(\mf{F} (\zeta)\) is bounded and an
  exceptional set
  \[ E \eqd \left\{ {} \zeta \suchthat M \mf{F} (\zeta) \geq C_2 \left\|
     \mf{\mf{F}} (\zeta) \right\|_{L^2 (B_1)} \right\} \]
  for some \(C_2 \gg 1\) to be chosen later. \(E\) is open and we can represent \(E
  = \bigcup_{n \in \N} B_{s_n} (x_n)\) with pairwise disjoint \(B_{s_n} (x_n)\).
  Let us set \(\mf{\tilde{b}}_{+, n}^{\ast} (\zeta) \eqd 10 s_n - | \zeta - x_n
  |\) so that \(9 s_n < \mf{\tilde{b}}_{+, n} (\zeta) {\leq 10 s_n} \) on
  \(B_{s_n} (x_n)\). By the boundedness of the Hardy Littlewood maximal function
  it holds that \(\sum_{n \in \N} 2 s_n \leq C_M C_2^{- 1} .\) We can now write
  that
  \begin{equation}
    \mf{F} (\zeta)^u \leq \1_{B_1 \setminus E} (\zeta) \mf{F} (\zeta)^u + 2^u
    \sum_{n \in \N} \left( \mf{F}^{\uparrow}_n (\zeta)^u +
    \mf{F}^{\downarrow}_n (\zeta)^u \right)
    \label{eq:SIO:stopping-time-decomp}
  \end{equation}
  where
  \[ \begin{aligned}[t]&
       \mf{F}^{\uparrow}_n (\zeta) \eqd \1_{B_{s_n} (x_n)} (\zeta) \left(
       \int_{\Theta} \sup_{\sigma > s_n} \left|
       \int_{\sigma}^{\widetilde{\mf{b}}^{\ast}_+ (\zeta)} F^{\ast} (\theta,
       \zeta, \rho) [\phi_{\theta}] \frac{\dd \rho}{\rho} \right|^2 \frac{\dd
       \theta}{| \Theta |} \right)^{\frac{1}{2}},\\ &
       \mf{F}^{\downarrow}_n (\zeta) \eqd \1_{B_{s_n} (x_n)} (\zeta) \left(
       \int_{\Theta} \sup_{\sigma < \mf{\tilde{b}}_{+, n}^{\ast} (\zeta)}
       \left| \int_{\sigma}^{\mf{\tilde{b}}_{+, n}^{\ast} (\zeta)} F^{\ast}
       (\theta, \zeta, \rho) [\phi_{\theta}] \frac{\dd \rho}{\rho} \right|^2
       \frac{\dd \theta}{| \Theta |} \right)^{\frac{1}{2}} .
     \end{aligned} \]
  We will prove that
  \begin{equation}
    \int_{B_1} \mf{F} (\zeta)^u \dd \zeta \leq (1 + 2^u) (C_2 C_{u = 2} + C_M
    C_5 + C_M C_2^{- 1} C_{\ast})  \RHS{\eqref{eq:tree-SIO-size}}^u
    \label{eq:SIO:induction-step}
  \end{equation}
  that will allow us to conclude that
  \[ \int_{B_1} \mf{F} (\zeta)^u \dd \zeta \leq C_{\ast}
     \RHS{\eqref{eq:tree-SIO-size}}^u, \]
  as required, as long as \(C_2\) is chosen to be satisfy \(2 (1 + 2^u) < C_2\)
  and \(C_{\ast}\) satisfies \(2 (1 + 2^u) (C_2 C_{u = 2} + C_M C_5) \leq
  C_{\ast}\). Let us now show that \eqref{eq:SIO:induction-step} by bounding
  integrals of terms in \eqref{eq:SIO:stopping-time-decomp}.
  
  The term \(\1_{B_1 \setminus E} (\zeta) \mf{F} (\zeta)^u\) on
  {\RHS{\eqref{eq:SIO:stopping-time-decomp}}} can be bound since \(M \mf{F}
  (\zeta) \geq \mf{F} (\zeta)\); thus
  \[ \int_{B_1 \setminus E} \mf{F} (\zeta)^u \dd \zeta \leq C_2 \left\|
     \mf{\mf{F}} (\zeta) \right\|_{L^2 (B_1)} \leq C_2 C_{u = 2} 
     \RHS{\eqref{eq:tree-SIO-size}}^u \]
  for some \(C_{u = 2} > 0\) that comes from assuming that bound
  \eqref{eq:SIO-bound:explicit} holds with \(u = 2\).
  
  Let us now concentrate on the term \(\sum_{n \in \N} \mf{F}^{\uparrow}_n
  (\zeta)^u\) on {\RHS{\eqref{eq:SIO:stopping-time-decomp}}}. We will show that
  \[ \int_{B_1} \sum_{n \in \N} \mf{F}^{\uparrow}_n (\zeta)^u \dd \zeta =
     \sum_{n \in \N} \int_{B_{s_n} (x_n)} \mf{F}^{\uparrow}_n (\zeta)^u \dd
     \zeta \leq C_M C_5 C_{u = 2}  \RHS{\eqref{eq:tree-SIO-size}}^u \]
  for some fixed \(C_5 > 1\) by showing that for each \(n \in \N\) we have
  \[ \int_{B_{s_n} (x_n)} \mf{F}^{\uparrow}_n (\zeta)^u \dd \zeta \leq s_n C_2
     C_5 C_{u = 2}  \RHS{\eqref{eq:tree-SIO-size}}^u \]
  and using that \(\sum_{n \in \N} 2 s_n \leq C_M C_2^{- 1}\), i.e. the
  boundedness of the Hardy Littlewood maximal function. To do so we will argue
  that \(\mf{F}^{\uparrow}_n (\zeta)\) is essentially constant on \(B_{s_n}
  (x_n)\). Since \(\left\| \widetilde{\mf{b}}^{\ast}_+ (\zeta)
  \right\|_{\tmop{Lip}} \leq 1\) this shows that \(\min_{\zeta \in B_{s_n}
  (x_n)} \widetilde{\mf{b}}^{\ast}_+ (\zeta) \geq \min_{\zeta \in B_{s_n}
  (x_n)} \widetilde{\mf{b}}^{\ast}_+ (\zeta) - 2 s_n\). If
  \(\mf{F}^{\uparrow}_n\) is not constantly vanishing, then
  \(\widetilde{\mf{b}}^{\ast}_+ (\bar{\zeta}) > 9 s_n\) for some \(\bar{\zeta}
  \in B_{s_n} (x_n)\) and thus for any \(\zeta, \zeta' \in B_{s_n} (x_n)\) it
  holds that \(5 / 7 \leq \widetilde{\mf{b}}^{\ast}_+ (\zeta') /
  \widetilde{\mf{b}}^{\ast}_+ (\zeta) \leq 9 / 7\). Similarly, since \(9 s_n <
  \mf{\tilde{b}}_{+, n} {\leq 10 s_n} \) on \(B_{s_n} (x_n)\) it holds that \(9 /
  10 \leq \widetilde{\mf{b}}^{\ast}_{+, n} (\zeta') /
  \widetilde{\mf{b}}^{\ast}_+ (\zeta) \leq 10 / 9\) for any \(\zeta, \zeta' \in
  B_{s_n} (x_n)\). Finally either \(\mf{\tilde{b}}_- < 7 s_n\) on \(B_{s_n} (x_n)\)
  or \(5 / 7 \leq \widetilde{\mf{b}}^{\ast}_- (\zeta') /
  \widetilde{\mf{b}}^{\ast}_- (\zeta) \leq 9 / 7\) for any \(\zeta, \zeta' \in
  B_{s_n} (x_n)\). Let us use this to compare \(\mf{F}^{\uparrow}_n (\zeta)\)
  with \(\mf{F}^{\uparrow}_n (\zeta')\) for any \(\zeta, \zeta' \in B_{s_n}
  (x_n)\). We have
  \[ \left| \mf{F}^{\uparrow}_n (\zeta) - \mf{F}^{\uparrow}_n (\zeta')
     \right| \leq \begin{aligned}[t]&
       \left( \int_{\Theta} \left| \int_{\widetilde{\mf{b}}^{\ast}_+ (\zeta) /
       2}^{2 \widetilde{\mf{b}}^{\ast}_+ (\zeta)} | F^{\ast} (\theta, \zeta,
       \rho) [\phi_{\theta}] | + | F^{\ast} (\theta, \zeta', \rho)
       [\phi_{\theta}] | \frac{\dd \rho}{\rho} \right|^2 \frac{\dd \theta}{|
       \Theta |} \right)^{\frac{1}{2}}\\ &
       + \left( \int_{\Theta} \left| \int_{\mf{\tilde{b}}_{+, n}^{\ast}
       (\zeta) / 2}^{2 \mf{\tilde{b}}_{+, n}^{\ast} (\zeta)} | F^{\ast}
       (\theta, \zeta, \rho) [\phi_{\theta}] | + | F^{\ast} (\theta, \zeta',
       \rho) [\phi_{\theta}] | \frac{\dd \rho}{\rho} \right|^2 \frac{\dd
       \theta}{| \Theta |} \right)^{\frac{1}{2}}\\ &
       + \begin{aligned}[t]&
         \left( \int_{\Theta} \left| \int_0^1 \1_{\widetilde{\mf{b}}^{\ast}_-
         (\zeta) / 2 < \rho < 2 \widetilde{\mf{b}}^{\ast}_- (\zeta)} \right.
         \right. \qquad\\ &
         \qquad \times \left. \left. | F^{\ast} (\theta, \zeta, \rho)
         [\phi_{\theta}] | + | F^{\ast} (\theta, \zeta', \rho) [\phi_{\theta}]
         | \frac{\dd \rho}{\rho} \right|^2 \frac{\dd \theta}{| \Theta |}
         \right)^{\frac{1}{2}} .
       \end{aligned}\\ &
       + \begin{aligned}[t]&
         \left( \int_{\Theta} \left| \int_{2 \mf{\tilde{b}}_{+, n}^{\ast}
         (\zeta')}^{\widetilde{\mf{b}}^{\ast}_+ (\zeta') / 2}
         \1_{\widetilde{\mf{b}}^{\ast}_- (\zeta') \vee
         \widetilde{\mf{b}}^{\ast}_- (\zeta) < \rho <
         \widetilde{\mf{b}}^{\ast}_+ (\zeta') \wedge
         \widetilde{\mf{b}}^{\ast}_+ (\zeta)} \right\nobracket \right.
         \qquad\\ &
         \qquad \times \left. \left. | F^{\ast} (\theta, \zeta, \rho)
         [\phi_{\theta}] - F^{\ast} (\theta, \zeta', \rho) [\phi_{\theta}] |
         \frac{\dd \rho}{\rho} \right|^2 \frac{\dd \theta}{| \Theta |}
         \right)^{\frac{1}{2}} .
       \end{aligned}
     \end{aligned} \]

     Using \Cref{lem:wave-packet-decomposition} to decompose \(\phi_{\theta}\) we
  can bound all but the last summand on {\RHS{}} above by \(\left\| \1_{(V^+
  \cap W^+) \setminus \left( V^- {\cup W^-}  \right)} \Emb [f]
  \right\|_{\SL{}^{(\infty, \infty)}_{\Theta} \Phi_{8 \gr}^{N - 6} (T)}\). To
  bound the last summand notice that as long as \(\widetilde{\mf{b}}^{\ast}_-
  (\zeta') \vee \widetilde{\mf{b}}^{\ast}_- (\zeta) < \rho <
  \widetilde{\mf{b}}^{\ast}_+ (\zeta') \wedge \widetilde{\mf{b}}^{\ast}_+
  (\zeta)\) we have that
  \[ \begin{aligned}[t]&
       F^{\ast} (\theta, \zeta, \rho) [\phi_{\theta}] = f \ast \Dil_{\rho}
       \Mod_{\theta} \phi^{\vee} (\zeta)\\ &
       \qquad = f \ast \Tr_{(\zeta' - \zeta)} \Dil_{\rho} \Mod_{\theta}
       \phi^{\vee} (\zeta') = F^{\ast} (\theta, \zeta', \rho) \left[ e^{2 \pi
       i \theta \frac{(\zeta - \zeta')}{\rho}} \Tr_{\frac{(\zeta' -
       \zeta)}{\rho}} \phi_{\theta} \right]
     \end{aligned} \]
  Since \(\left\| e^{2 \pi i \frac{(\zeta - \zeta') \theta}{\rho}} \Tr_{\left(
  \frac{\zeta - \zeta'}{\rho} \right)} \phi_{\theta} - \phi_{\theta}
  \right\|_{\Phi_{4 \gr}^{N - 4}} \lesssim \frac{| \zeta - \zeta' |}{\rho} \|
  \phi_{\theta} \|_{\Phi_{4 \gr}^{N - 5}} \lesssim \frac{| \zeta - \zeta'
  |}{\rho} \| \phi \|_{\Phi_{4 \gr}^{N - 5}}\) for \(\frac{| \zeta - \zeta'
  |}{\rho} \lesssim 1\) we obtain that
  \[ \begin{aligned}[t]&
       \begin{aligned}[t]&
         \left( \int_{\Theta} \left| \int_{2 \mf{\tilde{b}}_{+, n}^{\ast}
         (\zeta')}^{\widetilde{\mf{b}}^{\ast}_+ (\zeta') / 2}
         \1_{\widetilde{\mf{b}}^{\ast}_- (\zeta') \vee
         \widetilde{\mf{b}}^{\ast}_- (\zeta) < \rho <
         \widetilde{\mf{b}}^{\ast}_+ (\zeta') \wedge
         \widetilde{\mf{b}}^{\ast}_+ (\zeta)} \right\nobracket \right.
         \qquad\\ &
         \qquad \times \left. \left. | F^{\ast} (\theta, \zeta, \rho)
         [\phi_{\theta}] - F^{\ast} (\theta, \zeta', \rho) [\phi_{\theta}] |
         \frac{\dd \rho}{\rho} \right|^2 \frac{\dd \theta}{| \Theta |}
         \right)^{\frac{1}{2}}
       \end{aligned}\\ &
       = \begin{aligned}[t]&
         \left( \int_{\Theta} \left| \int_{2 \mf{\tilde{b}}_{+, n}^{\ast}
         (\zeta')}^{\widetilde{\mf{b}}^{\ast}_+ (\zeta') / 2}
         \1_{\widetilde{\mf{b}}^{\ast}_- (\zeta') \vee
         \widetilde{\mf{b}}^{\ast}_- (\zeta) < \rho <
         \widetilde{\mf{b}}^{\ast}_+ (\zeta') \wedge
         \widetilde{\mf{b}}^{\ast}_+ (\zeta)} \right\nobracket \right.
         \qquad\\ &
         \qquad \times \left. \left. \left| F^{\ast} (\theta, \zeta', \rho)
         \left[ e^{2 \pi i \frac{(\zeta - \zeta') \theta}{\rho}} \Tr_{\left(
         \frac{\zeta - \zeta'}{\rho} \right)} \phi_{\theta} - \phi_{\theta}
         \right] \right| \frac{\dd \rho}{\rho} \right|^2 \frac{\dd \theta}{|
         \Theta |} \right)^{\frac{1}{2}}
       \end{aligned}\\ &
       \lesssim \begin{aligned}[t]&
         \left( \int_{\Theta} \left( \int_{2 \mf{\tilde{b}}_{+, n}^{\ast}
         (\zeta')}^{\widetilde{\mf{b}}^{\ast}_+ (\zeta') / 2}
         \1_{\widetilde{\mf{b}}^{\ast}_- (\zeta') \vee
         \widetilde{\mf{b}}^{\ast}_- (\zeta) < \rho <
         \widetilde{\mf{b}}^{\ast}_+ (\zeta') \wedge
         \widetilde{\mf{b}}^{\ast}_+ (\zeta)} \frac{| \zeta - \zeta' |}{\rho}
         \frac{\dd \rho}{\rho} \right)^2 \frac{\dd \theta}{| \Theta |}
         \right)^{\frac{1}{2}}\\ &
         \times \left\| \1_{(V^+ \cap W^+) \setminus \left( V^- {\cup W^-} 
         \right)} \Emb [f] \right\|_{\SL{}^{(\infty, \infty)}_{\Theta}
         \Phi_{4 \gr}^{N - 5} (T)}
       \end{aligned}\\ &
       \lesssim \left\| \1_{(V^+ \cap W^+) \setminus \left( V^- {\cup W^-} 
       \right)} \Emb [f] \right\|_{\SL{}^{(\infty, \infty)}_{\Theta} \Phi_{4
       \gr}^{N - 5} (T)}
     \end{aligned} \]
  where the last bound holds because integration happens over \(\rho \geq s_n\)
  and \(| \zeta - \zeta' | \lesssim s_n\). We can now average in \(\zeta \in
  B_{s_n} (x_n)\) to obtain that
  \[ \int_{B_{s_n} (x_n)} \mf{F}^{\uparrow}_n (\zeta)^u \dd \zeta
     \begin{aligned}[t]&
       \lesssim \int_{B_{s_n} (x_n)} \left( \frac{1}{2 s_n} \int_{B_{s_n}
       (x_n)} \left( \left| \mf{F}^{\uparrow}_n (\zeta') \right| + \left|
       \mf{F}^{\uparrow}_n (\zeta) - \mf{F}^{\uparrow}_n (\zeta') \right|
       \right) \dd \zeta' \right)^u \dd \zeta\\ &
       \lesssim \begin{aligned}[t]&
         s_n \left\| \1_{(V^+ \cap W^+) \setminus \left( V^- {\cup W^-} 
         \right)} \Emb [f] \right\|_{\SL{}^{(\infty, \infty)}_{\Theta}
         \Phi_{8 \gr}^{N - 6} (T)}^u\\ &
         + s_n \left( \frac{1}{2 s_n} \int_{B_{s_n} (x_n)} \left|
         \mf{\mf{\mf{F} }}^{\uparrow}_n (\zeta') \right| \dd \zeta' \right)^u
       \end{aligned}
     \end{aligned} \]
  By maximality of \(B_{s_n} (x_n) \subset E\) it holds that
  \[ \frac{1}{2 s_n} \int_{B_{s_n} (x_n)} \left| \mf{\mf{\mf{F}
     }}^{\uparrow}_n (\zeta') \right| \dd \zeta' \leq \frac{1}{2 s_n}
     \int_{B_{s_n} (x_n)} \left| \mf{\mf{\mf{F} }} (\zeta') \right| \dd \zeta'
     \lesssim C_2 \left\| \mf{\mf{\mf{F} }} \right\|_{L^2 (B_1)} . \]
  As a matter of fact
  \[ \frac{1}{2 s_n} \int_{B_{s_n} (x_n)} \left| \mf{\mf{\mf{F} }} (\zeta')
     \right| \dd \zeta' \lesssim M \mf{\mf{\mf{F} }} (x_n + s_n) \leq C_2
     \left\| \mf{\mf{\mf{F} }} \right\|_{L^2 (B_1)} \]
  because \(x_n + s_n \notin E\). Thus
  \[ \begin{aligned}[t]&
       \sum_n \int_{B_{s_n} (x_n)} \mf{F}^{\uparrow}_n (\zeta)^u \dd \zeta
       \leq C_M C_5 \left( \RHS{\eqref{eq:tree-SIO-size}}^u + C_2 \left\|
       \mf{\mf{\mf{F} }} \right\|_{L^2 (B_1)}^u \right)\\ &
       \leq C_M C_5 (1 + C_2 C_{u = 2}) \RHS{\eqref{eq:tree-SIO-size}}^u,
     \end{aligned} \]
  as required.
  
  Finally, let us concentrate on the term \(\sum_{n \in \N}
  \mf{F}^{\downarrow}_n (\zeta)^u\) on
  {\RHS{\eqref{eq:SIO:stopping-time-decomp}}}. Setting \(T_n = T_{\Theta} (0,
  x_n, 10 s_n)\) it holds that
  \[ \begin{aligned}[t]&
       \frac{1}{s_n} {\int_{B_{s_n} (x_n)}}  \mf{F}^{\downarrow}_n (\zeta)^u
       \dd \zeta\\ &
       \qquad = \frac{1}{s_n} {\int_{B_{s_n} (x_n)}}  \left( \int_{\Theta}
       \sup_{\sigma < \mf{\tilde{b}}_{+, n}^{\ast} (\zeta)} \left|
       \int_{\sigma}^{\mf{\tilde{b}}_{+, n}^{\ast} (\zeta)} F^{\ast} (\theta,
       \zeta, \rho) [\phi_{\theta}] \frac{\dd \rho}{\rho} \right|^2 \frac{\dd
       \theta}{| \Theta |} \right)^{\frac{u}{2}} \dd \zeta\\ &
       \qquad \lesssim \begin{aligned}[t]&
         \sup_{\phi'} \left\| \left( \int_{\Theta} \sup_{\sigma > 0} \left|
         \int_{\sigma}^{1 - | \zeta |} \wpD_{\zeta} (\theta) \left( \1_{(V^+
         \cap W^+) \setminus \left( V^- {\cup W^-}  \right)} \right. \right.
         \right. \right. \qquad\\ &
         \times \left. \left. \left. \left. \Emb [f] \right) \circ \pi_{T_n}
         (\theta, \zeta, \rho) [\phi'] \frac{\dd \rho}{\rho} \right|^2
         \frac{\dd \theta}{| \Theta |} \right)^{\frac{1}{2}} \right\|_{L^u
         (B_1)}^u
       \end{aligned}
     \end{aligned} \]
  where the upper bound is taken over \(\phi' \in \Phi_{4 \gr}^{\infty}\) with
  \(\| \phi' \|_{\Phi_{4 \gr}^{N - 4}} \leq 1\). Since \(s_n \leq C_M C_2^{- 1}\)
  by the boundedness of the Hardy Littlewood maximal function, we can choose
  \(C_2\) sufficiently large so that \(10 s_n \leq C_{\tmop{ind}}^{- 1}\) and we
  can apply the induction hypothesis to the bound above to get
  \[ \frac{1}{s_n} {\int_{B_{s_n} (x_n)}}  \mf{F}^{\downarrow}_n (\zeta)^u \dd
     \zeta \lesssim C_{\ast} \RHS{\eqref{eq:tree-SIO-size}}^u . \]
  Summing up, this gives
  \[ \sum_{n \in \N} {\int_{B_{s_n} (x_n)}}  \mf{F}^{\downarrow}_n (\zeta)^u
     \dd \zeta \leq C_M C_2^{- 2} C_{\ast} \RHS{\eqref{eq:tree-SIO-size}}^u .
  \]
  This concludes showing \eqref{eq:SIO:induction-step}.
  
  \tmtextbf{The case \(u = 2\):}
  
  The final step is showing that \eqref{eq:tree-SIO-size} holds for \(u = 2\).
  By symmetry considerations (identity \eqref{lem:geometry-of-boundary} and
  subsequent discussion) we can suppose, without loss of generality, that \(T =
  T_{\Theta} (0, 0, 1)\). We need to show that given any smooth \(\sigma \of
  \Theta \times B_1 \rightarrow [0, 1]\) it holds that
  \[ \left\| \int_{\sigma (\theta, \zeta)}^{\mf{\tilde{b}^{\ast}_+ (\zeta)}}
     \wpD_{\zeta} (\theta) F^{\ast} (\theta, \zeta, \rho) [\phi] \frac{\dd
     \rho}{\rho}  \right\|_{L^2_{\frac{\dd \theta \dd \zeta}{| \Theta |}}
     (\Theta \times B_1)} \lesssim \RHS{\eqref{eq:tree-SIO-size}} \]
  with \(\phi \in \Phi_{4 \gr}^{\infty}\) being any wave packet with \(\| \phi
  \|_{\Phi_{4 \gr}^{N - 4}} \leq 1\). Let \(\phi_{\theta} (z) \eqd (- d_z + 2
  \pi i \theta) \phi (z)\) so that \(\wpD_{\zeta} (\theta) F^{\ast} (\theta,
  \zeta, \rho) [\phi] = F^{\ast} (\theta, \zeta, \rho) [\phi_{\theta}]\). Next,
  we will show that we can assume that \(\sigma (\theta, \zeta) =
  \widetilde{\mf{b}}_- (\zeta)\) and, in particular, we may assume that \(\|
  \zeta \mapsto \sigma (\theta, \zeta) \|_{\tmop{Lip}} \leq 1\). For any \(C
  \geq 1\) it holds that
  \[ \begin{aligned}[t]&
       \left\| \int_{\sigma (\theta, \zeta)}^{\min \left( C \sigma (\theta,
       \zeta), \mf{\tilde{b}}^{\ast}_+ (\zeta) \right)} F^{\ast} (\theta,
       \zeta, \rho) [\phi_{\theta}] \frac{\dd \rho}{\rho}
       \right\|_{L^2_{\frac{\dd \theta \dd \zeta}{| \Theta |}} (\Theta \times
       B_1)} \qquad\\ &
       \lesssim \left\| \1_{(V^+ \cap W^+) \setminus \left( V^- {\cup W^-} 
       \right)} \Emb [f] \right\|_{\SL^{(\infty, \infty)}_{\Theta} \Phi_{4
       \gr}^{N - 4 }}
     \end{aligned} \]
  so we can assume without loss of generality that that \(\sigma (\theta,
  \zeta) \in \left( C \mf{\tilde{b}}^{\ast}_- (\zeta), C^{- 1}
  \mf{\tilde{b}}^{\ast}_+ (\zeta) \right)\) for some large \(C \gg 1\). Let us
  freeze \(\theta \in \Theta\) and, for the sake of the subsequent argument,
  assume that \(\zeta \geq 0\); the case \(\zeta < 0\) is handled symmetrically.
  Since \(\left\| \zeta \mapsto \widetilde{\mf{b}} (\zeta)
  \right\|_{\tmop{Lip}} \leq 1\) it holds that \(\rho \in \left(
  \mf{\tilde{b}}^{\ast}_- (\zeta'), \mf{\tilde{b}}^{\ast}_+ (\zeta') \right)\)
  if \(\rho \in \left( \sigma (\theta, \zeta), C^{- 1} \mf{\tilde{b}}^{\ast}_+
  (\zeta) \right)\) and \(| \zeta' - \zeta | \leq \sigma (\theta, \zeta)\). We
  decompose
  \[ \int_{\sigma (\theta, \zeta)}^{\mf{\tilde{b}}^{\ast}_+ (\zeta)} F^{\ast}
     (\theta, \zeta, \rho) [\phi_{\theta}] \frac{\dd \rho}{\rho} = \mathrm{I}
     (\theta, \zeta) + \mathrm{II} (\theta, \zeta) + \mathrm{III} (\theta,
     \zeta) + \mathrm{IV} (\theta, \zeta) \]
  with
  \[ \begin{aligned}[t]&
       \mathrm{I} (\theta, \zeta) \assign \int_{\sigma (\zeta,
       \theta)}^{\widetilde{\mf{b}}^{\ast}_+ (\zeta) / C} \left( F^{\ast}
       (\theta, \zeta, \rho) [\phi_{\theta}] \frac{\dd \rho}{\rho} -
       \frac{1}{\sigma (\theta, \zeta)} \int_{\zeta - \sigma (\theta,
       \zeta)}^{\zeta} F^{\ast} (\theta, \zeta', \rho) [\phi_{\theta}] \dd
       \zeta' \right) \frac{\dd \rho}{\rho},\\ &
       \mathrm{II} (\theta, \zeta) \assign - \frac{1}{\sigma (\theta, \zeta)}
       \int_{\zeta - \sigma (\theta, \zeta)}^{\zeta}
       \int_{\mf{\tilde{b}}^{\ast}_+ (\zeta) / C}^{\mf{\tilde{b}}^{\ast}_+
       (\zeta')} F^{\ast} (\theta, \zeta', \rho) [\phi_{\theta}] \frac{\dd
       \rho}{\rho} \dd \zeta',\\ &
       \mathrm{III} (\theta, \zeta) \assign - \frac{1}{\sigma (\theta, \zeta)}
       \int_{\zeta - \sigma (\theta, \zeta)}^{\zeta}
       \int_{\widetilde{\mf{b}}^{\ast}_- (\zeta')}^{\sigma (\theta, \zeta)}
       F^{\ast} (\theta, \zeta', \rho) [\phi_{\theta}] \frac{\dd
       \rho}{\rho} \dd \zeta',\\ &
       \mathrm{IV} (\theta, \zeta) \assign \frac{1}{\sigma (\theta, \zeta)}
       \int_{\zeta - \sigma (\theta, \zeta)}^{\zeta}
       \int_{\widetilde{\mf{b}}^{\ast}_-
       (\zeta')}^{\mf{\widetilde{\mf{b}}}^{\ast}_+ (\zeta')} F^{\ast} (\theta,
       \zeta', \rho) [\phi_{\theta}] \frac{\dd \rho}{\rho} \dd \zeta' .
     \end{aligned} \]
  Terms \(\mathrm{I} (\theta, \zeta)\) and \(\mathrm{II} (\theta, \zeta)\) are
  ``correction'' terms since we will show that they can be bounded by the size
  \(\left\| \1_{(V^+ \cap W^+) \setminus \left( V^- {\cup W^-}  \right)} \Emb
  [f] \right\|_{\SL{}^{(\infty, \infty)}_{\Theta} \Phi_{8 \gr}^{N - 8} (T)}\).
  We will then proceed to bound the latter \(2\) terms.
  
  For any value of \((\zeta, \rho)\) in the integral expression defining
  \(\mathrm{I} (\theta, \zeta)\), we have that
  \[ F^{\ast} (\theta, \zeta, \rho) [\phi_{\theta}] - \frac{1}{\sigma (\theta,
     \zeta)} \int_{\zeta - \sigma (\theta, \zeta)}^{\zeta} F^{\ast} (\theta,
     \zeta', \rho) [\phi_{\theta}] \dd \zeta' = F^{\ast} (\theta, \zeta,
     \rho) [\phi_{(\theta, \zeta)}] \]
  where
  \[ \phi_{(\theta, \zeta)} = \frac{1}{\sigma (\theta, \zeta)} \int_{\zeta -
     \sigma (\theta, \zeta)}^{\zeta} \left( \phi_{\theta} - e^{2 \pi i \theta
     \frac{(\zeta' - \zeta)}{\rho}} \Tr_{\frac{(\zeta - \zeta')}{\rho}}
     \phi_{\theta} \right) \dd \zeta' \]
  satisfies
  \[ \| \phi_{(\theta, \zeta)} (z) \|_{\Phi_{4 \gr}^{N - 4}} \lesssim \left|
     \frac{\sigma (\theta, \zeta)}{\rho} \right| \| \phi_{\theta} (z)
     \|_{\Phi_{4 \gr}^{N - 5}} . \]
  This holds since \
  \[ F^{\ast} (\theta, \zeta', \rho) [\phi_{\theta}] = F^{\ast} (\theta,
     \zeta, \rho) \left[ e^{2 \pi i \theta \frac{(\zeta' - \zeta)}{\rho}}
     \Tr_{\frac{(\zeta - \zeta')}{\rho}} \phi_{\theta} \right] \]
  for any \(\rho \in \left( \widetilde{\mf{b}}^{\ast}_- (\zeta') \vee
  \widetilde{\mf{b}}^{\ast}_- (\zeta), \widetilde{\mf{b}}^{\ast}_+ (\zeta')
  \wedge \widetilde{\mf{b}}^{\ast}_+ (\zeta) \right)\). Thus
  \[ \begin{aligned}[t]&
       | \mathrm{I} (\theta, \zeta) | \lesssim \int_{\sigma (\theta,
       \zeta)}^{\mf{\tilde{b}}^{\ast}_+ (\zeta) / C} \left| \frac{\sigma
       (\theta, \zeta)}{\rho} \right| \frac{\dd \rho}{\rho} 
       (\sup_{\tilde{\phi}} \sup_{\rho \in (0, 1)} | F^{\ast} (\theta, \zeta,
       \rho) [\tilde{\phi}] |)
     \end{aligned} \]
  with the upper bound taken over \(\tilde{\phi} \in \Phi^{\infty}_{4 \gr}\)
  with \(\| \tilde{\phi} \|_{\Phi_{4 \gr}^{N - 5}} \leq 1\). Integrating and
  using \Cref{lem:wave-packet-decomposition} this gives the bound
  \[ \| \mathrm{I} (\theta, \zeta) \|_{L^2_{\frac{\dd \theta \dd \zeta}{|
     \Theta |}} (\Theta \times B_1)} \lesssim \left\| \1_{(V^+ \cap W^+)
     \setminus \left( V^- {\cup W^-}  \right)} \Emb [f]
     \right\|_{\SL{}^{(\infty, \infty)}_{\Theta} \Phi_{8 \gr}^{N - 7} (T)} .
  \]
  The term \(\mathrm{II} (\zeta, \theta)\), again using
  \Cref{lem:wave-packet-decomposition}, satisfies
  \[ \begin{aligned}[t]&
       | \tmop{II} (\theta, \zeta) | \lesssim \frac{1}{\sigma (\theta, \zeta)}
       \int_{\zeta - \sigma (\theta, \zeta)}^{\zeta} \int_{\mf{b}^{\ast}_+
       (\zeta) / C}^{\mf{b}^{\ast}_+ (\zeta')}  (\sup_{\tilde{\phi}}
       \sup_{\rho \in (0, 1)} | F^{\ast} (\theta, \zeta, \rho) [\tilde{\phi}]
       |) \frac{\dd \rho}{\rho} \dd \zeta'
     \end{aligned} \]
  with the upper bound taken over \(\tilde{\phi} \in \Phi^{\infty}_{4 \gr}\)
  with \(\| \tilde{\phi} \|_{\Phi_{4 \gr}^{N - 5}} \leq 1\). Integrating and
  using \Cref{lem:wave-packet-decomposition} gives the bound
  \[ \| \mathrm{II} (\theta, \zeta) \|_{L^2_{\frac{\dd \theta \dd \zeta}{|
     \Theta |}} (\Theta \times B_1)} \lesssim \left\| \1_{(V^+ \cap W^+)
     \setminus \left( V^- {\cup W^-}  \right)} \Emb [f]
     \right\|_{\SL{}^{(\infty, \infty)}_{\Theta} \Phi_{8 \gr}^{N - 7} (T)} .
  \]
  Let us now deal with the term \(\mathrm{III} (\theta, \zeta)\). Recall (see
  \eqref{eq:embedded-no-defect}) that
  \[ F^{\ast} (\theta, \zeta', \rho) [\phi_{\theta}] = \wpD (\theta) F^{\ast}
     (\theta, \zeta', \rho) [\phi] = \rho \dd_{\zeta'} F^{\ast} (\theta,
     \zeta', \rho) [\phi] \]
  as long as \(\rho \in \left( \mf{b_-} (\theta, \zeta'), \mf{b_+} (\theta,
  \zeta') \right)\), which is the case for \(\zeta' \in (\zeta - \sigma (\theta,
  \zeta), \zeta)\). Integration by parts gives that
  \[ \mathrm{III} (\theta, \zeta) = \begin{aligned}[t]&
       - \int_{\widetilde{\mf{b}}^{\ast}_- (\zeta)}^{\sigma (\theta, \zeta)}
       \frac{\rho}{\sigma (\theta, \zeta)} F^{\ast} (\theta, \zeta, \rho)
       [\phi] \frac{\dd \rho}{\rho}\\ &
       + \int_{\widetilde{\mf{b}}^{\ast}_- (\zeta - \sigma (\theta,
       \zeta))}^{\sigma (\theta, \zeta)} \frac{\rho}{\sigma (\theta, \zeta)}
       F^{\ast} (\theta, \zeta - \sigma (\theta, \zeta), \rho) [\phi]
       \frac{\dd \rho}{\rho}\\ &
       - \frac{1}{\sigma (\theta, \zeta)} \int_{\zeta - \sigma (\theta,
       \zeta)}^{\zeta} \dd_{\zeta'} \widetilde{\mf{b}}^{\ast}_- (\zeta')
       F^{\ast} \left( \theta, \zeta', \widetilde{\mf{b}}^{\ast}_- (\zeta')
       \right) [\phi] \dd \zeta'
     \end{aligned} \]
  Since \(\left| \dd_{\zeta'} \widetilde{\mf{b}}^{\ast}_- (\zeta') \right| \leq
  1\) it holds that
  \[ \begin{aligned}[t]&
       \left| \frac{1}{\sigma (\theta, \zeta)} \int_{\zeta - \sigma (\theta,
       \zeta)}^{\zeta} \dd_{\zeta'} \widetilde{\mf{b}}^{\ast}_- (\zeta')
       F^{\ast} \left( \theta, \zeta', \widetilde{\mf{b}}^{\ast}_- (\zeta')
       \right) [\phi] \dd \zeta' \right|\\ &
       \qquad \leq \frac{1}{\sigma (\theta, \zeta)} \int_{\zeta - \sigma
       (\theta, \zeta)}^{\zeta} \left| F^{\ast} \left( \theta, \zeta',
       \widetilde{\mf{b}}^{\ast}_- (\zeta') \right) [\phi] \right| \dd
       \zeta' \lesssim \sup_{\zeta' \in B_1} \left| F^{\ast} \left( \theta,
       \zeta', \widetilde{\mf{b}}^{\ast}_- (\zeta') \right) [\phi] \right| .
     \end{aligned} \]
  The other two summands satisfy
  \[ \begin{aligned}[t]&
       \left| \int_{\widetilde{\mf{b}}^{\ast}_- (\zeta)}^{\sigma (\theta,
       \zeta)} \frac{\rho}{\sigma (\theta, \zeta)} F^{\ast} (\theta, \zeta,
       \rho) [\phi] \frac{\dd \rho}{\rho} \right| \lesssim \sup_{\rho \in
       \left( \widetilde{\mf{b}}^{\ast}_- (\zeta), \sigma (\theta, \zeta)
       \right)} | F^{\ast} (\theta, \zeta, \rho) [\phi] |
     \end{aligned} \]
  and
  \[ \begin{aligned}[t]&
       \left| \int_{\widetilde{\mf{b}}^{\ast}_- (\zeta - \sigma (\theta,
       \zeta))}^{\sigma (\theta, \zeta)} \frac{\rho}{\sigma (\theta, \zeta)}
       F^{\ast} (\theta, \zeta - \sigma (\theta, \zeta), \rho) [\phi]
       \frac{\dd \rho}{\rho} \right|\\ &
       \qquad \lesssim \sup_{\zeta' \in B_1} \sup_{\rho \in \left(
       \widetilde{\mf{b}}^{\ast}_- (\zeta'), \widetilde{\mf{b}}^{\ast}_+
       (\zeta') \right)} | F^{\ast} (\theta, \zeta', \rho) [\phi] | .
     \end{aligned} \]
  Thus \(\| \mathrm{III} (\zeta, \theta) \|_{L^2_{\frac{\dd \zeta \dd \theta}{|
  \Theta |}} (B_1 \times \Theta)} \lesssim \left\| \1_{V^+ \setminus (V^-
  \cup W )} \Emb [f] \right\|_{\SL^{(2, \infty)}_{\Theta}
  {\Phi_{\gr}^{\tilde{N} - 3}}'}\).
  
  It remains to deal with the term \(\mathrm{IV} (\theta, \zeta)\). Up to an
  application of the Hardy-Littlewood maximal function, \(\mathrm{IV} (\theta,
  \zeta)\) can be compared with the original quantity to bound but with \(\sigma
  (\theta, \zeta) = \mf{\tilde{b}_- (\zeta)}\). More explicitly \(| \mathrm{IV}
  (\theta, \zeta) | \leq \HLM (\widetilde{\mathrm{IV}} (\theta, \cdot))
  (\zeta)\) where
  \[ \widetilde{\mathrm{IV}} (\theta, \zeta) = \int_{\mf{\tilde{b}}^{\ast}_-
     (\zeta)}^{\mf{\tilde{b}}^{\ast}_+ (\zeta)} F^{\ast} (\theta, \zeta, \rho)
     [\phi_{\theta}] \frac{\dd \rho}{\rho} \]
  so, using the boundedness of the Hardy-Littlewood maximal function \(M\) on
  \(L^2\), we need to show that
  \[ \| \widetilde{\mathrm{IV}} (\theta, \zeta) \|_{L^2_{\frac{\dd \theta \dd
     \zeta}{| \Theta |}} (\Theta \times B_1)} \lesssim
     \RHS{\eqref{eq:tree-SIO-size}} . \]
  We have
  \[ \begin{aligned}[t]&
       \left| \int_{\mf{\tilde{b}}^{\ast}_- (\zeta)}^{\mf{\tilde{b}}^{\ast}_+
       (\zeta)} F^{\ast} (\theta, \zeta, \rho) [\phi_{\theta}] \frac{\dd
       \rho}{\rho} \right|^2\\ &
       \qquad = 2 \Re \left( \int_{\mf{\tilde{b}}^{\ast}_-
       (\zeta)}^{\mf{\tilde{b}}^{\ast}_+ (\zeta)} F^{\ast} (\theta, \zeta,
       \rho) [\phi_{\theta}] \int_{\rho }^{\mf{\tilde{b}}^{\ast}_+ (\zeta)}
       \overline{F^{\ast} (\theta, \zeta, \rho') [\phi_{\theta}]} \frac{\dd
       \rho'}{\rho} \frac{\dd \rho}{\rho} \right) .
     \end{aligned} \]
  As before, recall that
  \[ F^{\ast} (\theta, \zeta, \rho) [\phi_{\theta}] = \rho' \dd_{\zeta'}
     F^{\ast} (\theta, \zeta, \rho') [\phi] \]
  Integrating by parts we obtain that
  \[ \begin{aligned}[t]&
       \int_{B_1} \left| \int_{\mf{\tilde{b}}^{\ast}_-
       (\zeta)}^{\mf{\tilde{b}}^{\ast}_+ (\zeta)} F^{\ast} (\theta, \zeta,
       \rho) [\phi_{\theta}] \frac{\dd \rho}{\rho} \right|^2 \dd \zeta\\ &
       \qquad \begin{aligned}[t]&
         = 2 \Re \int_{B_1} \left( \int_{\mf{\tilde{b}}^{\ast}_-
         (\zeta)}^{\mf{\tilde{b}}^{\ast}_+ (\zeta)} F^{\ast} (\theta, \zeta,
         \rho) [\phi_{\theta}] \int_{\rho }^{\mf{\tilde{b}}^{\ast}_+ (\zeta)}
         \rho' \dd_{\zeta} \overline{F^{\ast} (\theta, \zeta, \rho') [\phi]}
         \frac{\dd \rho'}{\rho'} \frac{\dd \rho}{\rho} \right) \dd \zeta\\ &
         = \begin{aligned}[t]&
           - 2 \Re \int_{B_1} \left( \int_{\mf{\tilde{b}}^{\ast}_-
           (\zeta)}^{\mf{\tilde{b}}^{\ast}_+ (\zeta)} \rho \dd_{\zeta}
           F^{\ast} (\theta, \zeta, \rho) [\phi_{\theta}] \int_{\rho
           }^{\mf{\tilde{b}}^{\ast}_+ (\zeta)} \frac{\rho'}{\rho}
           \overline{F^{\ast} (\theta, \zeta, \rho') [\phi]} \frac{\dd
           \rho'}{\rho'} \frac{\dd \rho}{\rho} \right) \dd \zeta\\ &
           + 2 \Re \int_{B_1} \left( \dd_{\zeta} \mf{\tilde{b}}^{\ast}_-
           (\zeta) F^{\ast} (\theta, \zeta, \rho) [\phi_{\theta}]
           \int_{\mf{\tilde{b}}^{\ast}_- (\zeta)}^{\mf{\tilde{b}}^{\ast}_+
           (\zeta)} \frac{\rho'}{\mf{\tilde{b}}^{\ast}_- (\zeta)}
           \overline{F^{\ast} (\theta, \zeta, \rho') [\phi]} \frac{\dd
           \rho'}{\rho'} \right) \dd \zeta\\ &
           - 2 \Re \int_{B_1} \left( \int_{\mf{\tilde{b}}^{\ast}_-
           (\zeta)}^{\mf{\tilde{b}}^{\ast}_+ (\zeta)} F^{\ast} (\theta, \zeta,
           \rho) [\phi_{\theta}] \frac{\dd \rho}{\rho} \dd_{\zeta}
           \mf{\tilde{b}}^{\ast}_+ (\zeta) \overline{F^{\ast} \left( \theta,
           \zeta, \mf{\tilde{b}}^{\ast}_+ (\zeta) \right) [\phi]} \right) \dd
           \zeta .
         \end{aligned}
       \end{aligned}
     \end{aligned} \]
  Using the bounds \(\left| \dd_{\zeta} \mf{\tilde{b}}^{\ast}_- (\zeta) \right|
  \leq 1\), \(\left| \dd_{\zeta} \mf{\tilde{b}}^{\ast}_+ (\zeta) \right| \leq
  1\), the identity
  \[ \rho \dd_{\zeta} F^{\ast} (\theta, \zeta, \rho) [\phi_{\theta}] = \wpD
     (\theta) F^{\ast} (\theta, \zeta, \rho) [\phi_{\theta}] = F^{\ast}
     (\theta, \zeta, \rho) [\phi_{\theta}'] \]
  with \(\phi'_{\theta} (z) = (- d_z + 2 \pi i \theta) \phi_{\theta} (z)\), the
  fact that \(\| \phi_{\theta}' \|_{\Phi_{4 \gr}^{N - 4}} \lesssim \|
  \phi_{\theta} \|_{\Phi_{4 \gr}^{N - 4}} \lesssim \| \phi \|_{\Phi_{4 \gr}^{N
  - 4}} \leq 1\), and Young's convolution inequality in the multiplicative
  measure \(\frac{\dd \rho'}{\rho'}\) we can integrate in \((\theta, \zeta) \in
  \Theta \times B_1\) to obtain that
  \[ \begin{aligned}[t]&
       \int_{\Theta \times B_1} \left| \int_{\mf{\tilde{b}}^{\ast}_-
       (\zeta)}^{\mf{\tilde{b}}^{\ast}_+ (\zeta)} \rho \dd_{\zeta} F^{\ast}
       (\theta, \zeta, \rho) [\phi_{\theta}] \int_{\rho
       }^{\mf{\tilde{b}}^{\ast}_+ (\zeta)} \frac{\rho'}{\rho}
       \overline{F^{\ast} (\theta, \zeta, \rho') [\phi]} \frac{\dd
       \rho'}{\rho'} \frac{\dd \rho}{\rho} \right| \frac{\dd \theta \dd
       \zeta}{| \Theta |}\\ &
       \qquad \lesssim \left\| \1_{(V^+ \cap W^+) \setminus \left( V^- {\cup
       W^-}  \right)} \Emb [f] \right\|_{\SL^{(2, 2)}_{\Theta} \Phi_{4
       \gr}^{N - 4 }}^2,
     \end{aligned} \]
  \[ \begin{aligned}[t]&
       \int_{\Theta \times B_1} \left| \dd_{\zeta} \mf{\tilde{b}}^{\ast}_-
       (\zeta) F^{\ast} (\theta, \zeta, \rho) [\phi_{\theta}]
       \int_{\mf{\tilde{b}}^{\ast}_- (\zeta)}^{\mf{\tilde{b}}^{\ast}_+
       (\zeta)} \frac{\rho'}{\mf{\tilde{b}}^{\ast}_- (\zeta)}
       \overline{F^{\ast} (\theta, \zeta, \rho') [\phi]} \frac{\dd
       \rho'}{\rho'} \right| \frac{\dd \theta \dd \zeta}{| \Theta |}\\ &
       \qquad \lesssim \begin{aligned}[t]&
         \left\| \1_{(V^+ \cap W^+) \setminus \left( V^- {\cup W^-}  \right)}
         \Emb [f] \right\|_{\SL^{(2, 2)}_{\Theta} \Phi_{4 \gr}^{N - 4 }}\\ &
         \qquad \times \left\| \1_{(V^+ \cap W^+) \setminus \left( V^- {\cup
         W^-}  \right)} \Emb [f] \right\|_{\SL^{(\infty, \infty)}_{\Theta}
         \Phi_{4 \gr}^{N - 4 }},
       \end{aligned} 
     \end{aligned} \]
  and
  \[ \begin{aligned}[t]&
       \int_{\Theta \times B_1} \left| \int_{\mf{\tilde{b}}^{\ast}_-
       (\zeta)}^{\mf{\tilde{b}}^{\ast}_+ (\zeta)} F^{\ast} (\theta, \zeta,
       \rho) [\phi_{\theta}] \frac{\dd \rho}{\rho} \dd_{\zeta}
       \mf{\tilde{b}}^{\ast}_+ (\zeta) \overline{F^{\ast} \left( \theta,
       \zeta, \mf{\tilde{b}}^{\ast}_+ (\zeta) \right) [\phi]} \right|
       \frac{\dd \theta \dd \zeta}{| \Theta |}\\ &
       \qquad \lesssim \begin{aligned}[t]&
         \left\| \1_{(V^+ \cap W^+) \setminus \left( V^- {\cup W^-}  \right)}
         \Emb [f] \right\|_{\SL^{(\infty, \infty)}_{\Theta} \Phi_{4 \gr}^{N -
         4 }}\\ &
         \qquad \times \| \widetilde{\mathrm{IV}} (\zeta, \theta)
         \|_{L^2_{\frac{\dd \zeta \dd \theta}{| \Theta |}} (\Theta \times
         B_1)} .
       \end{aligned}
     \end{aligned} \]
  With the above we have shown that
  \[ \| \widetilde{\mathrm{IV}} (\zeta, \theta) \|_{L^2_{\frac{\dd \zeta \dd
     \theta}{| \Theta |}} (\Theta \times B_1)} \lesssim
     \RHS{\eqref{eq:tree-SIO-size}} \Big( \RHS{\eqref{eq:tree-SIO-size}} + \|
     \widetilde{\mathrm{IV}} (\zeta, \theta) \|_{L^2_{\frac{\dd \zeta \dd
     \theta}{| \Theta |}} (\Theta \times B_1)} \Big) . \]
  This implies that
  \[ \| \widetilde{\mathrm{IV}} (\zeta, \theta) \|_{L^2_{\frac{\dd \zeta \dd
     \theta}{| \Theta |}} (\Theta \times B_1)} \lesssim
     \RHS{\eqref{eq:tree-SIO-size}}, \]
  as required 
\end{proof}

\subsection{Counting function
improvement}\label{sec:non-unif:counting-function-improvement}

To conclude that \Cref{thm:non-uniform-embedding-bounds} holds we now need to
improve the bound \eqref{eq:embedding-bounds:non-uniform-iterated:mu1}. In
particular, we need to bound \(\Emb [f]\) in the localized outer Lebesgue
quasi-norm \(L^p_{\nu_1} \fL^{q, +}_{\mu_{\Theta}^{\infty}, \nu_1}
\SF_{\Theta}^u \Phi_{\mf{r}}^N\) in place of \(L^p_{\nu_1} \fL^{q,
+}_{\mu_{\Theta}^1, \nu_1} \SF_{\Theta}^u \Phi_{\mf{r}}^N\).

\begin{proof}{Proof of \Cref{thm:non-uniform-embedding-bounds}:
\(\mu^1_{\Theta}\) to \(\mu^{\infty}_{\Theta}\)}
  Fix \(q_0 \in [1, + \infty)\). We aim to show that for any \(q \in (q_0, +
  \infty]\) it holds that
  \[ \| F \|_{L^p_{\nu_1} \fL^{q, +}_{\mu^{\infty}_{\Theta}, \nu_1}
     \SF^u_{\Theta} \Phi_{\gr}^N (V )} \lesssim \| F \|_{L^p_{\nu_1} \fL^{q_0,
     +}_{\mu^1_{\Theta}, \nu_1} \SF^u_{\Theta} \Phi_{\gr}^N} \]
  for any \(p \in (1, + \infty)\). To do so let us use the restricted weak
  interpolation. To apply \Cref{prop:outer-restricted-interpolation} we can
  suppose that \(F\) is supported on \(V \in \DD^{\cup}_1\) and we need to check
  that
  \[ \| F \|_{L^p_{\nu_1} \fL^{q, +}_{\mu^{\infty}_{\Theta}, \nu_1}
     \SF^u_{\Theta} \Phi_{\gr}^N} \lesssim \nu_1 (V)^{\frac{1}{p}} \| F
     \|_{\fL^{q_0, +}_{\mu^1_{\Theta}, \nu_1} \SF^u_{\Theta} \Phi_{\gr}^N} \]
  for any \(p > 1\). If \(q = + \infty\) the claim is immediate because for any
  \(V^+\)
  \[ \| F \|_{\fL^{\infty}_{\mu^{\infty}_{\Theta}, \nu_1} \SF^u_{\Theta}
     \Phi_{\gr}^N (V^+)} = \left\| \1_{V^+} F \right\|_{\SF^u_{\Theta}
     \Phi_{\gr}^N} = \| F \|_{\fL^{\infty}_{\mu^1_{\Theta}, \nu_1}
     \SF^u_{\Theta} \Phi_{\gr}^N (V^+)} \leq \| F \|_{\fL^{q_0,
     +}_{\mu^1_{\Theta}, \nu_{\beta}} \SF^u_{\Theta} \Phi_{\gr}^N} . \]
  Thus it is sufficient to show that
  \begin{equation}
    \| F \|_{L^p_{\nu_1} \fL^q_{\mu^{\infty}_{\Theta}, \nu_1} \SF^u_{\Theta}
    \Phi_{\gr}^N} \lesssim \nu_1 (V)^{\frac{1}{p}} \| F \|_{\fL^{q_0,
    +}_{\mu^1_{\Theta}, \nu_1} \SF^u_{\Theta} \Phi_{\gr}^N} .
    \label{eq:mu1-muinfty:p-end}
  \end{equation}
  We will show that for any \(V^+ \in \mathbb{D}^{\cup}_1\) there exists
  \(V^{\tmop{ecc}} (V^+) \in \mathbb{D}^{\cup}_1\) with \(\nu_1 (V^{\tmop{ecc}}
  (V^+)) \leq \frac{1}{2} \nu_1 (V^+)\) such that
  \begin{equation}
    \left\| \1_{{V^+}  \setminus V^{\tmop{ecc}} (V^+)} F \right\|_{L^{\bar{q},
    \infty}_{\mu^{\infty}_{\Theta}} \SF^u_{\Theta} \Phi_{\gr}^N} \lesssim
    \left\| \1_{V^+} F \right\|_{\fL^{q_0, +}_{\mu^1_{\Theta}, \nu_1}
    \SF^u_{\Theta} \Phi_{\gr}^N} \label{eq:mu1-muinfty:eccept}
  \end{equation}
  for any \(q_0 < \bar{q} < q\). Let us show that this is sufficient. Start by
  setting \(V_0 = V\) and \(V_{k + 1} = V^{\tmop{ecc}} (V_k)\) to obtain that
  \[ F = \sum_{k = 0}^{\infty} \1_{V_k \setminus V_{k + 1}} F_k \]
  so by quasi-subadditivity for any \(\varepsilon > 0\) it holds that
  \[ \| F \|_{L^p_{\nu_1} \fL^q_{\mu^{\infty}_{\Theta}, \nu_1} \SF^u_{\Theta}
     \Phi_{\gr}^N} \lesssim \sum_{k = 0}^{\infty} 2^{\varepsilon k} \left\|
     \1_{V_k \setminus V_{k + 1}} F \right\|_{L^p_{\nu_1}
     \fL^q_{\mu^{\infty}_{\Theta}, \nu_1} \SF^u_{\Theta} \Phi_{\gr}^N} . \]
  By \Cref{def:localized-outer-lebesgue}, by the construction of \(V_k\), and by
  log-convexity of outer Lebesgue norms (\eqref{eq:outer-log-convexity} in
  \Cref{prop:outer-properties}) we have that
  \[ \begin{aligned}[t]&
       \left\| \1_{V_k \setminus V_{k + 1}} F \right\|_{L^{\infty}_{\nu_1}
       \fL^q_{\mu^{\infty}_{\Theta}, \nu_1} \SF^u_{\Theta} \Phi_{\gr}^N}
       \lesssim \left\| \1_{V_k \setminus V_{k + 1}} F \right\|_{L^{\bar{q},
       \infty}_{\mu^{\infty}_{\Theta}} \SF^u_{\Theta} \Phi_{\gr}^N}^{\bar{q} /
       q} \left\| \1_{V_k \setminus V_{k + 1}} F
       \right\|_{L^{\infty}_{\mu^{\infty}_{\Theta}} \SF^u_{\Theta}
       \Phi_{\gr}^N}^{1 - \bar{q} / q}\\ &
       \leq \left\| \1_{V_k} F \right\|_{\fL^{q_0}_{\mu^1_{\Theta}, \nu_1}
       \SF^u_{\Theta} \Phi_{\gr}^N}^{\bar{q} / q} \left\| \1_{V_k \setminus
       V_{k + 1}} F \right\|_{\fL^{\infty}_{\mu^1_{\Theta}, \nu_1}
       \SF^u_{\Theta} \Phi_{\gr}^N}^{1 - \bar{q} / q} \leq \| F \|_{\fL^{q_0,
       +}_{\mu^1_{\Theta}, \nu 1} \SF^u_{\Theta} \Phi_{\gr}^N} .
     \end{aligned} \]
  Since \(\1_{V_k \setminus V_{k + 1}} F\) is supported on \(V_k\) that satisfies
  \(\nu_1 (V_k) \leq 2^{- k} \nu (V)\) it holds that
  \[
  \begin{aligned}[t]
  & \| F \|_{L^p_{\nu_1} \fL^q_{\mu^{\infty}_{\Theta}, \nu_1} \SF^u_{\Theta}
     \Phi_{\gr}^N} \\ & \qquad\lesssim \sum_{k = 0}^{\infty} 2^{\varepsilon k} 2^{- k /
     p} \nu (V)^{1 / p} \left\| \1_{V_k} F \right\|_{^{} \fL^{q_0,
     +}_{\mu^1_{\Theta}, \nu_1} \SF^u_{\Theta} \Phi_{\gr}^N} \lesssim \nu
     (V)^{1 / p} \| F \|_{\fL^{q_0, +}_{\mu^1_{\Theta}, \nu_1} \SF^u_{\Theta}
     \Phi_{\gr}^N} 
  \end{aligned}
  \]
  as long as \(\varepsilon < \frac{1}{p}\). This is what was required by
  \eqref{eq:mu1-muinfty:p-end}.
  
  Let us now show that \(\eqref{eq:mu1-muinfty:eccept}\) holds. Assume by
  homogeneity that \(\| F \|_{\fL^{q_0, +}_{\mu^1_{\Theta}, \nu_1}
  \SF^u_{\Theta} \Phi_{\gr}^N} = 1\). Since \(\left\| \1_{V^+} F
  \right\|_{L^{q_0, \infty}_{\mu^1_{\Theta}} \SF^u_{\Theta} \Phi_{\gr}^N} \leq
  \left\| \1_{V^+} F \right\|_{L^{q_0}_{\mu^1_{\Theta}} \SF^u_{\Theta}
  \Phi_{\gr}^N}\), for any \(k \in \N\) there exists \(W_k \in
  \TT^{\cup}_{\Theta}\) such that
  \[ \left\| \1_{V^+ \setminus W_k} F \right\|_{\SF^u_{\Theta} \Phi_{\gr}^N}
     \leq 2^{- \frac{k}{q_0}}, \qquad \mu^1_{\Theta} (W_k) \leq 2^k \nu_1
     (V^+) . \]
  Let \(\mc{T}_k^1 \subset \TT_{\Theta}\) be such that \(W_k \subset \bigcup_{T
  \in \mc{T}_k^1} T\) and \(\left\| N_{\mc{T}_k^1} \right\|_{L^1} \lesssim 2^k
  \nu_1 (V^+)\). Set
  \[ \mathbb{V}_k = \left\{ x \in \R : \HLM N_{\mc{T}_k^1} > C
     2^{\frac{\bar{q}}{q_0} k} \right\} \]
  and using the fact that \(\mathbb{V}_k\) is open let us represent it as a
  countable union of its connected components (open balls):
  \[ \mathbb{V}_k = \bigcup_{n \in \N} B_{s_{k, n}} (x_{k, n}) . \]
  Setting \(V_k = \bigcup_{n \in \N} D (x_{k, n}, 100 s_{k, n})\) and
  \(V^{\tmop{ecc}} (V^+) = \bigcup_{k \in \N} V_k\), by the boundedness of the
  Hardy Littlewood maximal function we have that
  \[ \nu_1 (V^{\tmop{ecc}} (V^+)) \leq \sum_{k \in \N} \nu_1 (V_k) \lesssim
     \sum_{k \in \N} C^{- 1} 2^{k \left( 1 - \frac{\bar{q}}{q_0} \right)}
     \nu_1 (V^+) \lesssim C^{- 1} \nu_1 (V^+) \]
  as required, as long as \(C > 1\) is chosen large enough. Let us now show that
  \[ \left\| \1_{V^+ \setminus (V^{\tmop{ecc}} (V^+) \cup W_k^{\infty})} F
     \right\|_{\SF^u_{\Theta} \Phi_{\gr}^N} \leq 2^{- \frac{k}{q_0}} \text{
     and } \mu^{\infty}_{\Theta} (W_k^{\infty}) \lesssim
     2^{\frac{\bar{q}}{q_0} k} \]
  where
  \[ \begin{aligned}[t]&
       W^{\infty}_k \eqd \bigcup_{T \in \mc{T}^{\infty}_k} T\\ &
       \mc{T}^{\infty}_k \eqd \begin{aligned}[t]&
         \left( \bigcup_{n \in \N} \left\{ T \in \mc{T}^1_k \suchthat B_{s_T}
         (x_T) \cap B_{s_{k, n}} (x_{k, n}) \neq \emptyset, s_T > 10 s_{k, n}
         \right\} \right)\\ &
         \qquad \cup \left\{ T \in \mc{T}^1_k \suchthat B_{s_T} (x_T) \cap V_k
         = \emptyset \right\} .
       \end{aligned}
     \end{aligned} \]
  The two bounds imply that \(\left\| \1_{V^+ \setminus V^{\tmop{ecc}} (V^+)} F
  \right\|_{L^{\bar{q}, \infty}_{\mu^{\infty}_{\Theta}}}^{\bar{q}} \lesssim
  1\), concluding the proof of \(\eqref{eq:mu1-muinfty:eccept}\). The former bound
  holds because \(W_k \subset V_k \cup W^{\infty}_k\). As a matter of fact, \(W_k
  \subset \bigcup_{T \in \mc{T}_k^1} T\) and if \(T \in \mc{T}_k^1 \setminus
  \mc{T}^{\infty}_k\) then there exists \(n \in \N\) such that \(B_{s_T} (x_T)
  \cap B_{s_{k, n}} (x_{k, n})\) and \(s_T < 10 s_{k, n}\) but then \(B_{s_T}
  (x_T) \subset B_{100 s_{k, n}} (x_{k, n})\) and thus \(T \subset V_k\). \
  
  To show that \(\mu^{\infty}_{\Theta} (W_k^{\infty}) \lesssim
  2^{\frac{\bar{q}}{q_0} k}\) it is sufficient to show that
  \(N_{\mc{T}_k^{\infty}} (x) \lesssim 2^{\frac{\bar{q}}{q_0} k}\) for all \(x
  \in \R\). If \(x \notin \mathbb{V}_k\) then the claim follows by the definition
  of \(\mathbb{V}_k\) and from the fact that \(N_{\mc{T}_k^{\infty}} (x) \leq
  N_{\mc{T}_k^1} (x) \leq \HLM N_{\mc{T}_k^1} (x)\). If \(x \in
  \mathbb{V}_k\)then \(x \in B_{s_{k, n}} (x_{k, n})\) for some \(n \in \N\). Let
  \[ \mc{T}_k^{\infty} (x) \eqd \left\{ T \in \mc{T}_k^{\infty} \suchthat
     B_{s_T} (x_T) \ni x \right\} \]
  so that \(N_{\mc{T}_k^{\infty}} (x)\) coincides with the cardinality of
  \(\mc{T}_k^{\infty} (x)\). Given the way \(\mc{T}_k^{\infty}\) was defined for
  any \(T \in \mc{T}_k^{\infty} (x)\) it holds that \(s_T > 10 s_{k, n}\) and
  thus either \(B_{s_T} (x_T) \supset (x, x + 4 s_T)\) or  \(B_{s_T} (x_T)
  \supset (x - 4 s_T, x)\) showing that
  \[ \int_{B_{3 s_{k, n}} (x_{k, n})} N_{\mc{T}^{\infty}_k} (y) \dd y > 3
     s_{k, n} N_{\mc{T}_k^{\infty}} (x) . \]
  Thus for any \(y \in B_{3 s_{k, n}} (x_{k, n})\) it holds that \(\HLM
  N_{\mc{T}_k^1} (y) \gtrsim N_{\mc{T}_k^{\infty}} (x)\) while we know from the
  maximality of \(B_{s_{k, n}} (x_{k, n})\) that there exists \(y \in B_{3 s_{k,
  n}} (x_{k, n}) \setminus B_{s_{k, n}} (x_{k, n})\) with \(\HLM N_{\mc{T}_k^1}
  (y) < C 2^{\frac{\bar{q}}{q_0} k}\), showing the claimed bound
  \(N_{\mc{T}_k^{\infty}} (x) \lesssim 2^{\frac{\bar{q}}{q_0} k}\).
\end{proof}

\section{Uniform embedding bounds - linear case
}\label{sec:uniform-embeddings:linear}

In this section we prove \Cref{thm:uniform-embedding-bounds:linear} about the
linear embedding bounds \eqref{eq:embedding-bounds:uniform-iterated}. of.
Recall from the setup of \Cref{thm:uniform-embedding-bounds:linear} that
\(\Gamma = \Gamma_{(\alpha, \beta, \gamma)}\) is as described \eqref{eq:gamma},
\(\Theta\) is an bounded open interval with \(B_4 \subsetneq \Theta \subsetneq
\R\), and let \(\Theta^{\tmop{in}}\) is a bounded open interval with \(B_{2^{- 5}}
(- \gamma) \subsetneq \Theta^{\tmop{in}} \subsetneq B_{2^{- 3}} (- \gamma)\)
and let \(\Theta^{\tmop{ex}} \eqd \Theta \setminus \Theta^{\tmop{in}}\). We will
consistently use the notation \(\theta_{\Gamma} \eqd \alpha \beta (\theta +
\gamma)\) as introduced in \Cref{sec:derived-sizes}. Furthermore
\(\Theta_{\Gamma} \eqd \alpha \beta (\Theta + \gamma)\),
\(\Theta^{\tmop{in}}_{\Gamma} \eqd \{ \theta_{\Gamma} : \theta \in
\Theta^{\tmop{in}} \}\), and \(\Theta^{\tmop{ex}}_{\Gamma} \eqd \{
\theta_{\Gamma} : \theta \in \Theta^{\tmop{ex}} \}\). As in the rest of the
paper we assume that the parameter \(N\) is large enough and \(\gr > 0\) is small
enough. These parameters, that govern the rate of decay and the Fourier
support of wave packets \(\phi\) appear in the size \(\widetilde{\SF}^u_{\Gamma}
\Phi_{\mf{r}}^N\) and are ubiquitous in the subsequent section. The values \(N >
100\) and \(\gr < 2^{- 10}\) are, in particular, always admissible and the choice
of \(N\) and \(\gr\) can be made independently of \(\beta \in (0, 1]\). All implicit
constants appearing in \Cref{thm:uniform-embedding-bounds:linear} and in this
entire section are uniform in \(\beta \in (0, 1]\). The constants are also
uniform in \(\Theta\) and \(\gr\) as long as \(| \Theta |\) is uniformly bounded and
\(\gr > 0\) is bounded away from \(0\).

While \Cref{thm:uniform-embedding-bounds:linear} holds for all \(\beta \in (0,
1]\), we are mainly interested in the case \(0 < \beta \ll 1\).

In this section, for simplicity, the reader may assume that \(\alpha = \beta^{-
1}\) and \(\gamma = 0\). This maintains the important features of the subsequent
discussion and of the proofs but avoids unnecessary complications. Under these
assumption one has that \(\theta_{\Gamma} = \theta\), \(\Theta_{\Gamma} =
\Theta\). There does not, however, seem to be a direct way to deduce the
statements below and \Cref{thm:uniform-embedding-bounds:linear} itself in
their full generality from these restricted assumptions on the parameters
\((\alpha, \beta, \gamma)\).

The boundedness properties the map \(F \mapsto F \circ \Gamma\) are encoded in
the two propositions below. They allow us to control outer Lebesgue
quasi-norms with two auxiliary sizes that do not appear, per se, in the size
\(\widetilde{\SF}^u_{\Gamma} \Phi_{\mf{r}}^N\). However, for functions of the
form \(F = \Emb [f]\) these auxiliary sizes can be used to control outer
Lebesgue quasi-norms with size \(\widetilde{\SF}^u_{\Gamma} \Phi_{\mf{r}}^N\).

\begin{proposition}[Uniform bounds for exterior size]
  \label{prop:unif-gamma-bounds:Sex}The bounds
  \begin{equation}
    \begin{aligned}[t]&
      \| F \circ \Gamma \|_{L^r_{\mu_{\Theta}^1} \Gamma^{\ast} \SL^{(u,
      2)}_{(\Theta, \Theta^{\tmop{ex}})} \Phi_{\gr}^N} \lesssim \| F
      \|_{L^r_{\mu_{\Theta_{\Gamma}}^1} \left( \SL^{(\infty,
      \infty)}_{\Theta_{\Gamma}} + \SL^{(\bar{u}, 2)}_{(\Theta_{\Gamma},
      \Theta^{\tmop{ex}}_{\Gamma})} \right) \Phi_{2 \gr}^{N - 4}},
    \end{aligned} \label{eq:unif-gamma-bounds:Sex}
  \end{equation}
  hold for all \(F \in L^{\infty}_{\tmop{loc}} (\R^{3}_{+}) {\otimes
  \Phi_{2 \gr}^{\infty}}'\) as long as \(2 \leq u < r \leq + \infty\), and as
  long as \(\bar{u} \in (u, \infty)\) is sufficiently large, depending on \((u,
  r)\). 
\end{proposition}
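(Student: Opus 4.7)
The plan is to reduce Proposition \ref{prop:unif-gamma-bounds:Sex} to a geometric covering lemma on $\R^3_+$ together with a single-tree size estimate, following the general strategy outlined at the end of the introduction. First, by Remark \ref{rmk:iterated-outer-bounds-explicit} applied with $p = \infty$, the outer Lebesgue bound reduces to the following claim: for every admissible exceptional set $W^-_\Gamma \in \TT^{\cup}_{\Theta_\Gamma}$ witnessing a super-level measure bound on the RHS, one constructs a set $W^- \in \TT^{\cup}_\Theta$ with $\mu^1_\Theta(W^-) \lesssim \mu^1_{\Theta_\Gamma}(W^-_\Gamma)$ and with the tree-wise bound
\[
\| \1_{\R^3_+ \setminus W^-}(F \circ \Gamma) \|_{\Gamma^{\ast} \SL^{(u,2)}_{(\Theta, \Theta^{\tmop{ex}})} \Phi_{\gr}^N (T)}
\lesssim \| \1_{\R^3_+ \setminus W^-_\Gamma} F \|_{(\SL^{(\infty,\infty)}_{\Theta_\Gamma} + \SL^{(\bar u, 2)}_{(\Theta_\Gamma, \Theta^{\tmop{ex}}_\Gamma)}) \Phi_{2\gr}^{N-4} (T')}
\]
for every $T \in \TT_\Theta$, where $T' \eqd T_{\Theta_\Gamma}(\alpha \xi_T, x_T, s_T)$ is the natural image-side tree. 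Promoting this local estimate to the $L^r_{\mu_\Theta^1}$ inequality then uses regularity of the sizes and the fact that $\Gamma$ sends trees into trees (with the image scale compressed by $\beta$ and with $\mu^1_{\Theta_\Gamma}(\Gamma(W)) = \mu^1_\Theta(W)$).

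Next, I would analyze the geometry of $\Gamma$ on a single tree. The identity
\[
\Gamma \circ \pi_T(\theta, \zeta, \sigma) = \pi_{(\alpha \xi_T, x_T, s_T)}(\theta_\Gamma, \zeta, \beta \sigma)
\]
shows $\Gamma(T) \subseteq T' \cap \{t \leq \beta s_T\}$, i.e. the image is a thin scale-slab inside the target tree $T'$, and the restriction $\theta \in \Theta^{\tmop{ex}}$ becomes $\theta_\Gamma \in \Theta^{\tmop{ex}}_\Gamma$ which is bounded away from $0$ with $|\theta_\Gamma| \gtrsim |\alpha\beta| \gtrsim 1$ by the hypothesis $B_{2^{-5}}(-\gamma) \subsetneq \Theta^{\tmop{in}}$. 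Changing variables from the local coordinates of $T$ to those of $T'$ via $\sigma' = \beta \sigma$, the quantity on the LHS becomes a Lebesgue integral of $F \circ \pi_{T'}$ restricted to $\Theta^{\tmop{ex}}_\Gamma \times B_1 \times (0, \beta)$, with a Jacobian factor that exactly reproduces the $(u,2)$ structure up to constants independent of $\beta$. The covering lemma, to be proved in Section \ref{sec:bound-exterior-size}, then provides a way to cover the image-side geometry by sub-trees of $T'$ so that the scale-slab restriction can be absorbed.

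For the single-tree comparison, the splitting into $\SL^{(\infty,\infty)}_{\Theta_\Gamma}$ plus $\SL^{(\bar u, 2)}_{(\Theta_\Gamma, \Theta^{\tmop{ex}}_\Gamma)}$ on the RHS reflects an interpolation in the frequency variable: on the exterior set $\Theta^{\tmop{ex}}_\Gamma$, the $L^u$ integral in frequency is controlled by a product of an $L^\infty$ factor (bounded by the first RHS size) and an $L^{\bar u}$ factor (bounded by the second), with the interpolation exponent chosen so that the remaining loss $|\Theta^{\tmop{ex}}_\Gamma|^{1/u - 1/\bar u}$ is absorbable. Combined with the scale-slab restriction $\sigma' < \beta$, which gains a factor of $\beta^{1/2}$ in the scale $L^2$ norm relative to the full $L^2$ scale norm on $T'$, one obtains the claimed bound with constants uniform in $\beta \in (0,1]$. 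The requirement that $\bar u$ be sufficiently large (depending on $r$) comes from ensuring the summation over the tree decomposition in the covering lemma converges at the $L^r$ level.

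The main obstacle is the scale mismatch: the pullback captures $F$ only on scales $t \leq \beta s_{T'}$ within $T'$, whereas the $\SL^{(\bar u,2)}$ size on $T'$ averages over all scales $t \leq s_{T'}$. Overcoming this requires the covering lemma to produce sub-trees of $T'$ of scale comparable to $\beta s_{T'}$ (rather than $s_{T'}$), with counting function controlled by that of $W^-_\Gamma$; this is where the exterior condition $\theta \in \Theta^{\tmop{ex}}$ is essential, since it keeps $\theta_\Gamma$ away from $0$ and hence keeps the image region away from the scale-independent boundary of $T'$. Without this separation, one would need the full $\widetilde{\SF}^u_\Gamma \Phi^N_\gr$ size including the defect and lacunary terms — handling those is precisely the role of Proposition \ref{prop:unif-gamma-bounds:singular-size} and Lemma \ref{lem:unif-derived-size-bound} in the complementary frequency region $\Theta^{\tmop{in}}$.
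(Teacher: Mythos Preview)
Your approach has a genuine gap at its core. You propose to relate super-level sets directly: given $W^-_\Gamma$ on the image side, construct $W^-$ on the domain side with comparable measure and then prove a tree-by-tree size comparison. But the tree-by-tree comparison you sketch cannot hold uniformly in $\beta$. The image $\Gamma(T)$ lives in the scale-slab $\{t < \beta s_{T'}\}$ of $T'$, and the $L^2_{d\sigma'/\sigma'}$ scale norm is dilation-invariant, so restricting to $\sigma' < \beta$ gains no factor of $\beta^{1/2}$ as you claim --- all of the $L^2$ mass of $F$ on $T'$ could sit below scale $\beta$. Your proposed fix, covering by sub-trees of $T'$ of scale $\sim \beta s_{T'}$, requires $\sim \beta^{-1}$ such sub-trees to cover the spatial interval of $T'$, which destroys any measure bound uniform in $\beta$. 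The frequency-interpolation heuristic you describe does not circumvent this.

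The paper's argument proceeds very differently and never attempts a pointwise tree-to-tree comparison. It first reduces to scalar $F$ via Corollary~\ref{cor:wave-packet-decomposition-and-sizes}, then squares to replace $\SL^{(u,2)}$ by $\SL^{(u,1)}$, and proves the resulting bound by restricted-weak-type interpolation between a trivial $L^\infty$ endpoint and a weak-$L^1$ endpoint (Lemma~\ref{lem:unif-Sex-bound}). The weak-$L^1$ endpoint is where the real work happens: one runs a greedy selection algorithm on $F\circ\Gamma$ (Lemma~\ref{lem:ext-tree-covering}) to extract trees $T$ with distinguished disjoint pieces $X_T$, sums the selection inequalities to get $\lambda\,\mu^1_\Theta(W_\lambda) \lesssim \int (\sum_T \1_{\Gamma(X_T)})\,|F|$, and then bounds the right-hand side by outer Radon--Nikodym duality. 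The uniformity in $\beta$ enters through the overlap control on $\sum_T \1_{\Gamma(X_T)}$ in the image space --- bounded in $\SL^{(\infty,\infty)}$ because the $X_T$ are disjoint, and bounded in $\SL^{(1,1)}_{(\Theta_\Gamma,\Theta^{\rm in}_\Gamma)}$ via a logarithmic estimate that exploits the quasi-maximal frequency ordering in the selection. The need for $\bar u$ large comes from the interpolation step, specifically from log-convexity of $\SL^{(u,1)}$ in $u$ at the $u^\ast = u(\bar u-1)/(\bar u-u)$ restricted endpoint.
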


\begin{proposition}[Uniform bounds for singular size]
  \label{prop:unif-gamma-bounds:singular-size}Let \(V  \in
  \mathbb{D}^{\cup}_{\beta}\), \(W  \in \TT_{\Theta}^{\cup}\) and \(E = V \cap W\)
  or \(E = V \cup W\). The bounds
  \begin{equation}
    \begin{aligned}[t]&
      \left\|  F \circ \Gamma (\eta, y, t) t \dd_t \1_E (\eta, y, t)
      \right\|_{L^r_{\mu_{\Theta}^1} \Gamma^{^{\ast}} \SL^{(u, 1)}_{(\Theta,
      \Theta^{\tmop{in}} )} \Phi_{\gr}^N} \qquad\\ &
      \lesssim \| F (\eta, y, t)  \|_{L^r_{\mu_{\Theta_{\Gamma}}^1}
      \SL^{(\infty, \infty)}_{\Theta_{\Gamma}} \Phi_{2 \gr}^{N - 4}}
    \end{aligned} \label{eq:unif-gamma-bounds:singular-size}
  \end{equation}
  hold for all \(F \in L^{\infty}_{\tmop{loc}} (\R^{3}_{+}) {\otimes
  \Phi_{2 \gr}^{\infty}}'\) as long as \(1 < u < r \leq \infty\). The implicit
  constant is independent of \(E\). The derivative in \(t \dd_t \1_E (\eta, y,
  t)\) is intended in the distributional sense and is a Radon measure on
  \(\R^3_+\).
\end{proposition}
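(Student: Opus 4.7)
The proof will rest on a single-tree reduction combined with a purely geometric covering lemma on $\R^3_+$, paralleling the structure laid out for \Cref{prop:unif-gamma-bounds:Sex}. The first step is to unwrap the left-hand side on a single tree $T \in \TT_\Theta$. By \Cref{lem:geometry-of-boundary}, the distributional derivative $t \dd_t \1_E$ is the Radon measure $t \delta(t - \mf{b}_E(\eta,y))$ supported on the Lipschitz graph of $\mf{b}_E$ with the explicit regularity \eqref{eq:boundary-regularity}. Combining \eqref{eq:boundary-singular-size-local} with the pullback relations for $\Gamma$ from \Cref{sec:derived-sizes}, the single-tree quantity
\[
\| F \circ \Gamma (\eta, y, t) \, t \dd_t \1_E \|_{\Gamma^\ast \SL^{(u,1)}_{(\Theta, \Theta^{\tmop{in}})}(T)}
\]
becomes (up to the $\phi$-decomposition of \Cref{lem:wave-packet-decomposition}) an ordinary $L^u(\Theta^{\tmop{in}} \times B_1)$ integral of $|F|$ evaluated on the image under $\Gamma$ of the boundary graph. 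The restriction $\Theta^{\tmop{in}} \subsetneq B_{2^{-3}}(-\gamma)$ is crucial here: on this band $\dist(\Theta^{\tmop{in}}, \R \setminus \Theta)$ is bounded away from zero, so the Jacobian bounds \eqref{eq:boundary-regularity-local} yield a parameterization of the surface with uniform Lipschitz constants.

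The heart of the argument is a covering lemma: for any $T \in \TT_\Theta$ and any $E = V \cap W$ or $E = V \cup W$ as in the hypothesis, one constructs a countable collection $\mc{T}' \subset \TT_{\Theta_\Gamma}$ whose union contains $\Gamma(\partial E \cap \pi_T(\mT_{\Theta^{\tmop{in}}}))$ and whose counting function $N_{\mc{T}'}$ is bounded pointwise by a universal constant (with the scales $s_{T'}$ commensurate with the vertical extent of the boundary piece). The construction decomposes $\partial E$ into its strata on $\partial V$, $\partial W$, and their intersections, and on each stratum uses the Lipschitz bound $|\partial_y \mf{b}_E| \leq \beta^{-1}$ to produce trees of the right aspect ratio. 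Here the $\beta^{-1}$ in the Lipschitz constant matches precisely the $\beta$-rescaling of the $t$-variable effected by $\Gamma$, so that the resulting covering is uniform in $\beta \in (0,1]$.

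Granted the covering lemma, the $r = \infty$ endpoint of \eqref{eq:unif-gamma-bounds:singular-size} follows: the $L^u$ integral from Step 1 is bounded pointwise by $\| F \|_{\SL^{(\infty,\infty)}_{\Theta_\Gamma}(T')}$ on each $T' \in \mc{T}'$ and then summed using the uniform bound on $N_{\mc{T}'}$. For general $r \in (u, \infty)$, the atomic decomposition of $F$ in $L^r_{\mu^1_{\Theta_\Gamma}} \SL^{(\infty,\infty)}_{\Theta_\Gamma}\Phi_{2\gr}^{N-4}$ from \Cref{prop:atomic-decomposition}, combined with the covering lemma applied atom by atom and the outer Chebyshev inequality \eqref{eq:outer-chebychev}, reduces the global bound to a geometric sum over atomic levels; the strict inequality $u < r$ provides the slack required for this sum to converge.

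The main obstacle is the construction in the covering lemma. The surface $\partial E$ is not itself a union of trees but a patchwork of graph pieces coming from $\partial V$ and $\partial W$, joined along curves of codimension two; any covering must handle these junctions without losing counting-function control. The restriction to $\Theta^{\tmop{in}}$ is what prevents blow-up near the endpoints of $\Theta$ (where \eqref{eq:boundary-regularity} permits $\partial_\eta \mf{b}_E$ to become arbitrarily large), and careful bookkeeping is needed to verify that no stray power of $\beta$ survives in the final estimate—this is what makes the bound uniform in $\beta \in (0,1]$ and thereby essential for the overall uniform program.
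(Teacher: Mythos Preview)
Your proof sketch shares the overall architecture with the paper—reduce to scalar via \Cref{lem:wave-packet-decomposition}, establish an $r=\infty$ endpoint, then interpolate—but the covering lemma you propose and the mechanism for reaching general $r$ are not the ones the paper uses, and your version has a gap.

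The paper's covering lemma (\Cref{lem:singular-tree-covering}) is not a static, $F$-independent covering of $\Gamma(\partial E)$ by trees in $\TT_{\Theta_\Gamma}$ with bounded counting function. It is instead an $F$-dependent \emph{selection} procedure in the \emph{source} space $\TT_\Theta$: given a level $\lambda$, one iteratively selects quasi-maximal trees $T$ on which the $\SL^{(1,1)}_{(\Theta,\Theta^{\tmop{in}})}$ local size of $F\circ\Gamma \cdot t\dd_t\1_E$ exceeds $\lambda$, together with distinguished subsets $X_T$, and the key geometric output is the overlap bound \eqref{eq:int-tree-covering:summability} on $\sum_T \1_{\Gamma(X_T)}\cdot t\dd_t\1_{\Gamma(E)}$ measured in the target size $\SL^{(1,1)}_{\Theta_\Gamma}$. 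This feeds into a weak $L^1$ estimate (\Cref{lem:unif-sing-bound}) via outer Radon--Nikodym duality, and only then is restricted weak-type interpolation invoked between this weak $L^1$ endpoint and $r=\infty$. The paper also reduces to $u=1$ first by replacing $F$ with $|F|^u$, which you do not mention.

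Your proposed route—atomic decomposition of $F$ on the right-hand side, then apply a static covering atom by atom—does not close. The issue is that an atom $F_k$ supported on a small forest $W_k \in \TT^\cup_{\Theta_\Gamma}$ can, after pulling back through $\Gamma^{-1}$ and restricting to $\partial E$, interact with source trees $T\in\TT_\Theta$ whose total $\mu^1_\Theta$-measure you have no control over from the counting-function bound alone. A static covering of $\Gamma(\partial E\cap\pi_T(\mT_{\Theta^{\tmop{in}}}))$ tells you nothing about how many \emph{source} trees see the support of $F_k$; that is precisely what the $F$-dependent selection with quasi-maximality in $\xi_T$ accomplishes. Your $r=\infty$ endpoint does go through, but it corresponds in the paper to the elementary area bound \eqref{eq:boundary-bound} on $\|t\dd_t\1_E\|_{\Gamma^\ast\SL^{(1,1)}_{(\Theta,\Theta^{\tmop{in}})}(T)}$, not to a tree covering.
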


These two statements are proved in \Cref{sec:bound-exterior-size} and in
\Cref{sec:unif-bounds:singular} respectively. The two sections follow a
similar outline: first we prove a geometric covering lemma on \(\R^3_+\)
(Lemmata \ref{lem:ext-tree-covering} and \ref{lem:singular-tree-covering}),
from this we deduce a weak-endpoint result for scalar valued functions \(F \in
\Rad (\R^{3}_{+})\). Finally, using restricted weak interpolation and
\Cref{lem:wave-packet-decomposition} we deduce the desired claim.

The next lemma shows that sizes \(\Gamma^{\ast} \SL^{(u, 2)}_{(\Theta,
\Theta^{\tmop{ex}})} \Phi_{\gr}^N\) and \(\Gamma^{^{\ast}} \SL^{(u,
1)}_{(\Theta, \Theta^{\tmop{in}} )} \Phi_{\gr}^N\) can be used to control the
size \(\widetilde{\SF}^u_{\Gamma} \Phi_{\mf{r}}^N\), that appears in
\Cref{thm:uniform-embedding-bounds:linear}.

\begin{lemma}[Uniform bounds for the \(\widetilde{\SF}_{\Gamma}^u \Phi_{\gr}^N\)
size]
  \label{lem:unif-derived-size-bound}Let \(V^+, V^- \in
  \mathbb{D}^{\cup}_{\beta}\) and \(W^+ \in \TT_{\Theta}^{\cup}\). The bounds
  \[ \begin{aligned}[t]&
       \Bigl\| \1_{(V^+ \cap W^+) \setminus V^-} \Emb [f] \circ \Gamma
       \Bigr\|_{L^r_{\mu^1_{\Theta}} \widetilde{\SF}^u_{\Gamma}
       {\Phi_{\mf{r}}^N} }\\ &
       \lesssim \begin{aligned}[t]&
         \Bigl\| \1_{(V^+ \cap W^+) \setminus V^-} \Emb [f] \circ \Gamma
         \Bigr\|_{L^r_{\mu^1_{\Theta}} \Gamma^{\ast} \SL^{(u, 2)}_{(\Theta,
         \Theta^{\tmop{ex}})} \Phi_{16 \gr}^{N - 10}}\\ &
         + \Bigl\| \1_{\overline{(V^+ \cap W^+)} \setminus V^-} \Emb [f]
         \circ \Gamma (\eta, y, t) t \dd_t \1_{(V^+ \cap W^+)} (\eta, y, t) 
         \Bigr\|_{L^r_{\mu^1_{\Theta}} {\Gamma^{\ast}}  \SL^{(u,
         1)}_{(\Theta, \Theta^{\tmop{in}})} \Phi_{16 \gr}^{N - 10}}\\ &
         + \Bigl\| \1_{\overline{(V^+ \cap W^+)} \setminus V^-} \Emb [f]
         \circ \Gamma (\eta, y, t) t \dd_t \1_{\left( \R^3_+ \setminus V^-
         \right)} (\eta, y, t)  \Bigr\|_{L^r_{\mu^1_{\Theta}} {\Gamma^{\ast}}
         \SL^{(u, 1)}_{(\Theta, \Theta^{\tmop{in}})} \Phi_{16 \gr}^{N - 10}}
       \end{aligned}
     \end{aligned} \]
  hold for any \(f \in \Sch (\R)\) and for all \(r \in (u, \infty]\).
  The implicit constant is independent of \(V^+\), \(V^-\), and \(W^+\).
\end{lemma}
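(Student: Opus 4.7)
The plan is to bound each of the three summands of $\widetilde{\SF}^u_{\Gamma} \Phi_{\gr}^N$ (as written out in \eqref{eq:uniform-embedding-full-size:linear}) separately, splitting the frequency integration $\theta \in \Theta$ into the exterior piece $\Theta^{\tmop{ex}} = \Theta \setminus \Theta^{\tmop{in}}$, where $|\theta_{\Gamma}| = |\alpha\beta(\theta+\gamma)|$ is uniformly bounded away from $0$, and the interior piece $\Theta^{\tmop{in}}$, where $\theta_{\Gamma}$ may vanish. Throughout, I work locally on a fixed tree $T \in \TT_{\Theta}$ (so outer Lebesgue $L^r_{\mu^1_{\Theta}}$ is preserved by superlevel-set reasoning), and I use the critical algebraic input that $F = \Emb[f] \circ \Gamma$ is annihilated by the defect operators \eqref{eq:no-defect-gamma}.

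For the defect summand $\Gamma^{\ast} \SL^{(u,1)}_{(\Theta, \Theta^{\tmop{in}})} \dfct_{\gr}^N$, the Leibniz rule applied to $\1_E F$ with $E = (V^+ \cap W^+) \setminus V^-$ produces
\[
    (\Gamma^{\ast} \wpD_{\zeta}(t(\eta - \xi_T)) - \beta t \partial_y + 2\pi i \alpha \xi_T)(\1_E F) = -\beta t (\partial_y \1_E) F,
\]
and analogously for the scale version, so that by \Cref{lem:geometry-of-boundary} the defect of $\1_E F$ is a Radon measure supported on the graph $t = \mf{b}_E(\eta,y)$. The Lipschitz bound $|\partial_y \mf{b}_E| \leq \beta^{-1}$ in \eqref{eq:boundary-regularity} exactly absorbs the $\beta$ prefactor, reducing the defect to the singular measure $t \delta(t - \mf{b}_E) F$ with bounded coefficients. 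Splitting $\partial E$ along $\partial(V^+ \cap W^+)$ and $\partial V^-$ yields the two boundary-measure terms on the right-hand side. For the lacunary summand $\Gamma^{\ast} \SL^{(u,2)}_{\Theta} \wpD_{\gr}^N$, I decompose $\Theta = \Theta^{\tmop{ex}} \cup \Theta^{\tmop{in}}$: on $\Theta^{\tmop{ex}}$ the operator $\phi \mapsto (-d_z + 2\pi i \theta_{\Gamma}) \phi$ in \eqref{eq:gamma-lacunary-size} is bounded on $\Phi_{\gr}^N$ with constant $\lesssim 1 + |\theta_{\Gamma}| \lesssim 1$, so it may be absorbed into the wave-packet norm, yielding a bound by $\Gamma^{\ast} \SL^{(u,2)}_{(\Theta, \Theta^{\tmop{ex}})} \Phi_{2\gr}^{N-1}$; on $\Theta^{\tmop{in}}$ I exchange $\Gamma^{\ast} \wpD_{\zeta}$ for the space derivative $\beta t \partial_y - 2\pi i \alpha \xi_T$ using \eqref{eq:no-defect-gamma} and then handle the resulting $\partial_y \1_E$ exactly as in the defect case.

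The remaining summand $\beta^{1/u} \Gamma^{\ast} \SL^{(u,\infty)}_{\Theta} \Phi_{\gr}^N$ is the most technically delicate. For each fixed $\phi \in \Phi_{\gr}^N$ and tree $T$, setting $G(\rho) = \1_E F \circ \pi_T (\theta, \zeta, \rho)[\phi]$, I use the fundamental theorem of calculus in the scale variable,
\[
    \sup_\rho |G(\rho)|^u \lesssim |G(\mf{b}^{\ast}_E(\theta,\zeta))|^u + u \int |G(\rho)|^{u-1} |\rho \partial_\rho G(\rho)| \frac{d\rho}{\rho}.
\]
The boundary term $|G(\mf{b}^{\ast}_E)|^u$ reproduces the singular-measure contributions via \eqref{eq:boundary-singular-size-local}, while for the integral term the zero-defect identity gives $\rho \partial_\rho G = \Gamma^{\ast}\wpD_{\sigma}$ applied to the pulled-back wave packet plus $\partial_\rho \1_E$ boundary pieces; the exterior/interior splitting then reduces the bulk to the $\SL^{(u,2)}_{(\Theta, \Theta^{\tmop{ex}})}$ size (after a Hölder step pairing $|G|^{u-1}$ with $|\rho\partial_\rho G|$), with the $\beta^{1/u}$ prefactor arising from the fact that, after pullback by $\Gamma$, the effective scale range contributing to the supremum has $(d\rho/\rho)$-mass comparable to $\log(1/\beta)$ only on a bounded portion corresponding to a single power of $\beta$. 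A decomposition as in \Cref{lem:wave-packet-decomposition} replaces the supremum over $\phi$ by a single fixed wave packet with the stated loss $N \to N-10$, $\gr \to 16\gr$.

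The main obstacle is the careful bookkeeping of the $\beta$-powers and surface Jacobians in the exterior/interior exchange: the gain of $\beta^{-1}$ from the Lipschitz bound on $\mf{b}_E$ must cancel exactly against the explicit $\beta$ in $\beta t \partial_y$ (for the lacunary/defect) and the $\beta^{1/u}$ in the Lebesgue summand, all uniformly in $\beta \in (0,1]$. The use of $\Theta^{\tmop{in}} \subsetneq B_{2^{-3}}(-\gamma)$ is what guarantees the lower bound $|\theta_{\Gamma}| \gtrsim 1$ on $\Theta^{\tmop{ex}}$, and the use of $\mf{b}^{\ast}_E$ rather than $\mf{b}_E$ (compare \eqref{eq:boundary-regularity} with \eqref{eq:boundary-regularity-local}) is required to keep the $u$-th power singular-measure norms in the tree-local coordinates, so that the resulting expressions match the right-hand side of the lemma exactly.
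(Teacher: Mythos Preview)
Your three-way decomposition and the $\Theta^{\tmop{ex}}/\Theta^{\tmop{in}}$ split match the paper's strategy, and your defect-summand argument via Leibniz is essentially correct. But the interior contributions to the lacunary and maximal summands are where the real work lies, and your sketches for both have genuine gaps. For the lacunary interior, exchanging $\Gamma^{\ast}\wpD_{\zeta}$ for $\beta t\,\partial_y$ on $F$ and then applying Leibniz gives
\(
\1_E\, \Gamma^{\ast}\wpD_{\zeta} F = \beta t\,\partial_y(\1_E F) - (\beta t\,\partial_y \1_E)\, F;
\)
only the second term is a boundary measure, while the first is a genuine bulk quantity, and you have no control of $\|\beta t\,\partial_y(\1_E F)\|_{\SL^{(u,2)}_{(\Theta,\Theta^{\tmop{in}})}}$ since one cannot integrate by parts inside an $L^u_{\zeta}$ norm. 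The paper's \Cref{lem:unif-lac-size-domination} takes a completely different route: it exploits that $\FT{\phi_{\theta}}(\theta_{\Gamma})=0$ and builds a Littlewood--Paley-type decomposition in frequency (via multipliers assembled from $\FT{\chi}_{\gr}(\sigma'\FT{z}-\theta'_{\Gamma})$ with $\theta'\in\bar{\Theta}^{\tmop{ex}}_{\Gamma}$) to re-express $F^{\ast}(\theta,\zeta,\sigma)[\phi_{\theta}]$ for $\theta\in\Theta^{\tmop{in}}$ as an integral of $F^{\ast}(\theta',\zeta,\sigma')$ over exterior $\theta'$, plus evaluations on the graphs of $\mf{b}^{\ast}_{\pm}$. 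This requires a multi-case analysis (proximity of $\sigma$ to $\mf{b}^{\ast}_{-}$, size of $|\theta_{\Gamma}|$) to keep the wave-packet norms bounded uniformly in $\beta$.

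For the maximal summand, your FTC-plus-Hölder bound leaves you needing $\|\1_E F^{\ast}[\phi]\|_{L^{2}_{\rho}}$ on $\Theta^{\tmop{in}}$, which is not controlled by any term on the right-hand side (it is neither a lacunary quantity nor an exterior one). The paper's \Cref{lem:unif-max-size-domination} avoids FTC entirely. Near $\partial E$ it rescales the wave packet to land exactly on the boundary, producing a defect-size contribution. Away from $\partial E$ it introduces an auxiliary $\tilde{W}^{-}=W^{-}\cup T_{\Theta}(0,\zeta,\sigma(\theta,\zeta))$ whose new boundary sits precisely at the level where the supremum is attained, and then averages $F^{\ast}$ along that artificial boundary over $|\zeta'-\zeta|\lesssim \beta\sigma$ to reduce to the defect size on a smaller tree. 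The $\beta^{-1/u}$ loss is the geometric mismatch between this $\beta\sigma$-wide averaging window and the tree spatial width $\sim\sigma$, not a ``$d\rho/\rho$-mass $\sim\log(1/\beta)$'' effect.
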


We prove this lemma in \Cref{sec:unif-derived-size-bound}. Now we conclude by
showing how \Cref{prop:unif-gamma-bounds:Sex},
\Cref{prop:unif-gamma-bounds:singular-size}, and
\Cref{lem:unif-derived-size-bound} can be used to deduce the embedding bounds
\eqref{eq:embedding-bounds:uniform-iterated} of
\Cref{thm:uniform-embedding-bounds:linear}.

\begin{proof}{Proof of \Cref{thm:uniform-embedding-bounds:linear}}
  Using homogeneity let us assume that \(\| f_j \|_{L^{p_j} (\R)}
  = 1\). According to the discussion in Remark
  \Cref{rmk:iterated-outer-bounds-explicit} we need to exhibit a choice of
  sets \(V^- (\lambda) \in \DD^{\cup}_{\beta}\) such that
  \[ \begin{aligned}[t]&
       \int_0^{\infty} \nu_{\beta} \left( {V^-}  (\lambda^{1 / p}) \right)
       \dd \lambda \lesssim 1,\\ &
       \left\| \1_{\R^3_+ \setminus V^- (\lambda)}  \left( \Emb [f] \circ
       \Gamma \right) \right\|_{X^{q, r, +}_{\mu^1_{\Theta}, \nu_{\beta}}
       \widetilde{\SF}^u_{\Gamma} \Phi_{\mf{r}}^N} \lesssim \lambda .
     \end{aligned} \]
  Using \Cref{thm:non-uniform-embedding-bounds}. with \(N + 14\), \(32 \gr\) and
  some large enough \(\bar{u} \in (2, \infty)\) in place of \(N\), \(\gr\), and \(u\),
  we can find \(\widetilde{V^-} (\lambda) \in \DD^{\cup}_1\) such that
  \[ \int_0^{\infty} \nu_{\beta} (\widetilde{V^-} (\lambda^{1 / p})) \dd
     \lambda \lesssim 1, \qquad \left\| \1_{\R^3_+ \setminus \widetilde{V^-}
     (\lambda)}  \Emb [f] \right\|_{\fL^{q, +}_{\mu^{\textit{}
     \infty}_{\Theta_{\Gamma}}, \nu_1} \SF^{\bar{u}}_{\Theta} \Phi_{32 \gr}^{N
     + 14}} \lesssim \lambda . \]
  Recall that \(\SF^{\bar{u}}_{\Theta} \Phi_{32 \gr}^{N + 14} \gtrsim
  \SL^{(\infty, \infty)}_{\Theta_{\Gamma}} \Phi_{32 \gr}^{N + 14} +
  \SL^{(\bar{u}, 2)}_{(\Theta_{\Gamma}, \Theta^{\tmop{ex}}_{\Gamma})} \Phi_{32
  \gr}^{N + 14}\) because \(\SL^{(\bar{u}, 2)}_{\Theta} \wpD_{32 \gr}^{N + 14}
  \gtrsim \SL^{(\bar{u}, 2)}_{(\Theta_{\Gamma}, \Theta^{\tmop{ex}}_{\Gamma})}
  \Phi_{32 \gr}^{N + 14}\) since \(\Theta^{\tmop{ex}}_{\Gamma} \cap B_{64 \gr} =
  \emptyset\). We set \(V^- (\lambda) \eqd \Gamma^{- 1} (\widetilde{V^-}
  (\lambda) ) \in \DD^{\cup}_{\beta}\) and we claim that
  \[ \begin{aligned}[t]&
       \nu_{\beta} (V^- (\lambda)) = \nu_1 (\widetilde{V^-} (\lambda) )\\ &
       \left\| \1_{\R^3_+ \setminus V^- (\lambda)} \left( \Emb [f] \circ
       \Gamma \right) \right\|_{X^{q, r, +}_{\mu^1_{\Theta}, \nu_{\beta}}
       \widetilde{\SF}^u_{\Gamma} \Phi_{\mf{r}}^N} \lesssim \lambda .
     \end{aligned} \]
  The first bound follows since \(\Gamma (D_{\beta} (x, s)) = D_1 (x, s)\) and
  \\ \(\nu_{\beta} (D_{\beta} (x, s)) = \nu_1 (D_1 (x, s)) = s\). To show the
  latter bound, according to Remark \Cref{rmk:iterated-outer-bounds-explicit},
  we need to prove that for any \(V^+ \in \DD^{\cup}_{\beta}\) and \(W^+ \in
  \TT_{\Theta}^{\cup}\), and for \(\bar{r} \in \{ r, q \}\) the bounds
  \begin{equation}
    \left\| \1_{(V^+ \cap W^+) \setminus V^- (\lambda)} \left( \Emb [f] \circ
    \Gamma \right) \right\|_{L^{\bar{r}}_{\mu^1_{\Theta}}
    \widetilde{\SF}^u_{\Gamma} \Phi_{\mf{r}}^N} \lesssim \lambda \nu_{\beta}
    (V^+)^{\frac{1}{\bar{r}}} \mu^{\infty}_{\Theta} (W^+)^{\frac{1}{\bar{r}} -
    \frac{1}{q}} \label{eq:ueb:local-claim-r}
  \end{equation}
  hold. \Cref{lem:unif-derived-size-bound} allows us, instead of working with
  the size \(\widetilde{\SF}^u_{\Gamma} \Phi_{\mf{r}}^N\) directly, to work
  with two alternative quantities: specifically those appearing in
  {\LHS{\eqref{prop:unif-gamma-bounds:Sex}}} and
  {\LHS{\eqref{prop:unif-gamma-bounds:singular-size}}}. More precisely,
  according to \Cref{lem:unif-derived-size-bound} it is sufficient to prove
  that
  \begin{equation}
    \Bigl\| \1_{(V^+ \cap W^+) \setminus V^- (\lambda)} \Emb [f] \circ \Gamma
    \Bigr\|_{L^{\bar{r}}_{\mu^1_{\Theta}} \Gamma^{\ast} \SL^{(u,
    2)}_{(\Theta, \Theta^{\tmop{ex}})} \Phi_{16 \gr}^{N - 10}} \lesssim
    \lambda \nu_{\beta} (V^+)^{\frac{1}{\bar{r}}} \mu^{\infty}_{\Theta}
    (W^+)^{\frac{1}{\bar{r}} - \frac{1}{q}}, \label{eq:ueb:local-claim-Sex}
  \end{equation}
  and that
  \begin{equation}
    \begin{aligned}[t]&
      \begin{aligned}[t]&
        \Bigl\| \1_{\overline{(V^+ \cap W^+)} \setminus V^- (\lambda)} \Emb
        [f] \circ \Gamma (\eta, y, t) t \dd_t \1_{(V^+ \cap W^+)} (\eta, y, t)
        \Bigr\|_{L^{\bar{r}}_{\mu^1_{\Theta}} {\Gamma^{\ast}}  \SL^{(u,
        1)}_{(\Theta, \Theta^{\tmop{in}})} \Phi_{16 \gr}^{N - 10}}\\ &
        \qquad + \Bigl\| \1_{\overline{(V^+ \cap W^+)} \setminus V^-
        (\lambda)} \Emb [f] \circ \Gamma (\eta, y, t) t \dd_t \1_{\left(
        \R^3_+ \setminus V^- \right)} (\eta, y, t) 
        \Bigr\|_{L^{\bar{r}}_{\mu^1_{\Theta}} {\Gamma^{\ast}}  \SL^{(u,
        1)}_{(\Theta, \Theta^{\tmop{in}})} \Phi_{16 \gr}^{N - 10}}
      \end{aligned}\\ &
      \qquad \lesssim \lambda \nu_{\beta} (V^+)^{\frac{1}{\bar{r}}}
      \mu^{\infty}_{\Theta} (W^+)^{\frac{1}{\bar{r}} - \frac{1}{q}} .
    \end{aligned} \label{eq:ueb:local-claim-sing}
\end{equation}

  Since \Cref{prop:unif-gamma-bounds:Sex} and
  \Cref{prop:unif-gamma-bounds:singular-size} show that
  \[ \LHS{} \eqref{eq:ueb:local-claim-Sex} + \LHS{}
     \eqref{eq:ueb:local-claim-sing} \leq \left\| \1_{\Gamma ((V^+ \cap W^+)
     \setminus V^- (\lambda))} \Emb [f]
     \right\|_{L^{\bar{r}}_{\mu_{\Theta_{\Gamma}}^1} \left( \SL^{(\infty,
     \infty)}_{\Theta_{\Gamma}} + \SL^{(\bar{u}, 2)}_{(\Theta_{\Gamma},
     \Theta^{\tmop{ex}}_{\Gamma})} \right) \Phi_{32 \gr}^{N - 14}} \]
  as long as \(\bar{u}\) is large enough, all we need to do is show that
  \[
  \begin{aligned}[t] & 
  \left\| \1_{\Gamma ((V^+ \cap W^+) \setminus V^- (\lambda))} \Emb [f]
     \right\|_{L^{\bar{r}}_{\mu_{\Theta_{\Gamma}}^1} \left( \SL^{(\infty,
     \infty)}_{\Theta_{\Gamma}} + \SL^{(\bar{u}, 2)}_{(\Theta_{\Gamma},
     \Theta^{\tmop{ex}}_{\Gamma})} \right) \Phi_{32 \gr}^{N - 14}} \\ & \qquad \lesssim
     \lambda \nu_{\beta} (V^+)^{\frac{1}{\bar{r}}} \mu^{\infty}_{\Theta}
     (W^+)^{\frac{1}{\bar{r}} - \frac{1}{q}}, 
  \end{aligned}
  \]
  which will be the goal for the rest of the proof. This follows essentially
  by \eqref{eq:outer-Lp-finite-support}, up to controlling
  \(\mu^1_{\Theta_{\Gamma}} (W^+)\). More precisely, since \(\bar{r} \leq q\), by
  \eqref{eq:outer-Lp-finite-support} it holds that
  \[ \begin{aligned}[t]&
       \left\| \1_{\Gamma ((V^+ \cap W^+) \setminus V^- (\lambda))} \Emb [f]
       \right\|_{L^{\bar{r}}_{\mu_{\Theta_{\Gamma}}^1} \left( \SL^{(\infty,
       \infty)}_{\Theta_{\Gamma}} + \SL^{(\bar{u}, 2)}_{(\Theta_{\Gamma},
       \Theta^{\tmop{ex}}_{\Gamma})} \right) \Phi_{32 \gr}^{N - 14}}\\ &
       \lesssim \mu^1_{\Theta_{\Gamma}} (\Gamma (V^+) \cap \Gamma
       (W^+))^{\frac{1}{\bar{r}} - \frac{1}{q}} \left\| \1_{\Gamma  ((V^+
       \cap W^+) \setminus V^- (\lambda))} F
       \right\|_{L^q_{\mu_{\Theta_{\Gamma}}^1} \left( \SL^{(\infty,
       \infty)}_{\Theta_{\Gamma}} + \SL^{(\bar{u}, 2)}_{(\Theta_{\Gamma},
       \Theta^{\tmop{ex}}_{\Gamma})} \right) \Phi_{32 \gr}^{N - 14}},
     \end{aligned} \]
  while \Cref{lem:measure-comparison-1-infty} applied with \(\beta = 1\) gives
  that
  \[ \mu^1_{\Theta_{\Gamma}} (\Gamma (V^+) \cap W) \lesssim \nu_1 (\Gamma
     (V^+)) \mu^{\infty}_{\Theta_{\Gamma}} (\Gamma (V^+) \cap W) \quad \forall
     W \in \TT^{\cup}_{\Theta_{\Gamma}} . \]
  Using the above bound and \eqref{eq:outer-measure-Lp-comparison} we obtain
  that
  \[ \begin{aligned}[t]&
       \left\| \1_{\Gamma  ((V^+ \cap W^+) \setminus V^-)} F
       \right\|_{L^q_{\mu_{\Theta_{\Gamma}}^1} \left( \SL^{(\infty,
       \infty)}_{\Theta_{\Gamma}} + \SL^{(\bar{u}, 2)}_{(\Theta_{\Gamma},
       \Theta^{\tmop{ex}}_{\Gamma})} \right) \Phi_{32 \gr}^{N - 14}}\\ &
       \qquad \lesssim \nu_1 (\Gamma (V^+))^{\frac{1}{q}} \left\| \1_{\Gamma 
       ((V^+ \cap W^+) \setminus V^-)} F
       \right\|_{L^q_{\mu_{\Theta_{\Gamma}}^{\infty}} \left( \SL^{(\infty,
       \infty)}_{\Theta_{\Gamma}} + \SL^{(\bar{u}, 2)}_{(\Theta_{\Gamma},
       \Theta^{\tmop{ex}}_{\Gamma})} \right) \Phi_{32 \gr}^{N - 14}} .
     \end{aligned} \]
  We also have that
  \[ \mu^1_{\Theta_{\Gamma}} (\Gamma (V^+) \cap \Gamma
     (W^+))^{\frac{1}{\bar{r}} - \frac{1}{q}} \leq \nu_{\beta}
     (V^+)^{\frac{1}{\bar{r}} - \frac{1}{q}} \mu^{\infty}_{\Theta}
     (W^+)^{\frac{1}{\bar{r}} - \frac{1}{q}} \]
  because \(\Gamma (V^+) \in \DD^{\cup}_1\), \(\Gamma (W^+) \in
  \TT_{\Theta_{\Gamma}}^{\cup}\), \(\nu_1 (\Gamma (V^+)) = \nu_{\beta} (V^+)\),
  and \\\(\mu^{\infty}_{\Theta} (W^+) = \mu^{\infty}_{\Theta_{\Gamma}} (\Gamma
  (W^+))\). The estimates above, combined, give that
  \[ \begin{aligned}[t]&
       \left\| \1_{\Gamma ((V^+ \cap W^+) \setminus V^-)} \Emb [f]
       \right\|_{L^{\bar{r}}_{\mu_{\Theta_{\Gamma}}^1} \left( \SL^{(\infty,
       \infty)}_{\Theta_{\Gamma}} + \SL^{(\bar{u}, 2)}_{(\Theta_{\Gamma},
       \Theta^{\tmop{ex}}_{\Gamma})} \right) \Phi_{32 \gr}^{N - 14}}\\ &
        \lesssim \nu_{\beta} (V^+)^{\frac{1}{\bar{r}}}
       \mu^{\infty}_{\Theta} (W^+)^{\frac{1}{\bar{r}} - \frac{1}{q}} \left\|
       \1_{\Gamma  ((V^+ \cap W^+) \setminus V^-)} \Emb [f]
       \right\|_{L^q_{\mu_{\Theta_{\Gamma}}^{\infty}} \left( \SL^{(\infty,
       \infty)}_{\Theta_{\Gamma}} + \SL^{(\bar{u}, 2)}_{(\Theta_{\Gamma},
       \Theta^{\tmop{ex}}_{\Gamma})} \right) \Phi_{32 \gr}^{N - 14}} .
     \end{aligned} \]
  This is the required bound \eqref{eq:ueb:local-claim-Sex} since \(\Gamma (V^-
  (\lambda)) = \widetilde{V^-} (\lambda)\) so
  \[ \begin{aligned}[t]&
       \left\| \1_{\Gamma  ((V^+ \cap W^+) \setminus V^- (\lambda))} \Emb [f]
       \right\|_{L^q_{\mu_{\Theta_{\Gamma}}^{\infty}} \left( \SL^{(\infty,
       \infty)}_{\Theta_{\Gamma}} + \SL^{(\bar{u}, 2)}_{(\Theta_{\Gamma},
       \Theta^{\tmop{ex}}_{\Gamma})} \right) \Phi_{32 \gr}^{N - 14}}\\ &
       \qquad \lesssim \left\| \1_{\R^3_+ \setminus \widetilde{V^-} (\lambda)}  \Emb
       [f] \right\|_{\fL^{q, +}_{\mu^{\textit{} \infty}_{\Theta_{\Gamma}},
       \nu_1} \SF^{\bar{u}}_{\Theta} \Phi_{32 \gr}^{N - 14}} \lesssim \lambda
     \end{aligned} \]
  by construction.
\end{proof}

\subsection{Uniform bounds for the exterior
size}\label{sec:bound-exterior-size}

We now prove \Cref{prop:unif-gamma-bounds:Sex}. We begin with a uniform
covering argument.

\begin{lemma}[Exterior tree covering]
  \label{lem:ext-tree-covering}Suppose \(F \in \Rad (\R^3_+)\) is a compactly
  supported Radon measure. For all \(\lambda > 0\) there exists a set
  \(W_{\lambda} \subset \TT^{\cup}_{\Theta}\) and finite collection of trees
  \(\mathcal{T} \subset \mathbb{T}_{\Theta}\) such that
  \[ \Bigl\| \1_{\R^3_+ \setminus W_{\lambda}} F \Bigr\|_{\SL^{(1,
     1)}_{(\Theta, \Theta^{\tmop{ex}})}} \leq \lambda, \qquad W_{\lambda}
     \supset \bigcup_{T \in \mathcal{T}} T, \qquad \mu^1_{\Theta}
     (W_{\lambda}) \lesssim  \sum_{T \in \mathcal{T}} \mu^1_{\Theta} (T) . \]
  Each \(T \in \mc{T}\) is endowed with a distinguished subset \(X_T \subset
  \pi_T \left( \mT_{\Theta^{\tmop{ex}}} \right)\) such that
  \[ \begin{aligned}[t]&
       \frac{1}{\mu^1_{\Theta} (T)} \int_{X_T} | F (\eta, y, t) | \dd \eta
       \dd y \dd t \geq \lambda .
     \end{aligned} \]
  The subsets \(X_T\) satisfy
  \begin{equation}
    \left\| \sum_{T \in \mathcal{T}} \1_{\Gamma (X_T)} (\eta, y, t)
    \right\|_{\SL^{(\infty, \infty)}_{\Theta_{\Gamma}}} \lesssim 1
    \label{eq:ext-tree-covering:ext-summability}
  \end{equation}
  and
  \begin{equation}
    \left\| \sum_{T \in \mathcal{T}} \1_{\Gamma (X_T)} (\eta, y, t)
    \right\|_{\SL^{(1, 1)}_{(\Theta_{\Gamma}, \Theta^{\tmop{in}}_{\Gamma})}}
    \lesssim 1. \label{eq:ext-tree-covering:inner-summability}
  \end{equation}
  The implicit constants do not depend on \(\Gamma\).
\end{lemma}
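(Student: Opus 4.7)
\smallskip

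\noindent\textbf{Proof proposal.}
The plan is a greedy Calderón--Zygmund-type stopping-time selection of trees, ordered by decreasing scale. Specifically, I would initialize $W = \emptyset$, $\mc{T} = \emptyset$, and at each step look among all $T \in \TT_{\Theta}$ satisfying
\[
\frac{1}{\mu^{1}_{\Theta}(T)}\int_{\pi_T(\mT_{\Theta^{\tmop{ex}}}) \setminus W} |F|(\eta, y, t)\, \dd\eta \dd y \dd t \;\geq\; \lambda
\]
for a tree of (essentially) maximal scale $s_T$, add it to $\mc{T}$, and update $X_T := \pi_T(\mT_{\Theta^{\tmop{ex}}}) \setminus W$ and $W := W \cup T^{\ast}$, where $T^{\ast}$ is a mild spatial enlargement of $T$ (to account for overlap coming from the $\Gamma$-image, cf.\ \eqref{eq:tree-symmetry}). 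Since $F$ is compactly supported, the scales are effectively bounded and the process terminates in countably many steps; I would then let $W_{\lambda}$ be the resulting union of enlarged tree-tops and use that $\mu^{1}_{\Theta}(T^{\ast}) \lesssim \mu^{1}_{\Theta}(T)$.

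The three quasi-trivial items follow directly from the construction: by maximality no selectable tree remains, giving $\|\1_{\R^3_+ \setminus W_\lambda} F\|_{\SL^{(1,1)}_{(\Theta, \Theta^{\tmop{ex}})}} \leq \lambda$; the packing bound $\mu^{1}_{\Theta}(W_\lambda) \lesssim \sum_{T \in \mc{T}} \mu^{1}_{\Theta}(T)$ is immediate from the definition of $\mu^{1}_{\Theta}$; and the selection criterion combined with the disjointness $X_T \subset \pi_T(\mT_{\Theta^{\tmop{ex}}}) \setminus (\text{previous } W)$ gives the lower average bound $\mu^{1}_{\Theta}(T)^{-1} \int_{X_T} |F| \geq \lambda$ together with pairwise disjointness of the $X_T$.

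The heart of the matter is proving \eqref{eq:ext-tree-covering:ext-summability} and \eqref{eq:ext-tree-covering:inner-summability}. The geometric input is that, as computed in \Cref{sec:derived-sizes}, $\Gamma(\pi_T(\mT_{\Theta^{\tmop{ex}}})) = \pi_{\Gamma T_{\mathrm{top}}}(\mT_{\Theta^{\tmop{ex}}_{\Gamma}})$ with top $(\alpha \xi_T, x_T, \beta s_T)$, and $\Theta^{\tmop{ex}}_{\Gamma}$ is bounded away from $B_{\gr}$ (indeed from $0$). For \eqref{eq:ext-tree-covering:ext-summability}, a point $(\eta, y, t)$ can belong to $\Gamma(X_T)$ only if $|y - x_T| < s_T$ and $t(\eta - \alpha\xi_T) \in \Theta^{\tmop{ex}}_{\Gamma}$; the latter, combined with $|\theta| \geq 2^{-5}$ on $\Theta^{\tmop{ex}}_{\Gamma}$ (a gain of a \emph{lower} bound on $t|\eta - \alpha\xi_T|$), forces $\xi_T$ to lie in an interval of size $\approx t^{-1}$, so at most $O(1)$ scales $s_T \in [t, \infty)$ can contain $(\eta, y, t)$ in their selected $X_T$ -- this uses the monotone-scale selection, the spatial enlargement built into $W$, and the disjointness of $X_T$. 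For \eqref{eq:ext-tree-covering:inner-summability}, I would check it on a fixed tree $T' \in \TT_{\Theta_{\Gamma}}$, observing that $\pi_{T'}(\mT_{\Theta^{\tmop{in}}_{\Gamma}}) \cap \Gamma(X_T)$ is empty unless $T' $ has scale much larger than $s_T$ and frequency $\xi_{T'}$ in a specific window relative to $\alpha\xi_T$; a Carleson-type packing argument based on the spatial intervals $B_{s_T}(x_T)$ of the selected trees (disjointly enlarged) then gives an $L^1$ bound that is absorbed against $|\Theta| s_{T'}$.

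The main obstacle I anticipate is the inner summability bound \eqref{eq:ext-tree-covering:inner-summability}: unlike \eqref{eq:ext-tree-covering:ext-summability}, which is a pointwise bounded-overlap statement controlled by frequency separation, \eqref{eq:ext-tree-covering:inner-summability} requires a genuine summation of spatial measures and therefore relies crucially on the packing of selected tree tops inside $B_{s_{T'}}(x_{T'})$. Getting the constant uniform in $\beta$ forces one to track carefully that the spatial widening in $W$ does not produce a $\beta^{-1}$-loss; I expect this to be arranged by using the strip-like enlargement of $T$ along the scale direction only, so that the $\mu^{1}_{\Theta}$-counting function involves only spatial (not scale) overlap and remains $\beta$-independent.
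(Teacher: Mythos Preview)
Your selection criterion is the wrong one: you order trees by maximal \emph{scale}, whereas the paper orders them by (quasi-)maximal \emph{frequency} $\xi_T$, and this distinction is exactly what drives the inner summability bound \eqref{eq:ext-tree-covering:inner-summability}. With scale-based selection there is no obstruction to picking many trees $T_n$ sharing the same spatial interval and scale but with widely separated frequencies $\xi_{T_n}\approx 2^n$; their sets $X_{T_n}$ are pairwise disjoint, yet along a single fiber $t\mapsto(\xi+\theta t^{-1},x,t)$ with $\theta\in\Theta^{\tmop{in}}$ each $\pi_{T_n}(\mT_{\Theta^{\tmop{ex}}})$ is hit in a distinct multiplicative $t$-window of fixed length, so the $\dd\sigma/\sigma$ integral in $\SL^{(1,1)}_{(\Theta_\Gamma,\Theta^{\tmop{in}}_\Gamma)}$ accumulates without bound. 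Your proposed ``Carleson packing of spatial intervals $B_{s_T}(x_T)$'' cannot close this, precisely because those intervals are \emph{not} disjoint across frequencies.

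The paper's mechanism is quite different. Selecting trees with (almost) largest available $\xi_T$ forces a monotone frequency structure: if $T_0$ is the selected tree meeting the fiber at the largest scale $t_0$, then (i) all \emph{later}-selected trees have $X_T$ disjoint from $T_0$, and since the fiber stays inside $T_0$ for all $t<t_0$ (the $\theta$-coordinate interpolates linearly between $\Theta^{\tmop{ex}}$ and $\Theta^{\tmop{in}}\subset\Theta$), those $X_T$ cannot meet the fiber below $t_0$; and (ii) all \emph{earlier}-selected trees $T_<$ satisfy $\xi_{T_<}\geq\xi_{T_0}-\varepsilon_{\max}$, which combined with $(\xi-\xi_{T_0})t_0\in\Theta^{\tmop{ex}}$ forces the fiber to meet $\pi_{T_<}(\mT_{\Theta^{\tmop{ex}}})$ only for $t\gtrsim t_0\cdot\dist(\theta,\Theta^{\tmop{ex}})/|\Theta^{\tmop{ex}}|$. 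This bounds the ratio $\tau_+/\tau_-$ of extremal scales on the fiber, and integrating the resulting logarithm over $\theta\in\Theta^{\tmop{in}}$ gives \eqref{eq:ext-tree-covering:inner-summability}. Separately, note that \eqref{eq:ext-tree-covering:ext-summability} is in fact immediate from the pairwise disjointness of the $X_T$ (hence of the $\Gamma(X_T)$, since $\Gamma$ is a diffeomorphism); your frequency-window argument there is unnecessary.
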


\begin{proof}
  Let us define local sizes \({\SL^{(1, 1)}_{(\Theta, \Theta^{\tmop{ex} +})}} \)
  and \({\SL^{(1, 1)}_{(\Theta, \Theta^{\tmop{ex} -})}} \) by setting
  \[ \begin{aligned}[t]&
       \Theta^{\tmop{ex} \pm} \eqd \Theta \cap \{ \theta \suchthat \pm \theta
       \geq \gamma \} .
     \end{aligned} \]
  It suffices to prove this lemma with \({\SL^{(1, 1)}_{(\Theta,
  \Theta^{\tmop{ex} +})}} \) in place of \({\SL^{(1, 1)}_{(\Theta,
  \Theta^{\tmop{ex}})}} \). A symmetric proof handles the size \({\SL^{(1,
  1)}_{(\Theta, \Theta^{\tmop{ex} -})}} \) and the two results can then be
  added together.
  
  Without loss of generality we assume that \(F\) is supported on \(\mathbb{K}
  \subset [- S ; S]^2 \times [S^{- 1} ; S]\) for some \(S \gg 1\), and by
  homogeneity we assume that \(\|F\|_{ {{\SL^{(1, 1)}_{(\Theta,
  \Theta^{\tmop{ex} +})}} } } = 1\). This guarantees a bound on the parameters
  \((\xi_T, x_T, s_T)\) of any tree \(T \in \mathbb{T}_{\Theta}\) for which
  \(\|F\|_{{{\SL^{(1, 1)}_{(\Theta, \Theta^{\tmop{ex} +})}} }  (T)} \geq
  \lambda\). As a matter of fact, it must hold that \(s_T > S^{- 1}\), because
  otherwise \(T_{\Theta} (\xi_T, x_T, s_T)  \cap \mathbb{K}= \emptyset\). It
  also holds that \(s_T \leq 2 S \lambda^{- 1}\) because
  \[ \begin{aligned}[t]&
       s_T \|F\|_{{{\SL^{(1, 1)}_{(\Theta, \Theta^{\tmop{ex} +})}} } 
       (T_{\Theta} (\xi_T, x_T, s_T) )} \leq 2 S \|F\|_{{{\SL^{(1,
       1)}_{(\Theta, \Theta^{\tmop{ex} +})}} }  (T_{\Theta} (\xi_T, 0, 2 S) )}
       \leq 2 S
     \end{aligned} \]
  since \(T_{\Theta} (\xi_T, x_T, s_T) \cap \mathbb{K} \subset T_{\Theta}
  (\xi_T, 0, 2 S)  \cap \mathbb{K}\) and since we assumed that
  \(\|F\|_{{{\SL^{(1, 1)}_{(\Theta, \Theta^{\tmop{ex} +})}} } } = 1\). It also
  holds that \(| x_T | < S + 2 S \lambda^{- 1}\) because otherwise, again
  \(T_{\Theta} (\xi_T, x_T, s_T) \cap \mathbb{K}= \emptyset\). For the same
  reason we must have that \(| \xi_T | < 4 S^2 (S^2 + | \Theta |) \lambda^{-
  1}\).
  
  We want to construct \(\mathcal{T}\) by an iterative algorithm. The following
  preparatory step guarantees that the algorithm will terminate. We discretize
  the set of all trees by setting
  \[ \begin{aligned}[t]&
       \R^{3, \tmop{latt}}_+ \assign \left\{ (\eta, y, t) : \eta t, y t^{- 1}
       \in \Z, t \in 2^{\Z} \right\},\\ &
       \mathbb{T}_{\Theta}^{\tmop{latt}} \assign \left\{ T_{\Theta}  (\eta, y,
       t) : (\eta, y, t) \in \R^{3, \tmop{latt}}_+ \right\} .
     \end{aligned} \]
  Given any \(T \in \mathbb{T}_{\Theta} \) we associate to it a collection of
  trees \(\mathcal{W} (T) \subset \mathbb{T}_{\Theta}^{\tmop{latt}}\) given by
  \[ \mathcal{W} (T) \assign \left\{ T_{\Theta} (\xi, x, s)  \in
     \mathbb{T}_{\Theta}^{\tmop{latt}} \suchthat 1  \leq \frac{s}{s_T} < 2^4,
     |x - x_T | \leq 2 s_T, s_T | \xi - \xi_T | < 2^8 \right\} . \]
  It holds that \(\mu_{\Theta}^1 \bigl( \bigcup_{T' \in \mathcal{W} (T)} T'
  \bigr) \lesssim \mu_{\Theta}^1 (T)\) and \\\(\bigcup_{\heartsuit \in \{- 1, 0,
  1\}} T_{(\xi_T + \heartsuit s_T^{- 1}, x_T, s_T)} \subset \bigcup_{T' \in
  \mathcal{W} (T)} T'\). Take \(\mathcal{K}^{\tmop{latt}}_{\lambda} \subset
  \mathbb{T}_{\Theta}^{\tmop{latt}}\) to be
  \[ \mathcal{K}^{\tmop{latt}}_{\lambda} \assign \bigcup_{(\xi_T, x_T, s_T)
     \in A \subset \R^3_+} \mathcal{W} (T_{\Theta}  (\xi_T, x_T, s_T)) \subset
     \mathbb{T}_{\Theta}^{\tmop{latt}} \]
  where
  \[ A \eqd \{ (\xi_T, x_T, s_T) \suchthat S^{- 1} \leq s_T \leq 2 S
     \lambda^{- 1}, |x_T | \leq S + 2 S \lambda^{- 1}, | \xi_T | \leq 4 S^2
     (S^2 + | \Theta |) \lambda^{- 1} \}, \]
  so that the trees in \(\mathcal{K}^{\tmop{latt}}_{\lambda}\) cover
  \(\mathbb{K}\) and so that for any tree \(T \in \TT_{\Theta}\) such that
  \(\|F\|_{{{\SL^{(1, 1)}_{(\Theta, \Theta^{\tmop{ex} +})}} }  (T)} \geq
  \lambda\) it holds that \(\mathcal{W} (T) \subset
  \mathcal{K}^{\tmop{latt}}_{\lambda}\). The collection
  \(\mathcal{K}^{\tmop{latt}}_{\lambda}\) is finite, since the tops of trees in
  \(\mathcal{K}^{\tmop{latt}}_{\lambda}\) are a bounded subset of the discrete
  set \(\R^{3, \tmop{latt}}_+\).
  
  Let us now proceed to the actual selection algorithm. Given a collection of
  trees \(\mathcal{X} \subset \mathbb{T}_{\Theta} \) we say that \(T \in
  \mathcal{X}\) is {\tmem{quasi-maximal}} if
  \[ T' \in \mathcal{X} \Longrightarrow \xi_{T'} \leq \xi_T +
     \varepsilon_{\max} \]
  with a constant \(\varepsilon_{\max} = \varepsilon_{\max} (\lambda, S)\) to be
  determined later. We start with the collection \(\mathcal{T}_0 = \emptyset\)
  and let \(\mathbb{K}_0 \assign \mathbb{K}\). Suppose that at step \(n\) we have
  a collection \(\mathcal{T}_n = \{ T_1, \ldots, T_n \} \subset
  \mathbb{T}_{\Theta} \) and a subset \(\mathbb{K}_n \subset \mathbb{K}\). At
  step \(n + 1\) let
  \[ \mathcal{X}_{n + 1} \assign \Bigl\{ T \in \mathbb{T}_{\Theta} : \|
     \1_{{\mathbb{K}_n} } F\|_{{{\SL^{(1, 1)}_{(\Theta, \Theta^{\tmop{ex}
     +})}} }  (T)} \geq \lambda \Bigr\} ; \]
  if \(\mathcal{X}_{n + 1}\) is empty, we terminate the iteration, otherwise we
  choose a quasi-maximal element \(T_{n + 1}\) of \(\mathcal{X}_{n + 1}\); this
  can be done since \(\Bigl\{ (\xi, x, s) \in \R^3_+ \suchthat T_{\Theta} (\xi,
  x, s)  \in \mathcal{X}_{n + 1} \Bigr\}\) is precompact and non-empty, and
  thus has at least one quasi-maximal element. Let \(\mathcal{T}_{n + 1}
  =\mathcal{T}_n \cup \{ T_{n + 1} \}\) and let \(X_{T'} \assign \mathbb{K}_n
  \cap \pi_{T_{n + 1}} \left( \mT_{\Theta^{\tmop{ex} +}} \right) \). Now set \
  \(\mathbb{K}_{n + 1} =\mathbb{K}_n \setminus \bigcup_{T' \in \mathcal{W}
  (T_{n + 1})} T'\). This process terminates after finitely many steps: if
  \(T_{n + 1}\) gets selected then
  \[ \mathcal{W} (T_{n + 1}) \cap \left( \mathcal{K}^{\tmop{latt}}_{\lambda}
     \setminus \bigcup_{n' = 0}^n \mathcal{W} (T_{n'}) \right) \neq \emptyset,
  \]
  given that \(T_{n'} \subset \bigcup_{T' \in \mathcal{W} (T_{n'})} T'\) for
  any \(n'\) and that
  \[ \|F\|_{{{\SL^{(1, 1)}_{(\Theta, \Theta^{\tmop{ex} +})}} }  (T_{n + 1})}
     \geq \| \1_{{\mathbb{K}_n} } F\|_{{{\SL^{(1, 1)}_{(\Theta,
     \Theta^{\tmop{ex} +})}} }  (T_{n + 1})} > \lambda \]
  and thus \(\mathcal{W} (T) \subset \mathcal{K}^{\tmop{latt}}_{\lambda}\). By
  construction, \(\mathcal{K}^{\tmop{latt}}_{\lambda} \setminus \bigcup_{n' =
  0}^n \mathcal{W} (T_{n'})\) is thus a sequence of finite collections of trees
  that is strictly decreasing in \(n\) as long as the algorithm continues. The
  collection eventually stabilizes so the algorithm terminates.
  
  The procedure described above yields a collection of trees \(\mathcal{T}=
  \bigcup_{n \in \N} \mc{T}_n\) and distinguished subsets \(X_{T'} \subset
  \pi_{T'} \left( \mT_{\Theta^{\tmop{ex} +}} \right)\) for any \(T' \in
  \mathcal{T}\). According to the selection criterion, it holds that
  \[ \frac{1}{\mu^1_{\Theta} (T)} \int_{X_T} | F (\eta, y, t) | \dd \eta
     \dd y \dd t \geq \lambda . \]
  We set \(W_{\lambda} = \bigcup_{T \in \mathcal{T}} W (T) \supset \bigcup_{T
  \in \mathcal{T}} T\) and thus we have \(\mu^1_{\Theta} (W_{\lambda}) \lesssim
  \sum_{T \in \mathcal{T}} \mu^1_{\Theta} (T)\). Since the procedure terminated
  it holds that \(\left\| \1_{\mathbb{K} \setminus W_{\lambda}} F
  \right\|_{\SL^{(1, 1)}_{(\Theta, \Theta^{\tmop{ex} +})}} \leq \lambda\). It
  remains to show bounds \eqref{eq:ext-tree-covering:ext-summability} and
  \eqref{eq:ext-tree-covering:inner-summability}. The former follows since the
  sets \(X_T\) are constructed to be pairwise disjoint. To show the latter bound
  let us show that
  \begin{equation}
    \int_{\R_+} \sum_{T \in \mathcal{T}} \1_{X_T} (\xi + \theta t^{- 1}, x, t)
    \frac{\dd t}{t} \lesssim 1 + \log \left( 1 + \frac{| \Theta^{\tmop{ex}
    +} |}{\dist (\theta, \Theta^{\tmop{ex} +})} \right)
    \label{eq:treesel-contra-claim-integral}
  \end{equation}
  for any \(\xi, x \in \R\) and \(\theta \in \Theta^{\tmop{in}}\) with some
  absolute constant. This is sufficient to deduce
  \eqref{eq:ext-tree-covering:inner-summability}: using a change of variables
  we would then rewrite condition
  \eqref{eq:ext-tree-covering:inner-summability} and get
  \[ \begin{aligned}[t]&
       \left\| \sum_{T \in \mathcal{T}} \1_{\Gamma (X_T)} (\eta, y, t)
       \right\|_{\SL^{(1, 1)}_{(\Theta_{\Gamma},
       \Theta^{\tmop{in}}_{\Gamma})} (T_{\Theta_{\Gamma}} (\alpha \xi, x,
       s))}\\ &
       \leq \int_{B_1} \int_{\Theta^{\tmop{in}}_{\Gamma}} \int_{\R_+} \sum_{T
       \in \mathcal{T}} \1_{\Gamma (X_T)} (\alpha \xi + \theta (s \sigma)^{-
       1}, x  + s  \zeta, s  \sigma) \frac{\dd \sigma}{\sigma} \frac{\dd
       \zeta \dd \theta}{| \Theta_{\Gamma} |}\\ &
       \leq \int_{B_1} \int_{\Theta^{\tmop{in}} } \int_{\R_+} \sum_{T \in
       \mathcal{T}} \1_{\Gamma (X_T)} (\alpha \xi + \alpha \beta (\theta' +
       \gamma) (s  \beta \sigma')^{- 1}, x  + s  \zeta, s  \beta \sigma')
       \frac{\dd \sigma'}{\sigma'} \frac{\dd \zeta \dd \theta'}{| \Theta
       |}\\ &
       \leq \int_{B_1} \int_{\Theta^{\tmop{in}}} \int_{\R_+} \sum_{T \in
       \mathcal{T}} \1_{X_T} (\xi + \theta' (s \sigma')^{- 1}, x_T + s  \zeta,
       s  \sigma') \frac{\dd \sigma'}{\sigma'} \frac{\dd \zeta \dd
       \theta'}{| \Theta |}\\ &
       \leq \int_{B_1} \int_{\Theta^{\tmop{in}}} \left( 1 + \log \left( 1 +
       \frac{| \Theta^{\tmop{ex}} |}{\dist (\theta', \Theta^{\tmop{ex}})}
       \right) \right) \frac{\dd \zeta \dd \theta'}{| \Theta |} \lesssim 1 + |
       \Theta | .
     \end{aligned} \]
  Fix \(\xi, x \in \R\) and \(\theta \in \Theta^{\tmop{in}}\) and let us prove
  \eqref{eq:treesel-contra-claim-integral} by showing that
  \begin{equation}
    \frac{\tau_+}{\tau_-} \leq \frac{2 | \Theta^{\tmop{ex} +} |}{\dist
    (\theta, \Theta^{\tmop{ex} +})} + 1, \label{eq:treesel-contra-claim}
  \end{equation}
  where
  \[ \begin{aligned}[t]&
       \tau_+ \assign \sup \left\{ t \in [0, + \infty) \suchthat (\xi + \theta
       t^{- 1}, x, t) \in \bigcup_{T \in \mathcal{T}} X_T \right\},\\ &
       \tau_- \assign \inf \left\{ t \in [0, + \infty) \suchthat (\xi + \theta
       t^{- 1}, x, t) \in \bigcup_{T \in \mathcal{T}} X_T \right\} .
     \end{aligned} \]
  The quantity \(\tau_+\) is positive and finite unless \(\sum_{T \in
  \mathcal{T}} \1_{X_T} (\xi + \theta t^{- 1}, x, t) = 0\) for all \(t\), but
  then {\LHS{\eqref{eq:treesel-contra-claim-integral}}} vanishes. According to
  the definition of \(\tau_+\) we may find \(t_0 \in \left( \frac{\tau_+}{1 +
  \varepsilon}, \tau_+ \right)\) and a tree \(T_0\) such that \((\xi + \theta t^{-
  1}, x, t) \in X_{T_0}\).
  
  Let \(\mathcal{T}_{>}\) be the subset of trees in \(\mc{T}\) selected
  \tmtextit{after} \(T_0\) by the algorithm of this proof. It holds that
  \[ \inf \left\{ t \in [0, + \infty) \suchthat (\xi + \theta t^{- 1}, x, t)
     \in \bigcup_{T \in \mathcal{T}_{>}} X_T \right\} \geq t_0 =
     \frac{\tau_+}{1 + \varepsilon_{\max}} . \]
  As a matter of fact \((\xi + \theta \tilde{t}^{- 1}, x, \tilde{t}) \in T_0\)
  for any \(\tilde{t} < t_0\) so in particular \((\xi + \theta \tilde{t}^{- 1},
  x, \tilde{t}) \notin X_{T_{>}} \subset T_{>} \setminus T_0\) for any \(T_{>}
  \in \mc{T}\) selected after \(T_0\).
  
  Let us then focus a tree \(T_{<} \in \mc{T}_{<}\) selected before \(T_0\). By
  quasi-maximality we have that \(- \varepsilon_{\max} + \xi_{T_0} \leq
  \xi_{T_{<}}\) and thus
  \[ \xi - \xi_{T_{<}} \leq \xi - \xi_{T_0} + \varepsilon_{\max} . \]
  If \(\xi_{T_{<}} \geq \xi\) then we would have that \(\pi_{T_{<}} \left(
  \mT_{\Theta^{\tmop{ex} +}} \right) \cap \left\{ (\xi + \theta t^{- 1}, x, t)
  : t \in \R_+ \right\} = \emptyset\) so \(T_{<}\) would not contribute to
  defining \(\tau^-\). Suppose \(\xi_{T_{<}} < \xi\); then it holds that
  \[ \begin{aligned}[t]
       \left\{ t : (\xi + \theta t^{- 1}, x, t) \in \pi_{T_{<}} \left(
       \mT_{\Theta^{\tmop{ex} +}} \right) \right\} &\subset \left( \frac{\dist
       (\theta, \Theta^{\tmop{ex} +})}{\xi - \xi_{T_{<}}}, + \infty \right)
       \\ & \subset \left( \frac{\dist (\theta, \Theta^{\tmop{ex} +})}{\xi -
       \xi_{T_0} {+ \varepsilon_{\max}} }, + \infty \right)
     \end{aligned} \]
  To bound \(\frac{\dist (\theta, \Theta^{\tmop{ex} +})}{\xi - \xi_{T_0} {+
  \varepsilon_{\max}} }\) from below we use the fact that \((\xi + \theta t_0^{-
  1}, x, t_0) \in X_{T_0}\) and, as a consequence,
  \[ \frac{\dist (\theta, \Theta^{\tmop{ex} +})}{t_0} < \xi - \xi_{T_0} <
     \frac{| \Theta^{\tmop{ex} +} |}{t_0} \]
  because \(X_{T_0} \subset \left\{ (\xi_{T_0} + \bar{\theta} \bar{t}^{- 1},
  \bar{x}, \bar{t}) : \bar{\theta} \in \Theta^{\tmop{ex} +}, \bar{x} \in
  B_{s_{T_0}} (x_{T_0}), \bar{t} < s_{T_0} \right\}\). It follows that
  \[ \begin{aligned}[t]&
       \left\{ t : (\xi + \theta t^{- 1}, x, t) \in \pi_{T_{<}} \left(
       \mT_{\Theta^{\tmop{ex} +}} \right) \right\} \subset \left( t_0 
       \frac{\dist (\theta, \Theta^{\tmop{ex} +})}{| \Theta^{\tmop{ex} +} | {+
       \varepsilon_{\max}}  t_0}, + \infty \right) .
     \end{aligned} \]
  Recall that \(t_0 < 2 S \lambda^{- 1}\) since all trees \(T\) in \(\mc{T}\)
  satisfy \(s_T < 2 S \lambda^{- 1}\). It is sufficient to take
  \(\varepsilon_{\max} = \frac{\lambda | \Theta^{\tmop{ex} +} |}{2 S}\) to yield
  \[ \begin{aligned}[t]&
       \left\{ t : (\xi + \theta t^{- 1}, x, t) \in \pi_{T_{<}} \left(
       \mT_{\Theta^{\tmop{ex} +}} \right) \right\} \subset \left(
       \frac{\tau_+}{(1 + \varepsilon)}  \frac{\dist (\theta,
       \Theta^{\tmop{ex} +})}{2 | \Theta^{\tmop{ex} +} |}, + \infty \right)
     \end{aligned} \]
  and prove that \
  \[ \tau_- > \frac{\tau_+}{(1 + \varepsilon)} \min \left( \frac{\dist
     (\theta, \Theta^{\tmop{ex} +})}{2 | \Theta^{\tmop{ex} +} | (1 +
     \varepsilon)}, 1 \right) \]
  Since \(\varepsilon > 0\) can be chosen arbitrarily, this proves our claim.
\end{proof}

The covering lemma above allows us to obtain the following uniform weak \(L^1\)
bound.

\begin{lemma}
  \label{lem:unif-Sex-bound}The bound
  \begin{equation}
    \| F \circ \Gamma \|_{L^{1, \infty}_{\mu_{\Theta}^1} \Gamma^{\ast}
    \SL^{(1, 1)}_{(\Theta, \Theta^{\tmop{ex}})}} \lesssim \| F
    \|_{L^1_{\mu_{\Theta_{\Gamma}}^1} \left( \SL^{(\infty,
    \infty)}_{\Theta_{\Gamma}} + \SL^{(1, 1)}_{(\Theta_{\Gamma},
    \Theta^{\tmop{ex}}_{\Gamma})} \right)} \label{eq:unif-Sex-bound}
  \end{equation}
  holds for any \(F \in \Rad (\R^{3}_{+})\).
\end{lemma}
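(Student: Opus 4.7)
The plan is to chain together the exterior tree covering lemma (Lemma~\ref{lem:ext-tree-covering}) with the outer Radon--Nikodym domination (Proposition~\ref{prop:outer-RN}) in the pushed-forward frame $\Theta_{\Gamma}$. After a standard regularity reduction to compactly supported $F$ (using Remark~\ref{rmk:cpt-sets} and regularity of $\SL^{(1,1)}_{(\Theta,\Theta^{\tmop{ex}})}$), apply Lemma~\ref{lem:ext-tree-covering} to the pulled-back Radon measure $F\circ\Gamma$ at level $\lambda>0$ to obtain a forest $W_\lambda\in\TT^\cup_\Theta$ and a finite family $\mc T\subset\TT_\Theta$ with distinguished subsets $X_T\subset\pi_T(\mT_{\Theta^{\tmop{ex}}})$. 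The first bullet of Lemma~\ref{lem:ext-tree-covering} together with the definition \eqref{eq:superlevel-measure} of the superlevel measure gives
\[
\mu^1_\Theta\bigl(\|F\circ\Gamma\|_{\Gamma^{\ast}\SL^{(1,1)}_{(\Theta,\Theta^{\tmop{ex}})}}>\lambda\bigr)\leq\mu^1_\Theta(W_\lambda)\lesssim\sum_{T\in\mc T}\mu^1_\Theta(T).
\]

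The next step is to convert the right-hand side back to an integral against $|F|$. The selection criterion gives $\lambda\,\mu^1_\Theta(T)\leq\int_{X_T}|F\circ\Gamma|$; since $\Gamma$ has constant Jacobian $|\alpha\beta|\in(1/2,2)$, changing variables yields
\[
\lambda\sum_{T\in\mc T}\mu^1_\Theta(T)\lesssim\sum_{T\in\mc T}\int_{\Gamma(X_T)}\dd|F|=\int_{\R^3_+}G\,\dd|F|,\qquad G\eqd\sum_{T\in\mc T}\1_{\Gamma(X_T)}.
\]
Each set $\Gamma(X_T)$ sits inside $\pi_{\Gamma(T)}(\mT_{\Theta^{\tmop{ex}}_\Gamma})$, and the summability bounds \eqref{eq:ext-tree-covering:ext-summability}--\eqref{eq:ext-tree-covering:inner-summability} supply $\|G\|_{\SL^{(\infty,\infty)}_{\Theta_\Gamma}}\lesssim 1$ and $\|G\|_{\SL^{(1,1)}_{(\Theta_\Gamma,\Theta^{\tmop{in}}_\Gamma)}}\lesssim 1$.

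To conclude, we invoke Proposition~\ref{prop:outer-RN} with outer measure $\mu^1_{\Theta_\Gamma}$, Radon measure $\mf m=G\,\dd\eta\dd y\dd t$ (which is absolutely continuous with respect to $\mu^1_{\Theta_\Gamma}$), and size $\SO=\SL^{(\infty,\infty)}_{\Theta_\Gamma}+\SL^{(1,1)}_{(\Theta_\Gamma,\Theta^{\tmop{ex}}_\Gamma)}$. The local hypothesis of Proposition~\ref{prop:outer-RN} amounts to showing that for every $T'\in\TT_{\Theta_\Gamma}$,
\[
\int_{T'}G\,\dd|F|\lesssim\mu^1_{\Theta_\Gamma}(T')\bigl(\|F\|_{\SL^{(\infty,\infty)}_{\Theta_\Gamma}(T')}+\|F\|_{\SL^{(1,1)}_{(\Theta_\Gamma,\Theta^{\tmop{ex}}_\Gamma)}(T')}\bigr).
\]
This one splits $T'=\bigl(T'\cap\pi_{T'}(\mT_{\Theta^{\tmop{in}}_\Gamma})\bigr)\cup\bigl(T'\cap\pi_{T'}(\mT_{\Theta^{\tmop{ex}}_\Gamma})\bigr)$ and dualizes each piece via the Lebesgue-size duality of Definition~\ref{def:lebesgue-size}: the inner contribution is controlled by pairing $G$ (via its $\SL^{(1,1)}_{(\Theta_\Gamma,\Theta^{\tmop{in}}_\Gamma)}(T')$ norm) against $F$ (via its $\SL^{(\infty,\infty)}_{\Theta_\Gamma}(T')$ norm), whereas the exterior contribution is controlled by pairing $G$ (via its $\SL^{(\infty,\infty)}_{\Theta_\Gamma}(T')$ norm) against $F$ (via its $\SL^{(1,1)}_{(\Theta_\Gamma,\Theta^{\tmop{ex}}_\Gamma)}(T')$ norm). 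Both pairings are uniformly bounded thanks to the summability bounds on $G$.

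The main technical point is the verification that both summability bounds on $G$ genuinely transfer to arbitrary $T'\in\TT_{\Theta_\Gamma}$ uniformly in $\lambda$ and $\Gamma$, so that the implicit constant in the Radon--Nikodym hypothesis is independent of the covering produced at level $\lambda$; this is guaranteed by the fact that \eqref{eq:generated-size:trees} takes the supremum over all such $T'$, and by the $\Gamma$-independence of the constants in Lemma~\ref{lem:ext-tree-covering}. Combining the displays, dividing by $\lambda$, and taking the supremum over $\lambda>0$ produces \eqref{eq:unif-Sex-bound}.
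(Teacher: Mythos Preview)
Your proposal is correct and follows essentially the same route as the paper's proof: reduce to compactly supported $F$, apply Lemma~\ref{lem:ext-tree-covering} to $F\circ\Gamma$, sum the selection inequalities and change variables via the Jacobian $|\alpha\beta|\approx 1$, then invoke Proposition~\ref{prop:outer-RN} in the $\Theta_\Gamma$-frame with the summability bounds \eqref{eq:ext-tree-covering:ext-summability}--\eqref{eq:ext-tree-covering:inner-summability} on $G=\sum_{T\in\mc T}\1_{\Gamma(X_T)}$. The paper phrases the final step as a single $L^1\times L^\infty$ pairing (placing $|F|$ in $L^1_{\mu^1_{\Theta_\Gamma}}(\SL^{(\infty,\infty)}_{\Theta_\Gamma}+\SL^{(1,1)}_{(\Theta_\Gamma,\Theta^{\tmop{ex}}_\Gamma)})$ and $G$ in the dual $L^\infty$ with size $\SL^{(1,1)}_{(\Theta_\Gamma,\Theta^{\tmop{in}}_\Gamma)}+\SL^{(\infty,\infty)}_{\Theta_\Gamma}$), whereas you spell out the local Radon--Nikodym hypothesis by splitting each $T'$ into inner and exterior parts and pairing each piece separately---but this is exactly what underlies the paper's dual-size formulation, so the two presentations coincide.
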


\begin{proof}
  We need to show that for every \(\lambda > 0\) there exists a set \(W_{\lambda}
  = \bigcup_{T \in \mathcal{T} \subset \mathbb{T}_{\Theta}} T\) such that
  \[ \begin{aligned}[t]&
       \left\| \1_{\R^3_+ \setminus W_{\lambda} } F \circ \Gamma
       \right\|_{\SL^{(1, 1)}_{(\Theta, \Theta^{\tmop{ex}})}} \leq \lambda,\\ &
       \mu_{\Theta}^1 (W_{\lambda}) \lesssim \lambda^{- 1}
       \|F\|_{L^1_{\mu_{\Theta_{\Gamma} }^1} \left( \SL^{(\infty,
       \infty)}_{{\Theta_{\Gamma} } } + \SL^{(1, 1)}_{(\Theta_{\Gamma} ,
       \Theta_{\Gamma}^{\tmop{ex}})} \right)} .
     \end{aligned} \]
  We may assume that \(F\) is supported on a compact set by the discussion at
  the end of \Cref{sec:sizes}. Fix \(\lambda > 0\) and let \(W_{\lambda} \assign
  \bigcup_{T \in \mathcal{T }_{\lambda} \subset \mathbb{T}_{\Theta}} T\) where
  \(\mathcal{T}_{\lambda} \subset \mathbb{T}_{\Theta}\) is obtained by applying
  Lemma \ref{lem:ext-tree-covering} to \(F \circ \Gamma \) in place of \(F\). It
  remains to prove the required bounds on \(\mu_{\Theta}^1 (W_{\lambda})\). For
  each tree \(T \in \mathcal{T}_{\lambda}\), Lemma \ref{lem:ext-tree-covering}
  gives the bound
  \[ \lambda \mu^1_{\Theta} (T) \leq \int_{X_T} | F \circ \Gamma (\eta, y, t)
     | \dd \eta \dd y \dd t. \]
  Summing the bound over all trees \(T \in \mathcal{T}_{\lambda}\) and applying
  a change of variables gives
  \[ \begin{aligned}[t]&
       \lambda \mu^1_{\Theta} (W_{\lambda}) \begin{aligned}[t]&
         \lesssim \lambda \sum_{T \in \mathcal{T}_{\lambda}} \mu^1_{\Theta}
         (T) \lesssim \int_{\R^3_+} \left( \sum_{T \in \mathcal{T}_{\lambda}}
         \1_{X_T} (\eta, y, t) \right) | F \circ \Gamma  (\eta, y, t) | \dd
         \eta \dd y \dd t\\ &
         = \int_{\R^3_+} \left( \sum_{T \in \mathcal{T}_{\lambda}} \1_{X_T}
         (\eta, y, t) \right) | F (\alpha (\eta + \gamma t^{- 1}), y, \beta t)
         | \dd \eta \dd y \dd t\\ &
         = \int_{\R^3_+} \left( \sum_{T \in \mathcal{T}_{\lambda}} \1_{\Gamma
         (X_T)} (\alpha (\eta + \gamma t^{- 1}), y, \beta t) \right) | F
         (\alpha (\eta + \gamma t^{- 1}), y, \beta t) | \dd \eta \dd y
         \dd t\\ &
         = \int_{\R^3_+} \frac{1}{\alpha \beta} \left( \sum_{T \in
         \mathcal{T}_{\lambda}} \1_{\Gamma (X_T)} (\eta, y, t) \right) | F
         (\eta, y, t) | \dd \eta \dd y \dd t.
       \end{aligned}
     \end{aligned} \]
  It remains to bound the \(\RHS{}\) of this inequality from above. By
  \Cref{prop:outer-RN} we have that
  \[ \begin{aligned}[t]&
       \int_{\R^3_+} \left( \sum_{T \in \mathcal{T}_{\lambda}} \1_{\Gamma
       (X_T)} (\eta, y, t) \right) | F (\eta, y, t) | \dd \eta \dd y
       \dd t\\ &
       \qquad \lesssim \|F\|_{L^1_{\mu_{\Theta_2}^1} \left( \SL^{(\infty,
       \infty)}_{\Theta_{\Gamma}} + \SL^{(1, 1)}_{(\Theta_{\Gamma},
       \Theta_{\Gamma}^{\tmop{ex}})} \right)} \Bigl\| \sum_{{T \in
       \mathcal{T}_{\lambda}} } \1_{\Gamma (X_T)} (\eta, y, t)
       \Bigr\|_{L^{\infty}_{\mu_{\Theta}^1} \left( \SL^{(1,
       1)}_{(\Theta_{\Gamma}, \Theta_{\Gamma}^{\tmop{in}})} + \SL^{(\infty,
       \infty)}_{\Theta_{\Gamma}} \right)} .
     \end{aligned} \]
  Since
  \[ \begin{aligned}[t]&
       \Bigl\| \sum_{{T \in \mathcal{T}_{\lambda}} } \1_{\Gamma (X_T)} (\eta,
       y, t) \Bigr\|_{L^{\infty}_{\mu_{\Theta}^1} \left( \SL^{(1,
       1)}_{(\Theta_{\Gamma}, \Theta_{\Gamma}^{\tmop{in}})} + \SL^{(\infty,
       \infty)}_{(\Theta_{\Gamma}, \Theta_{\Gamma}^{\tmop{ex}})} \right)}
       \lesssim 1
     \end{aligned} \]
  this concludes the proof.
\end{proof}

We are now ready to deduce \Cref{prop:unif-gamma-bounds:Sex} from
\Cref{lem:unif-Sex-bound}. We use \Cref{lem:wave-packet-decomposition} to
reduce to the scalar case i.e. to proving bounds for \(F \in \Rad (\R^{3}_{+})\) instead of for \(F \in \Rad (\R^{3}_{+}) \otimes
\Phi^N_{\gr}\). Then we conclude using a restricted weak type interpolation
argument.

\begin{proof}{Proof of \Cref{prop:unif-gamma-bounds:Sex}}
  We begin by reducing to the scalar bound. In particular, a straightforward
  adaptation of \Cref{cor:wave-packet-decomposition-and-sizes} to the case
  of the sizes \(\Gamma^{\ast} \SL^{(u, 2)}_{(\Theta, \Theta^{\tmop{ex}})}\)
  shows that
  \[ \| F \circ \Gamma \|_{L^r_{\mu_{\Theta}^1} \Gamma^{\ast} \SL^{(u,
     2)}_{(\Theta, \Theta^{\tmop{ex}})} \Phi^N_{\gr}} \lesssim \sup_{\phi} \|
     F \circ \Gamma (\eta, y, t) [\phi] \|_{L^r_{\mu_{\Theta}^1} \Gamma^{\ast}
     \SL^{(u, 2)}_{(\Theta, \Theta^{\tmop{ex}})}} \]
  where the upper bound is taken over all \(\phi \in \Phi^{N'}_{2 \gr}\) with
  \(\| \phi \|_{\Phi^{N'}_{2 \gr}} \lesssim 1\), as long as \(N \geq N' + 4\).
  Bound \eqref{eq:unif-gamma-bounds:Sex} can thus be deduced from
  \begin{equation}
    \| F \circ \Gamma \|_{L^r_{\mu_{\Theta}^1} \Gamma^{\ast} \SL^{(u,
    2)}_{(\Theta, \Theta^{\tmop{ex}})}} \lesssim \| F
    \|_{L^r_{\mu_{\Theta_{\Gamma}}^1} \left( \SL^{(\infty,
    \infty)}_{\Theta_{\Gamma}} + \SL^{(\bar{u}, 2)}_{(\Theta_{\Gamma},
    \Theta^{\tmop{ex}}_{\Gamma})} \right)}
    \label{eq:unif-gamma-bound-non-iter:Sex:scalar}
  \end{equation}
  for all \(F \in \Rad (\R^{3}_{+})\). The remaining step is to deduce
  bound \eqref{eq:unif-gamma-bound-non-iter:Sex:scalar} from bound
  \eqref{eq:unif-Sex-bound} of \Cref{lem:unif-Sex-bound}. In particular let us
  show that
  \begin{equation}
    \| F \circ \Gamma \|_{L^r_{\mu_{\Theta}^1} \Gamma^{\ast} \SL^{(u,
    1)}_{(\Theta, \Theta^{\tmop{ex}})}} \lesssim \| F
    \|_{L^r_{\mu_{\Theta_{\Gamma}}^1} \left( \SL^{(\infty,
    \infty)}_{\Theta_{\Gamma}} + \SL^{(\bar{u}, 1)}_{(\Theta_{\Gamma},
    \Theta^{\tmop{ex}}_{\Gamma})} \right)}
    \label{eq:unif-gamma-bound-non-iter:Sex:scalar:weak:non-diag}
  \end{equation}
  for \(1 \leq u < \bar{u} \leq \infty\) and \(r \in \left( u \frac{\bar{u} -
  1}{\bar{u} - u}, \infty \right]\) from \eqref{eq:unif-Sex-bound}. Bound
  \eqref{eq:unif-gamma-bound-non-iter:Sex:scalar} follows from
  \eqref{eq:unif-gamma-bound-non-iter:Sex:scalar:weak:non-diag} by applying it
  to \(F^2\) in lieu of \(F\) and choosing \(\bar{u}\) appropriately large. For any
  \(\bar{u} > u\) the \(r = \infty\) endpoint for
  \eqref{eq:unif-gamma-bound-non-iter:Sex:scalar:weak:non-diag}, given by
  \[ \| F \circ \Gamma \|_{L^{\infty}_{\mu_{\Theta}^1} \Gamma^{\ast} \SL^{(u,
     1)}_{(\Theta, \Theta^{\tmop{ex}})}} \leq \| F
     \|_{L^{\infty}_{\mu_{\Theta}^1} \SL^{(\bar{u}, 1)}_{(\Theta_{\Gamma},
     \Theta_{\Gamma}^{\tmop{ex}})}}, \]
  follows from the fact that \(\Gamma (T_{\Theta} (\xi_T, x_T, s_T)) \subset
  T_{\Theta_{\Gamma}} (\alpha \xi_T, x_T, s_T)\). We can use
  \Cref{prop:outer-restricted-interpolation} to conclude that bound
  \eqref{eq:unif-gamma-bound-non-iter:Sex:scalar:weak:non-diag} holds for all
  other \(r > u^{\ast} \eqd u \frac{\bar{u} - 1}{\bar{u} - u}\) if we show the
  \(u^{\ast}\) restricted weak endpoint
  \begin{equation}
    \begin{aligned}[t]&
      \| F \circ \Gamma \|_{L^{u^{\ast}, \infty}_{\mu_{\Theta}^1}
      \Gamma^{\ast} \SL^{(u, 1)}_{(\Theta, \Theta^{\tmop{ex}})}} \lesssim
      \begin{aligned}[t]&
        \mu_{\Theta}^1 \left( \|F\|_{\left( \SL^{(\infty,
        \infty)}_{\Theta_{\Gamma}} + \SL^{(1, 1)}_{(\Theta_{\Gamma},
        \Theta_{\Gamma}^{\tmop{ext}})} \right)} > 0
        \right)^{\frac{1}{u^{\ast}}}\\ &
        \qquad \times \|F\|_{\left( \SL^{(\infty, \infty)}_{\Theta_{\Gamma}} +
        \SL^{(\bar{u}, 1)}_{(\Theta_{\Gamma}, \Theta_{\Gamma}^{\tmop{ex}})}
        \right)} .
      \end{aligned}
    \end{aligned} \label{eq:unif-gamma-bound-non-iter:Sex:scalar:Lrestricted}
  \end{equation}
  Fix any \(\tau > 0\) and let us apply \Cref{lem:unif-Sex-bound}: there exists
  \(W_{\tau} \in \TT_{\Theta}^{\cup}\) such that
  \[ \begin{aligned}[t]&
       \mu^1_{\Theta} (W_{\tau}) \lesssim \begin{aligned}[t]&
         \tau^{- 1} \mu_{\Theta}^1 \left( \|F\|_{\left( \SL^{(\infty,
         \infty)}_{\Theta} + \SL^{(\bar{u}, 1)}_{(\Theta_{\Gamma},
         \Theta_{\Gamma}^{\tmop{ext}})} \right)} > 0 \right) \qquad\\ &
         \qquad \times \|F\|_{\left( \SL^{(\infty, \infty)}_{\Theta} +
         \SL^{(\bar{u}, 1)}_{(\Theta_{\Gamma}, \Theta_{\Gamma}^{\tmop{ex}})}
         \right)},
       \end{aligned}\\ &
       \| \1_{\R^3_+ \setminus W_{\tau}} F \circ \Gamma \|_{\Gamma^{\ast}
       \SL^{(1, 1)}_{(\Theta, \Theta^{\tmop{ex}})}} \leq \tau .
     \end{aligned} \]
  By \(\log\)-convexity of the \(\SL^{(u, 1)}_{(\Theta, \Theta^{\tmop{ex}})}\)
  sizes in the parameter \(u \in [1, \infty]\), we have that
  \[ \begin{aligned}[t]&
       \left\| \1_{\R^3_+ \setminus W_{\tau}} F \circ \Gamma
       \right\|_{\Gamma^{\ast} \SL^{(u, 1)}_{(\Theta, \Theta^{\tmop{ext}})}}
       \leq \| \1_{\R^3_+ \setminus W_{\tau}} F \circ \Gamma \|_{\Gamma^{\ast}
       \SL^{(1, 1)}_{(\Theta, \Theta^{\tmop{ex}})}}^{\frac{1}{u^{\ast}}} \|
       \1_{\R^3_+ \setminus W_{\tau}} F \circ \Gamma \|_{\Gamma^{\ast}
       \SL^{(\bar{u}, 1)}_{(\Theta, \Theta^{\tmop{ex}})}}^{\frac{u^{\ast} -
       1}{u^{\ast}}}\\ &
       \leq \tau^{\frac{1}{u^{\ast}}} \|F\|_{\Gamma^{\ast} \SL^{(\bar{u},
       1)}_{(\Theta, \Theta^{\tmop{ex}})}}^{\frac{u^{\ast} - 1}{u^{\ast}}}
     \end{aligned} . \]
  Combining the estimate above with the measure bound on \(W_{\tau}\) we get
  that
  \[ \mu_{\Theta}^1 (W_{\tau})  \left\| \1_{\R^3_+ \setminus W_{\tau}} F \circ
     \Gamma \right\|_{\SL^{(u, 1)}_{(\Theta, \Theta^{\tmop{ext}})}}^{u_{\ast}}
     \lesssim \begin{aligned}[t]&
       \mu_{\Theta}^1 \left( \|F\|_{\left( \SL^{(\infty,
       \infty)}_{\Theta_{\Gamma}} + \SL^{(\bar{u}, 1)}_{(\Theta_{\Gamma},
       \Theta_{\Gamma}^{\tmop{ex}})} \right)} > 0 \right) \qquad\\ &
       \qquad \times \|F\|_{\left( \SL^{(\infty, \infty)}_{\Theta_{\Gamma}} +
       \SL^{(\bar{u}, 1)}_{(\Theta_{\Gamma}, \Theta_{\Gamma}^{\tmop{ex}})}
       \right)}^{u^{\ast}} .
     \end{aligned} \]
  Since \(\tau > 0\) is arbitrary we have shown that
  \eqref{eq:unif-gamma-bound-non-iter:Sex:scalar:Lrestricted} holds.
\end{proof}

\subsection{Uniform bounds for the singular
size}\label{sec:unif-bounds:singular}

We now prove \Cref{prop:unif-gamma-bounds:singular-size}, proceeding roughly
in the same manner as in \Cref{sec:bound-exterior-size} while proving
\Cref{prop:unif-gamma-bounds:Sex}. We begin with a uniform covering argument
for the interior part of trees.

\begin{lemma}[Singular tree covering]
  \label{lem:singular-tree-covering}Let \(V  \in \mathbb{D}^{\cup}_{\beta}\), \(W
  \in \TT_{\Theta}^{\cup}\) and let \(E = V \cap W\) or \(E = V \cup W\). Suppose
  \(F \in L^{\infty}_{\tmop{loc}} (\R^3_+)\) is a compactly supported function.
  For all \(\lambda > 0\) there exists a set \(W_{\lambda} \subset
  \TT^{\cup}_{\Theta}\) and finite collection of trees \(\mathcal{T} \subset
  \mathbb{T}_{\Theta}\) such that
  \[ \begin{aligned}[t]&
       \Bigl\| \1_{\R^3_+ \setminus W_{\lambda}} F (\eta, y, t) t \dd_t \1_E
       (\eta, y, t) \Bigr\|_{\SL^{(1, 1)}_{(\Theta, \Theta^{\tmop{in}})}}
       \leq \lambda,\\ &
       W_{\lambda} \supset \bigcup_{T \in \mathcal{T}} T, \qquad
       \mu^1_{\Theta} (W_{\lambda}) \lesssim  \sum_{T \in \mathcal{T}}
       \mu^1_{\Theta} (T) .
     \end{aligned} \]
  Each \(T \in \mc{T}\) is endowed with a distinguished bounded subset \(X_T
  \subset \pi_T \left( \mT_{\Theta^{\tmop{in}}} \right)\) such that
  \[ \frac{1}{\mu^1_{\Theta} (T)} \int_{X_T} | F (\eta, y, t) | t \dd_t \1_E
     (\eta, y, t) \dd \eta \dd y \dd t \geq \lambda . \]
  and
  \begin{equation}
    \left\| \left( \sum_{T \in \mathcal{T}} \1_{\Gamma (X_T)} (\eta, y, t)
    \right) t \dd_t \1_{\Gamma (E)} (\eta, y, t) \right\|_{\SL^{(1,
    1)}_{\Theta_{\Gamma}}} \lesssim 1 \label{eq:int-tree-covering:summability}
  \end{equation}
  for any \(\Gamma\) as in \eqref{eq:gamma}. The implicit constants do not
  depend on the set \(E\) nor on \(\Gamma\).
\end{lemma}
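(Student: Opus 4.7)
\medskip

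The plan is to mimic the selection algorithm from Lemma~\ref{lem:ext-tree-covering}, now applied to the singular measure $F(\eta,y,t)\, t\,\dd_t \1_E(\eta,y,t)$ supported on the graph of $\mf{b}_E$ provided by Lemma~\ref{lem:geometry-of-boundary}, and with the size $\SL^{(1,1)}_{(\Theta,\Theta^{\tmop{in}})}$ playing the role previously taken by $\SL^{(1,1)}_{(\Theta,\Theta^{\tmop{ex}})}$. I first reduce, by homogeneity and by the standing compact support assumption, to the case $\spt F \subset [-S,S]^2 \times [S^{-1},S]$ and $\lambda = 1$ (after rescaling). As before, this bounds the set of eligible tops $(\xi_T,x_T,s_T)$ of trees $T \in \TT_\Theta$ for which $\|\1_{\mathbb{K}}F\cdot t\dd_t\1_E\|_{\SL^{(1,1)}_{(\Theta,\Theta^{\tmop{in}})}(T)} \geq 1$, and allows discretization to a finite lattice $\TT^{\tmop{latt}}_\Theta$ analogous to the one in Lemma~\ref{lem:ext-tree-covering}. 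The key change in the selection step is that quasi-maximality must be defined so that the frequency top $\xi_T$ is pushed toward the direction corresponding to $\theta \to -\gamma$ within $\Theta^{\tmop{in}}$, which is the opposite orientation from the exterior case (since $\Theta^{\tmop{in}}$ is centered at $-\gamma$ rather than extending out to the ends of $\Theta$). Iterating, removing enlarged neighborhoods $\mc{W}(T_n)$ after each selection, gives a finite family $\mc{T}$ and a set $W_\lambda = \bigcup_{T \in \mc{T}} \bigcup_{T' \in \mc{W}(T)} T'$ with the claimed size and measure bounds, and distinguished sets $X_T \subset \pi_T(\mT_{\Theta^{\tmop{in}}})$ on which the average of $|F|$ against the singular measure exceeds $\lambda$ and which, by construction, are pairwise disjoint on $\spt(t\,\dd_t\1_E)$.

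The main obstacle and the only genuinely new content is establishing the summability bound \eqref{eq:int-tree-covering:summability}. By the duality representation \eqref{eq:boundary-singular-size-local-duality} of Lemma~\ref{lem:geometry-of-boundary}, applied to the set $\Gamma(E)$, the boundary $\partial \Gamma(E)$ is the graph of a Lipschitz function $\mf{b}_{\Gamma(E)}\of\R^2 \to[0,\infty]$ satisfying \eqref{eq:boundary-regularity} with $\beta = 1$ (since $\Gamma$ maps $\DD_\beta$ to $\DD_1$). Testing the size $\SL^{(1,1)}_{\Theta_\Gamma}(T')$ on the singular measure thus reduces to the ordinary integral over $(\theta,\zeta) \in \Theta_\Gamma \times B_1$ of the counting function $\sum_{T \in \mc{T}}\1_{\Gamma(X_T)}$ evaluated on the graph $\sigma = \mf{b}^{*,T'}(\theta,\zeta)$, where $\mf{b}^{*,T'}$ is the local pullback of $\mf{b}_{\Gamma(E)}$ via $\pi_{T'}$. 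It therefore suffices to prove, uniformly in $(\xi',x') \in \R^2$ and $\theta' \in \Theta_\Gamma$, that the integral along the curve $t \mapsto (\xi' + \theta' t^{-1},\,x',\,t)$, intersected with $\partial \Gamma(E)$, of the counting function is uniformly bounded in $t \in \R_+$, after which a single logarithmic integration in $t$ (using that the curve meets $\partial \Gamma(E)$ on a set whose $\log t$-measure is bounded by the ratio $\tau_+/\tau_-$) closes the estimate.

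The boundedness of the counting function along each such curve is the heart of the proof. Since each $X_T$ is contained in $\pi_T(\mT_{\Theta^{\tmop{in}}})$, the image $\Gamma(X_T)$ is localized at frequencies $\theta_\Gamma \in \Theta^{\tmop{in}}_\Gamma \subset B_{2^{-3}}$, i.e.\ near the frequency center $\alpha \xi_T$. Exactly as in the proof of \eqref{eq:treesel-contra-claim} in Lemma~\ref{lem:ext-tree-covering}, if two trees $T_0, T_<$ both contribute to the curve and $T_<$ was selected strictly before $T_0$, then the quasi-maximality in $\xi_T$ forces $\xi_{T_<}$ to lie on the correct side of $\xi_{T_0}$, from which the bound
\[
\frac{\tau_+(\theta')}{\tau_-(\theta')} \lesssim 1 + \frac{|\Theta^{\tmop{ex}}_\Gamma|}{\dist(\theta',\,\Theta^{\tmop{in}}_\Gamma)}
\]
follows by comparing the scale at which the curve enters $\pi_{T_<}(\mT_{\Theta^{\tmop{in}}})$ to the scale $t_0 \approx s_{T_0}$ already achieved in $X_{T_0}$. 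The additional constraint that the curve lies on $\partial \Gamma(E)$ is a codimension-one restriction in $(\eta,y,t)$ but, thanks to \eqref{eq:boundary-regularity-local}, the graph of $\mf{b}^{*,T'}$ is transverse to the scale variable with Lipschitz constants uniform in $\beta$, so it does not degrade the ratio argument. Integrating the resulting pointwise logarithmic bound over $\theta' \in \Theta_\Gamma$ against $\frac{\dd \theta'}{|\Theta_\Gamma|}$ produces an absolute constant and yields \eqref{eq:int-tree-covering:summability}. The most delicate point, which I would verify carefully, is checking uniformity in $\beta \in (0,1]$ of the constant appearing in the boundary regularity and of the quasi-maximality threshold $\varepsilon_{\max}$: both must depend only on $|\Theta|$ and $\lambda$, and not on the parameters of $\Gamma$.
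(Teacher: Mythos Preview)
Your selection algorithm and lattice discretization are correct and match the paper. However, there are two genuine gaps.

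First, you omit the preliminary reduction $W \to W^{\pm}$ that the paper performs at the outset: writing each tree as $T = T^+ \cap T^-$ with $T^{\pm}$ monotone in $\pm\eta$, one reduces to $E^+ = V \cap W^+$ (or the union variant). This is not cosmetic. It forces the boundary function $\eta \mapsto \mf{b}_{\Gamma(E^+)}(\eta,y)$ to be \emph{non-decreasing}, and that monotonicity is the engine of the summability argument. Without it, the ODE bound \eqref{eq:boundary-regularity} gives only a two-sided differential inequality and the graph of $\mf{b}$ can oscillate in a way your argument does not control. (Relatedly, the quasi-maximality direction is the \emph{same} as in the exterior lemma, not opposite; what changes is this $W^+$ reduction.)

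Second, your approach to \eqref{eq:int-tree-covering:summability} does not go through. You attempt to transplant the $\tau_+/\tau_-$ curve argument from Lemma~\ref{lem:ext-tree-covering}, but there the size was a full $3$-dimensional integral and the curve argument bounded the $\dd t/t$-length of each fiber. Here the measure is singular, supported on the $2$-dimensional graph $\{t=\mf{b}_{\Gamma(E)}(\eta,y)\}$, and via \eqref{eq:boundary-singular-size} the size $\SL^{(1,1)}_{\Theta_\Gamma}(T')$ unpacks as
\[
\frac{1}{s_{T'}}\int_{B_{s_{T'}}(x_{T'})}\int_\R \Bigl(\sum_{T\in\mc{T}}\1_{\Gamma(X_T)}\bigl(\eta,y,\mf{b}(\eta,y)\bigr)\Bigr)\,\1_{T'}\bigl(\eta,y,\mf{b}(\eta,y)\bigr)\,\mf{b}(\eta,y)\,\dd\eta\,\dd y,
\]
an integral in $\eta$ along the graph, not in $t$ along a curve. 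The paper fixes $y$, uses the monotonicity of $\eta\mapsto\mf{b}(\eta,y)$ together with the integrated form \eqref{eq:singular-size-integrand-b-ODE-sol} of the ODE inequality to show that once the earliest-selected tree $T_0$ whose $\Gamma(X_{T_0})$ meets the graph is located at some $\eta_0$, quasi-maximality in $\xi_T$ confines all later contributions to an $\eta$-interval of length $\lesssim \mf{b}(\eta_0,y)^{-1}$ on which $\mf{b}\lesssim\mf{b}(\eta_0,y)$, so the product $\mf{b}\,\dd\eta$ integrates to $O(1)$. Your displayed ratio involving $\dist(\theta',\Theta^{\tmop{in}}_\Gamma)$ cannot play this role: the counting function $\sum_T\1_{\Gamma(X_T)}$ is supported precisely where $\theta'\in\Theta^{\tmop{in}}_\Gamma$, so that distance vanishes on the support and no integrable logarithm emerges.
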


\begin{proof}{Proof of \Cref{lem:singular-tree-covering}}
  The proof follows along the same lines of the proof of
  \Cref{lem:ext-tree-covering}. We begin by making a geometric assumption on
  \(W\). If \(W = \bigcup_{n \in \N} T_n\) then let us set \(W^{\pm} \eqd
  \bigcup_{n \in \N} T_n^{\pm}\) where
  \[ T^{\pm} : \{ (\eta \pm h, y, t), (\eta, y, t) \in T, h \geq 0 \} ; \]
  notice that \(T^+ \in \TT^{\cup}_{\Theta}\) if \(T \in \TT^{\cup}_{\Theta}\) and
  it holds that \(T = T^+ \cap T^-\). Thus, if \(E = V \cap W\) it holds that \(E =
  E^+ \cap E^-\) where \(E^{\pm} \eqd V \cap W^{\pm}\) and we have that
  \[ \left| F (\eta, y, t) t \dd_t \1_E (\eta, y, t) \right| \leq F (\eta, y,
     t)  \left| t \dd_t \1_{E^-} (\eta, y, t) \right| + F (\eta, y, t)  \left|
     t \dd_t \1_{E^+} (\eta, y, t) \right| \]
  Similarly, if \(E = V \cup W\) then \(E = E^+ \cup E^-\) and
  \[ \left| F (\eta, y, t) t \dd_t \1_E (\eta, y, t) \right| \leq F (\eta, y,
     t)  \left| t \dd_t \1_{E^-} (\eta, y, t) \right| + F (\eta, y, t)  \left|
     t \dd_t \1_{E^+} (\eta, y, t) \right| . \]
  By the quasi triangle inequality, it is thus sufficient to show the claim
  for \(E^+\) in place of \(E\). We reason in the case \(E^+ = V \cap W^+\) and we
  write \(F\) in place of \(F (\eta, y, t)  \1_{E^-} (\eta, y, t)\).
  
  Without loss of generality we assume that \(F\) is supported on \(\mathbb{K}
  \subset [- S ; S]^2 \times [S^{- 1} ; S]\) for some \(S \gg 1\). It follows
  from \Cref{lem:geometry-of-boundary} that \\ \(\left\| F \circ \Gamma (\eta, y,
  t) t \dd_t \1_E (\eta, y, t) \right\|_{\SL^{(1, 1)}_{(\Theta,
  \Theta^{\tmop{in}})}} < \infty\) so by homogeneity we assume that
  \[ \left\| F \circ \Gamma (\eta, y, t) t \dd_t \1_E (\eta, y, t)
     \right\|_{\SL^{(1, 1)}_{(\Theta, \Theta^{\tmop{in}})}} = 1. \]
  The parameters \((\xi_T, x_T, s_T)\) of any tree \(T \in \mathbb{T}_{\Theta}\)
  for which
  \[ \left\| F \circ \Gamma (\eta, y, t) t \dd_t \1_E (\eta, y, t)
     \right\|_{\SL^{(1, 1)}_{(\Theta, \Theta^{\tmop{in}})} (T)} \geq \lambda
  \]
  are bounded and \(s_T\) is bounded away from \(0\). As a matter of fact
  \[ \begin{aligned}[t]&
       s_T \|F (\eta, y, t) t \dd_t \1_{E^+} (\eta, y, t) \|_{{{\SL^{(1,
       1)}_{(\Theta, \Theta^{\tmop{in}})}} }  (T_{\Theta} (\xi_T, x_T, s_T)
       )}\\ &
       \qquad \leq 2 S \|F (\eta, y, t) t \dd_t \1_{E^+} (\eta, y, t)
       \|_{{{\SL^{(1, 1)}_{(\Theta, \Theta^{\tmop{in}})}} }  (T_{\Theta}
       (\xi_T, 0, 2 S) )} .
     \end{aligned} \]

  We want to construct \(\mathcal{T}\) by an iterative procedure. We enact the
  same preparatory steps that we have done in the proof of
  \Cref{lem:ext-tree-covering}: we introduce
  \(\mathbb{T}_{\Theta}^{\tmop{latt}}\), the discrete collection of trees; to
  any \(T \in \mathbb{T}_{\Theta} \) we associate a collection of trees
  \(\mathcal{W} (T) \subset \mathbb{T}_{\Theta}^{\tmop{latt}}\) such that \
  \(\mu_{\Theta}^1 \bigl( \bigcup_{T' \in \mathcal{W} (T)} T' \bigr) \lesssim
  \mu_{\Theta}^1 (T)\) and \(\bigcup_{\heartsuit \in \{- 1, 0, 1\}} T_{(\xi_T +
  \heartsuit s_T^{- 1}, x_T, s_T)} \subset \bigcup_{T' \in \mathcal{W} (T)}
  T'\). We define a finite collection \(\mathcal{K}^{\tmop{latt}}_{\lambda}
  \subset \mathbb{T}_{\Theta}^{\tmop{latt}}\) so that the trees in
  \(\mathcal{K}^{\tmop{latt}}_{\lambda}\) cover \(\mathbb{K}\) and so that for any
  tree \(T \in \TT_{\Theta}\) such that\\ \(\left\| F \circ \Gamma (\eta, y, t) t
  \dd_t \1_E (\eta, y, t) \right\|_{\SL^{(1, 1)}_{(\Theta,
  \Theta^{\tmop{in}})} (T)} \geq \lambda\) it holds that \(\mathcal{W} (T)
  \subset \mathcal{K}^{\tmop{latt}}_{\lambda}\).
  
  Let us now proceed to the selection algorithm. Given a collection of trees
  \(\mathcal{X} \subset \mathbb{T}_{\Theta} \) we say that \(T \in \mathcal{X}\)
  is {\tmem{quasi-maximal}} if
  \[ T' \in \mathcal{X} \Longrightarrow \xi_{T'} \leq \xi_T +
     \varepsilon_{\max} \]
  with \(\varepsilon_{\max} \leq 2^{- 100} S.\) Let us start with the collection
  \(\mathcal{T}_0 = \emptyset\) and let \(\mathbb{K}_0 \assign \mathbb{K}\).
  Suppose that at step \(n\) we have a collection \(\mathcal{T}_n = \{ T_1,
  \ldots, T_n \} \subset \mathbb{T}_{\Theta} \) and a subset \(\mathbb{K}_n
  \subset \mathbb{K}\). At step \(n + 1\) let
  \[ \mathcal{X}_{n + 1} \assign \Bigl\{ T \in \mathbb{T}_{\Theta} : \|
     \1_{{\mathbb{K}_n} } F \circ \Gamma (\eta, y, t) t \dd_t \1_E (\eta, y,
     t) \|_{{{\SL^{(1, 1)}_{(\Theta, \Theta^{\tmop{in}})}} }  (T)} \geq
     \lambda \Bigr\} ; \]
  if \(\mathcal{X}_{n + 1}\) is empty, we terminate the iteration. Otherwise
  choose a quasi-maximal element \(T_{n + 1}\) of \(\mathcal{X}_{n + 1}\) and let
  \(\mathcal{T}_{n + 1} =\mathcal{T}_n \cup \{ T_{n + 1} \}\), \(X_{T'} \assign
  \mathbb{K}_n \cap \pi_{T_{n + 1}} \left( \mT_{\Theta^{\tmop{in}}} \right) \)
  and \(\mathbb{K}_{n + 1} =\mathbb{K}_n \setminus \bigcup_{T' \in \mathcal{W}
  (T_{n + 1})} T'\). This process terminates after finitely many steps. This
  procedure yields a collection of trees \(\mathcal{T}= \bigcup_{n \in \N}
  \mc{T}_n\) and distinguished subsets \(X_{T'} \subset \pi_{T'} \left(
  \mT_{\Theta^{\tmop{in}}} \right)\) for any \(T' \in \mathcal{T}\). According to
  the selection criterion (and Lemma \Cref{lem:geometry-of-boundary}) we have
  that
  \[ \frac{1}{\mu^1_{\Theta} (T)} \int_{X_T} | F (\eta, y, t) | t \dd_t \1_E
     (\eta, y, t) \dd \eta \dd y \dd t \geq \lambda . \]
  We set \(W_{\lambda} = \bigcup_{T \in \mathcal{T}} W (T) \supset \bigcup_{T
  \in \mathcal{T}} T\) and we have \(\mu^1_{\Theta} (W_{\lambda}) \lesssim
  \sum_{T \in \mathcal{T}} \mu^1_{\Theta} (T)\). Since the procedure terminated
  it holds that
  \[ \| \1_{\R^3_+ \setminus W_{\lambda}} F \circ \Gamma (\eta, y, t) t \dd_t
     \1_E (\eta, y, t) \|_{{{\SL^{(1, 1)}_{(\Theta, \Theta^{\tmop{in}})}} } 
     (T)} \leq \lambda . \]
  It remains to show that \eqref{eq:int-tree-covering:summability} holds. It
  holds that \(\Gamma (E^+) = \Gamma (V) \cap \Gamma (W^+)\) with \(\Gamma (V)
  \in \DD_1^{\cup}\) and \(\Gamma (W^+) \in \TT_{\Theta_{\Gamma}}^{\cup}\) so let
  \(\mf{b} (\eta, y)\) be the function whose graph is the boundary of \(\Gamma
  (E^+)\), as described in \Cref{lem:geometry-of-boundary}. Denoting
  \(\Theta_{\Gamma} = (\theta_{\Gamma -}, \theta_{\Gamma +})\), according to
  \Cref{lem:geometry-of-boundary} it holds that
  \begin{equation}
    0 \leq - \dd_{\eta} \left( \frac{1}{\mf{b} (\eta, y)} \right) \leq |
    \theta_{\Gamma -} |^{- 1} ; \label{eq:singular-size-integrand-b-ODE}
  \end{equation}
  the condition \(\dd_{\eta} \mf{b} (\eta, y) \geq 0\) comes from the fact that
  \(\eta \mapsto \mf{b} (\eta, y)\) is non-decreasing: if \((\eta, y, t) \in
  \Gamma (E^+)\) then \((\eta', y, t) \in \Gamma (E^+)\) for any \(\eta' \geq
  \eta\). It follows that for any \(\eta_0 \in \R\) and any \(\eta_0 < \eta <
  \frac{| \theta_{\Gamma -} |}{\mf{b} (\eta_0, y)}\) one has
  \begin{equation}
    \begin{aligned}[t]&
      \mf{b} (\eta_0, y) \leq \mf{b} (\eta, y), \qquad \forall \eta \geq
      \eta_0,\\ &
      \mf{b} (\eta, y) \leq \frac{| \theta_{\Gamma -} | \mf{b} (\eta_0, y)}{|
      \theta_{\Gamma -} | - \mf{b} (\eta_0, y) (\eta - \eta_0)}, \qquad
      \forall \eta \in \left( \eta_0, \frac{| \theta_{\Gamma -} |}{\mf{b}
      (\eta_0, y)} \right) .
    \end{aligned} \label{eq:singular-size-integrand-b-ODE-sol}
  \end{equation}
  We must bound the expression
  \begin{equation}
    \begin{aligned}[t]&
      \Bigl\| \sum_{T \in \mathcal{T}} \1_{\Gamma (X_T)} (\eta, y, t) t \dd_t
      \1_{\Gamma (E)} (\eta, y, t) \Bigr\|_{{{\SL^{(1, 1)}_{\Theta_{\Gamma}}}
      }  (T)}\\ &
      = \frac{1}{s_T} \int_{B_{s_T} (x_T)} \left( \int_{\R } \left( \sum_{T
      \in \mathcal{T}} \1_{\Gamma (X_T)} \left( \eta, y, \mf{b} (\eta, y)
      \right) \right) \1_T \left( \eta, y, \mf{b} (\eta, y) \right) \mf{b}
      (\eta, y) \frac{\dd \eta}{| \Theta |} \right) \dd y.
    \end{aligned} \label{eq:singular-size-integrand}
  \end{equation}
  Without loss of generality, let us suppose that \(T = T_{\Theta_{\Gamma}} (0,
  0, 1)\). Fix \(y \in B_1 (0)\) and fix \(\bar{\theta} > \theta_{\Gamma -}\) close
  to \(\theta_{\Gamma -}\); in particular let \(\bar{\theta} < - | \theta_{\Gamma
  -}^{\tmop{in}} |\).
  
  Let \(\bar{\eta}\) be the lower bound of \(\eta \in \R\) such that \(\eta \mf{b}
  (\eta, y) > \bar{\theta}\) (or \(+ \infty\) if no such \(\eta\) exists). The
  quantity \(\eta \mapsto \eta \mf{b} (\eta, y)\) is non-decreasing, according
  to \eqref{eq:singular-size-integrand-b-ODE}. Let us start by showing that \
  \[ \int_{\eta > \bar{\eta}} \left( \sum_{T \in \mathcal{T}} \1_{\Gamma
     (X_T)} \left( \eta, y, \mf{b} (\eta, y) \right) \right) \1_T \left( \eta,
     y, \mf{b} (\eta, y) \right) \mf{b} (\eta, y) \frac{\dd \eta}{| \Theta |}
     \lesssim 1. \]
  This is relevant only if \(\bar{\eta} < + \infty\). From
  \eqref{eq:singular-size-integrand-b-ODE-sol} we can deduce that for
  \(\bar{\eta} < \eta < \frac{\bar{\theta} - \theta_{\Gamma -}}{2 \mf{b}
  (\bar{\eta}, y)}\) it holds that
  \[ \mf{b} (\eta, y) < \frac{2 | \theta_{\Gamma -} | \mf{b} (\bar{\eta},
     y)}{\bar{\theta} - \theta_{\Gamma}} . \]
  When the graph \(\eta \mapsto \mf{b} (\eta, y)\) intersects \(T\) we have that
  \(\mf{b} (\eta, y) \leq \frac{\theta_{\Gamma_+}}{\eta}\). Thus for \(\eta >
  \frac{\bar{\theta} - \theta_{\Gamma}}{2 \mf{b} (\bar{\eta}, y)}\) we have
  \[ \mf{b} (\eta, y) < \frac{\theta_{\Gamma_+} 2 \mf{b} (\bar{\eta},
     y)}{\bar{\theta} - \theta_{\Gamma_-}} . \]
  We have shown that for all \(\eta > \bar{\eta}\) when the graph \(\eta \mapsto
  \mf{b} (\eta, y)\) intersects \(T\) it holds that
  \[ \mf{b} (\eta, y) < \frac{(2 | \theta_{\Gamma -} | + \theta_{\Gamma_+}) 2
     \mf{b} (\bar{\eta}, y)}{| \Theta |} . \]
  The graph \(\eta \mapsto \mf{b} (\eta, y)\) intersects \(T\) only if \(\eta <
  \frac{\theta_{\Gamma +}}{\mf{b} (\bar{\eta}, y)}\) so
  \[ \int_{\eta > \bar{\eta}} \left( \sum_{T \in \mathcal{T}} \1_{\Gamma
     (X_T)} \left( \eta, y, \mf{b} (\eta, y) \right) \right) \1_T \left( \eta,
     y, \mf{b} (\eta, y) \right) \mf{b} (\eta, y) \frac{\dd \eta}{| \Theta |}
     \leq 2 (| \theta_{\Gamma -} | + \theta_{\Gamma_+}) . \]

  It thus remains to bound the inner integrand of
  \eqref{eq:singular-size-integrand} restricted to when \(\eta < \bar{\eta}\)
  and in particular when \(\theta_{\Gamma -} < \eta b (\eta, y) <
  \bar{\theta}\). Let \(A = \{ (\eta, y, t) : \theta_{\Gamma -} < \eta b (\eta,
  y) < \bar{\theta} \} .\) Let \(T_0 \in \mc{T}\) be the tree selected earliest
  by the algorithm among those for which the graph \(\eta \mapsto \mf{b} (\eta,
  y)\) intersects \(\Gamma (X_{T_0}) \cap T \cap A\). We need to show the bound
  \begin{equation}
    \int_{\eta = - \infty }^{\bar{\eta}} \left( \sum_{T \in \mathcal{T}_{> 0}}
    \1_{\Gamma (X_T)} \left( \eta, y, \mf{b} (\eta, y) \right) \right) \1_{T
    \cap A} \left( \eta, y, \mf{b} (\eta, y) \right) \mf{b} (\eta, y)
    \frac{\dd \eta}{| \Theta |} \lesssim 1
    \label{eq:singular-size-integrand-y-fixed}
  \end{equation}
  where \(\mathcal{T}_{> 0}\) is the sub collection of those trees in \(\mc{T}\)
  selected after \(T_{n_0}\). Let \(\eta_0\) be the lower bound of \(\eta\) for
  which the graph \(\eta \mapsto \mf{b} (\eta, y)\) intersects \(\Gamma \left(
  X_{T_{n_0}} \right) \cap T\). It holds that
  \[ \alpha \xi_{T_{n_0}} < \eta_0 + | \theta_{\Gamma -}^{\tmop{in}} | \mf{b}
     (\eta_0, y)^{- 1} < 0 \]
  because \(\eta_0 \mf{b} (\eta_0, y) < \bar{\theta} < - | \theta_{\Gamma
  -}^{\tmop{in}} |\). It follows that \(\left\{ (\eta, y, t) \in T : \eta \leq
  \eta_0, t < \mf{b} (\eta_0, t) \right\} \subset \Gamma (T_{n_0})\) and thus
  the graph \(\eta \mapsto \mf{b} (\eta, y)\) over \(\eta \in (- \infty, \eta_0]\)
  cannot intersect \(\Gamma (X_{T_{>}}) \cap T \cap A\) for any \(T_{>} \in
  \mc{T}\) selected after \(T_{n_0}\). We are down to bounding
  \[ \int_{\eta  = \eta_0}^{\bar{\eta}} \left( \sum_{T \in \mathcal{T}_{> 0}}
     \1_{\Gamma (X_T)} \left( \eta, y, \mf{b} (\eta, y) \right) \right) \1_{T
     \cap A} \left( \eta, y, \mf{b} (\eta, y) \right) \mf{b} (\eta, y)
     \frac{\dd \eta}{| \Theta |} \]
  Any tree \(T_{>} \in \mathcal{T}_{> 0}\) satisfies
  \[ \alpha \xi_{T'} < \alpha \xi_{T_{n_0}} + \alpha \varepsilon_{\max} <
     \eta_0 + | \theta_{\Gamma -}^{\tmop{in}} | \mf{b} (\eta_0, y)^{- 1} +
     \alpha \varepsilon_{\max} < \eta_0 + 2 | \theta_{\Gamma -}^{\tmop{in}} |
     \mf{b} (\eta_0, y)^{- 1} \]
  by the selection condition, given that \(\epsilon_{\max} < \frac{|
  \theta_{\Gamma -}^{\tmop{in}} |}{S}\). The graph \(\eta \mapsto \mf{b} (\eta,
  y)\) intersects \(\Gamma (X_{T_{>}}) \cap T \subset \Gamma \left( \pi_{T_{>}}
  \left( \mT_{\Theta^{\tmop{in}}} \right) \right) \cap T\) only if
  \[ \mf{b} (\eta, y) < \frac{| \theta^{\tmop{in}}_{\Gamma +} |}{\eta -
     \alpha \xi_{T_{>}}} . \]
  This shows that if \(\eta > \alpha \xi_{T_{>}} + \varepsilon' \mf{b} (\eta_0,
  y)^{- 1}\) for some \(\varepsilon' > 0\) then
  \[ \mf{b} (\eta, y) < \frac{| \theta^{\tmop{in}}_{\Gamma +} | \mf{b}
     (\eta_0, y)}{\varepsilon'} . \]
  We have already established that
  \[ \mf{b} (\eta, y) \leq \frac{| \theta_{\Gamma -} | \mf{b} (\eta_0, y)}{|
     \theta_{\Gamma -} | - \mf{b} (\eta_0, y) (\eta - \eta_0)} \]
  so if \(\eta < \alpha \xi_{T'} + \varepsilon' \mf{b} (\eta_0, y)^{- 1}\) then
  \(0 < \eta - \eta_0 < (2 | \theta_{\Gamma -}^{\tmop{in}} | + \varepsilon')
  \mf{b} (\eta_0, y)^{- 1}\) so
  \[ \mf{b} (\eta, y) \leq \frac{| \theta_{\Gamma -} | \mf{b} (\eta_0, y)}{|
     \theta_{\Gamma -} | - (2 | \theta_{\Gamma -}^{\tmop{in}} | +
     \varepsilon')} . \]
  Choosing \(\varepsilon' = 1 / 4\) we get that
  \[ \mf{b} (\eta, y) \leq (| \theta_{\Gamma -} | + 4 |
     \theta^{\tmop{in}}_{\Gamma +} |) \mf{b} (\eta_0, y) \]
  when the graph \(\eta \mapsto \mf{b} (\eta, y)\) intersects \(\Gamma
  (X_{T_{>}}) \cap T\). Since \(\eta \mapsto \mf{b} (\eta, y)\) is non-decreasing
  it holds that the graph \(\eta \mapsto \mf{b} (\eta, y)\) intersects \(T\) only
  if \(\eta \mf{b} (\eta_0, y) < \theta_{\Gamma +}\) so we have shown that
  \[ \begin{aligned}[t]&
       \int_{\eta  = \eta_0}^{\bar{\eta}} \left( \sum_{T \in \mathcal{T}_{>
       0}} \1_{\Gamma (X_T)} \left( \eta, y, \mf{b} (\eta, y) \right) \right)
       \1_{T \cap A} \left( \eta, y, \mf{b} (\eta, y) \right) \mf{b} (\eta, y)
       \frac{\dd \eta}{| \Theta |}\\ &       \qquad 
       \leq \int_{\eta  = \theta_{\Gamma -} / \mf{b} (\eta_0,
       y)}^{\theta_{\Gamma +} / \mf{b} (\eta_0, y)} (| \theta_{\Gamma -} | + 4
       | \theta^{\tmop{in}}_{\Gamma +} |) \mf{b} (\eta_0, y) \frac{\dd \eta}{|
       \Theta |} \lesssim | \theta_{\Gamma -} | + 4 |
       \theta^{\tmop{in}}_{\Gamma +} | .
     \end{aligned} \]
  The claim follows. 
\end{proof}

The covering lemma above allows us to obtain the following uniform weak \(L^1\)
bound.

\begin{lemma}
  \label{lem:unif-sing-bound}Let \(V  \in \mathbb{D}^{\cup}_{\beta}\), \(W  \in
  \TT_{\Theta}^{\cup}\) and \(E = V \cap W\) or \(E = V \cup W\). The bound
  \begin{equation}
    \begin{aligned}[t]&
      \left\| F \circ \Gamma (\eta, y, t) t \dd_t \1_E (\eta, y, t)
      \right\|_{L^{1, \infty}_{\mu_{\Theta}^1} \Gamma^{\ast} \SL^{(1,
      1)}_{(\Theta, \Theta^{\tmop{in}})}} \lesssim  \| F (\eta, y, t) 
      \|_{L^1_{\mu_{\Theta}^1} \SL^{(\infty, \infty)}_{\Theta_{\Gamma}}},
    \end{aligned} \label{eq:unif-sing-bound}
  \end{equation}
  holds for any function \(F \in L^{\infty}_{\tmop{loc}} (\R^3_+)\). The
  implicit constant is independent of the set \(E\).
\end{lemma}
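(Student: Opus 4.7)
My plan is to adapt the proof of \Cref{lem:unif-Sex-bound} verbatim, with the singular covering \Cref{lem:singular-tree-covering} playing the role of the exterior covering \Cref{lem:ext-tree-covering} and the summability bound \eqref{eq:int-tree-covering:summability} playing the role of \eqref{eq:ext-tree-covering:inner-summability}. By the regularity of sizes and \Cref{rmk:cpt-sets}, I first reduce to the case of $F$ compactly supported in $\R^3_+$.

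Fix $\lambda > 0$. I would apply \Cref{lem:singular-tree-covering} with $F \circ \Gamma$ in place of the input function and with the same set $E$, producing a finite collection $\mc{T}_\lambda \subset \TT_\Theta$ together with distinguished subsets $X_T \subset \pi_T(\mT_{\Theta^{\tmop{in}}})$ for each $T \in \mc{T}_\lambda$, and a set $W_\lambda \eqd \bigcup_{T \in \mc{T}_\lambda} T \in \TT_\Theta^\cup$. Since the pullback Lebesgue size coincides pointwise with the original one, cf.\ \eqref{eq:gamma-Lebesgue-size} and \eqref{eq:in-out-size}, the first conclusion of the covering lemma yields immediately
\begin{equation*}
\left\| \1_{\R^3_+ \setminus W_\lambda} (F \circ \Gamma)(\eta, y, t) \, t \dd_t \1_E(\eta, y, t) \right\|_{\Gamma^\ast \SL^{(1,1)}_{(\Theta, \Theta^{\tmop{in}})}} \le \lambda,
\end{equation*}
so that only the bound on $\mu^1_\Theta(W_\lambda)$ remains.

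Summing the selection condition $\lambda \mu^1_\Theta(T) \le \int_{X_T} |F \circ \Gamma| \, t \dd_t \1_E$ across $T \in \mc{T}_\lambda$ and performing the change of variables through $\Gamma$, which is a diffeomorphism with Jacobian $|\alpha\beta|^{-1} \in (1/2, 2)$, leads to
\begin{equation*}
\lambda \mu^1_\Theta(W_\lambda) \lesssim \int_{\R^3_+} \Big( \sum_{T \in \mc{T}_\lambda} \1_{\Gamma(X_T)} \Big)(\eta, y, t) \, |F(\eta, y, t)| \, t \dd_t \1_{\Gamma(E)}(\eta, y, t),
\end{equation*}
that is, the integral of $|F|$ against the Radon measure $\mf{m} \eqd \big( \sum_T \1_{\Gamma(X_T)} \big) t \dd_t \1_{\Gamma(E)}$ on $\R^3_+$. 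To this I would apply the outer Radon-Nikodym principle \Cref{prop:outer-RN} on the outer space $(\R^3_+, \TT_{\Theta_\Gamma}^\cup, \mu^1_{\Theta_\Gamma}, \SL^{(\infty,\infty)}_{\Theta_\Gamma})$; by duality in \Cref{def:lebesgue-size}, the per-tree hypothesis of \Cref{prop:outer-RN} reduces to $\|\mf{m}\|_{\SL^{(1,1)}_{\Theta_\Gamma}} \lesssim 1$, which is precisely \eqref{eq:int-tree-covering:summability}. Using that $\mu^1_\Theta$ and $\mu^1_{\Theta_\Gamma}$ are comparable (their ratio being controlled by $|\Theta|/|\Theta_\Gamma|$, uniformly bounded above and away from zero since $\Theta \supset B_4$ and $|\alpha\beta| \approx 1$), this produces $\mu^1_\Theta(W_\lambda) \lesssim \lambda^{-1} \|F\|_{L^1_{\mu^1_{\Theta_\Gamma}} \SL^{(\infty,\infty)}_{\Theta_\Gamma}}$, completing the proof.

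The main technical point is verifying in the step above that $\Gamma_\ast(t \dd_t \1_E) = t \dd_t \1_{\Gamma(E)}$ up to a multiplicative factor bounded above and away from zero, uniformly in $\beta \in (0, 1]$. Using the explicit description of $t \dd_t \1_E$ as the singular measure $\mf{b}_E(\eta, y) \delta(t - \mf{b}_E(\eta, y)) \, d\eta \, dy \, dt$ from \Cref{lem:geometry-of-boundary}, this factor equals $(\beta |J_\partial|)^{-1}$, where $J_\partial$ is the Jacobian of the induced boundary map $(\eta, y) \mapsto (\alpha(\eta + \gamma/\mf{b}_E(\eta, y)), y)$. The bound \eqref{eq:boundary-regularity} on $\partial_\eta \mf{b}_E / \mf{b}_E^2$, together with $|\gamma| < 1$ from \eqref{eq:gamma} and $\Theta \supset B_4$, give $|J_\partial| \approx |\alpha|$, and hence $(\beta |J_\partial|)^{-1} \approx |\alpha\beta|^{-1} \in (1/2, 2)$, uniformly in $\beta$ and in the choice of the set $E$.
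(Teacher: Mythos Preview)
Your proposal is correct and follows essentially the same route as the paper: apply the singular covering \Cref{lem:singular-tree-covering} to $F\circ\Gamma$, sum the selection conditions, push the resulting boundary integral through $\Gamma$ via the $2$D boundary change of variables (your Jacobian computation $|J_\partial|\approx|\alpha|$ using \eqref{eq:boundary-regularity}, $|\gamma|<1$, and $\Theta\supset B_4$ is exactly the paper's), and then invoke outer Radon--Nikodym together with the summability bound \eqref{eq:int-tree-covering:summability}. The paper phrases the last step as Radon--Nikodym followed by outer H\"older (\Cref{cor:outer-holder-classical}) at the $L^1\times L^\infty$ level, whereas you absorb the H\"older step into the per-tree hypothesis of \Cref{prop:outer-RN}; the two are equivalent.
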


\begin{proof}
  We need to show that for every \(\lambda > 0\) there exists a set \(W_{\lambda}
  = \bigcup_{T \in \mathcal{T} \subset \mathbb{T}_{\Theta}} T\) such that
  \[ \begin{aligned}[t]&
       \left\| \1_{\R^3_+ \setminus W_{\lambda} } F \circ \Gamma (\eta, y, t)
       t \dd_t \1_E (\eta, y, t) \right\|_{\Gamma^{\ast} \SL^{(1,
       1)}_{(\Theta, \Theta^{\tmop{in}})}} \leq \lambda,\\ &
       \mu_{\Theta}^1 (W_{\lambda}) \lesssim \lambda^{- 1}
       \|F\|_{L^1_{{\mu_{\Theta }^1}_{\Gamma}} \SL^{(\infty,
       \infty)}_{\Theta_{\Gamma}}} .
     \end{aligned} \]
  We may assume that \(F\) is supported on a compact set by the discussion at
  the end of \Cref{sec:sizes}. Fix \(\lambda > 0\) and let \(W_{\lambda} \assign
  \bigcup_{T' \in \mathcal{T }_{\lambda} \subset \mathbb{T}_{\Theta}} T'\)
  where \(\mathcal{T}_{\lambda} \subset \mathbb{T}_{\Theta}\) is obtained by
  applying \Cref{lem:singular-tree-covering} to \(F \circ \Gamma \) in place of
  \(F\). It remains to prove the required bounds on \(\mu_{\Theta}^1
  (W_{\lambda})\). Let \((\eta, y) \mapsto \mf{b} (\eta, y)\) be the function
  whose graph is the boundary of \(E \), as described in
  \Cref{lem:geometry-of-boundary}. For each tree \(T' \in
  \mathcal{T}_{\lambda}\), Lemma \ref{lem:singular-tree-covering} gives the
  bound
  \[ \int_{\R^2} \1_{X_{T'}} \left( \eta, y, \mf{b} (\eta, y) \right) \left| F
     \circ \Gamma (\eta, y, \mf{b} (\eta, y)) \right| \mf{b} (\eta, y) \dd
     \eta \dd y \geq \lambda \mu^1_{\Theta} (T) . \]
  and thus
  \[ \begin{aligned}[t]
       \lambda \mu^1_{\Theta} (W_{\lambda}) &\lesssim  \lambda \sum_{T' \in
       \mathcal{T}_{\lambda}} \mu^1_{\Theta} (T')\\ &
       \lesssim \int_{\R^2} \left( \sum_{T' \in \mathcal{T}_{\lambda}}
       \1_{X_T} \left( \eta, y, \mf{b} (\eta, y) \right) \right) \left| F
       \circ \Gamma (\eta, y, \mf{b} (\eta, y)) \right| \mf{b} (\eta, y)
       \dd \eta \dd y\\ &
       \lesssim \begin{aligned}[t]&
         \int_{\R^2} \left( \sum_{T' \in \mathcal{T}_{\lambda}} \1_{\Gamma
         (X_{T'})} \left( \alpha \left( \eta + \gamma \mf{b} (\eta, y)^{- 1}
         \right), y, \beta \mf{b} (\eta, y) \right) \right)\\ &
         \qquad \times \left| F (\alpha \left( \eta + \gamma \mf{b} (\eta,
         y)^{- 1} \right), y, \beta \mf{b} (\eta, y)) \right| \mf{b} (\eta, y)
         \dd \eta \dd y.
       \end{aligned}
     \end{aligned} \]
  Let us enact the Lipschitz change of variables \(\eta' (\eta, y) = \alpha
  \left( \eta + \gamma \mf{b} (\eta, y)^{- 1} \right)\). We have the following
  bound on the Jacobian:
  \[ \frac{\dd \eta' (\eta, y)}{\dd \eta} = \alpha \left( 1 - \gamma
     \frac{\partial_{\eta} \mf{b} (\eta, y)}{\mf{b} (\eta, y)^2} \right)
     \gtrsim \alpha ; \]
  this holds since \(| \gamma | < 1\) and \(| \theta_+ |, | \theta_- | \geq 2\)
  and thus, according to \eqref{eq:boundary-regularity}, it holds that \(\left|
  \gamma \frac{\partial_{\eta} \mf{b} (\eta, y)}{\mf{b} (\eta, y)^2} \right| <
  \frac{1}{2}\). Let \((\eta', y) \mapsto \mf{b}_{\Gamma} (\eta', y)\) be the
  function whose graph is the boundary of \(\Gamma (E)\).
  \Cref{lem:geometry-of-boundary} applies since \(\Gamma (E) = \Gamma (V) \cap
  \Gamma (W)\) (or, respectively, \(\Gamma (E) = \Gamma (V) \cup \Gamma (W)\))
  and it holds that \(\Gamma (V) \in \DD_1^{\cup}\) and \(\Gamma (W) \in
  \TT^{\cup}_{\Theta_{\Gamma}} .\) It follows that
  \[ \beta \mf{b} (\eta, y) = \mf{b}_{\Gamma} (\eta', y) \]
  since the boundary of \(E\) is the set
  \( \left\{ \left( \eta, y, \mf{b} (\eta, y) \right), (\eta, y) \in \R^2
     \right\} \)
  while the image under \(\Gamma\) of this set is
  \[ \begin{aligned}[t]&
       \Gamma \left( \left\{ \left( \eta, y, \mf{b} (\eta, y) \right), (\eta,
       y) \in \R^2 \right\} \right)\\ &
       \qquad \begin{aligned}[t]&
         = \left\{ \left( \left( \eta + \gamma \mf{b} (\eta, y)^{- 1} \right),
         y, \beta \mf{b} (\eta, y) \right) \suchthat (\eta, y) \in \R^2
         \right\}\\ &
         = \left\{ \left( \eta', y, \mf{b}_{\Gamma} (\eta', y) \right)
         \suchthat (\eta', y) \in \R^2 \right\} .
       \end{aligned}
     \end{aligned} \]
  We get that
  \[ \begin{aligned}[t]&
       \lambda \mu^1_{\Theta} (W_{\lambda})
       \lesssim \frac{1}{\alpha \beta} \int_{\R^2}
       \begin{aligned}[t]
       & \left( \sum_{T' \in
       \mathcal{T}_{\lambda}} \1_{\Gamma (X_{T'})} \left( \eta', y,
     \mf{b}_{\Gamma} (\eta', y) \right) \right)
     \\ & \qquad\times\left| F (\eta', y,
     \mf{b}_{\Gamma} (\eta', y)) \right| \mf{b}_{\Gamma} (\eta', y) \dd \eta' \dd y
       \end{aligned}
     \end{aligned} \]
  It remains to bound the \(\RHS{}\) of this inequality from above. By
  \Cref{prop:outer-RN} and by the Hölder inequality
  (\Cref{cor:outer-holder-classical}) we have that
  \[ \begin{aligned}[t]&
       \int_{\R^2} \left( \sum_{T' \in \mathcal{T}_{\lambda}} \1_{\Gamma
       (X_{T'})} \left( \eta', y, \mf{b}_{\Gamma} (\eta', y) \right) \right)
       \left| F (\eta', y, \mf{b}_{\Gamma} (\eta', y)) \right| \mf{b}_{\Gamma}
       (\eta', y) \dd \eta' \dd y\\ &
       = \int_{\R^2} \left( \sum_{T' \in \mathcal{T}_{\lambda}} \1_{\Gamma
       (X_{T'})} (\eta', y, t') \right) | F (\eta', y, t') | t' \delta \left(
       t' - \mf{b}_{\Gamma} (\eta', y) \right) \dd \eta' \dd y\\ &
       \leq \left\| \left( \sum_{T' \in \mathcal{T}_{\lambda}} \1_{\Gamma
       (X_{T'})} (\eta', y, t') \right) F (\eta', y, t') t' \delta \left( t' -
       \mf{b}_{\Gamma} (\eta', y) \right)
       \right\|_{L^1_{\mu_{\Theta_{\Gamma}}^1} \SL^{(1,
       1)}_{\Theta_{\Gamma}}}\\ &
       \leq \| F (\eta', y, t') \|_{L^1_{\mu_{\Theta_{\Gamma}}^1}
       \SL^{(\infty, \infty)}_{\Theta_{\Gamma}}} \left\| \left( \sum_{T' \in
       \mathcal{T}_{\lambda}} \1_{\Gamma (X_{T'})} (\eta', y, t') \right) t'
       \delta \left( t' - \mf{b}_{\Gamma} (\eta', y) \right)
       \right\|_{L^{\infty}_{\mu_{\Theta_{\Gamma}}^1} \SL^{(1,
       1)}_{\Theta_{\Gamma}}}
     \end{aligned} \]
  Using bound \eqref{eq:int-tree-covering:summability} provided by
  \Cref{lem:singular-tree-covering} we have
  \[ \begin{aligned}[t]&
       \left\| \left( \sum_{T' \in \mathcal{T}_{\lambda}} \1_{\Gamma
       (X_{T'})} (\eta', y, t') \right) t' \delta \left( t' - \mf{b}_{\Gamma}
       (\eta', y) \right) \right\|_{L^{\infty}_{\mu_{\Theta_{\Gamma}}^1}
       \SL^{(1, 1)}_{\Theta_{\Gamma}}}\\ &
       \qquad = \left\| \left( \sum_{T' \in \mathcal{T}_{\lambda}} \1_{\Gamma
       (X_{T'})} (\eta', y, t') \right) t' \dd_t \1_{\Gamma (E)} (\eta', y,
       t') \right\|_{L^{\infty}_{\mu_{\Theta_{\Gamma}}^1} \SL^{(1,
       1)}_{\Theta_{\Gamma}}} \lesssim 1
     \end{aligned} \]
  that concludes the proof.
\end{proof}

We are now ready to deduce \Cref{prop:unif-gamma-bounds:singular-size} from
\Cref{lem:unif-sing-bound}. The procedure is analogous to the deduction of
\Cref{prop:unif-gamma-bounds:Sex} from \Cref{lem:unif-Sex-bound}: first we
reduce to a scalar inequality and then we conclude using restricted weak type
interpolation with one endpoint provided by \Cref{lem:unif-sing-bound}. The
\(L^{\infty}\) endpoint of this argument is more involved than in the proof of
\Cref{prop:unif-gamma-bounds:Sex}. The reader is encouraged to refer to the
proof of \Cref{prop:unif-gamma-bounds:Sex} for the explanation of the common
steps with the proof below, that here are presented succinctly.

\begin{proof}{Proof of \Cref{prop:unif-gamma-bounds:singular-size}}
  Bound \eqref{eq:unif-gamma-bounds:singular-size} follows from the scalar
  bound
  \begin{equation}
    \left\|  F \circ \Gamma (\eta, y, t) t \dd_t \1_E (\eta, y, t)
    \right\|_{L^r_{\mu_{\Theta}^1} \Gamma^{^{\ast}} \SL^{(u, 1)}_{(\Theta,
    \Theta^{\tmop{in}} )}} \lesssim \| F (\eta, y, t) 
    \|_{L^r_{\mu_{\Theta_{\Gamma}}^1} \SL^{(\infty,
    \infty)}_{\Theta_{\Gamma}}}
    \label{eq:unif-gamma-bound-non-iter:singular:scalar}
  \end{equation}
  for all \(F \in L^{\infty}_{\tmop{loc}} (\R^{3}_{+})\), as has been
  discussed in \Cref{cor:wave-packet-decomposition-and-sizes} and at the
  beginning of the proof of \Cref{prop:unif-gamma-bounds:Sex}. Next, we deduce
  the bound \eqref{eq:unif-gamma-bound-non-iter:singular:scalar} from the
  bound \eqref{eq:unif-sing-bound}. It is sufficient to prove
  \eqref{eq:unif-gamma-bound-non-iter:singular:scalar} with \(u = 1\) since \(F
  \in L^{\infty}_{\tmop{loc}} (\R^{3}_{+})\) is arbitrary,
  \[ \| F (\eta, y, t)  \|_{L^r_{\mu_{\Theta_{\Gamma}}^1} \SL^{(\infty,
     \infty)}_{\Theta_{\Gamma}}} = \| | F (\eta, y, t) |^u  \|_{L^{r /
     u}_{\mu_{\Theta_{\Gamma}}^1} \SL^{(\infty, \infty)}_{\Theta_{\Gamma}}}^{1
     / u}, \]
  and by \eqref{eq:boundary-singular-size} it holds that
  \[ \begin{aligned}[t]&
       \left\|  F \circ \Gamma (\eta, y, t) t \dd_t \1_E (\eta, y, t)
       \right\|_{L^r_{\mu_{\Theta}^1} \Gamma^{^{\ast}} \SL^{(u, 1)}_{(\Theta,
       \Theta^{\tmop{in}} )}}\\ &
       \qquad \lesssim \left\|  | F |^u \circ \Gamma (\eta, y, t) t \dd_t
       \1_E (\eta, y, t) \right\|_{L^{r / u}_{\mu_{\Theta}^1}
       \Gamma^{^{\ast}} \SL^{(1, 1)}_{(\Theta, \Theta^{\tmop{in}} )}}^{1 / u}
       .
     \end{aligned} \]
  Now, supposing \(u = 1\), we prove the bound by interpolation
  (\Cref{prop:outer-restricted-interpolation}). The weak \(L^1\) endpoint is
  given by \Cref{lem:unif-sing-bound}. The \(r = \infty\) endpoint for
  \eqref{eq:unif-gamma-bound-non-iter:Sex:scalar:weak:non-diag}, given by
  \[ \left\|  F \circ \Gamma (\eta, y, t) t \dd_t \1_E (\eta, y, t)
     \right\|_{L^{\infty}_{\mu_{\Theta}^1} \Gamma^{^{\ast}} \SL^{(1,
     1)}_{(\Theta, \Theta^{\tmop{in}} )}} \leq \| F (\eta, y, t) 
     \|_{L^r_{\mu_{\Theta_{\Gamma}}^1} \SL^{(\infty,
     \infty)}_{\Theta_{\Gamma}}} . \]
  This bound holds because
  \[ \begin{aligned}[t]&
       \left\|  F \circ \Gamma (\eta, y, t) t \dd_t \1_E (\eta, y, t)
       \right\|_{L^{\infty}_{\mu_{\Theta}^1} \Gamma^{^{\ast}} \SL^{(1,
       1)}_{(\Theta, \Theta^{\tmop{in}} )}}\\ &
       \qquad \leq \|  F \circ \Gamma (\eta, y, t) 
       \|_{L^{\infty}_{\mu_{\Theta}^1} \Gamma^{^{\ast}} \SL^{(\infty,
       \infty)}_{\Theta}} \left\|  t \dd_t \1_E (\eta, y, t)
       \right\|_{L^{\infty}_{\mu_{\Theta}^1} \Gamma^{^{\ast}} \SL^{(1,
       1)}_{(\Theta, \Theta^{\tmop{in}} )}}\\ &
       \hspace{2.0em} \leq \|  F \|_{L^{\infty}_{\mu_{\Theta_{\Gamma}}^1}
       \SL^{(\infty, \infty)}_{\Theta_{\Gamma}}} \left\|  t \dd_t \1_E (\eta,
       y, t) \right\|_{L^{\infty}_{\mu_{\Theta}^1} \Gamma^{^{\ast}} \SL^{(1,
       1)}_{(\Theta, \Theta^{\tmop{in}} )}} \leq \|  F
       \|_{L^{\infty}_{\mu_{\Theta_{\Gamma}}^1} \SL^{(\infty,
       \infty)}_{\Theta_{\Gamma}}} .
     \end{aligned} \]
  The last bound requires us to show that
  \begin{equation}
    \left\|  t \dd_t \1_E (\eta, y, t) \right\|_{\Gamma^{^{\ast}} \SL^{(1,
    1)}_{(\Theta, \Theta^{\tmop{in}} )} (T)} \lesssim 1 \qquad \forall T \in
    \TT_{\Theta}, \label{eq:boundary-bound}
  \end{equation}
  that we will now show and, allowing us to conclude the proof. Let \((\eta, y)
  \mapsto \mf{b} (\eta, y)\) is the function whose graph is the boundary of
  \(E\), as described by \Cref{lem:geometry-of-boundary}. The same lemma gives
  that
  \[ \begin{aligned}[t]&
       \left\|  t \dd_t \1_E (\eta, y, t) \right\|_{\Gamma^{^{\ast}}
       \SL^{(1, 1)}_{(\Theta, \Theta^{\tmop{in}} )} (T)}\\ &
       \qquad = \int_{\R^2_+} \1_{\pi_T \left( \mT_{\Theta^{\tmop{in}}}
       \right)} \left( \eta, y, \mf{b} (\eta, y) \right) \mf{b} (\eta, y)
       \frac{\dd y \dd \eta}{s_T | \Theta |} .
     \end{aligned} \]
  By symmetry, assume \(T = T_{\Theta} (0, 0, 1)\). Let us denote \(\Theta =
  (\theta_-, \theta_+)\) and \(\Theta^{\tmop{in}} = (\theta_-^{\tmop{in}},
  \theta^{\tmop{in}}_+)\). Integrating bounds \eqref{eq:boundary-regularity},
  for \(\eta > 0\) it holds that
  \[ \frac{| \theta_+ | \mf{b} (0, y)}{| \theta_+ | + \eta \mf{b} (0, y) }
     \leq \mf{b} (\eta, y), \]
  and if also \(\eta < | \theta_- | \mf{b} (0, y)^{- 1}\) then
  \[ \mf{b} (\eta, y) \leq \frac{| \theta_- | \mf{b} (0, y)}{| \theta_- | -
     \eta \mf{b} (0, y) } . \]
  When \(\eta < 0\) it holds that
  \[ \frac{| \theta_- | \mf{b} (0, y)}{| \theta_- | - \eta \mf{b} (0, y) }
     \leq \mf{b} (\eta, y) \leq \frac{| \theta_+ | \mf{b} (0, y)}{| \theta_+ |
     + \eta \mf{b} (0, y) } \]
  and if also \(\eta > - | \theta_+ | \mf{b} (0, y)^{- 1}\) then
  \[ \mf{b} (\eta, y) \leq \frac{| \theta_+ | \mf{b} (0, y)}{| \theta_+ | +
     \eta \mf{b} (0, y) } . \]
  Since \(\left( \eta, y, \mf{b} (\eta, y) \right) \in \pi_T \left(
  \mT_{\Theta^{\tmop{in}}} \right)\) only if \(\left| \eta \mf{b} (\eta, y)
  \right| < \theta^{\tmop{in}}_{\ast} \eqd \max (| \theta^{\tmop{in}}_+ |, |
  \theta^{\tmop{in}}_- |)\).
  
  Let us show that if the graph of \(\mf{b}\) intersects \(\pi_T \left(
  \mT_{\Theta^{\tmop{in}}} \right)\) then \(\mf{b} (\eta, y) \lesssim \mf{b} (0,
  y)\). Supposing that \(\eta > 0\), if \(\mf{b}\) intersects \(\pi_T \left(
  \mT_{\Theta^{\tmop{in}}} \right)\) then
  \[ \mf{b} (\eta, y) \leq \frac{| \theta_- | \mf{b} (0, y)}{| \theta_- | -
     \eta \mf{b} (0, y) } \1_{\eta < \frac{| \theta_- |}{2 \mf{b} (0, y)} } +
     \frac{\theta^{\tmop{in}}_{\ast}}{\eta} \1_{\eta > \frac{| \theta_- |}{2
     \mf{b} (0, y)} } \leq 2 \left( 1 + \frac{\theta^{\tmop{in}}_+}{| \theta_-
     |} \right) \mf{b} (0, y) \]
  On the other hand if \(\eta < 0\) then
  \[ \mf{b} (\eta, y) \leq \frac{| \theta_+ | \mf{b} (0, y)}{| \theta_+ | +
     \eta \mf{b} (0, y) } \1_{\eta > - \frac{| \theta_+ |}{2 \mf{b} (0, y)}} +
     \frac{\theta^{\tmop{in}}_{\ast}}{| \eta |} \1_{\eta < - \frac{| \theta_+
     |}{2 \mf{b} (0, y)}} \leq 2 \left( 1 + \frac{| \theta^{\tmop{in}}_- |}{|
     \theta_+ |} \right) \mf{b} (0, y) . \]
  Let us show that if the graph of \(\mf{b}\) intersects \(\pi_T \left(
  \mT_{\Theta^{\tmop{in}}} \right)\) then \(| \eta | \lesssim \mf{b} (0, y)^{-
  1}\). Given the bounds on \(\mf{b}\), the graph of \(\mf{b}\) intersects \(\pi_T
  \left( \mT_{\Theta^{\tmop{in}}} \right)\) only if
  \[ \begin{aligned}[t]&
       \frac{| \theta_+ | \mf{b} (0, y)}{| \theta_+ | + \eta \mf{b} (0, y) } <
       \frac{\theta^{\tmop{in}}_{\ast}}{\eta} \text{ when } \eta > 0,\\ &
       \frac{| \theta_- | \mf{b} (0, y)}{| \theta_- | + | \eta | \mf{b} (0, y)
       } < \frac{\theta^{\tmop{in}}_{\ast}}{| \eta |} \text{ when } \eta < 0,
     \end{aligned} \]
  and thus
  \[ \begin{aligned}[t]&
       \mf{b} (0, y) (| \theta_+ | - \theta^{\tmop{in}}_{\ast}) | \eta | < |
       \theta_+ | \theta^{\tmop{in}}_{\ast} \text{ when } \eta > 0,\\ &
       \mf{b} (0, y) (| \theta_- | - \theta^{\tmop{in}}_{\ast}) | \eta | < |
       \theta_+ | \theta^{\tmop{in}}_{\ast} \text{ when } \eta < 0.
     \end{aligned} \]
  Since \(\theta^{\tmop{in}}_{\ast} < 1 < \max (| \theta_+ |, | \theta_- |)\) it
  follows that \(| \eta | \lesssim \mf{b} (0, y)^{- 1}\). Together with the
  bound \(\mf{b} (\eta, y) \lesssim \mf{b} (0, y)\) it shows that
  \[ \int_{\R^2_+} \1_{\pi_T \left( \mT_{\Theta^{\tmop{in}}} \right)} \left(
     \eta, y, \mf{b} (\eta, y) \right) \mf{b} (\eta, y) \frac{\dd y \dd
     \eta}{s_T | \Theta |} \lesssim 1 \]
  as required.
\end{proof}

\subsection{Uniform bounds for the \(\widetilde{\SF}_{\Gamma}^u \Phi_{\gr}^N\)
size}\label{sec:unif-derived-size-bound}

In this section we prove \Cref{lem:unif-derived-size-bound}. The size
\(\widetilde{\SF}_{\Gamma}^u \Phi_{\gr}^N\), as defined in
\eqref{eq:uniform-embedding-full-size:linear}, consists of three parts; the
three lemmata \ref{lem:uniform-defect-bound},
\ref{lem:unif-lac-size-domination}, and \ref{lem:unif-max-size-domination}
below address these parts: the sizes \({\Gamma^{\ast}}  \SL^{(u, 1)}_{(\Theta,
\Theta^{\tmop{in}})} \dfct_{\gr}^N\), \({\Gamma^{\ast}}  \SL^{(u, 2)}_{\Theta}
\wpD_{\gr}^N\), and \({\beta^{\frac{1}{u}}}  \Gamma^{\ast} \SL^{(u,
\infty)}_{\Theta} \Phi_{\gr}^N\) respectively. \Cref{lem:uniform-defect-bound},
up next, amounts to showing that the defect of \(\1_{(V^+ \cap W^+) \setminus
V^-} \Emb [f] \circ \Gamma\) is essentially concentrated on the boundary of the
region \((V^+ \cap W^+) \setminus V^-\) and thus can be controlled by the
quantity on {\LHS{\eqref{prop:unif-gamma-bounds:singular-size}}}, introduced
in \Cref{prop:unif-gamma-bounds:singular-size}.

\begin{lemma}[Defect size domination]
  \label{lem:uniform-defect-bound}Let \(V^+, V^- \in \mathbb{D}^{\cup}_{\beta}\)
  and \(W^+, W^- \in \TT_{\Theta}^{\cup}\). The bounds
  \begin{equation}
    \begin{aligned}[t]&
      \Bigl\| \1_{(V^+ \cap W^+) \setminus (V^- \cup W^-)} \left( \Emb [f]
      \circ \Gamma \right) \Bigr\|_{{\Gamma^{\ast}}  \SL^{(u, 1)}_{(\Theta,
      \Theta^{\tmop{in}})} \dfct_{\gr}^N}\\ &
      \lesssim \begin{aligned}[t]&
        \Bigl\| \1_{\overline{(V^+ \cap W^+)} \setminus (V^- \cup W^-)}
        \left( \Emb [f] \circ \Gamma \right) (\eta, y, t) t \dd_t \1_{(V^+
        \cap W^+)} (\eta, y, t)  \Bigr\|_{{\Gamma^{\ast}}  \SL^{(u,
        1)}_{(\Theta, \Theta^{\tmop{in}})} \Phi_{16 \gr}^{N - 8}}\\ &
        + \Bigl\| \1_{\overline{(V^+ \cap W^+)} \setminus (V^- \cup W^-)}
        \left( \Emb [f] \circ \Gamma \right) (\eta, y, t) t \dd_t \1_{\left(
        \R^3_+ \setminus V^- \right)} (\eta, y, t)  \Bigr\|_{{\Gamma^{\ast}} 
        \SL^{(u, 1)}_{(\Theta, \Theta^{\tmop{in}})} \Phi_{16 \gr}^{N - 8}}\\ &
        + \Bigl\| \1_{(V^+ \cap W^+) \setminus (V^- \cup W^-)} \left( \Emb
        [f] \circ \Gamma \right) \Bigr\|_{\Gamma^{\ast} \SL^{(u,
        2)}_{(\Theta, \Theta^{\tmop{ex}})} \Phi_{16 \gr}^{N - 8}}
      \end{aligned}
    \end{aligned} \label{eq:geometry-of-defects}
\end{equation}

  hold for any \(f \in \Sch (\R)\) and for all \(u \in [1, +
  \infty]\). The implicit constant is independent of the sets \(V^+\), \(V^-\),
  \(W^+\), and \(W^-\).
\end{lemma}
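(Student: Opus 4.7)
The starting point is that, since $F = \Emb[f] \circ \Gamma$ satisfies the zero-defect identities \eqref{eq:no-defect-gamma}, the defect of $\tilde F := \1_E F$ is an artifact of the indicator function $\1_E$. Concretely, applying Leibniz to the operators appearing in \eqref{eq:gamma-defect-size} and using \eqref{eq:no-defect-gamma} one obtains
\[
\bigl(\Gamma^*\wpD_\zeta(t(\eta-\xi_T)) - \beta t\partial_y + 2\pi i\alpha\xi_T\bigr)\tilde F = -\beta t\,(\partial_y \1_E)\,F,
\]
\[
\bigl(\Gamma^*\wpD_\sigma(t(\eta-\xi_T)) - t\partial_t + (\eta-\xi_T)\partial_\eta\bigr)\tilde F = \bigl(-t\partial_t \1_E + (\eta-\xi_T)\partial_\eta \1_E\bigr)F,
\]
so the defect distributions are Radon measures supported on $\partial E$, weighted by derivatives of the defining graph $\mf{b}_E$ from \Cref{lem:geometry-of-boundary}. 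By \eqref{eq:generated-size:trees} it suffices to fix a tree $T \in \TT_\Theta$ and bound the local size $\Gamma^*\SL^{(u,1)}_{(\Theta,\Theta^{\tmop{in}})}\dfct^N_{\gr}(T)$ of the above measures.

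Next I would decompose $\1_E$ as $\1_{V^+\cap W^+}\cdot \1_{(V^-)^c}\cdot \1_{(W^-)^c}$ and apply the product rule, splitting the boundary defect into three contributions located respectively on $\partial(V^+\cap W^+)$, $\partial V^-$, and $\partial W^-$ intersected with the remaining indicators. For each, \Cref{lem:geometry-of-boundary} applies and represents the boundary distributional derivative as $t\dd_t\1_{(\cdot)}$ modulated by bounded coefficients: indeed $|\partial_y\mf{b}|\le \beta^{-1}$ and $|(\eta-\xi_T)\partial_\eta \mf{b}/\mf{b}|\lesssim 1$ by \eqref{eq:boundary-regularity}, which shows that $|{-}\beta t\partial_y \1_{(\cdot)}|$ and $|-t\partial_t\1_{(\cdot)} + (\eta-\xi_T)\partial_\eta \1_{(\cdot)}|$ are all controlled pointwise by $|t\dd_t \1_{(\cdot)}|$. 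The first piece is then exactly the first summand on the RHS and the $V^-$ piece is exactly the second summand; here one also uses the inclusion $\overline{(V^+\cap W^+)}\setminus (V^-\cup W^-)\supseteq E$ when restricting cutoffs.

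The main obstacle, and the content of the third summand, is the contribution from $\partial W^-$. Since $W^-\in\TT^\cup_\Theta$, its graph $\mf{b}_{W^-}$ is the supremum of the tree-top graphs $\mf{b}_{T'}$ over $T'\in W^-$, and away from the strip part (which looks formally like a $V^-$-type boundary and is absorbed there after enlarging $V^-$ by a bounded factor) the boundary consists of frequency faces $t(\eta-\xi_{T'})=\theta_\pm$ of the constituent trees. The key geometric observation is that at a point with $t(\eta-\xi_T)\in\Theta^{\tmop{in}}$ lying on such a frequency face of $T'$, one has $t(\xi_{T'}-\xi_T)=t(\eta-\xi_T)-\theta_\pm$, which by the separation $\dist(\Theta^{\tmop{in}},\partial\Theta)\gtrsim 1$ forces $|t(\xi_{T'}-\xi_T)|$ to be bounded away from $0$. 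Applying an integration-by-parts (or equivalently smoothing the indicator $\1_{W^-}$ at scale $t$ in $\eta$ and in $y$) converts the singular boundary measure on this portion of $\partial W^-$ into a Lebesgue integral over a tube of aperture $\sim t$ inside $T'$, where $F = \Emb[f]\circ\Gamma$ is smooth and the identities \eqref{eq:no-defect-gamma} may be applied to absorb derivatives of $F$; the frequency separation guarantees that this tube, when read in $T$'s local frequency coordinate, lives entirely in $\pi_T(\mT_{\Theta^{\tmop{ex}}})$. After this localization and the use of Cauchy--Schwarz to trade the single $\dd\sigma/\sigma$ integration for the $L^2_{\dd\sigma/\sigma}$-type control, the tube integral is bounded by $\|\1_{(V^+\cap W^+)\setminus(V^-\cup W^-)}(\Emb[f]\circ\Gamma)\|_{\Gamma^*\SL^{(u,2)}_{(\Theta,\Theta^{\tmop{ex}})}\Phi^{N-8}_{16\gr}}$, producing the third summand. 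Summing the three contributions, using \Cref{cor:wave-packet-decomposition-and-sizes} to account for the $\Phi^N_\gr$ supremum (costing a few derivatives and an enlargement of $\gr$), yields \eqref{eq:geometry-of-defects}.
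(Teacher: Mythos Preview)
Your reduction to boundary contributions via Leibniz and the zero-defect identity \eqref{eq:no-defect-gamma} is correct, as is your treatment of the first two pieces (the $\partial(V^+\cap W^+)$ and $\partial V^-$ contributions). These match the paper's argument.

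The handling of the $\partial W^-$ contribution, however, has real gaps. First, you cannot ``absorb the strip part into an enlarged $V^-$'': the right-hand side of \eqref{eq:geometry-of-defects} has a fixed $V^-$, and in any case the spatial faces of trees in $W^-\in\TT^\cup_\Theta$ have $|\partial_y\mf{b}|\le 1$, not $\beta^{-1}$, so they are not of $\DD^\cup_\beta$-type. The paper does not split $\partial W^-$ into strip and frequency parts at all; it handles the entire graph $\mf{b}^*_{W^-}$ uniformly. Second, your proposed tube lies \emph{inside} $T'\subset W^-$, where the truncated function $\1_E F$ vanishes; the right-hand side only controls the truncated function, so an average of the un-truncated $F$ over such a tube does not close. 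Third, your claim that this tube ``lives entirely in $\pi_T(\mT_{\Theta^{\tmop{ex}}})$'' is false: the boundary point itself has $\theta\in\Theta^{\tmop{in}}$ in $T$'s local coordinates, and a small tube around it stays in $\Theta^{\tmop{in}}$ of $T$.

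The paper's actual mechanism is different and is the missing idea. Near a point $(\theta,\zeta,\mf{b}^*_{W^-}(\theta,\zeta))$ with $\theta\in\Theta^{\tmop{in}}$, one moves \emph{upward} in scale to a region $R^+$ with $\sigma'\in((1+2r_\sigma)\mf{b}^*_{W^-},(1+8r_\sigma)\mf{b}^*_{W^-})$, which lies \emph{outside} $W^-$ by the Lipschitz bound $|\partial_\zeta\mf{b}^*_{W^-}|\le 1$. The embedded function is essentially constant between the boundary and $R^+$ via the wave-packet identity \eqref{eq:wave-packet-move}. The key geometric step is then to view $R^+$ not from $T$ but from a \emph{frequency-shifted} tree $T''=T_\Theta\bigl(\xi_T-\tfrac{1}{2\mf{b}^*_{W^-}(\theta,\zeta)},\,x_T+\zeta,\,2\mf{b}^*_{W^-}(\theta,\zeta)\bigr)$: in $T''$'s local frequency coordinate the points of $R^+$ satisfy $t'(\eta'-\xi_{T''})\in\Theta^{\tmop{in}}+\tfrac12(1+4r_\sigma,1+6r_\sigma)\subset\Theta^{\tmop{ex}}$, which is what makes the third summand applicable. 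Finally, the paper must also account for the possibility that $R^+$ crosses the graphs $\mf{b}^*_\pm$ (so that $\1_{E^*}$ is not identically $1$ on $R^+$); this is handled by a stopping set $L_{(\theta,\zeta)}$ and produces additional contributions bounded by the first two summands. Your sketch does not address this interaction.
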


\begin{proof}
  To show \eqref{eq:geometry-of-defects} it is sufficient to prove that
  \[ \begin{aligned}[t]&
       \begin{aligned}[t]&
         \Bigl\| \1_{(V^+ \cap W^+) \setminus (V^- \cup W^-)} \left( \Emb [f]
         \circ \Gamma \right) \Bigr\|_{{\Gamma^{\ast}}  \SL^{(u,
         1)}_{(\Theta, \Theta^{\tmop{in}})} \dfct_{\zeta, \gr}^N (T)}\\ &
         \qquad + \Bigl\| \1_{(V^+ \cap W^+) \setminus (V^- \cup W^-)} \left(
         \Emb [f] \circ \Gamma \right) \Bigr\|_{{\Gamma^{\ast}}  \SL^{(u,
         1)}_{(\Theta, \Theta^{\tmop{in}})} \dfct_{\sigma, \gr}^N (T)}
       \end{aligned}\\ &
       \lesssim \begin{aligned}[t]&
         \Bigl\| \1_{\overline{(V^+ \cap W^+)} \setminus (V^- \cup W^-)}
         \left( \Emb [f] \circ \Gamma \right) (\eta, y, t) t \dd_t \1_{(V^+
         \cap W^+)} (\eta, y, t)  \Bigr\|_{{\Gamma^{\ast}}  \SL^{(u,
         1)}_{(\Theta, \Theta^{\tmop{in}})} \Phi_{16 \gr}^{N - 8}}\\ &
         + \Bigl\| \1_{\overline{(V^+ \cap W^+)} \setminus (V^- \cup W^-)}
         \left( \Emb [f] \circ \Gamma \right) (\eta, y, t) t \dd_t \1_{\left(
         \R^3_+ \setminus V^- \right)} (\eta, y, t)  \Bigr\|_{{\Gamma^{\ast}}
         \SL^{(u, 1)}_{(\Theta, \Theta^{\tmop{in}})} \Phi_{16 \gr}^{N - 8}}\\ &
         + \Bigl\| \1_{\overline{(V^+ \cap W^+)} \setminus (V^- \cup W^-)}
         \left( \Emb [f] \circ \Gamma \right) \Bigr\|_{\Gamma^{\ast} \SL^{(u,
         2)}_{(\Theta, \Theta^{\tmop{ex}})} \Phi_{16 \gr}^{N - 8}}
       \end{aligned}
     \end{aligned} \]
  holds for any \(T \in \TT_{\Theta}\). Let us fix \(f \in \Sch (\R)\). By symmetry, according to \eqref{eq:size-symmetry} and the
  subsequent discussion, we can assume that \({T = T_{\Theta}}  (0, 0, 1)\) so
  that \(\Gamma \circ \pi_T (\theta, \zeta, \sigma) = (\alpha \beta (\theta +
  \gamma) (\beta \sigma)^{- 1}, \zeta, \beta \sigma) = (\theta_{\Gamma} (\beta
  \sigma)^{- 1}, \zeta, \beta \sigma)\). For conciseness we set \(F_{\Gamma} =
  \Emb [f] \circ \Gamma\). By \eqref{eq:embedded-no-defect} it holds that
  \[ \begin{aligned}[t]&
       \left( \Gamma^{\ast} \wpD_{\zeta} (t \eta) - \beta t \dd_y \right)
       F_{\Gamma} (\eta, y, t) = 0,\\ &
       \left( \Gamma^{\ast} \wpD_{\sigma} (t \eta) - t \dd_t + \eta \dd_{\eta}
       \right) F_{\Gamma} (\eta, y, t) = 0.
     \end{aligned} \]
  According to definitions \eqref{eq:gamma-defect-size} (see also the
  discussion below \eqref{eq:embedded-no-defect}) it holds that
  \begin{equation}
    \begin{aligned}[t]&
      \| F_{\Gamma} (\eta, y, t) \|_{\Gamma^{\ast} {\SL_{(\Theta,
      \Theta^{\tmop{in}}) }^{(u, 1)}}  \dfct_{\zeta, \gr}^{N } (T)}\\ &
      \leq \begin{aligned}[t]&
        {\left\| \left( \beta t \dd_y \1_{(W^+ \cap V^+)} (\eta, y, t)
        \right) \left( \1_{\R^3_+ \setminus (W^- \cup V^-)} F_{\Gamma} \right)
        (\eta, y, t) \right\|_{\SL_{(\Theta, \Theta^{\tmop{in}})}^{(u, 1)}
        \Phi_{\mf{r}}^N (T)}} \\ &
        + {\left\| \left( \beta t \dd_y \1_{\R^3_+ \setminus V^-} (\eta, y,
        t) \right) \left( \1_{\overline{(W^+ \cap V^+)} \setminus (W^- \cup
        V^-)} F_{\Gamma} \right) (\eta, y, t) \right\|_{\SL_{(\Theta,
        \Theta^{\tmop{in}})}^{(u, 1)} \Phi_{\mf{r}}^N (T)}} \\ &
        + {\left\| \left( \beta t \dd_y \1_{\R^3_+ \setminus W^-} (\eta, y,
        t) \right) \left( \1_{\overline{(W^+ \cap V^+)} \setminus (W^- \cup
        V^-)} F_{\Gamma} \right) (\eta, y, t) \right\|_{\SL_{(\Theta,
        \Theta^{\tmop{in}})}^{(u, 1)} \Phi_{\mf{r}}^N (T)}} ,
      \end{aligned}
    \end{aligned} \label{eq:geometry-of-defects:space}
  \end{equation}
  and
  \begin{equation}
    \begin{aligned}[t]&
      \| F_{\Gamma} (\eta, y, t) \|_{\Gamma^{\ast} {\SL_{(\Theta,
      \Theta^{\tmop{in}}) }^{(u, v)}}  \dfct_{\sigma, \gr}^{N } (T)}\\ &
      \leq \begin{aligned}[t]&
        {\left\| \left( \left( t \dd_t - \eta \dd_{\eta} \right) \1_{(W^+
        \cap V^+)} (\eta, y, t) \right) \left( \1_{\R^3_+ \setminus (W^- \cup
        V^-)} F_{\Gamma} \right) (\eta, y, t) \right\|_{\SL_{(\Theta,
        \Theta^{\tmop{in}})}^{(u, 1)} \Phi_{\mf{r}}^N (T)}} \\ &
        + {\left\| \left( \left( t \dd_t - \eta \dd_{\eta} \right) \1_{\R^3_+
        \setminus V^-} (\eta, y, t) \right) \left( \1_{\overline{(W^+ \cap
        V^+)} \setminus (W^- \cup V^-)} F_{\Gamma} \right) (\eta, y, t)
        \right\|_{\SL_{(\Theta, \Theta^{\tmop{in}})}^{(u, 1)} \Phi_{\mf{r}}^N
        (T)}} \\ &
        + {\left\| \left( \left( t \dd_t - \eta \dd_{\eta} \right) \1_{\R^3_+
        \setminus W^-} (\eta, y, t) \right) \left( \1_{\overline{(W^+ \cap
        V^+)} \setminus (W^- \cup V^-)} F_{\Gamma} \right) (\eta, y, t)
        \right\|_{\SL_{(\Theta, \Theta^{\tmop{in}})}^{(u, 1)} \Phi_{\mf{r}}^N
        (T) .}} 
      \end{aligned}
    \end{aligned} \label{eq:geometry-of-defects:scale}
\end{equation}

  Let \(\mf{b}_+\), \(\mf{b}_-\) and \(\mf{b}_{W^-}\) be the functions whose
  graphs are the boundaries of \(V^+ \cap W^+\), \(V^-\), and \(W^-\) respectively,
  given by \Cref{lem:geometry-of-boundary}. According to
  \eqref{eq:boundary-regularity} we have
  \[ \begin{aligned}[t]&
       \left| \beta t \dd_y \1_{V^+ \cap W^+} (\eta, y, t) \right| \lesssim t
       \delta \left( t - \mf{b}_+ (\eta, y) \right),\\ &
       \left| \beta t \dd_y \1_{\R^3_+ \setminus V^-} (\eta, y, t) \right|
       \lesssim t \delta \left( t - \mf{b}_- (\eta, y) \right),\\ &
       \left| \beta t \dd_y \1_{\R^3_+ \setminus W^-} (\eta, y, t) \right|
       \lesssim t \delta \left( t - \mf{b}_{W^-} (\eta, y) \right) .
     \end{aligned} \]
  Similarly, when \((\eta, y, t) \in \pi_T \left( \mT_{\Theta} \right)\), or in
  general when \(\eta t\) is bounded it holds that
  \[ \begin{aligned}[t]&
       \left| \left( t \dd_t - \eta \dd_{\eta} \right) \1_{V^+ \cap W^+}
       (\eta, y, t) \right| \lesssim t \delta \left( t - \mf{b}_+ (\eta, y)
       \right),\\ &
       \left| \left( t \dd_t - \eta \dd_{\eta} \right) \1_{\R^3_+ \setminus
       V^-} (\eta, y, t) \right| \lesssim t \delta \left( t - \mf{b}_- (\eta,
       y) \right),\\ &
       \left| \left( t \dd_t - \eta \dd_{\eta} \right) \1_{\R^3_+ \setminus
       W^-} (\eta, y, t) \right| \lesssim t \delta \left( t - \mf{b}_{W^-}
       (\eta, y) \right) .
     \end{aligned} \]
  As a matter of fact, if \(\mf{b}\) is any of the functions \(\mf{b}_+,
  \mf{b}_-, \mf{b}_{W^-}\) then
  \[ \eta \dd_{\eta} \1_{t > \mf{b} (\eta, y)} = - \delta \left( t - \mf{b}
     (\eta, y) \right) \eta \dd_{\eta} \mf{b} (\eta, y) = - t \delta \left( t
     - \mf{b} (\eta, y) \right) \eta t \frac{\dd_{\eta} \mf{b} (\eta,
     y)}{\mf{b} (\eta, y)^2} \]
  and, according to \eqref{eq:boundary-regularity}, \(\left| \eta t
  \frac{\dd_{\eta} \mf{b} (\eta, y)}{\mf{b} (\eta, y)^2} \right| \lesssim 1\).
  Thus the first two summands in {\RHS{\eqref{eq:geometry-of-defects:space}}}
  and in {\RHS{\eqref{eq:geometry-of-defects:scale}}} are bounded by the first
  two summands in {\RHS{}}\eqref{eq:geometry-of-defects}. It remains to show
  that the last summand of {\RHS{\eqref{eq:geometry-of-defects:space}}} and
  the last summand of {\RHS{\eqref{eq:geometry-of-defects:scale}}} are each
  bounded by {\RHS{}}\eqref{eq:geometry-of-defects}. According to the
  discussion above, this follows by showing the bound
  \[ {\left\| \left( \1_{\overline{(W^+ \cap V^+)} \setminus (W^- \cup V^-)}
     F_{\Gamma} \right) (\eta, y, t) t \delta \left( t - \mf{b}_{W^-} (\eta,
     y) \right) \right\|_{\SL_{(\Theta, \Theta^{\tmop{in}})}^{(u, 1)}
     \Phi_{\mf{r}}^N (T)}}  \lesssim \RHS{} \eqref{eq:geometry-of-defects} .
  \]
  Let \(\mf{b}_+^{\ast}\), \(\mf{b}_-^{\ast}\) and \(\mf{b}_{W^-}^{\ast}\) be the
  functions \(\mf{b}_+\), \(\mf{b}_-\) and \(\mf{b}_{W^-}\) expressed in local
  coordinates of \(\pi_T\) i.e. such that \eqref{eq:boundary-pullback-condition}
  holds for \(\mf{b} = \mf{b}_+\), \(\mf{b} = \mf{b}_-\) or \(\mf{b} =
  \mf{b}_{W^-}\), respectively. By \eqref{eq:boundary-singular-size-local} we
  need to show that
  \[ \int_{\Theta^{\tmop{in}} \times B_1} \left| \1_{E^{\ast}} F^{\ast}
     \left( \theta, \zeta, \mf{b}_{W^-}^{\ast} (\theta, \zeta) \right) [\phi]
     \right|^u \frac{\dd \theta \dd \zeta}{| \Theta |} \lesssim \RHS{}
     \eqref{eq:geometry-of-defects}^u \| \phi \|_{\Phi^N_{\gr}} \]
  where we use the shorthand \(F^{\ast} =
  \1_{\mT_{\Theta^{\tmop{in}}_{\Gamma}}} \left( \Emb [f] \circ \Gamma \circ
  \pi_T \right)\) and\\ \(E^{\ast} = \left\{ (\theta, \zeta, \sigma) :
  \mf{b}_-^{\ast} (\theta, \zeta) < \sigma < \mf{b}_+^{\ast} (\theta, \zeta)
  \right\}\). First we record that locally in \(\theta\) and \(\sigma\) the
  function \(F^{\ast}\) is almost constant. As a matter of fact, it follows form
  the definitions \eqref{eq:embedding} and \eqref{eq:gamma} of \(\Emb\) and
  \(\Gamma\) that
  \[ F^{\ast} (\theta, \zeta, \sigma) [\phi] =
     \1_{\mT_{\Theta^{\tmop{in}}_{\Gamma}}} (\theta, \zeta, \sigma) f \ast
     \Dil_{\sigma} \Mod_{\theta_{\Gamma}} \phi^{\vee} (\zeta) \]
  where \(\phi^{\vee} (z) = \phi (- z)\). Thus, if we define \(\phi_{(\theta',
  \sigma', \theta, \sigma)}\) by setting
  \begin{equation}
    \Dil_{\sigma'} \Mod_{\theta'_{\Gamma}} \phi_{(\theta', \sigma', \theta,
    \sigma)}^{\vee} \eqd \Dil_{\sigma} \Mod_{\theta_{\Gamma}} \phi^{\vee}
    \label{eq:wave-packet-move} .
  \end{equation}
  it holds that \(F^{\ast} (\theta, \zeta, \sigma) [\phi] = F^{\ast} (\theta',
  \zeta', \sigma') [\phi_{(\theta', \sigma', \theta, \sigma)}]\) as long as
  \((\theta', \zeta', \sigma') \in \mT_{\Theta^{\tmop{in}}_{\Gamma}}\).
  Furthermore, any \(\bar{r} > 0\) small enough independently of \(\Gamma\), if \(|
  \theta' - \theta | < \bar{r}\) and \(| \sigma' / \sigma - 1 | < \bar{r}\) one
  has that \(\| \phi_{(\theta', \sigma', \theta, \sigma)} \|_{\Phi^N_{2^{l + 1}
  \gr}} \lesssim \| \phi \|_{\Phi^N_{2^l \gr}}\) for any \(l \in \{ 0, \ldots,
  10 \}\), any \(N < N_0\) for some fixed arbitrary large \(N_0 \in \N\) and any
  \(\phi \in \Phi_{2^l \gr}^N\). Let us fix such a small \(\bar{r} > 0\) so that
  the facts above hold.
  
  Next we want to show that close to any point \(\left( \theta, \zeta,
  \mf{b}_{W^-}^{\ast} (\theta, \zeta) \right) \in
  \mT_{\Theta^{\tmop{in}}_{\Gamma}} \cap E^{\ast}\) there exists a small
  neighborhood in \(\pi_T^{- 1} (T \setminus W^-)\) such that the contribution
  of \(\left| \1_{E^{\ast}} F^{\ast} \left( \theta, \zeta, \mf{b}_{W^-}^{\ast}
  (\theta, \zeta) \right) [\phi] \right|^u\) to the integral we are bounding is
  controlled by \(F^{\ast}\) on that neighborhood. We will then, in turn, bound
  \(F^{\ast}\) on that neighborhood by \(\RHS{} \eqref{eq:geometry-of-defects}\).
  For small enough \(r_{\theta}, r_{\zeta}, r_{\sigma} > 0\) let
  \[ \begin{aligned}[t]&
       R (\theta, \zeta) \eqd \left\{ (\theta', \zeta') \in
       \Theta^{\tmop{in}}_{\Gamma} \times B_1 \of | \theta' - \theta | <
       r_{\theta}, | \zeta' - \zeta | < r_{\zeta} \mf{b}_{W^-}^{\ast} (\theta,
       \zeta) \right\}\\ &
       R^0 (\theta, \zeta) \eqd \left\{ (\theta', \zeta', \sigma') \of
       (\theta', \zeta') \in R (\theta, \zeta),
       \frac{\sigma'}{\mf{b}_{W^-}^{\ast} (\theta, \zeta)} \in [1 - 10
       r_{\sigma}, 1 + 10 r_{\sigma}] \right\},\\ &
       R^+ (\theta, \zeta) \eqd \left\{ (\theta', \zeta', \sigma') \of
       (\theta', \zeta') \in R (\theta, \zeta),
       \frac{\sigma'}{\mf{b}_{W^-}^{\ast} (\theta, \zeta)} \in (1 + 2
       r_{\sigma}, 1 + 8 r_{\sigma}) \right\},\\ &
       R^{+ +} (\theta, \zeta) \eqd \left\{ (\theta', \zeta', \sigma') \of
       (\theta', \zeta') \in R (\theta, \zeta),
       \frac{\sigma'}{\mf{b}_{W^-}^{\ast} (\theta, \zeta)} \in (1 + 4
       r_{\sigma}, 1 + 6 r_{\sigma}) \right\} .
     \end{aligned} \]
  Clearly \(R^{+ +} (\theta, \zeta) \subset R^+ (\theta, \zeta) \subset R^0
  (\theta, \zeta) \subset R (\theta, \zeta) \times \R_+\). From the bounds
  \eqref{eq:boundary-regularity-local} and in particular from the fact that
  \(W^- \in \TT_{\Theta}^{\cup}\) so \(\left| \partial_{\zeta}
  \mf{b}^{\ast}_{W^-} (\theta, \zeta) \right| \leq 1\) we can see that
  \(r_{\theta}, r_{\zeta}, r_{\sigma}\) can be chosen small enough so that
  \[ \begin{aligned}[t]&
       \left( \theta', \zeta', \mf{b}_{W^-}^{\ast} (\theta', \zeta') \right)
       \subset R^0 (\theta, \zeta) \qquad \forall (\theta', \zeta') \in R
       (\theta, \zeta),\\ &
       (\theta', \zeta', \sigma'), (\theta'', \zeta', \sigma'') \in R^0
       (\theta, \zeta) \Longrightarrow | \theta' - \theta | < \bar{r}, |
       \sigma' / \sigma - 1 | < \bar{r},\\ &
       R^{+ +} (\theta, \zeta) \subset R^+ (\theta, \zeta) \subset \pi_T^{- 1}
       \left( \R^3_+ \setminus W^- \right) .
     \end{aligned} \]
  As a matter of fact for fixed \(r_{\theta} > 0\) and \(r_{\zeta} > 0\) it holds
  that for any \((\theta', \zeta') \in R (\theta, \zeta)\) one has
  \[ \mf{b}_{W^-}^{\ast} (\theta, \zeta) (e^{- C r_{\theta}} - r_{\zeta}) <
     \mf{b}_{W^-}^{\ast} (\theta', \zeta') < \mf{b}_{W^-}^{\ast} (\theta,
     \zeta) (e^{C r_{\theta}} + r_{\zeta}) \]
  where \(C = \frac{1}{\dist \left( \Theta^{\tmop{in}}, \R \setminus \Theta
  \right) - r_{\theta}} \lesssim 1\). Thus, choosing \(10 r_t < | (e^{C
  r_{\theta}} + r_{\zeta}) - 1 | + | (e^{- C r_{\theta}} - r_{\zeta}) - 1 |\)
  provides us with the first claim. The other claims can be satisfied
  analogously. It holds that
  \begin{equation}
    \begin{aligned}[t]&
      \int_{R (\theta, \zeta)} \left| {\1_{E^{\ast}}}  F^{\ast} \left(
      \theta', \zeta', \mf{b}_{W^-}^{\ast} (\theta', \zeta') \right) [\phi]
      \right|^u \dd \zeta' \dd \theta'\\ &
      \lesssim \begin{aligned}[t]&
        \int_{R^+ (\theta, \zeta)} \left| {\1_{E^{\ast}}}  F^{\ast} (\theta',
        \zeta', \sigma') \left[ \phi_{\left( \theta'', \sigma', \theta',
        \mf{b}_{W^-}^{\ast} (\theta', \zeta') \right)} \right] \right|^u\\ &
        \qquad \times \left( \1_{L_{(\theta, \zeta)}} (\zeta') + \sigma'
        \delta \left( \sigma' - \mf{b}_-^{\ast} (\theta'', \zeta') \right) +
        \sigma' \delta \left( \sigma' - \mf{b}_+^{\ast} (\theta', \zeta')
        \right) \right) \dd \zeta' \dd \theta' \frac{\dd \sigma'}{\sigma'}
      \end{aligned}
    \end{aligned} \label{eq:defect-stopped-bound}
\end{equation}

  where
  \[ L_{(\theta, \zeta)} = \bigcap_{\theta'} \left\{ \zeta' \in B_1 : \text{}
     (\theta', \zeta') \times \left\{ \mf{b}_-^{\ast} (\theta', \zeta'),
     \mf{b}_+^{\ast} (\theta', \zeta') \right\} \cap R^{+ +} (\theta, \zeta) =
     \emptyset \right\} \]
  with the intersection taken over all \(\theta'\) with \((\theta', \zeta') \in R
  (\theta, \zeta)\). The bound follows because for every fixed \(\zeta'\) either
  \(\1_{L_{(\theta, \zeta)}} (\zeta') \neq 0\) and thus
  \[ \begin{aligned}[t]&
       \1_{R (\theta, \zeta)} (\theta', \zeta') \left| {\1_{E^{\ast}}} 
       F^{\ast} \left( \theta', \zeta', \mf{b}_{W^-}^{\ast} (\theta', \zeta')
       \right) [\phi] \right|^u \qquad\\ &
       \lesssim \left| {\1_{E^{\ast}}}  F^{\ast} (\theta', \zeta', \sigma')
       \left[ \phi_{\left( \theta', \sigma', \theta', \mf{b}_{W^-}^{\ast}
       (\theta', \zeta') \right)} \right] \right|^u
     \end{aligned} \]
  for every \(\theta'\) with \((\theta', \zeta') \in R (\theta, \zeta)\).
  Otherwise, if \(\1_{L_{(\theta, \zeta)}} (\zeta') = 0\) then there exist
  \(\bar{\theta}\) with \((\bar{\theta}, \zeta') \in R (\theta, \zeta)\) such that
  either \(\left( \bar{\theta}, \zeta', \mf{b}_-^{\ast} (\bar{\theta}, \zeta')
  \right) \in R^{+ +} (\theta, \zeta) \neq \emptyset\) or \(\left( \bar{\theta},
  \zeta', \mf{b}_+^{\ast} (\bar{\theta}, \zeta') \right) \in R^{+ +} (\theta,
  \zeta) \neq \emptyset\). In the former case, there exists an absolute
  \(\bar{r} > 0\) depending on \(r_{\theta}, r_{\zeta}, r_{\sigma} > 0\) such that
  \(\left( \theta'', \zeta', \mf{b}_-^{\ast} (\theta', \zeta') \right) \in R^+
  (\theta, \zeta)\) for all \(| \theta'' - \theta | < \bar{r}\). Similarly,
  \(\left( \theta'', \zeta', \mf{b}_+^{\ast} (\theta', \zeta') \right) \in R^+
  (\theta, \zeta)\) in the case \(\left( \bar{\theta}, \zeta', \mf{b}_+^{\ast}
  (\bar{\theta}, \zeta') \right) \in R^{+ +} (\theta, \zeta) \neq \emptyset\).
  The claim then follows by integrating over \(\zeta'\) and \(\theta''\).
  
  Our argument then proceeds as follows. We need to show that
  \begin{equation}
    \RHS{\eqref{eq:defect-stopped-bound}} \lesssim \mf{b}_{W^-}^{\ast}
    (\theta, \zeta) \RHS{\eqref{eq:geometry-of-defects}}^u \| \phi
    \|_{\Phi^N_{\gr}} \label{eq:defect-stopped-bound-bound}
  \end{equation}
  Then we will show the following fact:
  \begin{equation}
    \int_{\Theta^{\tmop{in}}_{\Gamma} \times B_1} \frac{1}{\mf{b}_{W^-}^{\ast}
    (\theta, \zeta)} \1_{R (\theta, \zeta)} (\theta', \zeta') \dd \theta \dd
    \zeta \gtrsim \1_{\Theta^{\tmop{in}} \times B_1} (\theta', \zeta') .
    \label{eq:defect-stopping-construction}
  \end{equation}
  From these two bounds we can conclude the proof: using
  \eqref{eq:defect-stopping-construction} we get that
  \[ \begin{aligned}[t]&
       \int_{\Theta^{\tmop{in}}_{\Gamma} \times B_1} \left| {\1_{E^{\ast}}} 
       F^{\ast} \left( \theta', \zeta', \mf{b}_{W^-}^{\ast} (\theta', \zeta')
       \right) [\phi] \right|^u \dd \theta' \dd \zeta'\\ &
       \lesssim \int_{\Theta^{\tmop{in}}_{\Gamma} \times B_1}
       \frac{1}{\mf{b}_{W^-}^{\ast} (\theta, \zeta)} \left( \int_{R (\theta,
       \zeta)} \left| {\1_{E^{\ast}}}  F^{\ast} \left( \theta', \zeta',
       \mf{b}_{W^-}^{\ast} (\theta', \zeta') \right) [\phi] \right|^u \dd
       \theta' \dd \zeta' \right) \dd \theta \dd \zeta ;
     \end{aligned} \]
  using \eqref{eq:defect-stopped-bound-bound} we obtain that
  \[ \begin{aligned}[t]&
       \int_{\Theta^{\tmop{in}}_{\Gamma} \times B_1}
       \frac{1}{\mf{b}_{W^-}^{\ast} (\theta, \zeta)} \left( \int_{R (\theta,
       \zeta)} \left| {\1_{E^{\ast}}}  F^{\ast} \left( \theta', \zeta',
       \mf{b}_{W^-}^{\ast} (\theta', \zeta') \right) [\phi] \right|^u \dd
       \theta' \dd \zeta' \right) \dd \theta \dd \zeta\\ &
       \lesssim \int_{\Theta^{\tmop{in}}_{\Gamma} \times B_1} \RHS{}
       \eqref{eq:geometry-of-defects}^u \| \phi \|_{\Phi^N_{\gr}} \dd \theta
       \dd \zeta,
     \end{aligned} \]
  and this concludes the claim. Let us show that
  \eqref{eq:defect-stopping-construction} holds. For any fixed \(\left(
  \theta', \zeta' \right) \in \Theta^{\tmop{in}}_{\Gamma} \times B_1\) and any
  \((\theta'', \zeta'') \in R (\theta', \zeta')\) it holds that
  \(\mf{b}_{W^-}^{\ast} (\theta'', \zeta'') > \mf{b}_{W^-}^{\ast} (\theta,
  \zeta) (1 - 10 r_t) .\) It follows that if \(| \theta'' - \theta' | <
  r_{\theta}\) and \(| \zeta'' - \zeta' | < r_{\zeta} (1 - 10 r_t)
  \mf{b}_{W^-}^{\ast} (\theta', \zeta')\) then \((\theta', \zeta') \in R
  (\theta'', \zeta'')\). It follows that
  \[ \int_{\Theta^{\tmop{in}}_{\Gamma} \times B_1} \1_{R (\theta'', \zeta'')}
     (\theta', \zeta') \dd \theta'' \dd \zeta'' \gtrsim \mf{b}_{W^-}^{\ast}
     (\theta', \zeta') \]
  with an absolute constant depending on \(r_{\theta}, r_{\zeta}, r_{\sigma}\)
  and \(r''\), as required.
  
  It remains to show that \eqref{eq:defect-stopped-bound-bound} holds. Note
  that \(\pi_T \left( R^+ (\theta, \zeta) \cap
  \mT_{\Theta^{\tmop{in}}_{\Gamma}} \right) \subset \pi_{T'} \left(
  \mT_{\Theta^{\tmop{in}}_{\Gamma}} \right)\) with \(T' = T_{\Theta} \left(
  \xi_T, x_T + \zeta, 2 \mf{b}_{W^-}^{\ast} (\theta, \zeta) \right)\). As a
  matter of fact, \((\theta', \zeta', \sigma') \in R^+ (\theta, \zeta) \cap
  \mT_{\Theta^{\tmop{in}}}\) only if \(\theta' \in \Theta^{\tmop{in}}_{\Gamma}\),
  \(| \zeta' - \zeta | < r_{\zeta} \mf{b}_{W^-}^{\ast} (\theta, \zeta) \), and
  \(\sigma' < (1 + 8 r_t) \mf{b}_{W^-}^{\ast} (\theta, \zeta)\). In particular,
  this means that if \((\eta', y', t') = \pi_T (\theta', \zeta', \sigma')\) then
  \[ \begin{aligned}[t]&
       t' < (1 + 8 r_t) \mf{b}_{W^-}^{\ast} (\theta, \zeta),\\ &
       | y' - (x_T + \zeta) | < r_{\zeta} \mf{b}_{W^-}^{\ast} (\theta,
       \zeta),\\ &
       t' (\eta' - \xi_T) \in \Theta^{\tmop{in}}_{\Gamma},
     \end{aligned} \]
  that shows the claimed inclusion. Applying
  \Cref{lem:wave-packet-decomposition} to \(\phi_{\left( \theta', \sigma',
  \theta', \mf{b}_{W^-}^{\ast} (\theta', \zeta') \right)}\) we get that
  \[ \phi_{\left( \theta', \sigma', \theta', \mf{b}_{W^-}^{\ast} (\theta',
     \zeta') \right)} = \sum_k a_k \left( \theta', \sigma', \theta',
     \mf{b}_{W^-}^{\ast} (\theta', \zeta') \right) \widetilde{\phi_k} \]
  with \(\| \widetilde{\phi_k} \|_{\Phi^{N - 2}_{4 \gr}} \lesssim 1\) and
  \(\left| a_k \left( \theta', \sigma', \theta', \mf{b}_{W^-}^{\ast} (\theta',
  \zeta') \right) \right| \lesssim \langle k \rangle^{- 2} \| \phi
  \|_{\Phi^N_{\gr}}\) so
  \[ \begin{aligned}[t]&
       \begin{aligned}[t]&
         \int_{R^+ (\theta, \zeta)} \left| {\1_{E^{\ast}}}  F^{\ast} (\theta',
         \zeta', \sigma') \left[ \phi_{\left( \theta', \sigma', \theta',
         \mf{b}_{W^-}^{\ast} (\theta', \zeta') \right)} \right] \right|^u\\ &
         \qquad \times \left( \1_{L_{(\theta, \zeta)}} (\zeta') + \sigma'
         \delta \left( \sigma' - \mf{b}_-^{\ast} (\theta', \zeta') \right) +
         \sigma' \delta \left( \sigma' - \mf{b}_+^{\ast} (\theta', \zeta')
         \right) \right) \dd \zeta' \dd \theta' \frac{\dd \sigma'}{\sigma'}
       \end{aligned}\\ &
       \lesssim \begin{aligned}[t]&\| \phi \|_{\Phi^N_{\gr}} \sup_{\tilde{\phi}} 
         \int_{R^+ (\theta, \zeta)} \left| {\1_{E^{\ast}}}  F^{\ast} (\theta',
         \zeta', \sigma') [\tilde{\phi}] \right|^u\\ &
         \qquad \times \left( \1_{L_{(\theta, \zeta)}} (\zeta') + \sigma'
         \delta \left( \sigma' - \mf{b}_-^{\ast} (\theta', \zeta') \right) +
         \sigma' \delta \left( \sigma' - \mf{b}_+^{\ast} (\theta', \zeta')
         \right) \right) \dd \zeta' \dd \theta' \frac{\dd \sigma'}{\sigma'} .
       \end{aligned}
     \end{aligned} \]
  where the upper bound is taken over \(\tilde{\phi} \in \Phi^{\infty}_{4 \gr}\)
  with \(\| \tilde{\phi} \|_{\Phi^{N - 2}_{4 \gr}} \lesssim 1\). By
  \eqref{eq:boundary-singular-size-local} we obtain that
  \[ \begin{aligned}[t]&
       \begin{aligned}[t]&
         \| \phi \|_{\Phi^N_{\gr}} \sup_{\tilde{\phi}} \int_{R^+ (\theta,
         \zeta)} \left| {\1_{E^{\ast}}}  F^{\ast} (\theta', \zeta', \sigma')
         [\tilde{\phi}] \right|^u\\ &
         \qquad \times \left( \sigma' \delta \left( \sigma' - \mf{b}_-^{\ast}
         (\theta', \zeta') \right) + \sigma' \delta \left( \sigma' -
         \mf{b}_+^{\ast} (\theta', \zeta') \right) \right) \dd \zeta' \dd
         \theta' \frac{\dd \sigma'}{\sigma'}
       \end{aligned}\\ &
       \lesssim \begin{aligned}[t]&
         \| \phi \|_{\Phi^N_{\gr}} \mu^1_{\Theta} (T') \left( \left\|
         \1_{\overline{(W^+ \cap V^+)} \setminus (W^- \cup V^-)} F_{\Gamma}
         (\eta, y, t) t \delta \left( t - \mf{\mf{b}_-} (\eta, y) \right)
         \right\|_{\SL^{(u, 1)}_{(\Theta, \Theta^{\tmop{in}})} (T')} \qquad
         \right.\\ &
         + \left. \left\| \1_{\overline{(W^+ \cap V^+)} \setminus (W^- \cup
         V^-)} F_{\Gamma} (\eta, y, t) t \delta \left( t - \mf{\mf{b}_+}
         (\eta, y) \right) \right\|_{\SL^{(u, 1)}_{(\Theta,
         \Theta^{\tmop{in}})} (T')} \right)
       \end{aligned}\\ &
       \lesssim \| \phi \|_{\Phi^N_{\gr}} \RHS{}
       \eqref{eq:geometry-of-defects}^u \mu^1_{\Theta} (T') \lesssim \| \phi
       \|_{\Phi^N_{\gr}} \mf{b}_{W^-}^{\ast} (\theta, \zeta) \RHS{}
       \eqref{eq:geometry-of-defects}^u,
     \end{aligned} \]
  as required. It remains to show the bound \
  \[ \left( \frac{1}{\mf{b}_{W^-}^{\ast} (\theta, \zeta)} \int_{R^+ (\theta,
     \zeta)} \1_{L_{(\theta, \zeta)}} (\zeta') \left| {\1_{E^{\ast}}} 
     F^{\ast} (\theta', \zeta', \sigma') [\tilde{\phi}] \right|^u \dd \zeta'
     \dd \theta' \frac{\dd \sigma'}{\sigma'} \right)^{\frac{1}{u}} \lesssim
     \RHS{} \eqref{eq:geometry-of-defects}  \]
  for any fixed \(\tilde{\phi} \in \Phi^{\infty}_{4 \gr}\) with \(\| \tilde{\phi}
  \|_{\Phi^{N - 2}_{4 \gr}} \leq 1\). It holds that \(\pi_T \left( R^{+ +}
  (\theta, \zeta) \cap \mT_{\Theta^{\tmop{in}}_{\Gamma}} \right) \subset
  \pi_{T''} \left( \mT_{\Theta^{\tmop{in}}_{\Gamma}} \right)\) with \(T'' =
  T_{\Theta} \left( \xi_T - \frac{1}{2 \mf{b}_{W^-}^{\ast} (\theta, \zeta)},
  x_T + \zeta, 2 \mf{b}_{W^-}^{\ast} (\theta, \zeta) \right)\). As a matter of
  fact, \((\theta', \zeta', \sigma') \in R^{+ +} (\theta, \zeta) \cap
  \mT_{\Theta^{\tmop{in}}_{\Gamma}}\) only if \(\theta' \in
  \Theta^{\tmop{in}}_{\Gamma}\), \(| \zeta' - \zeta | < r_{\zeta}
  \mf{b}_{W^-}^{\ast} (\theta, \zeta) \), and \((1 + 4 r_t) \mf{b}_{W^-}^{\ast}
  (\theta, \zeta) < \sigma' < (1 + 6 r_t) \mf{b}_{W^-}^{\ast} (\theta,
  \zeta)\). In particular, this means that if \((\eta', y', t') = \pi_T
  (\theta', \zeta', \sigma')\) then
  \[ \begin{aligned}[t]&
       (1 + 4 r_t) \mf{b}_{W^-}^{\ast} (\theta, \zeta) < t' < (1 + 6 r_t)
       \mf{b}_{W^-}^{\ast} (\theta, \zeta),\\ &
       | y' - (x_T + \zeta) | < r_{\zeta} \mf{b}_{W^-}^{\ast} (\theta,
       \zeta),\\ &
       t' (\eta' - \xi_T) \in \Theta^{\tmop{in}}_{\Gamma} .
     \end{aligned} \]
  But then
  \[ t' \left( \eta' - \left( \xi_T - \frac{1}{2 \mf{b}_{W^-}^{\ast} (\theta,
     \zeta)} \right) \right) \in \Theta^{\tmop{in}}_{\Gamma} + \left( \frac{1
     + 4 r_t}{2}, \frac{1 + 6 r_t}{2} \right) \subset
     \Theta^{\tmop{ex}}_{\Gamma}, \]
  as required. If \(u \leq 2\) this is sufficient to conclude: it holds that
  \[ \begin{aligned}[t]&
       \left( \frac{1}{\mf{b}_{W^-}^{\ast} (\theta, \zeta)} \int_{R^+ (\theta,
       \zeta)} \1_{L_{(\theta, \zeta)}} (\zeta') \left| {\1_{E^{\ast}}} 
       F^{\ast} (\theta', \zeta', \sigma') [\tilde{\phi}] \right|^u \dd \zeta'
       \dd \theta' \frac{\dd \sigma'}{\sigma'} \right)^{\frac{1}{u}}\\ &
       \qquad \lesssim \left\| \1_{\overline{(W^+ \cap V^+)} \setminus (W^-
       \cup V^-)} \1_{\pi_T (R^+ (\theta, \zeta))} F_{\Gamma}
       \right\|_{{\Gamma^{\ast}}  \SL^{(u, u)}_{(\Theta, \Theta^{\tmop{ex}})}
       \Phi_{4 \gr}^{N - 2} (T'')} \\ &
       \qquad \lesssim
       \begin{aligned}[t]
       &\left\| \1_{\overline{(W^+ \cap V^+)} \setminus (W^-
       \cup V^-)} \1_{\pi_T (R^+ (\theta, \zeta))} F_{\Gamma}
       \right\|_{{\Gamma^{\ast}}  \SL^{(u, 2)}_{(\Theta, \Theta^{\tmop{ex}})}
       \Phi_{4 \gr}^{N - 2} (T'')}\\ &
       \qquad \times \left\| \1_{\overline{(W^+ \cap V^+)} \setminus (W^- \cup V^-)}
       \1_{\pi_T (R^+ (\theta, \zeta))} \right\|_{{\Gamma^{\ast}}  \SL^{\left(
       \infty, \frac{2 u}{2 - u} \right)}_{(\Theta, \Theta^{\tmop{ex}})}
       (T'')}
       \end{aligned}
     \end{aligned} \]
  and
  \[ \begin{aligned}[t]&
       \left\| \1_{\overline{(W^+ \cap V^+)} \setminus (W^- \cup V^-)}
       \1_{\pi_T (R^+ (\theta, \zeta))} \right\|_{{\Gamma^{\ast}}  \SL^{\left(
       \infty, \frac{2 u}{2 - u} \right)}_{(\Theta, \Theta^{\tmop{ex}})}
       (T'')}^{\frac{2 u}{2 - u}}\\ &
       \qquad \lesssim \sup_{\theta \in \Theta} \sup_{\zeta \in B_1} \int_0^1
       \left| \1_{R^+ (\theta, \zeta)} (\pi_T^{- 1} \pi_{T''} (\theta',
       \zeta', \sigma'))  \right|^{\frac{2 u}{2 - u}} \frac{\dd
       \sigma'}{\sigma'}\\ &
       \qquad\lesssim \sup_{\theta \in \Theta} \sup_{\zeta \in B_1} \int_0^1 \1_{(1
       + 2 r_{\sigma}) / 2 < \sigma' < (1 + 8 r_{\sigma}) / 2} \frac{\dd
       \sigma'}{\sigma'}
     \end{aligned} \]
  since \((\theta'', \zeta'', \sigma'') = \pi_T^{- 1} \pi_{T''} (\theta',
  \zeta', \sigma')\) only if \(\sigma'' = 2 \mf{b}_{W^-}^{\ast} (\theta, \zeta)
  \sigma'\). If \(u > 2\) we use that the integrand \({\1_{E^{\ast}}}  F^{\ast}
  (\theta', \zeta', \sigma') [\tilde{\phi}]\) is almost constant over \(R^+
  (\theta, \zeta)\). As a matter of fact it holds that
  \[ \1_{L_{(\theta, \zeta)}} (\zeta') \left| {\1_{E^{\ast}}}  F^{\ast}
     (\theta', \zeta', \sigma') [\phi] \right| = \1_{L_{(\theta, \zeta)}}
     (\zeta') \left| {\1_{E^{\ast}}}  F^{\ast} (\theta'', \zeta', \sigma'')
     \left[ {\phi }_{(\theta'', \sigma'', \theta', \sigma')} \right] \right|
  \]
  as per \eqref{eq:wave-packet-move} for any \((\theta'', \zeta', \sigma'')
  \in R^{+ +} (\theta, \zeta)\). Applying \Cref{lem:wave-packet-decomposition}
  to \({\phi }_{(\theta'', \sigma'', \theta', \sigma')}\)
  \[ \begin{aligned}[t]&
       \begin{aligned}[t]&
         \1_{L_{(\theta, \zeta)}} (\zeta') \left| {\1_{E^{\ast}}}  F^{\ast}
         (\theta', \zeta', \sigma') [\tilde{\phi}] \right|\\ &
         \qquad = \sup_{\widetilde{\widetilde{\phi }}} \left( \frac{\int_{R^{+
         +} (\theta, \zeta)} \1_{L_{(\theta, \zeta)}} (\zeta') \left|
         {\1_{E^{\ast}}}  F^{\ast} (\theta'', \zeta', \sigma'') \left[
         \widetilde{\widetilde{\phi }} \right] \right|^2 \frac{\dd
         \sigma''}{\sigma''} \dd \theta'}{\int_{R^{+ +} (\theta, \zeta)}
         \1_{L_{(\theta, \zeta)}} (\zeta') \frac{\dd \sigma''}{\sigma''} \dd
         \theta'} \right)^{\frac{1}{2}}
       \end{aligned} 
     \end{aligned} \]
  where the upper bound is taken over \(\widetilde{\widetilde{\phi }} \in
  \Phi^{\infty}_{16 \gr}\) with \(\| \tilde{\phi} \|_{\Phi^{N - 4}_{16 \gr}}
  \lesssim 1\). Since
  \[ \int_{R^{+ +} (\theta, \zeta)} \1_{L_{(\theta, \zeta)}} (\zeta')
     \frac{\dd \sigma''}{\sigma''} \dd \theta' \gtrsim 1 \]
  we have that
  \[ \begin{aligned}[t]&
       \frac{1}{\mf{b}_{W^-}^{\ast} (\theta, \zeta)} \int_{R^+ (\theta,
       \zeta)} \1_{L_{(\theta, \zeta)}} (\zeta') \left| {\1_{E^{\ast}}} 
       F^{\ast} (\theta', \zeta', \sigma') [\tilde{\phi}] \right|^u \dd \zeta'
       \dd \theta' \frac{\dd \sigma'}{\sigma'}\\ &
       \lesssim \sup_{\widetilde{\widetilde{\phi }}} \int_{B_1} \left(
       \int_{\Theta^{\tmop{in}} \times [0, 1]} \1_{L_{(\theta, \zeta)}}
       (\zeta') \left| \1_{R^{+ +} (\theta, \zeta)} {\1_{E^{\ast}}}  F^{\ast}
       (\theta'', \zeta', \sigma'') \left[ \widetilde{\widetilde{\phi }}
       \right] \right|^2 \frac{\dd \sigma''}{\sigma''} \dd \theta' \right)^{u
       / 2} \dd \zeta' .
     \end{aligned} \]
  Now, the same procedure as before allows us, using that \(u \geq 2\) to deduce
  that
  \[ \begin{aligned}[t]&
       \left( \frac{1}{\mf{b}_{W^-}^{\ast} (\theta, \zeta)} \int_{R^+ (\theta,
       \zeta)} \1_{L_{(\theta, \zeta)}} (\zeta') \left| {\1_{E^{\ast}}} 
       F^{\ast} (\theta', \zeta', \sigma') [\tilde{\phi}] \right|^u \dd \zeta'
       \dd \theta' \frac{\dd \sigma'}{\sigma'} \right)^{\frac{1}{u}}\\ &
       \qquad \lesssim
       \begin{aligned}[t] &
       \left\| \1_{\overline{(W^+ \cap V^+)} \setminus (W^-
       \cup V^-)} \1_{\pi_T (R^+ (\theta, \zeta))} F_{\Gamma}
       \right\|_{{\Gamma^{\ast}}  \SL^{(u, 2)}_{(\Theta, \Theta^{\tmop{ex}})}
       \Phi_{4 \gr}^{N - 2} (T'')}\\ &
       \times \left\| \1_{\overline{(W^+ \cap V^+)} \setminus (W^- \cup V^-)}
       \1_{\pi_T (R^+ (\theta, \zeta))} \right\|_{{\Gamma^{\ast}} 
       \SL^{(\infty, \infty)}_{(\Theta, \Theta^{\tmop{ex}})} (T'')}
       \end{aligned}
       \\ &
       \qquad \lesssim \left\| \1_{\overline{(W^+ \cap V^+)} \setminus (W^-
       \cup V^-)} \1_{\pi_T (R^+ (\theta, \zeta))} F_{\Gamma}
       \right\|_{{\Gamma^{\ast}}  \SL^{(u, 2)}_{(\Theta, \Theta^{\tmop{ex}})}
       \Phi_{4 \gr}^{N - 2} (T'')} .
     \end{aligned} \]
\end{proof}

\begin{lemma}[Lacunary size domination]
  \label{lem:unif-lac-size-domination}Given any sets \(V^+, V^- \in \DD_{\beta
  }^{\cup}\), and let \(W^+, W^- \in \TT_{\Theta}^{\cup}\), the bounds
  \begin{equation}
    \begin{aligned}[t]&
      \bigl\| \1_{(V^+ \cap W^+) \setminus (V^- \cup W^-)} \left( \Emb [f]
      \circ \Gamma \right) \bigr\|_{{\Gamma^{\ast}}  \SL^{(u, 2)}_{\Theta}
      \wpD_{\gr}^N} \qquad\\ &
      \qquad \lesssim \begin{aligned}[t]&
        \bigl\| \1_{(V^+ \cap W^+) \setminus (V^- \cup W^-)}  \left( \Emb [f]
        \circ \Gamma \right) \bigr\|_{{\Gamma^{\ast}}  \SL^{(u, 2)}_{(\Theta,
        \Theta^{\tmop{ex}})} \Phi_{16 \gr}^{N - 10}}\\ &
        + \bigl\| \1_{(V^+ \cap W^+) \setminus (V^- \cup W^-)}  \left( \Emb
        [f] \circ \Gamma \right) \bigr\|_{{\Gamma^{\ast}}  \SL^{(u,
        1)}_{(\Theta, \Theta^{\tmop{in}})} \dfct^{N - 10}_{16 \gr}}
      \end{aligned}
    \end{aligned} \label{eq:unif-lac-size-domination}
  \end{equation}
  hold for all \(f \in \Sch (\R)\) as long as \(u \in [1, \infty]\).
  The implicit constant is independent of \(f\) and of the sets \(V^{\pm},
  W^{\pm}\).
\end{lemma}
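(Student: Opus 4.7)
The proof strategy is to split the $\theta$-integration in the size $\Gamma^{\ast} \SL^{(u,2)}_{\Theta} \wpD^N_{\gr}$ according to whether $\theta$ lies in $\Theta^{\tmop{ex}}$ or $\Theta^{\tmop{in}}$. Fix a tree $T \in \TT_{\Theta}$ and, by the covariance identity \eqref{eq:size-symmetry} and the subsequent discussion, reduce to $T = T_{\Theta}(0, 0, 1)$. Write $E = (V^+ \cap W^+) \setminus (V^- \cup W^-)$ so that we are analyzing $F = \1_E \Emb[f] \circ \Gamma$.

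For the exterior piece, the defining identity $\Gamma^{\ast}\wpD_{\zeta}(\theta) F[\phi] = F[(-d_z + 2 \pi i \theta_{\Gamma}) \phi]$ lets us absorb the $\wpD_{\zeta}$ operator into a modified wave packet $\tilde{\phi}_{\theta} = (-d_z + 2 \pi i \theta_{\Gamma})\phi$. Since $|\theta_{\Gamma}|$ is uniformly bounded on $\Theta^{\tmop{ex}}$ (and $|\alpha \beta| \in [1/2, 2]$), the map $\theta \mapsto \tilde{\phi}_{\theta}$ gives a measurable family in $\Phi^{N-1}_{\gr}$ whose $\Phi^{N-1}_{\gr}$-norms are uniformly bounded by $\|\phi\|_{\Phi^N_{\gr}}$. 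Decomposing each $\tilde{\phi}_{\theta}$ via \Cref{lem:wave-packet-decomposition} into an absolutely summable series of wave packets in $\Phi^{N-2}_{2\gr}$ (and applying \Cref{cor:wave-packet-decomposition-and-sizes}) controls this contribution by $\|F\|_{\Gamma^{\ast} \SL^{(u,2)}_{(\Theta, \Theta^{\tmop{ex}})} \Phi^{N-10}_{16\gr}}$, with room to spare in the wave packet order and frequency parameters.

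For the interior piece, apply the algebraic identity
\[
\Gamma^{\ast}\wpD_{\zeta}(t(\eta - \xi_T)) F = (\beta t \partial_y - 2\pi i \alpha \xi_T) F + \big(\Gamma^{\ast}\wpD_{\zeta}(t(\eta - \xi_T)) - \beta t \partial_y + 2\pi i \alpha \xi_T\big) F.
\]
The second summand is precisely the defect operator of \eqref{eq:gamma-defect-size}, and by the no-defect property \eqref{eq:no-defect-gamma} applied to $\Emb[f] \circ \Gamma$ it reduces for our specific $F$ to the singular measure $-\beta t (\partial_y \1_E) \Emb[f] \circ \Gamma$, concentrated on the graph $\{t = \mf{b}(\eta, y)\}$ of a locally Lipschitz function $\mf{b}$ given by \Cref{lem:geometry-of-boundary}. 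In the same way, the first summand is controlled by $\wpD_{\zeta} F$ up to the boundary term $\beta t(\partial_y \1_E)\Emb[f]\circ\Gamma$, and by \Cref{lem:geometry-of-boundary}, more specifically identities \eqref{eq:boundary-singular-size} and \eqref{eq:boundary-singular-size-local}, both contributions reduce to a common surface integral that matches the structure of the $\SL^{(u,1)}_{(\Theta, \Theta^{\tmop{in}})} \dfct$ size.

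The main technical obstacle is the transition from the $L^2_\sigma$ structure on the LHS to the $L^1_\sigma$ structure of the defect size on $\Theta^{\tmop{in}}$: a direct Hölder estimate in $\sigma$ is lossy. The resolution mirrors the localization argument used in the proof of \Cref{lem:uniform-defect-bound}: around each interior point $(\theta, \zeta, \mf{b}(\theta, \zeta))$ we construct a small parallelepiped $R(\theta, \zeta)$ in $(\theta', \zeta', \sigma')$-space of controlled size, use \eqref{eq:wave-packet-move} to move wave packets between nearby points at the cost of a bounded factor in $\Phi$-norm, and then dominate the local $L^2_\sigma$ average by either the boundary surface measure on $\partial E$ (absorbed into the defect size) or the $L^2_\sigma$ quantity on $\pi_T(R^+(\theta,\zeta))$ whose frequency projection lies in $\Theta^{\tmop{ex}}_\Gamma$ (absorbed into the exterior size). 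Integrating the resulting pointwise bound against a partition-of-unity-type averaging kernel analogous to \eqref{eq:defect-stopping-construction} yields the interior contribution.
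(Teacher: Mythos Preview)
Your treatment of the exterior region $\Theta^{\tmop{ex}}$ matches the paper's: absorb $\wpD_\zeta$ into the wave packet and bound by the exterior Lebesgue size directly.

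However, your handling of the interior region $\Theta^{\tmop{in}}$ has a genuine gap. The algebraic identity you write,
\[
\Gamma^{\ast}\wpD_{\zeta}(\theta) F = (\beta t \partial_y - 2\pi i \alpha \xi_T) F + (\text{defect operator}) F,
\]
is correct, and the defect piece is indeed a boundary measure by \eqref{eq:no-defect-gamma}. But the other summand $(\beta t\partial_y - 2\pi i\alpha\xi_T)F$ is a \emph{bulk} derivative in $\zeta$, not a boundary term. On the interior of the support, $\beta t\partial_y(\1_E\,\Emb[f]\circ\Gamma)$ equals $\1_E\,\Gamma^{\ast}\wpD_\zeta(\Emb[f]\circ\Gamma)$ (again by no-defect) plus a boundary measure, so after cancellation you are back to the very quantity you started with. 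The decomposition is circular and gives no control of the $L^2_\sigma$-integral for $\sigma$ away from both $\mf{b}^{\ast}_{\pm}(\theta,\zeta)$. The localization argument you borrow from \Cref{lem:uniform-defect-bound} is designed for a \emph{surface} integral (over the graph of a single $\mf{b}^{\ast}$), not for the full scale integral appearing in $\SL^{(u,2)}$.

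The paper's actual argument for $\theta\in\Theta^{\tmop{in}}$ supplies the missing idea and is quite different. It rests on the observation that $\widehat{\phi_\theta}$ vanishes at $-\theta_\Gamma$ (since $\phi_\theta(z)=(-d_z+2\pi i\theta_\Gamma)\phi$), so $\Upsilon=d_z\Mod_{\theta_\Gamma}\phi^\vee$ has a frequency zero. This permits a Calder\'on-type reproducing formula on the Fourier side: the paper constructs multipliers $M^+$ and $m$ (built from integrals of $\widehat{\chi_{\gr}}(\sigma'\widehat{z}-\theta'_\Gamma)$ over $\theta'\in\bar{\Theta}^{\tmop{ex}}_\Gamma$) that decompose $\widehat{\Dil_\sigma\Upsilon}$ into a superposition of wave packets at \emph{exterior} frequencies $\theta'$ and scales $\sigma'\in(\mf{b}^{\ast}_-,\mf{b}^{\ast}_+)$, plus remainders living at the boundary scales $\mf{b}^{\ast}_\pm$. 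This yields the pointwise estimate \eqref{eq:ld:full-decomposition-bound}; the exterior integral term is then handled by Young's convolution inequality in $\dd\sigma'/\sigma'$ and feeds into the exterior size, while the two boundary terms feed into the defect size via \eqref{eq:boundary-singular-size-local}. The case analysis (Cases 1.0/1.5 for $\sigma\lesssim\mf{b}^{\ast}_-$, Case 2 for the bulk) makes this precise. Your proposal essentially captures only the near-boundary pieces but not this bulk Calder\'on decomposition, which is where the real work lies.
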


\begin{proof}
  Let us fix \(T \in \TT_{\Theta}\) and show that
  \[ \begin{aligned}[t]&
       \bigl\| \1_{(V^+ \cap W^+) \setminus (V^- \cup W^-)} \left( \Emb [f]
       \circ \Gamma \right) \bigr\|_{{\Gamma^{\ast}}  \SL^{(u, 2)}_{\Theta}
       \wpD_{\gr}^N (T)} \lesssim \RHS{\eqref{eq:unif-lac-size-domination}} .
     \end{aligned} \]
  By symmetry, (see \eqref{eq:size-symmetry} and the subsequent discussion),
  we can assume that \({T = T_{\Theta}}  (0, 0, 1)\) so that
  \[ \Gamma \circ \pi_T (\theta, \zeta, \sigma) = (\alpha \beta (\theta +
     \gamma) (\beta \sigma)^{- 1}, \zeta, \beta \sigma) = (\theta_{\Gamma}
     (\beta \sigma)^{- 1}, \zeta, \beta \sigma) = \pi_T (\theta_{\Gamma},
     \zeta, \beta \sigma) . \]
  For the sake of this proof, let us suppose that \(\alpha \beta > 0\) and
  \(\gamma < 0\). Any other combination of signs can be dealt with, mutatis
  mutandis, following the same argument. We use the shorthand \(E_+^{\ast}
  \assign \pi_T^{- 1} (V^+ \cap W^+ \cap T)\), \(E_-^{\ast} \assign \pi_T^{- 1}
  (V^- \cup W^-)\), and \(F^{\ast} \eqd \1_{E_+^{\ast} \setminus E_-^{\ast}}
  \Emb{} [f] \circ \Gamma \circ \pi_T\). From \Cref{def:lacunary-size} we have
  that
  \[ \begin{aligned}[t]&
       \bigl\| \1_{(V^+ \cap W^+) \setminus (V^- \cup W^-)} \left( \Emb [f]
       \circ \Gamma \right) \bigr\|_{{\Gamma^{\ast}}  \SL^{(u, 2)}_{\Theta}
       \wpD_{\gr}^N (T)} = \sup_{\phi} \| F^{\ast} (\theta, \zeta, \sigma)
       [\phi_{\theta}] \|_{L^u_{\frac{\dd \theta \dd \zeta}{| \Theta |}
       } L^2_{\frac{\dd \sigma}{\sigma}}}\\ &
       \phi_{\theta} (z) \eqd (- d_z + 2 \pi i \theta_{\Gamma}) \phi (z)
     \end{aligned} \]
  with the upper bound taken over \(\phi \in \Phi^{\infty}_{\gr}\) with \(\| \phi
  \|_{\Phi^N_{\gr}} \leq 1\). First we address the case \(\theta \in
  \Theta^{\tmop{ex}}_{\Gamma}\) i.e. \(\theta_{\Gamma} \notin
  \Theta^{\tmop{in}}\). Since
  \[ \begin{aligned}[t]&
       \wpD_{\zeta} (\theta_{\Gamma}) F^{\ast} (\theta, \zeta, \sigma) [\phi]
       = F^{\ast} (\theta, \zeta, \sigma) [\phi_{\theta}]
     \end{aligned} \]
  and \(\| - d_z \phi \|_{\Phi_{\gr}^N} + \| 2 \pi i \theta_{\Gamma} \phi
  \|_{\Phi_{\gr}^N} \lesssim \| \phi \|_{\Phi_{\gr}^N}\) it follows immediately
  that
  \[ \begin{aligned}[t]&
       \sup_{\phi}  \int_{\R^2} \left( \int_{\R_+}
       \1_{\mT_{\Theta_{\Gamma}^{\tmop{ex}}}} \1_{E_+^{\ast} \setminus
       E_-^{\ast}} \left| \wpD_{\zeta} (\theta_{\Gamma}) F^{\ast} (\theta,
       \zeta, \sigma) [\phi] \right|^2 \hspace{0.17em} \frac{\dd
       \sigma}{\sigma} \right)^{\frac{u}{2}} \hspace{0.17em} \frac{\dd \zeta
       \dd \theta}{| \Theta |}\\ &
       \qquad \begin{aligned}[t]&
         \lesssim \sup_{\phi}  \int_{\R^2} \left( \int_{\R_+}
         \1_{\mT_{\Theta_{\Gamma}^{\tmop{ex}}}} \1_{E_+^{\ast} \setminus
         E_-^{\ast}} | F^{\ast} (\theta, \zeta, \sigma) [\phi] |^2
         \hspace{0.17em} \frac{\dd \sigma}{\sigma} \right)^{\frac{u}{2}}
         \hspace{0.17em} \frac{\dd \zeta \dd \theta}{| \Theta |}\\ &
         \lesssim \| \1_{(V^+ \cap W^+) \setminus (V^- \cup W^-)} \left( \Emb
         [f] \circ \Gamma \right) \|^u_{{\Gamma^{\ast}}  \SL^{(u,
         2)}_{(\Theta, \Theta^{\tmop{ex}})} \Phi_{\gr}^N} .
       \end{aligned}
     \end{aligned} \]
  The rest of the proof is dedicated to dealing with the case \(\theta \in
  \Theta^{\tmop{in}}_{\Gamma}\). Note that \(\FT{\phi_{\theta}}\) vanishes at \(-
  \theta_{\Gamma}\) while for any \(\theta \notin \Theta^{\tmop{in}}_{\Gamma}\),
  \(\FT{\phi}\) vanishes on a large neighborhood of \(- \theta_{\Gamma}\) simply
  because \(| \theta_{\Gamma} | > \gr\) and \(\spt \FT{\phi} \subset B_{\gr}\).
  This brings us to compare \(F^{\ast} (\theta, \zeta, \sigma) [\phi_{\theta}]\)
  or \(\theta \in \Theta^{\tmop{in}}_{\Gamma}\) with \(F^{\ast} (\theta', \zeta,
  \sigma) [\phi'_{\theta'}]\) for \(\theta' \in \Theta^{\tmop{ex}}_{\Gamma}\) and
  for an appropriate \(\phi'_{\theta} \in \Phi^{\infty}_{16 \gr}\), and to
  expect to be able to bound \(\LHS{\eqref{eq:unif-lac-size-domination}}\) by
  the first addend on {\RHS{\eqref{eq:unif-lac-size-domination}}}. In
  practice, we also need to control some remainder terms by the boundary
  integral
  \[ \bigl\| \1_{(V^+ \cap W^+) \setminus (V^- \cup W^-)}  \left( \Emb [f]
     \circ \Gamma \right) \bigr\|_{{\Gamma^{\ast}}  \SL^{(u, 1)}_{(\Theta,
     \Theta^{\tmop{in}})} \dfct^{N - 10}_{16 \gr}} . \]
  This proof goes along the lines of {\cite[Lemma
  5.3]{amentaBilinearHilbertTransform2020}}, however care is taken to
  \begin{itemize}
    \item make sure the estimates are uniform in \(\Gamma\) and in particular in
    the deformation parameter \(\beta \in (0, 1]\),
    
    \item use the defect size \({\Gamma^{\ast}}  \SL^{(u, 1)}_{(\Theta,
    \Theta^{\tmop{in}})} \dfct^{N - 10}_{16 \gr}\) instead of the size
    \(\Gamma^{\ast} \SL^{(\infty, \infty)}_{\Theta} \Phi_{16 \gr}^{N - 10}\)
    that is not available in this setting. 
  \end{itemize}
  Let \(\mf{b}_+^{\ast}\), \(\mf{b}_-^{\ast}\) be the two functions whose graphs
  are the boundaries of \(E_+^{\ast}\) and \(E_-^{\ast}\) respectively so that
  \(E_+^{\ast} \setminus E_-^{\ast} = \left\{ (\theta, \zeta, \sigma) :
  \mf{b}_-^{\ast} (\theta, \zeta) < \sigma < \mf{b}_+^{\ast} (\theta, \zeta)
  \right\}\). The existence of such functions is guaranteed by
  \Cref{lem:geometry-of-boundary}. Our goal is to show that
  \begin{equation}
    \sup_{\phi}  \int_{\R^2} \left( \int_{\R_+}
    \1_{\mT_{\Theta_{\Gamma}^{\tmop{in}}}} \1_{E_+^{\ast} \setminus
    E_-^{\ast}} \left| \wpD_{\zeta} (\theta_{\Gamma}) F^{\ast} (\theta, \zeta,
    \sigma) [\phi] \right|^2 \hspace{0.17em} \frac{\dd \sigma}{\sigma}
    \right)^{\frac{u}{2}} \hspace{0.17em} \frac{\dd \zeta \dd \theta}{| \Theta
    |} \lesssim \RHS{\eqref{eq:unif-lac-size-domination}}
    \label{eq:ld:local-coordinates:in}
  \end{equation}
  with \(\phi\) as before.
  
  Let us denote \(\Theta^{\tmop{in}}_{\Gamma} = (\theta^{\tmop{in}}_{\Gamma -},
  \theta^{\tmop{in}}_{\Gamma +})\), \(\Theta^{\tmop{in}} =
  (\theta^{\tmop{in}}_-, \theta^{\tmop{in}}_+)\) with \(\theta^{\tmop{in}}_{\pm}
  = \alpha \beta \theta^{\tmop{in}}_{\Gamma \pm} + \gamma\), let
  \(\bar{\Theta}_{\Gamma} \eqd (\bar{\theta}_{\Gamma -}, \bar{\theta}_{\Gamma
  +}) = (\theta^{\tmop{in}}_{\Gamma -} - (\alpha \beta)^{- 1},
  \theta^{\tmop{in}}_{\Gamma +} + (\alpha \beta)^{- 1})\) be an enlargement of
  \(\Theta^{\tmop{in}}_{\Gamma}\) that still satisfies \(\bar{\Theta}_{\Gamma}
  \subset \Theta\), and let \(\bar{\Theta}_{\Gamma}^{\tmop{ex}} =
  \bar{\Theta}_{\Gamma}  \setminus \Theta^{\tmop{in}}_{\Gamma}\).
  
  Our main claim is that for any \((\theta, \zeta, \sigma) \in
  \mT_{\Theta^{\tmop{in}}_{\Gamma}} \cap (E_+^{\ast} \setminus E_-^{\ast})\)
  and for any \(\phi \in \Phi_{\gr}^{\infty}\) with \(\| \phi \|_{\Phi_{\gr}^N}
  \leq 1\) we have that
  \begin{equation}
    \begin{aligned}[t]&
      \left| \wpD_{\zeta} (\theta_{\Gamma}) F^{\ast} (\theta, \zeta, \sigma)
      [\phi] \right|\\ &
      \qquad \lesssim \begin{aligned}[t]&
        \sup_{\tilde{\phi}} \left( \int_{\bar{\Theta}_{\Gamma}^{\tmop{ex}}}
        \int_{\mf{b}_-^{\ast} (\theta', \zeta)}^{\mf{b}_+^{\ast} (\theta',
        \zeta)} e^{- \left| \log \left( \frac{\sigma}{\sigma'} \right)
        \right|} | F^{\ast} (\theta', \zeta, \sigma') [\tilde{\phi}] |
        \frac{\dd \sigma'}{\sigma'} \dd \theta' \right.\\ &
        \qquad + \begin{aligned}[t]&
          \int_{\Theta^{\tmop{in}}_{\Gamma}} {\1_{\mf{b}_-^{\ast} (\theta',
          \zeta) < \mf{b}_+^{\ast} (\theta', \zeta)}}  e^{- \left| \log \left(
          \frac{\sigma}{\mf{b}_-^{\ast} (\theta', \zeta)} \right) \right|}
          \left| F^{\ast} (\theta', \zeta, \mf{b}_-^{\ast} (\theta', \zeta))
          [\tilde{\phi}] \right|\\ &
          \qquad \times (1 + \delta (\theta - \theta')) \dd \theta'
        \end{aligned}\\ &
        \qquad \left\nobracket + \int_{\Theta^{\tmop{in}}_{\Gamma}}
        {\1_{\mf{b}_-^{\ast} (\theta', \zeta) < \mf{b}_+^{\ast} (\theta',
        \zeta)}}  e^{- \left| \log \left( \frac{\sigma}{\mf{b}_+^{\ast}
        (\theta', \zeta)} \right) \right|} \left| F^{\ast} (\theta', \zeta,
        \mf{b}_+^{\ast} (\theta', \zeta)) [\tilde{\phi}] \right| \dd \theta'
        \right)
      \end{aligned}
    \end{aligned} \label{eq:ld:full-decomposition-bound}
\end{equation}

  with the upper bound taken over functions \(\tilde{\phi} \in \Phi_{8 \gr}^{N
  - 8}\) with \(\| \tilde{\phi} \|_{\Phi_{8 \gr}^{N - 8}} \leq 1\). Furthermore,
  if \(\sigma < C_{\tmop{sep}} \mf{b}_-^{\ast} (\theta, \zeta)\) for some fixed
  large \(C_{\tmop{sep}} \gg 1\) the above statement also holds for any \(\phi
  \in \Phi_{4 \gr}^{\infty}\) with \(\| \phi \|_{\Phi_{4 \gr}^N} \leq 1\) on
  {\LHS{\eqref{eq:ld:full-decomposition-bound}}} while the upper bound is
  still taken over \(\tilde{\phi} \in \Phi_{8 \gr}^{N - 8}\) on
  {\RHS{\eqref{eq:ld:full-decomposition-bound}}}.
  
  We now show how this would allow us to conclude that
  \(\eqref{eq:ld:local-coordinates:in} \lesssim
  \RHS{\eqref{eq:unif-lac-size-domination}}^u\), as required. Applying
  \Cref{lem:wave-packet-decomposition} we represent any \(\tilde{\phi} \in
  \Phi_{8 \gr}^{N - 8}\) with \(\| \tilde{\phi} \|_{\Phi_{8 \gr}^{N - 8}} \leq
  1\) as
  \[ \tilde{\phi} = \sum_{k \in \Z} a_k \widetilde{ \tilde{\phi}}_k \]
  with \(\widetilde{ \tilde{\phi}}_k \in \Phi_{16 \gr}^{N - 10}\), \(\|
  \widetilde{ \tilde{\phi}}_k \|_{\Phi_{16 \gr}^{N - 10}} \leq 1\), and \(| a_k
  | \lesssim \langle k \rangle^{- 2}\). Using
  \eqref{eq:ld:full-decomposition-bound} and the triangle inequality we get
  that \(\eqref{eq:ld:local-coordinates:in} \lesssim \mathrm{I} + \mathrm{I}^- +
  \mathrm{I}^+\) where
  \[ \begin{aligned}[t]&
       \mathrm{I} \eqd \sup_{\tilde{\phi}} \begin{aligned}[t]&
         \int_{\Theta_{\Gamma}^{\tmop{in}} \times B_1} \left( \int_{[0, 1]}
         {\1 }_{\mT_{\Theta_{\Gamma}^{\tmop{in}}} \cap (E_+^{\ast} \setminus
         E_-^{\ast}) } (\theta, \zeta, \sigma)  \right.\\ &
         \qquad \times \left. \left( \int_{\bar{\Theta}_{\Gamma}^{\tmop{ex}}}
         \int_{\mf{b}_-^{\ast} (\theta', \zeta)}^{\mf{b}_+^{\ast} (\theta',
         \zeta)} e^{- \left| \log \left( \frac{\sigma}{\sigma'} \right)
         \right|} | F^{\ast} (\theta', \zeta, \sigma') [\tilde{\phi}] |
         \frac{\dd \sigma'}{\sigma'} \dd \theta' \right)^2 \frac{\dd
         \sigma}{\sigma} \right)^{\frac{u}{2}} \frac{\dd \zeta \dd \theta}{|
         \Theta |},
       \end{aligned}\\ &
       \mathrm{I}^- \eqd \sup_{\tilde{\phi}} \begin{aligned}[t]&
         \int_{\Theta_{\Gamma}^{\tmop{in}} \times B_1} \left( \int_{[0, 1]}
         {\1 }_{\mT_{\Theta_{\Gamma}^{\tmop{in}}} \cap (E_+^{\ast} \setminus
         E_-^{\ast}) } (\theta, \zeta, \sigma)  \left(
         \int_{\Theta^{\tmop{in}}_{\Gamma}} e^{- \left| \log \left(
         \frac{\sigma}{\mf{b}_-^{\ast} (\theta', \zeta)} \right) \right|}
         \right. \right\nobracket \qquad\\ &
         \qquad \times \left\nobracket \left\nobracket \left| F^{\ast}
         (\theta', \zeta, \mf{b}_-^{\ast} (\theta', \zeta)) [\tilde{\phi}]
         \right| (1 + \delta (\theta - \theta')) \dd \theta' \right)^2
         \frac{\dd \sigma}{\sigma} \right)^{\frac{u}{2}} \frac{\dd \zeta \dd
         \theta}{| \Theta |},
       \end{aligned} \\ &
       \mathrm{I}^+ \eqd \sup_{\tilde{\phi}} \begin{aligned}[t]&
         \int_{\Theta_{\Gamma}^{\tmop{in}} \times B_1} \left( \int_{[0, 1]}
         {\1 }_{\mT_{\Theta_{\Gamma}^{\tmop{in}}} \cap (E_+^{\ast} \setminus
         E_-^{\ast}) } (\theta, \zeta, \sigma)  \right.\\ &
         \qquad \times \left. \left( \int_{\Theta^{\tmop{in}}_{\Gamma}} e^{-
         \left| \log \left( \frac{\sigma}{\mf{b}_+^{\ast} (\theta', \zeta)}
         \right) \right|} \left| F^{\ast} (\theta', \zeta, \mf{b}_+^{\ast}
         (\theta', \zeta)) [\tilde{\phi}] \right| \dd \theta' \right)^2
         \frac{\dd \sigma}{\sigma} \right)^{\frac{u}{2}} \frac{\dd \zeta \dd
         \theta}{| \Theta |} .
       \end{aligned}
     \end{aligned} \]
  where the upper bound, this time, is taken over functions \(\tilde{\phi}
  \in \Phi_{16 \gr}^{N - 10}\) with \\ \(\| \tilde{\phi} \|_{\Phi_{16 \gr}^{N -
  10}} \leq 1\). We bound the term \(\mathrm{I}\) using the Minkowski inequality
  and using the Young convolution inequality with respect to the
  multiplicative Haar measure \(\frac{\dd \sigma'}{\sigma'}\):
  \[ \mathrm{I} \textrm{} \begin{aligned}[t]&
       \lesssim \sup_{\tilde{\phi}} \int_{\bar{\Theta}_{\Gamma}^{\tmop{ex}}
       \times B_1} \left( \int_{\mf{b}_-^{\ast} (\theta',
       \zeta)}^{\mf{b}_+^{\ast} (\theta', \zeta)} | F^{\ast} (\theta', \zeta,
       \sigma') [\tilde{\phi}] | ^2 \frac{\dd \sigma'}{\sigma'}
       \right)^{\frac{u}{2}} \hspace{0.17em} \dd \zeta \dd \theta'\\ &
       \lesssim \Bigl\| \1_{V^+ \setminus V^-} \1_{\R^3_+ \setminus W^-}
       \left( \Emb [f] \circ \Gamma \right) \Bigr\|_{{\Gamma^{\ast}} 
       \SL^{(u, 2)}_{(\Theta, \Theta^{\tmop{ex}})} \Phi_{8 \gr}^{N - 8} (T)}^u
       .
     \end{aligned} \]
  Bounding the terms \(\mathrm{I}^-\) and \(\mathrm{I}^+\) makes use of the defect
  size.
  \[ \begin{aligned}[t]&
       \mathrm{I}^+ \lesssim \sup_{\tilde{\phi}} \begin{aligned}[t]&
         \int_{\Theta_{\Gamma}^{\tmop{in}} \times B_1} \left. \left(
         \int_{\Theta^{\tmop{in}}_{\Gamma}} {\1_{\mf{b}_-^{\ast} (\theta',
         \zeta) < \mf{b}_+^{\ast} (\theta', \zeta)}}  \left| F^{\ast}
         (\theta', \zeta, \mf{b}_+^{\ast} (\theta', \zeta)) [\tilde{\phi}]
         \right| \dd \theta' \right. \right)^{\frac{u}{2}} \frac{\dd \zeta \dd
         \theta}{| \Theta |}
       \end{aligned} \qquad\\ &
       \lesssim \sup_{\tilde{\phi}} \begin{aligned}[t]&
         \int_{\Theta^{\tmop{in}}_{\Gamma} \times B_1} {\1_{\mf{b}_-^{\ast}
         (\theta', \zeta) < \mf{b}_+^{\ast} (\theta', \zeta)}}  \left|
         F^{\ast} (\theta', \zeta, \mf{b}_+^{\ast} (\theta', \zeta))
         [\tilde{\phi}] \right|^{\frac{u}{2}} \frac{\dd \zeta \dd \theta'}{|
         \Theta |}
       \end{aligned} \lesssim \RHS{\eqref{eq:unif-lac-size-domination}}^u .
     \end{aligned} \]
  Analogously,
  \[ \begin{aligned}[t]&
       \mathrm{I}^- \lesssim \sup_{\tilde{\phi}} \begin{aligned}[t]&
         \int_{\Theta_{\Gamma}^{\tmop{in}} \times B_1} \left(
         \int_{\bar{\Theta}} {\1_{\mf{b}_-^{\ast} (\theta', \zeta) <
         \mf{b}_+^{\ast} (\theta', \zeta)}}  \left| F^{\ast} (\theta', \zeta,
         \mf{b}_-^{\ast} (\theta', \zeta)) [\tilde{\phi}] \right| \right.
         \qquad\\ &
         \times \left\nobracket (1 + \delta (\theta - \theta')) \dd \theta'
         \right)^{\frac{u}{2}} \frac{\dd \zeta \dd \theta}{| \Theta |}
       \end{aligned}\\ &
       \qquad \lesssim \sup_{\tilde{\phi}} \begin{aligned}[t]&
         \int_{\bar{\Theta}_{\Gamma} \times B_1} {\1_{\mf{b}_-^{\ast}
         (\theta', \zeta) < \mf{b}_+^{\ast} (\theta', \zeta)}}  \left|
         F^{\ast} (\theta', \zeta, \mf{b}_-^{\ast} (\theta', \zeta))
         [\tilde{\phi}] \right|^{\frac{u}{2}} \frac{\dd \zeta \dd \theta'}{|
         \Theta |}
       \end{aligned}
       \\ & \qquad\lesssim \RHS{\eqref{eq:unif-lac-size-domination}}^u .
     \end{aligned} \]
  The last inequality in each of the chains above follows from
  \Cref{lem:geometry-of-boundary} and in particular from
  \eqref{eq:boundary-singular-size-local}.
  
  The remaining part of this proof is dedicated to showing that
  \eqref{eq:ld:full-decomposition-bound} holds. In
  {\LHS{}}\eqref{eq:ld:full-decomposition-bound} fix \((\theta, \zeta, \sigma)
  \in \mT_{\Theta} \cap (E_+^{\ast} \setminus E_-^{\ast})\) and \(\phi \in
  \Phi^{\infty}_{\gr}\) with \(\| \phi \|_{\Phi^N_{\gr}} \leq 1\). We will
  discuss two cases. First we consider when \(\sigma \leq C_{\tmop{sep}}
  \mf{b}^{\ast}_- (\theta, \zeta)\) for some very large \(C_{\tmop{sep}} \gg 1\).
  This corresponds to the point \((\theta, \zeta, \sigma)\) being close to the
  boundary of \(E^{\ast}_-\). We then address the more general case when \(\sigma
  \in \left( C_{\tmop{sep}}  \mf{b}^{\ast}_- (\theta, \zeta), \mf{b}^{\ast}_+
  (\theta, \zeta) \right)\).
  
  We start with the former case. In this regime we also allow \(\phi\) to have
  larger frequency support: \(\phi \in \Phi_{4 \gr}^{\infty}\) with \(\| \phi
  \|_{\Phi_{4 \gr}^N} \leq 1\) instead of only \(\phi \in \Phi_{\gr}^{\infty}\);
  this will be necessary in the other regime. We further distinguish two
  cases: \tmtextbf{Case 1.0} and \tmtextbf{Case 1.5}. The former is
  when \(\sigma < \left( 1 + 2^{- 10} \gr \right) \mf{b}^{\ast}_- (\theta,
  \zeta)\) or if \(| \theta_{\Gamma} | < \gr\) while the latter is when \(\sigma
  \leq C_{\tmop{sep}} \mf{b}^{\ast}_- (\theta, \zeta)\) but the conditions of
  \tmtextbf{Case 1.0} do not hold.
  
  \tmtextbf{Case 1.0}: Suppose \(\sigma < \left( 1 + 2^{- 10} \gr \right)
  \mf{b}^{\ast}_- (\theta, \zeta)\) or \(| \theta_{\Gamma} | < \gr\) with \(\sigma
  \leq C_{\tmop{sep}} \mf{b}^{\ast}_- (\theta, \zeta)\). We will use the fact
  that
  \[ \begin{aligned}[t]&
       \wpD_{\zeta} (\theta) F^{\ast} (\theta, \zeta, \sigma) [\phi] =
       F^{\ast} (\theta, \zeta, \mf{b}^{\ast}_- (\theta, \zeta)) [\tilde{\phi}
       ],\\ &
       \widetilde{\phi }^{\vee} \eqd \Mod_{- \theta_{\Gamma}} {\Dil }_{\sigma
       / \mf{b}^{\ast}_- (\theta, \zeta)} \Mod_{\theta_{\Gamma}}
       \phi_{\theta}^{\vee},\\ &
       \phi_{\theta} (z) \eqd (- d_z + 2 \pi i \theta_{\Gamma}) \phi (z)
     \end{aligned} \]
  This identity follows from \eqref{eq:embedding} that defines \(\Emb\). Since
  \[ \spt \left( \FT{{\Dil }_{\sigma} \Mod_{\theta_{\Gamma}}
     \phi_{\theta}^{\vee}} \right) \subset \left( \sigma^{- 1} \left(
     \theta_{\Gamma} - 4 \gr \right), \sigma^{- 1} \left( \theta_{\Gamma} + 4
     \gr \right) \right) \]
  it holds that
  \[ \spt \left( \FT{\widetilde{\phi  }^{\vee}} \right) \subset \left(
     \frac{\mf{b}^{\ast}_- (\theta, \zeta)}{\sigma} \left( \theta_{\Gamma} - 4
     \gr \right) - \theta_{\Gamma}, \frac{\mf{b}^{\ast}_- (\theta,
     \zeta)}{\sigma} \left( \theta_{\Gamma} + 4 \gr \right) - \theta_{\Gamma}
     \right) . \]
  Since \(\theta_{\Gamma} \in \Theta\) and thus \(| \theta_{\Gamma} | < 10\) and
  either \(| \theta_{\Gamma} | < \gr\) or \(\frac{1}{1 + 2^{- 10} \gr} <
  \frac{\mf{b}^{\ast}_- (\theta, \zeta)}{\sigma} < 1\) we have that \(\left|
  \left( \frac{\mf{b}^{\ast}_- (\theta, \zeta)}{\sigma} - 1 \right)
  \theta_{\Gamma} \right| < 2 \gr\). This implies that \(\tilde{\phi}  \in
  \Phi_{8 \gr}^N\) with \(\| \tilde{\phi}  \|_{\Phi_{8 \gr}^N}
  \lesssim_{C_{\tmop{sep}}} \| \phi \|_{\Phi_{4 \gr}^N} \leq 1\). This shows
  bound \eqref{eq:ld:full-decomposition-bound} in this case because we have
  obtained that
  \[ \left| \wpD_{\zeta} (\theta_{\Gamma}) F^{\ast} (\theta, \zeta, \sigma)
     [\phi] \right| \lesssim \sup_{\tilde{\phi}} \begin{aligned}[t]&
       \int_{\Theta^{\tmop{in}}_{\Gamma}} {\1_{\mf{b}_-^{\ast} (\theta',
       \zeta) < \mf{b}_+^{\ast} (\theta', \zeta)}}  e^{- \left| \log \left(
       \frac{\sigma}{\mf{b}_-^{\ast} (\theta', \zeta)} \right) \right|}\\ &
       \qquad \times \left| F^{\ast} (\theta', \zeta, \mf{b}_-^{\ast}
       (\theta', \zeta)) [\tilde{\phi}] \right| (1 + \delta (\theta -
       \theta')) \dd \theta'
     \end{aligned} \]

  \tmtextbf{Case 1.5}: We now suppose that \(1 + 2^{- 10} \gr \leq
  \frac{\sigma}{\mf{b}^{\ast}_- (\theta, \zeta)} < C_{\tmop{sep}}\) and \(|
  \theta_{\Gamma} | > \gr\). We assume that \(\theta > - \gamma +
  \frac{\gr}{\alpha \beta}\), the case \(\theta < - \gamma - \frac{\gr}{\alpha
  \beta}\) is dealt with symmetrically. Let \(\left( \omega , \mf{b}^{\ast}_-
  (\omega, \zeta) \right)\) be the point in \(\Theta \times \R_+\) where the
  segment connecting the points \((\theta, \sigma) \in \Theta \times \R_+\) and
  \((- \gamma, 0) \in \Theta \times [0, + \infty)\) intersects the graph of
  \(\theta' \mapsto \mf{b}^{\ast}_- (\theta', \zeta)\). We claim that
  \begin{itemize}
    \item such a point exists,
    
    \item it is unique,
    
    \item it holds that \(\mf{b}^{\ast}_- (\omega, \zeta) < \mf{b}^{\ast}_+
    (\omega, \zeta)\),
    
    \item there exists a small \(\bar{r} > 0\) such that \((\omega, \omega +
    \bar{r}) \subset (- \gamma, \theta)\) and \(\mf{b}^{\ast}_- (\theta', \zeta)
    < \mf{b}^{\ast}_+ (\theta', \zeta)\) for all \(\theta' \in (\omega, \omega +
    \bar{r})\).
  \end{itemize}
  Assuming for now that this holds, we define \(\tilde{\phi}_{\theta'}\) by
  setting
  \[ {\Dil }_{\mf{b}^{\ast}_- (\theta', \zeta)} \Mod_{\theta'_{\Gamma}}
     \widetilde{\phi }_{\theta'}^{\vee} \eqd {\Dil }_{\sigma}
     \Mod_{\theta_{\Gamma}} \phi_{\theta}^{\vee} \]
  so that
  \[ \wpD_{\zeta} (\theta_{\Gamma}) F^{\ast} (\theta, \zeta, \sigma) [\phi] =
     \frac{1}{\bar{r}} \int_{\omega}^{\omega + \bar{r}} F^{\ast} (\theta',
     \zeta, \mf{b}^{\ast}_- (\theta', \zeta)) [{\widetilde{\phi }_{\theta'}} ]
     \dd \theta' . \]
  It holds that \(\spt \left( \FT{\widetilde{\phi  }_{\omega}} \right) \subset
  B_{4 \gr \frac{\mf{b}^{\ast}_- (\omega, \zeta)}{\sigma}} \subset B_{4 \gr}\);
  using bound \eqref{eq:boundary-regularity-local}, and in particular that
  \[ e^{- | \theta' - \omega |} < \frac{\mf{b}^{\ast}_- (\theta',
     \zeta)}{\mf{b}^{\ast}_- (\omega, \zeta)} < e^{| \theta' - \omega |}, \]
  we deduce, by continuity, that \(\spt \left( \FT{\widetilde{\phi 
  }_{\theta'}} \right) \subset B_{8 \gr}\) if \(\bar{r} > 0\) is chosen small
  enough. Furthermore, since \(\sigma < C_{\tmop{sep}} \mf{b}^{\ast}_- (\theta,
  \zeta)\) and \(\mf{b}^{\ast}_- (\omega, \zeta) > e^{- | \theta - \gamma |}
  \mf{b}^{\ast}_- (\theta, \zeta)\) it holds that \(\sigma \lesssim
  C_{\tmop{sep}} \mf{b}^{\ast}_- (\omega, \zeta)\). Thus \(\widetilde{\phi
  }_{\theta'} \in \Phi_{8 \gr}^N\) with \(\left\| {\widetilde{\phi }_{\theta'}} 
  \right\|_{\Phi_{8 \gr}^N} \lesssim_{C_{\tmop{sep}}} \| \phi \|_{\Phi_{4
  \gr}^N} \leq 1\). An application of \Cref{lem:wave-packet-decomposition}
  allows us to conclude that
  \[ \left| \wpD_{\zeta} (\theta_{\Gamma}) F^{\ast} (\theta, \zeta, \sigma)
     [\phi] \right| = \sup_{\tilde{\phi}} \frac{1}{\bar{r}}
     \int_{\omega}^{\omega + \bar{r}} \left| F^{\ast} (\theta', \zeta,
     \mf{b}^{\ast}_- (\theta', \zeta)) [\widetilde{\phi } ] \right| \dd
     \theta' \]
  with the upper bound taken over \(\widetilde{\phi } \in \Phi_{16
  \gr}^{\infty}\) with \(\| \widetilde{\phi }  \|_{\Phi_{16 \gr}^{N - 2}}
  \lesssim_{C_{\tmop{sep}}} 1.\) This would conclude showing bound
  \eqref{eq:ld:full-decomposition-bound} in this case. It remains to prove the
  properties about the point \(\left( \omega , \mf{b}^{\ast}_- (\omega, \zeta)
  \right)\) claimed above.
  
  At least one point of intersection between the segment connecting \
  \((\theta, \sigma) \in \Theta \times [0, + \infty)\) and \((- \gamma, 0) \in
  \Theta \times [0, + \infty)\) and the graph of \(\theta' \rightarrow
  \mf{b}_-^{\ast} (\theta', \zeta)\) exists because \(\mf{b}_-^{\ast} (- \gamma,
  \zeta) > 0\), \(\mf{b}_-^{\ast} (\theta, \zeta) < \sigma\), and \(\theta'
  \mapsto \mf{b}_-^{\ast} (\theta', \zeta)\) is continuous. Let us call \(\left(
  \omega, \mf{b}_-^{\ast} (\omega, \zeta) \right)\) one such point. Let us show
  that the intersection is unique. As a matter of fact, if \(\mf{b}\) is any
  function satisfying \eqref{eq:boundary-regularity-local} then its graph can
  only cross the segment connecting \((\theta, \sigma) \in \Theta \times [0,
  + \infty)\) and \((- \gamma, 0) \in \Theta \times [0, + \infty)\) from above to
  below. To see this suppose that \(\theta_0\) is the \(\Theta\) coordinate of a
  point of intersection: we then have
  \[ \mf{b} (\theta_0, \zeta) = \sigma \frac{\theta_0 + \gamma}{\theta +
     \gamma} . \]
  By \eqref{eq:boundary-regularity-local} we have that \(\dd_{\theta'}
  \mf{b}^{\ast}  (\theta', \zeta) \leq \frac{\mf{b}^{\ast}  (\theta',
  \zeta)}{\theta' - \theta_-}\) from which we get that for any \(\theta' >
  \theta_0\)
  \[ \mf{b}^{\ast} (\theta', \zeta) \leq \mf{b}^{\ast} (\theta_0, \zeta)
     \frac{\theta' - \theta_-}{\theta_0 - \theta_-} = \sigma \frac{\theta_0 +
     \gamma}{\theta + \gamma} \frac{\theta' - \theta_-}{\theta_0 - \theta_-}
     \leq \sigma \frac{\theta' + \gamma}{\theta + \gamma} ; \]
  we used that \(\theta_0 - \theta_- > \theta_0 + \gamma\). This is exactly the
  property we claimed. By the same reasoning, if we were to have
  \(\mf{b}^{\ast}_+ (\omega, \zeta) < \mf{b}^{\ast}_- (\omega, \zeta) = \sigma
  \frac{\omega + \gamma}{\theta + \gamma}\) then it would holds that
  \[ \mf{b}^{\ast}_+ (\theta, \zeta) \leq \sigma \]
  which is a contradiction to the fact that \((\theta, \zeta, \sigma) \in
  \mT_{\Theta} \cap (E_+^{\ast} \setminus E_-^{\ast})\).
  
  Let us now address the existence of \(\bar{r} > 0\). Since \(\sigma >
  \mf{b}^{\ast}_- (\theta, \zeta) \left( 1 + 2^{- 10} \gr \right)\),
  \(\mf{b}^{\ast}_- (\theta', \zeta) < \mf{b}^{\ast}_- (\theta, \zeta) e^{|
  \theta' - \theta |}\) and the slope of the segment connecting \((- \gamma, 0)\)
  to \((\theta, \sigma)\) is bounded from above by \(\alpha \beta
  \frac{\mf{b}^{\ast}_- (\theta, \zeta)}{\gr}\) it follows that \(\omega <
  \theta - 2 \bar{r}\) for some universal \(\bar{r} > 0\) independent of
  \(\Gamma\), \(\theta\) and \(\zeta\). Finally, for any \(\theta' \in (\omega,
  \omega + \bar{r})\) it holds that \(\mf{b^{\ast}_- (\theta', \zeta)} \leq
  \mf{b^{\ast}_+ (\theta', \zeta)}\) because otherwise, as before, we would be
  able to deduce that \(\mf{b}^{\ast}_+ (\theta, \zeta) \leq \sigma\) in
  contradiction with the fact that \((\theta, \zeta, \sigma) \in \mT_{\Theta}
  \cap (E_+^{\ast} \setminus E_-^{\ast})\).
  
  \tmtextbf{Case 2:} Now let us discuss the case \(\sigma > C_{\tmop{sep}}
  \mf{b}^{\ast}_- (\theta, \zeta)\). Let \\ \(\mf{b}^{\ast}_- (\zeta) =
  \sup_{\theta' \in \bar{\Theta}_{\Gamma}} \mf{b}^{\ast}_- (\theta', \zeta)\)
  and \(\mf{b}^{\ast}_- (\zeta) = \inf_{\theta' \in \bar{\Theta}_{\Gamma}}
  \mf{b}^{\ast}_+ (\theta', \zeta)\). By \eqref{eq:boundary-regularity-local}
  and in particular from the fact that for any \(\theta' \in
  \bar{\Theta}_{\Gamma}\) it holds that
  \[ e^{- | \bar{\Theta}_{\Gamma} |} \mf{b}^{\ast}_{\pm} (\theta, \zeta) \leq
     \mf{b}^{\ast}_{\pm} (\theta', \zeta) \leq e^{| \bar{\Theta}_{\Gamma} |}
     \mf{b}^{\ast}_{\pm} (\theta, \zeta) \]
  so we obtain that \(\mf{b}^{\ast}_- (\theta, \zeta) \mf{< b}^{\ast}_- (\zeta)
  < 2 \mf{b}^{\ast}_- (\theta, \zeta)\), \(\mf{b}^{\ast}_+ (\theta, \zeta) \mf{<
  b}^{\ast}_+ (\zeta) < 2 \mf{b}^{\ast}_+ (\theta, \zeta)\), and in particular
  \(\mf{b}^{\ast}_+ (\zeta) > \frac{C_{\tmop{sep}}}{4} \mf{b}^{\ast}_- (\zeta)\)
  and \(\sigma > \frac{C_{\tmop{sep}}}{2} \mf{b}^{\ast}_- (\zeta)\). We set
  \(C_{\tmop{sep}} = 2^{11}\). Fix \(\chi \in \Phi^{\infty}_2\) with \(\FT{\chi}
  \left( \FT{z} \right) \in [0, 1]\) and \(\FT{\chi} \left( \FT{z} \right) = 1\)
  for \(\FT{z} \in B_{7 / 4}\) and let \(\chi_{\gr} (z) = \gr \chi \left( \gr z
  \right)\). Let us define
  \[ M^+ \left( \FT{z} \right) = \int_{\bar{\Theta}_{\Gamma}} \FT{\chi_{\gr}}
     \left( \mf{b}^{\ast}_+ (\theta', \zeta) \FT{z} - \theta'_{\Gamma} \right)
     \dd \theta' . \]
  It holds that \(M^+ \left( \FT{z} \right) \geq 0\), \(M^+ \left( \FT{z} \right)
  \gtrsim 1\) for \(\left| \FT{z} \right| < \frac{1}{2 \mf{b}^{\ast}_+
  (\zeta)}\), and \(\left| \partial_{\FT{z}}^n M^+ \left( \FT{z} \right)
  \right| \lesssim \mf{b}^{\ast}_+ (\zeta)^{- n}\). Recall that for \((\theta,
  \zeta, \sigma) \in \mT_{\Theta} \cap (E_+^{\ast} \setminus E_-^{\ast})\) we
  have \(\wpD_{\zeta} (\theta_{\Gamma}) F^{\ast} (\theta, \zeta, \sigma) [\phi]
  = f \ast \Dil_{\sigma} \Upsilon\) with \(\Upsilon  \assign \dd_z
  \tmop{Mod}_{\theta_{\Gamma}} \phi^{\vee} (z)\); we keep the dependence on
  \(\theta_{\Gamma}\) implicit. It holds that
  \[ \spt \left( \FT{\Dil_{\sigma} \Upsilon } \right) \subset \left(
     \sigma^{- 1} \left( \theta_{\Gamma} - \gr \right), \sigma^{- 1} \left(
     \theta_{\Gamma} + \gr \right) \right) . \]
  Let us decompose
  \[ \FT{\Dil_{\sigma} \Upsilon } \left( \FT{z} \right) \begin{aligned}[t]&
       = \FT{\chi} \left( 4 \mf{b}^{\ast}_+ (\zeta) \FT{z} \right)
       \FT{\Dil_{\sigma} \Upsilon_{\theta_{\Gamma}}} \left( \FT{z} \right) +
       \left( 1 - \FT{\chi} \left( 4 \mf{b}^{\ast}_+ (\zeta) \FT{z} \right)
       \right) \FT{\Dil_{\sigma} \Upsilon } \left( \FT{z} \right)\\ &
       = \FT{\Upsilon_{\sigma}^{\uparrow}} + \FT{\Upsilon_{\sigma}^0} .
     \end{aligned} \]
  It holds that
  \[ \FT{\Upsilon_{\sigma}^{\uparrow}} = \int_{\bar{\Theta}_{\Gamma}}
     \frac{\FT{\Upsilon_{\sigma}^{\uparrow}} \left( \FT{z} \right)}{M^+ \left(
     \FT{z} \right)} \FT{\chi_{\gr}} \left( \mf{b}^{\ast}_+ (\theta', \zeta)
     \FT{z} - \theta'_{\Gamma} \right) \dd \theta' \]
  so let us define
  \[ \FT{\phi_{\theta'}} \left( \FT{z} \right) \eqd
     \frac{\FT{\Upsilon_{\sigma}^{\uparrow}} \left( \frac{\FT{z} +
     \theta'_{\Gamma}}{\mf{b}^{\ast}_+ (\theta', \zeta)} \right)}{M^+ \left(
     \frac{\FT{z} + \theta'_{\Gamma}}{\mf{b}^{\ast}_+ (\theta', \zeta)}
     \right)} \FT{\chi_{\gr}} \left( \FT{z} \right) \]
  so that \(\phi_{\theta'} \in \Phi^{\infty}_{2 \gr}\), \(\| \phi_{\theta'}
  \|_{\Phi^N_{2 \gr}} \lesssim 1\) and
  \[ \begin{aligned}[t]&
       \wpD_{\zeta} (\theta_{\Gamma}) F^{\ast} (\theta, \zeta, \sigma) [\phi]
       =
       \begin{aligned}[t]
       & \int_{\bar{\Theta}_{\Gamma}^{\tmop{in}}} F^{\ast} (\theta', \zeta,
       \mf{b}^{\ast}_+ (\theta', \zeta)) [\phi_{\theta'}] \dd \theta' \\ &\qquad 
       + \int_{\bar{\Theta}_{\Gamma}^{\tmop{ex}}} F^{\ast} (\theta', \zeta,
       \mf{b}^{\ast}_+ (\theta', \zeta)) [\phi_{\theta'}] \dd \theta' .
       \end{aligned}       
     \end{aligned} \]
  An application of \Cref{lem:wave-packet-decomposition} allows us to conclude
  that
  \[ \left| \int_{\bar{\Theta}_{\Gamma}^{\tmop{in}}} F^{\ast} (\theta',
     \zeta, \mf{b}^{\ast}_+ (\theta', \zeta)) [\phi_{\theta'}] \dd \theta'
     \right| \lesssim \sup_{\tilde{\phi}} \left|
     \int_{\bar{\Theta}_{\Gamma}^{\tmop{in}}} F^{\ast} (\theta', \zeta,
     \mf{b}^{\ast}_+ (\theta', \zeta)) [\tilde{\phi}] \dd \theta' \right| \]
  with the upper bound taken over \(\tilde{\phi} \in \Phi_{4 \gr}^{\infty}\)
  with \(\| \tilde{\phi} \|_{\Phi_{4 \gr}^{N - 4}} \leq 1.\) To bound the second
  addend, define \(\phi_{\theta', \sigma'}\) for all \(\sigma' \in \left(
  \frac{\mf{b}^{\ast}_+ (\theta', \zeta)}{1 + \bar{r}}, \mf{b}^{\ast}_+
  (\theta', \zeta) \right)\) for some sufficiently small \(\bar{r} > 0\) by
  setting
  \[ \Dil_{\sigma'} \Mod_{\theta'_{\Gamma}} \phi_{\theta', \sigma'} =
     \Dil_{\mf{b}^{\ast}_+ (\theta', \zeta)} \Mod_{\theta'_{\Gamma}}
     \phi_{\theta'} . \]
  Note that if \(\bar{r} > 0\) is small enough and since \(\mf{b}^{\ast}_+
  (\zeta) / \mf{b}^{\ast}_- (\zeta) > C_{\tmop{sep}} / 4\) we have that \
  \(\mf{b}^{\ast}_- (\theta', \zeta) < \sigma' < \mf{b}^{\ast}_+ (\theta',
  \zeta)\) and \(\phi_{\theta', \sigma'} \in \Phi_{4 \gr}^{\infty}\) with \(\|
  \phi_{\theta', \sigma'} \|_{\Phi_{4 \gr}^{N - 2}} \lesssim 1\). An
  application of \Cref{lem:wave-packet-decomposition} allows us to conclude
  that
  \[ \begin{aligned}[t]&
       \left| \int_{\bar{\Theta}_{\Gamma}^{\tmop{in}}} F^{\ast} (\theta',
       \zeta, \mf{b}^{\ast}_+ (\theta', \zeta)) [\phi_{\theta'}] \dd \theta'
       \right| \lesssim \left| \int_{\bar{\Theta}_{\Gamma}^{\tmop{in}}}
       \int_{\frac{\mf{b}^{\ast}_+ (\theta', \zeta)}{1 +
       \bar{r}}}^{\mf{b}^{\ast}_+ (\theta', \zeta)} F^{\ast} (\theta', \zeta,
       \mf{b}^{\ast}_+ (\theta', \zeta)) [\phi_{\theta', \sigma'}] \dd \theta'
       \frac{\dd \sigma'}{\sigma'} \right|\\ &
       \sup_{\tilde{\phi}} \int_{\bar{\Theta}_{\Gamma}^{\tmop{in}}}
       \int_{\frac{\mf{b}^{\ast}_+ (\theta', \zeta)}{1 +
       \bar{r}}}^{\mf{b}^{\ast}_+ (\theta', \zeta)} \left| F^{\ast} (\theta',
       \zeta, \mf{b}^{\ast}_+ (\theta', \zeta)) [\tilde{\phi}] \right| \dd
       \theta'
     \end{aligned} \]
  with the upper bound taken over \(\tilde{\phi} \in \Phi_{4 \gr}^{\infty}\)
  with \(\| \tilde{\phi} \|_{\Phi_{8 \gr}^{N - 4}} \leq 1.\) It remains to
  decompose \(\FT{\Upsilon_{\sigma}^0}\). Recall that \
  \[ \begin{aligned}[t]&
       \FT{\Upsilon_{\sigma}^0} \left( \FT{z} \right) = \left( 1 - \FT{\chi}
       \left( 4 \mf{b}^{\ast}_+ (\zeta) \FT{z} \right) \right)
       \FT{\Dil_{\sigma} \Upsilon} \left( \FT{z} \right),\\ &
       \spt \left( \FT{\Upsilon_{\sigma}^0} \right) \subset B_{\frac{1 + 4
       \gr}{4 \sigma}} \setminus B_{\frac{7}{16 \mf{b}_+^{\ast} (\zeta)}} .
     \end{aligned} \]
  For any \(\theta' \in \bar{\Theta}_{\Gamma}^{\tmop{ex}}\) it holds that
  \(\theta'_{\Gamma} > 2^{- 4}\) or \(3 < \theta'_{\Gamma} < - 2^{- 4}\)
  \[ m_{\theta'_{\Gamma}} \left( \FT{z} \right)  = \int_1^{\infty} \FT{\chi}
     \left( \sigma \FT{z} - \theta'_{\Gamma} \right) \frac{\dd \sigma}{\sigma}
     . \]
  Note that \(\spt \left( \FT{\chi} \left( \sigma \FT{z} - \theta'_{\Gamma}
  \right) \right) \subset \left( \sigma^{- 1} \left( \theta'_{\Gamma} - 2 \gr
  \right), \sigma^{- 1} \left( \theta'_{\Gamma} + 2 \gr \right) \right)\) and \
  \(\FT{\chi} \left( \sigma \FT{z} - \theta'_{\Gamma} \right) = 1\) on \(\left(
  \sigma^{- 1} \left( \theta'_{\Gamma} - 7 \gr / 4 \right), \sigma^{- 1}
  \left( \theta'_{\Gamma} + 7 \gr / 4 \right) \right)\). It follows that if
  \(\theta'_{\Gamma}\) then
  \[ \spt \left( m_{\theta'_{\Gamma}} \left( \FT{z} \right)  \right) \subset
     \left( 0, \theta'_{\Gamma} + 2 \gr \right) \]
  and
  \[ m_{\theta'_{\Gamma}} \left( \FT{z} \right) = \int_0^{\infty} \FT{\chi}
     \left( \FT{z} - \theta'_{\Gamma} \right) \frac{\dd \FT{z}}{\FT{z}} \]
  for \(\FT{z} \in \left( 0, \theta'_{\Gamma} - 2 \gr \right)\);
  \(\int_0^{\infty} \FT{\chi} \left( \FT{z} - \theta'_{\Gamma} \right)
  \frac{\dd \FT{z}}{\FT{z}}\) is bounded above and away from \(0\) uniformly in
  \(\theta'_{\Gamma}\). It follows that
  \[ m \left( \FT{z} \right) = \int_{\bar{\Theta}^{\tmop{ex}}_{\Gamma}}
     m_{\theta'_{\Gamma}} \left( \FT{z} \right) \dd \theta' \]
  satisfies
  \[ \spt \left( m \left( \FT{z} \right)  \right) \subset \left( - 2 - 2 \gr,
     2 + 2 \gr \right) \]
  and
  \[ m \left( \FT{z} \right) = \int_{\bar{\Theta}^{\tmop{ex}}_{\Gamma}}
     \int_0^{\infty} \FT{\chi}_{\gr} \left( \FT{z} - \theta'_{\Gamma} \right)
     \frac{\dd \FT{z}}{\FT{z}} \dd \theta' \]
  for \(\FT{z} \in B_{2^{- 6} - 2 \gr}\). Let us re-define \(m (0) =
  \lim_{\FT{z} \rightarrow 0} m \left( \FT{z} \right)\) to obtain a smooth
  function \(\FT{z} \rightarrow m \left( \FT{z} \right) .\) Now set
  \(M_{\sigma_-, \sigma_+} \left( \FT{z} \right) \eqd \frac{m \left( \sigma_-
  \FT{z} \right) - m \left( \sigma_+ \FT{z} \right)}{m (0)}\) for any \(\sigma_-
  < \sigma_+\). It holds that
  \[ \begin{aligned}[t]&
       M_{\sigma_-, \sigma_+} \left( \FT{z} \right) \geq 0,\\ &
       M_{\sigma_-, \sigma_+} \left( \FT{z} \right) = 0 \qquad \text{if }
       \left| \FT{z} \right| > 2 \sigma_-^{- 1} \left( 1 + \gr \right),\\ &
       M_{\sigma_-, \sigma_+} \left( \FT{z} \right) = 0 \qquad \text{if }
       \left| \FT{z} \right| < 2 \sigma_+^{- 1} \left( 2^{- 7} - \gr
       \right),\\ &
       M_{\sigma_-, \sigma_+} \left( \FT{z} \right) = 1 \qquad \text{if } 2
       \sigma_+^{- 1} \left( 1 + \gr \right) < \left| \FT{z} \right| < 2
       \sigma_-^{- 1} \left( 2^{- 7} - \gr \right) .
     \end{aligned} \]
  The first claim follows from the fact that
  \[ M_{\sigma_-, \sigma_+} \left( \FT{z} \right) = \frac{1}{m (0)}
     \int_{\bar{\Theta}^{\tmop{ex}}_{\Gamma}} \int_{\sigma_-}^{\sigma_+}
     \FT{\chi}_{\gr} \left( \sigma \FT{z} - \theta'_{\Gamma} \right) \frac{\dd
     \sigma}{\sigma} \dd \theta' \]
  while subsequent claims follow from support considerations on \(m\) above. Let
  us write
  \[ \FT{\Upsilon_{\sigma}^0} \left( \FT{z} \right) = M_{\mf{b }^{\ast}_-
     (\zeta), \mf{b }^{\ast}_+ (\zeta)} \left( \FT{z} \right)
     \FT{\Upsilon_{\sigma}^0} \left( \FT{z} \right) . \]
  This identity holds because of support considerations and since
  \(\frac{\mf{b}^{\ast}_- (\zeta) C_{\tmop{sep}}}{2} < \sigma < \mf{b}_-^{\ast}
  (\zeta)\). We obtain that
  \[ \begin{aligned}[t]&
       \FT{\Upsilon_{\sigma}^0} \left( \FT{z} \right) =
       \int_{\bar{\Theta}^{\tmop{ex}}_{\Gamma}} \int_0^{\infty}
       \FT{\Upsilon_{\sigma}^0} \left( \FT{z} \right) \FT{\chi}_{\gr} \left(
       \sigma' \FT{z} - \theta'_{\Gamma} \right) \frac{\dd \sigma'}{\sigma'}
       \dd \theta'\\ &
       \qquad = \int_{\bar{\Theta}^{\tmop{ex}}_{\Gamma}} \int_0^{\infty}
       \left( 1 - \FT{\chi} \left( 4 \mf{b}^{\ast}_+ (\zeta) \FT{z} \right)
       \right) \sigma \FT{z} \FT{\phi} \left( \sigma \FT{z} - \theta_{\Gamma}
       \right) \FT{\chi}_{\gr} \left( \sigma' \FT{z} - \theta'_{\Gamma}
       \right) \frac{\dd \sigma'}{\sigma'} \dd \theta'
     \end{aligned} \]
  By support considerations it holds that
  \[ \FT{\phi} \left( \sigma \FT{z} - \theta_{\Gamma} \right) \FT{\chi}_{\gr}
     \left( \sigma' \FT{z} - \theta'_{\Gamma} \right) = 0 \]
  unless \(\sigma' \gtrsim \sigma\) and \(\left( 1 - \FT{\chi} \left( 4
  \mf{b}^{\ast}_+ (\zeta) \FT{z} \right) \right) \FT{\chi}_{\gr} \left(
  \sigma' \FT{z} - \theta'_{\Gamma} \right) = 0\) unless \(\sigma' \gtrsim
  \mf{b}^{\ast}_+ (\zeta) \FT{z}\). It follows that setting
  \[ \phi_{\theta', \sigma'} \left( \sigma' \FT{z} - \theta'_{\Gamma} \right)
     \eqd \left( 1 - \FT{\chi} \left( 4 \mf{b}^{\ast}_+ (\zeta) \FT{z} \right)
     \right) \sigma' \FT{z} \FT{\phi} \left( \sigma \FT{z} - \theta_{\Gamma}
     \right) \FT{\chi}_{\gr} \left( \sigma' \FT{z} - \theta'_{\Gamma} \right)
  \]
  we obtain a function \(\phi_{\theta', \zeta', \sigma'} \in \Phi_{2 \gr}^N\)
  with \({\| \phi_{\theta', \zeta', \sigma'} \|_{\Phi_{2 \gr}^N}}  \lesssim 1\)
  so
  \[ \FT{\Upsilon_{\sigma}^0} \left( \FT{z} \right) =
     \int_{\bar{\Theta}^{\tmop{ex}}_{\Gamma}} \int_0^{\infty}
     \frac{\sigma}{\sigma'} \1_{\sigma' > a \sigma} \phi_{\theta', \zeta',
     \sigma'} \left( \sigma' \FT{z} - \theta'_{\Gamma} \right) \frac{\dd
     \sigma'}{\sigma'} \dd \theta' . \]
  or some \(a > 0\). Making sure to choose \(C_{\tmop{sep}} > \frac{4}{a}\) and
  applying \Cref{lem:wave-packet-decomposition} allows us to conclude that
  \[ | f \ast \Upsilon_{\sigma}^0 (\zeta) | \lesssim \sup_{\tilde{\phi}}
     \int_{\bar{\Theta}^{\tmop{ex}}_{\Gamma}} \int_0^{\infty}
     \frac{\sigma}{\sigma'} \1_{\sigma' > a \sigma} | F^{\ast} (\theta',
     \zeta', \sigma') [\tilde{\phi}] | \frac{\dd \sigma'}{\sigma'} \dd \theta'
     . \]
  with the upper bound taken over \(\tilde{\phi} \in \Phi_{4 \gr}^{\infty}\)
  with \(\| \tilde{\phi} \|_{\Phi_{8 \gr}^{N - 4}} \leq 1\), as required. 
\end{proof}

\begin{lemma}[Maximal size domination]
  \label{lem:unif-max-size-domination}Let \(V^+, V^- \in
  \mathbb{D}^{\cup}_{\beta}\), and let \(W^+, W^- \in \TT_{\Theta}^{\cup}\). The
  bounds
  \begin{equation}
    \begin{aligned}[t]&
      \| \1_{(V^+ \cap W^+) \setminus (V^- \cup W^-)} \left( \Emb [f] \circ
      \Gamma \right) \|_{{\Gamma^{\ast}}  \SL^{(u, \infty)}_{(\Theta,
      \Theta^{\tmop{in}})} \Phi_{\gr}^N} \qquad\\ &
      \qquad \lesssim \beta^{- \frac{1}{u}} \| \1_{(V^+ \cap W^+) \setminus
      (V^- \cup W^-)} \left( \Emb [f] \circ \Gamma \right) \|_{\left(
      {\Gamma^{\ast}}  \SL^{(u, 2)}_{(\Theta, \Theta^{\tmop{ex}})} \Phi_{4
      \gr}^{N - 2} {+ \Gamma^{\ast}}  \SL^{(u, 1)}_{\Theta} \dfct^{N - 2}_{4
      \gr} \right)},
    \end{aligned} \label{eq:unif-max-size-domination}
  \end{equation}
  hold for any \(f \in \Sch (\R)\) as long as \(u \in [2, \infty]\).
  The implicit constant is independent of \(f\) and of the sets \(V^{\pm},
  W^{\pm}\).
\end{lemma}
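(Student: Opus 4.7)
I would follow the structure of the proof of Lemma \ref{lem:unif-lac-size-domination}, adapted to handle an $L^\infty$-in-scale quantity on the left-hand side and to extract the $\beta^{-1/u}$ factor. By the symmetry identity \eqref{eq:size-symmetry} it suffices to prove the bound on a fixed tree $T = T_\Theta(0,0,1)$. Let $\mf{b}^\ast_\pm$ be the local-coordinate boundary functions for $E^\ast_+ = \pi_T^{-1}(V^+ \cap W^+ \cap T)$ and $E^\ast_- = \pi_T^{-1}(V^- \cup W^-)$ given by Lemma \ref{lem:geometry-of-boundary}, and set $F^\ast = \1_{E^\ast_+\setminus E^\ast_-}\Emb[f]\circ\Gamma\circ\pi_T$. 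Then the goal is to bound
\[
\int_{\Theta^{\tmop{in}}_\Gamma\times B_1}\sup_{\mf{b}^\ast_-(\theta,\zeta)<\sigma<\mf{b}^\ast_+(\theta,\zeta)}\bigl|F^\ast(\theta,\zeta,\sigma)[\phi]\bigr|^{u}\,\frac{d\theta\,d\zeta}{|\Theta|}
\]
by $\beta^{-1}$ times the $u$-th power of the right-hand side of \eqref{eq:unif-max-size-domination}, uniformly over $\phi\in\Phi^\infty_\gr$ with $\|\phi\|_{\Phi^N_\gr}\le 1$.

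\textbf{Step 1 (scale FTC $\to$ lacunary plus defect).} For $\mf{b}^\ast_-(\theta,\zeta)<\sigma<\mf{b}^\ast_+(\theta,\zeta)$ I apply the fundamental theorem of calculus in $\sigma$ up to $\mf{b}^\ast_+(\theta,\zeta)$:
\[
F^\ast(\theta,\zeta,\sigma)[\phi]=F^\ast(\theta,\zeta,\mf{b}^\ast_+(\theta,\zeta))[\phi]-\int_\sigma^{\mf{b}^\ast_+(\theta,\zeta)}\rho\partial_\rho F^\ast(\theta,\zeta,\rho)[\phi]\,\frac{d\rho}{\rho}.
\]
The boundary term at $\rho=\mf{b}^\ast_+(\theta,\zeta)$ is a singular measure on $\partial E^\ast_+$ and is controlled, via the local representation \eqref{eq:boundary-singular-size-local}, by the scale-defect size $\Gamma^\ast\SL^{(u,1)}_\Theta\dfct_{\sigma,4\gr}^{N-2}$. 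For the integral term, write $\rho\partial_\rho F^\ast=\wpD_\sigma(\theta_\Gamma)F^\ast-(\wpD_\sigma(\theta_\Gamma)-\rho\partial_\rho)F^\ast$; the remainder is again a scale-defect contribution, and by the identity \eqref{eq:useful-boost-identities} the first summand equals $\wpD_\zeta(\theta_\Gamma)F^\ast(\theta,\zeta,\rho)[z\phi(z)]$, which now carries a genuine $\zeta$-derivative.

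\textbf{Step 2 (wave packet resolution, extracting $\beta^{-1/u}$).} For each $(\theta,\zeta,\rho)$ with $\theta_\Gamma\in\Theta^{\tmop{in}}_\Gamma$ I would invoke a wave packet resolution analogous to Case~2 in the proof of Lemma \ref{lem:unif-lac-size-domination}: using the frequency gap $\dist(\Theta^{\tmop{in}}_\Gamma,\R\setminus\bar\Theta^{\tmop{ex}}_\Gamma)\gtrsim 1$ and the fact that $\wpD_\zeta(\theta_\Gamma)[\,\cdot\,]$ kills the frequency value $-\theta_\Gamma$, I would write $\wpD_\zeta(\theta_\Gamma)F^\ast(\theta,\zeta,\rho)[z\phi(z)]$ as an average over auxiliary frequencies $\theta'_\Gamma\in\bar\Theta^{\tmop{ex}}_\Gamma$ of quantities $F^\ast(\theta',\zeta,\rho)[\tilde\phi_{\theta',\rho}]$ with $\|\tilde\phi_{\theta',\rho}\|_{\Phi^{N-2}_{4\gr}}\lesssim 1$. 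Then, applying Minkowski and Cauchy--Schwarz in $\rho$ (using $u\ge 2$), $\sup_\sigma$ of the scale integral is absorbed into an $L^2_{d\rho/\rho}$ norm on $(\mf{b}^\ast_-(\theta',\zeta),\mf{b}^\ast_+(\theta',\zeta))$, which is precisely the integrand of $\Gamma^\ast\SL^{(u,2)}_{(\Theta,\Theta^{\tmop{ex}})}\Phi^{N-2}_{4\gr}$. The change of variables $\theta'\mapsto\theta'_\Gamma=\alpha\beta(\theta'+\gamma)$ in the outer $\theta'$-integration produces a Jacobian $(\alpha\beta)^{-1}\asymp\beta^{-1}$; after taking $u$-th roots this is exactly the $\beta^{-1/u}$ factor in the statement.

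\textbf{Main obstacle.} The hard part is the wave packet resolution in Step~2: one must construct the family $\tilde\phi_{\theta',\rho}$ so that it simultaneously (a)~has Fourier support in $B_{4\gr}$ centered near $\theta'_\Gamma$, (b)~has $\Phi^{N-2}_{4\gr}$-norm uniformly bounded in all parameters (including $\beta$, $\theta_\Gamma\in\Theta^{\tmop{in}}_\Gamma$, and $\rho\in(\mf{b}^\ast_-,\mf{b}^\ast_+)$), and (c)~realizes the frequency shift from the interior $\theta_\Gamma$ to the exterior $\theta'_\Gamma$ with a $\theta'_\Gamma$-averaging of unit length. The $\beta^{-1/u}$ then arises solely from transferring that unit-length averaging in $\theta'_\Gamma$ back to a $\beta$-length averaging in $\theta'$ via the Jacobian, so the bookkeeping must be done precisely: too crude a decomposition (averaging $\theta'_\Gamma$ over a range larger than $O(1)$) would lose the correct power of $\beta$, while too narrow a decomposition would fail to produce bounded-norm wave packets with exterior Fourier support. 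I expect this to reduce, after some care, to the scale-decomposition argument already present in Case~2 of the proof of Lemma \ref{lem:unif-lac-size-domination}, with the roles of the Lebesgue/defect sides interchanged.
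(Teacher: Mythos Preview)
Your Step~2 has a genuine gap. After the scale FTC you are left with
\[
\sup_{\sigma\in(\mf{b}^\ast_-(\theta,\zeta),\mf{b}^\ast_+(\theta,\zeta))}\Bigl|\int_\sigma^{\mf{b}^\ast_+(\theta,\zeta)}\wpD_\zeta(\theta_\Gamma)F^\ast(\theta,\zeta,\rho)[z\phi]\,\tfrac{d\rho}{\rho}\Bigr|,
\]
a maximally truncated scale integral of a lacunary quantity. You cannot absorb this supremum into an $L^2_{d\rho/\rho}$ norm by Cauchy--Schwarz: the constant factor picks up $\bigl(\log(\mf{b}^\ast_+/\sigma)\bigr)^{1/2}$, which is unbounded. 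Even after the wave packet resolution of Lemma~\ref{lem:unif-lac-size-domination} (which yields a kernel $e^{-|\log(\rho/\sigma')|}$ against $|F^\ast(\theta',\zeta,\sigma')|$), integrating that kernel in $\rho$ over $(\sigma,\mf{b}^\ast_+)$ gives a function of $\sigma'$ whose $L^2_{d\sigma'/\sigma'}$ norm still diverges logarithmically. What you have written down is essentially the SIO truncation size $\SJ^u$, whose control (Proposition~\ref{prop:SIO-size-domination}) required a separate, nontrivial argument in the non-uniform setting; invoking it here would not obviously be uniform in $\beta$.

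Your account of where $\beta^{-1/u}$ comes from is also off. The integration in $\theta'$ in the exterior size runs over $\Theta^{\tmop{ex}}\subset\Theta$ with measure $\frac{d\theta'}{|\Theta|}$; there is no change of variables to $\theta'_\Gamma$ and hence no Jacobian $(\alpha\beta)^{-1}$. The paper's proof takes a different route entirely: for the ``interior'' case (away from both boundaries) it exploits that $\Emb[f]\circ\Gamma\circ\pi_T(\theta,\zeta,\sigma)=f\ast\Dil_{\beta\sigma}\Mod_{\theta_\Gamma}\phi^\vee(\zeta)$ has \emph{spatial} scale $\beta\sigma$, so its value at $(\theta,\zeta,\sigma)$ equals an average of boundary values $F^\ast(\theta',\zeta',\tilde{\mf{b}}^\ast_-(\theta',\zeta'))$ over a box of width $\sim\beta\sigma$ in $\zeta'$ (the boundary being that of an auxiliary tree $T_\Theta(0,\zeta,\sigma)$ added to $W^-$). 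Comparing the $\beta\sigma$ box normalization with the $\sigma$ normalization of the defect size on a tree of scale $\sim\sigma$ via H\"older produces the factor $\beta^{-1/u}$. The near-boundary cases ($\sigma$ within a fixed ratio of $\mf{b}^\ast_\pm$) are handled directly by pushing the evaluation point to the boundary via a wave packet rescaling, with no $\beta$ loss.
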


\begin{proof}
  Let us fix \(T \in \mathbb{T}_{\Theta}\) and show that
  \[ \| \1_{(V^+ \cap W^+) \setminus (V^- \cup W^-)} \left( \Emb [f] \circ
     \Gamma \right) \|_{{\Gamma^{\ast}}  \SL^{(u, \infty)}_{(\Theta,
     \Theta^{\tmop{in}})} \Phi_{\gr}^N (T)} \lesssim
     \RHS{\eqref{eq:unif-lac-size-domination} .} \]
  By symmetry, (see \eqref{eq:size-symmetry} and the subsequent discussion),
  we can assume that \({T = T_{\Theta}}  (0, 0, 1)\) so that
  \[ \Gamma \circ \pi_T (\theta, \zeta, \sigma) = (\alpha \beta (\theta +
     \gamma) (\beta \sigma)^{- 1}, \zeta, \beta \sigma) = (\theta_{\Gamma}
     (\beta \sigma)^{- 1}, \zeta, \beta \sigma) = \pi_T (\theta_{\Gamma},
     \zeta, \beta \sigma) . \]
  We use the shorthand \(E_+^{\ast} \assign \mT_{\Theta^{\tmop{in}}}  \cap
  \pi_T^{- 1} (V^+ \cap W^+)\), \(E_-^{\ast} \assign \pi_T^{- 1} (V^- \cup
  W^-)\), and \(F^{\ast} \eqd \1_{E_+^{\ast} \setminus E_-^{\ast}} \Emb{} [f]
  \circ \Gamma \circ \pi_T\). We have that
  \[ \begin{aligned}[t]&
       \| \1_{(V^+ \cap W^+) \setminus (V^- \cup W^-)} \left( \Emb [f] \circ
       \Gamma \right) \|_{{\Gamma^{\ast}}  \SL^{(u, \infty)}_{(\Theta,
       \Theta^{\tmop{in}})} \Phi_{\gr}^N (T)}\\ &
       = \sup_{\phi} \sup_{\sigma} \int_{\Theta^{\tmop{in}} \times B_1}
       F^{\ast} (\theta, \zeta, \sigma (\theta, \zeta)) [\phi]^u \frac{\dd
       \theta \dd \zeta}{| \Theta |}
     \end{aligned} \]
  where the upper bound is taken over all \(\phi \in \Phi^{\infty}_{\gr}\) with
  \(\| \phi \|_{\Phi^N_{\gr}} \leq 1\) and over all continuous functions \(\sigma
  : \Theta \times B_1 \rightarrow (0, + \infty)\). Let \(\mf{b}_+^{\ast}\),
  \(\mf{b}_-^{\ast}\) be the two functions whose graphs are the boundaries of
  \(E_+^{\ast}\) and \(E_-^{\ast}\) respectively so that \(E_+^{\ast} \setminus
  E_-^{\ast} = \left\{ (\theta, \zeta, \sigma) : \mf{b}_-^{\ast} (\theta,
  \zeta) < \sigma < \mf{b}_+^{\ast} (\theta, \zeta) \right\}\). The existence
  of such functions is guaranteed by \Cref{lem:geometry-of-boundary}. Let
  \(\bar{r} > 0\) be a small absolute constant to depending only on \(\gr\) be
  determined later. We distinguish the points \((\theta, \zeta)\) based on how
  far \(\sigma (\theta, \zeta)\) is from the boundary of \(E_+^{\ast} \setminus
  E_-^{\ast}\). Let
  \[ \begin{aligned}[t]&
       U_- \eqd \left\{ (\theta, \zeta) \in \Theta^{\tmop{in}} \times B_1
       \suchthat \mf{b}_-^{\ast} (\theta, \zeta) \leq \sigma (\theta, \zeta) <
       (1 + \bar{r}) \mf{b}_-^{\ast} (\theta, \zeta) \right\},\\ &
       U_+ \eqd \left\{ (\theta, \zeta) \in \Theta^{\tmop{in}} \times B_1
       \suchthat (1 - \bar{r}) \mf{b}_+^{\ast} (\theta, \zeta) < \sigma
       (\theta, \zeta) \leq \mf{b}_+^{\ast} (\theta, \zeta) \right\},\\ &
       U_0 \eqd \left\{ (\theta, \zeta) \in \Theta^{\tmop{in}} \times B_1
       \suchthat (1 + \bar{r}) \mf{b}_-^{\ast} (\theta, \zeta) \leq \sigma
       (\theta, \zeta) \leq (1 - \bar{r}) \mf{b}_+^{\ast} (\theta, \zeta)
       \right\} .
     \end{aligned} \]
  If \((\theta, \zeta) \in U_-\) we use that
  \[ \left( \Emb [f] \circ \Gamma \right) (\theta, \zeta, \sigma (\theta,
     \zeta)) [\phi] = \left( \Emb [f] \circ \Gamma \right) \left( \theta,
     \zeta, \mf{b}_-^{\ast} (\theta, \zeta) \right) [\phi_{(\theta, \zeta)}]
  \]
  with
  \[ \phi_{(\theta, \zeta)} \eqd \Mod_{\theta_{\Gamma}} \Dil_{\sigma (\theta,
     \zeta) / \mf{b}_-^{\ast} (\theta, \zeta)} \Mod_{- \theta_{\Gamma}} \phi .
  \]
  Since \(1 \leq \sigma (\theta, \zeta) / \mf{b}_-^{\ast} (\theta, \zeta) < 1 +
  \bar{r}\) we have that \(\phi_{(\theta, \zeta)} \in \Phi_{2 \gr}^{\infty}\)
  with \(\| \phi_{(\theta, \zeta)} \|_{\Phi_{2 \gr}^N} \lesssim 1\). Using
  \Cref{lem:wave-packet-decomposition} we get that
  \[ \begin{aligned}[t]&
       \int_{\Theta^{\tmop{in}} \times B_1} \1_{U_-} (\theta, \zeta) F^{\ast}
       (\theta, \zeta, \sigma (\theta, \zeta)) [\phi]^u \frac{\dd \theta \dd
       \zeta}{| \Theta |}\\ &
       \qquad \lesssim \sup_{\tilde{\phi}} \int_{\Theta^{\tmop{in}} \times
       B_1} \1_{U_-} (\theta, \zeta) F^{\ast} \left( \theta, \zeta,
       \mf{b}_-^{\ast} (\theta, \zeta) \right) [\phi]^u \frac{\dd \theta \dd
       \zeta}{| \Theta |}
     \end{aligned} \]
  with the upper bound taken over \(\tilde{\phi} \in \Phi_{2 \gr}^{\infty}\)
  with \(\| \tilde{\phi} \|_{\Phi_{4 \gr}^{N - 2}} \leq 1\). The {\RHS{}} of the
  entry above is bounded by \(\| \1_{(V^+ \cap W^+) \setminus (V^- \cup W^-)}
  \left( \Emb [f] \circ \Gamma \right) \|_{{\Gamma^{\ast}}  \SL^{(u,
  1)}_{\Theta} \dfct^{N - 2}_{4 \gr} (T)}\) and thus by
  \(\RHS{\eqref{eq:unif-lac-size-domination}^u}\), as required. The same
  procedure applies to the case when \((\theta, \zeta) \in U_+\): Set
  \(\phi_{(\theta, \zeta)} \eqd \Mod_{\theta_{\Gamma}} \Dil_{\sigma (\theta,
  \zeta) / \mf{b}_+^{\ast} (\theta, \zeta)} \Mod_{- \theta_{\Gamma}} \phi\) to
  obtain that
  \[ \begin{aligned}[t]&
       \int_{\Theta^{\tmop{in}} \times B_1} \1_{U_+} (\theta, \zeta) F^{\ast}
       (\theta, \zeta, \sigma (\theta, \zeta)) [\phi]^u \frac{\dd \theta \dd
       \zeta}{| \Theta |}\\ &
       \qquad \lesssim \sup_{\tilde{\phi}} \int_{\Theta^{\tmop{in}} \times
       B_1} \1_{U_-} (\theta, \zeta) F^{\ast} \left( \theta, \zeta,
       \mf{b}_+^{\ast} (\theta, \zeta) \right) [\phi]^u \frac{\dd \theta \dd
       \zeta}{| \Theta |} \lesssim \RHS{\eqref{eq:unif-lac-size-domination}^u}
       .
     \end{aligned} \]

  In the case \((\theta, \zeta) \in U_0\) we will just show the pointwise bound
  \(| F^{\ast} (\theta, \zeta, \sigma (\theta, \zeta)) [\phi] | \lesssim
  \RHS{\eqref{eq:unif-lac-size-domination}}\). Integrating this bound will
  yield the claim. Let \\\(\tilde{W}^- \eqd W^- \cup T_{\Theta} (0, \zeta, \sigma
  (\theta, \zeta))\); let \(\tilde{E}_-^{\ast} = E_-^{\ast} \cup \pi_T^{- 1}
  (T_{\Theta} (0, \zeta, \sigma (\theta, \zeta)))\) and let
  \(\mf{\tilde{b}_-^{\ast}} : \R^2 \rightarrow \R_+\) be the function given by
  \Cref{lem:geometry-of-boundary} such that its graphs is the boundary of the
  set \(\tilde{E}_-^{\ast}\). For any \(\left( \theta', \zeta',
  \widetilde{\mf{b}}_-^{\ast} (\theta', \zeta') \right) \in E^{\ast}_+
  \setminus \tilde{E}^{\ast}_-\) it holds that
  \[ F^{\ast} (\theta, \zeta, \sigma (\theta, \zeta)) [\phi] = F^{\ast}
     \left( \theta', \zeta', \widetilde{\mf{b}}_-^{\ast} (\theta', \zeta')
     \right) [\phi_{(\theta', \zeta')}] \]
  with \(\phi_{(\theta', \zeta')}\) defined by the relation
  \[ \Tr_{\zeta'} \Dil_{\beta \widetilde{\mf{b}}_-^{\ast} (\theta', \zeta')}
     \Mod_{- \theta_{\Gamma}} \phi_{(\theta', \zeta')} \eqd \Tr_{\zeta}
     \Dil_{\beta \sigma (\theta, \zeta)} \Mod_{- \theta_{\Gamma}} \phi . \]
  If \(| \zeta' - \zeta | <  \bar{r}^2 \beta \sigma (\theta, \zeta)\) and
  \(\theta' \in B_{\bar{r}^2} (\theta) \cap \Theta^{\tmop{in}} \) it holds that
  \(\widetilde{\mf{b}}_-^{\ast} (\theta', \zeta') < \mf{b}_{+ }^{\ast}
  (\theta', \zeta')\) given the continuity assumptions
  \eqref{eq:boundary-regularity-local} on \(\widetilde{\mf{b}}_-^{\ast}\) and
  \(\mf{b}_+^{\ast}\) and since \(\mf{b}_{+ }^{\ast} (\theta, \zeta) > (1 -
  \bar{r})^{- 1} \sigma (\theta, \zeta) = \widetilde{\mf{b}}_-^{\ast} (\theta,
  \zeta)\). Furthermore, we have that \(\phi_{(\theta', \zeta')} \in \Phi_{2
  \gr}^{\infty}\) with \(\| \phi_{(\theta', \zeta')} \|_{\Phi^N_{2 \gr}}
  \lesssim 1\). It follows that
  \[ \begin{aligned}[t]&
       | F^{\ast} (\theta, \zeta, \sigma (\theta, \zeta)) [\phi] |\\ &
       \qquad \lesssim \frac{1}{\beta \sigma (\theta, \zeta) \bar{r}^4}
       \sup_{\tilde{\phi}} {\int_{(B_{\bar{r}^2} (\theta) \cap
       \Theta^{\tmop{in}} ) \cap \bar{r}^2 \beta \sigma (\theta, \zeta)}} 
       \left| F^{\ast} \left( \theta', \zeta', \widetilde{\mf{b}}_-^{\ast}
       (\theta', \zeta') \right) [\tilde{\phi}] \right| \dd \theta' \dd
       \zeta'\\ &
       \lesssim \bigl\| \1_{(V^+ \cap W^+) \setminus (V^- \cup \tilde{W}^-)}
       \left( \Emb [f] \circ \Gamma \right) \bigr\|_{{\Gamma^{\ast}} 
       \SL^{(1, \infty)}_{(\Theta, \Theta^{\tmop{in}})} \dfct^{N - 2}_{4 \gr}
       (T')}
     \end{aligned} \]
  with \(T' = T_{\Theta} (0, \zeta, 10 \sigma (\theta, \zeta))\). Since
  \[ \begin{aligned}[t]&
       \bigl\| \1_{(V^+ \cap W^+) \setminus (V^- \cup \tilde{W}^-)} \left(
       \Emb [f] \circ \Gamma \right) \bigr\|_{{\Gamma^{\ast}}  \SL^{(1,
       \infty)}_{(\Theta, \Theta^{\tmop{in}})} \dfct^{N - 2}_{4 \gr} (T')}\\ &
       \qquad \lesssim \bigl\| \1_{(V^+ \cap W^+) \setminus (V^- \cup
       \tilde{W}^-)} \left( \Emb [f] \circ \Gamma \right)
       \bigr\|_{{\Gamma^{\ast}}  \SL^{(u, \infty)}_{(\Theta,
       \Theta^{\tmop{in}})} \dfct^{N - 2}_{4 \gr} (T')}
     \end{aligned} \]
  This concludes the proof. 
\end{proof}

\section{Uniform embedding bounds - bilinear case
}\label{sec:uniform-embeddings:bilinear}

In this section we prove the bilinear embedding bounds
\eqref{eq:embedding-bound:bilinear} of
\Cref{thm:uniform-embedding-bounds:bilinear}. It is noteworthy that all but
the last size \(\Gamma_{\times}^{\ast} \SL_{(\Theta , \Theta^{\tmop{in}})}^{(u
, 1)} \dfct_{\gr}^{N }\) appearing in the expression of \(\widetilde{\SF}^{u
}_{\Gamma_{\times}} \Phi_{\mf{r}}^N\) can be bound using sizes
\(\widetilde{\SF}^u_{\Gamma_j} \Phi_{\mf{r}}^N\), \(j \in \{ 2, 3 \}\). Thus, an
application of the outer Hölder inequality
(\Cref{cor:outer-holder-classical}) can yields bounds
\eqref{eq:embedding-bound:bilinear} with \(\SO^{u_{\times}}_{\Gamma_{\times}}
\Phi_{\gr}^N\) in place of \(\widetilde{\SF}^{u }_{\Gamma_{\times}}
\Phi_{\mf{r}}^N\) where
\begin{equation}
  \begin{aligned}[t]&
    \SO^{u_{\times}}_{\Gamma_{\times}} \Phi_{\gr}^N \eqd \begin{aligned}[t]&
      \beta^{\frac{1}{u_{\times}}} \Big( \Gamma_{\times}^{\ast} \SL_{(\Theta,
      \Theta^{\tmop{in}}) }^{\left( {u_{\times}} , 2 \right)} \left( \Phi
      \otimes \wpD \right)_{\gr}^N + \Gamma_{\times}^{\ast} \SL_{(\Theta,
      \Theta^{\tmop{in}}) }^{\left( {u_{\times}} , 2 \right)} \left( \wpD
    \otimes \Phi \right)_{\gr}^N\Big.
    \\ & \qquad \Big.+ \Gamma_{\times}^{\ast} {\SL 
      }^{(u_{\times}, \infty)}_{ (\Theta, \Theta^{\tmop{in}})} (\Phi \otimes
      \Phi)_{\gr}^{N } \Big)
      \hspace{2.0em} + \Gamma_{\times}^{\ast} {\SL  }^{(u_{\times},
      1)}_{\Theta} \left( \wpD \otimes \wpD \right)_{\gr}^{N } .
    \end{aligned}
  \end{aligned} \label{eq:uniform-embedding-full-size:bilinear-holder}
\end{equation}

We show this in \Cref{prop:bilinear-holder-bound}. It then remains to show the
validity of bound \eqref{eq:embedding-bound:bilinear} with the size
\(\Gamma_{\times}^{\ast} \SL_{(\Theta , \Theta^{\tmop{in}})}^{(u , 1)}
\dfct_{\gr}^{N }\) in place of \(\widetilde{\SF}^{u }_{\Gamma_{\times}}
\Phi_{\mf{r}}^N\). This is done in \Cref{prop:bilinear-bidefect-bound}.

\begin{proposition}
  \label{prop:bilinear-holder-bound}The bound
  \begin{equation}
    \| F_2 F_3 \|_{L^{p_{\times}}_{\nu_{\beta}} X^{q_{\times}, r_{\times},
    +}_{\mu_{\Theta}^1, \nu_{\beta}} \SO^{u_{\times}}_{\Gamma_{\times}}
    \Phi_{\gr}^N} \lesssim \prod_{j = 2}^3 \| F_j \|_{L^{p_j}_{\nu_{\beta }}
    X_{\mu_{\Theta}^1, \nu_{\beta }}^{q_j, r_j, +}
    \widetilde{\SF}^{u_j}_{\Gamma_j} \Phi_{\mf{r}}^N}
    \label{eq:bound:bilinear-holder}
  \end{equation}
  holds for any \(F_j \in L^{p_j}_{\nu_{\beta }} X_{\mu_{\Theta}^1, \nu_{\beta
  }}^{q_j, r_j, +} \widetilde{\SF}^{u_j}_{\Gamma_j} \Phi_{\mf{r}}^N\) with
  \[ \begin{aligned}[t]&
       p_{\times}^{- 1} = p_2^{- 1} + p_3^{- 1} \qquad q_{\times}^{- 1} =
       q_2^{- 1} + q_3^{- 1}\\ &
       r_{\times}^{- 1} = r_2^{- 1} + r_3^{- 1} \qquad u_{\times}^{- 1} =
       u_2^{- 1} + u_3^{- 1}
     \end{aligned} \]
  as long as \(p_j \in (1, \infty]\) and \(r_j \leq q_j\), \(j \in \{ 2, 3 \}\).
  As a consequence, the bound
  \begin{equation}
    \left\| \Emb [f_2] \circ \Gamma_2  \Emb [f_3] \circ \Gamma_3
    \right\|_{L^{p_{\times}}_{\nu_{\beta}} X^{q_{\times}, r_{\times},
    +}_{\mu_{\Theta}^1, \nu_{\beta}} \SO^{u_{\times}}_{\Gamma_{\times}}
    \Phi_{\gr}^N} \lesssim \| f_2 \|_{L^{p_2} (\R)} \| f_2
    \|_{L^{p_3} (\R)} .
    \label{eq:embedding-bound:bilinear-holder}
  \end{equation}
  holds for any \(p_j \in (1, \infty]\) as long as \(q_{\times}^{- 1} < \min
  \left( \frac{1}{2}, 1 - p_2^{- 1} \right) + \min \left( \frac{1}{2}, 1 -
  p_3^{- 1} \right)\) and as long as \(1 \leq u_{\times} < r_{\times} <
  q_{\times}\). The implicit constant is independent of \(\beta \in (0, 1]\),
  that determines \((\alpha_j, \beta_j, \gamma_j)\) and \(\Gamma_j\), \(j \in \{ 2,
  3 \}\), and of \(F_2, F_3\). 
\end{proposition}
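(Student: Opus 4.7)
The plan has two parts: first an abstract Hölder-type reduction \eqref{eq:bound:bilinear-holder} for arbitrary functions in the iterated $X$-spaces, then the application to $F_j = \Emb[f_j] \circ \Gamma_j$ via \Cref{thm:uniform-embedding-bounds:linear}.

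For the abstract bound, I first observe that each of the four summands making up $\SO^{u_\times}_{\Gamma_\times} \Phi_\gr^N$ in \eqref{eq:uniform-embedding-full-size:bilinear-holder} is a ``tensor product'' of one size component in $\widetilde{\SF}^{u_2}_{\Gamma_2} \Phi_\gr^N$ with one size component in $\widetilde{\SF}^{u_3}_{\Gamma_3} \Phi_\gr^N$. Concretely: $(\Phi \otimes \Phi)$ pairs two $\SL^{(u_j, \infty)} \Phi$ factors, $(\Phi \otimes \wpD)$ pairs $\SL^{(u_2, \infty)} \Phi$ with $\SL^{(u_3, 2)} \wpD$, and so on, with $(\wpD \otimes \wpD)$ pairing the two lacunary factors. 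Classical Hölder on the Lebesgue spaces $L^u_{\dd\theta\dd\zeta} L^v_{\dd\sigma/\sigma}$ defining the local Lebesgue sizes (Definitions \ref{def:lebesgue-size} and \ref{def:lacunary-size}) gives the tree-by-tree bound
\begin{equation*}
\|F_2 F_3\|_{\text{tensor-size}(T)} \lesssim \|F_2\|_{\text{component-size}_2(T)} \|F_3\|_{\text{component-size}_3(T)}
\end{equation*}
with exponents satisfying $u_\times^{-1} = u_2^{-1} + u_3^{-1}$ and, in each case, $v_\times^{-1} = v_2^{-1} + v_3^{-1}$. The $\beta$-weights match: when only one factor carries $\beta^{1/u_j}$ we absorb $\beta^{1/u_\times} \leq \beta^{1/u_j}$ (using $\beta \leq 1$), and the pullbacks $\Gamma_j^\ast$ combine to $\Gamma_\times^\ast$ because $\Gamma_2$ and $\Gamma_3$ share the same $y$-identity and $t$-dilation $\beta$.

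Given this size-level Hölder, I would then promote it to the iterated $X$-norm level. Fix any $V \in \DD_\beta^\cup$ and $W \in \TT_\Theta^\cup$. Combining the tree-by-tree tensor bound with the outer Hölder inequality of \Cref{cor:outer-holder-classical} at exponent $r_\times^{-1} = r_2^{-1} + r_3^{-1}$ yields
\begin{equation*}
\|\1_{V \cap W} F_2 F_3\|_{L^{r_\times}_{\mu_\Theta^1} \SO^{u_\times}_{\Gamma_\times} \Phi_\gr^N} \lesssim \prod_{j=2}^3 \|\1_{V \cap W} F_j\|_{L^{r_j}_{\mu_\Theta^1} \widetilde{\SF}^{u_j}_{\Gamma_j} \Phi_\gr^N}.
\end{equation*}
Applying the definition \eqref{eq:X-local-size} of each factor on the right, taking supremum over $W$, and using $q_\times^{-1} = q_2^{-1} + q_3^{-1}$ together with $r_\times^{-1} = r_2^{-1} + r_3^{-1}$ so that the powers $\nu_\beta(V)^{1/r_\times}$ and $\mu_\Theta^\infty(W)^{1/r_\times - 1/q_\times}$ come out as the product of the corresponding factors, gives
\begin{equation*}
\|F_2 F_3\|_{X^{q_\times, r_\times}_{\mu_\Theta^1, \nu_\beta} \SO^{u_\times}_{\Gamma_\times} \Phi_\gr^N (V)} \lesssim \prod_{j=2}^3 \|F_j\|_{X^{q_j, r_j}_{\mu_\Theta^1, \nu_\beta} \widetilde{\SF}^{u_j}_{\Gamma_j} \Phi_\gr^N (V)}.
\end{equation*}
The $+$ variant follows by taking the supremum in $r \in [r_\times, q_\times]$ and comparing with $r_j \in [r_j, q_j]$. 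One more application of the outer Hölder inequality at the strip level with $p_\times^{-1} = p_2^{-1} + p_3^{-1}$ (using that the outer measure $\nu_\beta$ and size $X^{q,r,+}_{\mu_\Theta^1, \nu_\beta}$ satisfy \eqref{eq:abstract-measure-holder} because products are pointwise supported on the intersection of supports) then produces \eqref{eq:bound:bilinear-holder}.

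For the embedding bound \eqref{eq:embedding-bound:bilinear-holder}, I would invoke \Cref{thm:uniform-embedding-bounds:linear} on each factor $\Emb[f_j] \circ \Gamma_j$, $j \in \{2, 3\}$. The range hypothesis $q_\times^{-1} < \min(1/2, 1 - p_2^{-1}) + \min(1/2, 1 - p_3^{-1})$ is precisely the openness condition needed to split $q_\times^{-1} = q_2^{-1} + q_3^{-1}$ with each $q_j > \max(p_j', 2)$, and the chain $1 \leq u_\times < r_\times < q_\times$ similarly permits splittings $u_\times^{-1} = u_2^{-1} + u_3^{-1}$ and $r_\times^{-1} = r_2^{-1} + r_3^{-1}$ with $1 \leq u_j < r_j < q_j$. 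With these exponent choices, \Cref{thm:uniform-embedding-bounds:linear} bounds each factor in \eqref{eq:bound:bilinear-holder} by $\|f_j\|_{L^{p_j}(\R)}$ uniformly in $\beta$, yielding \eqref{eq:embedding-bound:bilinear-holder}. The only mildly delicate point is the exponent bookkeeping in the last step — checking that the admissibility regions are non-empty for the full claimed range of $(p_2, p_3, q_\times, r_\times, u_\times)$ — but this reduces to a convexity argument on the open polytope of compatible exponents. All substantive analytic content sits in \Cref{thm:uniform-embedding-bounds:linear} and in the pointwise tensor-size Hölder, both of which are already available.
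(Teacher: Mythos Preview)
Your proposal is correct and follows essentially the same approach as the paper: a size-level classical Hölder on each tree for the four tensor summands of $\SO^{u_\times}_{\Gamma_\times}$, promoted first to $L^{r_\times}_{\mu_\Theta^1}$ via the outer Hölder inequality (\Cref{cor:outer-holder-classical}), then to the $X^{q_\times, r_\times, +}$ localized size, and finally to $L^{p_\times}_{\nu_\beta}$ by another outer Hölder at the strip level; the embedding bound then follows by splitting the exponents and applying \Cref{thm:uniform-embedding-bounds:linear} to each factor. Your handling of the $\beta$-weights (absorbing $\beta^{1/u_\times} \leq \beta^{1/u_j}$ when only one factor carries a $\beta$-power) is in fact cleaner than the paper's displayed inequalities, which contain some typographical slips in the $\beta$-exponents.
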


\begin{proof}
  Bound \eqref{eq:embedding-bound:bilinear-holder} follows from bound
  \eqref{eq:bound:bilinear-holder} thanks to the linear embedding bounds
  \eqref{eq:embedding-bounds:uniform-iterated}. As a matter of fact, fix \(1
  \leq u_{\times} < r_{\times} < q_{\times}\) with \(q_{\times}^{- 1} < \min
  \left( \frac{1}{2}, 1 - p_2^{- 1} \right) + \min \left( \frac{1}{2}, 1 -
  p_3^{- 1} \right)\) and set
  \[ \begin{aligned}[t]&
       q_j^{- 1} = \min \left( \frac{1}{2}, 1 - p_j^{- 1} \right) -
       \varepsilon_q / 2,\\ &
       r_j^{- 1} = q_j^{- 1} - \varepsilon_r / 2,\\ &
       u_j = r_j - \varepsilon_u / 2.
     \end{aligned} \]
  with
  \[ \begin{aligned}[t]&
       \varepsilon_q \eqd q_{\times}^{- 1} - \min \left( \frac{1}{2}, 1 -
       p_2^{- 1} \right) - \min \left( \frac{1}{2}, 1 - p_3^{- 1} \right) >
       0,\\ &
       \varepsilon_r \eqd r_{\times}^{- 1} - q_{\times}^{- 1} > 0,\\ &
       \varepsilon_r \eqd u_{\times}^{- 1} - r_{\times}^{- 1} > 0.
     \end{aligned} \]
  For such a choice of \((q_j, r_j, u_j)\),
  \Cref{thm:uniform-embedding-bounds:linear} holds so \(\RHS{}
  \eqref{eq:bound:bilinear-holder} \lesssim
  \RHS{\eqref{eq:embedding-bound:bilinear-holder}}\) proving our claim.
  
  Let us now prove \eqref{eq:bound:bilinear-holder} by appropriately applying
  the outer Hölder inequality (\Cref{cor:outer-holder-classical}). For any
  \(F_j \in \Rad (\R^{3}_{+}) \otimes \Phi^{\infty}_{\gr}\), \(j \in \{
  2, 3 \}\), it holds that that
  \[ \nu_{\beta} (\| F_2 F_3 \|_{X^{q_{\times}, r_{\times},
     +}_{\mu_{\Theta}^1, \nu_{\beta}} \SO^{u_{\times}, N}_{\gr}} > 0) \leq
     \min_{j = 2, 3} \nu_{\beta} \bigl( \| F_j \|_{X_{\mu_{\Theta}^1,
     \nu_{\beta }}^{q_j, r_j, +} \widetilde{\SF}^{u_j}_{\Gamma_j}
     \Phi_{\mf{r}}^N} > 0 \bigr) \]
  since for any \(V^- \in \DD_{\beta}^{\cup}\) it holds that if \(\1_{\R^3_+
  \setminus V^-} F_j = 0\) for \(j = 2\) or \(j = 3\) then, trivially, \(\1_{\R^3_+
  \setminus V^-} F_2 F_3 = 0\). To apply the outer Hölder inequality we must
  check that for any \(F_j \in \Rad (\R^{3}_{+}) \otimes
  \Phi^{\infty}_{\gr}\), \(j \in \{ 2, 3 \}\) it holds that
  \[ \| F_2 F_3 \|_{X^{q_{\times}, r_{\times}, +}_{\mu_{\Theta}^1,
     \nu_{\beta}} \SO^{u_{\times}}_{\Gamma_{\times}} \Phi_{\gr}^N} \lesssim
     \prod_{j = 2}^3 \| F_j \|_{X_{\mu_{\Theta}^1, \nu_{\beta }}^{q_j, r_j, +}
     \widetilde{\SF}^{u_j}_{\Gamma_j} \Phi_{\mf{r}}^N} . \]
  The sizes \(X^{q_{\times}, r_{\times}, +}_{\mu_{\Theta}^1, \nu_{\beta}}
  \SO^{u_{\times}}_{\Gamma_{\times}} \Phi_{\gr}^N\) are themselves (localized)
  outer Lebesgue norms. According to \eqref{eq:X-size}, it is sufficient to
  check that for any \(V^+ \in \DD_{\beta}^{\cup}\) and any \(W^+ \in
  \TT_{\Theta}^{\cup}\) one has
  \[ \begin{aligned}[t]&
       \nu_{\beta} (V^+)^{- \frac{1}{r_{\times}}} \mu^{\infty}_{\Theta}
       (W^+)^{\frac{1}{q_{\times}} - \frac{1}{r_{\times}}} \left\| \1_{(V^+
       \cap W^+)} F_2 F_3 \right\|_{L^{r_{\times}}_{\mu_{\Theta}^1}
       \SO^{u_{\times}}_{\Gamma_{\times}} \Phi_{\gr}^N}\\ &
       \qquad \lesssim \prod_{j = 2}^3 \left( \nu_{\beta} (V^+)^{-
       \frac{1}{r_j}} \mu^{\infty}_{\Theta} (W^+)^{\frac{1}{q_j} -
       \frac{1}{r_j}} \left\| \1_{(V^+ \cap W^+)} F_j
       \right\|_{L^{r_j}_{\mu_{\Theta}^1} \widetilde{\SF}^{u_j}_{\Gamma_j}
       \Phi_{\mf{r}}^N} \right)
     \end{aligned} \]
  that boils down to showing
  \[ \left\| \1_{(V^+ \cap W^+)} F_2 F_3
     \right\|_{L^{r_{\times}}_{\mu_{\Theta}^1}
     \SO^{u_{\times}}_{\Gamma_{\times}} \Phi_{\gr}^N} \lesssim \prod_{j = 2}^3
     \left\| \1_{(V^+ \cap W^+)} F_j \right\|_{L^{r_j}_{\mu_{\Theta}^1}
     \widetilde{\SF}^{u_j}_{\Gamma_j} \Phi_{\mf{r}}^N} . \]
  This amounts to the outer Hölder inequality at the level of the outer
  Lebesgue norms \(L^{r_{\times}}_{\mu_{\Theta}^1}
  \SO^{u_{\times}}_{\Gamma_{\times}} \Phi_{\gr}^N\) and
  \(L^{r_j}_{\mu_{\Theta}^1} \widetilde{\SF}^{u_j}_{\Gamma_j} \Phi_{\mf{r}}^N\),
  \(j \in \{ 2, 3 \}\).
  
  Using that \(\mu_{\Theta}^1 (\| F_2 F_3
  \|_{\SO^{u_{\times}}_{\Gamma_{\times}} \Phi_{\gr}^N} > 0) \leq \min_{j = 2,
  3} \mu_{\Theta}^1 \bigl( \| F_j \|_{\widetilde{\SF}^{u_j}_{\Gamma_j}
  \Phi_{\mf{r}}^N} > 0 \bigr)\), we can apply the Hölder inequality to obtain
  the above bound as long as it holds that
  \[ \left\| \1_{(V^+ \cap W^+)} F_2 F_3
     \right\|_{\SO^{u_{\times}}_{\Gamma_{\times}} \Phi_{\gr}^N} \lesssim
     \prod_{j = 2}^3 \left\| \1_{(V^+ \cap W^+)} F_j
     \right\|_{\widetilde{\SF}^{u_j}_{\Gamma_j} \Phi_{\mf{r}}^N} . \]
  This however is a simple consequence of the classical Hölder inequality and
  the definitions \eqref{eq:product-tensor-sizes} of the sizes in play. We
  immediately have
  \[ \begin{aligned}[t]&
       \left\| \1_{(V^+ \cap W^+)} F_2 F_3
       \right\|_{\beta^{\frac{1}{{u_{\times}} }} \Gamma_{\times}^{\ast} {\SL 
       }^{(u_{\times}, \infty)}_{\Theta} (\Phi \otimes \Phi)_{\gr}^{N }}
       \lesssim \prod_{j = 2}^3 \beta^{\frac{1}{{u_j} }} \left\| \1_{(V^+ \cap
       W^+)} F_j \right\|_{\Gamma_j^{\ast} {\SL  }^{(u_j, \infty)}_{ (\Theta,
       \Theta^{\tmop{in}})} \Phi_{\mf{r}}^N},\\ &
       \left\| \1_{(V^+ \cap W^+)} F_2 F_3
       \right\|_{\beta^{\frac{1}{{u_{\times}} }} \Gamma_{\times}^{\ast} {\SL 
       }^{(u_{\times}, 1)}_{\Theta} \left( \Phi \otimes \wpD \right)_{\gr}^{N
       }} \lesssim \begin{aligned}[t]&
         \beta^{\frac{1}{u_2}} \left\| \1_{(V^+ \cap W^+)} F_2
         \right\|_{\Gamma_2^{\ast} {\SL  }^{(u_2, \infty)}_{ (\Theta,
         \Theta^{\tmop{in}})} \Phi_{\mf{r}}^N}\\ &
         \times \beta^{\frac{1}{{u_3} }} \left\| \1_{(V^+ \cap W^+)} F_3
         \right\|_{\Gamma_3^{\ast} {\SL  }^{(u_3, 2)}_{\Theta}
         \wpD_{\mf{r}}^N},
       \end{aligned}\\ &
       \left\| \1_{(V^+ \cap W^+)} F_2 F_3
       \right\|_{\beta^{\frac{1}{{u_{\times}} }} \Gamma_{\times}^{\ast} {\SL 
       }^{(u_{\times}, 1)}_{\Theta} \left( \Phi \otimes \wpD \right)_{\gr}^{N
       }} \lesssim \begin{aligned}[t]&
         \beta^{\frac{1}{u_2}} \left\| \1_{(V^+ \cap W^+)} F_2
         \right\|_{\Gamma_2^{\ast} {\SL  }^{(u_2, \infty)}_{\Theta}
         \wpD_{\mf{r}}^N}\\ &
         \times \beta^{\frac{1}{{u_3} }} \left\| \1_{(V^+ \cap W^+)} F_3
         \right\|_{\Gamma_3^{\ast} {\SL  }^{(u_3, \infty)}_{ (\Theta,
         \Theta^{\tmop{in}})} \Phi_{\mf{r}}^N},
       \end{aligned}\\ &
       \left\| \1_{(V^+ \cap W^+)} F_2 F_3 \right\|_{\Gamma_{\times}^{\ast}
       {\SL  }^{(u_{\times}, 1)}_{\Theta} \left( \wpD \otimes \wpD
       \right)_{\gr}^{N }} \lesssim \prod_{j = 2}^3 \left\| \1_{(V^+ \cap
       W^+)} F_j \right\|_{\Gamma_j^{\ast} {\SL  }^{(u_j, 2)}_{\Theta}
       \wpD_{\mf{r}}^N} .
     \end{aligned} \]
  
\end{proof}

\begin{proposition}
  \label{prop:bilinear-bidefect-bound}The bound
  \[ \left\| \Emb [f_2] \circ \Gamma_2  \Emb [f_3] \circ \Gamma_3
     \right\|_{L^{p_{\times}}_{\nu_{\beta}} X^{q_{\times}, r_{\times},
     +}_{\mu_{\Theta}^1, \nu_{\beta}} \Gamma_{\times}^{\ast} \SL_{(\Theta ,
     \Theta^{\tmop{in}})}^{\left( {u_{\times}} , 1 \right)} \dfct_{\gr}^{N }}
     \lesssim \| f_2 \|_{L^{p_2} (\R)} \| f_2 \|_{L^{p_3} (\R)} \]
  holds for any \(f_j \in \Sch (\R)\), and any  \(p_j \in (1,
  \infty]\), \(j \in \{ 2, 3 \}\), as long as \(p_{\times}^{- 1} = p_2^{- 1} +
  p_3^{- 1}\) and as long as \(q_{\times} \in (0, + \infty]\) with \(q_{\times}^{-
  1} < \min \left( \frac{1}{2}, 1 - p_2^{- 1} \right) + \min \left(
  \frac{1}{2}, 1 - p_3^{- 1} \right)\) and as long as \(1 \leq u_{\times} <
  r_{\times} \leq q_{\times} .\) The implicit constant is independent of \(\beta
  \in (0, 1]\), that determines \((\alpha_j, \beta_j, \gamma_j)\) and \(\Gamma_j\),
  \(j \in \{ 2, 3 \}\), and of \(f_2, f_3\). 
\end{proposition}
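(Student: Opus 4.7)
The plan is to parallel the proof of \Cref{thm:uniform-embedding-bounds:linear}, now in a bilinear form, and to combine it with the outer H\"older template from \Cref{prop:bilinear-holder-bound}. The main new ingredient needed is a bilinear analogue of \Cref{lem:uniform-defect-bound} showing that the bilinear defect of the truncated product $\1_E \Emb[f_2]\circ\Gamma_2 \cdot \Emb[f_3]\circ\Gamma_3$, with $E=(V^+\cap W^+)\setminus V^-$, is controlled by singular measures on the boundary of $E$; the rest of the argument then reduces to two linear singular-boundary quantities by a classical H\"older inequality, each of which is handled by the linear uniform embedding bounds already established.

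First I will establish the bilinear defect domination. Writing $H=F_2F_3$ with $F_j=\Emb[f_j]\circ\Gamma_j$, identity \eqref{eq:no-defect-product} asserts that both bilinear defect operators in \eqref{eq:product-space-defect}--\eqref{eq:product-scale-defect} annihilate $H$. Distributing these operators applied to $\1_E H$ via the Leibniz rule splits the result into terms where each linear defect operator $\Gamma_j^\ast \wpD_\zeta - \beta t\dd_y + 2\pi i \alpha_j \xi_T$ (respectively its scale analogue) acts on an individual $F_j$ (these vanish by \eqref{eq:no-defect-gamma}) and terms where the geometric derivative $\beta t\dd_y$ or $t\dd_t - (\eta-\xi_T)\dd_\eta$ falls on $\1_E$. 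By \Cref{lem:geometry-of-boundary} the latter are Radon measures supported on the boundaries of $V^+\cap W^+$ and of $\R^3_+\setminus V^-$, and the case analysis of \Cref{lem:uniform-defect-bound} produces a bound of the form
\[
\|\1_E H\|_{\Gamma_\times^\ast \SL^{(u_\times,1)}_{(\Theta,\Theta^{\tmop{in}})}\dfct^N_\gr} \lesssim \sum_{\widetilde E\in\{V^+\cap W^+,\,\R^3_+\setminus V^-\}} \|\1_E H\cdot t\dd_t\1_{\widetilde E}\|_{\Gamma_\times^\ast\SL^{(u_\times,1)}_{(\Theta,\Theta^{\tmop{in}})}(\Phi\otimes\Phi)^{N-c}_{c'\gr}},
\]
modulo additional correction terms (of exterior-lacunary type, as in the last summand of \eqref{eq:geometry-of-defects}) that are absorbed by the same geometric rectangle argument $R\subset R^+\subset R^{++}$ used in the proof of \Cref{lem:uniform-defect-bound}.

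Next I will exploit the product structure of $H$. The factorization $H[\phi_2,\phi_3]=F_2[\phi_2]F_3[\phi_3]$, combined with the boundary integral representation \eqref{eq:boundary-singular-size-local} and the classical H\"older inequality at exponent $u_\times^{-1}=u_2^{-1}+u_3^{-1}$, yields tree by tree
\[
\|H\cdot t\dd_t\1_{\widetilde E}\|_{\Gamma_\times^\ast\SL^{(u_\times,1)}_{(\Theta,\Theta^{\tmop{in}})}(\Phi\otimes\Phi)^N_\gr(T)} \lesssim \prod_{j=2}^3 \|F_j\cdot t\dd_t\1_{\widetilde E}\|_{\Gamma_j^\ast\SL^{(u_j,1)}_{(\Theta,\Theta^{\tmop{in}})}\Phi^N_\gr(T)}.
\]
Lifting this tree-level bound to the iterated outer Lebesgue quasi-norm $L^{p_\times}_{\nu_\beta}X^{q_\times,r_\times,+}_{\mu^1_\Theta,\nu_\beta}$ via the outer H\"older inequality, following the argument of \Cref{prop:bilinear-holder-bound} with $p_\times^{-1}=\sum_j p_j^{-1}$, $q_\times^{-1}=\sum_j q_j^{-1}$ and $r_\times^{-1}=\sum_j r_j^{-1}$, reduces the problem to bounding each factor $\|F_j\cdot t\dd_t\1_{\widetilde E}\|_{L^{p_j}_{\nu_\beta}X^{q_j,r_j,+}_{\mu^1_\Theta,\nu_\beta}\Gamma_j^\ast\SL^{(u_j,1)}_{(\Theta,\Theta^{\tmop{in}})}\Phi^N_\gr}$ individually.

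Finally, each such linear singular quantity is precisely one of the summands of $\widetilde{\SF}^{u_j}_{\Gamma_j}\Phi^N_\gr$ that appears in \Cref{lem:unif-derived-size-bound}, so \Cref{thm:uniform-embedding-bounds:linear} produces the estimate by $\|f_j\|_{L^{p_j}(\R)}$. Selecting $q_j^{-1}=\min(\frac12,1-p_j^{-1})-\varepsilon/2$, $r_j^{-1}=q_j^{-1}-\varepsilon/2$ and $u_j=r_j-\varepsilon/2$ for sufficiently small $\varepsilon>0$ (mirroring the choice in the proof of \Cref{prop:bilinear-holder-bound}) ensures both the hypotheses $q_j>\max(p_j',2)$, $1\le u_j<r_j<q_j$ of the linear embedding theorem and the correct summed exponents, under the assumptions $q_\times^{-1}<\min(\frac12,1-p_2^{-1})+\min(\frac12,1-p_3^{-1})$ and $1\le u_\times<r_\times<q_\times$. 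The hardest step will be the bilinear defect domination: reproducing the local geometric argument from \Cref{lem:uniform-defect-bound}---the rectangles $R\subset R^+\subset R^{++}$ and the use of the boundary regularity \eqref{eq:boundary-regularity-local}---in the tensor product setting requires careful bookkeeping of wave-packet factors for both $\phi_2$ and $\phi_3$, and in particular of the Leibniz cross-terms where one defect-like operator lands on $\1_E$ while the other lands on a wave-packet argument of the other factor.
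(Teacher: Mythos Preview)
Your overall strategy matches the paper's: reduce the bilinear defect of the truncated product to boundary terms via \eqref{eq:no-defect-product}, H\"older-factorize the boundary term, and bound each factor by the linear uniform size $\widetilde{\SF}^{u_j}_{\Gamma_j}$ using \Cref{thm:uniform-embedding-bounds:linear}. However, you are overcomplicating the first step.

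There are no ``Leibniz cross-terms'' and no ``exterior-lacunary correction terms.'' In the bilinear defect operators of \eqref{eq:product-space-defect}--\eqref{eq:product-scale-defect} the pieces $\Gamma_j^\ast\wpD_\zeta\otimes\Id$, $\Id\otimes\Gamma_j^\ast\wpD_\zeta$ (and their $\wpD_\sigma$ analogues) act only on the wave-packet arguments $\phi_2,\phi_3$ and therefore commute with multiplication by $\1_E$. Only the single geometric derivative $\beta t\dd_y$ (respectively $t\dd_t-(\eta-\xi_T)\dd_\eta$) touches $\1_E$, so the Leibniz rule gives exactly
\[
D_\times(\1_E H)=\1_E\,D_\times H-\bigl(\text{geometric derivative of }\1_E\bigr)\cdot H=-\bigl(\text{geometric derivative of }\1_E\bigr)\cdot H
\]
by \eqref{eq:no-defect-product}. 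This is a clean boundary term with no residue requiring the rectangle argument of \Cref{lem:uniform-defect-bound}. After the H\"older factorization at the tree level, each factor $\|F_j\cdot t\delta(t-\mf b_\pm)\|_{\Gamma_j^\ast\SL^{(u_j,1)}_{(\Theta,\Theta^{\tmop{in}})}\Phi^N_\gr}$ is, up to constants, dominated by the linear defect of $\1_E\,\Emb[f_j]\circ\Gamma_j$, hence by the full size $\widetilde{\SF}^{u_j}_{\Gamma_j}\Phi^N_\gr$---no detour through the exterior-lacunary size is needed.

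For the final assembly the paper does not invoke an abstract outer H\"older but directly constructs the bilinear level sets as unions $V^-_\times(\lambda)=\bigcup_j V^-_j(\lambda^{p_\times/p_j})$ and $W^-_\times(\tau)=\bigcup_j W^-_j(\tau^{\bar r_\times/\bar r_j})$ and verifies the required bounds by hand. Your ``lift via outer H\"older'' is an equivalent repackaging of the same computation.
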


We believe that the bounds above cannot be obtained as in the case of
\Cref{prop:bilinear-holder-bound} from linear embeddings. This is because the
expressions \eqref{eq:product-space-defect} and
\eqref{eq:product-scale-defect} for the sizes \(\dfct_{\zeta, \gr }^{N }\) and
\(\dfct_{\sigma, \gr }^{N }\) are intrinsically bilinear and cannot be
controlled through a classical Hölder inequality. In the subsequent proof we
mimic the parts of \Cref{sec:uniform-embeddings:linear} related to bounding
the defect sizes.

\begin{proof}
  Using homogeneity let us assume that \(\| f_j \|_{L^{p_j} (\R)}
  = 1\). Fix \(q_j \in (\max (2, p_j'), \infty]\) such that \(q_{\times}^{- 1} =
  q_2^{- 1} + q_3^{- 1}\) and select \(u_j\) and \(r_j\), \(j \in \{ 2, 3 \}\), so
  that \(1 \leq u_j \leq r_j \leq q_j\) and \(r_{\times}^{- 1} = r_2^{- 2} +
  r_3^{- 2}\) and \(u_{\times}^{- 1} = u_2^{- 2} + u_3^{- 2}\). Under these
  conditions \Cref{thm:uniform-embedding-bounds:linear} holds for the
  exponents \((u_j, r_j, q_j, p_j)\) for \(j \in \{ 2, 3 \}\). According to the
  discussion in Remark \Cref{rmk:iterated-outer-bounds-explicit} this means
  that for \(j \in \{ 2, 3 \}\) there exists a choice of sets \(V^-_j (\lambda)
  \in \DD^{\cup}_{\beta}\) such that
  \[ \begin{aligned}[t]&
       \int_0^{\infty} \nu_{\beta} \left( {V^-_j}  (\lambda^{1 / p_j}) \right)
       \dd \lambda \lesssim 1, \qquad \left\| \1_{\R^3_+ \setminus V^-_j
       (\lambda)}  \Emb [f_j] \right\|_{X^{q_j, r_j, +}_{\mu_{\Theta}^1,
       \nu_{\beta}} \tilde{F}^{u_j}_{\Gamma_j} \Phi_{\gr}^{N }} \lesssim
       \lambda .
     \end{aligned} \]
  Unwrapping the definition of the size \(X^{q_j, r_j, +}_{\mu_{\Theta}^1,
  \nu_{\beta}} \tilde{F}^{u_j}_{\Gamma_j} \Phi_{\gr}^{N }\) this means that for
  any any \(\bar{r}_j \in \{ r_j, q_j \}\), any \(V^+ \in
  \mathbb{D}_{\beta}^{\cup}\), any \(W^+ \in \mathbb{T}_{\Theta}^{\cup}\), and
  any \(\tau > 0\) there exists a choice of \(W^-_j (\tau) \in
  \TT_{\Theta}^{\cup}\) such that
  \[ \begin{aligned}[t]&
       \int_0^{\infty} \mu^1_{\Theta} (W^-_j (\tau^{1 / \bar{r}_j})) \dd
       \tau \lesssim 1,\\ &
       \left\| \1_{(V^+ \cap W^+) \setminus (V^-_j (\lambda) \cup W^-_j
       (\tau))}  \Emb [f_j] \right\|_{\tilde{F}^{u_j}_{\Gamma_j}
       \Phi_{\gr}^{N }} \lesssim \tau \lambda \nu_{\beta} (V^+)^{1 /
       \bar{r}_j} \mu^{\infty}_{\Theta} (W^+)^{\frac{1}{\bar{r}_j} -
       \frac{1}{q_j}} .
     \end{aligned} \]
  Note that for any \(V^+, V^- \in \DD^{\cup}_{\beta}\) and \(W^+, W^- \in
  \TT_{\Theta}^{\cup}\) we have

  \[ \begin{aligned}[t]&
       \left\| \1_{(V^{_+} \cap W^{_+}) \setminus (V^- \cup W^-)} \Emb [f_2]
       \circ \Gamma_2  \Emb [f_3] \circ \Gamma_3
       \right\|_{\Gamma_{\times}^{\ast} \SL_{(\Theta ,
       \Theta^{\tmop{in}})}^{\left( {u_{\times}} , 1 \right)} \dfct_{\gr}^{N }
       (T)}\\ &
       \qquad \lesssim \begin{aligned}[t]&
         \left\| \left( \Emb [f_2] \circ \Gamma_2  \Emb [f_3] \circ \Gamma_3
         \right) (\eta, y, t) \right.\\ &
         \qquad \times \left. t \dd_t \left( \1_{(V^{_+} \cap W^{_+})
         \setminus (V^- \cup W^-)} (\eta, y, t) \right)
         \right\|_{\Gamma_{\times}^{\ast} \SL_{(\Theta ,
         \Theta^{\tmop{in}})}^{\left( {u_{\times}} , 1 \right)} \dfct_{\gr}^{N
         } (T)}
       \end{aligned}\\ &
       \qquad \lesssim \begin{aligned}[t]&
         \left\| \left( \Emb [f_2] \circ \Gamma_2  \Emb [f_3] \circ \Gamma_3
         \right) (\eta, y, t) \right.  \1_{(V^{_+} \cap W^{_+}) \setminus (V^-
         \cup W^-)} (\eta, y, t)\\ &
         \qquad \times \left.  \left( t \delta \left( t - \mf{b_+} (\eta, y)
         \right) + t \delta \left( t - \mf{b_-} (\eta, y) \right) \right. 
         \right\|_{\Gamma_{\times}^{\ast} \SL_{(\Theta ,
         \Theta^{\tmop{in}})}^{\left( {u_{\times}} , 1 \right)} (\Phi \otimes
         \Phi)_{\gr}^{N } (T)}
       \end{aligned}\\ &
       \qquad \lesssim \prod_{j = 2}^3 \begin{aligned}[t]&
         \left\| \left( \Emb [f_j] \circ \Gamma_j  \right) (\eta, y, t)
         \right.  \1_{(V^{_+} \cap W^{_+}) \setminus (V^- \cup W^-)} (\eta, y,
         t)\\ &
         \qquad \times \left.  \left( t \delta \left( t - \mf{b_+} (\eta, y)
         \right) + t \delta \left( t - \mf{b_-} (\eta, y) \right) \right. 
         \right\|_{\Gamma_j^{\ast} \SL_{(\Theta ,
         \Theta^{\tmop{in}})}^{\left( {u_j} , 1 \right)} \Phi_{\gr}^{N }} .
       \end{aligned}
     \end{aligned} \]
  On the other hand
  \[ \begin{aligned}[t]&
       \begin{aligned}[t]&
         \left\| \left( \Emb [f_j] \circ \Gamma_j  \right) (\eta, y, t)
         \right.  \1_{(V^{_+} \cap W^{_+}) \setminus (V^- \cup W^-)} (\eta, y,
         t)\\ &
         \hspace{4em} \times \left.  \left( t \delta \left( t - \mf{b_+}
         (\eta, y) \right) + t \delta \left( t - \mf{b_-} (\eta, y) \right)
         \right.  \right\|_{\Gamma_j^{\ast} \SL_{(\Theta ,
         \Theta^{\tmop{in}})}^{\left( {u_j} , 1 \right)} \Phi_{\gr}^{N }}
       \end{aligned}\\ &
       \qquad \lesssim \left\| \1_{(V^+ \cap W^+) \setminus (V^-  \cup W^-)} 
       \Emb [f_j] \right\|_{\tilde{F}^{u_j}_{\Gamma_j} \Phi_{\gr}^{N }} .
     \end{aligned} \]
  Based on the above two bounds, the rest of the proof is merely an unwrapping
  of definitions of iterated outer Lebesgue norms. We set \(V^-_{\times}
  (\lambda) = \bigcup_{j \in \{ 2, 3 \}} {V^-_j}  (\lambda^{p_{\times} /
  p_j})\) so that
  \[ \int_0^{\infty} \nu_{\beta} \left( {V^-_{\times}}  (\lambda^{1 /
     p_{\times}}) \right) \dd \lambda \leq \sum_{j \in \{ 2, 3 \}}
     \int_0^{\infty} \nu_{\beta} \left( {V^-_j}  (\lambda^{1 / p_j}) \right)
     \dd \lambda \lesssim 1. \]
  We claim that
  \[ \left\| \1_{\R^3_+ {\setminus V^-_{\times}}  (\lambda)} \Emb [f_2]
     \circ \Gamma_2  \Emb [f_3] \circ \Gamma_3 \right\|_{X^{q_j, r_j,
     +}_{\mu_{\Theta}^1, \nu_{\beta}} \Gamma_{\times}^{\ast} \SL_{(\Theta ,
     \Theta^{\tmop{in}})}^{\left( {u_{\times}} , 1 \right)} \dfct_{\gr}^{N }
     (T)} \lesssim \lambda \]
  i.e. that for \(\bar{r}_{\times} \in \{ r_{\times}, q_{\times} \}\) and for
  any \(V^+ \in \mathbb{D}_{\beta}^{\cup}\), any \(W^+ \in
  \mathbb{T}_{\Theta}^{\cup}\), and for any \(\tau > 0\) there exists
  \(W^-_{\times} (\tau) \in \TT_{\Theta}^{\cup}\) such that
  \[ \int_0^{\infty} \mu^1_{\Theta} (W^-_{\times} (\tau^{1 /
     \bar{r}_{\times}})) \dd \tau \lesssim 1 \]
  and
  \[ \begin{aligned}[t]&
       \left\| \1_{(V^{_+} \cap W^{_+}) \setminus \left( {V^-_{\times}} 
       (\lambda) \cup W^-_{\times} (\tau) \right)} \Emb [f_2] \circ \Gamma_2 
       \Emb [f_3] \circ \Gamma_3 \right\|_{\Gamma_{\times}^{\ast}
       \SL_{(\Theta , \Theta^{\tmop{in}})}^{\left( {u_{\times}} , 1 \right)}
       \dfct_{\gr}^{N } (T)}\\ &
       \qquad \lesssim \tau \lambda \nu_{\beta} (V^+)^{1 / \bar{r}_{\times}}
       \mu^{\infty}_{\Theta} (W^+)^{\frac{1}{\bar{r}_{\times}} -
       \frac{1}{q_{\times}}}
     \end{aligned} . \]
  It is sufficient to set \(W^-_{\times} (\tau) = \bigcup_{j \in \{ 2, 3 \}}
  {W^-_j}  (\tau^{\bar{r}_{\times} / \bar{r}_j})\) where \({W^-_j}  (\tau) \in
  \TT_{\Theta}^{\cup}\) is such that
  \[ \begin{aligned}[t]&
       \int_0^{\infty} \mu^1_{\Theta} (W^-_j (\tau^{1 / \bar{r}_j})) \dd \tau \lesssim 1, \\ &
       \begin{aligned}[t] &
       \left\| \1_{(V^+ \cap W^+) \setminus (V^-_j (\lambda^{p_{\times} /p_j}) \cup W^-_j (\tau))}  \Emb [f_j]\right\|_{\tilde{F}^{u_j}_{\Gamma_j} \Phi_{\gr}^{N }} \\ & \qquad \lesssim \tau
       \lambda^{p_{\times} / p_j} \nu_{\beta} (V^+)^{1 / \bar{r}_j}
       \mu^{\infty}_{\Theta} (W^+)^{\frac{1}{\bar{r}_j} - \frac{1}{q_j}} .
       \end{aligned}      
     \end{aligned} \]
  The claim follows.
\end{proof}

\printbibliography

@software{vanderhoevenGNUTeXmacsScientific1998,
  title = {{{GNU TeXmacs}}: A Scientific Editing Platform. {{https://www.texmacs.org}}},
  author = {van der Hoeven, Joris and {et al}},
  date = {1998},
  url = {https://www.texmacs.org}
}

@article{amentaBanachvaluedModulationInvariant2020,
  title = {Banach-valued modulation invariant {{Carleson}} embeddings and outer-\${{L}}\^p\$ spaces: the {{Walsh}} case},
  shorttitle = {Banach-{{Valued Modulation Invariant Carleson Embeddings}} and {{Outer-}}\$\${{L}}\^p\$\${{Spaces}}},
  author = {Amenta, Alex and Uraltsev, Gennady},
  date = {2020-06-22},
  journaltitle = {Journal of Fourier Analysis and Applications},
  shortjournal = {J Fourier Anal Appl},
  volume = {26},
  number = {4},
  pages = {53},
  issn = {1531-5851},
  doi = {10.1007/s00041-020-09768-0},
  langid = {english}
}

@article{amentaBilinearHilbertTransform2020,
  title = {The bilinear {{Hilbert}} transform in {{UMD}} spaces},
  author = {Amenta, Alex and Uraltsev, Gennady},
  date = {2020-12-01},
  journaltitle = {Mathematische Annalen},
  shortjournal = {Math. Ann.},
  volume = {378},
  number = {3},
  pages = {1129--1221},
  issn = {1432-1807},
  doi = {10.1007/s00208-020-02052-y},
  langid = {english}
}

@article{calderonCommutatorsSingularIntegral1965,
  title = {Commutators of singular integral operators},
  author = {Calder\'on, A. P.},
  date = {1965-05-01},
  journaltitle = {Proceedings of the National Academy of Sciences},
  volume = {53},
  number = {5},
  pages = {1092--1099},
  issn = {0027-8424, 1091-6490},
  doi = {10.1073/pnas.53.5.1092},
  langid = {english}
}

@article{carlesonConvergenceGrowthPartial1966,
  title = {On convergence and growth of partial sums of {{Fourier}} series},
  author = {Carleson, Lennart},
  date = {1966},
  journaltitle = {Acta Mathematica},
  shortjournal = {Acta Math.},
  volume = {116},
  number = {0},
  pages = {135--157},
  issn = {0001-5962},
  doi = {10.1007/BF02392815},
  langid = {english}
}

@article{demeterTwodimensionalBilinearHilbert2010,
  title = {On the two-dimensional bilinear {{Hilbert}} transform},
  author = {Demeter, Ciprian and Thiele, Christoph},
  date = {2010},
  journaltitle = {American Journal of Mathematics},
  shortjournal = {American Journal of Mathematics},
  volume = {132},
  number = {1},
  pages = {201--256},
  issn = {1080-6377},
  doi = {10.1353/ajm.0.0101},
  langid = {english}
}

@article{diplinioModulationInvariantCarleson2018,
  title = {A modulation invariant {{Carleson}} embedding theorem outside local {{L2}}},
  author = {di Plinio, Francesco and Ou, Yumeng},
  options = {useprefix=true},
  date = {2018-06-01},
  journaltitle = {Journal d'Analyse Math\'ematique},
  shortjournal = {JAMA},
  volume = {135},
  number = {2},
  pages = {675--711},
  issn = {1565-8538},
  doi = {10.1007/s11854-018-0049-4},
  langid = {english}
}

@unpublished{diplinioWeaktypeCarlesonTheorem2022,
  title = {The weak-type {{Carleson}} theorem via wave packet estimates},
  author = {Di Plinio, Francesco and Fragkos, Anastasios},
  date = {2022-04-17},
  eprint = {2204.08051},
  eprinttype = {arxiv},
  primaryclass = {math},
  publisher = {{arXiv}},
  doi = {10.48550/arXiv.2204.08051},
  archiveprefix = {arXiv}
}

@article{doTheoryOuterMeasures2015,
  title = {\${{L}}\^p\$ theory for outer measures and two themes of {{Lennart Carleson}} united},
  author = {Do, Yen and Thiele, Christoph},
  date = {2015},
  journaltitle = {Bulletin of the American Mathematical Society},
  volume = {52},
  number = {2},
  pages = {249--296}
}

@book{federerGeometricMeasureTheory1996,
  title = {Geometric {{Measure Theory}}},
  author = {Federer, Herbert},
  editor = {Eckmann, B. and van der Waerden, B. L.},
  options = {useprefix=true},
  date = {1996},
  series = {Classics in {{Mathematics}}},
  publisher = {{Springer Berlin Heidelberg}},
  location = {{Berlin, Heidelberg}},
  doi = {10.1007/978-3-642-62010-2},
  isbn = {978-3-540-60656-7 978-3-642-62010-2}
}

@article{feffermanPointwiseConvergenceFourier1973,
  title = {Pointwise convergence of {{Fourier}} series},
  author = {Fefferman, Charles},
  date = {1973-11},
  journaltitle = {The Annals of Mathematics},
  shortjournal = {The Annals of Mathematics},
  volume = {98},
  number = {3},
  pages = {551},
  issn = {0003486X},
  doi = {10.2307/1970917},
  langid = {english}
}

@article{grafakosUniformBoundsBilinear2004,
  title = {Uniform bounds for the bilinear {{Hilbert}} transforms, {{I}}},
  author = {Grafakos, Loukas and Li, Xiaochun},
  date = {2004-05-01},
  journaltitle = {Annals of Mathematics},
  shortjournal = {Ann. Math.},
  volume = {159},
  number = {3},
  pages = {889--933},
  issn = {0003-486X},
  doi = {10.4007/annals.2004.159.889},
  langid = {english}
}

@article{gressmanTrilinearSingularIntegral2016,
  title = {On a trilinear singular integral form with determinantal kernel},
  author = {Gressman, Philip and He, Danqing and Kova\v{c}, Vjekoslav and Street, Brian and Thiele, Christoph and Yung, Po-Lam},
  date = {2016},
  journaltitle = {Proceedings of the American Mathematical Society},
  volume = {144},
  number = {8},
  pages = {3465--3477},
  publisher = {{American Mathematical Society}},
  issn = {0002-9939},
  doi = {10.1090/proc/13007}
}

@article{kovacBoundednessTwistedParaproduct2012,
  ids = {KovacBoundednesstwistedparaproduct2012},
  title = {Boundedness of the twisted paraproduct},
  author = {Kova\v{c}, Vjekoslav},
  date = {2012},
  journaltitle = {Revista Matem\'atica Iberoamericana},
  volume = {28},
  number = {4},
  pages = {1143--1164},
  issn = {0213-2230},
  doi = {10.4171/RMI/707},
  langid = {english}
f}

@article{kovacDyadicTriangularHilbert2015,
  title = {Dyadic triangular hilbert transform of two general functions and one not too general function},
  author = {Kova\v{c}, Vjekoslav and Thiele, Christoph and Zorin-Kranich, Pavel},
  date = {2015-11},
  journaltitle = {Forum of Mathematics, Sigma},
  volume = {3},
  issn = {2050-5094},
  doi = {10.1017/fms.2015.25},
  langid = {english}
}

@article{laceyBilinearMaximalFunctions2000a,
  title = {The bilinear maximal functions map into \${{L}}\^p\$ for \$2/3 {$<$} p \textbackslash leq 1\$},
  author = {Lacey, Michael T.},
  date = {2000},
  journaltitle = {Annals of Mathematics},
  volume = {151},
  number = {1},
  pages = {35--57},
  publisher = {{Annals of Mathematics}},
  issn = {0003-486X},
  doi = {10.2307/121111}
}

@article{laceyCalderonConjectureBilinear1998,
  title = {On {{Calder\'on}}'s conjecture for the bilinear {{Hilbert}} transform},
  author = {Lacey, Michael T. and Thiele, Christoph M.},
  date = {1998},
  journaltitle = {Proceedings of the National Academy of Sciences of the United States of America},
  volume = {95},
  number = {9},
  pages = {4828},
  doi = {ttps://doi.org/10.1073/pnas.95.9.4828}
}

@article{laceyEstimatesBilinearHilbert1997,
  title = {Estimates on the bilinear {{Hilbert}} transform for 2{$<$} p{$<\infty$}},
  author = {Lacey, M. and Thiele, C. Lp},
  date = {1997},
  journaltitle = {Ann. Math},
  volume = {146},
  pages = {693--724},
  keywords = {Mathematics}
}

@article{liUniformBoundsBilinear2006,
  title = {Uniform bounds for the bilinear {{Hilbert}} transforms, {{II}}},
  author = {Li, Xiaochun},
  date = {2006},
  journaltitle = {Revista Matem\'atica Iberoamericana},
  shortjournal = {Rev. Mat. Iberoamericana},
  volume = {22},
  number = {3},
  pages = {1069--1126},
  issn = {0213-2230},
  doi = {10.4171/RMI/483},
  langid = {english}
}

@article{muscaluCalderonCommutatorsCauchy2014,
  title = {Calder\'on commutators and the {{Cauchy}} integral on {{Lipschitz}} curves revisited {{I}}. {{First}} commutator and generalizations},
  author = {Muscalu, Camil},
  date = {2014},
  journaltitle = {Revista Matem\'atica Iberoamericana},
  shortjournal = {Rev. Mat. Iberoamericana},
  volume = {30},
  number = {2},
  pages = {727--750},
  issn = {0213-2230},
  doi = {10.4171/RMI/798},
  langid = {english}
}

@book{muscaluClassicalMultilinearHarmonic2013a,
  title = {Classical and multilinear harmonic analysis. {{Vol}}. {{II}}},
  author = {Muscalu, Camil and Schlag, Wilhelm},
  date = {2013},
  series = {Cambridge {{Studies}} in {{Advanced Mathematics}}},
  volume = {138},
  publisher = {{Cambridge University Press, Cambridge}},
  isbn = {978-1-107-03182-1},
  mrnumber = {3052499},
  pagetotal = {xvi+324}
}

@article{thieleUniformEstimate2002,
  title = {A {{Uniform Estimate}}},
  author = {Thiele, Christoph},
  date = {2002-09},
  journaltitle = {The Annals of Mathematics},
  shortjournal = {The Annals of Mathematics},
  volume = {156},
  number = {2},
  pages = {519--563},
  issn = {0003486X},
  doi = {10.2307/3597197}
}

@book{thieleWavePacketAnalysis2006,
  ids = {thieleWavePacketAnalysis2006a,thieleWavePacketAnalysis2006c},
  title = {Wave packet analysis},
  author = {Thiele, Christoph},
  date = {2006},
  series = {{{CBMS Regional Conference Series}} in {{Mathematics}}},
  volume = {105},
  publisher = {{Published for the Conference Board of the Mathematical Sciences, Washington, DC; by the American Mathematical Society, Providence, RI}},
  doi = {10.1090/cbms/105},
  isbn = {978-0-8218-3661-3},
  mrnumber = {2199086},
  pagetotal = {vi+86}
}

@unpublished{uraltsevVariationalCarlesonEmbeddings2016,
  title = {Variational {{Carleson}} embeddings into the upper 3-space},
  author = {Uraltsev, Gennady},
  date = {2016-10-24},
  eprint = {1610.07657},
  eprinttype = {arxiv},
  abstract = {In this paper we formulate embedding maps into time-frequency space related to the Carleson operator and its variational counterpart. We prove bounds for these embedding maps by iterating the outer measure theory of [DT15]. Introducing iterated outer \$L\^p\$ spaces is a main novelty of this paper. [DT15] Yen Do and Christoph Thiele. "\$L\^p\$ theory for outer measures and two themes of Lennart Carleson united". In: Bulletin of the American Mathematical Society 52.2 (2015), pp. 249-296.},
  archiveprefix = {arXiv}
}

@thesis{warchalskiUniformEstimatesOneand2018,
type = {Thesis},
  title = {Uniform Estimates in One- and Two-Dimensional Time-Frequency Analysis},
  author = {Warchalski, Michał},
  date = {2019-02-22},
  institution = {{RFWU Bonn}},
  url = {https://bonndoc.ulb.uni-bonn.de/xmlui/handle/20.500.11811/7882},
  langid = {english}
  }

\end{document}